\newcommandx{\ebltodo}[2][1=]{\todo[linecolor=red,backgroundcolor=red!25,bordercolor=red,#1]{#2}}
{
  \color{olive}%
}%
{}
\DeclareMathAlphabet{\mathpzc}{OT1}{pzc}{m}{it}
\numberwithin{equation}{section}
\newtheorem{thm}[subsection]{Theorem}
\newtheorem*{cor*}{Corollary}
\newtheorem{lemma}[subsection]{Lemma}
\newtheorem{propos}[subsection]{Proposition}
\newtheorem{conjecture}[subsection]{Conjecture}
\newtheorem*{thm*}{Theorem}
\newtheorem*{thma*}{Theorem A}
\newtheorem*{thmb*}{Theorem B}
\newtheorem*{thmc*}{Theorem C}
\newtheorem*{claim}{Claim}
\theoremstyle{definition}
\newtheorem{defin}[subsection]{Definition}
\newcounter{consta}
\newcounter{constk}
\renewcommand{\theconstk}{{\kappa_{\arabic{constk}}}}
\newcommand{\constk}{\refstepcounter{constk}\theconstk}
\newcounter{constc}
\newcounter{constE}
\renewcommand{\theconstE}{{{C}_{\arabic{constE}}}}
\newcommand{\constE}{\refstepcounter{constE}\theconstE}
\newcounter{constd}
\newcommand*\bigcdot{\mathpalette\bigcdot@{.5}}
\newcommand*\bigcdot@[2]{\mathbin{\vcenter{\hbox{\scalebox{#2}{$\m@th#1\bullet$}}}}}
\def\XXint#1#2#3{{\setbox0=\hbox{$#1{#2#3}{\int}$ }
\vcenter{\hbox{$#2#3$ }}\kern-.6\wd0}}
\DeclareMathOperator{\Lip}{Lip}
\DeclareMathOperator{\supp}{supp}
\DeclareMathOperator{\Haar}{Haar}
\DeclareMathOperator{\Leb}{Leb}
\DeclareMathOperator{\diff}{d}
\DeclareMathOperator\Ad{Ad}
\newcommand\Aut{\rm{Aut}}
\newcommand\vol{{\rm{vol}}}
\newcommand\SL{{\rm{SL}}}
\newcommand\SO{{\rm{SO}}}
\newcommand\Lie{{\rm Lie}}
\def\sl{{\mathfrak{sl}}}
\def\bbz{\mathbb{Z}}
\def\bbq{\mathbb{Q}}
\def\bbr{\mathbb{R}}
\def\bbc{\mathbb{C}}
\def\bbn{\mathbb{N}}
\def\Gbf{\mathbb{G}}
\def\Z{\bbz}
\def\R{\bbr}
\def\C{\bbc}
\def\Scal{\mathcal{S}}
\def\hfrak{\mathfrak{h}}
\def\rfrak{\mathfrak{r}}
\def\gfrak{\mathfrak{g}}
\def\Gbf{\mathbf{G}}
\def\Hbf{\mathbf{H}}
\def\Lbf{\mathbf{L}}
\def\H{\Hbf}
\def\G{\Gbf}
\def\bpz{\mathpzc{b}}
\def\vare{\varepsilon}
\def\zg0{Z_{G_\omega}(s)}
\def\zg{Z_G(s)}
\def\be{\begin{equation}}
\def\ee{\end{equation}}
\def\dist{{\rm dist}}
\def\lf{\mathfrak l}
\def\Sob{{\mathcal S}}
\def\dist{d}
\def\mixexp{\kappa_0}
\def\boxH{\mathsf B^H}
\def\boxHs{\mathsf B^{s,H}}
\def\boxG{\mathsf B^{G}}
\def\rwm{\nu}
\def\rws{t}
\def\conv{\ast}
\def\injr{\eta}
\def\rel{r}
\def\ave{\int_{0}^1}
\def\uvk{u_\rel}
\def\uvkd{\diff\!\rel}
\def\mfht{f}
\def\mfm{f}
\def\margI{I}
\def\noI{\psi}
\def\inj{{\rm inj}}
\def\lf{\mathbb R}
\def\qlf{\mathbb C}
\def\qi{i}
\def\nuni{e}
\def\coneH{\mathsf E}
\def\cone{\mathcal E}
\def\umt{\mathsf Q}
\newcommand{\rhsc}{b}
\newcommand{\scmf}{b}
\newcommand{\mfsc}{b}
\newcommand{\sfh}{\mathsf h}
\newcommand{\boxU}{U}
\newcommand{\sfs}{\mathsf s}
\newcommand{\balpha}{\alpha} 
\newcommand{\trct}{\mathsf R}
\newcommand\mfbd{\Upsilon}
\newcommand\eng{\mathcal G}
\newcommand\egbd{\Upsilon}
\newcommand\pvare{\mathsf{c}}
\newcommand\gdh{L}
\newcommand{\icone}{\cone'}
\newcommand{\iconeH}{\coneH'}
\newcommand{\ccone}{\hat{\cone}}
\newcommand{\cconeH}{\hat{\coneH}}
\newcommand{\cmu}{\hat{\mu}}
\newcommand{\density}{\rho}
\newcommand{\ddensity}{\varrho}
\newcommand{\mbhs}{V}
\newcommand\adm{M}
\newcommand{\adl}{\lambda}
\newcommand\fub{m_0}
\newcommand{\czeta}{\check{\zeta}}
\newcommand{\cZ}{\check{Z}}
\newcommand{\sfk}{\mathsf k}
\newcommand{\exceptional}{{\rm Exc}}
\newcommand{\hide}[1]{}
\begin{document}
\title[Effective equidistribution for unipotent flows]{Effective equidistribution for some one parameter unipotent flows}

\author{E.~Lindenstrauss}
\address[E.L.]{Institute for Advanced Study, 1 Einstein Drive, Princeton, NJ 08540, USA\newline
\emph{and}\newline
The Einstein Institute of Mathematics, Edmund J. Safra Campus, Givat Ram,
The Hebrew University of Jerusalem, Jerusalem 91904, Israel
}
\email{elonl@ias.edu}
\thanks{E.L.\ acknowledges support by ERC 2020 grant HomDyn (grant no.~833423).}

\author{A.~Mohammadi}
\address{A.M.: Department of Mathematics, University of California, Berkeley, CA 94720}
\email{amirmo@math.berkeley.edu}
\thanks{A.M.\ acknowledges support by the NSF, grants DMS-2055122 and 2350028.}

\author{Z.~Wang}
\address{Z.W.: Department of Mathematics, Johns Hopkins University,
Baltimore, MD 21218}
\email{zhirenw@jhu.edu}
\thanks{Z.W.\ acknowledges support by the NSF grant  DMS-1753042.}

\begin{abstract}
We prove effective equidistribution theorems, with polynomial error rate, for orbits of the unipotent subgroups of $\SL_2(\lf)$ 
in arithmetic quotients of $\SL_2(\qlf)$ and $\SL_2(\lf)\times\SL_2(\lf)$. 

The proof is based on the use of a Margulis function, tools from incidence geometry, 
and the spectral gap of the ambient space. 
\end{abstract}

\maketitle

\setcounter{tocdepth}{1}
\tableofcontents

\section{Introduction}

A landmark result of Ratner~\cite{Ratner-topological} states that if $G$ is a Lie group, $\Gamma$ a lattice in $G$ and if $u_t$ is a one-parameter $\Ad$-unipotent subgroup of $G$, then for {\bf any} $x \in G/\Gamma$ the orbit $u_t.x$ is equidistributed in a periodic orbit of some subgroup $L <G$ that contains both the one parameter group $u_t$ and the initial point $x$. We say an orbit $L.x$ of a  group $L$ in some space $X$ is periodic if the stabilizer of $x$ in $L$ is a lattice in $L$, equivalently that the stabilizer of $x$ in $L$ is discrete and $L.x$ supports a unique $L$-invariant probability measure $m_{L.x}$;
and $u_t.x$ is equidistributed in $L.x$ in the sense that 
\begin{equation}\label{eq:equidist}
\frac1T \int_{0}^T f(u_t.x) dt \to \int f dm_{L.x} \qquad\text{for any $f \in C_0(G/\Gamma)$.}
\end{equation}
In order to prove this equidistribution result, Ratner first classified the $u_t$-invariant probability measures on $G/\Gamma$ \cite{Ratner-Acta, Ratner-measure}; the proof also uses the non-divergence properties of unipotent flows established by Dani and Margulis \cite{Marg-Nondiv, Dani-Nondiv-1, Dani-Nondiv-2}.

\medskip

In this paper we prove a quantitative equidistribution result for orbits of a one parameter unipotent group on quotients $G/\Gamma$ where $G$ is either $\SL_2(\qlf)$ or $\SL_2(\lf)\times\SL_2(\lf)$ with a polynomial error rate, which is the first quantitative equidistribution statement for individual orbits of unipotent flows on quotients of semi-simple groups beyond the horospherical case. Our approach builds on the paper \cite{LM-PolyDensity} by the first two authors, where an effective density result with a polynomial rate for orbits of a Borel subgroup of a subgroup $H \simeq \SL_2(\bbr)$ of $G$ was proved.

\medskip

Recall that a group $N<G$ is horospheric if there is some $g \in G$ so that 
\[
N = \{h \in G: g^{-n} h g^{n} \to 1 \text{ as $n\to\infty$}\}.
\]
For instance, the one parameter unipotent group
\[
\left\{\begin{pmatrix}1&r\\0&1\end{pmatrix}: r\in\bbr\right\}
\]
is horospheric in $\SL_2(\R)$ as are the groups
\[
\left\{\begin{pmatrix}1&r+is\\0&1\end{pmatrix}:r,s\in\bbr \right \}
\quad\text{and}\quad
\left\{\left(\begin{pmatrix}1&r\\0&1\end{pmatrix}, \begin{pmatrix}1&s\\0&1\end{pmatrix}\right): r,s\in\bbr\right\}
\]
in $\SL_2(\C)$ and $\SL_2(\R)\times\SL_2(\R)$, respectively.
The classification of invariant measures and orbit closures for horospherical flows was established prior to Ratner's work by Hedlund, Furstenberg, Dani, Veech and others, and this has been understood for some time also quantitatively since one can relate the distribution properties of individual $N$ orbits to the ergodic theoretic properties of the action of $g$ on $G/\Gamma$ (cf.~\S\ref{sec: horospheric} for more details).

\medskip

The non-horospheric case, on the other hand, is much more delicate, and proving a quantitative form of Ratner's theorem regarding equidistribution of unipotent orbits has been a major challenge. We survey below in \S\ref{sec: history} what was known before our work as well as some very recent developments that have taken place after these results have been announced.

To state our main results we first fix some notations.
Let
\[
G=\SL_2(\qlf)\quad\text{ or }\quad G=\SL_2(\lf)\times\SL_2(\lf).
\] 
Let $\Gamma\subset G$ be a lattice, and put $X=G/\Gamma$. We let $m_X$ denote the  $G$-invariant probability measure on $X$.   
Throughout the paper, we will denote by $H$ a subgroup of $G$ isomorphic to $\SL_2(\lf)$, namely
\[
\SL_2(\lf)\subset\SL_2(\qlf)\quad\text{or}\quad\{(g,g): g\in\SL_2(\lf)\}\subset \SL_2(\lf)\times\SL_2(\lf).
\]

For all $t, r\in\bbr$, let $a_t$ and $u_r$ denote the image of 
\[
\begin{pmatrix} e^{t/2} & o \\ 0 & e^{-t/2}\end{pmatrix}\quad\text{and}\quad \begin{pmatrix} 1 & r \\ 0 & 1\end{pmatrix},
\]
in $H$, respectively.

We fix maximal compact subgroups ${\rm SU}(2)\subset \SL_2(\bbc)$ and $\SO(2)\times\SO(2)\subset\SL_2(\bbr)\times\SL_2(\bbr)$.
Let $\dist$ be the right invariant metric on $G$ which is defined using the Killing form and the aforementioned maximal compact subgroups.\label{d definition page} This metric induces a metric $\dist_X$ on $X$, 
and natural volume forms on $X$ and its submanifolds. We define the injectivity radius of a point $x\in X$ using this metric. 
In the sequel, $\|\;\|$ denotes the maximum norm on ${\rm Mat}_2(\qlf)$ or 
${\rm Mat}_{2}(\lf)\times {\rm Mat}_{2}(\lf)$ with respect to the standard basis. 

\medskip

Our main result is the following:

\begin{thm}\label{thm:main}
Assume $\Gamma$ is an arithmetic lattice. 
For every $x_0\in X$, and large enough $R$ (depending explicitly on $X$ and the injectivity radius of $x_0$), for any $T\geq R^{A}$, at least one of the following holds.
\begin{enumerate}
\item For every $\varphi\in C_c^\infty(X)$, we have 
\[
\biggl|\int_0^1 \varphi(a_{\log T}u_rx_0)\diff\!r-\int \varphi\diff\!m_X\biggr|\leq \Sob(\varphi)R^{-\ref{k:main-1}}
\]
where $\Sob(\varphi)$ is a certain Sobolev norm. 
\item There exists $x\in X$ such that $H.x$ is periodic with $\vol(H.x)\leq R$, and 
\[
\dist_X(x,x_0)\leq R^{A}(\log T)^AT^{-1}.
\] 
\end{enumerate} 
The constants $A$ and $\constk\label{k:main-1}$ are positive and depend on $X$ but not on $x_0$.
\end{thm}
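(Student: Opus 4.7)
Using the identity $a_{\log T}u_r = u_{rT} a_{\log T}$ and changing variables $s=rT$, the integral in alternative (1) becomes
\[
\frac{1}{T}\int_0^T \varphi(u_s y_0)\diff\!s,\qquad y_0 := a_{\log T} x_0,
\]
so the theorem reduces to equidistribution of a long $U$-orbit based at $y_0$, with the dichotomy that either the time average approximates $\int\varphi\,\diff\!m_X$ with a polynomial rate in $R$, or there is a periodic $H$-orbit of volume $\leq R$ within distance $R^A(\log T)^A T^{-1}$ of $x_0$. I would assume from the outset that alternative (2) fails; after conjugation by $a_{\log T}$ (which expands $U$), this translates into a quantitative statement that $y_0$ is not too concentrated near any periodic $H$-orbit of volume $\leq R$.

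\textbf{Margulis function and transverse shearing.} In the spirit of the first two authors' earlier work on the Borel action, I would introduce a Margulis function $f\colon X\to [1,\infty)$ whose level sets simultaneously detect excursions into the cusp and excursions towards periodic $H$-orbits of volume $\leq R$. Combining Dani--Margulis non-divergence with a Margulis-type contraction inequality adapted to the present $U$-action, one proves
\[
\frac{1}{T}\int_0^T f(u_s y_0)\diff\!s \leq \poly(R),
\]
so most of the orbit visits a thick compact region in which nearby trajectories can be compared. The core mechanism is then to pick pairs $y,y'=\exp(v)y$ of nearby return points, decompose $v=v^H+v^\perp$ along $\gfrak=\hfrak\oplus\hfrak^\perp$, and track both components under $\Ad(u_s)$. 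The aim is to show that if the $U$-orbit fails to equidistribute, a time-averaging / pigeonhole argument along the flow produces a positive fraction of returns for which $v^\perp$ has grown to unit size while $v^H$ remains small. This yields a thick family of genuinely transverse shearings --- the substitute, in the non-horospheric case, for what mixing alone provides in the horospheric setting.

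\textbf{Incidence geometry, spectral gap, and main obstacle.} The transverse shearings obtained above come a priori with uncontrolled directions in $\hfrak^\perp$ and could conspire to concentrate on a few proper affine subspaces, in which case they would fail to spread the orbit effectively. Ruling this out is precisely the role of the incidence-geometry input: a dimension/bracketing argument exploiting how $\Ad(U)$ acts irreducibly on the transverse representation shows that the transverse returns spread over a genuine two-dimensional set at the appropriate scale. Finally, the spectral gap of the $A$-action on $X$ (which holds because $\Gamma$ is arithmetic) converts the combined presence of a transverse near-invariance and $U$-invariance into equidistribution with polynomial rate, yielding alternative (1) with exponent $\kappa_1$ determined by the spectral gap together with the polynomial counts from the previous step. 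The principal obstacle, and what distinguishes this argument from the Borel-orbit analysis, is producing \emph{quantitatively many} transverse shearings from a single long $U$-orbit: one cannot simply use an $A$-contraction to kill the $v^H$ component, and the resulting loss of control must be recouped by time averaging along $U$ together with the incidence-geometric and spectral-gap inputs.
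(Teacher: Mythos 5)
Your opening ``reduction'' is only a rewriting: $\int_0^1\varphi(a_{\log T}u_rx_0)\,\diff\!r=\tfrac1T\int_0^T\varphi(u_sy_0)\,\diff\!s$ with $y_0=a_{\log T}x_0$ is the same quantity, and it points the argument in the opposite direction from the paper, which proves the translate statement first and only afterwards deduces the long-orbit statement (Theorem~\ref{thm:main unipotent}) from it by linearization. The genuine gap is in your core mechanism: extracting ``quantitatively many transverse shearings'' by pigeonholing pairs of nearby returns of the single orbit of $y_0$ is the classical strategy, and it is only known to give logarithmic or iterated-logarithmic rates (this is exactly the regime of the earlier effective density results recalled in \S\ref{sec: history}); nothing in your sketch explains how time averaging plus an unspecified ``incidence-geometry'' input would upgrade this to a polynomial rate, and that upgrade is precisely the hard content of the theorem. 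The paper's route is structurally different: assuming alternative (2) fails, an avoidance principle for translates (Proposition~\ref{prop: linearization translates}, proved with Margulis functions attached to periodic $H$-orbits) upgrades the Diophantine property of the base point; an arithmetic closing lemma (Proposition~\ref{prop:closing lemma intro}) then produces an \emph{initial} transverse dimension $1/D$ for the entire translated arc $\{a_{8t}u_rx_1\}$ --- not from recurrence of one trajectory; this dimension is boosted to $\alpha=1-\theta$ close to $1$ (not $2$ as you write: the endgame projects $\rfrak$ to a line via $\xi_r$) by a bootstrap combining a localized Margulis function with a restricted projection theorem (Theorem~\ref{thm: proj thm}) applied to admissible measures on finite unions of local $H$-orbits under the convolution powers $\rwm_\ell^{(n)}$ (Proposition~\ref{propos: imp dim main}); and the final step uses equidistribution of translated horospheres together with Venkatesh's sparse equidistribution argument (Propositions~\ref{prop: 1-epsilon N} and~\ref{prop: high dim to equid}). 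Your sketch names some of these ingredients but supplies no mechanism that could replace this chain, and the route you do propose (shearing from returns of a single long orbit) would not deliver the stated polynomial rate; it also runs head-on into the ``slow drift in the centralizer direction'' obstruction for long unipotent orbits that the paper deliberately sidesteps by working with expanding translates.

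A second, more localized error: you attribute the arithmeticity hypothesis to the spectral gap of the $A$-action. The decay of matrix coefficients~\eqref{eq: exp mixing} holds for an arbitrary lattice here (arithmeticity/congruence only makes the exponent uniform). Arithmeticity is genuinely used in the closing lemma, where two small non-commuting elements of the stabilizer of a nearby point must be converted into a periodic $H$-orbit of polynomially bounded volume (Lemma~\ref{lem:non-elementary}); without it one only obtains the weaker alternative (2') and hence Theorem~\ref{thm: main arithmeticity relaxed} rather than Theorem~\ref{thm:main}. As it stands, your proposal would not localize where the hypothesis enters, and its central dimension-increment step is missing.
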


Theorem~\ref{thm:main} can be viewed as an effective version of \cite[Thm.~1.4]{Shah-Expanding}. Combining Theorem~\ref{thm:main} and the Dani--Margulis linearization method \cite{DM-Linearization} (cf.\ also Shah \cite{Shah-MathAnn}),
that allows to control the amount of time a unipotent trajectory spends near
invariant subvarieties of a homogeneous space, 
we also obtain an effective equidistribution theorem for long pieces of unipotent orbits (more precisely, we use a sharp form of the linearization method taken from~\cite{LMMS}).

\begin{thm}\label{thm:main unipotent}
Assume $\Gamma$ is an arithmetic lattice.
For every $x_0\in X$ and large enough $R$ (depending explicitly on $X$), for any $T\geq R^{A_1}$, at least one of the following holds.
\begin{enumerate}
\item For every $\varphi\in C_c^\infty(X)$, we have 
\[
\biggl|\frac{1}{T}\int_0^{T} \varphi(u_rx_0)\diff\!r-\int \varphi\diff\!m_X\biggr|\leq \Sob(\varphi)R^{-\ref{k:main uni}}
\]
where $\Sob(\varphi)$ is a certain Sobolev norm. 
\item There exists $x \in G/\Gamma$ with $\vol(H.x)\leq R^{A_1}$, 
and for every $r\in [0,T]$ there exists $g\in G$ with $\|g\|\leq R^{A_1}$ so that  
\[
\dist_X(u_{s}x_0, gH.x)\leq R^{A_1}\left(\frac{|s-r|}{T}\right)^{1/A_2}\quad\text{for all $s\in[0,T]$.}
\] 
\item For every $r\in[0,T]$ and $t \in [\log R, \log T]$, the injectivity radius at $a_{-t}u_r x_0$ is at most $R^{A_1}e^{-t}$.
\end{enumerate} 
The constants $A_1$, $A_2$, and $\constk\label{k:main uni}$ are positive, and depend on $X$ but not on $x_0$. 
\end{thm}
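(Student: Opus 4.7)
\textbf{Proof plan for Theorem~\ref{thm:main unipotent}.}
The plan is to bootstrap Theorem~\ref{thm:main}, which controls the rescaled horocyclic average on a length-one interval, to a genuine long-interval unipotent average, by applying it at many base points along the orbit and then combining the resulting local alternatives via the sharp form of the Dani--Margulis linearization method from~\cite{LMMS}.

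First I partition $[0,T]$ into $N\approx T/T_0$ subintervals of length $T_0$, with $T_0$ an intermediate scale (a small power of $R$, chosen so that $T_0\ge R^{A}$ while $T/T_0$ is still a large power of $R$). The commutation $a_t u_s a_{-t}=u_{e^t s}$ rewrites each subinterval average as
\[
\frac{1}{T_0}\int_{kT_0}^{(k+1)T_0}\varphi(u_s x_0)\,ds=\int_0^1\varphi\bigl(a_{\log T_0}u_r y_k\bigr)\,dr,\qquad y_k:=a_{-\log T_0}u_{kT_0}x_0.
\]
Provided $y_k$ has a reasonably large injectivity radius, Theorem~\ref{thm:main} at base point $y_k$ with time scale $T_0$ yields a dichotomy: either the average equidistributes with error $O(\Sob(\varphi)R^{-\kappa})$, or $y_k$ lies within distance $R^{A}(\log T_0)^AT_0^{-1}$ of some $x_k$ whose $H$-orbit is periodic of volume $\le R$. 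The non-divergence obstruction --- that many of the $y_k$ have tiny injectivity radius --- is exactly what alternative~(3) of Theorem~\ref{thm:main unipotent} is designed to absorb: if~(3) does not hold, a pigeonhole argument on the set of bad times supplies a large collection of indices $k$ for which the application of Theorem~\ref{thm:main} is legitimate.

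If among these legitimate $k$'s the equidistribution alternative prevails on a majority, summing and averaging over $k$ produces conclusion~(1) of Theorem~\ref{thm:main unipotent}. The core of the proof is therefore the case in which a definite fraction of the $y_k$ lie close to \emph{some} periodic $H$-orbit $H.x_k$ of bounded volume. Here the Dani--Margulis linearization enters: the finitely many conjugacy classes of periodic $H$-orbits of volume at most $R^{A_1}$ are encoded as a finite union of algebraic subvarieties $\mathcal V\subset V$ inside a single finite-dimensional $G$-representation. Under this encoding, closeness of $y_k$ to a periodic $H$-orbit translates into closeness of a polynomial curve $\psi(s)\in V$, obtained by acting $u_s$ on a chosen vector, to $\mathcal V$ at $s=kT_0$; the degree of $\psi$ is bounded in terms of $\dim G$ because $u_s$ acts unipotently on $V$.

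The main obstacle, and the technical heart of the argument, is to upgrade this pointwise closeness on many $k$ to a uniform, polynomially controlled closeness of $\psi(s)$ on the whole interval $[0,T]$ to a \emph{single} component of $\mathcal V$, equivalently to a single periodic $H$-orbit: a priori the different $x_k$ could correspond to wildly different periodic orbits. This is precisely what the sharp linearization of~\cite{LMMS} provides: a polynomial of bounded degree that stays close to an algebraic variety on a macroscopic subset of times must in fact stay H\"older-close to one irreducible component of that variety throughout the whole interval, with an explicit polynomial loss. Translating the resulting control on $\psi$ back to $X$ yields conclusion~(2) of Theorem~\ref{thm:main unipotent}, with the factor $(|s-r|/T)^{1/A_2}$ reflecting the degree of $\psi$ together with the H\"older exponent from the linearization; careful bookkeeping of the dependence of all constants on $R$ and $T$ produces the stated bounds.
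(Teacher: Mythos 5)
Your overall route is the same as the paper's: rewrite the long average as unit-length translated averages at base points $a_{-\log T_0}u_{kT_0}x_0$, apply Theorem~\ref{thm:main} at each good base point, and feed the bad times into the sharp linearization of \cite{LMMS}. But there are two genuine gaps. First, your treatment of the small-injectivity-radius times does not work as stated. Alternative (3) of Theorem~\ref{thm:main unipotent} is a statement about \emph{every} $r\in[0,T]$ and $t\in[\log R,\log T]$; its negation only produces a single pair $(r,t)$ with non-small injectivity radius, and no pigeonhole argument converts that into a bound on the \emph{measure} of times $k$ at which $\inj(y_k)$ is too small to apply Theorem~\ref{thm:main}. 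The implication you actually need is the converse one: if the set of times with $u_{r_1}a_{-d}x_0\notin X_\eta$ has measure exceeding a small power of $R$ times $T_1$, then (3) holds. Proving this requires quantitative non-divergence for polynomial trajectories (Proposition~\ref{prop:Non-div-main}), which forces $\inj(a_{-\log T}x_0)\ll T_1^{-1}$, produces a single unipotent $\gamma\in\Gamma$ with a small conjugate, and then an explicit matrix computation propagates the smallness to all $r\in[0,T]$ and all $t\in[\log R,\log T]$. This non-divergence/unipotent-element step is an essential ingredient, not a counting argument.

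Second, your linearization branch asserts that closeness to periodic $H$-orbits of volume $\le R$ on a set of times of positive density upgrades to H\"older closeness to a \emph{single} periodic $H$-orbit, i.e.\ to conclusion (2). That is not the full output of the \cite{LMMS} machinery: the subgroup it isolates (an intersection of two conjugates of the relevant $\mathbb{Q}$-group, as in Lemma~\ref{lem: unipotent linearization}) may degenerate to a conjugate of the one-parameter unipotent group $U$ rather than of $H$, and in that case one does \emph{not} obtain (2); instead one obtains precisely the injectivity-radius decay of alternative (3), via the explicit computation with the nilpotent vector ${\bf v}_{\tilde H'}$. This degenerate case cannot be ruled out under your hypotheses (think of orbits running into a cusp while repeatedly shadowing periodic orbits), and it is the reason alternative (3) appears in the theorem in its strong "for all $r,t$" form rather than merely as a record of cusp excursions of the sampled points. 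A smaller calibration point: for conclusion (1) with error $R^{-\kappa}$ you must set the dichotomy at bad-set density $R^{-\star}$, not "a majority/definite fraction", and correspondingly the linearization lemma must be invoked with an exceptional set of density only $R^{-\star}$ (which \cite{LMMS} indeed allows); with a constant-fraction threshold your case analysis leaves an uncovered intermediate regime.
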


The assumption in Theorem~\ref{thm:main}, that $\Gamma$ is arithmetic, may be relaxed. 
Let us say $\Gamma$ has {\em algebraic entries} if the following is satisfied:
there is a number field $F$, a semisimple $F$-group $\G$ of adjoint type, 
and a place $v$ of $F$ so that $F_v=\bbr$ and $\G(F_v)$ and $G$ are locally isomorphic --- in which case there is a surjective homomorphism from $G$ onto the connected component of the identity in $\G(F_v)$ ---
and the image of $\Gamma$ in $\G(F_v)$ (possibly after conjugation) is contained in $\G(F)$. Every arithmetic lattice has algebraic entries, but there are lattices with algebraic entries that are not arithmetic.

Note that the condition that  $\Gamma$ has {\em algebraic entries} is automatically satisfied if $\Gamma$ is an irreducible lattice in $\SL_2(\bbr)\times \SL_2(\bbr)$ or if $G=\SL_2(\C)$. Indeed, by arithmeticity theorems of Selberg and Margulis, irreducible lattices in $\SL_2(\bbr)\times \SL_2(\bbr)$ are arithmetic~\cite[Ch.~IX]{Margulis-Book}. Moreover, by local rigidity, lattices in $\SL_2(\bbc)$ always have algebraic entries \cite[Thm.~0.11]{GarRag-FD} (see also~\cite{Selb, Weil-1, Weil-2}).

\begin{thm}\label{thm: main arithmeticity relaxed}
Assume $\Gamma$ is a lattice which has algebraic entries.
For every $0<\delta<1/4$, every $x_0\in X$ and large enough $T$ (depending explicitly on $X$, $\delta$ and the injectivity radius of $x_0$) 
at least one of the following holds.
\begin{enumerate}
\item For every $\varphi\in C_c^\infty(X)$, we have 
\[
\biggl|\int_0^1 \varphi(a_{\log T}u_rx_0)\diff\!r-\int \varphi\diff\!m_X\biggr|\leq \Sob(\varphi)T^{-\delta^2\ref{k:main-1-relax}}
\]
where $\Sob(\varphi)$ is a certain Sobolev norm. 
\item There exists $x\in X$ with
\[
\dist_X(x,x_0)\leq T^{-1/A'},
\]
satisfying the following: there are elements $\gamma_1$ and $\gamma_2$ in ${\rm Stab}_H(x)$
with $\|\gamma_i\|\leq T^{\delta}$ for $i=1,2$ so that the group generated by $\{\gamma_1,\gamma_2\}$ is Zariski dense in $H$.
\end{enumerate} 
The constants $A'$ and $\constk\label{k:main-1-relax}$ are positive, and depend on $X$ but not on $\delta$ and $x_0$.
\end{thm}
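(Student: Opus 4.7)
The plan is to adapt the proof of Theorem~\ref{thm:main}, stopping at the intermediate stage before arithmeticity of $\Gamma$ is used. I expect the proof of Theorem~\ref{thm:main} to proceed in two stages: first the Margulis function / incidence geometry / spectral gap arguments produce a point $x$ close to $x_0$ together with two stabilizer elements $\gamma_1,\gamma_2\in\stab_H(x)$ of polynomially bounded norm that generate a Zariski dense subgroup of $H$, and only then is arithmeticity of $\Gamma$ invoked to promote this pair to generators of an arithmetic lattice of $H$, thereby producing the periodic orbit with $\vol(H.x)\le R$. For the weaker Theorem~\ref{thm: main arithmeticity relaxed} we simply halt at the intermediate output.

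Concretely, suppose alternative (1) of Theorem~\ref{thm: main arithmeticity relaxed} fails with error exceeding $T^{-\delta^2\ref{k:main-1-relax}}\Sob(\varphi)$ for some $\varphi\in C_c^\infty(X)$. Running the Margulis function / incidence geometry / spectral gap machinery of the paper with internal parameter $R\asymp T^{\delta^2}$ --- small enough that $R^{A}\le T$ remains valid for $\delta<1/\sqrt{A}$ --- would output $x\in X$ with $\dist_X(x,x_0)\le T^{-1/A'}$ together with elements $\gamma_1,\gamma_2\in H\cap x\Gamma x^{-1}$ satisfying $\|\gamma_i\|\le R^B\le T^{\delta}$ for a suitable fixed exponent $B$ once $\delta$ is sufficiently small. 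These close-return arguments use only that $X$ is a finite-volume quotient with a spectral gap in $L^2_0(X)$, and do not involve arithmeticity of $\Gamma$; the trade $\delta\mapsto\delta^2$ in the error exponent is precisely what allows us to pay simultaneously for the norm bound $T^\delta$ and for the equidistribution quality.

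The remaining task is to verify Zariski density of $\langle\gamma_1,\gamma_2\rangle$ in $H\cong\SL_2(\R)$. Every proper algebraic subgroup of $\SL_2$ is contained in a Borel subgroup or in the normalizer of a one-dimensional torus, so Zariski non-density is equivalent to a polynomial identity of bounded degree on $(\gamma_1,\gamma_2)$. The algebraic-entries hypothesis enters exactly here: it guarantees that only countably many proper $F$-algebraic subgroups of $\Hbf$ (up to $H$-conjugation) can contain such a pair, and the Dani--Margulis linearization method in the sharpened form of~\cite{LMMS} used elsewhere in the paper translates each such identity into the statement that $\{u_r x_0:r\in[0,T]\}$ spends an anomalously large fraction of its time near the preimage of a proper $H$-invariant closed subvariety of $X$. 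For $\delta$ sufficiently small these non-divergence bounds contradict the assumed failure of alternative (1) and thus force Zariski density.

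The main obstacle I anticipate lies in step one: cleanly extracting from the proof of Theorem~\ref{thm:main} the intermediate output producing two bounded-norm stabilizer elements, independently of the subsequent arithmetic upgrade, and tracking the polynomial bookkeeping so that the exponent $T^{-\delta^2\ref{k:main-1-relax}}$ and the norm bound $T^{\delta}$ emerge with constants genuinely independent of $\delta$ and $x_0$. The linearization in step three must in turn be applied uniformly across the countable family of $F$-algebraic subgroups of $\Hbf$, which is exactly why the algebraic-entries hypothesis appears.
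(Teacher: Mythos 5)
Your top-level plan is the right one and matches the paper: the closing lemma (Proposition~\ref{prop:closing lemma intro}) has a non-arithmetic variant, stated as (2') in \S\ref{sec: closing lemma}, whose output is exactly a nearby point $x$ together with $\gamma_1,\gamma_2\in{\rm Stab}_H(x)$ of norm $\leq \nuni^{D_1t}$ generating a Zariski dense subgroup of $H$; arithmeticity is used only (via Lemma~\ref{lem:non-elementary}) to upgrade this pair to a periodic $H$-orbit of controlled volume, and Theorem~\ref{thm: main arithmeticity relaxed} is proved by stopping before that upgrade. The genuine gap is your third paragraph. There is no ``remaining task'' of verifying Zariski density by a separate Dani--Margulis linearization over a countable family of $F$-algebraic subgroups: Zariski density is part of the dichotomy delivered by the closing lemma itself (its proof in Appendix~\ref{sec: proof closing} splits into the commutative Case~1, which is excluded outright, and Case~2, where the algebraic-entries hypothesis is used in the effective perturbation Lemma~\ref{lem:almost-inv} to pass from approximate to exact stabilization of $v_H$ at a nearby point). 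So the algebraic-entries hypothesis does \emph{not} enter where you place it, and the step you sketch --- translating non-density of $\langle\gamma_1,\gamma_2\rangle$ into the orbit $\{u_rx_0\}$ spending anomalous time near invariant subvarieties and contradicting the failure of (1) --- does not correspond to anything that works: the failure of (1) concerns the translates $a_{\log T}u_rx_0$ and gives no such non-divergence input, and your proposal is in any case internally inconsistent (paragraph one asserts the intermediate output already includes Zariski density; paragraph three treats it as unproven).

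The second issue is structural and quantitative. The proof of Theorem~\ref{thm:main} cannot simply be rerun with a smaller $R$: the Diophantine-improvement step (Proposition~\ref{prop: linearization translates}) must be dropped entirely, because without arithmeticity a point whose stabilizer contains a small Zariski-dense two-generated subgroup need not lie on a periodic $H$-orbit, so ``being far from periodic orbits of volume $\leq R$'' is useless input for the non-arithmetic closing lemma. This omission is precisely why alternative (2) of the theorem is phrased in terms of $\gamma_1,\gamma_2$ and why the rate degrades: in \S\ref{sec: proof arith relaxed} one takes $t=\tfrac{\delta}{D_1}\log T$ and $D\asymp 1/\delta$, so the norm bound $\nuni^{D_1t}=T^{\delta}$ and the distance $T^{-1/A'}=\nuni^{(-D+D_1)t}$ hold for \emph{every} $0<\delta<1/4$ with constants independent of $\delta$, while the final exponent is $\hat\kappa t$ with $\hat\kappa\asymp\kappa\asymp d_1^{-1}\asymp\delta$, giving $T^{-c\delta^2}$. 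Your bookkeeping with $R\asymp T^{\delta^2}$ and $\|\gamma_i\|\leq R^{B}\leq T^{\delta}$ ``once $\delta$ is sufficiently small'' produces the conclusion only for small $\delta$ with $\delta$-dependent thresholds, which falls short of the statement, and it leaves unaddressed how the avoidance step would be replaced.
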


The obstacle to effective equidistribution in Theorem~\ref{thm:main} is much cleaner and simpler than in Theorem~\ref{thm:main unipotent}. This is not an artifact of the proof but a reflection of reality; a unipotent orbit may fail to equidistribute at the expected rate without it staying near a single period orbit of some subgroup $\{u_t\}< L< G$: one must allow a slow drift of the periodic orbit in the direction of the centralizer of $u_t$. Unlike the work of Shah in \cite{Shah-Expanding}, where (in particular) a non-effective version of Theorem~\ref{thm:main} is proved relying on Ratner's measure classification theorem for unipotent flows, our proof goes the other way, first establishing Theorem~\ref{thm:main}, and then deduce Theorem~\ref{thm:main unipotent} from it using a linearization and non-divergence argument. 

\medskip

These results have been announced in \cite{LMW-announce}, as well as in a series of three talks at the IAS in Princeton in February 2022\footnote{\url{https://www.ias.edu/video/effective-equidistribution-some-one-parameter-unipotent-flows-polynomial-rates-i-ii}}. The announcement \cite{LMW-announce} also contains an overview of the argument; the reader may find it useful to consult \cite{LMW-announce} before (or while) reading the full version.

\subsection{Background and further discussion}\label{sec: history}
Ratner's equidistribution theorem implies a corresponding orbit closure classification theorem. Answering a conjecture of Raghunathan, Ratner deduced from the equidistribution theorem a classification of orbit closures: if $G$ is a Lie group, $\Gamma$ a lattice in $G$, and if $H<G$ is generated by one parameter Ad-unipotent subgroups of $G$, then for any $x \in G/\Gamma$ one has that $\overline{H.x}=L.x$ where $H \leq L \leq G$ and $L.x$ is periodic.
Important special cases of Raghunathan's conjecture were proven earlier by Margulis and by Dani and Margulis using a different more direct approach, which in particular gave a proof of a rather strong form of the longstanding Oppenheim conjecture \cite{Margulis-Oppenheim,DM-Oppenheim,DM-MathAnn}. The rigidity properties of unipotent flows have had many other surprising applications to number theory, from equidistribution to counting integer points and even regarding nonvanishing of central values of L-functions, as well as many other areas. Already the cases we study here, e.g., the action of $u_t$ on $\SL_2(\R)\times\SL_2(\R)/ \SL_2(\Z)\times\SL_2(\Z)$ is of interest to some number theoretic implications (e.g.~\cite{Sarnak-Ubis, B-S-Z}). 

Both because of its intrinsic interest, but especially in view of the applications, obtaining quantitative versions of equidistribution results for unipotent flows has been a well known open problem (cf.\ \cite[\S1.3]{Maconj}, in particular problem 7 there, or \cite[Ques.~17]{Gorodnik-conjectures}).

As mentioned above, the equidistribution of orbits of horospheric groups is by now well understood, in part using the relation between studying individual orbits of horospheric groups and mixing properties of a corresponding diagonalizable group. 
The first work in this direction we are aware of is Sarnak \cite{Sarnak-Thesis} who studied periodic orbits of the horocycle flow. Burger \cite{Burger-Horo} gave a general effective treatment for quotients of $\SL_2(\R)$ (even in some infinite volume cases). In \cite{KMnonquasi}, Kleinbock and Margulis use a quantitaive equidistribution result for expanding translates of orbits of horospheric groups \cite[Proposition 2.4.8]{KMnonquasi}. More recent papers in the topic include the work of Flaminio and Forni \cite{FlFor}, Str\"{o}mbergsson \cite{Strom-Horo}, and Sarnak and Ubis \cite{Sarnak-Ubis}. Quantitative horospheric equidistribution has now been established in much greater generality e.g.\ by Kleinbock and Margulis in \cite{KM-Drich}, McAdam in \cite{McAdam} and by Asaf Katz \cite{Katz-Quantitative}. Moreover a quantitative equidistribution estimate twisted by a character was proved by Venkatesh \cite{Venkatesh-Sparse} and further developed by Tanis
and Vishe as well as Flaminio, Forni, and Tanis \cite{Tanis-Vishe, FFT-twisted}; this was generalized to a disjointness result with a general nil-system by Asaf Katz in \cite{Katz-Quantitative}.  
Closely related is the case of translates of periodic orbits of subgroups $L\subset G$ which are fixed by an involution by Duke, Rudnick and Sarnak, Eskin and McMullen, and Benoist and Oh in \cite{DRS-Counting, Eskin-McMullen-1993, BO-Counting}.

Unipotent dynamics have a very different flavour when the ambient group $G$ itself is a unipotent group (in which case the study of these flows, e.g. the classification of invariant measures, dates back to work by Leon Green, Parry and others from the late 1960s) on the one extreme and when $G$ is a semisimple group on the other. 
The case when $G$ is a skew product $G' \ltimes N$ with $G'$ semisimple and $N$ unipotent, with the acting group $U$ projecting to a horospheric subgroup of $G'$, can be viewed as intermediate between these two cases.

\begin{itemize}
    \item 
Even when $G$ is unipotent (and $G/\Gamma$ a nilmanifold) the quantitative behaviour of unipotent flows has only been understood relatively recently by Green and Tao \cite{GrTao-Nil}. 

\item
In the case of quotients of the skew product $G=\SL_2(\R)\ltimes \R^2$, Strombergsson \cite{Strom-Semi} has an effective equidistribution result for one parameter unipotent orbits (which are not horospheric in $G$, but project to a horospheric group on $\SL_2(\R)$), and this has been generalized by several authors, in particular by Wooyeon Kim \cite{ASLn-Kim} (using a completely different argument) to $\SL_n(\R)\ltimes\R^n$. 
The case where $G$ is a direct product $G=G' \times N$ and $U$ projects to a horospheric subgroup of $G'$ is discussed in Katz paper \cite{Katz-Quantitative}.

\item
Not quite in this framework, but also somewhat of an intermediate case between the case of $G$ semisimple and nilpotent is the study of random walks by automorphisms of the torus or nilmanifold $X$ driven by a probability measure on $\Aut(X)$ whose support generates a group with sufficiently large Zariski closure. Here there is a quantitative
equidistribution result by Bourgain, Furman, Mozes and the first named author \cite{BFLM}, which was extended by Weikun He and de Saxce \cite{He-Saxce}.
Elements from this proof were used by Wooyen Kim in \cite{ASLn-Kim}.

\item
When $G$ is semisimple, there have been some results regarding effective \emph{density} of non-horospherical unipotnet flows. Specifically, for $G/\Gamma=\SL_3(\R)/\SL_3(\Z)$ and $u_t$ is the generic one parameter unipotent subgroup a result towards effective density with a logarithmic error term was proved by Margulis and the first named author \cite{LM-Oppenheim} in order to give an effective and quantitative proof of the Oppenheim Conjecture. A more general result in this direction, with iterated logarithmic rate\footnote{I.e.\ very far from the right kind of dependence which should be polynomial.}, was announced by Margulis, Shah and two of us (E.L. and A.M.) with the first installment of this work appearing in \cite{LMMS}. An effective density result for $G=\SL_2(\C)$ or $\SL_2(\R)\times\SL_2(\R)$ and $u_t$ a one-parameter unipotent (i.e.\ the case we consider in this paper), with a polynomial rate, was established by the first two named authors~\cite{LM-PolyDensity}.

\item
When $G$ is semisimple, there have been some results regarding effective \emph{equidistribution} of \emph{special} orbits of non-horospherical groups generated by unipotents. In particular we note the work of Einsiedler, Margulis and Venkatesh \cite{EMV} showing that periodic orbits of semisimple subgroups $H$ of a semisimple group $G$ are quantitatively equidistributed in an appropriate homogeneous subspace of $G/\Gamma$ if $\Gamma$ is a congruence lattice and $H$ has finite centralizer in $G$. Subsequently Einsiedler, Margulis, Venkatesh and the second named author by using Prasad's volume formula and a more adelic view point were able to prove such an equidistribution result for periodic orbits of maximal semisimple subgroups of $G$ when the subgroup is allowed to vary \cite{EMMV} with arithemetic applications. The equidistribution of periodic orbits of semisimple groups is also closely connected to the equidistribution of Hecke points; a quantitative treatment of such equidistribution was given by Clozel, Oh and Ullmo in \cite{Clozel-Oh-Ullmo}.

In a different direction, but also under this general heading we note the paper of Chow and Lei Yang \cite{Chow-Yang} which deals with expanding translates of special 1-parameter unipotent orbits, with applications to Diophantine approximations.

\item For $G$ semisimple and $U$ a  nonhorospheric unipotent group there were no quantitative equidistribution results known, with any rate, before our work (certainly not for a one parameter group $U$; but see e.g.~\cite{Ubis-translates} for a related result in an ``almost horospheric'' situation).  Our work was announced in \cite{LMW-announce}. While we were working on finishing this paper Lei Yang posted a very interesting preprint treating another nonhorospheric case \cite{Yang-SL3} --- the case of trajectories of a non-generic one-parameter unipotent group on $\SL_3(\R)/\SL_3(\Z)$. That paper uses some elements common with our approach (e.g.\ a similar closing lemma as a starting point and a similar last stage), but the critical dimension increment phase seems to be done quite differently. We note that the case treated by Lei Yang in that paper is the same case for which Chow and Yang proved equidistribution for translates of special orbits in \cite{Chow-Yang}.

\end{itemize}

An extremely interesting analogue to unipotent flows on homogeneous spaces is given by the action of $\SL_2(\R)$ and its subgroups on strata of abelian differentials. 
Let $g \geq 1$, and 
let $\alpha = (\alpha_1,\dots, \alpha_n)$ be a partition of $2g-2$. Let $\mathcal H(\alpha)$ be the corresponding stratum of abelian differentials, i.e., the space of pairs $(M,\omega)$ where $M$ is a compact Riemann surface with genus $g$ and $\omega$ is a holomorphic $1$-form on $M$ whose zeroes have
multiplicities $\alpha_1, \dots, \alpha_n$. The form $\omega$ defines a canonical flat metric on $M$ with conical singularities and a natural area from. Let $\mathcal H_1(\alpha)$ be the space of unit area surfaces in $\mathcal H(\alpha)$. The space $\mathcal H(\alpha)$
admits a natural action of $\SL_2(\bbr)$; this action preserves the unit area hyperboloid $\mathcal H_1(\alpha)$.

A celebrated theorem of Eskin and Mirzakhani~\cite{EMir-Meas} shows that any $P$-invariant ergodic measure is $\SL_2(\bbr)$-invariant and is supported on an affine invariant manifold, where $P$ denotes the group of upper triangular matrices in $\SL_2(\bbr)$. We shall refer to these measures as {\em affine invariant measures}. 
Moreover, if we define, for any interval $I \subset \R$ and $x \in \mathcal H_1(\alpha)$, the probability measure $\mu_I^x$ on $\mathcal H_1(\alpha)$ by
\[
\mu_{I}^{x}=|I|^{-1}\int_I \delta_{u_s x}\diff\! s,
\]
then Eskin, Mirzakhani and the second named author \cite{EMM-Orbit} showed that for any  $x \in \mathcal H_1(\alpha)$ the limit
\begin{equation}\label{eq:EMM-Orbit}
    \lim_{T\to\infty} \frac{1}{T}\int_{t=0}^T a_t\mu_{[0,1]}^x \diff\! t \qquad\text{exists in weak${}^*$ sense}
\end{equation}
and is equal to an ($\SL_2(\R)$-invariant) affine invariant probability measure with $x$ in its support.
On the other hand, there are several results, in particular by Chaika, Smillie and B.~Weiss in~\cite{CSW-Tremors}, that show that an analogue of Ratner's equidistribution theorem (or our Theorem~\ref{thm:main unipotent}) fails to hold in this setting, for instance for some $x$ the sequence of measure $\mu_{[0,T]}^{x}$ may fail to converge as $T\to\infty$, or may converge to a non-ergodic measure. However the following conjecture of Forni seems to us very plausible:

\begin{conjecture}[{\cite[Conj.~1.4]{Forni-limits}}]
Let $\mathcal H_1(\alpha)$ be the space of unit area surfaces in stratum of abelian differentials on a genus $g$ surface whose zeros have multiplicities given by $\alpha=(\alpha_1, \dots, \alpha_n)$, and let $x\in\mathcal H_1(\alpha)$.
Then $\lim_{t\to\infty} a_t\mu^{x}_{[0,1]}$ exists in the weak${}^*$ sense and is equal to an affine invariant measure with $x$ in its support.  
\end{conjecture}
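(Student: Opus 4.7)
The plan is to establish an effective version of the conjecture, analogous to our Theorems~\ref{thm:main} and~\ref{thm:main unipotent}, adapted to the stratum $\mathcal H_1(\alpha)$. The starting observation is that, using $a_tu_s=u_{se^t}a_t$ together with a change of variables $r=se^t$, one has
\[
a_t\mu^x_{[0,1]}=\frac{1}{e^t}\int_0^{e^t}\delta_{u_r(a_tx)}\,\diff\!r,
\]
so that $a_t\mu^x_{[0,1]}$ is a long horocycle orbit average of length $e^t$ based at the moving point $a_tx$. Any weak-$*$ subsequential limit $\nu=\lim_n a_{t_n}\mu^x_{[0,1]}$ is therefore automatically invariant under $u_r$ for every $r\in\R$, since translating the parameter interval $[0,e^{t_n}]$ by a bounded amount changes only a set of relative measure $O(e^{-t_n})$. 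The problem then reduces to showing that every such limit coincides with the affine invariant measure $\mu_\infty$ supplied by~\eqref{eq:EMM-Orbit}.

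The plan is to transplant the Margulis function, closing-lemma, and spectral-gap scheme of this paper into the stratum setting, with the Avila--Gouezel--Yoccoz exponential mixing of the Teichm\"uller flow on connected components of strata playing the role of the spectral gap used in the proof of Theorem~\ref{thm:main}, the Eskin--Mirzakhani measure classification and the Avila--Eskin--Mirzakhani description of affine invariant submanifolds playing the role of Ratner's measure classification, and Filip's algebraicity of affine invariant submanifolds providing the rigidity needed for a closing lemma. Concretely, for a fixed smooth test function $\varphi$, one would bound the fluctuations of $t\mapsto\int\varphi\,d(a_t\mu^x_{[0,1]})$ by a Sobolev-type estimate combining the exponential mixing of $a_t$ against $\mu_\infty$ with a Margulis function on $\mathcal H_1(\alpha)$ controlling excursions into the thin part of the stratum and into neighbourhoods of proper affine invariant submanifolds passing through $x$. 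The minimality of $\mu_\infty$ --- that no strictly smaller affine invariant submanifold passes through $x$ --- is then used to identify every subsequential limit as $\mu_\infty$.

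The principal obstacle, and where the plan becomes genuinely speculative, is that the algebraic rigidity underlying our arguments in the homogeneous setting has no clean analogue on $\mathcal H_1(\alpha)$: there is no arithmetic structure on a general stratum, no adelic height machinery to quantify the ``volume'' of exceptional orbits analogous to the $\vol(H.x)\leq R$ bound of Theorem~\ref{thm:main}, and no polynomial Jordan decomposition driving the incidence-geometric dimension-increment phase of our proof. The substitutes provided by Filip algebraicity and by the affine period coordinates of Avila--Eskin--Mirzakhani are much coarser, and in particular no known Margulis function on $\mathcal H_1(\alpha)$ controls the proximity to a proper affine invariant submanifold with a polynomial rate uniform in the submanifold. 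A second serious obstacle is that, by Chaika--Smillie--Weiss~\cite{CSW-Tremors}, the horocycle orbit averages $\mu^x_{[0,T]}$ themselves can fail to converge in weak-$*$, so any successful proof must exploit the smoothing effect of the $a_t$-push --- which stretches the horocycle to length $e^t$ while moving the basepoint to $a_tx$ --- in an essential and quantitative way, thereby genuinely distinguishing the conjecture from its false horocyclic analogue.
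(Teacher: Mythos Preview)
The paper does not prove this statement: it is presented as an open conjecture of Forni that the authors find ``very plausible,'' not as a theorem. The only comment the paper makes toward a proof is that once existence of the limit is established, identification with an affine invariant measure supported at $x$ follows from~\cite{EMM-Orbit}. There is therefore no paper proof to compare your proposal against.

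Your proposal is not a proof either, and you say so yourself. The opening computation rewriting $a_t\mu^x_{[0,1]}$ as a long horocycle average at the moving basepoint $a_tx$ is correct and useful, and the observation that any subsequential limit is $u_r$-invariant is standard. But from that point on the proposal is a research outline with self-identified gaps: you correctly flag that there is no arithmetic height machinery on a general stratum, no known Margulis function controlling proximity to proper affine invariant submanifolds with polynomial rate, and no substitute for the algebraic closing lemma driving the dimension-increment phase of this paper. These are not technical details to be filled in later; they are the heart of the difficulty, and the Chaika--Smillie--Weiss examples you cite show that something genuinely new is required beyond transplanting the homogeneous machinery. As written, the proposal is an honest assessment of obstacles, not a proof.
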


\noindent Of course, once one establishes that $\lim_{t\to\infty} a_t\mu^{x}_{[0,1]}$ exists, the rest follows from \cite{EMM-Orbit}. In this context again obtaining quantitative equidistribution results would be very interesting.

\subsection*{Acknowledgment}
A.M. and E.L. would like to thank the Hausdorff Institute for its hospitality during the winter of 2020. The three authors thank the Institute for Advanced Study for its hospitality while working on this project; indeed, we first started discussing this project when the three of us were visiting the IAS. In particular  
A.M.\ would like to thank the Institute for Advanced Study for its hospitality during the fall of 2019 and Z.W.\ would like to thank the Institute for Advanced Study for its hospitality during the fall of 2022. The authors would like to thank Gregory Margulis and Nimish Shah for many discussions about effective density, and Joshua Zahl for helpful communications regarding projections theorems. We would also like to thank Lei Yang for alerting us to his work and for several related discussions.

\section{The main steps of the proofs}\label{sec: outline}
As mentioned above, Theorem~\ref{thm:main unipotent} is proved by combining 
Theorem~\ref{thm:main} and the linearization techniques~\cite{DM-Linearization} in their quantitative form~\cite{LMMS}, see \S\ref{sec: proof main unip} for details. 
We note that the idea of using equidistribution of expanding translates of a fixed piece of a $U$ orbit of the type $\{a_t u_s .x: 0\leq s \leq 1\}$ to deduce equidistribution of a large segment of a non-translated $U$ orbit $\{u_s.x: 0\leq s \leq T\}$ is quite classical.

Let us now highlight some of the main ingredients used in the proof of Theorem~\ref{thm:main}. 
Assume that part~(2) in Theorem~\ref{thm:main} fails for $x_0$, $T$, and $R$ as the proof is complete otherwise. 
We begin with a version of avoidance principle \'a la linearization techniques of Dani--Margulis albeit for random walks.

Roughly speaking, the following proposition asserts that failure of part~(2) in Theorem~\ref{thm:main} may be 
upgraded to a Diophantine estimate with a polynomial rate (whose degree is absolute) in terms of $R$. 
We will let $\inj(x)$ denote (our slightly modified) injectivity radius of $x$, see~\S\ref{sec: notation} and \S\ref{sec: non-div}.  

\begin{propos}\label{prop: linearization translates 1}
There exist $D_0$ (absolute) and $\constE\label{c: linear trans},s_0$ (depending on $X$) so that the following holds. 
Let $R, S\geq 1$. Suppose $x_0\in X$ is so that 
\[
\dist_X(x_0,x)\geq (\log S)^{D_0}S^{-1}
\] 
for all $x$ with $\vol(Hx)\leq R$. Then for all 
\[
s\geq \max\Bigl\{\log S, 2|\log(\inj(x_0))|\Bigr\}+s_0
\] 
and all $0<\eta\leq 1$, we have 
\[
\biggl|\biggl\{r\in [0,1]\!:\!\! \begin{array}{c}\inj(a_su_rx_0)\leq \eta \text{ or there is $ x$ with }\\ 
\vol(Hx)\leq R \text{ s.t. }\dist_X(a_{s}u_rx_0,x)\leq \frac{1}{\ref{c: linear trans}R^{D_0}}\end{array}\!\!\biggr\}\biggr|\!\!\leq \ref{c: linear trans}(\eta^{1/2}+R^{-1}).
\]
\end{propos}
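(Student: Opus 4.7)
The plan is to decompose the bad event into two parts, the injectivity-radius part and the proximity-to-periodic-orbit part, and control each by a quantitative non-divergence argument. The first part is handled by a direct application of the Dani--Margulis quantitative non-divergence estimate for unipotent flows. Specifically, the hypothesis $s \ge 2|\log(\inj(x_0))| + s_0$ ensures that the translated base point $a_s x_0$ is not pushed too deeply into the thin part by the $a_s$-contraction; combined with the fact that $r\mapsto u_r v$ is polynomial of bounded degree for $v$ in any representation encoding cusp excursions (e.g.\ the representation on $\bigwedge^{k}\bbr^n$ governing short $\Gamma$-translates of lattice vectors), the Kleinbock--Margulis $(C,\alpha)$-good type bound yields
\[
\bigl|\{r\in[0,1]:\inj(a_s u_r x_0)\leq \eta\}\bigr|\ll \eta^{1/2},
\]
where the exponent $1/2$ matches the $(C,\alpha)$-good exponent for quadratic polynomials.

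For the second part I would use the Dani--Margulis linearization method in its sharp form as developed in~\cite{LMMS}. For each conjugate $L = gHg^{-1}$ admitting a periodic orbit $Ly$ with $\vol(Ly)\le R$, there is a representation $\rho\colon G\to \GL(V_L)$ and a vector $v_L\in V_L$ whose stabilizer is $N_G(L)$, with the key property that closeness of a point $z$ to the periodic orbit $Ly$ corresponds (with polynomial loss in $R$) to smallness of $\|\rho(g_z)v_L\|$ for an appropriate representative $g_z$, and that $v_L$ can be taken inside a discrete $\Gamma$-orbit. The hypothesis $\dist_X(x_0,x)\ge (\log S)^{D_0}S^{-1}$ for all $x$ with $\vol(Hx)\le R$ translates into a lower bound on the size of every such vector at $x_0$, of the shape $\|\rho(g_{x_0})v_L\| \gtrsim (\log S)^{D_0}S^{-1}$.

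Next, I would study how $a_s u_r$ acts on each such vector. The map $r\mapsto \rho(a_s u_r)\rho(g_{x_0})v_L$ is a polynomial in $r$ of degree depending only on the representation (hence absolute), while $\rho(a_s)$ acts with eigenvalues that are powers of $e^{s/2}$. Since $s\ge \log S + s_0$, the expansion provided by $a_s$ on the relevant weight spaces overcomes the lower bound $(\log S)^{D_0}S^{-1}$ on the initial vector (with room to spare, which is what fixes the value of $D_0$). The $(C,\alpha)$-good property of polynomials then bounds the measure of $r\in[0,1]$ for which $\|\rho(a_s u_r g_{x_0})v_L\|\le 1/(\ref{c: linear trans} R^{D_0})$ by $O(R^{-D_0/(2\deg)})$, which after adjusting $D_0$ is bounded by $R^{-2}$, say.

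The final step is to take a union bound over all periodic $H$-orbits of volume at most $R$. Here one uses that the number of such periodic orbits, or more precisely the number of relevant $\Gamma$-translates of $v_L$ encountering a fixed ball of radius $R$ in the representation space, is polynomially bounded in $R$; this is a standard counting estimate in the arithmetic/algebraic-entries setting. Choosing $D_0$ large enough in comparison to this polynomial growth yields the bound $R^{-1}$ on the total contribution. The main subtlety, which is precisely what \cite{LMMS} is tailored to address, is to correctly package the linearization so that (i) only $H$-periodic orbits (and not merely orbits of larger intermediate subgroups) appear in the exceptional set, and (ii) the degeneration of the metric distance into representation-theoretic smallness is uniform in the volume parameter $R$; this combinatorial/representation-theoretic bookkeeping, rather than any of the individual estimates, is the principal obstacle.
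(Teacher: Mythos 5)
Your route is genuinely different from the paper's. The paper proves this proposition in Appendix~\ref{sec: proof linearization} without any linearization representation: for each periodic orbit $Y=Hy$ it builds a Margulis function $f_Y(x)=\sum_{w\in I_Y(x)}\|w\|^{-1/3}$ recording all sheets of $Y$ within the injectivity radius, proves a contraction estimate for averages over $a_du_r$ (Lemmas~\ref{lem: linear algebra}--\ref{lem: Margulis func periodic}), iterates it along a decreasing sequence of intermediate times, converts the result to an average over $r\in[0,1]$ (Lemma~\ref{lem: average over [0,1]}), and finishes with Chebyshev plus a union bound over the $\ll R^6$ periodic orbits of volume $\le R$; the injectivity-radius part is Proposition~\ref{prop:Non-div-main}, exactly as you say. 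The authors do remark (\S\ref{sec: Dioph inheritance}) that a version of the proposition can alternatively be proved by the linearization methods of~\cite{LMMS}, so your general strategy is not off the table.

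However, as written your argument has a genuine gap at the union-bound step. You bound, for each periodic orbit and each relevant $\Gamma$-translate of $v_L$, the measure of bad $r$ by $O(R^{-D_0/(2\deg)})$ via $(C,\alpha)$-goodness, and then claim the number of relevant translates is polynomially bounded in $R$. But $s$ is allowed to be arbitrarily large compared to $\log R$: the trajectory $\{a_su_rx_0:r\in[0,1]\}$ has length $\asymp e^{s}$, and a translate $\gamma v_L$ can become $R^{-D_0}$-small at time $r$ while having norm as large as $e^{O(s)}$ at the base point $x_0$. So the translates (equivalently, the sheets of a single periodic orbit) that must be accounted for number $e^{O(s)}$, not $O(R^{\star})$, and even a dyadic refinement fails, since a translate of base norm $\sim N$ only gains a factor $N^{-1/2}$ from the sup lower bound while the number of such translates grows like $N^{\dim V}$. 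This is precisely why the classical Dani--Margulis linearization never takes a global union bound over translates but instead covers $[0,1]$ by subintervals on which a single translate is ``active'' and controls relative measures, with the simultaneous-smallness of two translates handled separately --- the part you defer wholesale to~\cite{LMMS}. The paper's Margulis-function proof sidesteps this proliferation differently: by iterating the contraction at intermediate scales, only the sheets visible at the \emph{current} point enter at each step, and their number is $\le E\vol(Y)$ by~\eqref{eq: number of sheets}, which is what makes a polynomial-in-$R$ count suffice. To turn your sketch into a proof you would need to supply the interval-covering/avoidance argument (adapted from long $U$-orbits to expanding translates), not merely the per-orbit $(C,\alpha)$-good estimate and a count of periodic orbits.
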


The proof of this proposition uses {\em Margulis functions} for periodic $H$-orbits and is completed in Appendix~\ref{sec: proof linearization}, see also \S\ref{sec: Dioph inheritance} for more details.  

We will apply this proposition with $\eta=R^{-\star}$ where $\star$ is a small constant. 
In view of this proposition and the fact that part~(2) in Theorem~\ref{thm:main} does not hold,
for all but a set with measure $\ll R^{-\star}$ of $r\in [0,1]$, the point $x_1=a_{s}u_rx_0$ (where $s=\log T-C\log R$ for appropriate choice of $C$) satisfy 
\be\label{eq: improved dioph}
\inj(x_1)\geq \eta\quad\text{and}\quad d(x,x_1)\geq R^{-D_0}\; \text{for every $x$ with $\vol(Hx)\leq R$.}
\ee

Thus, in order to show that $\int_0^1\varphi(a_{\log T}u_rx_0)\diff\!r$ 
is within $R^{-\star}$ of $\int\varphi\diff\!m$, it suffices to show that $\int_0^1\varphi(a_{C\log R}u_r x_1)\diff\!r$
is within $R^{-\star}$ of $\int\varphi\diff\!m$ where $x_1$ satisfies~\eqref{eq: improved dioph}.
We will show this statement in three phases. 

\subsection*{A closing lemma and the initial dimension}
In this phase, we show that the improved Diophantine condition~\eqref{eq: improved dioph} for $x_1$ implies that 
points in $\{a_{\star \log R}u_rx_0: r\in[0,1]\}$ 
(possibly after removing an exceptional set of measure $\ll R^{-\star}$) are {\em separated transversal to} $H$.  

Let $t>0$ be a large parameter, and fix some $\nuni^{-0.01t}<\beta=\nuni^{-\kappa t}$ (in our application, $\kappa$ will be chosen to be $\ll 1/D_0$ where the implied constant depends on $X$ and $D_0$ is as in Proposition~\ref{prop: linearization translates}, moreover, we will assume $\beta=\eta^2$ in that proposition).  

For every $\tau\geq0$, put
\[
\coneH_{\tau}=\boxHs_{\beta}\cdot a_\tau\cdot \{u_r: r\in [0,1]\} \subset H,
\]
where $\boxHs_{\beta}:=\{u_s^-:|s|\leq {\beta}\}\cdot\{a_t: |t|\leq \beta\}$ and $u_s^-$ is the transpose of $u_s$. 

Let $\gfrak=\Lie(G)$, that is, $\gfrak=\mathfrak {sl}_2(\qlf)$ or $\gfrak=\mathfrak{sl}_2(\lf)\oplus \mathfrak{sl}_2(\lf)$.
Let $\rfrak=\qi\mathfrak{sl}_2(\bbr)$ if $\gfrak=\mathfrak{sl}_2(\qlf)$ and 
$\rfrak=\mathfrak{sl}_2(\bbr)\oplus\{0\}$
if $\gfrak=\mathfrak{sl}_2(\bbr)\oplus \mathfrak{sl}_2(\bbr)$.
In either case $\mathfrak g=\mathfrak h\oplus \mathfrak r$ where $\hfrak=\Lie(H)\simeq\mathfrak{sl}_2(\bbr)$, and both $\hfrak$ and $\rfrak$ are $\Ad(H)$-invariant.

Let $\tau\geq 0$ and $y\in X$. Assume that $\mathsf h\mapsto\mathsf hy$ is injective over $\coneH$. 
For every $z\in\coneH_\tau.y$, put
\[
\margI_\tau(z):=\Bigl\{w\in \rfrak: \|w\|<\inj(z) \text{ and } \exp(w) z\in \coneH_\tau.y\Bigr\};
\]
this is a finite subset of $\rfrak$ since $\coneH_\tau$ is bounded ---  
we will define $\margI_\cone(h,z)$ 
for all $h\in H$ and more general sets $\cone$ in the bootstrap phase below.

Let $0<\alpha<1$. Define the function $f_{\tau}:\coneH_\tau.y\to [1,\infty)$ as follows
\[
f_{\tau}(z)=\begin{cases} \sum_{0\neq w\in I_\tau(z)}\|w\|^{-\alpha} & \text{if $\margI_\tau(z)\neq\{0\}$}\\
\inj(z)^{-\alpha}&\text{otherwise}
\end{cases}.
\]

\begin{propos}\label{prop: closing lemma intro 1}
Assume $\Gamma$ is arithmetic. 
There exists $D_1$ (which depends on $\Gamma$ explicitly) satisfying the following. Let $D\geq D_1$ 
and $x\in X$. Then for all large enough $t$ at least one of the following holds.
 
\begin{enumerate}
\item  There is a subset $I(x)\subset [0,1]$ with $|[0,1]\setminus I(x)|\ll_X \beta^{1/4}$ 
such that for all $r\in I(x)$ we have the following  
\begin{enumerate}
\item $\inj(a_{8t}u_rx)\geq \beta^{1/2}$.
\item $\sfh\mapsto \sfh.a_{8t}u_rx$ is injective over $\coneH_{\rws}$.
\item For all $z\in\coneH_\rws.a_{8t}u_rx$, we have 
\[
f_{t}(z)\leq  \nuni^{D\rws}.
\]
\end{enumerate}

\item There is $x'\in X$ such that $Hx'$ is periodic with
\[
\vol(Hx')\leq \nuni^{D_1\rws}\quad\text{and}\quad\dist_X(x',x)\leq \nuni^{(-D+D_1)\rws}.
\] 
\end{enumerate} 
\end{propos}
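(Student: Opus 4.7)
The plan is to apply the avoidance principle of Proposition~\ref{prop: linearization translates 1} to transfer a Diophantine improvement to most points in the orbit $\{a_{8t}u_r x : r \in [0,1]\}$, and then to derive the injectivity and Margulis-function bounds via closing arguments that exploit the arithmeticity of $\Gamma$.

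First, I apply Proposition~\ref{prop: linearization translates 1} to $x$ with $s=8t$, $\eta=\beta^{1/2}$, $R=\nuni^{D_1 t}$ and $S=\nuni^{(D\wedge (D_1+8)-D_1)t}$, choosing $D_1$ large enough (depending on the absolute constants from that proposition, on $\kappa$, and on $\Gamma$) that the measure bound $\ll \beta^{1/4}$ holds and the lower bound on $s$ is met for large $t$. If its hypothesis on $x$ fails, then a periodic $H$-orbit of volume $\leq \nuni^{D_1 t}$ lies within $(\log S)^{D_0}S^{-1}\leq \nuni^{-(D-D_1)t}$ of $x$ (absorbing the log factor into a slight decrease of the exponent), placing us in case (2). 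Otherwise, we obtain a subset $I'(x)\subset [0,1]$ of complementary measure $\ll \beta^{1/4}$ such that for every $r\in I'(x)$ the point $x_r:=a_{8t}u_rx$ satisfies $\inj(x_r)\geq \beta^{1/2}$ (giving (1)(a)) and lies at distance $\geq \nuni^{-D_0 D_1 t}/C$ from every periodic orbit of volume $\leq \nuni^{D_1 t}$, for some constant $C$ from the avoidance principle.

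I then take $I(x):=I'(x)$ and deduce (1)(b), (1)(c) for $r\in I(x)$ by closing arguments. For (1)(b), if $\sfh_1\neq \sfh_2$ in $\coneH_t$ satisfy $\sfh_1 x_r=\sfh_2 x_r$, then $\gamma:=\sfh_2^{-1}\sfh_1\in \stab_H(x_r)\setminus\{1\}$ has matrix norm $\leq \nuni^{O(t)}$; in the arithmetic setting, the presence of such an element, together with a second one produced from a nearby pair, generates a Zariski-dense subgroup of $H$ and so forces a periodic $H$-orbit of volume $\nuni^{O(t)}$ within $\nuni^{-O(t)}$ of $x_r$, contradicting the Diophantine improvement once $D_1$ dominates the hidden $O(t)$-exponents. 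For (1)(c), each nonzero $w\in\margI_t(z)$ with $\|w\|<\inj(z)$ produces $\sfh_1,\sfh_2\in\coneH_t$ and an element $\gamma_w:=\sfh_2^{-1}\exp(w)\sfh_1$ in a $G$-conjugate of $\Gamma$ with $\|\gamma_w\|\leq \nuni^{O(t)}$, and the map $w\mapsto \gamma_w$ is essentially injective since $\sfh_1,\sfh_2$ are determined up to bounded ambiguity by the product. Arithmeticity of $\Gamma$ bounds the number of such lattice elements, and a dyadic scale-by-scale count exploiting that $\gamma_w$ is $O(\|w\|)$-close to $H$ yields
\[
\#\{w\in\margI_t(z):\|w\|\in[\rho,2\rho]\}\leq \nuni^{O(t)}\rho,
\]
whence summing over dyadic $\rho\in[\inj(z),1]$ against $\rho^{-\alpha}$ (using $\alpha<1$) gives $f_t(z)\leq \nuni^{O(t)}\leq \nuni^{Dt}$ once $D\geq D_1$ absorbs the implicit constants.

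The principal obstacle is the proof of (1)(c): one must carefully use arithmeticity both to bound the number of small-norm elements in $\Gamma$-conjugates and to show that accumulations of short returns would force a genuine nearby periodic orbit rather than accidental near-coincidences, thereby closing the loop with the Diophantine improvement of the first step. This is precisely where the explicit dependence of $D_1$ on $\Gamma$ enters, through the polynomial growth rate of lattice points and the field of definition of the ambient algebraic group.
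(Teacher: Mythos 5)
Your proposal diverges from the paper's argument in a way that leaves the central difficulty unaddressed. First, the reduction through Proposition~\ref{prop: linearization translates 1} cannot produce the dichotomy as stated. That proposition needs $s\geq \log S$, so with $s=8t$ you are forced (as you note) to cap $S$ at roughly $\nuni^{8t}$; consequently, when its hypothesis on $x$ fails you only obtain a periodic orbit within distance about $\nuni^{-8t}$ of $x$, which is far weaker than the required $\dist_X(x',x)\leq \nuni^{(-D+D_1)t}$ once $D-D_1>8$. There is also a quantifier mismatch in your contradiction step: $D_1$ is fixed (depending on $\Gamma$) while $D\geq D_1$ is arbitrary; for $D$ close to $D_1$ the violation of (1)(c) only produces returns of size $\nuni^{(-D+O(1))t}$, which need not beat the Diophantine threshold $\nuni^{-D_0D_1t}$ coming from the avoidance principle, so the contradiction does not close. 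In the paper the strong closeness in case~(2) is not obtained from any avoidance statement at all: it comes directly from the fact that $f_t(z)>\nuni^{Dt}$ forces a transversal displacement $w$ with $\|w\|\leq \nuni^{(-D+5)t}$, and this tiny vector is what gets converted (via Lemmas~\ref{lem:almost-inv} and~\ref{lem:non-elementary}) into a periodic orbit at distance $\nuni^{(-D+D_1)t}$; conclusion~(2) is produced constructively, not by contradiction.

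Second, and more seriously, the assertion that a short return ``together with a second one produced from a nearby pair, generates a Zariski-dense subgroup of $H$'' is exactly the crux and is not automatic: the lattice elements $\gamma_r$ produced by failures of (1)(b)/(1)(c) may all commute (for instance all be unipotent, or lie in a common torus), in which case no Zariski-dense subgroup of $H$ and no nearby periodic $H$-orbit is forced, and your dyadic count of returns has no justification. The paper's proof is organized precisely around this dichotomy: it first shows (the ``Claim'') that the bad set produces $\gg \nuni^{29t/10}$ distinct elements $\gamma_r$; in the non-commuting case arithmeticity enters through Lemma~\ref{lem:non-elementary} (two non-commuting elements nearly fixing $g^{-1}v_H$ force a closed $H$-orbit of volume $\nuni^{O(t)}$ nearby), yielding conclusion~(2); and the commutative case is excluded by a separate, delicate argument (counting lattice points in tori and unipotent groups, explicit matrix-entry and trace estimates, and the use of two $\eta^{1/2}$-separated subintervals of the bad set). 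Your proposal omits this commutative case entirely, so the step ``violation of (1)(b)/(1)(c) forces a genuine nearby periodic orbit'' is a genuine gap rather than a routine closing argument.
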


This proposition will be proved in \S\ref{sec: closing lemma}. 
We also refer to that section for discussions regarding the assumption that $\Gamma$ is arithmetic.

For the rest of the argument, let $t=\frac{1}{D_1}\log R$, where $R$ is as in Theorem~\ref{thm:main}, and let $x_1$ be as in~\eqref{eq: improved dioph}.
Apply Proposition~\ref{prop: closing lemma intro 1} with the point $x_1$. Then for every $r_1\in I(x_1)$, 
the conclusions in part~(1) of that proposition holds for $x_2=a_{8t}u_{r_1}x_1$.
That is,
$h\mapsto hx_2$ is injective over $\coneH_t$ and the transverse dimension of $\coneH_t.x_2$ is $\geq 1/D$ for all 
\begin{equation}\label{eq:x2}
    x_2 \in\Bigl\{a_{8t} u_{r_1}  x_1 : r_1 \in I(x_1)\Bigr\}
\end{equation}
where $D=D_0D_1+2D_1$.
Therefore, in order to show that $\int_0^1 \varphi(a_{C\log R}u_rx_1)\diff\!r$ is within $R^{-\star}$ of $\int \phi$, 
it is enough to show a similar estimate for
\[
\int_0^1 \varphi(a_{C\log R-8t}u_rx_2)\diff\!r
\] 
for all $x_2$ as in \eqref{eq:x2}.

\subsection*{Improving the dimension}
Roughly speaking, Proposition~\ref{prop: closing lemma intro 1} states that the set $\{a_{8t}u_rx_1: r\in[0,1]\}$ 
has transversal dimension $1/D$. In this step, we will improve this dimension to reach at dimension $\alpha$, close to $1$. 

We need some notation. Recall that $t=\frac{1}{D_1}\log R$. Let $\beta=\nuni^{-\kappa t}$ for some small $\kappa>0$. 
(More explicitly, we will fix some $0<\vare\leq10^{-8}$ to be explicated later, and let  $\kappa=10^{-6}\vare/(2D)$, $D= D_0D_1+2D_1$). 
Let 
\[
\coneH=\boxHs_\beta\cdot\Bigl\{u_r: |r|\leq \eta_0\Bigr\}.
\]
It will be more convenient to {\em approximate} translations 
\[
\{a_{\bigcdot} u_rx_0: r\in[0,1]\}
\] 
with sets which are a disjoint union of local $\coneH$-orbits as we now define.  
Let $F\subset B_\rfrak(0,\beta)$ be a finite set with $\#F\geq \nuni^{t/2}$, and let $y\in X$ with $\inj(y)\geq \beta^{1/2}$. 
Put 
\be\label{eq: define cone intro}
\cone=\bigcup\coneH.\{\exp(w)y: w\in F\}.
\ee
For every $w\in F$, we let $\mu_{w}$ 
be a measure which is absolutely continuous with respect to the pushforward of the Haar measure $m_H|_{\coneH}$ 
to $\coneH.\exp(w)y$ whose density satisfies certain Lipschitz condition, see \S\ref{sec: cone and mu cone} for more details.
We equip $\cone$ with the probability measure $\mu_\cone$ proportional to $\sum_w\mu_{w}$.
  
\medskip

Let $\theta$ be a small constant; in our application, the exact choice of $\theta$ will depending on the decay of matrix coefficients in $G/\Gamma$, see~\eqref{eq: choose theta intro}. Let 
\[
\text{$\balpha=1-\theta\quad$ and $\quad\vare=\theta^2$}.
\]
Let $\ell=0.01\vare t$, and let $\rwm_\ell$ be the probability measure on $H$ defined by 
\[
\rwm_\ell(\varphi)=\ave\varphi(a_{\ell}\uvk)\uvkd\qquad\text{for all $\varphi\in C_c(H)$;}
\] 
let $\rwm_\ell^{(n)}=\rwm_\ell\conv\cdots\conv\rwm_\ell$ denote the $n$-fold convolution of $\rwm_\ell$ for all $n\in\bbn$.

The following proposition is one of main steps in the proof.

\begin{propos}\label{propos: main bootstrap intro}
Let $x_1\in X$, and assume that Proposition~\ref{prop: closing lemma intro 1}(2) does not hold for $D$, $x_1$, and $t$. 
 Let  
\[
J:=[d_2, d_{1}]\cap\bbn,
\] 
where $d_{1}=100\lceil\tfrac{4D-3}{2\vare}\rceil$ and $d_2=d_{1}-\lceil{10^4}\vare^{-1/2}\rceil$. 

Let $r_1\in I(x_1)$, see Proposition~\ref{prop: closing lemma intro 1}(1), and put $x_2=a_{8t}u_{r_1}x_1$.
For every $d\in J$, there is a collection $\Xi_d=\{\cone_{d,i}: 1\leq i\leq N_d\}$
of sets 
\[
\cone_{d,i}=\coneH.\{\exp(w)y_{d,i}: w\in F_{d,i}\},
\] 
with $F_{d,i}\subset B_\rfrak(0,\beta)$ and $\inj(y_{d,i})\geq \beta^{1/2}$, and admissible measures $\mu_{\cone_{d,i}}$, see~\S\ref{sec: cone and mu cone}, so that both of the following hold: 
\begin{enumerate}
    \item Put $\scmf=\nuni^{-\sqrt\vare t}$. Let $d\in J$, $1\leq i\leq N_d$, and let $w_0\in B_\rfrak(0,\beta)$. 
    Then for every $w\in B_\rfrak(w_0,\scmf)$ and all $\delta\geq \nuni^{-t/2}$, we have
\be\label{eq: energy estimate final intro-1}
\frac{\#\Bigl(B_\rfrak(w,\delta) \cap B_\rfrak(w_0, \scmf)\cap F_{d,i}\Bigr)}{\#\Bigl(B(w_0, \scmf)\cap F_{d,i}\Bigr)}\leq \nuni^{\vare t} (\delta/\scmf)^{\alpha}. 
\ee
\item For all $s\leq t$ and all $r\in[0,2]$, we have
\begin{multline}\label{eq: nud1 and nud1-d 1}
\int \varphi(a_su_rz)\diff\!\rwm_\ell^{(d_{1})}\conv\mu_{\coneH_{t}.x_2}(z)=\\
\sum_{d,i}c_{d,i}\int \varphi(a_su_rz)\diff\!\rwm_\ell^{(d_{1}-d)}\conv\mu_{\cone_{d,i}}(z) + O(\operatorname{Lip}(\varphi)\beta^{\ref{k: bootstrap beta exp}})
\end{multline}
where $\varphi\in C_c^{\infty}(X)$, $c_{d,i}\geq0$ and $\sum_{d,i}c_{d,i}=1-O(\beta^{\ref{k: bootstrap beta exp}})$, $\operatorname{Lip}(\varphi)$ is the Lipschitz norm of $\varphi$, and $\constk\label{k: bootstrap beta exp}$ and the implied constants depend on $X$.
\end{enumerate}
\end{propos}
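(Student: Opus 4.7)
The plan is to build the families $\Xi_d$ by a stopping-time, dimension-increment scheme: starting from $\mu_{\coneH_t.x_2}$, I would repeatedly apply the random-walk step $\rwm_\ell$ and, after each application, partition the evolving measure into pieces that already satisfy the Frostman bound~\eqref{eq: energy estimate final intro-1} at exponent $\alpha=\balpha$ (these are harvested into $\Xi_d$ at the current level $d$) and pieces whose transverse Frostman exponent $\alpha_d<\alpha$ is still too small (these continue to evolve). The dimension gain $\alpha_{d+1}-\alpha_d$ coming from a single convolution is bounded below by a quantity of order $\theta(\balpha-\alpha_d)$, so every piece reaches the target exponent within at most $O(1/\theta)=O(\vare^{-1/2})$ steps; this matches the length $|J|=d_1-d_2$ of the harvesting window.

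For the initial decomposition, since Proposition~\ref{prop: closing lemma intro 1}(2) is assumed to fail for $x_1$, part~(1) of that proposition applies and yields a good set $I(x_1)\subset[0,1]$ on which $\sfh\mapsto\sfh.x_2$ is injective over $\coneH_t$ and $f_t(z)\le \nuni^{Dt}$. Encoding the local fibre through a point of this piece by its transverse vectors $w\in B_\rfrak(0,\beta)$ yields a finite set $F_0$ whose normalised counting measure, by a standard layer-cake argument applied to the Margulis-function bound, satisfies a Frostman bound with exponent $\alpha_0\ge\balpha/(4D-3)$ at every scale $\delta\ge\nuni^{-t/2}$. Combined with the admissible densities introduced in \S\ref{sec: cone and mu cone}, this gives the input to the iteration.

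The inductive step is the heart of the argument. Given an active admissible piece $\coneH\cdot\{\exp(w)y:w\in F\}$ of transverse exponent $\alpha_d$, a further convolution with $\rwm_\ell$ pushes it by $a_\ell u_r$ and averages over $r\in[0,1]$. Under $\Ad(a_\ell)$ the unstable, neutral, and stable components of $\rfrak$ dilate by $\nuni^{\ell}$, $1$, and $\nuni^{-\ell}$ respectively; since $\ell=0.01\vare t\ll t$, the image can be re-partitioned into new admissible pieces at the transverse scale $\scmf=\nuni^{-\sqrt\vare t}$, based at points $y'$ with $\inj(y')\ge\beta^{1/2}$. A discretised projection theorem in $\rfrak$, the incidence-geometry input advertised in the abstract, then shows that on a subset of near-full mass the new transverse sets $F'$ satisfy~\eqref{eq: energy estimate final intro-1} with exponent improved by $c\,\theta(\balpha-\alpha_d)$ for an absolute $c>0$. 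Pieces that reach exponent $\alpha$ are transferred into $\Xi_{d+1}$; the rest become active input for the next step.

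Error control uses the Margulis function estimate of Proposition~\ref{prop: closing lemma intro 1}. At each step, the mass lost through the small-injectivity set $\{\inj<\beta^{1/2}\}$ and through the exceptional set on which $f$ concentrates near a lower-volume periodic orbit is bounded by $\beta^{\ref{k: bootstrap beta exp}}$, using the standing hypothesis that part~(2) of Proposition~\ref{prop: closing lemma intro 1} fails for $x_1$; summing over the $O(\vare^{-1/2})$ iterations yields the cumulative error $O(\operatorname{Lip}(\varphi)\beta^{\ref{k: bootstrap beta exp}})$ in~\eqref{eq: nud1 and nud1-d 1}. The main obstacle is precisely the dimension-increment step: extracting a quantitative, iterable gain from a single application of $\rwm_\ell$ requires a discretised projection theorem robust under Margulis-function weightings and compatible with the admissibility framework, while simultaneously keeping the scales $\scmf$, $\nuni^\ell$, and $\nuni^{-t/2}$ mutually consistent throughout the iteration.
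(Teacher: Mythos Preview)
Your outline captures the right architecture---iterate $\rwm_\ell$, use a projection theorem for a transverse dimension gain, harvest pieces that reach the target, control errors step by step---but the quantitative mechanism you propose is wrong, and as written your scheme would not produce the statement.

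The main gap is the per-step gain and the resulting iteration length. You assert that one application of $\rwm_\ell$ raises the Frostman exponent by order $\theta(\alpha-\alpha_d)$, so $O(\vare^{-1/2})$ steps suffice, and you identify this with the window length $|J|=d_1-d_2$. But $J=[d_2,d_1]$ sits at the \emph{end} of a $d_1$-step iteration with $d_1\approx 200D/\vare\gg \vare^{-1/2}$; the formula in part~(2) has $\rwm_\ell^{(d_1)}$ on the left and $d_1-d\le d_1-d_2=O(\vare^{-1/2})$ on the right precisely because pieces must evolve for nearly all of the $d_1$ steps before harvesting. The paper does not track an increasing Frostman exponent at all. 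It tracks a \emph{localized modified Margulis energy} $\mfm_{\cone,\scmf,\trct}$ at the fixed exponent $\alpha$, allowing removal of a slowly growing number $\trct_n$ of exceptional points (these accumulate from the projection theorem at each step, and controlling their total is a genuine issue your sketch omits). The projection theorem (Theorem~\ref{thm: proj thm}) does not improve the energy---it preserves it up to a factor $\Upsilon^{O(\kappa)}$---and the actual gain comes from the $e^{-\alpha\ell}$ contraction under $\Ad(a_\ell)$ in the most-expanded direction. One step therefore yields (Lemma~\ref{lem: truncated MF main estimate}, Lemma~\ref{lem: dimension improve 2}) the dichotomy: either $\Upsilon_{\rm new}\le e^{-\ell/2}\Upsilon_{\rm old}$, or $\Upsilon_{\rm old}<e^{\vare t/2}\noI$; it is the second alternative that declares a piece ``final'' and implies~\eqref{eq: energy estimate final intro-1}. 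Starting from $\Upsilon_0=e^{Dt}$ (the closing-lemma bound) and needing $\Upsilon\lesssim\noI\lesssim e^{2t}$, multiplicative decay by $e^{-\ell/2}=e^{-0.005\vare t}$ per step forces $\approx 200D/\vare=d_1$ steps. The width $d_1-d_2=O(\vare^{-1/2})$ of the harvesting window has a separate origin (Lemma~\ref{lem: every Xi has future}): if a piece was last final at some $n_m\le d_2$, a comparison of $\Upsilon$ against $\noI$ along the trajectory---$\noI$ drops only by powers of $\beta$ at each step---forces finality to recur within $10^4/\sqrt\vare$ further steps.

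A smaller point: the hypothesis that Proposition~\ref{prop: closing lemma intro 1}(2) fails is used exactly once, to obtain the initial bound $f_t\le e^{Dt}$ at $x_2$. It is not available at later steps (the evolved base points are not $x_1$), and the recurring error terms come instead from non-divergence, boundary effects in the convex-combination machinery (\S\ref{sec: convex comb}), the exceptional sets in the projection theorem, and the tree-regularization of each $F$ (Lemma~\ref{lem: regular tree decomposition}) needed after every step so that the estimates iterate.
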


Roughly speaking, the proposition states 
that up to an exponentially small error, 
$\rwm_\ell^{(d_1)}\conv\mu_{\coneH_t.x_1}$ may be decomposed as 
$\sum_{d,i}c_{d,i}\rwm_\ell^{(d_1-d)}\conv\mu_{\cone_{d,i}}$ where $\sum_{d,i} c_{d,i}=1-O(\beta^{\ref{k: bootstrap beta exp}})$ (see~\eqref{eq: nud1 and nud1-d 1}) and for all $d\in J$ and $1\leq i\leq N_d$ the dimension of $\cone_{d,i}$ transversal to $H$ at controlled scales is $\geq \alpha$ (see~\eqref{eq: energy estimate final intro-1}). See Proposition~\ref{propos: imp dim main} for a more precise formulation which relies on a {\em Modified Margulis function}. The proof of Proposition~\ref{propos: imp dim main} (and hence of Proposition~\ref{propos: main bootstrap intro}) will be completed in \S\ref{sec: improve dim}--\ref{sec: proof main prop}. 

\medskip
Using this proposition we further reduce the analysis to equidistribution of sets $\cone$ 
satisfying part~(1) in Proposition~\ref{propos: main bootstrap intro}: 
Let $s=2\sqrt\vare t$ (note that this is much larger than $\ell=0.01\vare t$ but much smaller than $t$). Then
\[
\int_0^1\varphi(a_{s+ d_1\ell+t}u_rx_2)\diff\!r
\]
is within $R^{-\star}$ of  
\[
\int_0^1\int \varphi(a_{s}u_rz)\diff\!\rwm_\ell^{(d_1)}\conv\mu_{\coneH_{t}.x_2}(z)\diff\!r.
\]

We now use Proposition~\ref{propos: main bootstrap intro} to improve the small transversal dimension from $1/D$ to $\alpha$.
More precisely, Proposition~\ref{propos: main bootstrap intro} shows that 
\[
\int_0^1\int \varphi(a_{s}u_rz)\diff\!\rwm_\ell^{(d_{1})}\conv\mu_{\coneH_{t}.x_2}(z)\diff\!r
\]
is within $R^{-\star}$ of a convex combination of integrals of the form 
\be\label{eq: reduction to int wrt mu-cone}
\int_0^1\int \varphi(a_su_rz)\diff\!\rwm_\ell^{(n)}\conv\mu_{\cone}(z)\diff\!r
\ee
where $0\leq n=d_1-d\leq 10^4\vare^{-1/2}$ and $\cone=\cone_{d,i}$ has dimension at least $\alpha$ transversal to $H$ at controlled scales, see~\eqref{eq: energy estimate final intro-1}.

\subsection*{From large dimension to equidistribution}
In this final step of the argument, we will show that~\eqref{eq: reduction to int wrt mu-cone} equidistributes so long as $\theta$ (recall that $\alpha=1-\theta$) is chosen carefully.   

Let begin with the following quantitative decay of correlations for the ambient space $X$: 
There exists $0<\mixexp\leq1$ so that  
\be\label{eq:actual-mixing-intro}
\biggl|\int \varphi(gx)\psi(x)\diff\!{m_X}-\int\varphi\diff\!{m_X}\int\psi\diff\!{m_X}\biggr|\ll \Sob(\varphi)\Sob(\psi) \nuni^{-\mixexp d(e,g)}
\ee
for all $\varphi,\psi\in C^\infty_c(X)+\bbc\cdot 1$, where $m_X$ is the $G$-invariant probability measure on $X$ and $d$ is our fixed right $G$-invariant metric on $G$. See, e.g., \cite[\S2.4]{KMnonquasi} and references there for~\eqref{eq:actual-mixing-intro}; we note that $\mixexp$ is absolute if $\Gamma$ is a congruence subgroup. This is known in much greater generality, but the cases relevant to our paper are due to Selberg and Jacquet-Langlands~\cite{Selberg-ThreeSixteenth, Jacquet-Langlands}.

The quantitative decay of correlation can be used to establish quantitative results regarding the equidistribution of translates of pieces of an $N$-orbit. Specifically we employ the results in~\cite{KMnonquasi}, but there is rich literature around the subject; a more complete list can be found in \S\ref{sec: history}.

Now let $\xi:[0,1]\to \rfrak$ be a smooth non-constant curve. Then using the quantitative results regarding equidistribution of translates of pieces of an $N$-orbit such as~\cite{KMnonquasi}, one can show that for every $x\in X$,
\[
a_\tau\Bigl\{u_r\exp(\xi(s)).x:r,s\in [0,1]\Bigr\}
\]
is equidistributed in $X$ as $\tau\to\infty$ (with a rate which is polynomial in $\nuni^{-\tau}$).  
The key point in the deduction of this equidistribution result from the equidistribution of shifted $N$ orbits is that conjugation by $a_\tau$ moves $u_r\exp(\xi(s))$ to the direction of 
$N$, hence the above average essentially reduces to an average on a $N$ orbit.

\medskip 

Roughly speaking, the following proposition states that one may replace the curve $\{\xi(s):s\in[0,1]\}$ with a measure on 
$\rfrak$ so long as the measure has dimension $\geq 1-\theta$, for an appropriate choice of $\theta$ depending on $\mixexp$. 

\medskip

The precise formulation is the following.

\begin{propos}\label{prop: equi translates of cone intro}
For any $\theta>0$ and $c>0$ there is a $\constk\label{k: rho/varrho exp}$ so that the following holds:
Let $0<\mfsc_0<10^{-6}$, and let $F\subset B_\rfrak\Bigl(0,\mfsc_0\Bigr)$ 
be a finite set satisfying  
\[
\frac{\#(F\cap B_\rfrak(0,\delta))}{\#F}\leq \mfsc_1^{-c}\Bigl(\delta/\mfsc_0\Bigr)^{1-\theta}\quad\text{for all $\delta\geq \mfsc_1$}
\]
where $\mfsc_1 <\mfsc_0^{10}$.

Then for all $x\in X$ with $\inj(x)\geq \mfsc_0^{1/20}$, all 
$|\log(\mfsc_0)|\leq \tau\leq \frac{1}{10} |\log(\mfsc_1)|$, and every $\varphi\in C_c^\infty(X)$, we have 
\begin{multline*}
   \biggl|\int_0^1\frac{1}{\#F}\sum_{w\in F}\varphi(a_\tau u_r\exp(w)x)\diff\!r-\int\varphi\diff\!m_X\biggr|\ll_X \\
   \Sob(\varphi)\max\left((\mfsc_1/\mfsc_0)^{\ref{k: rho/varrho exp}},\mfsc_1^{-2c} e^{2\tau\theta}\mfsc_0^{\mixexp^2/M}\right), 
\end{multline*}
where $\Sob(\varphi)$ is a certain Sobolev norm and $M$ an absolute constant. 
\end{propos}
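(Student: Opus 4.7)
The plan is to reduce via Cauchy--Schwarz to a pairwise analysis over $F\times F$, handle pairs with large separation using matrix-coefficient decay, and handle pairs with small separation using the Frostman hypothesis. Setting $\varphi^0 = \varphi - \int\varphi\,dm_X$ and letting $E$ denote the error we seek to bound, Cauchy--Schwarz in $r$ yields
\[
|E|^2 \;\leq\; \frac{1}{(\#F)^2}\sum_{w_1,w_2\in F}\int_0^1 \varphi^0(a_\tau u_r\exp(w_1)x)\,\overline{\varphi^0(a_\tau u_r\exp(w_2)x)}\,dr.
\]
For each pair, set $\Delta w = w_2-w_1$ and write the second trajectory as $g(r,w_1,w_2)\cdot a_\tau u_r\exp(w_1)x$ with $g(r,w_1,w_2) = \exp(\Ad(a_\tau u_r)(\Delta w))$ up to a BCH term of size $O(\|\Delta w\|^2)$.

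Next I would exploit the weight decomposition $\rfrak = \rfrak^+\oplus\rfrak^0\oplus\rfrak^-$ under $\Ad(a_t)$: a direct computation of $\Ad(u_r)$ shows that the $\rfrak^+$-component of $\Ad(a_\tau u_r)(\Delta w)$ equals $e^\tau P_r(\Delta w)$ for a polynomial $P_r$ of degree at most $2$ in $r$ whose coefficients are determined by the three weight components of $\Delta w$. A Remez-type inequality then gives $|P_r(\Delta w)|\gtrsim \|\Delta w\|$ for $r$ outside an exceptional subset of $[0,1]$ of small measure, so that $\|g(r,w_1,w_2)\|\gtrsim e^\tau\|\Delta w\|$ off this exceptional set. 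To convert the integral over $r$ into a matrix coefficient (so that~\eqref{eq:actual-mixing-intro} can be invoked), I would use a small thickening in the stable direction $\rfrak^-$ (essentially free, since $a_\tau$ contracts $\rfrak^-$) combined with effective equidistribution of $a_\tau$-translates of pieces of the horospherical $N$-orbit, in the style of~\cite{KMnonquasi}. The assumptions $\tau\geq |\log\mfsc_0|$ and $\inj(x)\geq \mfsc_0^{1/20}$ are used precisely here. The net result is the per-pair bound
\[
\ll \Sob(\varphi)^2\Bigl(\bigl(e^\tau\|\Delta w\|\bigr)^{-\mixexp}+\mfsc_0^{\mixexp^2/M}\Bigr),
\]
where the first term comes from~\eqref{eq:actual-mixing-intro} applied to a representative value of $g$, and the second is the KMnonquasi equidistribution rate.

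To sum over $F\times F$, I would use the Frostman-type hypothesis on $F$ (applied, after a translation, uniformly around each base point, as is the case for the collections $F_{d,i}$ arising from Proposition~\ref{propos: main bootstrap intro}) to control the $\mu\otimes\mu$ mass of a ball $\{\|\Delta w\|\leq\delta\}$ by $\mfsc_1^{-2c}(\delta/\mfsc_0)^{1-\theta}$. Split the sum into a diagonal part $\|\Delta w\|<\mfsc_1$ (bounded trivially, and producing the first term $(\mfsc_1/\mfsc_0)^{\ref{k: rho/varrho exp}}$ via $\mfsc_1<\mfsc_0^{10}$ after taking square roots) and a dyadic off-diagonal part indexed by $\|\Delta w\|\in[2^k\mfsc_1,2^{k+1}\mfsc_1)$, whose contributions form a geometric series summing to a multiple of $\mfsc_1^{-2c}e^{2\tau\theta}\mfsc_0^{\mixexp^2/M}$ provided $\theta$ is small enough relative to $\mixexp^2$ (a constraint reflected in the final bound). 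The principal difficulty lies in the middle step: since $g(r,w_1,w_2)$ depends on $r$, a bare application of matrix-coefficient decay does not suffice, and the thickening--equidistribution device is delicate; the interplay between the injectivity-radius hypothesis, the scale constraint $\tau\leq \tfrac{1}{10}|\log\mfsc_1|$, the KMnonquasi error $\mfsc_0^{\mixexp^2/M}$, and the Frostman loss is what dictates the dichotomy in the final bound.
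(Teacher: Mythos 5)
There is a genuine gap, and it sits exactly at the step you flag as "delicate": converting the $r$-integral into a matrix coefficient. After your Cauchy--Schwarz in $r$, the pair term is an integral of $\varphi^0(a_\tau u_r\exp(w_1)x)\overline{\varphi^0(a_\tau u_r\exp(w_2)x)}$ over a \emph{one-dimensional orbit segment}, not over $(X,m_X)$, so the decay of matrix coefficients~\eqref{eq:actual-mixing-intro} simply does not apply to it; mixing controls $\int_X\varphi(g\,\cdot)\psi\,\diff m_X$, and to pass from the segment average to a space average you would need effective equidistribution of the translated segment $\{a_\tau u_r y:r\in[0,1]\}$ itself --- which is (essentially) the non-horospherical statement this whole paper is proving, so the argument is circular. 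Your proposed remedy, thickening in the stable direction $\rfrak^-$, does not fix this: the expanding horospherical group of $a_\tau$ is the two-dimensional $N=UV$, and the missing extent is in the $V$-direction, which is transversal to the orbit and cannot be created for free from a single $U$-segment. The Remez-type lower bound $|\xi_r(\Delta w)|\gtrsim\|\Delta w\|$ off a small set of $r$ gives you \emph{separation} of the two segments in the $V$-direction, but separation plus mixing is not enough without an actual average in that direction; consequently the claimed per-pair bound $\Sob(\varphi)^2\bigl((e^\tau\|\Delta w\|)^{-\mixexp}+\mfsc_0^{\mixexp^2/M}\bigr)$ is not obtainable by the means described. (Indeed, if such a pairwise bound were available, the dimension hypothesis on $F$ would play only a bookkeeping role and the incidence-geometry input of the paper would be unnecessary.)

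What the paper does instead is to use the set $F$ \emph{collectively} to manufacture the missing $V$-direction average: after localizing $F$ at scale $\asymp\mfsc_0$ and disintegrating along local $H$-leaves, it applies a projection theorem of Wolff--Schlag--Zahl/K\"aenm\"aki--Orponen--Venieri type (Theorem~\ref{thm: proj thm 2}) to conclude that for most $r_2$ the projected set $\xi_{r_2}(F)$ carries a measure of dimension close to $1-\theta$ at the relevant scales; conjugating by $a_{\ell_2}u_{r_2}$ then turns the transversal displacements into displacements along $V$, so the average becomes an average over a $V$-measure of dimension $1-\theta$. Only at that point does a Cauchy--Schwarz/mixing argument enter, in the form of Venkatesh's sparse equidistribution trick (Proposition~\ref{prop: 1-epsilon N}), which adds a short auxiliary $U$-average before squaring, together with the horospherical input of Proposition~\ref{prop: equid trans horo}; see Proposition~\ref{prop: high dim to equid}. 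So the order of operations is reversed relative to your plan: dimension is first transferred into the $V$-direction via the projection theorem, and Cauchy--Schwarz plus the spectral gap are applied afterwards to a genuine (fractal) horospherical-type average, not pair-by-pair to bare $U$-segments. Your dyadic summation over $\|\Delta w\|$ using the Frostman hypothesis, and the identification of $\xi_r$ as a quadratic polynomial in $r$, are fine as far as they go, but they cannot substitute for this missing mechanism.
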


The proof of this proposition is significantly more delicate than that of the ``toy version" of a shifted curve, and relies on an adaptation of a projection theorem due to K\"{a}enm\"{a}ki, Orponen, and Venieri~\cite{kenmki2017marstrandtype}, based on the works of 
Wolff~\cite{Wolff}, Schlag~\cite{Schlag}, and~\cite{Zahl}, in conjunction with a sparse equidistribution argument due to Venkatesh~\cite{Venkatesh-Sparse}. 
These elements also played a crucial role in previous work by E.L.\ and A.M.~\cite{LM-PolyDensity} regarding quantitative density for the action of $AU$ on the spaces we consider here. A slightly modified statement and the proof are given in \S\ref{sec: equidistribution}, see in particular Proposition~\ref{prop: high dim to equid}.

We now use this proposition and outline the last step in the proof of Theorem~\ref{thm:main}: 
Using the above notation, fix $\theta$ and $\vare$ as follows
\be\label{eq: choose theta intro}
0<\theta < 10^{-8}\mixexp^2/M\quad\text{and}\quad \vare=\theta^2.
\ee

Recall that $s=2\sqrt\vare t$. In view of~\eqref{eq: reduction to int wrt mu-cone}, it now suffices to show that 
\[
\int_0^1\int \varphi(a_su_rz)\diff\!\rwm_\ell^{(n)}\conv\mu_{\cone}(z)\diff\!r
\] 
is within $R^{-\star}$ of $\int\varphi\diff\!m_X$ for all $\cone$ and $n$ as above. 
We will use Proposition~\ref{prop: equi translates of cone intro} to show this. 
First note that   
\[
\int_0^1\int \varphi(a_su_rz)\diff\!\rwm_\ell^{(n)}\conv\mu_{\cone}(z)\diff\!r
\]
is within $R^{-\star}$ of 
\[
\int\int_0^1\varphi(a_{s+n\ell}u_rz)\diff\!r\diff\mu_{\cone}(z).
\]
Moreover, we have 
\[
2\sqrt\vare t\leq s+n\ell\leq 2\sqrt\vare t+\frac{10^4\ell}{\sqrt\vare}=102\sqrt\vare t;
\]
in view of our choice of $\theta$ the right most term in the above series of inequalities is $\leq (10^{-5}\mixexp^2/M) t$.
Thus, Proposition~\ref{prop: equi translates of cone intro}, applied with $\theta=\sqrt\vare=1-\alpha$, $c=2\vare$, $\mfsc_0=\nuni^{-\sqrt\vare t}$, $\mfsc_1=\nuni^{-t/2}$, and $\tau=s+n\ell$, gives
\be\label{eq: using prop equi trans}
\biggl|\iint \varphi(a_{s+n\ell}u_rz)\diff\!\mu_{\cone}(z)\diff\!r-\int\varphi \diff\!m_X\biggr|\ll\Sob(\varphi)e^{-\star t}=\Sob(\varphi)R^{-\star}
\ee
where the implied constants depend on $X$. 

Note that the total time required for these three phases is $s+d_1\ell+9t$ which in view of the choices of $s$, $\ell$ and $t$ is indeed a (large) constant times $\log R$. Theorem~\ref{thm:main} follows.

\section{Notation and preliminary results}\label{sec: notation}
Throughout the paper     
\[
\text{$G=\SL_2(\qlf)\quad$ or $\quad G=\SL_2(\lf)\times\SL_2(\lf)$}.
\] 
Let $\Gamma\subset G$ be a lattice, and put $X=G/\Gamma$.

Let $A=\{a_t: t\in\bbr\}\subset H$. Let $U\subset N$ denote the group of upper triangular unipotent matrices in $H\subset G$, respectively. 
More explicitly, if $G=\SL_2(\qlf)$, then  
\[
N=\left\{n(\rel,s)=\begin{pmatrix} 1 & \rel+is\\ 0 &1\end{pmatrix}: (r,s)\in\lf^2\right\}
\] 
and $U=\{n(\rel,0):\rel\in \lf\}$; note that $n(r,0)=u_\rel$ for $\rel\in\bbr$. Let 
\[
V=\{n(0, s)=v_s: s\in \lf\};
\]
if $G=\SL_2(\lf)\times \SL_2(\lf)$, then  
\[
N=\left\{n(\rel,s)=\left(\begin{pmatrix} 1 & \rel+s\\ 0 &1\end{pmatrix},\begin{pmatrix} 1 & \rel\\ 0 &1\end{pmatrix}\right): (r,s)\in\lf^2\right\}
\] 
and $U=\{n(\rel,0):\rel\in \lf\}$.
As before, $n(\rel,0)=u_r$ for $\rel\in\bbr$. Let \[V=\{n(0,s)=v_s: s\in\lf\}.\] In both cases, we have $N=UV$.
Let us denote the transpose of $U$ by $U^-$ and its elements by $u^-_r$.

\subsection*{Lie algebras and norms}
Let $|\;|$ denote the usual absolute value on $\qlf$ (and on $\lf$).  
Let $\|\;\|$ denotes the maximum norm on ${\rm Mat}_2(\qlf)$ and ${\rm Mat}_2(\lf)\times {\rm Mat}_2(\lf)$,
with respect to the standard basis.

Let $\gfrak=\Lie(G)$, that is, $\gfrak=\mathfrak {sl}_2(\qlf)$ or $\gfrak=\mathfrak{sl}_2(\lf)\oplus \mathfrak{sl}_2(\lf)$.
We write $\mathfrak g=\mathfrak h\oplus \mathfrak r$ where $\hfrak=\Lie(H)\simeq\mathfrak{sl}_2(\bbr)$,
$\rfrak=\qi\mathfrak{sl}_2(\bbr)$ if $\gfrak=\mathfrak{sl}_2(\qlf)$ and 
$\rfrak=\mathfrak{sl}_2(\bbr)\oplus\{0\}$
if $\gfrak=\mathfrak{sl}_2(\bbr)\oplus \mathfrak{sl}_2(\bbr)$.

Note that $\rfrak$ is a {\em Lie algebra} in the case $G=\SL_2(\bbr)\times\SL_2(\bbr)$, but not when $G=\SL_2(\bbc)$.

Throughout the paper, we will use the uniform notation
\[
w=\begin{pmatrix}
w_{11} & w_{12}\\
w_{21} & w_{22}
\end{pmatrix}
\]
for elements $w\in\rfrak$, where $w_{ij}\in i\bbr$ if $G=\SL_2(\bbc)$ and $w_{ij}\in\bbr$ if $G=\SL_2(\bbr)\times\SL_2(\bbr)$.

We fix a norm on $\hfrak$ by taking the maximum norm where the coordinates are given by $\Lie(U)$, $\Lie(U^-)$, and $\Lie(A)$; similarly fix a norm on $\rfrak$. 
By taking maximum of these two norms we get a norm on $\gfrak$. These norms will also be denoted by $\|\;\|$.

Let $\constE\label{E:2ball}\label{E:dist-sheet}\geq 1$ be so that 
\be\label{eq:2ball}
\text{$\|hw\|\leq \ref{E:2ball}\|w\|$ for all $\|h-I\|\leq 2$ and all $w\in \gfrak$.} \ee

For all ${\beta}>0$, we define
\be\label{eq:def-BoxH}
\boxH_{\beta}:=\{u_s^-:|s|\leq {\beta}\}\cdot\{a_t: |t|\leq \beta\}\cdot\{u_\rel:|\rel|\leq {\beta}\}
\ee 
for all $0<{\beta}<1$.  Note that for all $h_i\in(\boxH_{\beta})^{\pm1}$, $i=1,\ldots,5$, we have  
\be\label{eq:B-beta-almost-group}
 h_1\cdots h_5\in \boxH_{100\beta}.
\ee

We also define $\boxG_{\beta}:=\boxH_\beta\cdot\exp(B_\rfrak(0,\beta))$ where $B_\rfrak(0,{\beta})$ denotes the ball of radius $\beta$ in $\rfrak$ with respect to $\|\;\|$. 

Similarly, using $\|\;\|$ we define $\mathsf B_\delta^L$ for $\delta>0$ and $L=U^\pm, A, AU, H, N$. Given an open subset $\mathsf B\subset L$, and $\delta>0$, $\partial_\delta\mathsf B=\{\sfh\in\mathsf B: \mathsf B_\delta^L.\sfh\not\subset\mathsf B\}$.

We deviate slightly from the notation in the introduction, and 
define the injectivity radius of $x\in X$ using $\boxG_{\beta}$ instead of the metric $d$ on $G$. 
Put
\be\label{eq:def-inj}
\inj(x)=\min\Big\{0.01, \sup\Big\{\beta: \text{ $g\mapsto gx$ is injective on $\boxG_{100\beta}$}\Big\}\Big\}.
\ee
Taking a further minimum if necessary, we always assume that the injectivity radius of $x$ defined using the metric $d$ dominates $\inj(x)$.  

For every $\injr>0$, let  
\[
X_\injr=\Bigl\{x\in X: \inj(x)\geq \injr\Bigr\}.
\]

\subsection*{The set $\coneH_{\eta, t,\beta}$}
For all $\eta,t, \beta>0$, set 
\be\label{eq:def-Ct}
\coneH_{\eta, t,\beta}:=\boxHs_{\beta}\cdot a_t\cdot \big\{u_r: r\in[0,\eta]\big\} \subset H.
\ee
Then $m_H(\coneH_{\eta,t,\beta})\asymp \eta{\beta}^2\nuni^{t}$ where $m_H$ denotes our fixed Haar measure on $H$.

Throughout the paper, the notation $\coneH_{\eta, t,\beta}$ will be used only for $\eta,t, \beta>0$ 
which satisfy $\nuni^{-0.01t}<\beta\leq\eta^2$ even if this is not explicitly mentioned.

For all $\eta,\beta, m>0$, put 
\be\label{eq: def B ell beta}
\umt^H_{\eta,\beta,m}=\Bigl\{u^-_s: |s|\leq \beta \nuni^{- m}\Bigr\}\cdot\{a_t: |t|\leq \beta\}\cdot\Bigl\{u_r: |r|\leq \eta\Bigr\}.
\ee
Roughly speaking, $\umt_{\eta, \beta,m}^H$ is a {\em small thickening} of the $(\beta,\eta)$-neighborhood of the identity in $AU$. 
We write $\umt^H_{\beta,m}$ for $\umt^H_{\beta,\beta,m}$.

The following lemma will also be used in the sequel.

\begin{lemma}[\cite{LM-PolyDensity}, Lemma 2.3]
\label{lem:commutation-rel}
\begin{enumerate}
\item Let ${m}\geq 1$, and let $0<\eta,\beta<0.1$. Then 
\[
\bigg(\Bigl(\umt_{0.01\eta,0.01\beta,m}^H\Bigr)^{\pm1}\bigg)^3\subset \umt_{\eta,\beta, m}^H.
\]
\item For all $0\leq \beta,\eta\leq 1$, $t,m>0$, and all $|r|\leq 2$, we have
\be\label{eq:well-rd-tau-1}
\Bigl(\umt_{\eta, \beta^2, m}^H\Bigr)^{\pm1}\cdot a_m u_r \coneH_{\eta', t,\beta'}\subset a_m u_r\coneH_{\eta,t, \beta},
\ee
where $\eta'= \eta(1-100\nuni^{-t})$ and $\beta'= \beta(1-100\beta)$.
\end{enumerate}
\end{lemma}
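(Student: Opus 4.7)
The plan is to reduce both statements to explicit computations inside $H\simeq\SL_2(\lf)$ in the Bruhat coordinates $u^-_s a_\tau u_r$, using the three standard ``swap'' identities: $a_\tau u_r = u_{\nuni^\tau r}a_\tau$, $a_\tau u_s^- = u_{\nuni^{-\tau} s}^- a_\tau$, and the LDU rearrangement $u_r u_s^- = u_{s/(1+rs)}^- a_{2\log(1+rs)} u_{r/(1+rs)}$, valid when $|rs|<1$. When the parameters are small, each swap perturbs the $U^-AU$-coordinates only to second order in $r$ and $s$, so compositions are controlled by approximately additive parameters plus explicit quadratic corrections. Both (1) and (2) are then bookkeeping exercises that track these corrections inside a product of at most three factors.

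For part (1), write each of the six possible generators $g_i^{\pm 1}$ as $u_{s_i}^- a_{\tau_i} u_{r_i}$ with $|s_i|\leq C\beta\nuni^{-m}$, $|\tau_i|\leq C\beta$, $|r_i|\leq C\eta$ for an absolute constant $C\leq 0.02$ (inversion is itself a single LDU rearrangement and at worst doubles $C$ for $\beta,\eta\leq 0.1$). Compute $g_1 g_2 g_3$ in $U^-AU$-form by pushing $U$-factors rightward and $U^-$-factors leftward via the three swaps above; each LDU operation involves a cross-term $|rs|\leq C^2\eta\beta\nuni^{-m}$ lying far below the target tolerances. A finite number of such rearrangements yields final parameters bounded by $3C\beta\nuni^{-m}$, $3C\beta+O(C^2\eta\beta\nuni^{-m})$, and $3C\eta$; the factor $0.01$ in the hypothesis supplies exactly the margin for these to stay inside $(\beta\nuni^{-m},\beta,\eta)$.

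For part (2), write $g\cdot a_m u_r\cdot h=a_m u_r\cdot g' h$ with $g':=(a_m u_r)^{-1} g (a_m u_r)$, and show $g'h\in\coneH_{\eta,t,\beta}$. Writing $g=u_{s_0}^- a_{\tau_0} u_{r_0}$, conjugation by $a_m$ dilates the $U^-$-coordinate by $\nuni^m$ (turning $|s_0|\leq\beta^2\nuni^{-m}$ into $|\nuni^m s_0|\leq\beta^2$) and contracts the $U$-coordinate by $\nuni^{-m}$ (turning $|r_0|\leq\eta$ into $|\nuni^{-m} r_0|\leq\eta\nuni^{-m}$); conjugation by $u_r$ with $|r|\leq 2$ inserts further LDU corrections of size $O(\beta^2)$. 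Renormalizing to $U^-AU$-form places $g'$ inside $\boxHs_{100\beta^2}\cdot\{u_{r'}:|r'|\leq 100\beta^2+\eta\nuni^{-m}\}$. Forming $g'\cdot h$ with $h=u_{s_1}^- a_{\tau_1} a_t u_{r_1}\in\boxHs_{\beta'}\cdot a_t\cdot u_{[0,\eta']}$, the leading $\boxHs_{100\beta^2}$ absorbs into $\boxHs_{\beta'+100\beta^2}\subset\boxHs_\beta$ using $\beta'=\beta(1-100\beta)$; the residual $u_{r'}$-factor passes through $\boxHs_{\beta'}$ with $O(\beta')$ LDU corrections and then through $a_t$, which contracts it to size $\nuni^{-t}(100\beta^2+\eta\nuni^{-m})$. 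Combined additively with $u_{r_1}$, the final $U$-coordinate lies in $[0,\eta'+\nuni^{-t}(100\beta^2+\eta\nuni^{-m})]\subset[0,\eta]$ by the choice $\eta'=\eta(1-100\nuni^{-t})$.

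The only real obstacle is numerical bookkeeping: verifying that all accumulated constants fit inside the margin $0.01$ in part (1) and the margins $100\beta$ and $100\nuni^{-t}$ in part (2). No conceptual input beyond the explicit $U^-AU$ rearrangement structure of $\SL_2$ is needed; each step is a single application of one of the swap identities above, and the sharp form of the lemma reflects precisely the amount of slack one gains by reducing the input parameters from $(\eta,\beta)$ to $(0.01\eta,0.01\beta)$ (in part~1) or by allowing $\beta^2$ instead of $\beta$ in the $U^-$-direction (in part~2).
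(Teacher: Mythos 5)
The paper itself does not prove this lemma (it is quoted from \cite{LM-PolyDensity}), so there is no internal argument to compare against; your route --- explicit $U^-AU$ bookkeeping via the identities $a_\tau u_r a_{-\tau}=u_{e^{\tau}r}$, $a_\tau u^-_s a_{-\tau}=u^-_{e^{-\tau}s}$ and $u_ru^-_s=u^-_{s/(1+rs)}a_{2\log(1+rs)}u_{r/(1+rs)}$ --- is the natural, essentially the only, proof. Your treatment of part (1) is correct: inverting a factor and rearranging a product of three factors only multiplies the coordinate bounds by an absolute constant, comfortably inside the margin supplied by the factor $0.01$.

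Part (2), however, has a genuine gap at the last step. Your own estimate places $g'=(a_mu_r)^{-1}g(a_mu_r)$ in $\boxHs_{100\beta^2}\cdot\{u_{r'}:|r'|\le 100\beta^2+\eta\nuni^{-m}\}$, and this $r'$ can have either sign: the $u_{r_0}$-component of $g$ is only bounded by $|r_0|\le\eta$ and may be negative, and the LDU cross-terms are signed as well. After contraction by $a_t$ this perturbs the final $U$-coordinate two-sidedly, by up to $\nuni^{-t}(\eta\nuni^{-m}+O(\beta^2))$ in either direction, so the final coordinate ranges over an interval of the form $[-\nuni^{-t}(\cdots),\,\eta'+\nuni^{-t}(\cdots)]$, not $[0,\eta'+\nuni^{-t}(\cdots)]$ as you assert; since the $u$-range in \eqref{eq:def-Ct} is the one-sided interval $[0,\eta]$, the inclusion \eqref{eq:well-rd-tau-1} does not follow from your computation at the lower endpoint. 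Taken literally it even fails there: with $g=u_{-\eta}\in\umt^H_{\eta,\beta^2,m}$, $r=0$ and $h=a_t\in\coneH_{\eta',t,\beta'}$ (any $t$ with $\eta'\ge0$), one gets $g\,a_m\,h=a_ma_tu_{-\eta\nuni^{-m-t}}$, and uniqueness of the $U^-AU$-coordinates forces a negative $u$-parameter, so this is not of the form $a_m\,\boxHs_\beta\,a_t\,u_c$ with $c\in[0,\eta]$. A correct write-up must confront this boundary issue explicitly, e.g.\ by proving the statement for symmetric $u$-ranges (which is how such sets are actually used later, cf.\ the symmetric $\coneH$ of \eqref{eq: def cone} and the interior-trimmed sets in \S\ref{sec: conv comb induction}), or by shrinking the left endpoint of the source interval as well as the right one. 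A second, smaller omission: your top-end margin needs $100\beta^2+\eta\nuni^{-m}\le 100\eta$, which is false for general $0\le\beta,\eta\le1$; it holds under the standing convention $\nuni^{-0.01t}<\beta\le\eta^2$ recorded right after \eqref{eq:def-Ct}, and you should invoke that convention explicitly rather than rely on the bare hypotheses of the lemma.
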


\subsection*{Constants and the $\star$-notation}
In our analysis, the dependence of the exponents on $\Gamma$ are via the application of results in \S\ref{sec: horospheric}, see~\eqref{eq: exp mixing}, and~\S\ref{sec: closing lemma}.

We will use the notation $A\asymp B$ when the ratio between the two lies in $[C^{-1}, C]$
for some constant $C\ge 1$ which depends at most on $G$ and $\Gamma$ in general. 
We write $A\ll B^\star$ (resp.\ $A\ll B$) to mean that $A\le C B^\kappa$ (resp.\ $A\leq CB$) 
for some constant $C>0$ depending on $G$ and $\Gamma$, and $\kappa>0$ which follows the above convention about exponents.

\subsection*{Commutation relations} We also record the following two lemmas.

\begin{lemma}[\cite{LM-PolyDensity}, Lemma 2.1]
\label{lem: BCH}
There exist absolute constants $\beta_0$ and $\constE\label{E:BCH}$ so that the following holds.  
Let $0<\beta\leq \beta_0$, and let $w_1,w_2\in B_\rfrak(0,\beta)$. There are $h\in H$ and $w\in\rfrak$ which satisfy 
\[
\text{$\tfrac23\|w_1-w_2\|\leq \|w\|\leq \tfrac32\|w_1-w_2\|\quad$ and $\quad\|h-I\|\leq \ref{E:BCH}\beta\|w\|$}
\] 
so that $\exp(w_1)\exp(-w_2)=h\exp(w)$. More precisely,
\[
\|w-(w_1-w_2)\|\leq \ref{E:BCH}\beta\|w_1-w_2\|
\]
\end{lemma}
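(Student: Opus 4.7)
My plan is to apply the Baker--Campbell--Hausdorff formula to express $\exp(w_1)\exp(-w_2)$ as a single exponential $\exp(Z)$, and then to exploit the $\Ad(H)$-invariant vector-space splitting $\gfrak=\hfrak\oplus\rfrak$ together with a quantitative inverse function theorem to factor $\exp(Z)=h\exp(w)$ with $h\in H$ and $w\in\rfrak$. The work is almost entirely in bookkeeping the error terms.

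First, taking $\beta_0$ small enough that BCH converges, I would write $\exp(w_1)\exp(-w_2)=\exp(Z)$ with
\[
Z=(w_1-w_2)+\tfrac12[w_1,-w_2]+R,
\]
where $R$ collects iterated brackets of $w_1$ and $-w_2$ of length $\geq 3$. The key observation is that since $[w_2,w_2]=0$, one may write $[w_1,-w_2]=-[w_1-w_2,w_2]$, so $\|[w_1,-w_2]\|\lesssim\beta\,\|w_1-w_2\|$ rather than the naive $\beta^2$. More generally, each iterated bracket appearing in BCH must contain both $w_1$ and $-w_2$ as factors (brackets involving only one vanish); substituting $w_1=(w_1-w_2)+w_2$ and expanding, every surviving $n$-fold bracket has at least one factor equal to $w_1-w_2$ and $n-1$ factors of norm $\leq\beta$. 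Summing the resulting geometric series gives $\|R\|\leq C_1\beta^2\|w_1-w_2\|$, hence
\[
\|Z-(w_1-w_2)\|\leq C_2\beta\,\|w_1-w_2\|.
\]
Decomposing $Z=Z_\hfrak+Z_\rfrak$ and using $w_1-w_2\in\rfrak$ yields the split estimates $\|Z_\hfrak\|\leq C_2\beta\|w_1-w_2\|$ and $\|Z_\rfrak-(w_1-w_2)\|\leq C_2\beta\|w_1-w_2\|$.

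Next, I would invoke a quantitative inverse function theorem for the smooth map $\Psi\colon\hfrak\times\rfrak\to\gfrak$ defined by $\Psi(X,w)=\log(\exp(X)\exp(w))$, whose differential at the origin is the identity map $\hfrak\oplus\rfrak\to\gfrak$. This produces an absolute constant $C_3$ such that, whenever $\|Z\|\leq C_3^{-1}$, there is a unique small $(X,w)\in\hfrak\times\rfrak$ with $\exp(X)\exp(w)=\exp(Z)$, satisfying
\[
\|X-Z_\hfrak\|+\|w-Z_\rfrak\|\leq C_3\|Z\|^2.
\]
Setting $h=\exp(X)\in H$ and bounding $\|Z\|\leq 2\|w_1-w_2\|\leq 2\beta$, two triangle inequalities give
\[
\|w-(w_1-w_2)\|\leq \|w-Z_\rfrak\|+\|Z_\rfrak-(w_1-w_2)\|\leq C_3\|Z\|^2+C_2\beta\|w_1-w_2\|\leq C\beta\|w_1-w_2\|,
\]
and analogously $\|h-I\|\leq 2\|X\|\leq 2\|Z_\hfrak\|+2C_3\|Z\|^2\leq C\beta\|w_1-w_2\|$. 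Shrinking $\beta_0$ so that $C\beta_0\leq 1/3$ then delivers the two-sided bound $\tfrac23\|w_1-w_2\|\leq\|w\|\leq\tfrac32\|w_1-w_2\|$, and the $\|h-I\|$ bound can be re-expressed in terms of $\|w\|$ up to harmless constants.

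The main obstacle I expect is precisely the bookkeeping in the BCH expansion: the naive Lipschitz bound on the bracket only gives $\beta^2$, which is useless once $\|w_1-w_2\|\ll\beta$. Upgrading this to $\beta\|w_1-w_2\|$ for every order of the expansion via the identity $w_1=(w_1-w_2)+w_2$ and $[w_2,w_2]=0$ is the one nontrivial point; the implicit-function step is standard.
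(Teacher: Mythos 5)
Your proof is correct: the two points that could sink a naive argument — the bracket bound being only $O(\beta^2)$ rather than $O(\beta\|w_1-w_2\|)$, and the quadratic error $\|Z\|^2$ in the implicit-function step — are both handled properly, the first via $[w_1,-w_2]=-[w_1-w_2,w_2]$ (and its higher-order analogues after substituting $w_1=(w_1-w_2)+w_2$), the second via $\|Z\|^2\le\|Z\|\cdot\|Z\|\ll\beta\|w_1-w_2\|$. Note that the paper does not reprove this lemma but imports it from \cite{LM-PolyDensity}; your BCH-plus-local-diffeomorphism argument for the product map $(X,w)\mapsto\exp(X)\exp(w)$ with respect to the $\Ad(H)$-invariant splitting $\gfrak=\hfrak\oplus\rfrak$ is essentially the standard proof of that cited statement.
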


\begin{lemma}[\cite{LM-PolyDensity}, Lemma 2.2]
\label{lem:dist-sheet} 
There exists $\beta_0$ so that the following holds for all $0<\beta\leq \beta_0$.
Let $x\in X_{10\beta}$ and $w\in B_\rfrak(0,\beta)$. If there are $h,h'\in \boxH_{2\beta}$
so that $\exp(w')hx=h'\exp(w)x$, then 
\[
\text{$h'=h\quad$ and $\quad w'=\Ad(h)w$}.
\]
Moreover, we have $\|w'\|\leq 2\|w\|$.
\end{lemma}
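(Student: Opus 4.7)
The plan is to lift the equality of points in $X$ to an equality of group elements in $G$ using injectivity at $x$, and then to invoke the local uniqueness of the factorization $G=\exp(\rfrak)\cdot H$ about the identity to read off $h'=h$ and $w'=\Ad(h)w$.

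\textbf{Step 1: lift to $G$.} First I will use that $\inj(x)\geq 10\beta$ makes $g\mapsto gx$ injective on $\boxG_{1000\beta}$. Under the tacit assumption, which is present in every intended application, that $w'$ lies in $B_\rfrak(0,100\beta)$, both sides of the hypothesis will belong to $\boxG_{1000\beta}$ once $\beta_0$ is chosen small enough: $h'\exp(w)$ is directly in $\boxG_{2\beta}$, while $\exp(w')h=h\exp(\Ad(h^{-1})w')$ has an $\rfrak$-factor controlled by \eqref{eq:2ball}. The injectivity at $x$ then upgrades the relation $\exp(w')hx=h'\exp(w)x$ to the group identity
\[
\exp(w')\,h\;=\;h'\,\exp(w)\qquad\text{in }G.
\]

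\textbf{Step 2: reorganize via $\Ad$-invariance of $\rfrak$.} Next I will conjugate and use that $\Ad(h')\rfrak\subset\rfrak$ to rewrite $h'\exp(w)=\exp(\Ad(h')w)\,h'$, so the previous identity becomes
\[
\exp(w')\,h\;=\;\exp(\Ad(h')w)\,h'.
\]
Both sides now take the form $\exp(v)\,k$ with $v\in\rfrak$ and $k\in H$.

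\textbf{Step 3: local uniqueness and the norm bound.} Finally, because $\gfrak=\rfrak\oplus\hfrak$, the map $\Phi\colon(v,k)\mapsto\exp(v)k$ is a local diffeomorphism from a neighbourhood of $(0,e)$ in $\rfrak\times H$ onto a neighbourhood of the identity in $G$. I will shrink $\beta_0$ once and for all to a value on which $\Phi$ is injective on every pair appearing in Step~2, which lets me read off $h=h'$ and $w'=\Ad(h')w=\Ad(h)w$. The estimate $\|w'\|\leq 2\|w\|$ will then follow from \eqref{eq:2ball} together with a further shrinking of $\beta_0$ so that the operator norm of $\Ad(h)$ on $\rfrak$ is at most $2$ uniformly for $h\in\boxH_{2\beta_0}$. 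The only delicate point I anticipate is bookkeeping: arranging a single $\beta_0$, independent of $x$, so that $\Phi$ is injective on the relevant neighbourhood and both sides of Step~2 actually lie there. This is a standard quantitative implicit function theorem applied once to the fixed splitting $\gfrak=\hfrak\oplus\rfrak$, so it should not pose a real difficulty.
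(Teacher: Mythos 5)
Your proof is correct and is essentially the argument behind this lemma, which the present paper does not prove but imports from \cite{LM-PolyDensity}: one lifts the identity in $X$ to an identity in $G$ using $\inj(x)\geq 10\beta$, moves the $\rfrak$-factors to one side via the $\Ad(H)$-invariance of $\rfrak$, and concludes by local injectivity of $(v,k)\mapsto \exp(v)k$ near $(0,e)$ coming from the splitting $\gfrak=\rfrak\oplus\hfrak$, the bound $\|w'\|\leq 2\|w\|$ following since $\Ad(h)$ is close to the identity for $\beta_0$ small. You are also right that a size restriction on $w'$ (as in your tacit assumption) is needed for the statement to be meaningful, and with that caveat your bookkeeping of constants goes through.
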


\section{Avoidance principles in homogeneous spaces}\label{sec: avoidance}

In this section we will collect statements concerning avoidance principles for unipotent flows and random walks on homogeneous spaces.    

\subsection{Nondivergence results}\label{sec: non-div}\label{sec:SiegelSet}
This subsection, is devoted to non-divergence results for unipotent flows. The results in this section are known to the experts and were also proved in details in~\cite[\S3]{LM-PolyDensity}.

The results of this subsection are trivial when $\Gamma$ a uniform lattice.

\begin{propos}[Prop.~3.1,\cite{LM-PolyDensity}]
\label{prop:Non-div-main}\label{prop:one-return}\label{lem:one-return}\label{prop: Non-div main}
There exist $\constE\label{E:non-div-main}\geq 1$ with the following property. 
Let $0<\delta, \vare<1$ and $x\in X$.
Let $I\subset [-10,10]$ be an interval with $|I|\geq\delta$. Then
\[
\Bigl|\Bigl\{r\in I:\inj(a_t\uvk x)< \vare^2\Bigr\}\Bigr|<\ref{E:non-div-main}\vare |I|
\]
so long as $t\geq |\log(\delta^2\inj(x))|+\ref{E:non-div-main}$.
\end{propos}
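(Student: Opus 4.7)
The proof follows the classical Dani--Margulis--Kleinbock--Margulis scheme for non-divergence of unipotent flows, adapted to express the injectivity radius $\inj$ in terms of short vectors in suitable representations. I would set up a height function $\alpha : X \to [1,\infty)$ so that $\inj(y) \asymp 1/\alpha(y)$ (up to truncation by the constant $0.01$ in the definition of $\inj$). For arithmetic $\Gamma$, $\alpha(y)$ is taken to be the maximum of $\|v\|^{-1}$ over primitive vectors $v$ of norm below a fixed threshold in a finite family of $\Gamma$-invariant lattices $\Lambda_i \subset V_i$ inside representations $\rho_i : G \to \GL(V_i)$ coming from the arithmetic structure (e.g.\ for $\SL_2(\lf)\times\SL_2(\lf)$ with an irreducible lattice one uses the Galois-conjugate embedding; for $\SL_2(\qlf)$ one passes to the restriction of scalars). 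The statement that $\inj(a_t u_r x) < \vare^2$ then becomes the statement that some such $v$ has $\|\rho_i(a_t u_r x) v\| < c \vare^2$.

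Next, for each fixed lattice vector $v$ in any $\Lambda_i$ and each fixed $t$, the function $r \mapsto \|\rho_i(a_t u_r) v\|$ is the norm of a polynomial curve in $V_i$ of bounded degree (at most the dimension of $V_i$, in fact degree $\leq 2$ for the adjoint representation, which is all one needs here since $\mathfrak h \simeq \mathfrak{sl}_2(\lf)$ and the nilpotent acts in a Jordan block of size $\leq 3$). Polynomials of bounded degree are $(C,\alpha_0)$-good on intervals for absolute $C, \alpha_0 > 0$ with $\alpha_0 \geq 1/2$; this is the fundamental Kleinbock--Margulis input. Consequently, for each $v$ and each subinterval $J \subset I$,
\[
\bigl|\{r \in J : \|\rho_i(a_t u_r x)v\| < \vare^2 \cdot M_J(v) \}\bigr| \leq C \vare \cdot |J|,
\]
where $M_J(v) := \sup_{r\in J} \|\rho_i(a_t u_r x)v\|$.

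The heart of the argument is then to show that the expansion assumption $t \geq |\log(\delta^2 \inj(x))| + C_{\ref{E:non-div-main}}$ is precisely what is needed to guarantee that on the whole interval $I$ (of length $\geq \delta$), the supremum $M_I(v)$ is at least a fixed positive constant $\rho_0$ for every relevant $v$. The point is that if $v$ were initially so short that $\|\rho_i(x) v\| \ll \inj(x)$, then after flowing by $a_t u_r$ its polynomial orbit in $V_i$ acquires a leading coefficient of size at least $\delta^{-k} \nuni^{t/2} \inj(x)^{1/k}$ (or similar), which the hypothesis makes $\gg 1$. This is handled by the standard inductive Dani--Margulis covering argument: one works with the family of proper $\Gamma$-invariant subspaces (``flags'' in the various $V_i$), and uses that either some flag vector stays bounded in which case the orbit lies in a fixed compact set, or the polynomial structure together with $(C,\alpha_0)$-goodness recovers the measure estimate after a union bound over the finitely many flags that can contribute.

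\textbf{Expected main obstacle.} The technically delicate step is the inductive ``compactness criterion'' in the middle paragraph: ensuring that the hypothesis on $t$ really does rule out the possibility that some $v$ remains uniformly short on all of $I$. In the $\SL_2(\qlf)$ and $\SL_2(\lf)\times\SL_2(\lf)$ settings this is slightly more involved than the classical $\SL_n(\bbr)/\SL_n(\bbz)$ case because one must descend through the restriction-of-scalars / Galois-twisted structure to identify the correct family of representations; however, because only $H \simeq \SL_2(\lf)$ acts, the relevant polynomials remain of very low degree and the inductive step terminates quickly. As the text notes, a complete treatment is given in \cite[\S3]{LM-PolyDensity}, whose Proposition~3.1 is directly cited here.
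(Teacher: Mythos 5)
Your overall scheme -- reduce small injectivity radius to a short vector, observe that $r\mapsto\|\Ad(a_tu_r)w\|$ is (the norm of) a polynomial of degree at most $2$, invoke $(C,\alpha)$-goodness, and use the hypothesis $t\geq|\log(\delta^2\inj(x))|+\ref{E:non-div-main}$ to force the supremum over $I$ to be of unit size -- is exactly the Kleinbock--Margulis argument that the cited proof in \cite[\S3]{LM-PolyDensity} carries out (the present paper gives no proof of its own, it only imports Proposition~3.1 from there). Your quantitative heuristic for the supremum is also essentially right: the relevant matrix entry picks up a factor $e^t$ and the quadratic term contributes $\gtrsim\delta^2$ times the initial size $\gtrsim\inj(x)$, which is precisely what the hypothesis on $t$ is designed to beat.

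The genuine gap is in your reduction step. The proposition is stated for an \emph{arbitrary} lattice $\Gamma\subset\SL_2(\qlf)$ or $\SL_2(\lf)\times\SL_2(\lf)$ -- no arithmeticity is assumed, and indeed the paper needs these non-divergence statements for lattices that are merely assumed to have algebraic entries (Theorem~\ref{thm: main arithmeticity relaxed}), while non-arithmetic lattices in $\SL_2(\qlf)$ exist. For such $\Gamma$ there are no $\Gamma$-invariant lattices $\Lambda_i$ in representations of $G$, so the height function you propose, and the ensuing Dani--Margulis induction over $\Gamma$-rational flags, is simply not available. The correct substitute (and the one used in \cite[\S3]{LM-PolyDensity}) is geometric: $\inj(a_tu_rx)<\vare^2$ forces the existence of $\gamma\in\Gamma\setminus\{1\}$ with $\|a_tu_rg\gamma g^{-1}u_{-r}a_{-t}-I\|\ll\vare^2$, and by the Margulis lemma / thick--thin decomposition, once $\vare$ is below a threshold depending only on $X$ the only elements that can achieve this are unipotent (short closed geodesics in a fixed lattice have length bounded below, and there are finitely many relevant conjugacy classes). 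Writing $\gamma=\exp(w)$ with $w$ nilpotent, one is back in your polynomial framework and the rest of your argument goes through verbatim; but without this replacement your proof only covers arithmetic $\Gamma$ and hence does not establish the proposition as stated.
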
 

The following is a direct corollary of Proposition~\ref{prop:Non-div-main}.  

\begin{propos}[Prop.~3.4,\cite{LM-PolyDensity}]
\label{prop:non-div}
There exists $0<\eta_X<1$, depending on $X$, so that the following holds. 
Let $0<\eta<1$ and let $x\in X$. Let $I\subset\bbr$ be an interval of length at least $\eta$. Then 
\[
|\{r\in I: a_t u_rx\in X_{\eta_X}\}|\geq 0.9|I|
\]
for all $t\geq |\log(\eta_X^2\inj(x))|+\ref{E:non-div-main}$. 
\end{propos}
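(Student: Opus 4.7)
The plan is to derive Proposition~\ref{prop:non-div} as a direct specialization of Proposition~\ref{prop:Non-div-main}; the only substance of the argument is a careful choice of the two parameters $\vare$ and $\delta$ in the latter.

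First I would choose $\eta_X$. To force the exceptional set in Proposition~\ref{prop:Non-div-main} to have relative measure at most $1/10$, I take $\vare:=\sqrt{\eta_X}$ and require $\ref{E:non-div-main}\sqrt{\eta_X}\leq 1/10$; for instance, set
\[
\eta_X:=\min\bigl\{1,(10\,\ref{E:non-div-main})^{-2}\bigr\}.
\]
This depends only on $X$ (via $\ref{E:non-div-main}$), as required. With $\vare^2=\eta_X$, the event $\{\inj(a_tu_rx)<\vare^2\}$ appearing in Proposition~\ref{prop:Non-div-main} is exactly the complement of $\{a_tu_rx\in X_{\eta_X}\}$.

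Next, I would choose the parameter $\delta$ so as to reproduce the advertised time threshold. Taking $\delta:=\eta_X$ in Proposition~\ref{prop:Non-div-main} makes its time hypothesis $t\geq|\log(\delta^2\inj(x))|+\ref{E:non-div-main}$ agree verbatim with the one in the statement we are proving. Two mild hypothesis mismatches must be handled. The restriction $I\subset[-10,10]$ is absorbed by partitioning $I$ into unit sub-intervals and translating each into $[-10,10]$; since Proposition~\ref{prop:Non-div-main} bounds a \emph{relative} measure, summing over the pieces preserves the $0.9|I|$ bound. The requirement $|I|\geq\eta_X$ is automatic as soon as $\eta\geq\eta_X$, while for $\eta<\eta_X$ one may either enlarge $I$ to a length-$\eta_X$ interval inside $[-10,10]$ (paying only a multiplicative constant that is harmless after reselecting $\eta_X$) or fall back on Proposition~\ref{prop:Non-div-main} with $\delta=|I|$ at the cost of a weaker time bound which is not needed in the intended applications.

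Assembling these two specializations yields
\[
\bigl|\{r\in I:\inj(a_tu_rx)<\eta_X\}\bigr|\leq \ref{E:non-div-main}\sqrt{\eta_X}\,|I|\leq 0.1\,|I|,
\]
and passing to the complement gives the desired $0.9\,|I|$ lower bound. I expect no genuine technical difficulty here; the only delicate point is to take $\delta=\eta_X$ rather than $\delta=\eta$ in Proposition~\ref{prop:Non-div-main}, so that the $\eta$-dependence is entirely absorbed into the universal constant $\eta_X$ and the time threshold is expressed purely in terms of the injectivity radius at $x$.
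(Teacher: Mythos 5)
Your main step coincides with the paper's entire proof: the paper simply applies Proposition~\ref{prop:Non-div-main} with $\vare=0.1\,\ref{E:non-div-main}^{-1}$ and declares that the claim holds with $\eta_X=\vare^2$, which is exactly your choice $\eta_X=(10\,\ref{E:non-div-main})^{-2}$, and your identification of $\{\inj(a_tu_rx)<\vare^2\}$ with the complement of $\{a_tu_rx\in X_{\eta_X}\}$ is the same reduction.

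The extra bookkeeping you add to reconcile the hypotheses is where there is a genuine problem. Your first option for the case $\eta<\eta_X$ --- enlarge $I$ to an interval $I'$ of length $\eta_X$ and apply Proposition~\ref{prop:Non-div-main} to $I'$ --- does not work: that proposition bounds the \emph{relative} measure of the bad set in $I'$, i.e.\ it gives a bad set of measure at most $0.1\,\eta_X$, which says nothing about the proportion of bad $r$ inside the short subinterval $I$; the loss is a factor $\eta_X/|I|$, which is unbounded, not a multiplicative constant that can be absorbed by reselecting $\eta_X$. (And no reselection can rescue it: if $|I|\ll e^{-t}$ and $t$ is only of the size $|\log(\eta_X^2\inj(x))|+\ref{E:non-div-main}$, the whole translated segment $\{a_tu_rx:r\in I\}$ can remain in the cusp, so the only honest route for small intervals is your fallback with $\delta=|I|$, in which the interval length enters the time threshold.) Similarly, translating a unit subinterval of a far-away $I\subset\bbr$ back into $[-10,10]$ replaces the base point $x$ by $u_{r_0}x$, so the threshold you can quote involves $\inj(u_{r_0}x)$ rather than $\inj(x)$; this is harmless for bounded $r_0$ but not for arbitrary translates. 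To be fair, the paper's own proof is a single sentence that addresses neither point, and in the applications the intervals are of bounded range and length at least $\eta_X$ (equivalently, one reads the threshold with $\delta$ equal to the interval length, as in your fallback); but as written, your enlargement argument is a step that would fail.
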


\begin{proof}
Apply Proposition~\ref{lem:one-return} with $\vare=0.1\ref{E:non-div-main}^{-1}$. The claim thus holds with $\eta_X=\vare^2$. 
\end{proof}

\subsection*{The subsets $X_{\rm cpt}$ and $\mathfrak S_{\rm cpt}$} 
If $X$ is compact, let $X_{\rm cpt}=X$; otherwise, let $X_{\rm cpt}=\{gx: x\in X_{\eta_X}, \|g-I\|\leq 2\}$   
where $X_{\eta_X}$ is given by Proposition~\ref{prop:non-div}. 
Note that by~\cite[Lemma 3.6]{LM-PolyDensity}, we have 
\be\label{eq: periodic orbit X cpt}
\mu_{Hx}(X_{\rm cpt})>0.9
\ee 
for every periodic orbit $Hx$. 

We also fix once and for all a compact subset with piecewise smooth boundary 
$\mathfrak S_{\rm cpt}\subset G$ which projects onto $X_{\rm cpt}$. 

More generally, we have the following lemma which is a consequence of reduction theory. In this form, the lemma is a spacial case of~\cite[Lemma 2.8]{LMMS}. 

\begin{lemma}\label{lem: reduction theory}
There exist $D_2$ (absolute) and $\constE\label{c: red th}$ (depending on $X$) so that the following holds for all $0<\eta\leq \eta_X$. Let $g\in G$ be so that $g\Gamma\in X_\eta$. Then there is some $\gamma\in\Gamma$ so that 
\[
\|g\gamma\|\leq \ref{c: red th}\eta^{-D_2}.
\]
\end{lemma}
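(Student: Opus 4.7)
The plan is to invoke reduction theory for the lattice $\Gamma\subset G$. Since $\Gamma$ is a lattice in the semisimple group $G$ (of $\bbr$-rank at most $2$ in both of our cases), one has a Siegel-set decomposition $G=\mathfrak S\cdot\Gamma$, where $\mathfrak S=\bigcup_{i=1}^{\ell}\kappa_i\,\Omega\,A_{\ge-t_0}\,K$ is a finite union indexed by cusp representatives $\kappa_i\in G$, with $\Omega$ a bounded subset of the unipotent radical $N$ of a minimal parabolic, $K$ the fixed maximal compact of $G$, and $A_{\ge-t_0}=\{a_s:s\ge-t_0\}$. Given $g\in G$ with $g\Gamma\in X_\eta$, I would first choose $\gamma_0\in\Gamma$ with $g\gamma_0^{-1}\in\mathfrak S$ and factor
\[
g\gamma_0^{-1}=\kappa_i\,\omega\,a_s\,k, \qquad \omega\in\Omega,\ k\in K,\ s\ge-t_0.
\]

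The key quantitative step is to bound $s$ in terms of $\eta$. Each unipotent stabilizer $\Gamma_i:=\kappa_i^{-1}\Gamma\kappa_i\cap N$ is a lattice in $N\cong\bbr^2$, so it contains a nonzero element $\nu$ with $\|\nu-I\|\le C_1$ for some $C_1=C_1(X)$. Conjugation by $a_s$ contracts it: $\|a_s\nu a_{-s}-I\|\ll e^{-\alpha s}$, where $\alpha>0$ is the relevant weight of $\Ad(a_\bullet)$ on $\Lie N$ (equal to $1$ in both of our cases). Pulling back, $g\Gamma g^{-1}$ contains a nonidentity element within distance $\ll e^{-\alpha s}$ of the identity in $G$, which, via the definition of $\inj$ in~\eqref{eq:def-inj}, forces $\inj(g\Gamma)\ll e^{-\alpha s}$. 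Combining with $\inj(g\Gamma)\ge\eta$ gives $s\le \alpha^{-1}|\log\eta|+O_X(1)$.

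Since $\kappa_i$, $\omega$, and $k$ all lie in fixed bounded subsets of $G$ (depending only on $X$), one obtains $\|g\gamma_0^{-1}\|\le C_2\|a_s\|\ll_X e^{s/2}\ll_X \eta^{-D_2}$ for an absolute $D_2$; taking $\gamma=\gamma_0^{-1}\in\Gamma$ completes the proof. The point that requires care is verifying that $D_2$ is genuinely absolute: the $X$-dependent inputs (the Siegel parameters $\kappa_i,\Omega,t_0$, and the shortest-vector bound $C_1$ for the cusp lattices $\Gamma_i$) enter only the multiplicative constant $\ref{c: red th}$, whereas the exponential contraction rate $\alpha$ is intrinsic to the root system of $G$, which is the same for $\SL_2(\qlf)$ and $\SL_2(\lf)\times\SL_2(\lf)$. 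Alternatively, one can simply quote~\cite[Lemma 2.8]{LMMS} and observe that it specializes to the present statement.
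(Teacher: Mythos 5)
The paper does not actually prove this lemma: it records it as a special case of \cite[Lemma 2.8]{LMMS} and moves on, so the alternative you mention in your last sentence is precisely the paper's route. Your self-contained Siegel-set argument is the standard proof underlying that citation, and its core mechanism is correct: bound the $A$-coordinate of the Siegel decomposition by the injectivity radius using a short nontrivial element of the cusp unipotent lattice, and absorb all $X$-dependent data (cusp representatives, $\Omega$, $t_0$, the shortest-vector constant) into the multiplicative constant $\ref{c: red th}$, so that the exponent $D_2$ is indeed absolute. Two points to tighten if you keep the direct proof. First, for the right quotient $X=G/\Gamma$ the injectivity radius at $g\Gamma$ is governed by conjugates $g\gamma g^{-1}$, so the Siegel sets should be taken in the order $K\,A_{\bigcdot}\,\Omega\,\kappa_i^{-1}$ (equivalently, run the usual $N A K$ argument in $\Gamma\backslash G$ and invert); with your order $\kappa_i\,\Omega\,A_{\bigcdot}\,K$ the compact factor $k$ sits between $a_s$ and the cusp unipotent, and $a_s\bigl(k\kappa_i\nu\kappa_i^{-1}k^{-1}\bigr)a_{-s}$ is not contracted, so the sentence ``pulling back, $g\Gamma g^{-1}$ contains a small nonidentity element'' does not go through literally as written. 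Second, in the reducible case $\Gamma=\Gamma_1\times\Gamma_2\subset\SL_2(\bbr)\times\SL_2(\bbr)$ the group $\kappa_i^{-1}\Gamma\kappa_i\cap N$ need not be a lattice in $N\cong\bbr^2$ (for instance if one factor is cocompact), so there one should argue factor by factor; this affects only the constants, not the absoluteness of $D_2$. With these adjustments your argument is a correct, more self-contained substitute for the citation, at the cost of redoing reduction theory that \cite{LMMS} already packages in the quantitative form needed here.
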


\subsection{Inheritance of the Diophantine property}\label{sec: Dioph inheritance}
As it was mentioned in the outline given in \S\ref{sec: outline}, assuming part~(2) in Theorem~\ref{thm:main} does not hold, the first step in the proof is to improve this Diophantine condition. The following proposition (which was also stated in \S\ref{sec: outline}) is tailored for this purpose.  

\begin{propos}\label{prop: linearization translates}
There exist $D_0$ (absolute) and $\ref{c: linear trans}, s_0$ (depending on $X$) so that the following holds. 
Let $R, S\geq 1$. Suppose $x_0\in X$ is so that 
\[
\dist_X(x_0,x)\geq (\log S)^{D_0}S^{-1}
\] 
for all $x$ with $\vol(Hx)\leq R$. Then for all 
\[
s\geq \max\Bigl\{\log S, 2|\log(\inj(x_0))|\Bigr\}+s_0
\] 
and all $0<\eta\leq 1$, we have 
\[
\biggl|\biggl\{r\in [0,1]\!:\!\! \begin{array}{c}\inj(a_su_rx_0)\leq \eta \text{ or there is $ x$ with }\\ 
\vol(Hx)\leq R \text{ s.t. }\dist_X(a_{s}u_rx_0,x)\leq \frac{1}{\ref{c: linear trans}R^{D_0}}\end{array}\!\!\biggr\}\biggr|\!\!\leq \ref{c: linear trans}(\eta^{1/2}+R^{-1}).
\]
\end{propos}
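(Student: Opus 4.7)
The plan is to handle the two contributions to the bad set separately. For the subset where $\inj(a_s u_r x_0) \leq \eta$, I would apply Proposition~\ref{prop:Non-div-main} directly with $\vare = \eta^{1/2}$ and $\delta=1$: choosing $s_0 \geq \ref{E:non-div-main}$, the hypothesis $s \geq 2|\log \inj(x_0)| + s_0$ verifies the time condition of that proposition, yielding a bound of $\ref{E:non-div-main}\,\eta^{1/2}$ for the measure of this subset.

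For the main term, the approach is Dani--Margulis linearization packaged as a Margulis function argument. Each periodic orbit $Hx$ in $X$ is encoded by a characteristic vector $v_x$ in a fixed rational representation $(\rho,V)$ of $G$ whose projective stabilizer is (essentially) $N_G(H)$; the construction is arranged so that $\vol(Hx)$ and $\|v_x\|$ are polynomially related, and so that proximity of a point $y=g\Gamma$ to $Hx$ within distance $\delta$ is controlled, up to polynomial factors, by the existence of $\gamma\in\Gamma$ with $\|\rho(g\gamma)v_x\|$ of size comparable to $\delta^{c}\|v_x\|$ (for a structural exponent $c$). The hypothesis $\dist_X(x_0,x)\geq (\log S)^{D_0} S^{-1}$, valid for every $Hx$ with $\vol(Hx) \leq R$, then translates into a uniform polynomial lower bound of the form $\min_\gamma \|\rho(g_0\gamma)v_x\| \gg S^{c'}$ for each such characteristic vector, where $g_0\Gamma=x_0$.

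Given this setup, the function $r \mapsto \rho(a_s u_r g_0 \gamma)v_x$ is polynomial in $r$ of degree bounded in terms of $\dim V$, so the $(C,\alpha)$-good property of Kleinbock--Margulis applies. Because $s\geq \log S$, the supremum of $\|\rho(a_s u_r g_0\gamma)v_x\|$ over $r\in[0,1]$ is at least $e^{\lambda s}\cdot S^{c'}$ for a suitable positive $a_s$-weight $\lambda$; the $(C,\alpha)$-good bound then shows that the measure of $r\in[0,1]$ on which the norm falls below the level corresponding to $\delta=(\ref{c: linear trans}R^{D_0})^{-1}$ is at most a fixed polynomial power of $R^{-1}$, provided $D_0$ is chosen large enough (and absolute).

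The principal obstacle is accumulating these single-orbit estimates into the uniform bound asserted over \emph{all} periodic $Hx$ with $\vol(Hx)\leq R$, together with every $\gamma\in\Gamma$ contributing a close approach. I would introduce a Margulis function of the form $F_R(y)=\sum_{i}\sum_{\gamma}\|\rho(g\gamma)v_i\|^{-\alpha}$, where the outer sum runs over $\Gamma$-orbit representatives of characteristic vectors with $\|v_i\|\ll R^{O(1)}$, and then establish a drift inequality $\int_0^1 F_R(a_s u_r y)\,dr \leq \tfrac12 F_R(y)+C_R$ for $s$ larger than an absolute threshold, using the standard contraction of the $a_s$-action on the non-trivial weight spaces together with non-divergence to control the additive term $C_R$. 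The Diophantine hypothesis on $x_0$ feeds directly into a polynomial upper bound on $F_R(x_0)$; iterating the drift inequality and applying Chebyshev then controls the $r$-measure on which any of the summands becomes larger than $R^{D_0\alpha}$, which is precisely the event that $a_s u_r x_0$ comes within $R^{-D_0}/\ref{c: linear trans}$ of some $Hx$ with $\vol(Hx)\leq R$. Summing the two contributions yields the stated bound $\ref{c: linear trans}(\eta^{1/2}+R^{-1})$. The delicate points I expect are: (i) choosing the representation $(\rho,V)$ so that $D_0$ can be taken absolute rather than depending on $X$, and (ii) verifying the drift inequality with constants that depend polynomially (and in the right direction) on $R$ near the cusps of $X$.
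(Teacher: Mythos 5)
Your treatment of the injectivity-radius part is exactly what the paper does (Proposition~\ref{prop:Non-div-main} with $\vare=\eta^{1/2}$), and your overall architecture for the main term — a Margulis function, a contraction-on-average inequality, and Chebyshev — is also the right one; the paper implements it with a per-orbit function $f_Y(x)=\sum_{w\in I_Y(x)}\|w\|^{-1/3}$ built from transverse displacements $w\in\rfrak$ with $\exp(w)x\in Y$, and then sums the per-orbit Chebyshev bounds over the $\ll R^6$ periodic orbits of volume $\le R$. But two inputs you leave implicit are load-bearing and currently missing. First, the sum defining your global $F_R(y)=\sum_i\sum_\gamma\|\rho(g\gamma)v_i\|^{-\alpha}$ over all of $\Gamma$ has no reason to converge, let alone be polynomially bounded at $x_0$: one must truncate to the $\gamma$ producing genuinely nearby sheets and then bound the number of contributing terms — in the paper this is the counting bound $\#I_Y(x)\le E\vol(Y)$ together with the external fact that there are $\ll R^6$ periodic orbits of volume $\le R$. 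Without these, the assertion that the single-scale Diophantine hypothesis ``feeds directly into a polynomial upper bound on $F_R(x_0)$'' does not follow. (There is also a sign slip: the hypothesis yields a lower bound of the shape $S^{-c}$ up to logarithms, not $S^{c'}$; it is the expansion of $a_s$ with $s\ge\log S$ that must compensate for this smallness.)

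The more serious gap is the step ``establish $\int_0^1 F_R(a_su_ry)\,dr\le\tfrac12F_R(y)+C_R$ \dots iterating the drift inequality and applying Chebyshev.'' The proposition concerns the single average over $\{a_su_r:r\in[0,1]\}$ at an arbitrary prescribed time $s$. A one-step inequality at time $s$ itself is useless, because the additive term grows exponentially in $s$ (new sheets enter at rate $\asymp e^{\Theta(s)}\vol(Y)$; compare the term $ce^{d}E\vol(Y)\bar Be^{d}$ in Lemma~\ref{lem: Margulis func periodic 1}), while an iterated inequality controls integrals against convolution powers of the walk, not the specific average $\int_0^1 F_R(a_su_rx_0)\,dr$ — and since $F_R$ has an enormous Lipschitz constant near its singular set, you cannot pass between the two by continuity. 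The paper's appendix is organized around exactly this difficulty: it iterates with a geometrically decreasing sequence of times $d_i$, each repeated $100$ times, so that the additive contributions stay $O(\vol(Y))$ while the total time is $\approx\log T$ (Lemma~\ref{lem: Margulis func periodic}), and then converts the convolution average into $\int_0^1 f_Y(a_{d(T)}u_rx_0)\,dr$ via the commutation $u_ra_d=a_du_{e^{-d}r}$ and the enlarged interval $[-1,2]$, deliberately avoiding any appeal to continuity of $f_Y$ (Lemma~\ref{lem: average over [0,1]}); a general $s$ is then handled by comparison with the nearest $d(T)$. Your sketch needs this conversion mechanism, or an equivalent device, before the Chebyshev step can be applied to the quantity the proposition actually bounds.
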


In the proof of Proposition~\ref{prop: linearization translates}, 
which is given in Appendix~\ref{sec: proof linearization}, we use Margulis functions for periodic $H$-orbits similar to those which were used in~\cite[\S9]{LM-PolyDensity}, see also~\cite[Prop.\ 2.13]{EMM-Orbit} and the original paper~\cite{EMM-Upp}. 
This will then be combined with the fact that the number of periodic $H$-orbits with volume $\leq R$ 
in $X$ is $\ll R^6$, see e.g~\cite[\S10]{MO-MargFun}, to conclude.  We also refer the reader to~\cite[\S2]{ELMV-1} for results concerning isolation of periodic orbits.

It is also worth mentioning that even though~\cite[Thm.~1.4]{LMMS} concerns long pieces of $U$-orbits and Proposition~\ref{prop: linearization translates} deals with translates of pieces of $U$-orbits, similar tools are applicable here as well. In particular, a version of Proposition~\ref{prop: linearization translates} can be proved using the methods of~\cite{LMMS}.

\subsection{Closing lemma}\label{sec: closing lemma}
Let $t>0$ be a large parameter. Fix some 
\[
\nuni^{-0.01t}<\beta=\eta^2<\eta_X^2;
\]
in our application, we will let $\beta=\nuni^{-\kappa t}$ where $\kappa\ll 1/D_0$ with $D_0$ as in Proposition~\ref{prop: linearization translates} and the implied constant depending on $X$. 

For every $\tau\geq0$, put
\[
\coneH_{\tau}=\boxHs_{\beta}\cdot a_\tau\cdot \{u_r: r\in [0,1]\} \subset H.
\]

If $y\in X$ is so that the map $\sfh\mapsto \sfh y$ is injective over $\coneH_{\tau}$, then $\mu_{\coneH_\tau.y}$ 
denotes the pushforward of the normalized Haar measure on $\coneH_\tau$ to $\coneH_\tau.y\subset X$.

Let $\tau\geq 0$ and $y\in X$. For every $z\in\coneH_\tau.y$, put
\[
\margI_\tau(z):=\Bigl\{w\in \rfrak: \|w\|<\inj(z) \text{ and } \exp(w) z\in \coneH_\tau.y\Bigr\};
\]
this is a finite subset of $\rfrak$ since $\coneH_\tau$ is bounded ---  
we will define $\margI_\cone(h,z)$ 
for all $h\in H$ and more general sets $\cone$ in the bootstrap phase below.

Let $0<\alpha<1$. Define the function $f_{\tau}:\coneH_\tau.y\to [1,\infty)$ as follows
\[
f_{\tau}(z)=\begin{cases} \sum_{0\neq w\in I_\tau(z)}\|w\|^{-\alpha} & \text{if $\margI_\tau(z)\neq\{0\}$}\\
\inj(z)^{-\alpha}&\text{otherwise}
\end{cases}.
\]

The following proposition supplies an initial dimension which we will bootstrap in the next phase.
Roughly speaking, it asserts that points in $\{a_{8t}u_rx_0: r\in[0,1]\}$ 
(possibly after removing an exponentially small set of exceptions) 
are {\em separated transversal to} $H$, unless $x_0$ is extremely close to a periodic $H$ orbit.    

\begin{propos}\label{prop:closing lemma intro}
Assume $\Gamma$ is arithmetic. 
There exists $D_1$ (which depends on $\Gamma$ explicitly) satisfying the following. Let 
$D\geq D_1$ and $x_1\in X$.
Then for all large enough $t$ (depending on $\inj(x_1)$) at least one of the following holds.
 
\begin{enumerate}
\item  There is a subset $I(x_1)\subset [0,1]$ with $|[0,1]\setminus I(x_1)|\ll_X \eta^{1/2}$ 
such that for all $r\in I(x_1)$ we have the following  
\begin{enumerate}
\item $a_{8t}u_rx_1\in X_{\eta}$.
\item $\sfh\mapsto \sfh.a_{8t}u_rx_1$ is injective on $\coneH_{\rws}$.
\item For all $z\in\coneH_\rws.a_{8t}u_rx_1$, we have 
\[
f_{t}(z)\leq  \nuni^{D\rws}.
\]
\end{enumerate}

\item There is $x\in X$ such that $Hx$ is periodic with
\[
\vol(Hx)\leq \nuni^{D_1\rws}\quad\text{and}\quad\dist_X(x,x_1)\leq \nuni^{(-D+D_1)\rws}.
\] 
\end{enumerate} 
\end{propos}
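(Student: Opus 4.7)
The plan is to argue by contrapositive: fix $D \geq D_1$ (where $D_1$ will be determined by the arithmetic structure of $\Gamma$) and assume alternative (2) fails, so that no periodic $H$-orbit of volume $\leq \nuni^{D_1 t}$ lies within distance $\nuni^{(-D+D_1)t}$ of $x_1$. I will then produce the desired set $I(x_1)$ and verify conclusion (1).

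First I would apply the non-divergence input, Proposition~\ref{prop:non-div} together with the $\vare$-dependent form of Proposition~\ref{prop:Non-div-main}, to remove a subset of $[0,1]$ of measure $O(\eta^{1/2})$ outside of which $y := a_{8t}u_r x_1 \in X_\eta$, which secures (1)(a). For such $y$, reduction theory (Lemma~\ref{lem: reduction theory}) provides a lift $g\Gamma = y$ with $\|g\| \leq \ref{c: red th}\,\eta^{-D_2}$. The central observation is that every nonzero $w \in \margI_t(z)$ at a point $z = h.y \in \coneH_t.y$ corresponds to $h,h' \in \coneH_t$ with
\[
\gamma_w \;:=\; g^{-1}(h')^{-1}\exp(w)\,h\,g \;\in\; \Gamma \setminus \{e\}.
\]
Decomposing $\Ad(g)\gamma_w$ into its $\hfrak$- and $\rfrak$-parts and using Lemmas~\ref{lem: BCH}--\ref{lem:dist-sheet} together with $\|h\|,\|h'\| \ll \nuni^{t}$, one finds that the $\rfrak$-component of $\gamma_w$ has norm $\asymp \|g\|^{-2}\|w\|$, while the total norm $\|\gamma_w\| \ll \nuni^{2t}\|g\|^2 \leq \nuni^{D_2' t}$. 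A failure of the injectivity condition (1)(b) is the special case $w=0$ with $h\neq h'$, which likewise produces an element of $g^{-1}Hg \cap \Gamma \setminus \{e\}$.

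The arithmetic ingredient is a height gap of the following form: since $\Gamma$ is arithmetic in the semisimple $G$, after fixing a $\bbq$-structure the projection to the $\rfrak$-component becomes a polynomial map with algebraic coefficients in the matrix entries of $\gamma$. A Liouville/Northcott-type estimate then supplies constants $c_\Gamma, D_0' > 0$ depending only on $\Gamma$ such that for every $\gamma \in \Gamma$ with $\|\gamma\| \leq T_0$, either the $\rfrak$-component of $\gamma$ vanishes (in which case $\gamma$ lies in a fixed conjugate of $H$) or its norm is at least $c_\Gamma T_0^{-D_0'}$. Applied to the elements $\gamma_w$ with $T_0 = \nuni^{D_2' t}$, this yields a dichotomy for each nonzero $w \in \margI_t(z)$: either $\|w\| \geq \nuni^{-D_1 t}$ for a suitably large $D_1 = D_1(\Gamma)$, or $\gamma_w \in g^{-1}Hg \cap \Gamma \setminus \{e\}$.

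Given this dichotomy, a value $f_t(z) > \nuni^{Dt}$ with $D \geq D_1$ (after enlarging $D_1$ to absorb the trivial count $\#\margI_t(z) \ll \nuni^{O(t)}$) is incompatible with the first clause holding for every $w$, since otherwise $f_t(z) = \sum \|w\|^{-\alpha} \leq \#\margI_t(z)\cdot \nuni^{\alpha D_1 t}$ would contradict the hypothesis. Therefore there must exist $\gamma \in g^{-1}Hg \cap \Gamma \setminus \{e\}$ of norm $\leq \nuni^{D_2't}$. Two such elements that are Zariski-independent in $g^{-1}Hg$ generate a lattice therein, producing a periodic $H$-orbit $Hx$ with $\vol(Hx) \leq \nuni^{D_1 t}$ and, by using $g$ as a near-lift, $\dist_X(x,x_1) \leq \nuni^{(-D+D_1)t}$; this contradicts the failure of (2). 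The main obstacle is precisely this last step: upgrading a single near-$H$ return to a Zariski-independent pair generating a full periodic $H$-orbit with the correct volume bound. This is handled by combining the Margulis-function hypothesis --- which produces an abundance of such elements across different $z$'s and different $r$'s --- with a pigeonhole on $r \in [0,1]$ to extract two generically independent returns; it is in the quantitative form of this step, and in the escape-from-$H$ estimate above, that the explicit dependence $D_1 = D_1(\Gamma)$ on the arithmetic data of $\Gamma$ enters.
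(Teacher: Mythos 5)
Your opening moves match the paper's: non-divergence to secure (1)(a), reduction theory to lift $y=a_{8t}u_rx_1\in X_\eta$ to $g$ with $\|g\|\ll\eta^{-D_2}$, and the conversion of a failure of (1)(b) or (1)(c) into a nontrivial $\gamma\in\Gamma$ of norm $\nuni^{O(t)}$ with $g\gamma g^{-1}$ lying within $\nuni^{(-D+O(1))t}$ of $H$ (your phrase ``the $\rfrak$-component of $\gamma_w$'' should really refer to $g\gamma_w g^{-1}$, since conjugation by $g^{-1}$ does not respect $\gfrak=\hfrak\oplus\rfrak$, but that is cosmetic). The first genuine gap is your arithmetic ingredient. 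There is no Liouville/Northcott-type gap of the form ``for $\gamma\in\Gamma$ with $\|\gamma\|\leq T_0$, either $g\gamma g^{-1}\in H$ or its distance to $H$ is $\geq c_\Gamma T_0^{-D_0'}$'' that holds uniformly over the $t$-dependent lifts $g$: the conjugate $gHg^{-1}$ is not a $\bbq$-subgroup for generic $g$, and the statement fails precisely in the situation the proposition must detect. Indeed, if $x_1$ lies extremely close to, but not on, a periodic $H$-orbit of moderate volume, the stabilizer of that orbit supplies elements $\gamma$ of bounded norm with $\dist(g\gamma g^{-1},H)$ as small as the distance to the orbit, independently of $\|\gamma\|$. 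So your dichotomy is either false or presupposes the Diophantine alternative (2) you are trying to establish. In the paper, arithmeticity enters only through Lemma~\ref{lem:non-elementary}, i.e.\ only after one has \emph{two non-commuting} lattice elements approximately stabilizing $g^{-1}v_H$; Lemma~\ref{lem:almost-inv} then perturbs the basepoint so they stabilize it exactly, which is also why (2) asserts a nearby point $x$ rather than $x_1$ itself. Even if your gap held, it would give exact membership $\gamma\in g^{-1}Hg$, which the geometry simply does not provide.

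The second gap is the step you yourself flag as the main obstacle, and the pigeonhole sketch does not bridge it. What the construction actually yields is one lattice element $\gamma_r$ per bad parameter $r$, close to a conjugate of $H$, and a priori all of these could commute --- for instance all lie in a single unipotent subgroup --- in which case no Zariski-dense pair, hence no periodic $H$-orbit of controlled volume, is produced. The paper's proof (Appendix~\ref{sec: proof closing}) devotes most of its effort to exactly this: (i) a polynomial transversality argument showing that for fixed $\gamma$ the set of $r'$ with $\gamma_{r'}=\gamma$ is of measure $\ll\eta^{-4D_2}e^{-3t}$, because the quadratic $-a_3e^{7t}(r'-r)^2+(a_4-a_1)(r'-r)+a_2e^{-7t}$ has a coefficient bounded below by $\gg\eta^{2D_2}$ (discreteness of $\Gamma$), whence $\gg e^{29t/10}$ distinct elements $\gamma_r$ arise; and (ii) a case analysis in which the commuting case is excluded by identifying the Zariski closure as (virtually) a torus or a unipotent group and ruling each out via lattice-point counts in tori, trace estimates, and the use of two $\eta^{1/2}$-separated sub-intervals of bad parameters, so that only the non-commuting case survives and Lemma~\ref{lem:non-elementary} applies. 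None of this mechanism appears in your proposal, so the argument cannot be closed as written.
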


The proof of this proposition is a minor modification of the proof of~\cite[Prop.~6.1]{LM-PolyDensity}. The details are provided in Appendix~\ref{sec: proof closing}. 

Proposition~\ref{prop:closing lemma intro} is where the arithmeticity assumption on $\Gamma$ is used. If we replace the assumption that $\Gamma$ is arithmetic with 
the weaker requirement that $\Gamma$ has algebraic entries, we get a version of this proposition where part~(2) is replaced with the following. 

\medskip

{\em 
\begin{enumerate}
    \item[(2')]  There is $x\in X$ with
    \[
\dist_X(x,x_1)\leq \nuni^{(-D+D_1)\rws},
\] 
satisfying the following: there are elements $\gamma_1$ and $\gamma_2$ in ${\rm Stab}_H(x)$
with $\|\gamma_i\|\leq\nuni^{D_1t}$ for $i=1,2$ so that the group generated by $\{\gamma_1,\gamma_2\}$ is Zariski dense in $H$.
\end{enumerate}
}

See Appendix \ref{sec: proof closing} for more details.

\section{Equidistribution of translates of horospheres}\label{sec: horospheric}

We begin by recalling the following quantitative decay of correlations for the ambient space $X$: 
There exists $0<\mixexp\leq1$ so that  
\be\label{eq: exp mixing}
\biggl|\int \varphi(gx)\psi(x)\diff\!{m_X}-\int\varphi\diff\!{m_X}\int\psi\diff\!{m_X}\biggr|\ll \Sob(\varphi)\Sob(\psi) \nuni^{-\mixexp d(e,g)}
\ee
for all $\varphi,\psi\in C^\infty_c(X)+\bbc\cdot 1$, where $m_X$ is the $G$-invariant probability measure on $X$ and $d$ is the right $G$-invariant metric on $G$ defined on p.~\pageref{d definition page}. See, e.g., \cite[\S2.4]{KMnonquasi} and references there for~\eqref{eq: exp mixing}. 

Here $\Scal(\cdot)$ is a certain Sobolev norm on $C_c^\infty(X)+\bbc\cdot1$ 
which is assumed to dominate $\|\cdot\|_\infty$ and the 
Lipschitz norm $\|\cdot\|_{\rm Lip}$. Moreover, $\Scal(g.f)\ll\|g\|^\star\Scal(f)$ where the implied constants are absolute.

We note that by the works of Selberg and Jacquet-Langlands~\cite{Selberg-ThreeSixteenth, Jacquet-Langlands}, the constant $\mixexp$ is absolute if $\Gamma$ is a congruence subgroup, with the best known constant\footnote{To give a numerical value one needs to fix a normalization for $d$.} given by Kim and Sarnak \cite{KimSarnak} (this phenomenon, sometimes called \emph{property $(\tau)$} of congruence lattices, also holds in much greater generality).

Recall that $N=\{u_rv_s: r,s\in\bbr\}$ is a maximal unipotent subgroup of $G$, see \S\ref{sec: notation}. 
For $\delta_1,\delta_2>0$, put $B^N_{\delta_1,\delta_2}=\Bigl\{u_rv_s: 0\leq r\leq \delta_1, 0\leq s\leq\delta_2\Bigr\}$. 
We will denote $B^N_{1,1}$ by
$B^N_1$. Let $\diff\!n=\diff\!r\diff\!s$; in particular, $|B^N_{\delta_1,\delta_2}|=\delta_1\delta_2$.

It follows from Proposition~\ref{prop: Non-div main}, that 
for every $\vare>0$ and all $x\in X$, 
\[
\Bigl|\Bigl\{s\in [0,1]:\inj(a_tv_s x)< \vare^2\Bigr\}\Bigr|<\ref{E:non-div-main}\vare 
\]
so long as $t\geq |\log(\inj(x))|+\ref{E:non-div-main}$. 
Indeed Proposition~\ref{prop: Non-div main} is stated with $u_r$ instead of $v_s$, but the proof applies to this case as well 
--- note that $a_t,v_s\in H'$ where $H'=gHg^{-1}$ where $g={\rm diag}(i, 1)$.

\begin{propos}[cf.\ \cite{KMnonquasi}, Prop.~2.4.8]\label{prop: equid trans horo}
There exists $\constk\label{k:thick-mixing-prop}\gg \mixexp$ (where the implied constant is absolute) so that the following holds.  
Let $0<\eta,\delta\leq 1$ and $x\in X_{\eta}$. Then for every $t\geq 4|\log\eta|+2\ref{E:non-div-main}$ we have 
\[
\biggl|\frac{1}{|B^N_{\delta, 1}|}\int_{B^N_{\delta, 1}} f(a_tn.x)\diff\!n-\int f\diff\!m_X\biggr|\ll\Scal(f)(e^{t}\delta)^{-\ref{k:thick-mixing-prop}}  
\]
here $f\in C_c^\infty(X)+\bbc\cdot1$ and the implied constant depends on $X$. 
\end{propos}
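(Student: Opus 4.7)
This is a standard application of the Margulis thickening argument, in the spirit of \cite[Prop.~2.4.8]{KMnonquasi}. The strategy is to convert the average of $f$ along the translated $N$-orbit piece into a matrix coefficient of the $a_t$-action on $L^2(X)$, so that the exponential decay of correlations \eqref{eq: exp mixing} can be applied.

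Fix a small parameter $\epsilon>0$ to be optimized. Let $\mathsf L\subset G$ be a smooth local complement to $N$ near the identity (for instance, $AU^-$ augmented by the one-dimensional imaginary diagonal in the $\SL_2(\lf)$ case, so that $\dim\mathsf L=\dim G-\dim N$). Choose a smooth nonnegative bump $\psi$ supported on $\mathsf B^{\mathsf L}_\epsilon$ with $\int\psi\diff\!c=1$ and $\Scal(\psi)\ll\epsilon^{-\star}$. Since $f$ is Lipschitz,
\[
\frac{1}{|B^N_{\delta,1}|}\int_{B^N_{\delta,1}} f(a_tnx)\diff\!n=\frac{1}{|B^N_{\delta,1}|}\int_{B^N_{\delta,1}}\!\int_{\mathsf L}\psi(c)\,f(c\cdot a_tnx)\diff\!c\diff\!n+O\bigl(\epsilon\Scal(f)\bigr),
\]
provided $\epsilon$ is smaller than the injectivity radius at most of the points $a_tnx$. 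This non-divergence condition is guaranteed by Proposition~\ref{prop:one-return} applied with $\vare=\epsilon^{1/2}$, which together with the hypothesis $x\in X_\eta$ and $t\geq 4|\log\eta|+2\ref{E:non-div-main}$ ensures $\inj(a_tnx)\geq\epsilon$ outside a set of $n\in B^N_{\delta,1}$ of measure $O(\epsilon^{1/2})$.

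Next, using the local diffeomorphism $(c,n)\mapsto cn$ from a neighborhood of identity in $\mathsf L\times N$ onto a neighborhood of identity in $G$, the double integral is rewritten (up to Jacobian factors) as $\int_X f\cdot(a_t.\Phi)\diff\!m_X$, where $\Phi$ is the push-forward of a smoothed indicator of $\mathsf B^{\mathsf L}_\epsilon\cdot B^N_{\delta,1}$ under $g\mapsto gx$; injectivity of this push-forward is secured by $x\in X_\eta$ and $\epsilon<\eta$, and a direct calculation gives $\Scal(\Phi)\ll(\epsilon\delta)^{-\star}$. Applying \eqref{eq: exp mixing} yields
\[
\int_X f\cdot(a_t.\Phi)\diff\!m_X=\Bigl(\int f\diff\!m_X\Bigr)\Bigl(\int\Phi\diff\!m_X\Bigr)+O\bigl(\Scal(f)\Scal(\Phi)\nuni^{-\mixexp t}\bigr),
\]
and combining with the first step gives a total error of order $\Scal(f)\bigl(\epsilon+\epsilon^{-\star}\delta^{-\star}\nuni^{-\mixexp t}\bigr)$. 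Choosing $\epsilon$ as a small positive power of $(\nuni^t\delta)^{-1}$ balances the two contributions and produces the claimed bound with $\ref{k:thick-mixing-prop}$ equal to a small positive multiple of $\mixexp$.

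The main technical obstacle is bookkeeping: carefully tracking the Jacobians that appear when passing between the parametrization $(c,n)\in\mathsf L\times N$ of a neighborhood of identity in $G$ and the product $\mathsf L\times N$, and verifying that $\Scal(\Phi)$ grows only polynomially in $\epsilon^{-1}$ and $\delta^{-1}$ uniformly in $t$. In the regime $\delta\ll 1$, where $B^N_{\delta,1}$ is anisotropic, one must also choose the thickening $\mathsf B^{\mathsf L}_\epsilon$ anisotropically, matched to the differential of $\Ad(a_t)$ on the complement of $N$, so that the transverse extent of the smoothed set remains of unit (or at least controlled) scale after the action of $a_t$.
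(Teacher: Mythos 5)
There is a genuine gap, and it is structural rather than a matter of bookkeeping. In your thickening-plus-mixing argument the test function $\Phi$ is (a smoothed, normalized indicator of) a set whose extent in the $U$-direction is $\delta$, so any smoothing forces $\Scal(\Phi)\gg\delta^{-1}$ (in fact $\Scal(\Phi)\gg(\epsilon^{\star}\delta)^{-1-\star}$). The mixing bound \eqref{eq: exp mixing} then yields an error of size at least $\Scal(f)\,\delta^{-1}e^{-\mixexp t}=\Scal(f)\,(e^t\delta)^{-1}e^{(1-\mixexp)t}$, and no choice of $\epsilon$ as a power of $(e^t\delta)^{-1}$ can convert this into a negative power of $e^t\delta$: writing $e^t\delta=e^{\sigma t}$, your error contains a factor $e^{(1-\mixexp-\sigma)t}$, which blows up whenever $\sigma<1-\mixexp$. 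Since $\mixexp$ may be small and the proposition is later applied with $\delta$ exponentially small in $t$ while $e^t\delta$ is only a small exponential (this is exactly the situation inside Proposition~\ref{prop: 1-epsilon N} and in the endgame, where $\delta$ is chosen by $e^{\ell_1}\delta=e^{\sqrt\vare t/4}$), the direct argument proves the statement only under an extra lower bound of the form $\delta\geq e^{-c\mixexp t}$, not in the generality claimed, where the rate must depend on $e^t\delta$ alone. There is also a smaller slip: you insert the transverse bump on the left of $a_t$ (you write $f(c\,a_tnx)$); for the identification with a matrix coefficient one needs $f(a_t c\,nx)=f\bigl((a_tca_{-t})a_tnx\bigr)$ with $c$ in the non-expanding complement, since $a_{-t}ca_t$ expands the $U^-$-component by $e^t$ and the pushed-forward set would not stay small.

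The paper's proof avoids this by never applying mixing to an anisotropic box. It first replaces the average over $B^N_{\delta,1}$ by a double average over $B^N_1\times B^N_{\delta,1}$ (almost invariance of $B^N_{\delta,1}$, cost $(e^t\delta)^{-1/2}$), writes $t=d_1+d_2$ with $d_1=\tfrac12\log(e^t\delta)$, uses the non-divergence estimate (Proposition~\ref{prop:Non-div-main}, which is where the hypothesis $t\geq 4|\log\eta|+2\ref{E:non-div-main}$ enters) to ensure $\inj(a_{d_2}n_2x)\geq\vare^2$ for all but an $O(\vare)$-proportion of $n_2\in B^N_{\delta,1}$, and then applies the unit-box expanding-translate equidistribution result \cite[Prop.~4.1]{LM-PolyDensity} at the base points $z_{n_2}=a_{d_2}n_2x$ for time $d_1$; the error there is $\vare^{-\star}\Scal(f)e^{-\star d_1}=\vare^{-\star}\Scal(f)(e^t\delta)^{-\star}$, and optimizing $\vare$ as a small power of $(e^t\delta)^{-1}$ gives the claim. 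Your thickening argument is essentially the proof of that unit-box input, but the reduction of the thin box to the unit box via the intermediate flow $a_{d_2}$ is the essential step missing from your proposal.
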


\begin{proof}
We may assume $e^t\delta> 1$ or else the statement holds trivially.  
Put $d_1=\frac12\log(e^t\delta)$ and  
\begin{align*}
d_2=t-d_1&=\tfrac12(t+|\log\delta|)=|\log\delta|+ \tfrac12\log (e^t\delta)\\
&\geq 2|\log\eta|+\ref{E:non-div-main},
\end{align*}
where we used $t\geq 4|\log\eta|+2\ref{E:non-div-main}$. 

Now, for every $u_rv_s\in B^N_1$, we have 
\[
a_{d_1}u_rv_sa_{d_2}=a_tu_{e^{-d_2}r}v_{e^{-d_2}s};
\]
moreover, for every $u_rv_s\in B^N_1$, we have 
\[
\frac{|u_{e^{-d_2}r}v_{e^{-d_2}s}B^N_{\delta, 1}\triangle B^N_{\delta, 1}|}{|B^N_{\delta, 1}|}\ll (e^{d_2}\delta)^{-1}=(e^t\delta)^{-1/2}.
\]
We conclude that 
\begin{align*}
&\frac{1}{|B^N_{\delta, 1}|}\int_{B^N_{\delta, 1}} f(a_tn.x)\diff\!n=\frac{1}{|B^N_{\delta, 1}|}\int_{B^N_1}\diff\!n_1\int_{B^N_{\delta, 1}} f(a_tn_2.x)\diff\!n_2=\\
&\frac{1}{|B^N_{\delta, 1}|}\int_{B^N_1}\!\int_{B^N_{\delta, 1}} f(a_{d_1}n_1a_{d_2}n_2.x)\diff\!n_2\diff\!n_1+O\Bigl((e^t\delta)^{-1/2}\Scal(f)\Bigr).
\end{align*}
The above and the definition of $d_1$, thus, reduce the proof to showing that 
\[
\biggl|\frac{1}{|B^N_{\delta, 1}|}\int_{B^N_1}\int_{B^N_{\delta, 1}} f(a_{d_1}n_1a_{d_2}n_2.x)\diff\!n_2\diff\!n_1-\int f\diff\!m_X\biggr|
\ll \Scal(f)(e^{t}\delta)^{-\ref{k:thick-mixing-prop}}.
\]

We now turn to the proof of the above.  
Let $\vare$ be a constant which will be optimized and will be chosen to be $(e^t\delta)^{-\star}$. 
Since $d_2\geq 2|\log\eta|+\ref{E:non-div-main}$,  
Proposition~\ref{prop:Non-div-main}, applied to $u_rx$ for any $0\leq r\leq \delta$, implies that  
\[
\{s\in [0,1]: \inj(a_{d_2}v_su_rx)\leq \vare^2\}\leq \vare.
\]
This in particular implies the following: Put 
\[
\mathsf B:=\{n_2\in B^N_{\delta, 1}:\inj(a_{d_2}n_2x)\leq \vare^2 \},
\] 
then $|B^N_{\delta, 1}\setminus \mathsf B|\ll \vare |B^N_{\delta,1}|$.

In consequence, the following holds
\begin{multline*}
\frac{1}{|B^N_{\delta, 1}|}\int_{B^N_1}\int_{B^N_{\delta, 1}} f(a_{d_1}n_1a_{d_2}n_2.x)\diff\!n_2\diff\!n_1=\\
\frac{1}{|\mathsf B|}\int_{B^N_1}\int_{\mathsf B} f(a_{d_1}n_1a_{d_2}n_2.x)\diff\!n_2\diff\!n_1+ O(\vare \Scal(f)).
\end{multline*}
This reduces the investigations to the study of
\[
\frac{1}{|\mathsf B|}\int_{B^N_1}\int_{\mathsf B} f(a_{d_1}n_1a_{d_2}n_2.x)\diff\!n_2\diff\!n_1.
\]

Recall that $d_1=\frac12\log(e^t\delta)$. 
For every $n_2\in\mathsf B$, we have $z_{n_2}=a_{d_2}n_2x\in X_{\vare^2}$. Therefore, using e.g.~\cite[Prop.~4.1]{LM-PolyDensity}, 
we have 
\[
\biggl|\int_{B^N_1}f(a_{d_1}n_1.z_{n_2})\diff\! n_1-\int f\diff\!m_X\biggr|\ll \vare^{-\star}\Scal(f) e^{-\star d_1}=\vare^{-\star}\Scal(f)(e^t\delta)^{-\star}.
\]
Hence, if we choose $\vare$ to be a small negative power of $e^t\delta$, 
the above is $\ll \Scal(f)(e^t\delta)^{-\star}$. Averaging this over $\mathsf B$ finishes the proof.    
\end{proof}

Using Proposition~\ref{prop: equid trans horo} and an argument due to Venkatesh~\cite{Venkatesh-Sparse}, we obtain the following.

\begin{propos}\label{prop: 1-epsilon N}
There exist $\constk\label{k:mixing}\gg\mixexp^2$ so that the following holds. 
Let $0\leq \theta,\theta'<1$ and $0<\bpz\leq 0.1$. Let $\rho$ be a probability measure on $[0,1]$ which satisfies the following: there exists $C\geq 1$ so that 
\be\label{eq:C-rho-reg-N}
\rho(J)\leq C \bpz^{1-\theta}
\ee
for every interval $J$ of length $\bpz$.

Let $|\log\bpz|/4\leq t\leq (1-\theta')|\log \bpz|$, $0<\eta,\delta\leq 1$. Let $x\in X_{\eta}$, and assume 
\be\label{eq: comapring bpz and eta}
|\log\bpz|\geq 16|\log\eta|+8\ref{E:non-div-main}.
\ee
Then for all $f\in C_c^\infty(X)+\bbc\cdot1$, we have 
\begin{multline}
\biggl|\frac{1}{\delta}\int_0^1\int_0^\delta f(a_tu_rv_s.x)\diff\!r\diff\!\rho(s)-\int f\diff\!\mu_X\biggr|\\
\ll\Scal(f) \max\Bigl\{(C\bpz^{-\theta})^{1/2}(e^{t}\delta)^{-\ref{k:mixing}},\bpz^{\theta'}\Bigr\}\label{eq: in Prop 5.2}.
\end{multline}
where the implied constant depends on $X$.
\end{propos}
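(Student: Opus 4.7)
The plan is to thicken $\rho$ by convolving against a short interval, then combine Cauchy--Schwarz against Lebesgue measure with a horospherical variance estimate, in the spirit of the sparse equidistribution trick of \cite{Venkatesh-Sparse}. After subtracting the constant $\bar f := \int f\, dm_X$ from $f$ (both sides of \eqref{eq: in Prop 5.2} shift by the same amount) we may assume $\bar f = 0$, so it suffices to bound $E := \int_0^1 \phi(s)\, d\rho(s)$, where $\phi(s) := \tfrac{1}{\delta}\int_0^\delta f(a_t u_r v_s x)\, dr$. Choose a thickening scale $\tau$ with $\bpz \leq \tau \leq \bpz^{\theta'} e^{-t}$; this interval is nonempty precisely because $t \leq (1-\theta')|\log\bpz|$. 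Put $\psi_\tau := \tau^{-1}\mathbf{1}_{[0,\tau]}$ and $\tilde\rho := \rho \ast \psi_\tau$. Covering $[s-\tau,s]$ by $\lceil \tau/\bpz\rceil$ intervals of length $\bpz$ and using \eqref{eq:C-rho-reg-N}, $\tilde\rho$ is absolutely continuous with density $h(s) \leq 2C\bpz^{-\theta}$, and in particular $\|h\|_{L^2(ds)}^2 \leq \|h\|_\infty \int h \leq 2C\bpz^{-\theta}$. Decompose $E = E_1 + E_2$ with $E_1 := \int \phi(s) h(s)\, ds$ and $E_2 := \int \phi\, d(\rho - \tilde\rho) = \int (\phi - \tilde\phi)\, d\rho$, where $\tilde\phi(s) := \tau^{-1}\int_0^\tau \phi(s+u)\, du$.

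The term $E_2$ will be controlled by Lipschitz drift in the $v$-direction. Using that $U$ and $V$ commute inside $N$ together with $a_t v_u a_{-t} = v_{e^t u}$, one finds $a_t u_r v_{s+u} x = v_{e^t u}\cdot (a_t u_r v_s x)$. Right-invariance of the metric on $G$ (p.~\pageref{d definition page}) and the fact that $\Scal(\cdot)$ dominates the Lipschitz norm then yield $|\phi(s+u) - \phi(s)| \ll \Scal(f)\cdot e^t u$ uniformly in $s$, hence $|E_2| \leq \sup_s |\phi(s) - \tilde\phi(s)| \ll \Scal(f)\cdot e^t \tau \leq \Scal(f)\cdot \bpz^{\theta'}$, which is the second term in the max.

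For $E_1$, Cauchy--Schwarz with respect to Lebesgue measure on $[0,1]$ gives $|E_1|^2 \leq \|\phi\|_{L^2(ds)}^2 \cdot \|h\|_{L^2(ds)}^2 \leq 2C\bpz^{-\theta}\,\|\phi\|_{L^2(ds)}^2$, reducing matters to the variance bound $\|\phi\|_{L^2(ds)}^2 \ll \Scal(f)^2 (e^t \delta)^{-2\ref{k:mixing}}$. Using $a_t u_r v_s = u_{e^t r} v_{e^t s} a_t$ and substituting $r' = e^t r$, $s' = e^t s$, one rewrites $\phi(s) = \Phi(v_{e^t s} z)$ with $z := a_t x$ and
\[
\Phi(y) := \frac{1}{L}\int_0^L f(u_{r'}\,y)\, dr', \qquad L := e^t \delta,
\]
so that $\|\phi\|_{L^2(ds)}^2 = e^{-t}\int_0^{e^t} \Phi(v_{s'} z)^2\, ds'$. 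The identity $u_{r'} = a_{\log L}\,u_{r'/L}\,a_{-\log L}$ and Proposition~\ref{prop: equid trans horo} applied at the base point $a_{-\log L} v_{s'} z$ yield $|\Phi(v_{s'} z)| \ll \Scal(f)\, L^{-\ref{k:thick-mixing-prop}}$ whenever this base point lies in $X_{\eta'}$ for a suitable $\eta'$; the remaining $s' \in [0,e^t]$ form a set of normalized measure $\ll L^{-\star}$ by non-divergence of the $v$-flow, since $v_{s'} z = a_t v_{e^{-t} s'} x$ and Proposition~\ref{prop:Non-div-main} applies equally to $V$ (it is a one-parameter unipotent conjugate to $U$). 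The hypotheses $|\log\bpz| \geq 16|\log\eta|+8\ref{E:non-div-main}$ and $t \geq |\log\bpz|/4$ are exactly the margin needed to invoke these two ingredients. Collecting everything, $|E| \ll \Scal(f)\max\{(C\bpz^{-\theta})^{1/2}(e^t\delta)^{-\ref{k:mixing}},\, \bpz^{\theta'}\}$. The \textbf{main obstacle} will be the variance estimate: tracking the exceptional fraction of $s' \in [0,e^t]$ where $v_{s'} z$ approaches the cusp, keeping that loss dominated by the Frostman density $\bpz^{-\theta}$, and pinning down a clean exponent $\ref{k:mixing} \gg \mixexp^2$ depending only on the spectral gap of $X$.
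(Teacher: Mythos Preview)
Your reduction to the variance $\|\phi\|_{L^2(ds)}^2$ is sound, and the treatment of $E_2$ via the Lipschitz drift is fine. The gap is in the variance step itself: you claim a \emph{pointwise} bound $|\Phi(v_{s'}z)|\ll\Scal(f)L^{-\ref{k:thick-mixing-prop}}$ by invoking Proposition~\ref{prop: equid trans horo} at the base point $a_{-\log L}v_{s'}z$. But $\Phi(y)=\int_0^1 f(a_{\log L}u_\sigma\,a_{-\log L}y)\,d\sigma$ is a one--dimensional average over $U$, whereas Proposition~\ref{prop: equid trans horo} gives equidistribution only for two--dimensional averages over the full horospherical group $N=UV$. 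In $G=\SL_2(\bbc)$ or $\SL_2(\bbr)\times\SL_2(\bbr)$ the one--parameter group $U$ is \emph{not} horospherical for $a_t$; that is precisely the non--horospherical situation the whole paper is about. Any attempt to thicken the $U$--interval into an $N$--box by adding a $V$--segment of length $\epsilon$ faces the usual tension: the thickening error is $\ll\Scal(f)\,e^{\log L}\epsilon=\Scal(f)L\epsilon$, while Proposition~\ref{prop: equid trans horo} for the resulting box gives error $\asymp (L\epsilon)^{-\ref{k:thick-mixing-prop}}$, so no useful choice of $\epsilon$ exists. In short, there is no pointwise control of $\Phi$.

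What the paper does instead is to keep the $V$--direction and the $U$--direction together so that after Cauchy--Schwarz a genuine $N$--average survives. Concretely: one builds a density $\varphi$ on $N$ with $\|\varphi\|_\infty\ll C\bpz^{-\theta}\delta^{-1}$, introduces an \emph{extra} $U$--average at a carefully chosen short scale $\tau=\delta\,(e^t\delta)^{-1+\ref{k:thick-mixing-prop}/2N}$, applies Cauchy--Schwarz in the $N$--variable, and then expands the square to obtain the auto--correlation
\[
\hat f_{r_1,r_2}(y)=f(a_tu_{r_1}a_{-t}y)\,f(a_tu_{r_2}a_{-t}y),\qquad r_1,r_2\in[0,\tau].
\]
Now Proposition~\ref{prop: equid trans horo} legitimately applies to the full $N$--average of $\hat f_{r_1,r_2}$, reducing matters to $\int_X\hat f_{r_1,r_2}\,dm_X$, which is then killed by the matrix--coefficient decay~\eqref{eq: exp mixing} for $|r_1-r_2|$ not too small; this double use of the spectral gap is what produces $\ref{k:mixing}\gg\mixexp^2$. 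The extra scale $\tau$ is essential: it caps $\Scal(\hat f_{r_1,r_2})\ll\Scal(f)^2(e^t\tau)^N$, which would otherwise swamp the gain from Proposition~\ref{prop: equid trans horo}. Your scheme, even if you expand $\Phi^2$, has $r_1',r_2'\in[0,L]$ and hence uncontrolled Sobolev norms of the product. The missing ingredients are therefore (i) arranging Cauchy--Schwarz so that a full $N$--box survives, and (ii) the short extra $U$--average to tame the Sobolev norm of the correlation.
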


\begin{proof}
We will prove this for the case $G=\SL_2(\bbr)\times\SL_2(\bbr)$; 
the proof in the case $G=\SL_2(\bbc)$ is similar. 

Without loss of generality, we may assume $\int_Xf\diff\!\mu_X=0$.

Let $M\in\bbn$ be so that $1/M\leq \bpz\leq 1/(M-1)$.
For every $1\leq j\leq M$, let $I_j=\Big[\frac{j-1}{M}, \frac{j}{M}\Big)$; also put $s_j=\frac{2j-1}{2M}$ and $c_j=\rho(I_j)$ for all $j$.
Since $I_j$'s are disjoint, we have $\sum_j c_j=1$.

For all such $j$, let
\[
\mathsf B_j=\Bigl\{u_r v_s: 0\leq r\leq \delta , 0\leq s-s_j\leq \tfrac{\bpz}{4}\Bigr\}.
\]
In view of the choice of $M$, we have $\mathsf B_j\cap\mathsf B_{j'}=\emptyset$ for all $j\neq j'$.
Let $\varphi=\textstyle\sum_j (\delta\bpz/4)^{-1}c_j\mathbbm 1_{\mathsf B_j}$. 
Then $\int_N \varphi(r,s)\diff\!r\diff\!s=1$.

In view of~\eqref{eq:C-rho-reg-N}, we have $c_j\leq C\bpz^{1-\theta}$ for all $j$. This and the fact that $\mathsf B_j$'s are disjoint imply that  
\be\label{eq:phi-bound}
\varphi(n(z))\leq \max\{(\delta\bpz/4)^{-1}c_j: 1\leq j\leq M\}\ll C\bpz^{-\theta}\delta^{-1}
\ee
for all $n(z)\in N$; here and in what follows, $z=(r,s)$ and $\diff\!z=\diff\!r\diff\!s$. 

Using the fact that $I_j$'s are disjoint, we have 
\[
\int_0^1\int_0^\delta f(a_tu_rv_s.x)\diff\!r\diff\!\rho(s)=\sum_j\int_{I_j}\int_0^\delta f(a_tu_rv_s.x)\diff\!r\diff\!\rho(s);
\]
thus, we conclude that
\begin{align}
\label{eq:iint-rho-sj}&\bigg|\delta^{-1}\int_0^1\int_0^\delta f(a_tu_rv_s.x)\diff\!r\diff\!\rho(s)-\sum_{j}c_j\delta^{-1}\int_0^\delta f(a_tu_rv_{s_j}.x)\diff\!r\bigg|\\
\notag&\leq \sum_{j}\int_{I_j}\delta^{-1}\int_0^\delta \Big|f(a_tu_rv_s.x)-f(a_tu_rv_{s_j}.x)\Big|\diff\!r\diff\!\rho(s)\ll  \Scal(f)\bpz^{\theta'}
\end{align}
where we used the facts that $|s-s_j|\leq \bpz$ and $t\leq (1-\theta')|\log \bpz|$ in the last inequality.

In view of~\eqref{eq:iint-rho-sj}, thus, we need to bound $\sum_j\delta^{-1}\int c_jf(a_tu_rv_{s_j}x)\diff\!r$. Similar to~\eqref{eq:iint-rho-sj}, we can now make the following computation. 
 \begin{align}
\label{eq:mix-1st}&\biggl|\sum_{j} \delta^{-1}\int_0^\delta \!\!c_jf(a_tn(s_j,r).x)\diff\!r-\int_{N}\varphi(n(z))f(a_tn(z).x)\diff\!z\biggr|\\
\notag &\leq\sum_{j} \int_0^\delta\!\!(\bpz\delta/4)^{-1}c_j\int_{s_j}^{s_j+\frac{\bpz}{4}} \Bigl|f(a_tn(s_j,r).x)-f(a_tn(s,r).x)
\diff\!s\Bigr|\diff\!r\\
\notag&\ll\Scal(f)\bpz^{\theta'}
 \end{align}
where again we used the facts that $|s-s_j|\leq \bpz$ and $t\leq (1-\theta')|\log \bpz|$.
 
Thus, it suffices to investigate 
\[
 A_1=\int \varphi(n(z))f(a_tn(z).x)\diff\!z.
\] 

To that end, let $N\geq 1$ be so that $\Sob(g.f)\leq \|g\|^N\Sob(f)$. 
Let
\be\label{eq: choose ell for extra average}
\tau=\delta \cdot (\nuni^{t}\delta )^{-1+\frac{\ref{k:thick-mixing-prop}}{2N}},
\ee
and define
 \[
 A_2:=\tau^{-1}\int_0^\tau\int \varphi(n(z))f(a_t u_rn(z).x)\diff\!z\diff\!r.
 \]
Roughly speaking, we introduce an extra averaging in the direction of $U$. 
 
For every $0\leq r\leq \tau$, we have 
$|(\mathsf B_j+r)\Delta \mathsf B_j|\ll |\mathsf B_j|\tau/\delta $. Hence, 
\begin{align*}
\biggl|\int \varphi(z)&f(a_tu_rn(z).x)\diff\!z-\int \varphi(z)f(a_tn(z).x)\diff\!z\biggr|\\
&\leq\sum_j(\bpz\delta/4)^{-1}c_j\int_{(\mathsf B_j+r)\Delta \mathsf B_j}|f(a_tn(z)x)|\diff\!z\\
&\leq \sum_j(\bpz\delta/4)^{-1}c_j|\mathsf B_j|(\tau/\delta )\|f\|_\infty\\
&\leq \|f\|_\infty\cdot(\tau/\delta )\ll \Scal(f)\cdot(\tau/\delta );
\end{align*}
we used $|\mathsf B_j|=\bpz\delta/4$ for every $j$ and $\sum c_j=1$, in the second to the last inequality. 
Averaging the above over $[0,\tau]$, we conclude that 
\be\label{eq:mix-2nd}
|A_1-A_2|\ll \Scal(f)\tau/\delta \leq \Scal(f)(\nuni^t\delta )^{-1/2};
\ee 
where we used~\eqref{eq: choose ell for extra average}.
 
In consequence, we have reduced the proof to the study of $A_2$ to which we now turn. By the Cauchy-Schwarz inequality, we have
\[
|A_2|^2\leq \int \varphi(z)\biggl(\tau^{-1}\int_0^\tau f(a_tu_rn(z).x)\diff\!r\biggr)^2\diff\!z.
\]
Now using $\biggl(\tau^{-1}\int_0^\tau f(a_tu_rn(z).x)\diff\!r\biggr)^2\geq0$,~\eqref{eq:phi-bound}, and the above estimate, 
we conclude  
\begin{align}
 \notag|A_2|^2&\ll\frac{C\bpz^{-\theta}}{|B^N_{\delta ,1}|}\int_{B^N_{\delta ,1}}\biggl(\tau^{-1}\int_0^\tau f(a_tu_rn(z).x)\diff\!r\biggr)^2\diff\!z\\
\label{eq:mix-A2-est}&=\frac{1}{\tau^2}\int_0^\tau\int_0^\tau\frac{C\bpz^{-\theta}}{|B^N_{\delta ,1}|}\int_{B^N_{\delta ,1}}\hat f_{r_1,r_2}(a_tn(z).x)\diff\!z\diff\!r_1\diff\!r_2
 \end{align}
where $B^N_{\delta ,1}=\{u_rv_s: 0\leq r\leq \delta , 0\leq s\leq 1\}$ and for all 
$r_1,r_2\in [0,\tau]$ 
\[
 \hat f_{r_1,r_2}(y)=f(a_tu(r_1)a_{-t}.y)f(a_tu(r_2)a_{-t}.y).
\]
 
By~\eqref{eq: choose ell for extra average}, we have 
\be\label{eq:Sob-fhat}
 \Scal(\hat f_{r_1,r_2})\ll \Scal(f)^2 (e^t\tau)^N\ll \Scal(f)^2 (e^t\delta )^{\ref{k:thick-mixing-prop}/2}.
\ee
 
Now since $t\geq 4|\log\eta|+2\ref{E:non-div-main}$, by Proposition~\ref{prop: equid trans horo}, we have  
\[
 \biggl|\frac{1}{|B^N_{\delta ,1}|}\int_{B^N_{\delta ,1}}\hat f_{r_1,r_2}(a_tn(z)x)\diff\!z\biggr|=\int_X\hat f_{r_1,r_2}\diff\!\mu_X
 +  O(\Scal(\hat f_{r_1,r_2})(e^t\delta )^{-\ref{k:thick-mixing-prop}}).
\]
Recall from~\eqref{eq:Sob-fhat} that 
$\Scal(\hat f_{r_1,r_2})(e^t\delta )^{-\ref{k:thick-mixing-prop}}\leq \Scal(f)^2(e^t\delta )^{-\ref{k:thick-mixing-prop}/2}$. 
Altogether, we conclude that 
\begin{multline}\label{eq:mixing-ineq-main}
 \biggl|\frac{1}{|B^N_{\delta ,1}|}\int_{B^N_{\delta ,1}}\hat f_{r_1,r_2}(a_tn(z)x)\diff\!z\biggr|=\int_X\hat f_{r_1,r_2}\diff\!\mu_X \\
 + O(\Scal(f)^2 (e^t\delta )^{-\ref{k:thick-mixing-prop}/2}).
\end{multline}
 
 We now use estimates on the decay of matrix coefficients,~\eqref{eq: exp mixing}, and obtain the following: If $|r_1-r_2|>\tau \cdot(e^t\delta )^{-\frac{\ref{k:thick-mixing-prop}}{4N}}$, then 
 \be\label{eq:int-fhat}
 \biggl|\int_X\hat f_{r_1,r_2}(x)\diff\!\mu_X\biggr|\ll \Scal(f)^2(e^t\delta )^{-\ref{k:thick-mixing-prop}\mixexp/4N}
 \ee
 where we used $e^t\tau=(e^t\delta )^{\frac{\ref{k:thick-mixing-prop}}{2N}}$.
 
 Divide now the integral $\int_0^{\tau}\int_0^\tau$ in~\eqref{eq:mix-A2-est}
 into terms: one with $|r_1-r_2|\leq \tau \cdot(e^t\delta )^{-\frac{\ref{k:thick-mixing-prop}}{4N}}$ and the other its complement.
 We thus get from~\eqref{eq:mix-A2-est},~\eqref{eq:mixing-ineq-main}, and~\eqref{eq:int-fhat} that
 \[
 |A_2|\ll (C\bpz^{-\theta})^{1/2} \Scal(f)
 \biggl((e^t\delta )^{-\ref{k:thick-mixing-prop}\mixexp/4N}+(e^t\delta )^{-\ref{k:thick-mixing-prop}/4N}\biggr)^{1/2}.
 \]
This, together with~\eqref{eq:iint-rho-sj},~\eqref{eq:mix-1st}, and ~\eqref{eq:mix-2nd}, implies that the proposition holds with $\ref{k:mixing}=\ref{k:thick-mixing-prop}\mixexp/8N$.    
 \end{proof}

\section{Discretized dimension}\label{sec: discret dim}

Let $0<\alpha\leq 1$. We begin by defining a modified (and localized) $\alpha$-dimensional energy for finite subsets of $\mathbb R^d$.

Fix some norm $\|\;\|$ on $\bbr^d$ (below we will apply this for the cases $d=3$ and $d=1$). 
Let $0<\mfsc_0\leq 1$, and let $\Theta\subset\{w\in\bbr^d: \|w\|< \mfsc_0\}$ be a finite set. For $\trct\geq1$, define 
$\eng_{\Theta, \trct}: \Theta\to (0,\infty)$ as follows: If $\#\Theta\leq \trct$, put 
\[
\eng_{\Theta, \trct}(w)=\mfsc_0^{-\alpha},\quad\text{for all $w\in \Theta$,}
\]
and if $\#\Theta>\trct$, put 
\[
\eng_{\Theta,\trct}(w)=\min\left\{\sum_{\Theta'}\|w-w'\|^{-\alpha}: \begin{array}{c}\Theta'\subset \Theta\text{ and }\\ \#(\Theta\setminus\Theta')=\trct\end{array}\right\}.
\] 
We will also use this notation for finite subsets of $\rfrak$, which as a vector space is $\simeq \bbr^3$. 

\subsection{A projection theorem}\label{sec: proj thm}

We now state a projection theorem which plays a crucial role in our argument. Indeed, this theorem (as stated here) will be used in improving the dimension phase, \S\ref{sec: MF and IG}--\S\ref{sec: proof main prop}; a modified version of it (Theorem~\ref{thm: proj thm 2}) will also be used in the endgame phase, \S\ref{sec: equidistribution}.

\begin{thm}\label{thm: proj thm}
Let $0<\alpha\leq1$, and let $0<\pvare<0.01\alpha$. 
Let $\egbd\geq 1$ be large enough depending on $\pvare$, and let $\Theta\subset B_\rfrak(0,\mfsc_0)$ be a finite set
satisfying  
\be\label{eq: energy bd proj thm main}
\eng_{\Theta,\trct}(w)\leq \egbd\quad\text{for every $w\in\Theta$ and some $\trct\geq 1$.}
\ee
Consider the one-parameter family of projections $\xi_r : \rfrak \to \R$ given by
\[
\xi_r(w)=(\Ad(u_r)w)_{12}=-w_{21}r^2-2w_{11}r+w_{12}.
\]

Let $J\subset [0,1]$ be an interval with $|J|\geq 10^{-6}$. 
There exists a subset $J'\subset J$ with $|J\setminus J'|\leq  L_1 \egbd^{-\pvare^2}$, 
where $L_1=L\pvare^{-L}$ for an absolute constant $L$, so that the following holds. 
Let $r\in J'$, then there exists a subset $\Theta_{r}\subset \Theta$ with 
\[
\#(\Theta\setminus \Theta_{r})\leq L_1 \egbd^{-\pvare^2}\cdot (\#\Theta)
\]
such that the projected set $\xi_{r}(\Theta)$ satisfies that
\[
\eng_{\xi_{r}(\Theta),\trct_1}(w)\leq \egbd_1 \qquad\textup{for all $w\in \xi_r(\Theta_{r})$}
\] 
where $\trct_1=L_1\egbd^{7\pvare}\trct$, $\egbd_1=L_1 \egbd^{1+8\pvare}$.
\end{thm}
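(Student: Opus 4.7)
The plan is to prove a discretized Marstrand--Kaufman projection theorem for the quadratic family $\xi_r$, adapted to the modified energy $\eng_{\Theta,\trct}$. I would follow the strategy of K\"aenm\"aki--Orponen--Venieri, which combines a Frostman-type conversion with incidence geometry for parabolas (building on the work of Wolff, Schlag, and Zahl cited in the excerpt). The argument splits naturally into three steps: (i) convert the discretized energy hypothesis into a uniform ball-counting bound for $\Theta$; (ii) apply an averaging argument over $r \in J$ using transversality of $\xi_r(w) = -w_{21}r^2 - 2w_{11}r + w_{12}$ combined with incidence bounds for parabolas; (iii) apply Chebyshev's inequality twice to extract both the exceptional parameter set $J \setminus J'$ and, for each good $r$, the subset $\Theta \setminus \Theta_r$.

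For step (i), from $\eng_{\Theta,\trct}(w) \leq \egbd$ pick for each $w \in \Theta$ a witnessing subset $\Theta_w \subset \Theta$ with $\#(\Theta \setminus \Theta_w) \leq \trct$ and $\sum_{w' \in \Theta_w} \|w-w'\|^{-\alpha} \leq \egbd$. Dyadic summation over shells $B(w,2\rho) \setminus B(w,\rho)$ gives
\[
\#(\Theta \cap B(w,\rho)) \leq \trct + \egbd \, \rho^{\alpha} \qquad \text{uniformly in } w \in \Theta, \; \rho > 0.
\]
This Frostman-type ball-counting bound is the input to the projection machinery.

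Step (ii) is the heart of the matter. Fix $w \neq w'$ in $\Theta$; the difference $\xi_r(w) - \xi_r(w')$ is a polynomial in $r$ of degree at most $2$ with some coefficient of size $\asymp \|w-w'\|$, and a Remez-type inequality yields the transversality bound
\[
|\{r \in J : |\xi_r(w) - \xi_r(w')| \leq \delta\}| \ll (\delta / \|w-w'\|)^{1/2}.
\]
To promote the exponent $\tfrac12$ to the optimal exponent $1$ needed when $\alpha$ is close to $1$, I would invoke the discretized circle/parabola tangency incidence bound of K\"aenm\"aki--Orponen--Venieri, itself a descendant of the Wolff--Schlag circle incidence estimates and their discretization by Zahl. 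This produces, for every $r$ outside an exceptional set of measure at most $L_1 \egbd^{-\pvare^2}$, and at every dyadic scale $\delta \in [2^{-N}, 1]$, a bound of the form
\[
\#\{w' \in \Theta : |\xi_r(w) - \xi_r(w')| \leq \delta\} \leq \egbd^{O(\pvare)} \bigl(\trct + \egbd \, \delta^{\alpha}\bigr)
\]
for all $w$ outside a further $r$-dependent subset of $\Theta$ of fractional size at most $L_1 \egbd^{-\pvare^2}$.

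Step (iii) then consists of dyadic summation over scales, turning this scale-by-scale concentration estimate into the modified-energy bound $\eng_{\xi_r(\Theta),\trct_1}(\xi_r(w)) \leq \egbd_1$, with $\trct_1 = \trct + L_1 \egbd^{7\pvare}$ absorbing the residual $\trct$ terms plus the $\egbd^{O(\pvare)}$ overhead, and $\egbd_1 = L_1 \egbd^{1+8\pvare}$ absorbing the Frostman constant $\egbd$ together with the incidence-bound losses; the logarithmic number of scales is absorbed into the $\pvare^{-O(1)}$ factor hidden in $L_1$. The main technical obstacle is the quadratic rather than linear nature of $\xi_r$: the naive Remez transversality delivers only half the needed dimension gain, so the Wolff--Schlag--Zahl parabola-incidence framework must be deployed in full. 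A secondary bookkeeping obstacle is tracking the $\trct$ excluded points of the discretized energy cleanly through the dyadic scales, which is what forces the particular exponents $7\pvare$ and $8\pvare$ appearing in $\trct_1$ and $\egbd_1$.
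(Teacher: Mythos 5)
Your proposal follows essentially the same route as the paper: the paper first establishes a scale-by-scale projected multiplicity bound off small exceptional sets of parameters and points (Theorem~\ref{thm: proj thm app}, whose incidence input, Lemma~\ref{lem: Schlag}, is precisely the Wolff--Schlag--Zahl/K\"aenm\"aki--Orponen--Venieri tangency estimate you invoke), and then intersects the good sets over dyadic scales between $\egbd^{-1/\alpha}$ and roughly $\egbd^{-\pvare}$ and sums dyadically, with the truncation $\trct_1$ absorbing the points at the smallest scale, exactly as in your steps (ii)--(iii). One minor bookkeeping correction: the $\asymp\alpha^{-1}\log\egbd$ dyadic scales are absorbed into the extra factor $\egbd^{\pvare}$ (using the hypothesis that $\egbd$ is large depending on $\pvare$), not into the constant $L_1=L\pvare^{-L}$.
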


This theorem will be proved in Appendix~\ref{sec: proof proj}. We also refer to that section for references and historic comments.

\subsection{Regularization lemmas}\label{sec: regularization lemmas}
It will be more convenient to work with finite sets which have more {\em regular} structure, see~\cite[Lemma 5.2]{BFLM} and~\cite[\S2]{Bour-Proj}. In this section we recall this construction, tailored to the applications in our paper. 

\medskip

Let $t, m_0\geq 1$ and $0<\vare<1$ be three parameters: $t$ is large and arbitrary, $m_0$ is moderate and fixed, and $\vare$ is small and fixed; in particular, our estimates are allowed to depend on $m_0$ and $\vare$, but not on $t$. Let $e^{-0.01\vare t}\leq \eta\leq 1$ and let $\mfsc_0=e^{-\sqrt\vare t}\eta$.

Let $F\subset B_\rfrak(0,1)$ with 
\[
e^{t/2}\leq \#F\leq e^{\fub t}.
\]
For all $w\in F$, let $F_w=B_\rfrak(w, \mfsc_0)\cap F$, and assume that 
\be\label{eq: tree dec eng bd}
\eng_{F_w,\trct}(w')\leq \egbd \qquad\text{for all $w'\in I_w$}
\ee
where $1\leq \trct\leq e^{0.01\vare t}$ and $\egbd>0$ satisfying the following   
\be\label{eq: a priori bound on egbd}
\egbd\leq e^{(m_0+1)t}.
\ee
Note that there is $w\in F$ so that $\#F_w\geq e^{0.5t-4\sqrt\vare}>e^{9t/20}$. Thus~\eqref{eq: tree dec eng bd} and the the fact that 
$1\leq \trct\leq e^{0.01\vare t}$ imply that indeed, $\egbd\geq e^{0.4t}$.

\medskip

Let $\beta=e^{-\kappa t}$ for some $\kappa$ satisfying $0<\kappa (m_0+1)\leq 10^{-6}\vare$. Fix ${\mathsf M}\in\bbn$, large enough, so that both of the following hold  
\be\label{eq: condition on M}
\text{$2^{-{\mathsf M}}(m_0+1)< \kappa/100\quad$ and $\quad 6{\mathsf M}<2^{\kappa {\mathsf M}/100}$}.
\ee
Define $k_0:=\lfloor (-\log_2\mfsc_0)/{{\mathsf M}}\rfloor$ and 
$k_1:=\lceil(1+\alpha^{-1}\log_2\egbd) /{\mathsf M} \rceil+1$; note that    
\be\label{eq: F cap the smallest cube k1}
2^{({\mathsf M}k_1-1)\alpha}>\Upsilon.
\ee
In view of~\eqref{eq: tree dec eng bd} and~\eqref{eq: F cap the smallest cube k1}, we have 
\be\label{eq: F cap the smallest cube}
\#\Bigl(B_\rfrak(w,2^{-{\mathsf M}k_1})\cap F\Bigr)\leq \trct \qquad\text{for all $w\in\rfrak$.}
\ee

For every $k_0\leq k\leq k_1$, let $\mathcal Q_{{\mathsf M}k}$ 
denote the collection of $2^{-{\mathsf M}k}$-cubes
\[
\{w\in\rfrak: w_{rs}\in [\tfrac{n_{rs}}{2^{{\mathsf M}k}},\tfrac{n_{rs}+1}{2^{{\mathsf M}k}}), r,s=1,2\}
\]
for some trace zero $(n_{ij})\in \operatorname{Mat}_2 (\bbz)$ if $G=\SL_2(\bbr)\times\SL_2(\bbr)$ and with the obvious modification when $G=\SL_2(\bbc)$.

\begin{lemma}\label{lem: regular tree decomposition}\label{lem: trimming cone 2}
For all large enough $t$,
we can write $F=F'\bigcup (\bigcup_{i=1}^{N}F_i)$ (a disjoint union) with 
\[ 
\text{$\#F'<\beta^{1/4}\cdot(\#F)\quad$ and $\quad\#F_i\geq \beta^2\cdot(\#F)$}
\] 
so that the following holds. For every $i$ and every $k_0-10\leq k\leq k_1$, there exists some $\tau_{ik}$ so that 
for every cube $Q \in \mathcal Q_{{\mathsf M}k}$ we have 
\be\label{eq: regular tree}
2^{{\mathsf M}(\tau_{ik}-2)}\leq \#F_i\cap Q\leq 2^{{\mathsf M}\tau_{ik}}\quad\text{or}\quad F_i\cap Q=\emptyset.
\ee
Moreover, for every $i$ and every cube $Q\in \mathcal Q_{{\mathsf M}k_0}$, we have 
\be\label{eq: trivial lower bd}
\#F_i\cap Q\geq e^{-4\sqrt\vare t}\cdot(\#F_i)\quad\text{or}\quad F_i\cap Q=\emptyset.
\ee
\end{lemma}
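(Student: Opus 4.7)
The construction is a multi-scale regularization of the cube structure of $F$ in the spirit of Bourgain (cf.~\cite[\S2]{Bour-Proj} and~\cite[Lem.~5.2]{BFLM}).

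First I would assign to each nonempty cube $Q\in\mathcal Q_{\mathsf Mk}$ (for $k\in\{k_0-10,\dots,k_1\}$) a dyadic cardinality index $\tau(Q)\in\bbn$ defined by $2^{\mathsf M(\tau(Q)-1)}<\#(F\cap Q)\leq 2^{\mathsf M\tau(Q)}$. This gives a function $\tau_k:F\to\{0,\dots,T_0\}$ with $T_0\lesssim m_0 t/\mathsf M$, namely $\tau_k(w):=\tau(Q_k(w))$ for the unique $Q_k(w)\in\mathcal Q_{\mathsf Mk}$ containing $w$. The key monotonicity $\tau_{k+1}(w)\leq \tau_k(w)$ will follow from $\#(F\cap Q)\leq\#(F\cap Q')$ whenever $Q\subset Q'$.

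Next, I would build a top-down refinement tree. Starting from $\{F\}$ at level $k_0-11$, at each scale $k$ I split every current piece $F_\alpha$ into subpieces $F_{\alpha,\tau}:=\{w\in F_\alpha:\tau_k(w)=\tau\}$. At each stage every piece $F_\alpha$ produced is a union of complete $F$-intersections of cubes at all scales processed so far, and consequently $\#(F_\alpha\cap Q)=\#(F\cap Q)$ whenever $F_\alpha\cap Q\neq\emptyset$ for any $Q$ at any scale $\geq k-1$. After the $K:=k_1-k_0+11$ refinements, each final piece is indexed by a non-increasing sequence $\vec\tau=(\tau_{k_0-10},\dots,\tau_{k_1})$; by construction $\#(F_{\vec\tau}\cap Q)\in(2^{\mathsf M(\tau_k-1)},2^{\mathsf M\tau_k}]$ for every scale $k$ and every $Q\in\mathcal Q_{\mathsf Mk}$ meeting $F_{\vec\tau}$, which is stronger than~\eqref{eq: regular tree} (a dyadic window of width $1$ in $\tau$ rather than width $2$).

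Then I would discard: set $F':=\bigcup\{F_{\vec\tau}:\#F_{\vec\tau}<\beta^2\#F\}$ and let $F_1,\dots,F_N$ enumerate the remaining pieces. The input needed for $\#F'<\beta^{1/4}\#F$ is a combinatorial estimate on the number of nonempty types $\vec\tau$: since $(\tau_k)$ is non-increasing with values in $\{0,\dots,T_0\}$ there are at most $\binom{T_0+K}{T_0}$ such sequences, and the local energy bound \eqref{eq: tree dec eng bd} forces $\#(F\cap Q)\leq \trct+\egbd\cdot 2^{-\mathsf Mk\alpha}$ for every cube $Q\in\mathcal Q_{\mathsf Mk}$ meeting a ball $B_\rfrak(w,\mfsc_0)$, so each realized $\tau_k$ lies in a short window around the Frostman value $\alpha(k_1-k)+O(\log_2\trct/\mathsf M)$. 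Combining these with the hypotheses $2^{-\mathsf M}(m_0+1)<\kappa/100$ and $6\mathsf M<2^{\kappa\mathsf M/100}$ on $\mathsf M$ (and $\beta=e^{-\kappa t}$), the count of nonempty types can be bounded by $\beta^{-7/4}$, yielding $\#F'\leq\beta^{-7/4}\cdot\beta^2\#F=\beta^{1/4}\#F$.

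Finally, the lower bound \eqref{eq: trivial lower bd} is a volume count. Cubes in $\mathcal Q_{\mathsf Mk_0}$ have side length $\asymp\mfsc_0=e^{-\sqrt\vare t}\eta$ and, since $\eta\geq e^{-0.01\vare t}$, their total number in $B_\rfrak(0,1)$ is at most $C\mfsc_0^{-3}\leq e^{3.1\sqrt\vare t}$; hence each $F_i$ meets at most this many cubes at scale $k_0$. Combining with the envelope $\#(F_i\cap Q)\in[2^{\mathsf M(\tau_{i,k_0}-2)},2^{\mathsf M\tau_{i,k_0}}]$ yields $\#(F_i\cap Q)/\#F_i\geq 2^{-2\mathsf M}/e^{3.1\sqrt\vare t}\geq e^{-4\sqrt\vare t}$ for all sufficiently large $t$.

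The hard part will be the combinatorial bound $\binom{T_0+K}{T_0}\leq\beta^{-7/4}$: the generic estimate $\binom{T_0+K}{T_0}\leq 2^{T_0+K}$ is too crude and would force $\mathsf M$ to be linear in $m_0/\kappa$, whereas the hypothesis on $\mathsf M$ is only logarithmic in $m_0/\kappa$. To close this gap one must use both the monotonicity of $\vec\tau$ and the energy upper bound on cube counts to restrict the range of admissible $\tau_k$ at each scale, essentially reducing the problem to counting small perturbations of the Frostman profile.
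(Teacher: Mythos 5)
Your construction has a genuine gap at its central step: the type $\tau_k(w)$ is computed from the occupancies of the \emph{original} set $F$, but \eqref{eq: regular tree} is a statement about the occupancies of the \emph{piece} $F_i$ itself, and the former does not control the latter at any scale coarser than $k_1$. Concretely, take two cubes $Q_1,Q_2\in\mathcal Q_{\mathsf Mk}$ each containing $2^{2\mathsf M}$ points of $F$ (so the same $\tau_k$), where in $Q_1$ the points fill $2^{\mathsf M}$ subcubes with $2^{\mathsf M}$ points each, while in $Q_2$ one subcube carries $2^{\mathsf M}$ points and the remaining points are spread one per subcube. The final piece of type ``$2^{\mathsf M}$ points per $(k+1)$-cube'' meets $Q_1$ in $2^{2\mathsf M}$ points but $Q_2$ in only $2^{\mathsf M}$ points; with more scales the discrepancy compounds, so no fixed window such as the factor $2^{2\mathsf M}$ in \eqref{eq: regular tree} can hold for your final pieces. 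This is exactly why the paper runs the regularization of \cite[Lemma 5.2]{BFLM}: scale by scale, cubes are grouped according to the counts of the \emph{current, already refined} set (equivalently by branching numbers, which admit only $O(\mathsf M)$ dyadic classes), only the largest class is retained, and the pieces $F_i$ are produced by iterating this extraction on what remains --- after first shifting $F$ off the dyadic grid and pre-trimming the $2^{-\mathsf Mk_1}$-cells according to $\trct$, preparatory steps your outline omits.

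Even leaving this aside, the step you yourself flag as ``the hard part'' is a real obstruction, not a technicality. The number of non-increasing type sequences is at least $2^{\min(T_0,K)}$, which is of size $e^{c t/\mathsf M}$, and this is $\leq\beta^{-7/4}=e^{1.75\kappa t}$ only if $\mathsf M$ is at least of order $(m_0+1)/\kappa$, whereas \eqref{eq: condition on M} only provides $\mathsf M\gg\log_2\bigl((m_0+1)/\kappa\bigr)$. Your proposed rescue --- that \eqref{eq: tree dec eng bd} pins each $\tau_k$ in a short window around the ``Frostman value'' --- is not available: the energy bound gives only an \emph{upper} bound on cube occupancies (cubes containing a single point are fully consistent with it), so $\tau_k$ can range over essentially its whole interval. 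The paper needs no count of type sequences at all: keeping only the largest class at each of the at most $k_1$ scales loses a factor at most $6\mathsf M$ per scale, and the second inequality in \eqref{eq: condition on M}, $6\mathsf M<2^{\kappa\mathsf M/100}$, is precisely what makes $(6\mathsf M)^{-k_1}\gg\beta^{0.1}$, which yields $\#F_i\geq\beta^2\cdot(\#F)$ and $\#F'<\beta^{1/4}\cdot(\#F)$. (Your volume-counting derivation of \eqref{eq: trivial lower bd} from \eqref{eq: regular tree} is fine and is essentially the paper's.)
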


\begin{proof}
This lemma is essentially proved in~\cite[Lemma 5.2]{BFLM}. We explicate this construction for completeness. Let us begin with a preparatory step before applying the construction in loc.\ cit.; this step is also present in~\cite[Lemma 5.2]{BFLM}.  

\begin{claim}
We may write $F=F''\bigcup (\cup\hat F_j)$ (disjoint union) satisfying that $\#F''\leq \beta^{1/2}\cdot (\#F)$ and for each $\hat F_j$, there exists some $w_j\in \rfrak$ so that if $Q, Q'\in\mathcal Q_{\mathsf Mk}$ intersect $\hat F_j+w_j$ non-trivially, the distance between $Q\cap(\hat F_j+w_j)$ and $Q'\cap(\hat F_j+w_j)$ is at least $2^{-\mathsf Mk-\mathsf M}$. 
\end{claim}

\begin{proof}[Proof of the Claim]
For every $k_0-10\leq k\leq k_1$, the density of 
\[
D_k=\Bigl\{w\in \rfrak: \exists r,s, \text{ such that } w_{rs}\in 2^{-\mathsf k}(\bbz+[0, 2^{-{\mathsf M}}])\Bigr\}
\]
in $\rfrak$ is $\leq 3\times 2^{-{\mathsf M}}$. Using the definition, we conclude that the density of $D:=\bigcup_k D_k$ in $\rfrak$ is 
$\geq 1-(1-3\times 2^{-{\mathsf M}})^{k_1-k_0+1}$. 

Hence there exists some $w_1$ so that 
\[
\#(F+w_1\setminus D)\geq (1-3\times 2^{-{\mathsf M}})^{k_1-k_0+1}\cdot (\#F)\gg \beta^{0.1}\cdot (\#F),
\]
where we used $k_1-k_0\leq 2(m_0+1)t$ and the fact that $2^{-{\mathsf M}}(m_0+1)\leq \kappa/100$.

Note that $F+w_1\subset B_\rfrak(0,10)$, and put 
\[
\hat F_1:=(F+w_1\setminus D)-w_1.
\]
Cover $B_\rfrak(0,10)$ with dyadic cubes $\{Q_r\}$ 
in $\mathcal Q_{{\mathsf M}k_1}$, and set
\[
\hat Q_{1}^r=\Bigl((F+w_1\setminus D)\cap Q_r\Bigr)-w_1
\]
for any $r$ so that $(F+w_1\setminus D)\cap Q_r\neq \emptyset$.

Assuming $\hat F_1,\ldots, \hat F_n$ are defined, repeat the above with $F\setminus (\cup_{i=1}^n \hat F_i)$ if this set has $\geq \beta^{1/2}\cdot(\#F)$ many elements. 
Each set thus obtain satisfies 
\[
\#\hat F_j\gg \beta^{0.6}\cdot (\#F).
\]
In consequence, this process terminates after $N'\ll\beta^{-0.6}$ many steps and yields
sets $\hat F_1,\ldots, \hat F_{N'}$. Define $\{\hat Q_{j}^r\}$ similarly for each $\hat F_j$.

Let $F''=F\setminus (\bigcup \hat F_j)$, then $\#F''\leq \beta^{1/2}\cdot (\#F)$. The claim follows.
\end{proof}

We now further subdivide the sets $\hat F_j$ so that the resulting sets satisfy~\eqref{eq: regular tree} and~\eqref{eq: trivial lower bd}.
Fix some $j$. We will begin trimming $\hat F_j$ from the smallest cells, i.e., $2^{-{\mathsf M}k_1}$-cubes. 
In view of~\eqref{eq: F cap the smallest cube}, 
$\# \hat Q_{j}^r\ll \trct$
for all $r$. For $\ell\in\bbn$, let 
\[
\hat F_{j\ell}=\bigcup\{\hat Q_{j}^r: 2^{-\ell-1}\trct\leq \#\hat Q_{j}^r\leq 2^{-\ell}\trct\}.
\]
Let $\hat F_j'=\bigcup_\ell \Bigl\{\hat F_{j\ell}: \#\hat F_{j\ell}\leq \beta\cdot (\# \hat F_j)\Bigr\}$.

Recall that $1\leq \trct\leq e^{0.01\vare t}$ and $\beta=e^{-\kappa t}$. Therefore,  
\[
\#\Bigl(\bigcup F'_j\Bigr)\ll \sum\#F'_j\ll N'\cdot \beta\cdot (\# \hat F_j)\cdot\log\trct< \beta^{0.3}\cdot(\#F),
\] 
so long as $t$ is large enough. 
Put $\bar F=F''\bigcup(\bigcup F'_j)$, then $\#\bar F<  2\beta^{0.3}\cdot(\#F)$. 

Thanks to this and the claim we can now apply the construction in~\cite[p.~246]{BFLM}, with $\hat F_{j\ell}$ and dyadic cubes $2^{-{\mathsf M}k}$ with $k_0-10\leq k\leq k_1$, and write
\[
\hat F_{j\ell}=F'_{j\ell}\bigcup\Bigl(\textstyle\bigcup_q \hat F_{j\ell}^q\Bigr)
\]
so that $\#F'_{j\ell}\ll \beta\cdot (\#\hat F_{j\ell})$, Moreover, for every $q$, $F_{j\ell}^q$ satisfies~\eqref{eq: regular tree} and  
\[
\#\hat F_{j\ell}^q\gg (6{\mathsf M})^{-k_1}\cdot (\#\hat F_{j\ell})\gg 2^{-\kappa {\mathsf M}k_1/10}\cdot (\#\hat F_{j\ell})\gg \beta^{0.1}\cdot (\#\hat F_{j\ell});
\]
we used $6{\mathsf M}\leq 2^{\kappa {\mathsf M}/10}$, see~\eqref{eq: condition on M}, in the second inequality, and used the definitions of $k_1$ and $\beta$ together with~\eqref{eq: a priori bound on egbd} in the last inequality. 

Recall now that $\#\hat F_{j\ell}\geq \beta\cdot(\#\hat F_j)\geq \beta^{1.6}\cdot (\#F)$. Hence,
\[
\#\hat F_{j\ell}^q\geq \beta^2\cdot (\#F)
\]
if we assume $t$ is large enough to account for implied multiplicative constant. 

In view of~\eqref{eq: regular tree}, if for some $j,\ell,q$ and $2^{-{\mathsf M}k_0}$ 
cube $Q$ with $F_{j\ell}^q\cap Q\neq \emptyset$ we have $\#(F_{j\ell}^q\cap Q)\leq e^{-4\sqrt\vare t}\cdot(\#F_{j\ell}^q)$, then~\eqref{eq: regular tree}, applied with $k_0$, implies  
\[
\#F_{j\ell}^q\ll e^{-\sqrt\vare t}\cdot (\#F_{j\ell}^q),
\]
which is a contradiction if $t$ is large enough. 

Finally, note that as it was done 
\[
\#\bigcup_{j,\ell} F'_{j\ell}\leq N'\cdot \log R\cdot\beta\cdot (\#F)< \beta^{0.3}\cdot(\#F).
\]
The lemma thus holds with $F'=\bar F\bigcup (\bigcup_{j,\ell} F'_{j\ell})$ and $\{\hat F_{j\ell}^q: j, \ell, q\}$.
\end{proof}

Recall that for all $w\in F$, we put $F_w=B_\rfrak(w, \mfsc_0)\cap F$.
Assume now that for some $\mathsf C\leq e^{10\vare t}$ for all $w'\in F_w$, we have 
\be\label{eq: tree dec eng bd'}
\eng_{F_w,\trct}(w')\leq \mathsf C\cdot \mfsc_0^{-\alpha}\cdot (\#F_w).
\ee
Since $e^{t}\leq \#F\leq e^{\fub t}$ and $\mfsc_0=e^{-\lfloor \sqrt\vare t\rfloor}\eta$ where $\eta>e^{-0.01\vare t}$,~\eqref{eq: tree dec eng bd'} implies 
\[
\eng_{F_w,\trct}(w')\leq  e^{(m_0+2\sqrt\vare)t}.
\]
In particular,~\eqref{eq: tree dec eng bd} holds with $\egbd=e^{(m_0+2\sqrt\vare)t}$, and Lemma~\ref{lem: regular tree decomposition} is applicable.

\begin{lemma}\label{lem: regular tree and local eg}
Let $F=F'\bigcup (\bigcup_{i=1}^{N}F_i)$ be a decomposition of $F$ as in Lemma~\ref{lem: regular tree decomposition}.
Then for every $i$ and all $w\in F_i$ we have 
\[
\eng_{F_{i,w}, \trct}(w')\leq \mathsf C\beta^{-4}\mfsc_0^{-\alpha}\cdot (\#F_{i,w})
\]
for all $w'\in F_{i,w}:=F_i\cap B_\rfrak(w,\mfsc_0)$. 
\end{lemma}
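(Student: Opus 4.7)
The plan is to combine a straightforward monotonicity of the $\eng$-functional under restriction to subsets with a density comparison coming from the regular tree structure of Lemma~\ref{lem: regular tree decomposition}.

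First I would establish the monotonicity bound
\[
\eng_{F_{i,w},\trct}(w')\;\leq\;\eng_{F_w,\trct}(w')
\]
for every $w'\in F_{i,w}$. To see this, let $F_w'\subset F_w$ be an optimizer in the definition of $\eng_{F_w,\trct}(w')$, so that $\#(F_w\setminus F_w')=\trct$. Set $F_{i,w}'' := F_w'\cap F_{i,w}\subset F_{i,w}$; then $\#(F_{i,w}\setminus F_{i,w}'')=\#\bigl(F_{i,w}\cap(F_w\setminus F_w')\bigr)\leq\trct$. If this count equals $\trct$, the asserted monotonicity follows from $\sum_{F_{i,w}''}\|w'-w''\|^{-\alpha}\leq\sum_{F_w'}\|w'-w''\|^{-\alpha}$; otherwise we remove additional points of $F_{i,w}''$ to achieve exactly $\trct$ removals, which only decreases the sum. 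Combining with the hypothesis~\eqref{eq: tree dec eng bd'} then gives
\[
\eng_{F_{i,w},\trct}(w')\;\leq\;\mathsf C\cdot\mfsc_0^{-\alpha}\cdot\#F_w.
\]

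Next I would prove the density comparison $\#F_w\leq\beta^{-4}\#F_{i,w}$. The point is that the regular tree decomposition forces $F_i$ to occupy roughly equal mass in each of its non-empty $\mathcal Q_{\mathsf Mk_0}$-cubes. Indeed, by~\eqref{eq: trivial lower bd} applied at the coarsest scale $k_0$, the cube $Q_w\in\mathcal Q_{\mathsf Mk_0}$ containing $w$ satisfies $\#(F_i\cap Q_w)\geq e^{-4\sqrt\vare t}\,\#F_i$. The preparatory step in the proof of Lemma~\ref{lem: regular tree decomposition} places $w$ at distance at least $2^{-\mathsf M(k_0+1)}=\mfsc_0/2^{\mathsf M}$ from $\partial Q_w$, and applying the regular tree property at a slightly finer scale $k_0+c$ (with $c$ chosen so that a $(k_0+c)$-cube about $w$ has diameter $\leq\mfsc_0$ and hence sits inside $B_\rfrak(w,\mfsc_0)$) yields
\[
\#F_{i,w}\;\gtrsim\;\#(F_i\cap Q_w)\;\geq\;e^{-4\sqrt\vare t}\,\#F_i\;\geq\;e^{-4\sqrt\vare t}\,\beta^2\,\#F\;\geq\;e^{-4\sqrt\vare t}\,\beta^2\,\#F_w.
\]
In view of the standing convention $\kappa(m_0+1)\leq 10^{-6}\vare$ (which allows $\beta^{-2}$ to absorb the harmless factor $e^{-4\sqrt\vare t}$ up to $\beta^{-2}$) the resulting estimate $\#F_w\leq\beta^{-4}\#F_{i,w}$ follows, completing the proof.

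The main obstacle I expect is keeping track of the geometric constants in Step~2: one must verify that the boundary margin produced by the preparatory step of Lemma~\ref{lem: regular tree decomposition} is actually large enough (relative to the norm $\|\cdot\|$ on $\rfrak$) that the cube $Q_w$, or at least a definite fraction of it at a slightly finer scale, is captured by $B_\rfrak(w,\mfsc_0)$; this forces the choice of the shift $\mathsf M$ and the constant $c$ in the argument. Once this geometric point is handled, the density comparison is essentially a bookkeeping exercise with the parameters fixed in~\eqref{eq: condition on M}.
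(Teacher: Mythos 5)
Your Step 1 is fine (the truncated energy is monotone under passing to the subset $F_{i,w}\subset F_w$, up to the harmless degenerate case $\#F_{i,w}\leq\trct$), but it bounds $\eng_{F_{i,w},\trct}(w')$ by $\mathsf C\mfsc_0^{-\alpha}\cdot(\#F_w)$, i.e.\ with the wrong normalization, and Step 2 does not repair this. Your chain only gives $\#F_w\leq\#F\leq\beta^{-2}\#F_i\lesssim\beta^{-2}e^{4\sqrt\vare t}\,\#F_{i,w}$, and the factor $e^{4\sqrt\vare t}$ coming from~\eqref{eq: trivial lower bd} is \emph{not} absorbable into a power of $\beta$: since $\beta=e^{-\kappa t}$ with $\kappa(m_0+1)\leq10^{-6}\vare$, any fixed power $\beta^{-j}=e^{j\kappa t}$ has exponent $j\kappa\ll\sqrt\vare$, so $e^{4\sqrt\vare t}$ dwarfs $\beta^{-2}$ (and $\beta^{-100}$). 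Moreover the strategy itself cannot be patched: only $F_i$ has been regularized, not $F$, so the only lower bound on the local proportion of $F_i$-points inside $B_\rfrak(w,\mfsc_0)$ that holds uniformly in $w$ is the trivial one~\eqref{eq: trivial lower bd}, of size $e^{-4\sqrt\vare t}\beta^{2}$, far below $\beta^{4}$; nothing rules out balls in which $F\setminus F_i$ is much denser than $F_i$, in which case $\#F_w/\#F_{i,w}\gg\beta^{-4}$. So "monotonicity to the ambient energy plus a pointwise density comparison at every $w$" cannot yield the stated bound.

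The paper's proof needs a density comparison only at one well-chosen point: a covering argument with bounded multiplicity, combined with $\#F_i\geq\beta^2\#F$, produces some $w_i\in F_i$ with $\#F_{w_i}\leq\beta^{-3}\#F_{i,w_i}$ (see~\eqref{eq: a typical w in Fi}). The hypothesis~\eqref{eq: tree dec eng bd'} is first converted into dyadic ball-counting bounds for $F$, hence for $F_i$, centered at this special point, namely $\#\bigl(B_\rfrak(w_i,2^{-\mathsf Mk})\cap F_i\bigr)\leq 2^{\star\mathsf M}\beta^{-3}\mathsf C(2^{-\mathsf Mk}/\mfsc_0)^\alpha\#F_{i,w_i}$, and these bounds are then transported to an arbitrary $w\in F_i$ scale by scale using the regular tree property~\eqref{eq: regular tree} (non-empty cubes of the same generation carry comparable numbers of $F_i$-points), together with $\#F_{i,w_i}\asymp_{\mathsf M}\#F_{i,w}$, which again follows from~\eqref{eq: regular tree} at scale $k_0$. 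Summing over the at most $k_1$ scales, and using that $\#\bigl(B_\rfrak(w,2^{-\mathsf Mk_1})\cap F_i\bigr)\leq\trct$ at the finest scale, gives the energy bound with total loss $k_1 2^{\star\mathsf M}\beta^{-3}\leq\beta^{-4}$. To fix your write-up you should replace Step 2 by this "one good point plus regularity transfer" argument; the uniform bound~\eqref{eq: trivial lower bd} is far too lossy to be used pointwise.
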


\begin{proof}
Let $k_0\leq k\leq k_1$ and let $w\in F_i$.  Then using~\eqref{eq: tree dec eng bd'} 
and the fact that $\trct\leq 2^{0.01\vare t}$, we conclude that  
\be\label{eq: measure of cubes tree}
\begin{aligned}
\#\Bigl(B\Bigl(w,2^{-{\mathsf M}k}\Bigr)\cap F_i\Bigr)&\leq \#(B\Bigl(w,2^{-{\mathsf M}k}\Bigr)\cap F)\\
&\leq 2^{10{\mathsf M}}\mathsf C\cdot (2^{-{\mathsf M}k}/\mfsc_0)^\alpha\cdot (\#F_w).
\end{aligned}
\ee

Let $Q_0\in\mathcal Q_{{\mathsf M}k_0}$ be so that $Q_0\cap F_i\neq \emptyset$, and  
let $w\in F_i$. Then $B(w, 2^{-{\mathsf M}k_0})$ can be covered by at most $8$ cubes in $\mathcal Q_{{\mathsf M}k_0}$, moreover, it contains at least one cube in $\mathcal Q_{{\mathsf M}(k_0+1)}$ which also contains $w$. Thus by~\eqref{eq: regular tree}, 
\be\label{eq: I-i-w almost constant}
2^{-3-4{\mathsf M}}(\#Q_0\cap F_i)\leq \# F_{i,w}\leq 2^{3+2{\mathsf M}}(\#Q_0\cap F_i)
\ee

We claim that there exists $w_i\in F_i$ so that 
\be\label{eq: a typical w in Fi}
\begin{aligned}
\#F_{w_i}=\#(B_\rfrak(w_i,\mfsc_0)\cap F)&\leq \beta^{-3}\cdot (\#(B_\rfrak(w,\mfsc_0)\cap F_i)\\
&=\beta^{-3}\cdot(\#F_{i,w_i}).
\end{aligned}
\ee

Let us assume~\eqref{eq: a typical w in Fi} and finish the proof. 
Note that~\eqref{eq: measure of cubes tree} applied with $w=w_i$, together with~\eqref{eq: a typical w in Fi}, implies that
\be\label{eq: localized dim at w_i}
\#\Bigl(B_\rfrak\Bigl(w_i,2^{-{\mathsf M}k}\Bigr)\cap F_i\Bigr)\leq 2^{\star\mathsf M}\beta^{-3}\mathsf C\cdot (2^{-{\mathsf M}k}/\mfsc_0)^\alpha\cdot (\#F_{i,w_i}),
\ee
where we assumed $t$ is large.

Let now $k_0+2\leq k'\leq k_1$. Then 
\[
\#\Bigl(B_\rfrak\Bigl(w,2^{-{\mathsf M}k'}\Bigr)\cap F_i\Bigr)\leq \#(Q\cap F_i)
\] 
where $Q$ is a $2^{-{\mathsf M}(k'-1)}$ cube which contains $B_\rfrak\Bigl(w,2^{-{\mathsf M}k'}\Bigr)$. 
Let $Q'$ be a cube of same size which contains $w_i$, then using~\eqref{eq: regular tree}, we have 
\[
\#(Q\cap F_i)\leq 2^{2{\mathsf M}}\cdot(\#(Q'\cap F_i)).
\]
Since $Q'\subset B_\rfrak(w_i, 2^{-{\mathsf M}(k'-2)})$, 
using~\eqref{eq: localized dim at w_i} with $k=k'-2$, we conclude that 
\[
\begin{aligned}
\#\Bigl(B_\rfrak\Bigl(w,2^{-{\mathsf M}k'})\cap F_i\Bigr)&\leq 2^{2{\mathsf M}}\#(B_\rfrak(w_i, 2^{-{\mathsf M}(k'-2)})\cap F_i)\\
&\leq 2^{\star{\mathsf M}}\beta^{-3}\mathsf C\cdot (2^{-{\mathsf M}(k'-2)}/\mfsc_0)^\alpha\cdot (\#F_{i,w_i}).
\end{aligned}
\]
This and~\eqref{eq: I-i-w almost constant} (whic is used to replace $F_{i,w_i}$ with $F_{i,w}$) imply that 
\be\label{eq: num in Bwk' vs I-i-w}
\#\Bigl(B_\rfrak\Bigl(w,2^{-{\mathsf M}k'})\cap F_i\Bigr)\leq 2^{\star{\mathsf M}}\beta^{-3}\mathsf C\cdot(2^{-{\mathsf M}k'}/\mfsc_0)^\alpha\cdot (\#F_{i,w}).
\ee
Since $\#(B_\rfrak(w, 2^{-{\mathsf M}k_1})\cap F_i)\leq \#(B_\rfrak(w, 2^{-{\mathsf M}k_1})\cap F)\leq \trct$, see~\eqref{eq: F cap the smallest cube}, 
from~\eqref{eq: num in Bwk' vs I-i-w} we conclude that 
\[
\begin{aligned}
\eng_{F_{i,w}, \trct}(w)&\leq k_1 2^{\star{\mathsf M}}\beta^{-3}\mathsf C\cdot(\mfsc_0)^{-\alpha}\cdot (\#F_{i,w})\\
&\leq \beta^{-4}\mathsf C\cdot(\mfsc_0)^{-\alpha}\cdot (\#F_{i,w}),
\end{aligned}
\]
so long at $t$ is large enough. 
This completes the proof assuming~\eqref{eq: a typical w in Fi}.

We now prove~\eqref{eq: a typical w in Fi}. 
Let $\mathcal B=\{B_\rfrak(v, \mfsc_0): v\in F_i\}$ be a covering of $F_i$ with multiplicity $\leq K$.
Then  
\begin{align*}
\sum\#(B(v)\cap F)&\leq K\cdot \Bigl(\#\bigcup (B(v)\cap F)\Bigr)\leq K\cdot (\#F)\\
&\leq K\beta^{-2}\cdot(\#F_i)\leq K\beta^{-2}\sum\#(B(v)\cap F_i), 
\end{align*} 
where we write $B(v)$ for $B_\rfrak(v, \mfsc_0)$. We conclude that 
for some $w_i\in F_i$,
\begin{align*}
  \#F_{w_i}=\#(B(w_i)\cap F)&\leq K\beta^{-2}\cdot \Bigl(\#(B(w_i)\cap F_i)\Bigr)\\
  &\leq \beta^{-3}\cdot\Bigl(\#(B(w_i)\cap F_i)\Bigr)=\beta^{-3}(\#F_{i,w_i})  
\end{align*}
as was claimed in~\eqref{eq: a typical w in Fi}. 
\end{proof}


\section{Boxes, complexity and the Folner property}\label{sec: folner}

For every $\ell>0$, let $\rwm_\ell$ be the probability measure on $H$ defined by 
\be\label{eq: def rwm ell}
\rwm_\ell(\varphi)=\ave\varphi(a_{\ell}\uvk)\uvkd\qquad\text{for all $\varphi\in C_c(H)$.}
\ee

Our goal in this section and the next is to show that $\rwm_\ell^{(d)}$ (the $d$-fold convolution of $\rwm_\ell$) can be approximated with a convex combination of certain natural measures supported on a finite union of local $H$ orbits, see \S\ref{sec: cone and mu cone}. 

This section will lay the groundwork for this decomposition. 
In particular, we will prove a covering lemma, Lemma~\ref{lem: E good h0}, 
define the notion of an admissible measure, \S\ref{sec: cone and mu cone}, and
prove a certain almost invariance property for a class of measures appearing in our analysis, Lemmas~\ref{lem: Folner property} and~\ref{lem: Folner property 2}.

\subsection*{Covering lemmas}
We will fix $0<\injr\leq0.01\eta_X$ and $\beta=\eta^2$ throughout this section. 
For $m\geq0$, we introduce the shorthand notation $\umt^H_m$ for 
\be\label{eq: def BH}
\umt_{\eta,\beta^2,m}^H=\Bigl\{u^-_s: |s|\leq \beta^2 \nuni^{-m}\Bigr\}\cdot\{a_\tau: |\tau|\leq \beta^2\}\cdot U_\eta,
\ee
where for every $\delta>0$, let $U_\delta=\{u_r: |r|\leq \delta\}$, see~\eqref{eq: def B ell beta}.

Define $\umt^G_{m}\subset G$ by thickening $\umt^H_m$ 
in the transversal direction as follows: 
\be\label{eq: def O ell C}
\umt^G_{m}:=\umt^H_m\cdot \exp(B_\rfrak(0,2\beta^2)).
\ee

We begin by fixing a particular covering of $X_{2\eta}$.   

\begin{lemma}\label{lem: E good h0}
For every $m\geq 0$, there exists a covering 
\[
\Big\{\umt^G_{m}.y_{j}: j\in \mathcal J_m, y_j\in X_{3\eta/2}\Big\}
\] 
of $X_{2\injr}$ with multiplicity $K$, depending only on $X$.
In particular, $\#\mathcal J_m\ll\eta^{-1}\beta^{-10}\nuni^{m}$.
\end{lemma}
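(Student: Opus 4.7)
My plan is a standard Vitali-type covering argument tailored to the box structure of $\umt^G_m$. First I would construct an auxiliary ``quarter-size'' companion $V_m\subset \umt^G_m$ satisfying $V_m^{-1}V_m\subset \umt^G_m$ (using \eqref{eq:B-beta-almost-group} together with the commutation estimates of Lemma~\ref{lem:commutation-rel} and Lemma~\ref{lem: BCH} to absorb the Campbell--Hausdorff errors in the product). With respect to a fixed Haar measure $m_G$ on $G$ one then has
\[
m_G(V_m)\asymp m_G(\umt^G_m)\asymp \eta\,\beta^{10}\,\nuni^{-m},
\]
where the $\nuni^{-m}$ factor comes solely from the $U^-$-width of $\umt^G_m$, and the implied constants depend only on $G$.

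Next, I would pick a maximal subset $\{y_j:j\in\mathcal J_m\}\subset X_{3\eta/2}$ for which the translates $V_m\cdot y_j$ are pairwise disjoint. Since $\inj(y_j)\ge 3\eta/2\gg \operatorname{diam}(V_m)$, the map $g\mapsto gy_j$ is injective on a neighborhood containing $V_m\cdot V_m^{-1}$ by \eqref{eq:def-inj}, so disjointness is meaningful. To show $\bigcup_j \umt^G_m\cdot y_j\supset X_{2\injr}$, take $x\in X_{2\injr}$: the set $V_m\cdot x$ has diameter $\ll\beta\ll\eta/4$, hence lies inside $X_{3\eta/2}$, and by maximality there exists $j_0$ with $V_m\cdot x\cap V_m\cdot y_{j_0}\ne\emptyset$ (or $x=y_{j_0}$). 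In either case $x\in V_m^{-1}V_m\cdot y_{j_0}\subset \umt^G_m\cdot y_{j_0}$.

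For the multiplicity bound, suppose $x\in \bigcap_{k=1}^{K}\umt^G_m\cdot y_{j_k}$. Writing $y_{j_k}=g_k\cdot y_{j_1}$ for suitable $g_k\in \umt^G_m(\umt^G_m)^{-1}$ (using injectivity at $y_{j_1}$), the pairwise disjoint translates $V_m\cdot y_{j_k}$ all lie inside $\umt^G_m(\umt^G_m)^{-1}V_m\cdot y_{j_1}$, so comparing Haar volumes gives
\[
K\cdot m_G(V_m)\le m_G\bigl(\umt^G_m(\umt^G_m)^{-1}V_m\bigr).
\]
The crucial observation is that the $U^-$-width of $\umt^G_m(\umt^G_m)^{-1}$ remains $\ll\beta^2\nuni^{-m}$, so both sides scale identically in $m$ and the right-hand side is $\asymp m_G(V_m)$ with an absolute implied constant; hence $K$ is bounded in terms of $G$ and $X$ alone. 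Finally, disjointness of the $V_m\cdot y_j$ inside $X$ yields $\#\mathcal J_m\cdot m_G(V_m)\le m_X(X)$, i.e. $\#\mathcal J_m\ll \eta^{-1}\beta^{-10}\nuni^m$. The main thing to watch carefully is the compatibility between the shrinking/enlarging steps and the nonabelian multiplication in $G$, in particular verifying that both $V_m^{-1}V_m$ and $\umt^G_m(\umt^G_m)^{-1}V_m$ retain product-box shape with the same $m$-dependent $U^-$-scaling; after that the argument is essentially bookkeeping in the factored coordinates of \eqref{eq: def BH}--\eqref{eq: def O ell C}.
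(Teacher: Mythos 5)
Your argument is correct, but it takes a genuinely different route from the paper's. The paper builds the covering in two stages: first a Vitali-type covering of $X_{2\eta}$ by translates of the $m$-independent set $\boxHs_{\beta^2}\cdot U_\eta\cdot\exp(B_\rfrak(0,\beta^2))$ (multiplicity $K_1$ via the doubling relation \eqref{eq: doubling propert Bbar}, cardinality $\ll\eta^{-1}\beta^{-10}$ by volume), and then a sub-covering of each $AU$-box by $\ll\nuni^m$ thin translates $\umt^H_m h_j$ with $h_j$ a maximal separated set in $\umt^H_0$ (multiplicity $K_2$ via Lemma~\ref{lem:commutation-rel}); the final multiplicity $K_1K_2$ is then obtained not by a volume count but by an injectivity/pigeonhole argument using that $g\mapsto g\hat y_k$ is injective on $\boxG_{10\eta}$ and that $(h,w)\mapsto h\exp(w)$ is injective. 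You instead run a single Vitali argument directly with the anisotropic boxes $\umt^G_m$ and control the multiplicity by Haar-volume doubling, which requires the uniform-in-$m$ statement that $V_m^{-1}V_m$ and $(\umt^G_m)^{-1}\umt^G_m\cdot V_m$ are still product boxes whose $U^-$-width is $\ll\beta^2\nuni^{-m}$; this is exactly the content of Lemma~\ref{lem:commutation-rel}(1) together with the commutation identities \eqref{eq: commutation in convex comb sec} and Lemma~\ref{lem: BCH} (the $\Ad$ of elements of $\umt^H_m$ distorts $\rfrak$-norms only by $1+O(\eta)$), so the step you flag as the crucial one does go through. What each approach buys: yours is more streamlined (one covering, one volume comparison, and the cardinality bound falls out of disjointness in the finite-volume space $X$), while the paper's factorization isolates the $m$-dependence entirely in the $H$-direction and replaces volume-doubling of the $m$-dependent boxes by a soft injectivity argument, which is slightly more robust to the bookkeeping of enlarged boxes. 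Two small slips worth fixing but not affecting correctness: the diameter of $V_m$ is $\asymp\eta$ (its $U$-width), not $\ll\beta$ — all you actually need there is $X_{2\eta}\subset X_{3\eta/2}$ so that $x$ competes in the maximal family; and in the multiplicity step the relation $y_{j_k}=b_k^{-1}b_1y_{j_1}$ puts $g_k$ in $(\umt^G_m)^{-1}\umt^G_m$ rather than $\umt^G_m(\umt^G_m)^{-1}$, which has the same box structure, so the volume comparison is unaffected.
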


\begin{proof}
We first prove the following.
There exists a covering 
\[
\Big\{\Bigl(\boxHs_{\beta^2}\cdot\boxU_{\eta}\cdot\exp\bigl(B_\rfrak(0,\beta^2)\bigr)\Bigr).\hat y_k: k\in \mathcal K, \hat y_k\in X_{2\eta}\Big\}
\] 
of $X_{2\injr}$ with multiplicity $O(1)$ depending only on $X$.

Let us write $\bar{\mathsf B}^G_{\eta,\beta^2}=\boxHs_{\beta^2}\cdot \boxU_{\eta}\cdot\exp(B_\rfrak(0,\beta^2))$. Then 
\be\label{eq: doubling propert Bbar}
\Bigl(\bar{\mathsf B}^G_{0.1\eta, 0.1\beta^2}\Bigr)^{-1}\cdot\Bigl(\bar{\mathsf B}^G_{0.1\eta,0.1\beta^2}\Bigr)\subset 
\Bigl(\bar{\mathsf B}^G_{10\eta,10\beta^2}\Bigr),
\ee
see Lemma~\ref{lem: BCH}. 

Let $\{\hat y_k\in X_{2\eta}: k\in\mathcal K\}$ be maximal with the following property
\[
\bar{\mathsf B}^G_{0.01\eta,0.01\beta^2}.\hat y_i\cap \bar{\mathsf B}^G_{0.01\eta,0.01\beta^2}.\hat y_j=\emptyset\quad\text{ for all $i\neq j$.}
\]
In view of~\eqref{eq: doubling propert Bbar} thus $\{\bar{\mathsf B}^G_{\eta,\beta^2}.\hat y_k:k\in\mathcal K\}$ covers $X_{2\eta}$ with multiplicity $O(1)$. Since $m_G(\bar{\mathsf B}^G_{\eta,\beta^2})\asymp \eta\beta^{10}$, we also conclude that $\mathcal K\ll \eta^{-1}\beta^{-10}$.

The following generalization will also be used: for any $1\leq c\leq 100$, 
\be\label{eq: up-graded cov step 0}
\{\bar{\mathsf B}^G_{c\eta,c\beta^2}.\hat y_k:k\in\mathcal K\}
\ee
covers $X_{2\eta}$ with multiplicity $\leq K_1$, depending only on $X$.

Let now $m\geq 0$, and recall that we write $\umt^H_m$ for $\umt^H_{\eta,\beta^2, m}$. 
Fix a subset $\mathcal H\subset \umt_0^H$ which is maximal with the following property 
\[
\umt^H_{0.01\eta, 0.01\beta^2, m}h\cap \umt^H_{0.01\eta, 0.01\beta^2, m}h'=\emptyset,
\]
for all $h\neq h'\in \mathcal H$. Since 
\[
m_H(\umt^H_{0.01\eta, 0.01\beta^2, m})\asymp \nuni^{-m}m_H(\umt_0^H),
\]
we have $\#\mathcal H\ll\nuni^{m}$ where the implied constants are absolute. 
Furthermore,    
\[
\Bigl(\umt^H_{0.01\eta, 0.01\beta^2, m}\Bigr)^{\pm1}\cdot \umt^H_{0.01\eta, 0.01\beta^2, m}
\subset \umt^H_{0.1\eta, 0.1\beta^2, m}.
\]
Thus $\{\umt^H_mh_j: h_j\in\mathcal H\}$ covers $\umt_0^H=\boxHs_{\beta^2}\cdot \boxU_{\eta}$ with multiplicity $\ll K_2$. 

Combining these two coverings, we obtain a covering 
\[
\{\umt^H_mh_j\exp(B_\rfrak(0,\beta^2)).\hat y_k: h_j\in \mathcal H, k\in\mathcal K\}.
\]
of $X_{2\eta}$. Note further that 
\[
\umt^H_mh_j\exp(B_\rfrak(0,\beta^2))=\umt^H_m\exp\Big(\Ad(h_j)B_\rfrak(0,\beta^2)\Big)h_j\subset \umt^G_m h_j;
\]
where we used the fact that $\Ad(h_j)B_\rfrak(0,\beta^2)\subset B_\rfrak(0,2\beta^2)$ in the final inclusion above --- this holds since $\|h_j-I\|\leq2\beta^2$ and $\beta$ is small. 

Finally note that since $\hat y_k\in X_{2\eta}$ and $\|h_j-I\|\leq2\beta^2$, we have $h_j\hat y_k\in X_{19\eta/10}$, for every $j,k$. Altogether, we obtain a covering 
\[
\{\umt^G_m.y_j:j\in \mathcal J, y_j\in X_{19\eta/10}\}=\{\umt^G_m.h_j\hat y_k:h_j\in\mathcal H, k\in\mathcal K\}
\]
of $X_{2\eta}$. 

We claim: the multiplicity of this covering is $\leq K_1K_2$. Suppose $z\in X$ belongs to $M>K_1K_2$ sets 
$\umt^G_m.h_j\hat y_k$. That is, for $i=1,\ldots, M$, we have  
\[
z=\sfh_i\exp(w_i)h_{j_i}\hat y_{k_i}\in \umt^G_m.h_{j_i}\hat y_{k_i}.
\]  
Note that $\umt^G_mh_{j_i}\subset \bar{\mathsf B}^G_{10\eta,10\beta^2}$. 
Thus in view of~\eqref{eq: up-graded cov step 0} and the fact that for all $\hat y_k$, $g\mapsto g\hat y_{k}$ 
is injective over $\boxG_{10\eta}$, we conclude that for at least 
$M/K_1>K_2$ many choices of $i$ we have $\sfh_i\exp(w_i)h_{j_i}=\sfh\exp(w)h$. This implies 
\[
\sfh_ih_{j_i}\exp(\Ad(h_{j_i}^{-1})w_i)=\sfh h\exp(\Ad(h^{-1})w).
\]
Since the map $(h,w)\mapsto h\exp(w)$ is injective on $\boxH_{100\eta}\times B_\rfrak(0,100\eta)$, 
for more than $K_2$ choices of $i$ we have $\sfh_ih_{j_i}=\sfh h$. This contradicts 
the choice of $K_2$ and completes the proof. 
\end{proof}

\subsection*{A density function}
For every $m\geq 0$, we fix a covering 
\[
\{\umt^G_m y_j: y_j\in X_{3\eta/2}, j\in \mathcal J_m\}
\] 
as in Lemma~\ref{lem: E good h0}. For every $z\in X$, let $\mathsf k_m(z)=\#\{j: z\in \umt^G_m.y_j\}$.
Then $1\leq \mathsf k_m(z)\leq K$. Define 
\[
\density_{m}: X\to \{1/d: d=1,\ldots, K\} \qquad\text{by $\;\density_{m}(z):={1}/{\mathsf k_m(z)}$}.
\]
For every $j\in\mathcal J_m$, put 
\[
\density_{m,j}=\density_{m}|_{\umt^G_m.y_j}.
\]
Note that $\sum_{j}\density_{m,j}(z)=1$ for all $z\in X$.

\medskip

\subsection{Boxes and complexity}\label{sec: box complexity} 
Let $\mathsf{prd}:\mathbb R^3\to H$ be the map
\[
\mathsf{prd}(s,\tau,r)= u^-_sa_\tau u_r.
\]
A subset $\mathsf D\subset H$ will be called a {\em box} if there exist intervals 
$I^{\bigcdot}\subset\bbr$ (for $\bigcdot=\pm, 0$) so that 
\[
\mathsf D=\mathsf{prd}(I^-\times I^0\times I^+).
\] 

We say $\Xi\subset H$ has complexity bounded by $L$ (or at most $L$) if 
$\Xi=\bigcup_{1}^L \Xi_i$
where each $\Xi_i$ is a box.

For every interval $I\subset\bbr$, let $\partial I=\partial_{100\eta|I|}I$ (recall that $\eta=\beta^{1/2}$), and put $\mathring I=I\setminus\partial I$.
Given a box $\mathsf D=\mathsf{prd}(I^-\times I^0\times I^+)$, we let 
\begin{subequations}
\begin{align}
\label{eq: def mathring D}&\mathring{\mathsf D}=\mathsf{prd}\Bigl(\mathring{I^-}\times\mathring{I^0}\times\mathring{I^+}\Bigr) \quad\text{and}\\ 
\label{eq: def partial D}&\partial\mathsf D=\mathsf D\setminus \mathring{\mathsf D}.
\end{align}
\end{subequations}

More generally, if $\mathsf D=\mathsf{prd}(I^-\times I^0\times I^+)$ is a box, and 
$\Xi\subset \mathsf D$ has complexity bounded by $L$, we define 
$\partial\Xi:=\bigcup\partial\Xi_i$ and  
\be\label{eq: def mathring Xi}
\mathring\Xi_{\mathsf D}:=\bigcup\mathring\Xi_i
\ee
where the union is taken over those $i$ so that $\Xi_i=\mathsf{prd}(I_i^-\times I_i^0\times I_i^+)$ with 
$|I_i^{\bigcdot}|\geq 100\eta|I^{\bigcdot}|$ for $\bigcdot=\pm,0$.

\begin{lemma}\label{lem: density level sets}
There exists $K'$ depending only on $X$ so that the following holds. 
Let $j\in\mathcal J_m$ and $w\in B_\rfrak(0,2\beta^2)$. 
Then for every $1\leq \sfk\leq K$, there is $\Xi^\sfk=\Xi^\sfk(j,w)\subset \umt^H_m$ with complexity at most $K'$ so that 
\[
\begin{aligned}&\density_{m,j}(z)=1/\sfk\quad\text{for all $z\in\Xi^\sfk.\exp(w)y_j$ and}\\
&\Bigl|\{z\in\umt^H_m.\exp(w)y_j: \density_{m,j}(z)=1/\sfk\}\setminus \Bigl(\Xi^\sfk.\exp(w)y_j\Bigr)\Bigr|\ll \eta |\umt^H_m|
\end{aligned}
\]
where the implied constant depends only on $X$. 
\end{lemma}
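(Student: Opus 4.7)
The plan is to realize $\density_{m,j}$ on $\umt^H_m \exp(w) y_j$ as a Boolean combination of a bounded number of ``membership indicators'' coming from the covering, each of which is (essentially) a box in $\umt^H_m$.

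First, I would bound the set of competitors
\[
\mathcal J(j,w) := \big\{ j' \in \mathcal J_m : \umt^G_m y_{j'} \cap \umt^H_m \exp(w) y_j \neq \emptyset \big\}
\]
by a constant $L$ depending only on $X$. This follows from the bounded multiplicity of the covering together with the comparable $m_G$-volume of the sets $\umt^G_m y_{j'}$ and the bounded diameter of $\umt^H_m \exp(w) y_j$. For each such $j'$, choosing a lift to $G$ and a witness $\gamma \in \Gamma$ of the intersection, a direct computation identifies
\[
\big\{ h \in \umt^H_m : h\exp(w) y_j \in \umt^G_m y_{j'} \big\} \;=\; \umt^H_m \cap \big(\umt^G_m \, g_0^{j'} \exp(-w)\big)
\]
for an element $g_0^{j'} \in G$ determined by the representatives.

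The key geometric step is to show that for each $j'\in\mathcal J(j,w)$, this intersection differs from a box $\mathsf D^{j'} \subset \umt^H_m$ in the $\mathsf{prd}$-coordinates by a set of $m_H$-measure $\ll \eta\, m_H(\umt^H_m)$. Using $\umt^G_m = \umt^H_m \cdot \exp(B_\rfrak(0,2\beta^2))$ together with Lemmas~\ref{lem: BCH} and~\ref{lem:dist-sheet}, for any $h'\exp(w') g_0^{j'}\exp(-w)$ that lies in $\umt^H_m$ the $\rfrak$-component is essentially forced to cancel, so the intersection is (up to a boundary layer of thickness $\ll \eta$ in each of the three $\mathsf{prd}$-coordinates) the translate of $\umt^H_m$ by the $H$-part of $g_0^{j'}\exp(-w)$. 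The latter is a product of intervals in the $\mathsf{prd}$-coordinates and hence a box; the boundary layer contributes $\ll \eta\, m_H(\umt^H_m)$ to the $m_H$-measure.

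Given these approximate boxes, the level set
\[
\big\{ h \in \umt^H_m : \density_{m,j}(h\exp(w)y_j) = 1/\mathsf k \big\}
\]
consists of those $h$ lying in exactly $\mathsf k$ of the at most $L+1$ boxes $\{\mathsf D^{j'}\}_{j'\in\mathcal J(j,w)\cup\{j\}}$. This is a Boolean combination of at most $L+1$ boxes, which can be written as a disjoint union of at most $2^{L+1}$ intersections of boxes; since the intersection of two boxes in the $\mathsf{prd}$-coordinates is again a box, taking $\Xi^{\mathsf k}$ to be this union gives complexity $\leq K' := 2^{L+1}$, with the error bounded by the total $\ll \eta\, m_H(\umt^H_m)$ boundary contribution accumulated over the at most $L+1$ approximations. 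The main obstacle is the geometric step above: verifying that a translate of $\umt^G_m$ meets $\umt^H_m$ in an approximate box requires a careful unwinding of the $\hfrak \oplus \rfrak$ decomposition and of the commutation estimates in Lemma~\ref{lem:commutation-rel} to ensure the intersection is controlled in each of the $(s,\tau,r)$ coordinates independently.
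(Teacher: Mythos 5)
Your proposal follows essentially the same route as the paper's proof: only boundedly many charts $\umt^G_m.y_{j'}$ can meet $\umt^H_m.\exp(w)y_j$; for each such chart, membership of $h\exp(w)y_j$ in it pulls back (using the injectivity radius at $y_j$) to a condition of the form $h\in \umt^H_m\cap \umt^H_m\sfh$ for a single element $\sfh\in(\umt^H_m)^{-1}\cdot\umt^H_m$; each such intersection is approximated by a box up to $m_H$-measure $\ll\eta\,|\umt^H_m|$; and the level sets of $\density_{m,j}$ are Boolean combinations of boundedly many of these sets, which yields the bounded complexity and the $\ll\eta|\umt^H_m|$ error. This is exactly the paper's argument.

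Two points, one of substance. Your justification of the key geometric step is wrong as written: the right translate of $\umt^H_m$ by the $H$-part $h_0=u^-_{s_0}a_{\tau_0}u_{r_0}$ of $g_0^{j'}\exp(-w)$ is \emph{not} ``a product of intervals in the $\mathsf{prd}$-coordinates and hence a box''. Right multiplication mixes the three coordinates nonlinearly, see~\eqref{eq: commutation in convex comb sec}; $\umt^H_m h_0$ is a box only when $s_0=0$. What makes the approximation work --- and this is precisely the computation the paper carries out --- is that $\sfh\in(\umt^H_m)^{-1}\cdot\umt^H_m$ forces $|s_0|\ll e^{-m}\beta^2$ and $|\tau_0|\ll\beta^2$, so the correction terms $\tilde r,\tilde\tau,\tilde s_{1},\tilde s_{2}$ in~\eqref{eq: commutation in convex comb sec} perturb each coordinate by at most $\eta$ times the corresponding side length of $\umt^H_m$, and one can then fit a genuine box inside $\umt^H_m\sfh$ losing only $\ll\eta|\umt^H_m|$. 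You correctly flag this as the main obstacle and cite the right tools (the commutation estimates, Lemmas~\ref{lem: BCH} and~\ref{lem:dist-sheet}), so this is a gap of execution rather than of idea, but the step is asserted rather than proved and the stated reason for it is false. Secondly, your formula $\{h\in\umt^H_m: h\exp(w)y_j\in\umt^G_m y_{j'}\}=\umt^H_m\cap\bigl(\umt^G_m g_0^{j'}\exp(-w)\bigr)$ presupposes that a single witness $\gamma\in\Gamma$ (equivalently, a single local leaf $\umt^H_m.\exp(w_i)y_{j'}$) accounts for the whole intersection with each chart; this does hold, but it needs the injectivity of $g\mapsto gy_{j'}$ on $\boxG_{10\eta}$ and is the uniqueness statement the paper proves at the start of its argument, so it should be spelled out rather than absorbed into the choice of a representative.
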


\begin{proof}
We will use that $(h,v)\mapsto h\exp(v)y$ is injective over 
$\boxH_{10\eta}\times B_\rfrak(0,10\eta)$ for all $y\in X_\eta$, and that 
\[
(\umt^H_m)^{\pm1}\cdot (\umt^H_m)^{\pm1}\cdot (\umt^H_m)^{\pm1}\subset \umt^H_{10\eta, 10\beta^2, m} \quad\text{for all $m\geq 0$}.
\]

Let $\mathcal Y_j=\{y_{k_i}: \umt^G_m.y_j\cap \umt^G_m.y_{k_i}\}\neq \emptyset$. We now find the local $H$-leaves in $\umt^G_m.y_{k_i}$ ($y_{k_i}\in\mathcal Y_j$) which intersect $\umt^H_m.\exp(w)y_j$. Let 
\[
\mathcal Y_j^w=\Bigl\{(w_i, y_{k_i})\in B_\rfrak(0,2\beta^2)\times \mathcal Y_j: \Bigl(\umt^H_m.\exp(w)y_j\Bigr)\cap \Bigl(\umt^H_m.\exp(w_i)y_{k_i}\Bigr)\neq \emptyset\Bigr\}.
\]
 
Note that if $w_i, w'_i\in B_\rfrak(0,2\beta^2)$ are so that $\sfh\exp(w)y_j=\bar\sfh\exp(w_i)y_{k_i}$ 
and $\sfh'\exp(w)y_j=\bar\sfh'\exp(w_i')y_{k_i}$. Then 
\[
\sfh^{-1}\bar\sfh\exp(w_i)y_{k_i}=\sfh'^{-1}\bar\sfh'\exp(w_i')y_{k_i},
\]
which implies $w_i=w'_i$. Thus $\#\mathcal Y_j^w=n\leq \#\mathcal Y_j\leq K$. 

For every $(w_i, y_{k_i})\in\mathcal Y_j^w$, let 
$\sfh_i\in \mathsf B=(\umt^H_m)^{-1}\cdot(\umt^H_m)$ be so that 
\[
\exp(w_i)y_{k_i}=\sfh_i\exp(w)y_j.
\] 
Let us list these elements as $\{\sfh_{cd}\}$ where $1\leq c\leq l$ and for every such $c$ we have $1\leq d\leq n_c$, moreover, $\sfh_{c_1d_1}=\sfh_{c_1d_2}$ and if and only if $c_1=c_2$ and $d_1=d_2$. 

Let $\mathcal N_\sfk$ denote the set of $L\subset\{1,\ldots, l\}$ so that $\sum_{c\in L} n_c=\sfk$. Then 
\[
z\in\umt^H_m.\exp(w)y_j
\]
satisfies $\density_{m,j}(z)=1/\sfk$ if and only if there exists an 
$L\in\mathcal N_\sfk$ so that 
\[
z\in \umt^H_m\sfh_{cd}.\exp(w)y_j
\]
for all $c\in L$ and all $1\leq d\leq n_{c}$, and 
$z\not\in \umt^H_m\sfh_{cd}.\exp(w)y_j$ for any $(c,d)$ with $c\not\in L$. 
Therefore, $\{z\in\umt^H_m.\exp(w)y_j: \density_{m,j}(z)=1/\sfk\}$ is the image under the map $g\mapsto g\exp(w)y_j$ of the set  
\be\label{eq: density = d has bounded cplx}
\bigcup_{L\in\mathcal N_\sfk} \biggl(\textstyle\bigcap_{c\in L}\Bigl(\umt^H_m\cap\umt^H_m\sfh_{cd}\Bigr)\biggr)\bigcap \biggl(\textstyle\bigcap_{c\not\in L}\Bigl(\umt^H_m\setminus\umt^H_m\sfh_{cd}\Bigr)\biggr).
\ee

We now study the set appearing in~\eqref{eq: density = d has bounded cplx}. 
Let us begin with the following computation. 
Suppose $h\in H$ can be written as $h=u^-_{s_0}a_{\tau_0} u_{r_0}$.
Then 
\[
u^-_sa_\tau u_rh= u_{\hat s}a_{\hat\tau}u_{\hat r}
\] 
where $(\hat s, \hat\tau, \hat r)$ are given by
\be\label{eq: commutation in convex comb sec}
\begin{aligned}
&\hat r=\hat r_h(r)=\frac{r}{e^{\tau_0}(1+rs_0)}+r_0=r+r_0+\tilde r_h(r)r,\\
&\hat \tau=\hat \tau_h(r, \tau)=\tau+\tau_0+\tfrac{1}{2}\log(1+rs_0)=\tau+\tau_0+\tilde \tau_h(r)r,\\
& \hat s=\hat s_h(r,\tau, s)=s+\frac{s_0}{e^\tau(1+rs_0)}=s+s_0+\tilde s_{h,1}(r) r+ \tilde s_{h,2}(r,\tau)\tau,
\end{aligned}
\ee
so long as these parameters are defined (which is always the case near the identity). 

Apply the above with $u^-_sa_\tau u_r\in\umt^H_m$ and $h=\sfh_{cd}$ with $1\leq c\leq l$. Then $|s_0|\leq 10e^{-m}\beta^2$ and $|\tau_0|\leq 10\beta^2$, see~\eqref{eq: def BH}, and the functions $\tilde r_h$, $\tilde\tau_h$, $\tilde s_{h,1}$, and $\tilde s_{h,2}$ are analytic functions satisfying the following  
\[
\begin{aligned}
&|\tilde r_h(r)|\leq 10|\tau_0|\leq 100\beta^2,\\
&|\tilde\tau_h(r)|\leq 10|s_0|\leq 100e^{-m}\beta^2,\\
&|\tilde s_{h,1}(r,\tau)|, |\tilde s_{h,2}(r,\tau)|\leq 10|s_0|\leq 100 e^{-m}\beta^2.
\end{aligned}
\]
Therefore, there exists a box $\Xi_{cd}\subset \umt^H_m\sfh_{cd}$ so that
\[
|\umt^H_m\sfh_{cd}\setminus \Xi_{cd}|\ll \eta|\umt^H_m|. 
\]
Repeat this for all $c\in L$ and all $1\leq d\leq n_c$; let $\Xi(L)=\bigcap_{L}(\Xi_{cd}\cap\umt^H_m)$. 
Then
\[ 
\Bigl|\Bigl(\textstyle\bigcap_{L}\Bigl(\umt^H_m\sfh_{cd}\cap \umt^H_m\Bigr)\Bigr)\setminus\Xi(L)\Bigr|\ll \eta|\umt^H_m|.
\]
Similarly, there is $\Xi(L^\complement)$ of complexity $\ll1$ so that 
\[ 
\Bigl|\Bigl(\textstyle\bigcap_{L^\complement}\Bigl(\umt^H_m\setminus \umt^H_m\sfh_{cd}\Bigr)\Bigr)\setminus\Xi(L^\complement)\Bigr|\ll \eta|\umt^H_m|.
\]

The claim in the lemma thus holds with $\Xi^\sfk=\bigcup_{\mathcal N_\sfk}\Bigl(\Xi(L)\cap\Xi(L^\complement)\Bigr)$.
\end{proof}

\subsection*{Thickening in the stable direction}
We now record two lemmas whose proofs are essentially based on almost invariance (under small translations) 
of the measures in question, and on commutation relations in $H$. 
Let $\sigma$ denotes the uniform measure on $\boxHs_{\beta+100\beta^2}$, where as before, 
\[
\boxHs_{\delta}=\{u_s^-:|s|\leq {\delta}\}\cdot\{a_\tau: |\tau|\leq \delta\}
\]
for all $\delta>0$. 

We will write $\mbhs=m_{U^-A}(\boxHs_{\beta})$ where $m_{U^-A}$ denotes the left invariant measure. 
Recall also the definition of $\rwm_t$ from~\eqref{eq: def rwm ell}:
\[
\rwm_t(\varphi)=\ave\varphi(a_{t}\uvk)\uvkd\qquad\text{for all $\varphi\in C_c(H)$.}
\]
We fixed $0<\injr\leq0.01\eta_X$ and $\beta=\eta^2$. 
In the discussion below, we will work with $\rwm_t$ with large enough $t$ so that $e^{-t}\leq \beta^2$.

Let us begin with the following lemma. 

\begin{lemma}\label{lem: thickening stable}
Let $x\in X$. Let $t_1,t_2>0$, and assume that $\nuni^{-t_1}\leq \beta^2$. Put
$\mu=\sigma\conv\rwm_{t_2}\conv\sigma\conv\rwm_{t_1}$.
For every $\varphi\in C_c^\infty(X)$, we have
\[
\biggl|\int\varphi(hx)\diff\!\rwm_{t_2+t_1}(h)-\int\varphi(hx)\diff\!\mu(h)\biggr|\ll \beta\Lip(\varphi) 
\]
where the implied constant is absolute. 
\end{lemma}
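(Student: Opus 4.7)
The plan is to compute a normal form for a general element in the support of $\mu$. Writing a generic element as
\[
h = u^-_{s_1}a_{\tau_1}\cdot a_{t_2}u_{r_2}\cdot u^-_{s_2}a_{\tau_2}\cdot a_{t_1}u_{r_1},
\]
I would repeatedly apply the commutation relations $a_tu^-_s=u^-_{\nuni^{-t}s}a_t$ and $a_tu_r=u_{\nuni^{t}r}a_t$ together with the identity
\[
u_r\,u^-_s = u^-_{s/(1+rs)}\,a_{2\log(1+rs)}\,u_{r/(1+rs)}\qquad(|rs|<1),
\]
which applies because $|r_2s_2|\leq 2\beta\ll 1$. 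The outcome is a normal form $h = u^-_S\,a_T\cdot a_{t_1+t_2}\,u_R$ with $S,T,R$ explicit analytic functions of $(s_1,\tau_1,s_2,\tau_2,r_1,r_2)$.

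Using $|s_i|,|\tau_i|\leq \beta+100\beta^2$, $r_i\in[0,1]$, $t_1,t_2>0$, and the hypothesis $\nuni^{-t_1}\leq \beta^2$, direct bookkeeping yields $|S|,|T|\ll\beta$ and
\[
R = r_1 + O(\nuni^{-t_1}) = r_1 + O(\beta^2).
\]
Only the estimate on $R$ uses the hypothesis on $t_1$; the bounds on $S,T$ need only $t_2>0$.

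Since the metric $d$ is right-invariant, $d(u^-_S a_T g, g) = d(u^-_S a_T, e) \ll \beta$ for every $g\in G$. Applied with $g=a_{t_1+t_2}u_R$, this yields $|\varphi(hx)-\varphi(a_{t_1+t_2}u_R x)|\ll\beta\,\Lip(\varphi)$ uniformly in the six parameters. The problem thus reduces to comparing, for fixed $(s_1,\tau_1,s_2,\tau_2,r_2)$, the integral
\[
\int_0^1 \varphi(a_{t_1+t_2}u_{r_1+\delta}x)\,\diff\!r_1
\]
with
\[
\int_0^1 \varphi(a_{t_1+t_2}u_{r_1}x)\,\diff\!r_1 = \rwm_{t_2+t_1}(\varphi),
\]
where $\delta=\delta(s_1,\tau_1,s_2,\tau_2,r_2)=O(\beta^2)$. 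A change of variables in the first integral leaves only the boundary contribution $\int_{1}^{1+\delta}\varphi-\int_{0}^{\delta}\varphi$, of size $\ll\beta^2\,\Lip(\varphi)$. Averaging over the auxiliary parameters and combining the two error sources gives $|\mu(\varphi)-\rwm_{t_2+t_1}(\varphi)|\ll\beta\,\Lip(\varphi)$, as required.

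The only nontrivial step is the normal-form bookkeeping, in particular verifying that the $r_1$-shift inherits the factor $\nuni^{-t_1}$ from conjugating the $u_{r_2}$-factor (after its Iwasawa rearrangement with $u^-_{s_2}$) past $a_{t_1+t_2}$; everything else is the right-invariance of $d$ and elementary one-dimensional boundary estimates.
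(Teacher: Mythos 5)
Your proof is correct and follows essentially the same route as the paper: both arguments use the commutation relations to rewrite a generic element of the support of $\mu$ as a factor of size $O(\beta)$ near the identity times $a_{t_1+t_2}u_{r_1+\delta}$ with $\delta=O(\nuni^{-t_1})=O(\beta^2)$, then absorb the small left factor by right-invariance of the metric and the Lipschitz bound, and handle the $r_1$-shift by the change-of-variables (symmetric-difference) estimate on $[0,1]$. The only difference is presentational: the paper treats the inner block $a_{t_2}u_{r}\sfh a_{t_1}=\sfh' a_{t_1+t_2}u_{e^{-t_1}r'}$ and lets the outer $\sigma$ contribute a further $O(\beta\Lip(\varphi))$ implicitly, whereas you carry out the full four-factor normal form explicitly.
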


\begin{proof}
Let us recall the the following: for $c,d>0$, $a_{d}\boxHs_c a_{-d}\subset\boxHs_c$ and 
$u_ra_{d}=a_{d}u_{e^{-d}r}$. Moreover, for every $r\in[0,1]$ and $\sfh\in \boxHs_c$,
we have $u_r\sfh= \sfh' u_{r'}$ where $\sfh'\in\boxHs_{10c}$ and $|r'|\leq 2$. 
Altogether, we conclude that for every $\sfh\in\boxHs_{\beta+100\beta^2}$ and $r\in[0,1]$
we have 
\[
a_{t_2}u_{r}\sfh a_{t_1} = \sfh' a_{t_1+t_2} u_{e^{-t_1}r'}
\] 
where $|r'|\leq 2$. Since $\Bigl|[0,1]\triangle(e^{-t}r'+[0,1])\Bigr|\ll \beta$, we conclude that 
\[
\biggl|\int\varphi(hx)\diff\!\rwm_{t_2+t_1}(h)-\int\varphi(hx)\diff\!\rwm_{t_2}\conv\sigma\conv\rwm_{t_1}(h)\biggr|\ll \beta\Lip(\varphi).
\]
The lemma follows.
\end{proof}

\begin{lemma}\label{lem: Folner property}
Let $x\in X$ and $t>0$. Assume that $\nuni^{-t}\leq \beta^2$ 
and that $h\mapsto hx$ is injective on $\boxHs_\beta \cdot a_t\cdot U_1$. 
Let $j\in\mathcal J_0$ and $w\in B_\rfrak(0,2\beta^2)$ be so that
\[
\umt_0^H.\exp(w)y_j\subset \supp(\sigma\conv\rwm_{t}\conv\delta_x) \cap \umt^G_0.y_j.
\] 
Put $\bar\mu_{j,w}=(\sigma\conv\rwm_{t}\conv\delta_x)|_{\umt^H_0.\exp(w)y_j}$ and put 
\[
\diff\!\mu_{j,w}(z)=\density_{0,j}(z)\diff\!\bar\mu_{j,w}(z).
\] 
Then for all $\varphi\in C_c^\infty(X)$, all $\mathsf d\geq 0$, and $|r_1|, |r_2|\leq 2$ with $|r_1-r_2|\leq c\beta$, 
\[
\biggl|\int\varphi(a_{\mathsf d} u_{r_1}z)\diff\!\mu_{j,w}(z)-\int\varphi(a_{\mathsf d} u_{r_2}z)\diff\!\mu_{j,w}(z)\biggr|\ll \eta\Lip(\varphi)\mu_{j,w}(X)
\]
where the implied constant depends on $X$ and $c$. 
\end{lemma}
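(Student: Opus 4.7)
The plan is to reduce the claim to an almost-invariance of $\mu_{j,w}$ under small left translations by $u_r$, and then establish this via a F\o{}lner-type estimate combining Lemma~\ref{lem: density level sets} with the commutation relations in $H$.

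For the reduction: since $U$ is abelian, $a_{\mathsf d}u_{r_1}=a_{\mathsf d}u_{r_2}u_{r_1-r_2}$, so setting $\Phi:=\varphi\circ(a_{\mathsf d}u_{r_2})$ and $r:=r_1-r_2$,
\[
\int \varphi(a_{\mathsf d}u_{r_1}z)\,d\mu_{j,w}(z)-\int \varphi(a_{\mathsf d}u_{r_2}z)\,d\mu_{j,w}(z)=\int \Phi\,d\bigl((u_r)_*\mu_{j,w}-\mu_{j,w}\bigr).
\]
Since $\|\Phi\|_\infty=\|\varphi\|_\infty\leq\Lip(\varphi)$, the task reduces to showing $\|(u_r)_*\mu_{j,w}-\mu_{j,w}\|_{\mathrm{TV}}\ll\eta\,\mu_{j,w}(X)$ for $|r|\leq c\beta$. (We cannot bound $\Phi$ by its Lipschitz norm because $a_{\mathsf d}$ may be highly expansive; working in total variation sidesteps this.)

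To identify $\mu_{j,w}$ with (a bounded density times) a Haar-like measure, observe that the hypothesis $\umt_0^H\cdot\exp(w)y_j\subset\supp(\sigma\conv\rwm_t\conv\delta_x)$, together with the injectivity of $h\mapsto hx$ on $\boxHs_\beta\cdot a_t\cdot U_1$, implies that $\bar\mu_{j,w}$ coincides up to a smooth Jacobian bounded between fixed positive constants with the restriction of $m_H$ to $\umt_0^H\cdot\exp(w)y_j$, because $\sigma\conv\rwm_t$ is the pushforward of Lebesgue under the Bruhat-type parametrization $(s,\tau,r')\mapsto u^-_s a_{t+\tau}u_{r'}$. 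Applying Lemma~\ref{lem: density level sets}, modulo boundary terms of total mass $\ll\eta\,|\umt_0^H|$, the measure $\mu_{j,w}$ splits according to the level sets of $\density_{0,j}$ as
\[
\mu_{j,w}\approx\sum_{\sfk}\sfk^{-1}\,m_H|_{\Xi^\sfk\cdot\exp(w)y_j},
\]
with each $\Xi^\sfk\subset\umt_0^H$ of complexity at most $K'$.

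The heart of the proof is now a F\o{}lner estimate. A direct Bruhat computation in $H$ shows that for $|r|\leq c\beta$ one has $u_r\cdot u^-_s a_\tau u_{r'}=u^-_{s/(1+rs)}\,a_{\tau+2\log(1+rs)}\,u_{r'+e^{-\tau}r/(1+rs)}$; since $|s|,|\tau|\leq 2\beta^2$ each commutation shift is of order $c\beta^3$. Consequently, for any box $\mathsf D_i=\mathsf{prd}(I^-_i\times I^0_i\times I^+_i)\subset\umt_0^H$, the translate $u_r\mathsf D_i$ coincides with $\mathsf{prd}(I^-_i\times I^0_i\times(I^+_i+r))$ up to negligible thickenings, so $|u_r\mathsf D_i\triangle\mathsf D_i|_{m_H}\ll|r|\cdot|I^-_i|\cdot|I^0_i|\ll|r|\beta^4$. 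Summing over the at most $K'$ boxes for each of the $K$ density levels, transferring to $X$ by injectivity, and absorbing the boundary error, yields
\[
\|(u_r)_*\mu_{j,w}-\mu_{j,w}\|_{\mathrm{TV}}\ll|r|\beta^4+\eta\,\mu_{j,w}(X)\ll\eta\,\mu_{j,w}(X),
\]
where the final step uses $\mu_{j,w}(X)\asymp|\umt_0^H|\asymp\eta\beta^4$ and $|r|/\eta\leq c\beta/\eta=c\eta$. The principal technical obstacle is the careful bookkeeping: ensuring that the Bruhat-commutation shifts in the $U^-$ and $A$ directions, the Jacobian variation, and the boundary error from Lemma~\ref{lem: density level sets} are all dominated by $\eta\,\mu_{j,w}(X)$. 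The underlying F\o{}lner property is the trivial observation that the $U$-extent $\eta=\sqrt\beta$ of $\umt_0^H$ dominates $|r|\leq c\beta$ by a factor of $c\eta$.
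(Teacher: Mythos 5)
Your proposal is correct and follows essentially the same route as the paper's proof: reduce the difference to a small left $u_r$-translate so that only $\|\varphi\|_\infty$ is needed (avoiding the expansion of $a_{\mathsf d}$), use Lemma~\ref{lem: density level sets} to split $\mu_{j,w}$ into bounded-complexity boxes on which $\density_{0,j}$ is constant, and control the symmetric difference of each box and its $u_r$-translate via the Bruhat commutation relations, the gain coming from the ratio $|r|/\eta\leq c\eta$ exactly as in the paper. The only slip is cosmetic: by~\eqref{eq: conv abs cont}, $\bar\mu_{j,w}$ is comparable to $\mbhs^{-1}m_H$ rather than to $m_H$ up to absolute constants, but since the translation error and $\mu_{j,w}(X)$ rescale by the same factor, your ratio-based conclusion is unaffected.
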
 

\begin{proof}
Write $r_2=r_1+r'$ where $|r'|\leq c\beta$, and let $\sfh u_s\in \umt_0^H=\boxHs_{\beta^2}U_\eta$. 
Then 
\be\label{eq: ur' sfh us}
u_{r'}\sfh u_s=\sfh\sfh'u_{s+r''} \quad\text{where $|r''|\leq 10c\beta$ and $\|\sfh'-I\|\ll\beta^3$},
\ee
see~\eqref{eq: commutation in convex comb sec}.

Write  
$\umt_0^H.\exp(w)y_j=\bigcup_{\sfk=1}^K\{z\in \umt_0^H.\exp(w)y_j: \density_{0,j}(z)=1/\sfk\}$, and 
let 
\[
\Xi^\sfk.\exp(w)y_j\subset \{z\in \umt_0^H.\exp(w)y_j: \density_{0,j}(z)=1/\sfk\}
\] 
be as in Lemma~\ref{lem: density level sets}. By that lemma, 
there are collections of intervals $\mathcal J^-=\{J^-\subset[-\beta^2,\beta^2]\}$, $\mathcal J^0=\{J^0\subset[-\beta^2,\beta^2]\}$, and 
$\mathcal J^+=\{J^+\subset[-\eta,\eta]\}$ with $\#{\mathcal J}^{\bigcdot}\leq K'$, and $\mathcal J\subset \mathcal J^-\times\mathcal J^0\times\mathcal J^+$ so that 
\[
\Xi^\sfk=\bigcup_{\mathcal J}\mathsf{prd}(J^-\times J^0\times J^+),
\]
where $\mathsf{prd}(s,\tau,r)=u^-_sa_\tau u_r$. 

Let $\mathring\Xi^\sfk$ denote $\mathring\Xi^\sfk_{\umt^H_0}$, see~\eqref{eq: def mathring Xi}. We will write 
$\Xi^\sfk_{j,w}$ and $\mathring\Xi^\sfk_{j,w}$ for $\Xi^\sfk.\exp(w)y_j$ and $\mathring\Xi^\sfk.\exp(w)y_j$, respectively. 
Using~\eqref{eq: ur' sfh us} and the definition of $\mathring\Xi^\sfk$, we conclude that  
\be\label{eq: ur' pushes z to partila Xi sfk}
u_{r'}\mathring\Xi^\sfk_{j,w}\subset \Xi_{j,w}^\sfk
\ee 
so long as $\beta$ is small enough compared to $c$, see~\S\ref{sec: box complexity}.

Recall now that 
\[
\supp(\sigma\conv\rwm_{t})=\boxHs_{\beta+100\beta^2} \cdot a_t\cdot \{u_r: r\in [0,1]\}
\] 
and that $\mbhs=m_{U^-A}(\boxHs_{\beta+100\beta^2})$, where 
$m_{U^-A}$ is the left invariant measure. For $|s|, |\tau|\leq \beta+100\beta^2$ and $r\in[0,1]$, 
\be\label{eq: conv abs cont}
\diff\!\sigma\conv\rwm_{t}(u^-_sa_{\tau+t} u_r)= \frac{e^\tau}{\mbhs}\diff\! s\diff\!\tau\diff\! r.
\ee
 
Note also that $\umt_0^H.\exp(w)y_j\subset \supp(\sigma\conv\rwm_{t}\conv\delta_x) \cap \umt^G_0.y_j$.
Thus the definition of $\bar\mu_{j,w}$, and the fact $1/K\leq \density_{0,j}\leq 1$, imply that 
\be\label{eq: measure of partial Xi sfk}
\mu_{j,w}(\Xi^\sfk_{j,w}\setminus\mathring\Xi^\sfk_{j,w})\ll \eta\mu_{j,w}(X).
\ee

Using~\eqref{eq: measure of partial Xi sfk}, Lemma~\ref{lem: density level sets} and the definition of $\mu_{j,w}$ again, we have 
\[
\biggl|\int\varphi(a_{\mathsf d} u_{r_i}z)\diff\!\mu_{j,w}(z)-\sum_{\mathcal N}\int_{\mathring\Xi^\sfk_{j,w}}\varphi(a_{\mathsf d} u_{r_i}z)\diff\!\mu_{j,w}(z)\biggr|\ll\eta\Lip(\varphi)\mu_{j,w}(X),
\]
for $i=1,2$, where $\mathcal N=\{1\leq \sfk\leq K: \mathring\Xi^\sfk\neq\emptyset\}$.

In view of this, and since $r_2=r_1+r'$, we need to estimate the following  
\be\label{eq: mu jw folner prop m=0 sfk 1}
\biggl|\int_{\mathring\Xi_{j,w}^\sfk}\varphi(a_{\mathsf d} u_{r_1}z)\diff\!\mu_{j,w}(z)-\int_{\mathring\Xi_{j,w}^\sfk}\varphi(a_{\mathsf d} u_{r_1}u_{r'}z)\diff\!\mu_{j,w}(z)\biggr|
\ee
for all $\sfk\in\mathcal N$. 

Recall that $\diff\!\mu_{j,w}=\density_{0,j}\diff\!\bar\mu_{j,w}$. 
Thus~\eqref{eq: mu jw folner prop m=0 sfk 1} may be written as 
\[
\biggl|\int_{\mathring\Xi_{j,w}^\sfk}\varphi(a_{\mathsf d} u_{r_1}z)\density_{0,j}(z)\diff\!\bar\mu_{j,w}(z)-\int_{\mathring\Xi_{j,w}^\sfk}\varphi(a_{\mathsf d} u_{r_1}u_{r'}z)\density_{0,j}(z)\diff\!\bar\mu_{j,w}(z)\biggr|.
\] 

In view of~\eqref{eq: ur' pushes z to partila Xi sfk}, $\density_{0,j}(z)=\sfk$ and $\density_{0,j}(u_{r'}z)=\sfk$ 
for all $z\in\mathring\Xi_{j,w}^\sfk$.
Recall also that $h\mapsto hx$ is injective on $\supp(\sigma\conv\rwm_t)\subset \boxHs_\beta\cdot a_t\cdot U_1$.  
Thus, $\diff\!\bar\mu_{j,w}$ is the restriction to $\umt_0^H.\exp(w)y_j$ 
of the pushforward of the measure $\frac{e^\tau}{\mbhs}\diff\!s\diff\!\tau\diff\!r$ under the map $h\mapsto hx$. 
Moreover, by~\eqref{eq: measure of partial Xi sfk} and~\eqref{eq: ur' pushes z to partila Xi sfk}, we have 
$\bar\mu_{j,w}(u_{r'}\mathring\Xi_{j,w}^\sfk\triangle\mathring\Xi_{j,w}^\sfk)\ll \eta\mu_{j,i}(X)$.
Altogether, we conclude that    
\[
\biggl|\int_{\mathring\Xi_{j,w}^\sfk}\varphi(a_{\mathsf d} u_{r_1}z)\diff\!\mu_{j,w}(z)-\int_{\mathring\Xi_{j,w}^\sfk}\varphi(a_{\mathsf d} u_{r_1}u_{r'}z)\diff\!\mu_{j,w}(z)\biggr|\ll \eta\|\varphi\|_\infty\mu_{j,w}(X).
\] 
The proof is complete.
\end{proof}

\subsection{The set $\cone$ and the measure $\mu_\cone$}\label{sec: cone and mu cone}
Recall that $0<\injr\leq0.01\eta_X$ and $\beta=\eta^2$. Define
\be\label{eq: def cone}
\coneH=\boxHs_\beta\cdot\{u_r: |r|\leq \eta\},
\ee
where $\boxHs_{\beta}:=\{u_s^-:|s|\leq {\beta}\}\cdot\{a_t: |t|\leq \beta\}$ for all $\beta>0$.

Let $F\subset B_\rfrak(0,\beta)$ be a finite set, and let $y\in X_{2\eta}$. 
Then $\exp(w)y\in X_\eta$ for all $w\in F$, moreover
$\sfh\mapsto \sfh\exp(w)y$ is injective on $\coneH$. 
For every subset $\coneH'\subset\cone$, put  
\be\label{eq: def cone'}
\cone_{\coneH'}=\bigcup\coneH'.\{\exp(w)y: w\in F\};
\ee
we will denote $\cone_\coneH$ simply by $\cone$. 

Let $\adl,\adm>0$. 
Let $\cone=\coneH.\{\exp(w)y: w\in F\}$. 
A probability measure $\mu_\cone$ on $\cone$ is said to be $(\adl,\adm)$-{\em admissible} if 
\[
\mu_\cone=\frac1{\sum_{w\in F}\mu_w(X)}\sum_{w\in F}\mu_w
\]
where for every $w\in F$, $\mu_w$ is a measure on $\coneH.\exp(w)y$ satisfying that if $\sfh\exp(w)y$ is in the support of $\mu_w$
\[
\diff\!\mu_w(\sfh\exp(w)y)=\adl\ddensity_w(\sfh)\diff\!m_H(\sfh)\quad\text{where $1/\adm\leq \ddensity_w(\bigcdot)\leq \adm$;}
\] 
moreover, there is a subset $\coneH_w=\bigcup_{p=1}^{\adm}\coneH_{w,p}\subset \coneH$
so that 
\begin{enumerate}
\item $\mu_w\Bigl((\coneH\setminus \coneH_w).\exp(w)y\Bigr)\leq \adm\beta \mu_w(\coneH.\exp(w)y)$,
\item The complexity of $\coneH_{w,p}$ is bounded by $\adm$ for all $p$, and 
\item $\Lip(\ddensity_w|_{\coneH_{w,p}})\leq \adm$ for all $p$.
\end{enumerate}

Using the notation in~\eqref{eq: def mathring Xi}, let $(\mathring\coneH_w)_{\coneH}=\bigcup_p(\mathring\coneH_{w,p})_{\coneH}$. Put
\[
\text{$\mathring\cone=\bigcup_{w}(\mathring\coneH_w)_{\coneH}\;\;$ and $\;\;\mathring\mu_\cone=\mu_\cone|_{\mathring\cone}$},
\] 
for $\cone$ and an admissible measure $\mu_\cone$ as above.

The following lemma is an analogue of Lemma~\ref{lem: Folner property}.

\begin{lemma}\label{lem: Folner property 2}
Let $\ell>0$, and let $r\in[0,1]$. Assume that $\nuni^{-\ell}\leq \beta^2$.
Let $\mu_\cone$ be an admissible measure on $\cone=\coneH.\{\exp(w)y: w\in F\}$ 
for some $F\subset B_\rfrak(0,\beta)$, see~\eqref{eq: def cone}.
Let $j\in\mathcal J_\ell$ and $v\in B_\rfrak(0,2\beta^2)$ be so that
\[
\umt_\ell^H.\exp(v)y_j\subset \supp(a_\ell u_r\mathring\mu_\cone) \cap \umt^G_\ell.y_j.
\] 
Put $\bar\mu_{r,j}^{v}=(a_\ell u_r\mathring\mu_\cone)|_{\umt^H_\ell.\exp(v)y_j}$, and let 
$\diff\!\mu_{r,j}^v(z)=\density_{\ell,j}(z)\diff\!\bar\mu_{r,j}^v(z)$. 
Then for all $\varphi\in C_c^\infty(X)$, all $\mathsf d\geq 0$, and all $|r_1-r_2|\leq c\beta$, we have 
\[
\biggl|\int\varphi(a_{\mathsf d} u_{r_1}z)\diff\!\mu_{r,j}^v(z)-\int\varphi(a_{\mathsf d} u_{r_2}z)\diff\!\mu_{r,j}^v(z)\biggr|\ll \eta\Lip(\varphi)\mu_{r,j}^v(X)
\]
where the implied constant depends on $X$ and $c$.  
\end{lemma}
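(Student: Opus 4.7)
The proof will mirror that of Lemma~\ref{lem: Folner property}, with the role of the smooth Haar-like measure $\sigma\conv\rwm_t\conv\delta_x$ (whose density is $e^\tau/\mbhs$) replaced by the pull-back to $\umt_\ell^H$ of the admissible density coming from $\mu_\cone$. The plan is to realize $\bar\mu_{r,j}^v$ as a measure on (a subset of) $\umt_\ell^H.\exp(v)y_j$ which is absolutely continuous with respect to $m_H$, with a density that is Lipschitz on each of boundedly many pieces; once this is done, the commutation argument from the proof of Lemma~\ref{lem: Folner property} applies verbatim.

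First, unfold $\mathring\mu_\cone=\sum_{w\in F}\mu_w|_{(\mathring\coneH_w)_{\coneH}}$ (up to normalization). For each $w\in F$ with $(a_\ell u_r)(\mathring\coneH_w.\exp(w)y)\cap \umt_\ell^H.\exp(v)y_j\neq \emptyset$, let $\sfh^{(w)}\in H$ be the unique element (provided by the injectivity assumption together with the bounded-multiplicity covering of $X_{2\injr}$ in Lemma~\ref{lem: E good h0}) so that $a_\ell u_r\sfh\exp(w)y=(a_\ell u_r\sfh\sfh^{(w)})\exp(v)y_j$ for every $\sfh$ in the relevant subset of $\coneH$. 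The map $\sfh\mapsto \tilde\sfh=a_\ell u_r\sfh\sfh^{(w)}$ is left translation in $H$ and hence preserves $m_H$. Pulling back to $\umt_\ell^H$ via $\tilde\sfh\mapsto \tilde\sfh.\exp(v)y_j$, the contribution of $\mu_w$ becomes $\adl\,\ddensity_w(\sfh(\tilde\sfh))\,\diff m_H(\tilde\sfh)$ on a subset $\Omega_w\subset \umt_\ell^H$, where $\sfh(\tilde\sfh)$ denotes the inverse translation. Because $\ddensity_w$ is bounded by $\adm$ and is Lipschitz on the pieces $\coneH_{w,p}$ of complexity $\leq \adm$, the same holds for the pulled-back density on the images $\Omega_{w,p}\subset\umt_\ell^H$.

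Multiplying by $\density_{\ell,j}$ and invoking Lemma~\ref{lem: density level sets} (applied to the covering $\{\umt_\ell^G y_j\}$), we further intersect each $\Omega_{w,p}$ with the level-set boxes $\Xi^\sfk=\Xi^\sfk(j,v)\subset \umt_\ell^H$ (on which $\density_{\ell,j}\equiv 1/\sfk$). This produces a decomposition $\Omega=\bigcup_{w,p,\sfk}\Omega_{w,p,\sfk}$ in which the total complexity is bounded by a constant $K''=K''(X,\adm)$, and on each $\Omega_{w,p,\sfk}$ the total density
\[
G(\tilde\sfh)=\sfk^{-1}\adl\,\ddensity_w(\sfh(\tilde\sfh))
\]
is bounded and Lipschitz. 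Let $\mathring\Omega=\bigcup_{w,p,\sfk}(\mathring\Omega_{w,p,\sfk})_{\umt_\ell^H}$ denote the $100\eta$-interiors as in \eqref{eq: def mathring Xi}. Combining the boundary estimate from Lemma~\ref{lem: density level sets} with condition (1) of admissibility, we get
\[
\mu_{r,j}^v\bigl((\Omega\setminus\mathring\Omega).\exp(v)y_j\bigr)\ll \eta\,\mu_{r,j}^v(X),
\]
so it suffices to prove the estimate after restricting both integrals to $\mathring\Omega.\exp(v)y_j$.

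Finally, write $r'=r_2-r_1$. Applying the commutation relation \eqref{eq: commutation in convex comb sec} with $\tilde\sfh=u^-_sa_\tau u_{r_0}\in \umt_\ell^H$ (so that $|s|\leq \beta^2\nuni^{-\ell}\leq \beta^4$, $|\tau|\leq \beta^2$), one obtains $u_{r'}\tilde\sfh=\tilde\sfh\sfh'u_{r''}$ with $\|\sfh'-I\|\ll \beta^3$ and $|r''|\ll c\beta$, exactly as in the proof of Lemma~\ref{lem: Folner property}. In particular $u_{r'}\mathring\Omega\subset \Omega$, the change of variable $\tilde\sfh\mapsto u_{r'}\tilde\sfh$ preserves $m_H$ up to the harmless factor coming from $\sfh'$, and on each interior piece the density $G$ changes by at most $O(\beta\,\Lip(G))=O(\beta\,\adm\adl)$. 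After changing variables $z\mapsto u_{r'}z$ in $\int\varphi(a_{\mathsf d}u_{r_2}z)\,\diff\mu_{r,j}^v(z)=\int\varphi(a_{\mathsf d}u_{r_1}(u_{r'}z))\,\diff\mu_{r,j}^v(z)$, comparing with $\int\varphi(a_{\mathsf d}u_{r_1}z)\,\diff\mu_{r,j}^v(z)$ yields an error of size $\ll \eta\,\|\varphi\|_\infty\,\mu_{r,j}^v(X)$, which dominates the $O(\beta)$-Lipschitz error in $G$ because $\beta\leq \eta$.

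The main technical point, and the one requiring the most care, is the verification in Step~1 that pushing forward by $a_\ell u_r$ and restricting to $\umt_\ell^H.\exp(v)y_j$ indeed transports each admissibility piece $\coneH_{w,p}$ onto a set $\Omega_{w,p}$ of bounded complexity in $\umt_\ell^H$ with the Lipschitz constant of the density at most of the order $\adm\adl$; here one uses that left translation by elements of $H$ preserves the box structure up to a bounded multiplicative distortion of the defining intervals, and that only $O_X(1)$ distinct $w\in F$ can contribute to a single $\umt_\ell^G.y_j$ by the bounded multiplicity of the covering in Lemma~\ref{lem: E good h0}.
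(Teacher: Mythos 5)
Your overall architecture is the same as the paper's: decompose the leaf according to the level sets of $\density_{\ell,j}$ via Lemma~\ref{lem: density level sets}, discard an $O(\eta)$-proportion of boundary mass, and compare the two integrals through the commutation relation. However, two of the justifications in the step you yourself flag as the main technical point do not hold as stated. First, the claim that only $O_X(1)$ distinct $w\in F$ can contribute to a single $\umt^G_\ell.y_j$ ``by the bounded multiplicity of the covering in Lemma~\ref{lem: E good h0}'' is false: that multiplicity bound controls how many sets $\umt^G_\ell.y_j$ contain a given point, not how many leaves of $a_\ell u_r\cone$ meet a given $\umt^G_\ell.y_j$, and in the construction of \S\ref{sec: conv comb induction} exponentially many $w\in F$ do meet one such set (this is exactly what the numbers $N^j_{i,r}$ count). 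What is true, and what the paper's proof uses, is a different and stronger fact: the restriction of $a_\ell u_r\mathring\mu_\cone$ to the single local leaf $\umt^H_\ell.\exp(v)y_j$ is carried by a single $w$ and a single piece $\coneH_{w,p}$; this follows from transversal injectivity (Lemma~\ref{lem:dist-sheet} and the definition of the leaves, cf.~\eqref{eq: interior mu r-i}), since distinct $w,w'\in B_\rfrak(0,\beta)$ give transversally separated local $H$-leaves and left multiplication by $a_\ell u_r\in H$ preserves this. So the bounded-complexity input you need is available, but not from the lemma you cite.

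Second, your final error estimate rests on the assertion that the density $G$ transported to $\umt^H_\ell$ is Lipschitz with constant $O(\adm\adl)$, and this is not justified: the parameter change $\tilde\sfh\mapsto(a_\ell u_r)^{-1}\tilde\sfh(\sfh^{(w)})^{-1}$ expands the $U^-$-direction by a factor $\asymp e^{\ell}$ (the metric is right invariant, so left multiplication by $u_{-r}a_{-\ell}$ distorts by $\|\Ad(u_{-r}a_{-\ell})\|$), hence the transported density is in general only $O(e^{\ell}\adm\adl)$-Lipschitz in that direction, and $e^{\ell}$ is not bounded by any power of $\beta^{-1}$. Combined with your quoted bound $\|\sfh'-I\|\ll\beta^3$, the density-change term is of size $e^{\ell}\adm\adl\beta^3$, which does not close. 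There are two ways to repair this. (i) Use the full strength of \eqref{eq: commutation in convex comb sec}, as recorded in \eqref{eq: ur' sfh us'}: the $U^-$- and $A$-components of the correction are $\ll e^{-2\ell}\beta^2$ and $\ll e^{-\ell}\beta^2$ respectively, so even after the $e^{\ell}$-distortion the induced displacement of the upstairs parameter is $O(e^{-\ell}\beta^2)$ and the $\adm$-Lipschitz bound for $\ddensity_w$ suffices. (ii) Do as the paper does and run the quasi-invariance comparison upstairs on $\coneH$: writing $u_{r'}a_\ell u_r=a_\ell u_{r+e^{-\ell}r'}$, the translation seen by $\ddensity_w\,\diff m_H$ on $\coneH_{w,p}$ is by $u_{e^{-\ell}r'}$, where the admissibility hypotheses apply directly and only the $O(\eta)$ boundary terms remain. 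With either repair your argument becomes the paper's proof.
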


\begin{proof}
The proof is similar to the proof of Lemma~\ref{lem: Folner property}. 

Since $r$, $v$, and $j$ are fixed throughout the proof, we will denote $\mu_{r,j}^v$ and $\bar\mu_{r,j}^v$ 
simply by $\mu$ and $\bar\mu$. 

Write $r_2=r_1+r'$ where $|r'|\leq c\beta$. Let $\sfh u_{\hat r}\in \umt_\ell^{H}$, 
then 
\be\label{eq: ur' sfh us'}
u_{r'}\sfh u_{\hat r}=\sfh u^-_sa_\tau u_{\hat r+r''} \quad\text{where $|r''|\ll \beta$ and $\nuni^{\ell}|s|, |\tau|\ll \nuni^{-\ell}\beta^2$},
\ee
see~\eqref{eq: commutation in convex comb sec}.

Let $I^-=[-e^{-\ell}\beta^2, e^{-\ell}\beta^2]$, $I^0=[-\beta^2,\beta^2]$, and $I^+=[-\eta,\eta]$.
As it was done in the proof of Lemma~\ref{lem: Folner property}, write  
\[
\umt_\ell^H.\exp(v)y_j=\bigcup_{\sfk=1}^K\{z\in \umt_\ell^H.\exp(v)y_j: \density_{\ell,j}(z)=1/\sfk\},
\] 
and let $\Xi^\sfk.\exp(v)y_j\subset \{z\in \umt_\ell^H.\exp(v)y_j: \density_{\ell,j}(z)=1/\sfk\}$ 
be as in Lemma~\ref{lem: density level sets}. There are collections of intervals 
$\mathcal J^-=\{J^-\subset[-\beta^2,\beta^2]\}$, $\mathcal J^0=\{J^0\subset[-\beta^2,\beta^2]\}$, and 
$\mathcal J^+=\{J^+\subset[-\eta,\eta]\}$ with $\#{\mathcal J}^{\bigcdot}\leq K'$, and $\mathcal J\subset \mathcal J^-\times\mathcal J^0\times\mathcal J^+$ so that 
\[
\Xi^\sfk=\bigcup_{\mathcal J}\mathsf{prd}(J^-\times J^0\times J^+),
\]
where $\mathsf{prd}(s,\tau,r)=u^-_sa_\tau u_r$. 

Let $\mathring\Xi^\sfk$ denote $\mathring\Xi^\sfk_{\umt^H_\ell}$, see~\eqref{eq: def mathring Xi}. We will write 
$\Xi^\sfk_{j,v}$ and $\mathring\Xi^\sfk_{j,v}$ for $\Xi^\sfk.\exp(v)y_j$ and $\mathring\Xi^\sfk.\exp(v)y_j$, respectively. 
Using~\eqref{eq: ur' sfh us'} and the definition of $\mathring\Xi^\sfk$, we conclude that  
\be\label{eq: ur' pushes z to partila Xi sfk'}
u_{r'}\mathring\Xi^\sfk_{j,v}\subset \Xi_{j,v}^\sfk
\ee 
so long as $\beta$ is small enough compared to $c$, see~\S\ref{sec: box complexity}.

In view of the definitions of $\bar\mu$ and $\mu$, there exists some $w$ and $p$ so that 
$\bar\mu$ is the restriction of the measure 
\[
a_\ell u_r\mu_{w}|_{\mathring\coneH_{w,p}.\exp(w)y}
\] 
to $\umt^H_\ell.\exp(v)y_j$. 
Note that $a_\ell u_r\mu_{w}|_{\mathring\coneH_{w,p}.\exp(w)y}$ is supported on 
$a_\ell u_r\coneH.\exp(w)y$, moreover, for every $\sfh\in\mathring\coneH_{w,p}$, we have 
\be\label{eq: conv abs cont 2}
\diff\!\mu_{w}(\sfh\exp(w)y)|=\adl\ddensity_w(\sfh)\diff\!m_H(\sfh),
\ee
and $\Lip(\ddensity_w|_{\mathring\coneH_{w,p}})\leq \adm$.

Recall that $1\ll \density_{\ell,j},\ddensity\ll1$. 
In view of the definitions of $\bar\mu$ and $\mu$, thus, 
the above implies 
\be\label{eq: measure of partial Xi sfk'}
\mu(\Xi^\sfk_{j,v}\setminus\mathring\Xi^\sfk_{j,v})\ll \eta\mu(X)
\ee
the implied constant depends on $\adl$, $\adm$, and $X$ (via $K$ and $K'$). 

Using~\eqref{eq: measure of partial Xi sfk'}, Lemma~\ref{lem: density level sets}, and the definition of $\mu$ again, we have 
\[
\int\varphi(a_{\mathsf d} u_{r_i}z)\diff\!\mu(z)=\sum_{\mathcal N}\int_{\mathring\Xi^\sfk_{j,v}}\varphi(a_{\mathsf d} u_{r_i}z)\diff\!\mu(z)+O(\eta\Lip(\varphi)\mu(X)),
\]
for $i=1,2$, where $\mathcal N=\{1\leq \sfk\leq K: \mathring\Xi^\sfk\neq\emptyset\}$.

In view of this, and since $r_2=r_1+r'$, we need to estimate the following  
\be\label{eq: mu jw folner prop m=0 sfk 1'}
\biggl|\int_{\mathring\Xi_{j,v}^\sfk}\varphi(a_{\mathsf d} u_{r_1}z)\diff\!\mu(z)-\int_{\mathring\Xi_{j,v}^\sfk}\varphi(a_{\mathsf d} u_{r_1}u_{r'}z)\diff\!\mu(z)\biggr|
\ee
for all $\sfk\in\mathcal N$. 

Recall that $\diff\!\mu=\density_{\ell,j}\diff\!\bar\mu$. 
Thus~\eqref{eq: mu jw folner prop m=0 sfk 1'} may be written as 
\[
\biggl|\int_{\mathring\Xi_{j,v}^\sfk}\varphi(a_{\mathsf d} u_{r_1}z)\density_{\ell,j}(z)\diff\!\bar\mu(z)-\int_{\mathring\Xi_{j,v}^\sfk}\varphi(a_{\mathsf d} u_{r_1}u_{r'}z)\density_{\ell,j}(z)\diff\!\bar\mu(z)\biggr|.
\] 
First note that by~\eqref{eq: ur' pushes z to partila Xi sfk'}, $\density_{\ell,j}(z)=\sfk$ and $\density_{\ell,j}(u_{r'}z)=\sfk$ 
for all $z\in\mathring\Xi_{j,v}^\sfk$.

Now let $\mathsf C^\sfk\subset\coneH$ be so that $a_\ell u_r\mathsf C^\sfk\exp(w)y=\Xi^\sfk_{j,v}$; similarly, define $\mathring{\mathsf C}^\sfk$. 
Then 
\be\label{eq: C and Xi}
u_r\mathring{\mathsf C}^\sfk\exp(w)y=(a_{-\ell}\mathring\Xi^\sfk a_\ell) .a_{-\ell}\exp(v)y_j,
\ee
similarly for ${\mathsf C}^\sfk$ with $\Xi^\sfk$ on the right side. 

In view of~\eqref{eq: C and Xi},~\eqref{eq: conv abs cont 2}, and the definition of $\bar\mu$, 
$\diff\!\bar\mu|_{(u_{r'}\mathring\Xi)\cap\mathring\Xi}$ is a constant multiple
of the pushforward of $\ddensity_w\cdot\diff\!\mu_w^{\Haar}$ restricted to 
\[
\Bigl((u_{e^{-\ell}r'}\mathring{\mathsf C}^\sfk)\cap\mathring{\mathsf C}^\sfk\Bigr).\exp(w)y.
\]
Thus, using~\eqref{eq: measure of partial Xi sfk'} and~\eqref{eq: ur' pushes z to partila Xi sfk'}, we conclude that 
$\bar\mu(u_{r'}\mathring\Xi_{j,v}^\sfk\triangle\mathring\Xi_{j,v}^\sfk)\ll \eta\mu(X)$. Altogether, we get 
\[
\biggl|\int_{\mathring\Xi^\sfk_{j,v}}\varphi(a_{\mathsf d} u_{r_1}z)\diff\!\mu(z)-\int_{\mathring\Xi^\sfk_{j,v}}\varphi(a_{\mathsf d} u_{r_1}u_{r'}z)\diff\!\mu(z)\biggr|\ll \eta\Lip(\varphi)\mu(X).
\] 
The proof is complete.
\end{proof}

\section{A convex combination decomposition}\label{sec: convex comb}

Recall that for every $\ell>0$, we defined  
\be\label{eq: def rwm ell new section}
\rwm_\ell(\varphi)=\ave\varphi(a_{\ell}\uvk)\uvkd\qquad\text{for all $\varphi\in C_c(H)$.}
\ee
In this section, we will show that if $\rwm_\ell^{(d)}$ is the $d$-fold convolution of $\rwm_\ell$ and~$x\in G/\Gamma$, then the measure $\rwm_\ell^{(d)}.x$ can be approximated by a convex combination $\sum c_i\mu_{\cone_i}$, where $\mu_{\cone_i}$ is an admissible measure for all $i$; see~\S\ref{sec: cone and mu cone}. Since $\rwm_\ell^{(d)}$ and $\rwm_{d\ell}$ stay close to each other, see Lemma~\ref{lem: thickening stable}, we thus conclude that averages of the form appearing in Theorem~\ref{thm:main} (albeit for $a_{d\ell}$) can be approximated by a convex combination of measures supported on sets which are a finite union of local $H$ orbits.
The main results are Lemma~\ref{lem: convex comb 1} and Lemma~\ref{lem: convex comb 2}; the proofs are based on Lemmas~\ref{lem: Folner property} and~\ref{lem: Folner property 2}. 

The results of this section will be combined with Lemma~\ref{lem: truncated MF main estimate} in the proof of Proposition~\ref{propos: imp dim main}; see, in particular, part~(2) in that proposition.

\subsection*{Convex combination: the base case}
Let $x\in X$, and let $t>0$. 
Assume that $\nuni^{-t}\leq \beta$ and that $h\mapsto hx$ is injective on $\coneH\cdot a_t\cdot U_1$.

By Proposition~\ref{prop: Non-div main}, for every  interval $I\subset [0,1]$ with $|I|\geq \delta$, we have 
\be\label{eq: non-div use}
\Bigl|\Bigl\{r\in I:\inj(a_t\uvk x)< \vare^2\Bigr\}\Bigr|<\ref{E:non-div-main}\vare |I|,
\ee
so long as $t\geq |\log(\delta^2\inj(x))|+\ref{E:non-div-main}$.

In order to deal with boundary effects, we will consider {\em interior} points for the supports of $\rwm_t$ and $\sigma$. 
Let $\rwm_{t,1}'$ be the restriction of $\rwm_t$ to $\{a_tu_r: r\in [e^{-t}, 1-e^{-t}]\}$, note that for every $h\in\supp(\rwm_{t,1}')$, 
we have $U_{1}.h\subset\supp(\rwm_{t})$. 
Applying~\eqref{eq: non-div use}, with $\vare=(2\eta)^{1/2}$ and $I=[e^{-t}, 1-e^{-t}]$, we may write 
\[
\rwm_t=\rwm_{t,1}+\rwm_{t,2}
\] 
where $\supp(\rwm_{t,1}.x)\subset X_{2\eta}$, for every $h\in\supp(\rwm_{t,1})$ 
we have $U_{1}.h\subset\supp(\rwm_{t})$, and $\rwm_{t,2}(H)\ll \nuni^{-t}\ll \eta^{1/2}$. 

Recall that $\sigma$ is the uniform measure on $\boxHs_{\beta+100\beta^2}$, write $\sigma=\sigma_1+\sigma_2$ where 
\[
\sigma_1=\sigma|_{{\mathsf B}_{\beta-100\beta^2}^{s,H}}.
\]

Similarly, write $\rwm_t=\mathring\rwm_{t}+\partial\rwm_t$ where $\supp(\mathring\rwm_{t}.x)\subset X_{2\eta}$, for every $h\in\supp(\mathring\rwm_{t})$ we have $U_{1-100\eta}.h\subset \supp(\rwm_{t})$ and $\partial\rwm_t(H)\ll\eta^{1/2}$; also write 
$\sigma=\mathring\sigma+\partial\sigma$ where $\mathring\sigma=\sigma|_{\mathsf B_\beta^{s,H}}$. Note that 
\[
\text{$\supp(\nu_{t,1})\subset\supp(\mathring{\nu_{t}})\quad$ and $\quad\supp(\sigma_1)\subset\supp(\mathring\sigma)$.}
\]

For every $j\in \mathcal J_0$ and every $z\in\supp(\sigma_1\conv\rwm_{t,1}).x \cap \umt^G_0.y_j$,
we have $z=\sfh\exp(w)y_j$ where $w\in B_\rfrak(0,2\beta^2)$ and 
\[
\sfh\in\umt^H_0=\Bigl\{u^-_sa_\tau: |s|,|\tau|\leq \beta^2 \Bigr\}\cdot U_\eta.
\] 
In consequence, $\umt_0^H.\exp(w)y_j\subset \supp\Bigl((\mathring\sigma\conv\mathring\rwm_t).x\Bigr) \cap \umt^G_0.y_j$. 
This observation, in particular, implies that for every $j\in\mathcal J_0$, we have 
\[
((\sigma\conv\rwm_t).x)|_{\umt^G_0.y_j}=\mu'_j+\sum_{i=1}^{N_j} \bar\mu_{j,i}
\]
where for all $i$ there exists $w_i$ so that $\bar\mu_{j,i}=(\mathring\sigma\conv\mathring\rwm_t.x)|_{\umt^H_0.\exp(w_i)y_j}$ and
\[
\mu'_j(\umt^G_0.y_j)\leq ((\sigma_2\conv\rwm_t).x)(\umt^G_0.y_j).
\] 
For all $j\in\mathcal J_0$, put 
\be\label{eq: def Fj for J0}
F_j=\Bigl\{w_i: \bar\mu_{j,i}=(\mathring\sigma\conv\mathring\rwm_t.x)|_{\umt^H_0.\exp(w_i)y_j}\Bigr\}.
\ee

\begin{lemma}\label{eq: numb Fj 1}
We have 
\[
\#F_j\ll \beta^{-3}e^{t}. 
\]
\end{lemma}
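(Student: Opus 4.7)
The plan is a direct Haar volume comparison inside $H$: the lifts of the disjoint local $H$-leaves $\umt^H_0.\exp(w_i)y_j$ form a disjoint family of translates of $\umt^H_0$ sitting inside $\supp(\sigma\conv\rwm_t)\subset H$, so the bound on $\#F_j$ follows from dividing the two Haar volumes.

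First I would verify that the leaves $\{\umt^H_0.\exp(w_i)y_j\}_{w_i\in F_j}$ are pairwise disjoint subsets of $\umt^G_0.y_j\subset X$. Since $y_j\in X_{3\eta/2}$, the map $g\mapsto gy_j$ is injective on $\umt^G_0$, so any equality $\sfh\exp(w_i)y_j=\sfh'\exp(w_{i'})y_j$ with $\sfh,\sfh'\in\umt^H_0$ would give $\sfh\exp(w_i)=\sfh'\exp(w_{i'})$ in $G$. Rewriting $\exp(w_i)\exp(-w_{i'})=h\exp(w)$ via Lemma~\ref{lem: BCH} and applying the uniqueness of the local decomposition $H\cdot\exp(\rfrak)$ from Lemma~\ref{lem:dist-sheet}, one forces $w=0$, hence $w_i=w_{i'}$.

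Next I would lift each leaf to $H$. For every $w_i\in F_j$, pick $\sfh_i\in\supp(\mathring\sigma\conv\mathring\rwm_t)$ with $\sfh_ix=\exp(w_i)y_j$; the injectivity of $h\mapsto hx$ on $\coneH\cdot a_t\cdot U_1$ then identifies the leaf $\umt^H_0.\exp(w_i)y_j$ with the translate $\umt^H_0\sfh_i\subset H$. The $\mathring$-truncations of $\sigma$ and $\rwm_t$ are arranged (together with the hypothesis $\nuni^{-t}\leq\beta$, which lets us absorb the $U_\eta$-factor of $\umt^H_0$ after commuting past $a_t$) precisely so that $\umt^H_0\sfh_i\subset\supp(\sigma\conv\rwm_t)\subset\boxHs_{\beta+100\beta^2}\cdot a_t\cdot U_1$. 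Disjointness of the leaves in $X$ then promotes, via this injective chart, to disjointness of the lifts $\umt^H_0\sfh_i$ in $H$.

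Finally, using Bruhat coordinates $u^-_s a_\tau u_r$ and the identity $a_tU_1\cdot a_{-t}=U_{e^t}$, one obtains
\[
m_H(\umt^H_0)\asymp \beta^2\cdot\beta^2\cdot\eta=\beta^4\eta,\qquad m_H\bigl(\boxHs_{\beta+100\beta^2}\cdot a_t\cdot U_1\bigr)\asymp \beta^2 e^t.
\]
Unimodularity of $H$ yields $m_H(\umt^H_0\sfh_i)=m_H(\umt^H_0)$, so summing the disjoint contributions gives $\#F_j\cdot\beta^4\eta\ll\beta^2e^t$, i.e.\ $\#F_j\ll\beta^{-2}\eta^{-1}e^t=\beta^{-5/2}e^t\leq\beta^{-3}e^t$ since $\eta=\beta^{1/2}$ and $\beta<1$. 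The delicate part of the argument is the second step: carefully checking that $\umt^H_0\sfh_i$ genuinely lies inside both the injective domain $\coneH\cdot a_t\cdot U_1$ and the support $\supp(\sigma\conv\rwm_t)$, which is exactly what the boundary margins in the definitions of $\mathring\sigma$ and $\mathring\rwm_t$ (and the size hypothesis on $t$) are designed to guarantee.
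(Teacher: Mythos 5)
Your argument is correct and is essentially the paper's own proof: one first checks that the sheets $\umt^H_0.\exp(w_i)y_j$, $w_i\in F_j$, are pairwise disjoint using the local injectivity at $y_j\in X_{3\eta/2}$, and then compares Haar volumes, $\#F_j\cdot m_H(\umt^H_0)\asymp \#F_j\,\beta^4\eta\ll m_H\bigl(\boxHs_{\beta+100\beta^2}\cdot a_t\cdot\{u_r:r\in[0,1]\}\bigr)\asymp\beta^2e^t$, giving $\#F_j\ll\beta^{-2}\eta^{-1}e^t\le\beta^{-3}e^t$. The only difference is presentational: the paper runs the volume count directly in $X$, using disjointness of the thickened sheets $\boxH_{4\eta}\exp(w_i)z$ for points $z\in X_\eta$ together with the inclusion of the leaves in $\supp(\sigma\conv\rwm_t).x$, rather than lifting each leaf to a translate $\umt^H_0\sfh_i\subset H$ as you do, which also spares the precise support-containment check in your second step (for the count it is enough that the lifts lie in a set of Haar measure $\ll\beta^2e^t$).
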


\begin{proof}
The proof is similar to~\cite[Lemmas 6.4 and 7.5]{LM-PolyDensity}, 
we reproduce the argument for the convenience of the reader. 

Recall from~\eqref{eq:def-inj} that 
\[
\inj(z)=\min\Big\{0.01, \sup\Big\{\delta: \text{ $g\mapsto gz$ is injective on $\boxG_{100\delta}$}\Big\}\Big\},
\]
where for every $0<\delta\leq0.1$ we put $\boxG_{\delta}:=\boxH_\delta\cdot\exp(B_\rfrak(0,\delta))$.

Therefore, for every $z\in X_\eta$, the map $(\sfh, w)\mapsto \sfh\exp(w)z$ is injective over 
$\boxH_{4\eta}\times B_\rfrak(0,4\eta)$. Hence, for all distinct $w,w'\in B_\rfrak(0,2\eta)$, we have
\[
\boxH_{4\eta}\exp(w)z\cap \boxH_{4\eta}\exp(w')z=\emptyset.
\]

This, and the fact that $\umt^H_0.\exp(w_i)y_j\subset \supp(\sigma\conv\rwm_t.x)\cap X_\eta$ for every $w_i\in F_j$,
implies that 
\[
(\#F_j)\cdot (\beta^4\eta)\ll\beta^2e^t.
\] 
We obtain $\#F_j\ll \beta^{-2}\eta^{-1}e^t\ll \beta^{-3}e^t$, as it was claimed. 
\end{proof}

For any $j\in\mathcal J_0$ and $1\leq i\leq N_j$, define 
$\diff\!\mu_{j,i}(z)=\density_{0,j}(z)\diff\!\bar\mu_{j,i}(z)$.
Altogether, we obtain
\be\label{eq: sigma mu decom}
\sigma\conv\rwm_t.x=\mu'+\sum_{j\in\mathcal J_0}\sum_{i=1}^{N_j}\mu_{j,i}
\ee
where $\mu'(X)\ll \eta^{1/2}$. Let 
\be\label{eq: def cj}
c_j=\sum_{i=1}^{N_j}\mu_{j,i}(X).
\ee

\begin{lemma}\label{lem: popular conej's 1}
If $c_j\geq \beta^{11}$, then $\#F_j=N_j\geq \beta^{9}e^t$. Moreover, 
\[
\sum_{\;\;c_j\geq \beta^{11}} c_j\geq 1-O(\eta^{1/2})
\]
\end{lemma}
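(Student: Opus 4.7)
The plan is to prove both claims by mass accounting. The key technical ingredient is the upper bound
\[
\bar\mu_{j,i}(X) \ll \beta^{2}\eta\,e^{-t}
\]
on the mass of each sheet; combined with $\mu_{j,i}\le \bar\mu_{j,i}$ (since $\density_{0,j}\le 1$), this gives $c_j \le \sum_i\bar\mu_{j,i}(X) \ll N_j\beta^{2}\eta\,e^{-t} = N_j\beta^{5/2}e^{-t}$ (using $\eta=\beta^{1/2}$), so the hypothesis $c_j\ge\beta^{11}$ forces $N_j \gg \beta^{17/2}e^{t} \ge \beta^{9}e^{t}$ once $\beta$ is small enough to absorb the implied constant. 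Once this bound is in hand, the second claim will follow from an identity $\sum_j c_j = 1-O(\eta^{1/2})$ combined with the cardinality bound in Lemma~\ref{lem: E good h0}.

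To establish the mass bound, I fix $h_0\in H$ with $h_0 x = \exp(w_i)y_j$. Since $\umt^H_0.\exp(w_i)y_j\subset\supp(\mathring\sigma\conv\mathring\rwm_t.x)$ by construction, and since $h\mapsto hx$ is injective on $\supp(\sigma\conv\rwm_t)\subset \boxHs_\beta\cdot a_t\cdot U_1$ by hypothesis, $h_0$ may be taken in $\supp(\sigma\conv\rwm_t)$, and then $\umt^H_0\cdot h_0\subset\supp(\sigma\conv\rwm_t)$. By definition, $\bar\mu_{j,i}(X) \le (\sigma\conv\rwm_t)(\umt^H_0\cdot h_0)$. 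Writing $h_0=u^-_{s_0}a_{\tau_0+t}u_{r_0}$ with $|s_0|,|\tau_0|\le\beta+O(\beta^2)$ and $r_0\in[0,1]$, the commutation rule $u_r a_t = a_t u_{e^{-t}r}$ together with the Bruhat identity for $u_r u^-_{s_0}$ (which is $O(\beta^2)$-close to the identity on the scales $|r|\le\eta$, $|s_0|\le\beta$) expresses $\umt^H_0\cdot h_0$, in the $(s,\tau,r)\mapsto u^-_s a_{\tau+t}u_r$ parameterization, as a negligible perturbation of a box of sidelengths $\beta^2$, $\beta^2$, $2\eta e^{-t}$. Integrating the explicit density $\tfrac{e^{\tau}}{\mbhs}$ from~\eqref{eq: conv abs cont} over this box, with $\mbhs\asymp\beta^{2}$, yields $\bar\mu_{j,i}(X) \ll \beta^{4}\eta e^{-t}/\mbhs \asymp \beta^{2}\eta e^{-t}$, as desired.

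For the second claim, the family $\{\density_{0,j}\}_{j\in\mathcal J_0}$ is a partition of unity on $\bigcup_j\umt^G_0.y_j$, which contains $X_{2\eta}$ by Lemma~\ref{lem: E good h0}. Summing the decomposition $((\sigma\conv\rwm_t).x)|_{\umt^G_0.y_j}=\mu'_j+\sum_i\bar\mu_{j,i}$ against $\density_{0,j}$ and over $j$ gives
\[
\sum_j c_j = (\sigma\conv\rwm_t).x(X) - \mu'(X) = 1-O(\eta^{1/2}),
\]
where the bound $\mu'(X)\ll\eta^{1/2}$ follows from its construction, since $\mu'$ absorbs only the contributions of $\sigma_2$ (bounded by $|\boxHs_\beta\setminus\boxHs_{\beta-100\beta^2}|/\mbhs\ll\beta$), of $\rwm_{t,2}$ (of mass $\ll \eta^{1/2}$ by the application of Proposition~\ref{prop:Non-div-main} preceding the statement), and of points falling outside $X_{2\eta}$ (again via Proposition~\ref{prop:Non-div-main}). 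With $\#\mathcal J_0\ll\eta^{-1}\beta^{-10}=\beta^{-21/2}$, the $j$'s with $c_j<\beta^{11}$ contribute at most $\#\mathcal J_0\cdot\beta^{11}\ll\beta^{1/2}=\eta$, whence
\[
\sum_{c_j\ge\beta^{11}} c_j \;\ge\; \sum_j c_j - \eta \;\ge\; 1-O(\eta^{1/2}).
\]
The only mildly delicate step is the mass bound on $\bar\mu_{j,i}(X)$: the factor $e^{-t}$, which is precisely what matches the upper bound $\#F_j\ll\beta^{-3}e^{t}$ from Lemma~\ref{eq: numb Fj 1}, arises from the $a_t$-contraction of the $u$-direction inside $\umt^H_0\cdot h_0$. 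Everything else is bookkeeping.
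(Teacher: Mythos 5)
Your proof is correct and follows essentially the same mass-accounting argument as the paper: the paper asserts $c_j\asymp N_j e^{-t}\beta^{2}\eta$ directly (you derive the per-sheet upper bound $\bar\mu_{j,i}(X)\ll\beta^{2}\eta e^{-t}$ from the density formula~\eqref{eq: conv abs cont}, which is exactly what is needed), and then both arguments finish the second claim by combining $\mu'(X)\ll\eta^{1/2}$ with the count $\#\mathcal J_0\ll\eta^{-1}\beta^{-10}$ so that the small-$c_j$ terms contribute at most $\beta\eta^{-1}\leq\eta$.
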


\begin{proof}
Recall that $\diff\!\mu_{j,i}(z)=\density_{0,j}(z)\diff\!\bar\mu_{j,i}(z)$, where 
\[
\bar\mu_{j,i}=(\mathring\sigma\conv\mathring\rwm_t.x)|_{\umt^H_0.\exp(w_i)y_j}\quad\text{and}\quad 1/K\leq \density_{0,j}\leq 1. 
\]
Therefore, $c_j\asymp N_je^{-t}\beta^{-2}\beta^4\eta=N_je^{-t}\beta^{2}\eta$. 
Hence if $c_j\geq \beta^{11}$, we have 
\[
N_j\gg \beta^{9}e^{t}
\]
where we also used $0<\eta\leq1$.

To see the second claim, recall from Lemma~\ref{lem: E good h0} 
that $\#\mathcal J_0\ll \eta^{-1}\beta^{-10}$. Using $\beta= \eta^2$, thus, we conclude
\[
\sum_{\;\; c_j< \beta^{11}}c_j\leq \beta\eta^{-1}\leq \eta.
\]
This and the fact that $\mu'(X)\ll\eta^{1/2}$ imply the claim.   
\end{proof}

For every $j$ so that $c_j\geq \beta^{11}$, define 
\be\label{eq: def conej}
\cone_j=\coneH.\{\exp(w_i)y_j: w_i\in F_j\}.
\ee 
Let $\mu_{\cone_j}$ be the restriction of  
\be\label{eq: def mu conj}
\sum_{i=1}^{N_j}\sigma\conv\mu_{j,i}
\ee
to $\cone_j$, normalized to be a probability measure.

\begin{lemma}\label{lem: mu cone is admissible}
The measure $\mu_{\cone_j}$ is a $(1/\mbhs, \adm)$-admissible measure on $\cone_j$ where $\mbhs=m_{{U^-}A}(\boxHs_{\beta+100\beta^2})$ and $\adm$ depends only on $X$. 
\end{lemma}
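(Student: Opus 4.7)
The plan is to verify admissibility for each of the candidate measures
\[
\mu_{w_i}:=\Bigl(\sigma\conv\mu_{j,i}\Bigr)\Big|_{\coneH.\exp(w_i)y_j},\qquad w_i\in F_j,
\]
so that the normalization $\mu_{\cone_j}=\bigl(\sum_i\mu_{w_i}(X)\bigr)^{-1}\sum_i\mu_{w_i}$ agrees with that in~\eqref{eq: def mu conj} and inherits admissibility.

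First, I would derive an explicit formula for the density of $\mu_{w_i}$ with respect to the push-forward of Haar $m_H$ on the local $H$-orbit through $\exp(w_i)y_j$. From $d\mu_{j,i}=\density_{0,j}\,d\bar\mu_{j,i}$ together with the density formula~\eqref{eq: conv abs cont}, $\mu_{j,i}$ is absolutely continuous with respect to $m_H$ with density proportional (up to a bounded smooth Jacobian) to $\density_{0,j}$. Writing $d\sigma=(1/\mbhs)\mathbb{1}_{\boxHs_{\beta+100\beta^2}}\,dm_{U^-A}$, applying Fubini to the convolution integral and substituting $\sfh_0=\sfh_1\sfh_2$ via left-invariance of $m_H$ produces
\[
\frac{d\mu_{w_i}}{dm_H}(\sfh_0.\exp(w_i)y_j)=\frac{1}{\mbhs}\int \density_{0,j}\bigl(\sfh_1^{-1}\sfh_0.\exp(w_i)y_j\bigr)\,\Phi(\sfh_0,\sfh_1)\,d\sigma(\sfh_1),
\]
where $\Phi$ is a smooth, strictly positive, uniformly bounded Jacobian-type factor (combining the $e^\tau$ in~\eqref{eq: conv abs cont} with the Haar Jacobian, with $|\tau|\leq\beta$). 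Since $\density_{0,j}\in[1/K,1]$ and the integration region $\{\sfh_1\in\supp\sigma:\sfh_1^{-1}\sfh_0\in\umt^H_0\}$ has $\sigma$-mass uniformly bounded above and below for $\sfh_0$ in the bulk of $\coneH$, after absorbing bounded constants this density takes the required form $(1/\mbhs)\ddensity_{w_i}(\sfh_0)$ with $1/\adm\leq\ddensity_{w_i}\leq\adm$ for some $\adm=\adm(X)$.

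Next, I would construct $\coneH_{w_i}=\bigcup_{\sfk=1}^K\coneH_{w_i,\sfk}$ using Lemma~\ref{lem: density level sets} applied with $m=0$, $w=w_i$: for each $1\leq\sfk\leq K$ the level set $\{z\in\umt^H_0.\exp(w_i)y_j:\density_{0,j}(z)=1/\sfk\}$ agrees with $\Xi^\sfk.\exp(w_i)y_j$ up to a set of $m_H$-mass $\ll\eta\,m_H(\umt^H_0)$, where $\Xi^\sfk\subset\umt^H_0$ has complexity at most $K'$. Define $\coneH_{w_i,\sfk}$ by removing from $\Xi^\sfk\cap\coneH$ the $100\beta^2$-neighborhood of its boundary in the $U^-A$-directions, so that the convolution kernel's support lies inside $\Xi^\sfk$ whenever $\sfh_0\in\coneH_{w_i,\sfk}$; this preserves complexity (total $\leq KK'$). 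For such $\sfh_0$, the integrand $\density_{0,j}(\sfh_1^{-1}\sfh_0.\exp(w_i)y_j)$ is the constant $1/\sfk$ throughout the integration, so $\ddensity_{w_i}$ reduces to $(1/\sfk)\int\Phi\,d\sigma$; the smoothness of $\Phi$ then yields Lipschitz regularity with constant depending only on $X$. This verifies conditions~(2) and~(3) of admissibility.

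Finally, for the boundary condition~(1), I would estimate $m_H(\coneH\setminus\coneH_{w_i})$ as the sum of (a)~the $100\beta^2$-boundary layers of the $\leq KK'$ boxes composing $\bigcup_\sfk(\Xi^\sfk\cap\coneH)$, of total $m_H$-measure $\ll\beta\,m_H(\coneH)$; and (b)~the Lemma~\ref{lem: density level sets} error set, of $m_H$-measure $\ll\eta\,m_H(\umt^H_0)\ll\beta\,m_H(\coneH)$ (since $\eta=\beta^{1/2}$ and $m_H(\umt^H_0)\ll\beta^2\,m_H(\coneH)$). Combined with $\ddensity_{w_i}\leq\adm$, this gives $\mu_{w_i}((\coneH\setminus\coneH_{w_i}).\exp(w_i)y_j)\ll\adm\beta\,\mu_{w_i}(X)$. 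The main technical obstacle is the precise bookkeeping of the Jacobian $\Phi$ and the verification that the kernel's $\sigma$-mass is uniformly bounded above and below on the bulk, both of which reduce to standard commutation computations in the Iwasawa parameterization of $H$.
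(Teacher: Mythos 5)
Your first step (the Fubini/left-invariance computation giving $\diff\mu_{w_i}=\mbhs^{-1}\ddensity_{w_i}\,\diff m_H$ with $\ddensity_{w_i}\asymp 1$) is fine and is essentially what the paper does. The gap is in your construction of the sets $\coneH_{w_i,\sfk}$. You take $\Xi^\sfk\subset\umt^H_0=\boxHs_{\beta^2}\cdot U_\eta$ from Lemma~\ref{lem: density level sets} and define $\coneH_{w_i,\sfk}$ by shrinking $\Xi^\sfk\cap\coneH$ by $100\beta^2$ in the $U^-A$-directions. But $\mu_{w_i}=(\sigma\conv\mu_{j,i})|_{\coneH.\exp(w_i)y_j}$ has its mass spread over the \emph{full} stable thickness $\beta$ of $\coneH$, because convolving with $\sigma$ (supported on $\boxHs_{\beta+100\beta^2}$) pushes the thin set $\umt^H_0.\exp(w_i)y_j$ (stable thickness $\beta^2$) across the whole $\boxHs_\beta$-range. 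A set $\coneH_{w_i}\subset\Xi^\sfk\subset\umt^H_0$ therefore carries only an $O(\beta)$-fraction of $\mu_{w_i}$, so condition~(1) of admissibility fails badly: $\mu_{w_i}\bigl((\coneH\setminus\coneH_{w_i}).\exp(w_i)y_j\bigr)$ is comparable to $\mu_{w_i}(X)$, not $O(\beta)\mu_{w_i}(X)$. (Your item (a)/(b) accounting implicitly treats $\mu_{w_i}$ as if it lived on $\umt^H_0.\exp(w_i)y_j$, which is exactly what the $\sigma$-smearing destroys.) Relatedly, your justification that removing a $100\beta^2$-layer makes "the convolution kernel's support lie inside $\Xi^\sfk$" cannot work: $\supp\sigma$ has size $\sim\beta$, not $\beta^2$, in the $U^-A$-directions.

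The fix — and this is what the paper does — is to \emph{thicken} the (shrunken) level sets in the stable direction by the kernel: take
\[
\coneH_{w_i,\sfk}=\boxHs_{\beta-100\beta^2}\cdot\mathring\Xi^\sfk_{j,i},
\]
with $\mathring\Xi^\sfk_{j,i}=(\mathring\Xi^\sfk_{j,i})_{\umt^H_0}$ as in the proofs of Lemmas~\ref{lem: Folner property}--\ref{lem: Folner property 2}. For $\sfh_0=\sfs\sfx$ in this set, the constraint $\sfh_1^{-1}\sfh_0\in\supp\mu_{j,i}\subset\umt^H_0.\exp(w_i)y_j$ confines $\sfh_1$ to a $\beta^2$-size window around $\sfs$, which lies entirely inside $\supp\sigma$ (no truncation) because $\sfs\in\boxHs_{\beta-100\beta^2}$, and on that window $\density_{0,j}$ takes the constant value $1/\sfk$ because $\sfx\in\mathring\Xi^\sfk_{j,i}$; this is what yields the Lipschitz bound and the complexity bound, and the complement is controlled by the boundary layers of $\mathring\Xi^\sfk$ together with the Lemma~\ref{lem: density level sets} error (the paper in fact only obtains an $O(\eta)$-relative bound for condition~(1) by this route, since the $\mathring{}$-operation removes $100\eta$-relative layers in the $U$-direction, so your claimed $O(\beta)$ bound would not follow even after the correction).
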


\begin{proof}
For every $w_i\in F_j$, let $\mu_{w_i}$ denote the restriction of $\sigma\conv\mu_{j,i}$ to $\coneH.\exp(w_i)y_j$. 
Then $\mu_{\cone_j}=\frac{1}{\sum_i\mu_{w_i}(X)}\sum\mu_{w_i}$. We will show that
\[
\diff\!\mu_{w_i}=V^{-1}\ddensity_i\cdot\diff\!m_H|_{\coneH.\exp(w_i)y_j}
\]
where $\ddensity_i$ satisfies the desired properties for all $i$. 

Recall that $\sigma$ is the uniform measure on $\boxHs_{\beta+100\beta^2}$.
Moreover, $\mu_{j,i}=\density_{0,j}\cdot\bar\mu_{j,i}$ where 
\[
\bar\mu_{j,i}=(\mathring\sigma\conv\mathring\rwm_t)|_{\umt^H_0.\exp(w_i)y_j}
\]
and $\umt^H_0=\boxHs_{\beta^2}\cdot U_\eta$. 
These, together with $1/K\leq \density_{0,j}\leq 1$, 
imply 
\[
\diff\!\mu_{w_i}=V^{-1}\ddensity_i\cdot\diff\!m_H
\]
where $1\ll \ddensity_i(\sfh)\ll1$.

Let $\Xi_{j,i}^\sfk$ be as in the proof of Lemma~\ref{lem: Folner property} (and Lemma~\ref{lem: Folner property 2}) applied with $v=w_i$, 
write $\mathring\Xi_{j,i}^\sfk$ for $(\mathring\Xi_{j,i}^\sfk)_{\umt^H_0}$. We will show that the claim holds with   
\[
\text{$\coneH_{w_i}=\bigcup_{\sfk}\coneH_{w_i, \sfk}\quad$ where $\quad\coneH_{w_i, \sfk}={\mathsf B}_{\beta-100\beta^2}^{s,H}\cdot\mathring\Xi_{j,i}^\sfk$}.
\]
First note that the complexity of $\coneH_{w_i, \sfk}$ is $\ll1$ by its definition. 
Moreover, 
\[
\mu_{j,i}\Bigl((\Xi_{j,i}^\sfk\setminus \mathring\Xi_{j,i}^\sfk).\exp(w_i)y_j\Bigr)\ll \eta\mu_{j,i}(\coneH.\exp(w_i)y_j).
\] 
This and Lemma~\ref{lem: density level sets} imply that 
\[
\mu_{w_i}\Bigl((\coneH\setminus\coneH_{w_i}).\exp(w_i)y_j\Bigr)\ll \eta\mu_{w_i}(\coneH.\exp(w_i)y_j). 
\]
Finally, since $\density_{0,j}$ is constant on $\mathring\Xi_{j,i}^\sfk$, we have $\Lip(\ddensity_i|_{\coneH_{w_i, \sfk}})\ll 1$.  
\end{proof}

The following lemma is the base case of our inductive argument. 

\begin{lemma}\label{lem: convex comb 1}
Let $x\in X$, and let $t>0$. 
Assume that $\nuni^{-t}\leq \beta$ and that $h\mapsto hx$ is injective on $\coneH\cdot a_t\cdot U_1$.
Let $\{c_j\}$ and $\{\mu_{\cone_j}\}$ be as in~\eqref{eq: def cj} and~\eqref{eq: def mu conj}, respectively.
Then for every $\varphi\in C_c^\infty(X)$, every $\mathsf d>0$, and all $|s|\leq 2$, 
\[
\biggl|\int\varphi(a_{\mathsf d} u_sz)\diff((\sigma\conv\rwm_t).x)(z)-\sum_j c_j\int\varphi(a_{\mathsf d}u_sz)\diff\!\mu_{\cone_j}(z)\biggr|\ll \eta^{1/2}\Lip(\varphi)
\]
where the implied constant depends only on $X$. 
\end{lemma}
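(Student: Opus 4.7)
The plan is to combine the explicit decomposition of $\sigma\conv\rwm_t.x$ given in \eqref{eq: sigma mu decom} with the Folner-type almost-invariance of the pieces $\mu_{j,i}$ established in Lemma~\ref{lem: Folner property}. Substituting \eqref{eq: sigma mu decom} on the left-hand side and using $\mu'(X)\ll \eta^{1/2}$, together with Lemma~\ref{lem: popular conej's 1} to discard those $j$ with $c_j<\beta^{11}$ at cost $O(\eta\|\varphi\|_\infty)$, reduces the inequality to showing
\[
\sum_{c_j\geq\beta^{11}}\sum_{i=1}^{N_j}\int\varphi(a_{\mathsf d}u_sz)\diff\mu_{j,i}(z)=\sum_{c_j\geq\beta^{11}}c_j\int\varphi(a_{\mathsf d}u_sz)\diff\mu_{\cone_j}(z)+O(\eta^{1/2}\Lip\varphi).
\]
By the definition \eqref{eq: def mu conj} of $\mu_{\cone_j}$, the right-hand sum equals $\sum_{j,i}\int\varphi(a_{\mathsf d}u_sz)\diff(\sigma\conv\mu_{j,i})(z)$, up to the loss from restricting $\sigma\conv\mu_{j,i}$ to $\cone_j=\coneH.\{\exp(w_i)y_j\}$; since $\sigma$ is supported on $\boxHs_{\beta+100\beta^2}$ and $\mu_{j,i}$ on $\umt_0^H.\exp(w_i)y_j$, this restriction omits only an $O(\beta)$ fraction in the stable direction, contributing $O(\beta\|\varphi\|_\infty)$ overall.

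What remains is the comparison
\[
\int\varphi(a_{\mathsf d}u_sz)\diff(\sigma\conv\mu_{j,i})(z)=\int\int\varphi(a_{\mathsf d}u_s\sfh z)\diff\sigma(\sfh)\diff\mu_{j,i}(z).
\]
For $\sfh\in\boxHs_{\beta+100\beta^2}$ and $|s|\leq 2$ the commutation formulas in \eqref{eq: commutation in convex comb sec} (applied with the roles of $U$ and $U^-$ swapped) give $u_s\sfh=\sfh^*u_{s+r(\sfh)}$ with $\sfh^*\in\boxHs_{O(\beta)}$ and $|r(\sfh)|\ll\beta$. Conjugating by $a_{\mathsf d}$ contracts the $U^-$-part of $\sfh^*$ and fixes the $A$-part, so $\|a_{\mathsf d}\sfh^*a_{-\mathsf d}-I\|\ll\beta$ and hence
\[
|\varphi(a_{\mathsf d}u_s\sfh z)-\varphi(a_{\mathsf d}u_{s+r(\sfh)}z)|\ll\beta\Lip\varphi
\]
uniformly in $\sfh$ and $z$. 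Crucially the $a_{\mathsf d}$ does not expand any $U$-component, so no inflation of the error occurs.

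The final step is to invoke Lemma~\ref{lem: Folner property} (or, strictly, its obvious variant for $\mathring\sigma\conv\mathring\rwm_t$, whose proof is identical since the two convolutions agree after restriction to the interior set $\umt_0^H.\exp(w_i)y_j$) with $r_1=s$ and $r_2=s+r(\sfh)$, yielding
\[
\biggl|\int\varphi(a_{\mathsf d}u_{s+r(\sfh)}z)\diff\mu_{j,i}(z)-\int\varphi(a_{\mathsf d}u_sz)\diff\mu_{j,i}(z)\biggr|\ll\eta\Lip(\varphi)\,\mu_{j,i}(X).
\]
Integrating over $\sfh$ against the probability measure $\sigma$ and summing over $(j,i)$ gives, using $\sum_{j,i}\mu_{j,i}(X)\leq 1$,
\[
\sum_{j,i}\int\varphi(a_{\mathsf d}u_sz)\diff(\sigma\conv\mu_{j,i})(z)=\sum_{j,i}\int\varphi(a_{\mathsf d}u_sz)\diff\mu_{j,i}(z)+O(\eta\Lip\varphi),
\]
which together with the earlier reductions completes the bound at size $O(\eta^{1/2}\Lip\varphi)$.

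The only genuinely delicate point is the bookkeeping in Step~2: one must confirm that the measure $\sigma\conv\mu_{j,i}$ agrees, to within an $O(\beta)$ boundary error, with the piece of $\mu_{\cone_j}$ indexed by $w_i$. This requires checking that the convolution places mass within $\coneH.\exp(w_i)y_j$ and that the density $\ddensity_{w_i}=\density_{0,j}\cdot(\text{const})$ built into $\mu_{\cone_j}$ matches the density one gets from $\sigma\conv\mu_{j,i}$ on the bulk, which in turn reduces to the computation already carried out in the proof of Lemma~\ref{lem: mu cone is admissible}. Modulo this verification, the argument is a routine three-step combination of \eqref{eq: sigma mu decom}, the Folner property, and the Lipschitz bound under $a_{\mathsf d}$-conjugation of $\boxHs_\beta$.
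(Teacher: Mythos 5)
Your argument is correct and is essentially the paper's own proof run in the opposite direction: it rests on the same three ingredients — the decomposition \eqref{eq: sigma mu decom} together with Lemma~\ref{lem: popular conej's 1}, the commutation relation \eqref{eq: commutation in convex comb sec} combined with the fact that conjugation by $a_{\mathsf d}$ does not expand $\boxHs_\beta$, and the almost-invariance Lemma~\ref{lem: Folner property} applied to nearby $U$-parameters differing by $O(\beta)$. The bookkeeping you flag in Step~2 (matching $\sigma\conv\mu_{j,i}$ restricted to $\cone_j$ with the corresponding piece of $\mu_{\cone_j}$, up to an $O(\beta)$ boundary loss) is exactly what the paper absorbs into its estimate \eqref{eq: hat mu j sigma' 1}, so no genuinely new verification is needed beyond what is already in Lemma~\ref{lem: mu cone is admissible}.
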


\begin{proof}
We begin with the following observation.
For every $|r|\leq 2$ and all $\sfh\in\boxHs_\beta$, we have $u_r\sfh=\sfh'u_{r_\sfh}$ where $|r_\sfh-r|\ll\beta|r|$
and $\sfh'\in \boxHs_{10\beta}$, see~\eqref{eq: commutation in convex comb sec}. 
Moreover, $a_{\mathsf d}\boxHs_{\bigcdot}a_{-\mathsf d}\subset \boxHs_{\bigcdot}$. 
Therefore,
\be\label{eq: hat mu j sigma' 1}
\biggl|c_j\!\int\!\!\varphi(a_{\mathsf d} u_r z)\diff\!\mu_{\cone_j}(z)-\iint\!\!\varphi(a_{\mathsf d} u_{r_\sfh}z)\diff\!\hat\mu_j(z)\diff\!\sigma(\sfh)\biggr|\ll_X c_j\beta\Lip(\varphi)
\ee
where $\hat\mu_j=\sum_{i=1}^{N_j}\mu_{j,i}$.

Moreover, by Lemma~\ref{lem: Folner property} applied with $r_\sfh$ and $r$ and $c=2$, we have 
\be\label{eq: hat mu j sigma'}
\biggl|\int\varphi(a_{\mathsf d} u_{r_\sfh}z)\diff\!\hat\mu_j(z)-\int\varphi(a_{\mathsf d} u_rz)\diff\!\hat\mu_j(z)\biggr|\ll_X c_j\beta\Lip(\varphi).
\ee
In view of~\eqref{eq: sigma mu decom} and since $\sum c_j=1-O(\eta^{1/2})$, see Lemma~\ref{lem: popular conej's 1}, 
the claim follows from~\eqref{eq: hat mu j sigma' 1} and~\eqref{eq: hat mu j sigma'}. 
\end{proof}

\subsection{Convex combination: the inductive step}\label{sec: conv comb induction}
Let $x\in X$, and let $t$ and $\ell$ be positive. 
Assume that $\nuni^{-t},\nuni^{-\ell}<\beta$ and that $h\mapsto hx$ is injective on $\coneH\cdot a_t\cdot U_1$.
We also assume fixed some $\mathsf d_0\geq t,\ell$. 

For any $n\in\bbn$, define 
\be\label{eq: def mu tn...t1}
\mu_{t,\ell, n}=\rwm_{\ell}\conv\cdots\conv\nu_{\ell}\conv\sigma\conv\rwm_{t}
\ee
where $\rwm_\ell$ appears $n$-times. Put $\mu_{t,\ell,0}=\sigma\conv\rwm_{t}$.

Let $n\geq1$. Assume there are $0\leq c'_i\leq 1$ and 
$(\adl_{n-1},\adm_{n-1})$-admissible measures $\{\mu_{\cone_i'}\}$ supported on 
\[
\cone_i'=\coneH.\{\exp(w_q')y_i': w_q'\in F_i'\}\subset X_\eta
\] 
so that for every $0<\mathsf d\leq \mathsf d_{0}$ and all $|s|\leq 2$, we have 

\begin{multline}\label{eq: conv comb ind hyp}
\int\varphi(a_{\mathsf d}u_shx)\diff\!\mu_{t,\ell,n-1}(h)=\\
\sum_i c_i'\int\varphi(a_{\mathsf d} u_sz)\diff\!\mu_{\cone_i'}(z)+O(\delta_{n-1}\Lip(\varphi))
\end{multline}
for some $0<\delta_{n-1}\leq1$. 

Our goal in this section is to construct a collection of admissible measures $\mu_{\cone_j}$ and constants $0\leq c_j\leq 1$ 
so that~\eqref{eq: conv comb ind hyp} holds for $\mu_{t,\ell,n}$. 

\medskip

We begin with the following non-divergence result.  

\begin{lemma}\label{lem: a-ell u-r mu cone non-div}
For every $r\in[0,1]$ we have 
\[
\mu_{\cone'_i}\Bigl(\{z\in \cone'_i: a_\ell u_rz\not\in X_{2\eta}\}\Bigr)\ll\eta^{1/2} 
\]
so long as $\ell\geq 3|\log\eta|+\ref{E:non-div-main}$.
\end{lemma}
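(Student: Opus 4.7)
\medskip
\noindent\textit{Proof plan.} The strategy is to unpack the admissible structure of $\mu_{\cone'_i}$ so that the bound reduces to a fibrewise non-divergence estimate, and then to invoke Proposition~\ref{prop:Non-div-main} along each $U$-fibre.

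First, since $\mu_{\cone'_i}$ is $(\adl_{n-1},\adm_{n-1})$-admissible, it is a bounded (up to $\adm_{n-1}$) average over $w\in F'_i$ of measures of the form $\adl_{n-1}\,\ddensity_w(\sfh)\,\diff\!m_H(\sfh)$ on $\coneH.\exp(w)y'_i$. Using $1/\adm_{n-1}\leq \ddensity_w\leq \adm_{n-1}$, it suffices to prove that for each fixed $w\in F'_i$,
\[
m_H\Bigl(\bigl\{\sfh\in\coneH : a_\ell u_r \sfh\exp(w)y'_i\notin X_{2\eta}\bigr\}\Bigr)\ll \eta^{1/2}\cdot m_H(\coneH),
\]
with an implicit constant depending on $X$ (and on $\adm_{n-1}$ after the averaging). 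Since $\coneH=\boxHs_\beta\cdot U_\eta$, I will parametrize $\sfh=u^-_{s_0}a_{\tau_0}u_s$ with $|s_0|,|\tau_0|\leq \beta$ and $|s|\leq \eta$, and carry out the estimate in the fibres where $s_0,\tau_0$ are fixed.

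The next step is a commutation move in $H$ using~\eqref{eq: commutation in convex comb sec}: for $r\in[0,1]$ and $(s_0,\tau_0)$ as above we have $u_r u^-_{s_0}a_{\tau_0}=\sfh' u_{r'}$ with $\sfh'\in\boxHs_{10\beta}$ and $|r-r'|\ll\beta$. Conjugating by $a_\ell$ further contracts $\sfh'$ in the $u^-$-direction, so
\[
a_\ell u_r\,u^-_{s_0}a_{\tau_0}u_s\,\exp(w)y'_i\;=\;\sfh''\, a_\ell u_{r'+s}\,\exp(w)y'_i,
\]
where $\sfh''\in\boxH_{10\beta}$. In particular $\sfh''$ lies in a fixed small neighborhood of the identity so that $\sfh''z\in X_{2\eta}$ is equivalent to $z\in X_{3\eta}$ (say) up to harmless absolute constants, which is what I will use to conclude. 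So it remains to bound, for each fixed $(s_0,\tau_0)$,
\[
\bigl|\bigl\{s\in[-\eta,\eta] : \inj(a_\ell u_{r'+s}\exp(w)y'_i)< 3\eta\bigr\}\bigr|.
\]

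For this final step, note that since $\cone'_i\subset X_\eta$ and $\|w\|\leq \beta\ll\eta$, we have $\inj(\exp(w)y'_i)\gtrsim \eta$. I will then apply Proposition~\ref{prop:Non-div-main} with base point $\exp(w)y'_i$, interval $I=[r'-\eta,r'+\eta]\subset[-10,10]$ (so $|I|=2\eta$), and $\vare= C'\eta^{1/2}$ for an appropriate $C'$. The required lower bound on $t$ becomes $\ell\geq |\log(|I|^2\cdot\inj(\exp(w)y'_i))|+\ref{E:non-div-main}\asymp 3|\log\eta|+\ref{E:non-div-main}$, which matches the hypothesis of the lemma. This yields a fibrewise bound of $\ll \eta^{1/2}\cdot|I|=\eta^{3/2}$, and integrating over $(s_0,\tau_0)\in[-\beta,\beta]^2$ produces a bad set of $m_H$-measure $\ll \beta^2\eta^{3/2}\asymp \eta^{1/2}\cdot m_H(\coneH)$. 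Averaging over $w\in F'_i$ with the admissible weights gives the claim.

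The only mildly delicate point is the bookkeeping of constants when the $O(\beta)$ fluctuation from $\sfh''$ is absorbed into the threshold for $\inj$; since $\beta=\eta^2\ll\eta$, this is harmless and affects only the implicit constant.
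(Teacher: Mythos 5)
Your proof is correct and follows essentially the same route as the paper's: reduce via admissibility to a fibrewise bound for fixed $\sfh\in\boxHs_\beta$ and $w\in F'_i$, commute $u_r\sfh=\sfh'u_{\hat r}$ and contract $\sfh'$ under conjugation by $a_\ell$, then apply Proposition~\ref{prop:Non-div-main} on the length-$2\eta$ interval in the $U$-direction, for which $\ell\geq 3|\log\eta|+\ref{E:non-div-main}$ suffices, absorbing the residual $\boxHs_{10\beta}$ perturbation into the injectivity-radius threshold. Your choice $\vare\asymp\eta^{1/2}$ (so that the threshold $\vare^2\asymp\eta$ covers $3\eta$, with fibrewise bad set $\ll\eta^{3/2}$, i.e.\ relative measure $\ll\eta^{1/2}$) is in fact the precise way to apply the proposition here.
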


\begin{proof}
Recall that $\coneH=\boxHs_\beta\cdot\{u_{r'}: |r'|\leq \eta\}$.
We will show that for every $\sfh\in\boxHs_{\beta}$ and every $w'_q\in F'_i$,  
\be\label{eq: a-ell u-r mu cone non-div}
|\{r'\in[-\eta,\eta]: a_{\ell}u_r\sfh u_{r'}\exp(w'_q)y'_i\not\in X_{2\eta}\}|\ll \eta^{1/2}
\ee
Since $\diff\!\mu_{w'_q}=\adl_{n-1}\ddensity \diff\!m_H$ and 
$\frac{1}{\adm_{n-1}}\leq \ddensity\leq \adm_{n-1}$,~\eqref{eq: a-ell u-r mu cone non-div} implies the lemma. 

To see~\eqref{eq: a-ell u-r mu cone non-div}, note that $u_r\sfh=\sfh' u_{\hat r}$, for some $\sfh'\in \boxHs_{10\beta}$ and $|\hat r|\leq 2$.
Since $a_\ell \boxHs_{10\beta}a_{-\ell}\subset \boxHs_{10\beta}$, we conclude that
\be\label{eq: contraction non-div convex comb}
a_{\ell}u_r\sfh u_{r'}\exp(w'_q)y'_i\subset \boxHs_{10\beta} a_\ell u_{\hat r+r'}\exp(w'_q)y'_i.
\ee
Apply Proposition~\eqref{prop: Non-div main} with $I=\hat r+[-\eta,\eta]$ and $\vare=3\eta$. 
Then 
\[
|\{r'\in[-\eta,\eta]: a_{\ell}u_{\hat r+r'}\exp(w'_q)y'_i\not\in X_{3\eta}\}|\ll \eta^{1/2}.
\]
This and~\eqref{eq: contraction non-div convex comb} imply~\eqref{eq: a-ell u-r mu cone non-div} and finish the proof.  
\end{proof}

In view of this lemma, for the remainder of this section, we will assume that 
$\ell\geq 3|\log\eta|+\ref{E:non-div-main}$.

Recall that $\cone_i'=\coneH.\{\exp(w_q')y_i': w_q'\in F_i'\}$ is equipped with the admissible measure $\mu_{\cone_i'}$. 
For every $w'_q\in F_i'$, let $\ddensity_{w'_q}$ and $\coneH_{w_q'}=\bigcup_p\coneH_{w_q', p}$ 
be as in the definition of an admissible measure, \S\ref{sec: cone and mu cone}. 

Using the notation in~\eqref{eq: def mathring Xi}, let $\mathring\coneH_{w'_q}:=\bigcup_p(\mathring\coneH_{w'_q,p})_{\coneH}$. Put
\[
\text{$\mathring\cone'_i=\bigcup_{w'_q}\mathring\coneH_{w'_q}\quad$ and $\quad\mathring\mu_{\cone'_i}=\mu_{\cone'_i}|_{\mathring\cone'_i}$}.
\] 

For every $i$ and $r\in[0,1]$, put $\mu_{i,r}=a_\ell u_r\mu_{\cone'_i}$. 
In view of the definition of $\mathring\mu_{\cone'_i}$ and Lemma~\ref{lem: a-ell u-r mu cone non-div}, 
we will write $\mu_{i,r}=\mu_{i,r,1}+\mu_{i,r,2}$ where $\mu_{i,r,2}(X)\ll \max\{\adm_{n-1}\beta,\eta^{1/2}\}$ and  
\begin{align*}
\supp(\mu_{i,r,1})&\subset \supp(a_\ell u_r\mathring\mu_{\cone'_i})\cap X_{2\eta}\\
&=a_\ell u_r\Bigl(\bigcup\mathring\coneH_{w_q'}.\{\exp(w'_q)y'_i: w'_q\in F_i'\}\Bigr)\cap X_{2\eta},
\end{align*} 
moreover, for every $z\in\supp(\mu_{i,r,1})$ there are $q$ and $p$ so that 
\[
\hat\umt^H_\ell.z\subset a_\ell u_r\coneH_{w'_q,p}\exp(w'_q)y'_i,
\]
where $\hat\umt^H_{\ell}=\Bigl\{u^-_sa_\tau: \nuni^\ell|s|,|\tau|\leq 100\beta^2 \Bigr\}\cdot U_{10\eta}$.

For every $j\in \mathcal J_\ell$ as in Lemma \ref{lem: E good h0} and every $z\in\supp(\mu_{i,r,1}) \cap \umt^G_\ell.y_j$,
we have $z=\sfh\exp(v)y_j$ where $v\in B_\rfrak(0,2\beta^2)$ and 
$\sfh\in\umt^H_\ell=\Bigl\{u^-_sa_\tau: \nuni^{\ell}|s|,|\tau|\leq\beta^2 \Bigr\}\cdot U_{\eta}$.
Thus, 
\be\label{eq: interior mu r-i}
\begin{aligned}
\umt_\ell^H.\exp(v)y_j&\subset (a_\ell u_r\coneH_{w'_q,p}\exp(w'_q)y'_i) \cap \umt^G_\ell.y_j\\
&\subset \supp(\mu_{i,r}) \cap \umt^G_\ell.y_j.
\end{aligned}
\ee
This observation, in particular, implies that for every $j\in\mathcal J_\ell$, we have 
\[
\mu_{i,r}|_{\umt^G_\ell.y_j}=\mu'_{i,r}+\sum_{\varsigma=1}^{N_{i,r}^j} \bar\mu_{i,r}^{j,\varsigma}
\]
where for all $\varsigma$ there exists $v_\varsigma$ so that $\bar\mu_{i,r}^{j,\varsigma}=\mu_{i,r}|_{\umt^H_\ell.\exp(v_\varsigma)y_j}$ and
\[
\mu'_{i,r}(\umt^G_\ell.y_j)\leq \mu_{i,r,2}(\umt^G_\ell.y_j).
\] 

For all $j\in\mathcal J_\ell$, put 
\be\label{eq: define Frij}
F_{i,r}^j=\Bigl\{v_\varsigma: \bar\mu_{i,r}^{j,\varsigma}=(\mu_{i,r})|_{\umt^H_\ell.\exp(v_\varsigma)y_j}\Bigr\}.
\ee

For any $j\in\mathcal J_\ell$ and $1\leq \varsigma\leq N_{i,r}^j$, define 
$\diff\!\hat\mu_{i,r}^{j,\varsigma}(z)=\density_{\ell,j}(z)\diff\!\bar\mu_{i,r}^{j,\varsigma}(z)$. 
Then
\be\label{eq: mu r i decomp}
\mu_{i,r}=\mu'+\sum_{j\in\mathcal J_\ell}\sum_{\varsigma=1}^{N_{i,r}^j}\hat\mu_{i,r}^{j,\varsigma}
\ee
where $\mu'(X)\ll\max\{\eta^{1/2},\adm_{n-1}\beta\}$. For all $j\in\mathcal J_\ell$, put 
\be\label{eq: def cj 2}
c_{i,r}^j=\sum_{\varsigma=1}^{N_{i,r}^j}\hat\mu_{i,r}^{j,\varsigma}(X).
\ee

We have the following analogue of Lemma~\ref{lem: popular conej's 1}.

\begin{lemma}\label{lem: popular conej's 2}
Assume $\eta$ is small enough compare to $\adm_{n-1}$. 
If $c_{i,r}^j\geq \beta^{12}e^{-\ell}$, then $\#F_{i,r}^j=N_{i,r}^j\geq \beta^{8}\cdot(\#F'_i)$. Moreover, 
\[
\sum_{\;\;c_{i,r}^j\geq \beta^{12}e^{-\ell}} c_{i,r}^j\geq 1-O\Bigl(\max\{\eta^{1/2},\adm_{n-1}\beta\}\Bigr)
\]
\end{lemma}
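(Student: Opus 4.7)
The plan is to mimic the proof of Lemma~\ref{lem: popular conej's 1}, the main new point being an \emph{a priori} upper bound on the mass that a single tube $\umt^H_\ell.\exp(v_\varsigma)y_j$ can receive from $\mu_{i,r}$. Everything will follow once we show
\be\label{eq: per-tube upper bd}
\hat\mu_{i,r}^{j,\varsigma}(X)\;\ll\;\frac{\beta^2e^{-\ell}}{\#F_i'}
\ee
with an implied constant depending on $X$ and $\adm_{n-1}$.

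\textbf{Proving \eqref{eq: per-tube upper bd}.} Since $\mu_{\cone_i'}=\frac{1}{\sum_q\mu_{w_q'}(X)}\sum_{w_q'\in F_i'}\mu_{w_q'}$ with $d\mu_{w_q'}=\adl_{n-1}\ddensity_{w_q'}dm_H$ pulled back through $\sfh\mapsto \sfh\exp(w_q')y_i'$, Lemma~\ref{lem: mu cone is admissible} gives $\adl_{n-1}\asymp 1/\mbhs\asymp\beta^{-2}$; combined with the bound $1/\adm_{n-1}\le \ddensity_{w_q'}\le \adm_{n-1}$ and $m_H(\coneH)\asymp \beta^2\eta$, this yields $\mu_{w_q'}(X)\asymp\eta$ and hence $\sum_q\mu_{w_q'}(X)\asymp (\#F_i')\eta$. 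Pushing forward by $a_\ell u_r$ preserves $m_H$ by the bi-invariance of Haar on $H$, so each single sheet $a_\ell u_r\coneH.\exp(w_q')y_i'$ contributes to a given tube $\umt^H_\ell.\exp(v_\varsigma)y_j$ at most
\[
\frac{\adl_{n-1}\adm_{n-1}\,m_H(\umt^H_\ell)}{\sum_q\mu_{w_q'}(X)}\;\asymp\;\frac{\beta^{-2}\cdot\beta^4\eta e^{-\ell}}{(\#F_i')\eta}\;\asymp\;\frac{\beta^2e^{-\ell}}{\#F_i'}.
\]
Finally, by the same injectivity argument used in Lemma~\ref{eq: numb Fj 1} (two sheets meeting a common $\umt^G_\ell$-box must have their transversal coordinates in $B_\rfrak(0,2\beta^2)$ pairwise identified, and $(h,w)\mapsto h\exp(w)z$ is injective on $\boxH_{4\eta}\times\exp(B_\rfrak(0,4\eta))$), only boundedly many sheets (in terms of $X$ alone) can pass through any single tube. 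Summing over those sheets and using $\density_{\ell,j}\le 1$ gives \eqref{eq: per-tube upper bd}.

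\textbf{First assertion.} From \eqref{eq: per-tube upper bd} and $c_{i,r}^j=\sum_{\varsigma=1}^{N_{i,r}^j}\hat\mu_{i,r}^{j,\varsigma}(X)$ we obtain
\[
c_{i,r}^j\;\le\;C\adm_{n-1}\cdot N_{i,r}^j\cdot\frac{\beta^2e^{-\ell}}{\#F_i'}.
\]
Hence if $c_{i,r}^j\ge \beta^{12}e^{-\ell}$, then $N_{i,r}^j\ge(C\adm_{n-1})^{-1}\beta^{10}(\#F_i')\ge\beta^8(\#F_i')$, provided $\eta$ (and thus $\beta=\eta^2$) is small enough depending on $\adm_{n-1}$ and $X$.

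\textbf{Second assertion.} Integrating the decomposition \eqref{eq: mu r i decomp} gives $\sum_j c_{i,r}^j=\mu_{i,r}(X)-\mu'(X)=1-O(\max\{\eta^{1/2},\adm_{n-1}\beta\})$, using the bound on $\mu'(X)$ recorded just before \eqref{eq: mu r i decomp}. The contribution of the unpopular tubes is controlled by Lemma~\ref{lem: E good h0}:
\[
\sum_{c_{i,r}^j<\beta^{12}e^{-\ell}}c_{i,r}^j\;\le\;(\#\mathcal J_\ell)\cdot\beta^{12}e^{-\ell}\;\ll\;\eta^{-1}\beta^{-10}e^\ell\cdot\beta^{12}e^{-\ell}\;=\;\beta^2/\eta\;=\;\eta^3,
\]
which is dominated by $\eta^{1/2}$. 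Subtracting proves the claimed lower bound.

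The main (and essentially only nontrivial) obstacle is the bounded-overlap step inside the proof of \eqref{eq: per-tube upper bd}; once one accepts this (exactly as in Lemma~\ref{eq: numb Fj 1}), the rest is a direct volume count.
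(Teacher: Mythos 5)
Your overall route is the same as the paper's: bound the mass that a single tube $\umt^H_\ell.\exp(v_\varsigma)y_j$ can carry using admissibility of $\mu_{\cone_i'}$, deduce the lower bound on $N_{i,r}^j$ by dividing, and get the second assertion by combining the covering bound $\#\mathcal J_\ell\ll\eta^{-1}\beta^{-10}e^\ell$ from Lemma~\ref{lem: E good h0} with the bound $\mu'(X)\ll\max\{\eta^{1/2},\adm_{n-1}\beta\}$. The second assertion, the bounded-overlap point (in fact at most one translated sheet can meet a given tube, by local injectivity), and your per-tube computation itself are fine; note also that your appeal to Lemma~\ref{lem: mu cone is admissible} for $\adl_{n-1}\asymp\beta^{-2}$ only covers the base case, but this is harmless since $\adl_{n-1}$ cancels between numerator and denominator.

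The genuine gap is the last step of the first assertion. From your per-tube bound $\hat\mu_{i,r}^{j,\varsigma}(X)\ll_{\adm_{n-1}}\beta^2e^{-\ell}/(\#F_i')$ and $c_{i,r}^j\geq\beta^{12}e^{-\ell}$ you correctly get $N_{i,r}^j\geq (C\adm_{n-1})^{-1}\beta^{10}(\#F_i')$, but the asserted inequality $(C\adm_{n-1})^{-1}\beta^{10}(\#F_i')\geq\beta^{8}(\#F_i')$ is false: it is equivalent to $\beta^{2}\geq C\adm_{n-1}\geq 1$, which never holds for $\beta<1$, and shrinking $\eta$ (hence $\beta=\eta^2$) makes the left-hand side \emph{smaller}, so the clause ``provided $\eta$ is small enough'' points in the wrong direction. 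Thus your argument only yields $N_{i,r}^j\gg_{\adm_{n-1}}\beta^{10}(\#F_i')$, not the stated $\beta^{8}(\#F_i')$. The paper reaches the exponent $8$ by asserting the comparison $c_{i,r}^j\asymp N_{i,r}^j\,(e^{-\ell}\beta^4\eta)\,(\#F_i')^{-1}$, i.e.\ a per-tube mass smaller than yours by the factor $m_H(\coneH)\asymp\beta^2\eta$, from which $\beta^{12}/(\beta^4\eta)=\beta^8\eta^{-1}\geq\beta^8$. Your normalization (dividing the tube's Haar measure $m_H(\umt^H_\ell)\asymp e^{-\ell}\beta^4\eta$ by the per-sheet Haar mass $m_H(\coneH)\asymp\beta^2\eta$ of the probability measure) is the natural one, and it does not give the exponent $8$; so as written you must either reconcile your per-tube estimate with the paper's comparison or settle for a weaker power of $\beta$ in the conclusion — in either case the proposal, as it stands, does not prove the inequality in the statement, and no smallness assumption on $\eta$ repairs the final step.
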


\begin{proof}
Recall that $\diff\!\hat\mu_{i,r}^{j,\varsigma}(z)=\density_{\ell,j}(z)\diff\!\bar\mu_{i,r}^{j,\varsigma}(z)$ where 
\[
\bar\mu_{i,r}^{j,\varsigma}=\mu_{i,r}|_{\umt^H_\ell.\exp(v_\varsigma)y_j}
\]
and $1/K\leq \density_{0,j}\leq 1$. 

Since $\mu_{\cone'_i}$ is admissible, see \S\ref{sec: cone and mu cone}, we have 
$c_{i,r}^j\asymp N_{i,r}^j\Bigl(e^{-\ell}\beta^4\eta\Bigr)\cdot (\#F'_i)^{-1}$.
Therefore, if $c_{i,r}^j\geq \beta^{12}e^{-\ell}$, then  
\[
N_{i,r}^j\geq \beta^{8}\cdot(\#F'_i)
\]
where we assume $0<\eta\leq1$ is small enough to account for the implied constant which depends on $\adm_{n-1}$.

To see the second claim, recall from Lemma~\ref{lem: E good h0} 
that $\#\mathcal J_\ell\ll \eta^{-1}\beta^{-10}e^{\ell}\leq \beta^{-11}e^{\ell}$, therefore,
\[
\sum_{\;\; c_{i,r}^j< \beta^{12}e^{-\ell}}c_j\leq \beta.
\]
This and the fact that $\mu'(X)\ll\max\{\eta^{1/2},\adm_{n-1}\beta\}$ imply the claim.   
\end{proof}

Let $j$ be so that $c_{i,r}^j\geq \beta^{12}e^{-\ell}$. Then by Lemma~\ref{lem: popular conej's 2}, 
we have $\#F^j_{i,r}\geq \beta^8\cdot(\#F'_i)$. We write
\[
F^j_{i,r}=\tilde F^j_{i,r}\bigcup\,\biggl(\textstyle\bigcup_{m=1}^{M_{i,r}^j}F^{j,m}_{i,r}\biggr)
\] 
where $\#\tilde F^j_{i,r}< \beta^9\cdot(\#F'_i)$ and 
\be\label{eq: num in F-j-m-i-r almost const'}
\beta^9\cdot(\#F'_i)\leq \#F^{j,m}_{i,r}\leq \beta^8\cdot(\#F'_i)
\ee 
for every $m$.

Let the notation be as in~\eqref{eq: mu r i decomp}.
As it was observed in the proof of Lemma~\ref{lem: popular conej's 2}, we have 
$\hat\mu_{i,r}^{j,\varsigma}(X)\asymp \hat\mu_{i,r}^{j,\varsigma'}(X)$ for all $\varsigma,\varsigma'$. Thus, we may write 
\be\label{e: mu r i j m}
\sum_{\varsigma=1}^{N_{i,r}^j}\hat\mu_{i,r}^{j,\varsigma}=\mu_j'+\sum_{m=1}^{M_{i,r}^j}\sum_{k=1}^{N_{i,r}^{j,m}}\mu_{i,r}^{j,m,k}
\ee
where $\mu_j'(X)\ll \beta c_{i,r}^j$. 
Note that for every $k$, there is some $\varsigma$ so that 
\[
\mu_{i,r}^{j,m,k}=\hat\mu_{i,r}^{j,\varsigma}.
\]
Recall that $\diff\!\hat\mu_{i,r}^{j,\varsigma}(z)=\density_{\ell,j}(z)\diff\!\bar\mu_{i,r}^{j,\varsigma}(z)$, we will write $\bar\mu_{i,r}^{j,m,k}=\bar\mu_{i,r}^{j,\varsigma}$.

For every $1\leq m\leq M_{i,r}^j$, put 
\[
\mu_{i,r}^{j,m}:=\sum_{k=1}^{N_{i,r}^{j,m}}\mu_{i,r}^{j,m,k},\ c_{i,r}^{j,m}:=\mu_{i,r}^{j,m}(X).
\] 
Then~\eqref{e: mu r i j m} and~\eqref{eq: mu r i decomp} yield  
\be\label{eq: mu r i decomp 2}
\mu_{i,r}=\mu''+\sum_{c_{i,r}^j\geq \beta^{12}e^{-\ell}}\;\;\sum_{m=1}^{M_{i,r}^j}\mu_{i,r}^{j,m}
\ee
where $\mu''(X)\ll\max\{\eta^{1/2},\adm_{n-1}\beta\}$.

For every $j$ so that $c_{i,r}^j\geq \beta^{12}e^{-\ell}$ and all $1\leq m\leq M_{i,r}^j$, define 
\be\label{eq: def conej 2}
\cone_{i,r}^{j,m}=\coneH.\{\exp(v_k)y_j: v_k\in F_{i,r}^{j,m}\}.
\ee 
Let $\mu_{\cone_{i,r}^{j,m}}$ be the restriction of  
\be\label{eq: def mu conj 2}
\sigma\conv\mu_{i,r}^{j,m}
\ee
to $\cone_{i,r}^{j,m}$, normalized to be a probability measure. 

We will refer to 
$(\cone_{i,r}^{j,m},\mu_{\cone_{i,r}^{j,m}})$ as an {\em offspring} of $a_\ell u_r\mu_{\cone'_i}$.

\begin{lemma}\label{lem: mu cone is admissible 2}
The measure $\mu_{\cone_{i,r}^{j,m}}$ is a $(\adl_n,\adm_n)$-admissible measure, where $\adm_n$ depends only on $X$ and $\adm_{n-1}$.  
\end{lemma}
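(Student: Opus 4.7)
The plan is to check the three bullet points defining $(\adl_n, \adm_n)$-admissibility for the measure
\[
\mu_{\cone_{i,r}^{j,m}} = \Bigl(\textstyle\sum_k \mu_{v_k}(X)\Bigr)^{-1} \sum_{v_k \in F_{i,r}^{j,m}} \mu_{v_k},
\]
where I take $\mu_{v_k} := \sigma \conv \mu_{i,r}^{j,m,k}$. By construction each summand $\mu_{i,r}^{j,m,k}$ is supported in $\umt^H_\ell \cdot \exp(v_k) y_j$, and the commutation relations in $H$ (together with the standing hypothesis $\nuni^{-\ell} < \beta$) yield $\boxHs_{\beta+100\beta^2} \cdot \umt^H_\ell \subset \coneH$, so each $\mu_{v_k}$ is indeed supported in $\coneH \cdot \exp(v_k)y_j$, matching the required structural form of an admissible measure.

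For the density, the $(\adl_{n-1}, \adm_{n-1})$-admissibility of $\mu_{\cone'_i}$ together with unimodularity of $H$ makes $a_\ell u_r \mu_{\cone'_i}$ an absolutely continuous measure on each translated leaf $a_\ell u_r \coneH \exp(w'_q) y'_i$ with $m_H$-density bounded above by $\adl_{n-1}\adm_{n-1}$ and below by $\adl_{n-1}/\adm_{n-1}$ on the good pieces. Multiplying by $\density_{\ell,j} \in [1/K, 1]$ (with $K$ from Lemma~\ref{lem: E good h0}) preserves the two-sided bound up to a factor of $K$, so $\mu_{i,r}^{j,m,k}$ has the same type of density with constants depending only on $\adm_{n-1}$ and $K$. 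Applying Fubini to the convolution with $\sigma = \mbhs^{-1} \mathbbm 1_{\boxHs_{\beta+100\beta^2}}$ then gives
\[
\diff\!\mu_{v_k}(\sfh \exp(v_k) y_j) = \adl_n \ddensity_{v_k}(\sfh) \diff\!m_H(\sfh)
\]
with $\adl_n := \adl_{n-1}$, where $\ddensity_{v_k}(\sfh)$ is the $\mbhs$-normalized average of the prior density over left translates $\sfh_0^{-1}\sfh$ with $\sfh_0 \in \boxHs_{\beta+100\beta^2}$. At $\sfh$ whose full stable translate lies in the support, this average is bounded above and below by positive constants depending only on $\adm_{n-1}$, $K$ and $X$.

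To exhibit the good subset and control complexity and Lipschitz, I would set $\coneH_{v_k} = \bigcup_{\sfk,p}\coneH_{v_k,\sfk,p}$ where each piece is the intersection of: (i) an $\eta$-retraction of $\coneH$ so that $\boxHs_{\beta+100\beta^2}\cdot\sfh$ lies inside $\umt^H_\ell\cdot\exp(v_k)y_j$; (ii) a level set $\{\density_{\ell,j}(\bigcdot\exp(v_k)y_j)=1/\sfk\}$, which by Lemma~\ref{lem: density level sets} is a union of at most $K'$ boxes up to an $\eta$-thin set; and (iii) the pre-image under the affine map $\sfh\mapsto(a_\ell u_r)^{-1}\sfh(a_\ell u_r)\sfh_\varsigma$ of one of the good cells $\coneH_{w'_q,p}$ coming from the admissibility of $\mu_{\cone'_i}$, further intersected over $\sfh_0\in\boxHs_{\beta+100\beta^2}$. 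Because $\boxHs$ acts only in the $(U^-,A)$-coordinates and each $\coneH_{w'_q,p}$ is a union of at most $\adm_{n-1}$ boxes $\mathsf{prd}(I^-\times I^0\times I^+)$, this intersection is again a finite union of boxes with $I^-$ and $I^0$ slightly contracted, so the total complexity is bounded by $K\cdot K'\cdot\adm_{n-1}$ and the discarded boundary has $\mu_{v_k}$-measure $\ll(\adm_{n-1}+K')\beta\cdot\mu_{v_k}(X)$, which is absorbed into $\adm_n\beta$. Finally, on each cell the $\boxHs$-averaging makes $\ddensity_{v_k}$ essentially constant in the stable directions, leaving only the $U$-direction variation; conjugation by $(a_\ell u_r)^{-1}$ contracts the unstable direction by $\nuni^{-\ell}$, so the original Lipschitz constant $\adm_{n-1}$ of $\ddensity_{w'_q}$ induces an unstable Lipschitz constant $\leq \adm_{n-1}\nuni^{-\ell}\leq\adm_{n-1}$ for $\ddensity_{v_k}$. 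Taking $\adm_n$ large enough to absorb $K$, $K'$ and $\adm_{n-1}$ --- all depending only on $X$ and $\adm_{n-1}$ --- closes the argument. The main technical obstacle is the bookkeeping in (iii): ensuring that box structure is preserved when the preimage cells are intersected across $\sfh_0\in\boxHs$, which is precisely why it is essential that $\sigma$ lives in the stable direction.
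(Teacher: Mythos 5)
Your outline is essentially the paper's argument: define $\mu_{v_k}$ via the $\sigma$-convolution of the leafwise pieces $\mu_{i,r}^{j,m,k}$, get the two-sided density bound from the old admissibility together with $1/K\leq\density_{\ell,j}\leq 1$, build the good cells from the level-set boxes of Lemma~\ref{lem: density level sets} (suitably retracted and thickened by $\boxHs$), and obtain the Lipschitz bound from the identity $u_{r'}a_\ell u_r=a_\ell u_{r+\nuni^{-\ell}r'}$ plus the fact that the $\sigma$-average flattens the stable and diagonal directions on the retracted set.

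The one place you diverge is your step (iii), which you flag as the main technical obstacle: intersecting the new boxes with preimages of the old good cells $\coneH_{w'_q,p}$ and then intersecting over $\sfh_0\in\boxHs$. This is not needed. The offspring construction in \S\ref{sec: conv comb induction} already guarantees (see the discussion around \eqref{eq: interior mu r-i}) that each new local leaf satisfies $\supp(\bar\mu^{k})\subset a_\ell u_r\bigl(\coneH_{w'_q,p}.\exp(w'_q)y'_i\bigr)$ for a \emph{single} pair $(q,p)$, so the old density $\ddensity_{w'_q}$ is defined and $\adm_{n-1}$-Lipschitz on the entire relevant domain, and the good cells can be taken simply as $\boxHs_{\beta-100\beta^2}\cdot\mathring\Xi^{\mathsf f}_{j,k}$; no preimage bookkeeping or continuum intersection over $\sfh_0$ enters, and the complexity bound is immediate. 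Your route could probably be pushed through (the number of old cells meeting a given new leaf is at most $\adm_{n-1}$, and right-translates of boxes are boxes up to $O(\beta)$-errors as in Lemma~\ref{lem: density level sets}), but it adds work the construction was designed to avoid. Two further small corrections: the containment $\boxHs_{\beta+100\beta^2}\cdot\umt^H_\ell\subset\coneH$ is not literally true (the stable and diagonal coordinates can exceed $\beta$ by $O(\beta^2)$); this is immaterial only because $\mu_{\cone_{i,r}^{j,m}}$ is by definition the \emph{restriction} of $\sigma\conv\mu_{i,r}^{j,m}$ to $\cone_{i,r}^{j,m}$. Likewise $\adl_n$ is not exactly $\adl_{n-1}$ because of the normalizing factor coming from $\mu_{\cone'_i}$ being a probability measure, but since this factor is the same for all $k$ it is harmlessly absorbed into $\adl_n$, as the definition of admissibility permits.
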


\begin{proof}
The proof is similar to Lemma~\ref{lem: mu cone is admissible}.  
Since $r$, $i$, $j$, and $m$ are fixed throughout the argument, we will drop them from the notation whenever there is no confusion, 
e.g., we denote $\cone'_i$ by $\cone'$, $\mu_{i,r}^{j,m,k}$ by $\mu^k$, and $\cone_{i,r}^{j,m}$ by $\cone$. 

Recall that for every $k$, $\diff\!\mu^{k}=\density_{\ell,j}\diff\!\bar\mu^{k}$ where $\bar\mu^{k}=\mu_{i,r}|_{\umt^H_\ell.\exp(v_k)y_j}$ and $1/K\leq \density_{\ell,j}(z)\leq 1$. Also recall that there are $w'_q$ and $p$ so that 
\[
\supp(\bar\mu^k)\subset a_\ell u_r\Bigl(\coneH_{w_q',p}.\exp(w'_q)y'_i\Bigr).
\] 
Moreover, $\ddensity_{w'_q}$ (in the definition of $\mu_{w'_q}$) 
is $\adm_{n-1}$-Lipschitz on $\coneH_{w_q',p}$.

For every $v_k\in F$, let $\mu_{v_k}$ denote the restriction of $\sigma\conv\mu^k$ to $\coneH.\exp(v_k)y_j$. 
Thus $\mu_\cone=\frac{1}{\sum_i\mu_{v_k}(X)}\sum\mu_{v_k}$, and we have  
\[
\diff\!\mu_{v_k}(\cdot)=\adl_n\ddensity_k(\cdot)\diff\!m_H(\cdot). 
\]
We will show that $\ddensity_k$ satisfies the desired properties for all $k$. 

Recall that 
$\umt_\ell^H=\{u^-_s:|s|\leq e^{-\ell}\beta^2\}\cdot\{a_\tau:|\tau|\leq \beta^2\}\cdot U_\eta$,
and that $\sigma$ is the uniform measure on $\boxHs_{\beta+100\beta^2}$.
For every 
\[
\sfh\exp(v_k)y_j\in \umt_\ell^H.\exp(v_k)y_j=\supp(\bar\mu^k),
\] 
there exists a unique $\sfh'\in\coneH_{w'_q,p}$ so that $a_\ell u_r\sfh'\exp(w'_q)y'_i=\sfh\exp(v_k)y_j$. 
Let us define $\hat\ddensity_k$ on $\umt_\ell^H$ by 
\[
\hat\ddensity_k(\sfh)=\density_{\ell,j}(\sfh\exp(v_k)y_j)\ddensity_{w'_q}(\sfh'\exp(w'_q)y_j)
\]
We note that $\ddensity_k= \sigma\conv\hat\ddensity_k$. Thus 
$(K\adm_{n-1})^{-1}\ll \ddensity_k\ll \adm_{n-1}$. 

For every $1\leq \mathsf f\leq K$, 
let $\Xi_{j,i}^{\mathsf f}$ be as in the proof of Lemma~\ref{lem: Folner property} (and Lemma~\ref{lem: Folner property 2}) applied with $v=v_k$, 
and write $\mathring\Xi_{j,k}^{\mathsf f}$ for $(\mathring\Xi_{j,k}^{\mathsf f})_{\umt^H_\ell}$. In particular, $\density_{\ell,j}$ equals $1/\mathsf f$ on $\mathring\Xi_{j,k}^{\mathsf f}$. We will show that the claim holds with   
\[
\text{$\coneH_{v_k}=\bigcup_{d}\coneH_{v_k, \mathsf f}\quad$ where $\quad\coneH_{v_k, \mathsf f}=\boxHs_{\beta-100\beta^2}\cdot\mathring\Xi_{j,k}^{\mathsf f}$}.
\]
To see this note that the complexity of $\coneH_{v_k, \mathsf f}$ is $\ll1$ by its definition. Moreover, $\density_{\ell,j}$ 
is constant on $\mathring\Xi_{j,k}^{\mathsf f}$.
Thus in order to control $\Lip(\ddensity_k)$ on $\coneH_{v_k, \mathsf f}$, we may drop $\density_{\ell,j}$ from the definition of $\hat\ddensity_k$ above. 
Now $u_{r'}a_\ell u_r=a_\ell u_{r+e^{-\ell}r'}$, $\Lip(\ddensity_{w'_q}|{\coneH_{w'_q,p}})\leq \adm_{n-1}$, furthermore, 
\[
\boxHs_{\beta-100\beta^2}\subset\supp(\sigma)\setminus\partial_{100\beta^2}\supp(\sigma).
\] 
Altogether, we conclude that $\Lip(\sigma\conv\hat\ddensity_k)\ll\adm_{n-1}$ on $\coneH_{v_k, \mathsf f}$ for every $\mathsf f$. 

The proof is complete. 
\end{proof}

\begin{lemma}\label{lem: convex comb 2}
Let $x\in X$, and let $\ell$ and $t$ be positive. 
Assume that $\nuni^{-\ell},\nuni^{-t}<\beta$ and that $h\mapsto hx$ is injective on $\coneH\cdot a_t\cdot U_1$.

Suppose that for every $i$, we have fixed $L_i\subset [0,1]$ with $|[0,1]\setminus L_i|\leq\delta$, 
and let $\{r_{i,q}: q=1,\ldots, N_i\}$ be a maximal $e^{-3\mathsf d_0}$-separated subset of $L_i$. 
Let $\varphi\in C_c^\infty(X)$, $0<\mathsf d\leq \mathsf d_0-\ell$, and $|s|\leq 2$.
Then for every $r_{i,q}$ we have 
\begin{multline}\label{eq: convex combination one r}
\biggl|\int\varphi(a_{\mathsf d}u_sz)\diff(a_\ell u_{r_{i,q}}\mu_{\cone_{i}'})(z)-\sum c_{i, r_{i,q}}^{j,m}\int\varphi(a_{\mathsf d} u_sz)\diff\!\mu_{\cone_i^{j,m}}(z)\biggr|\\
\ll\max\Bigl\{\eta^{1/2}, \adm_{n-1}\beta, \beta\Bigr\}\Lip(\varphi),
\end{multline}
where $\sum=\sum_j\sum_m$. Moreover, we have  
\begin{multline}\label{eq: convex combination sum}
\biggl|\int\varphi(a_{\mathsf d}u_shx)\diff \mu_{t,\ell,n}(h)-\sum c_{i, r_{i,q}}^{j,m}\int\varphi(a_{\mathsf d} u_sz)\diff\!\mu_{\cone_{i, r_{i,q}}^{j,m}}(z)\biggr|\\
\ll\max\Bigl\{\eta^{1/2}, \adm_{n-1}\beta, \delta,\delta_{n-1}\Bigr\}\Lip(\varphi),
\end{multline}
where $\sum=\sum_i\sum_q\sum_j\sum_m$. 

The implied constants depend only on $X$ and $\adm_{n-1}$. 
\end{lemma}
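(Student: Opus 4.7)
Part (1) is a single-scale decomposition that is essentially parallel to Lemma~\ref{lem: convex comb 1}, with Lemma~\ref{lem: Folner property 2} playing the role that Lemma~\ref{lem: Folner property} played there. Part (2) then follows by combining the induction hypothesis \eqref{eq: conv comb ind hyp} with part (1), after discretizing the convolution over $r \in [0,1]$ via the $e^{-3\mathsf d_0}$-separated net $\{r_{i,q}\}$. The calculations are all controlled; the only point that needs care is matching the Lipschitz-with-expansion estimate from discretizing $r$ against the choice of separation $e^{-3\mathsf d_0}$.

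\textbf{Part (1).} I would start from the decomposition \eqref{eq: mu r i decomp 2}
\[
a_\ell u_{r_{i,q}}\mu_{\cone'_i}=\mu_{i,r_{i,q}}=\mu''+\sum_{j,m}\mu_{i,r_{i,q}}^{j,m},\qquad \mu''(X)\ll \max\{\eta^{1/2},\adm_{n-1}\beta\}.
\]
Integrating $\varphi(a_{\mathsf d}u_s\,\cdot\,)$ against both sides absorbs the $\mu''$ contribution into the error. For each $(j,m)$, I would compare $\int \varphi(a_{\mathsf d}u_s z)\diff\!\mu_{i,r_{i,q}}^{j,m}(z)$ with $\int \varphi(a_{\mathsf d}u_s z)\diff(\sigma\conv\mu_{i,r_{i,q}}^{j,m})(z)$. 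Using the commutation $u_s\sfh=\sfh' u_{s'}$ for $\sfh\in\boxHs_{\beta+100\beta^2}$, where $\sfh'\in\boxHs_{2\beta}$ and $|s'-s|\ll \beta$, and then $a_{\mathsf d}\sfh'a_{-\mathsf d}\in\boxHs_{2\beta}$, one rewrites $a_{\mathsf d}u_s\sfh=\sfh''a_{\mathsf d}u_{s'(\sfh)}$ with $\|\sfh''-I\|\ll\beta$, incurring an $O(\beta\Lip(\varphi))$ error from the stable factor $\sfh''$. The remaining shift $s\rightsquigarrow s'(\sfh)$ by at most $O(\beta)$ is absorbed by Lemma~\ref{lem: Folner property 2}, which applies precisely because each $\mu_{i,r_{i,q}}^{j,m,k}=\hat\mu_{i,r_{i,q}}^{j,\varsigma}$ is of the form $\density_{\ell,j}\cdot\bar\mu^{v}_{r,j}$ appearing in that lemma (with $r=r_{i,q}$ and $v=v_\varsigma$). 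Since $\sigma\conv\mu_{i,r_{i,q}}^{j,m}=c_{i,r_{i,q}}^{j,m}\cdot\mu_{\cone_{i,r_{i,q}}^{j,m}}$ by definition \eqref{eq: def mu conj 2}, summing yields \eqref{eq: convex combination one r}.

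\textbf{Part (2).} Since $\mu_{t,\ell,n}=\rwm_\ell\conv\mu_{t,\ell,n-1}$ and $a_{\mathsf d}u_sa_\ell u_r=a_{\mathsf d+\ell}u_{se^{-\ell}+r}$, I would write
\[
\int \varphi(a_{\mathsf d}u_shx)\diff\!\mu_{t,\ell,n}(h)=\int_0^1\int \varphi(a_{\mathsf d+\ell}u_{se^{-\ell}+r}hx)\diff\!\mu_{t,\ell,n-1}(h)\diff\!r,
\]
and apply the induction hypothesis \eqref{eq: conv comb ind hyp} with parameters $\mathsf d+\ell\leq\mathsf d_0$ and $|se^{-\ell}+r|\leq 2$ (valid for $\ell$ moderately large) to reduce to
\[
\sum_i c_i'\int_0^1\int \varphi(a_{\mathsf d}u_sz)\diff(a_\ell u_r\mu_{\cone'_i})(z)\diff\!r+O(\delta_{n-1}\Lip(\varphi)).
\]
Restricting each inner integral to $L_i$ costs $O(\delta\Lip(\varphi))$ since $|[0,1]\setminus L_i|\leq\delta$ and $a_\ell u_r\mu_{\cone'_i}$ is a probability measure. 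Partitioning $L_i$ into Voronoi cells around the $r_{i,q}$, and using that $r\mapsto \int\varphi(a_{\mathsf d}u_sz)\diff(a_\ell u_r\mu_{\cone'_i})(z)$ has Lipschitz constant $\leq \Lip(\varphi)\,e^{\mathsf d+\ell}\leq\Lip(\varphi)\,e^{\mathsf d_0}$ (conjugating $u_{r-r_{i,q}}$ by $a_{\mathsf d+\ell}$ expands by $e^{\mathsf d+\ell}$), the discretization error is bounded by $\Lip(\varphi)\,e^{\mathsf d_0}\cdot e^{-3\mathsf d_0}=\Lip(\varphi)\,e^{-2\mathsf d_0}$ per unit length, which is exponentially small and absorbed. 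Applying part (1) at each $r_{i,q}$ then produces the stated convex combination (the coefficients in the sum absorb the natural weights $c_i'$ from the induction and the Voronoi weights $\asymp 1/N_i$ from the discretization, following the convention used for the admissible measures $\mu_{\cone_{i,r_{i,q}}^{j,m}}$).

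\textbf{Main obstacle.} No single step is deep; the proof is a disciplined assembly. The one subtlety is making the discretization in $r$ compatible with the fact that $a_{\mathsf d+\ell}$ expands along $U$: this is arranged once for all by choosing the spacing $e^{-3\mathsf d_0}$, so that even after the expansion factor $e^{\mathsf d_0}$ the residual error is negligible. The applicability of Lemma~\ref{lem: Folner property 2} to the individual pieces $\mu_{i,r}^{j,m,k}$ is guaranteed by construction of the decomposition \eqref{eq: mu r i decomp 2} through the interior measure $a_\ell u_r\mathring\mu_{\cone'_i}$, and by the admissibility established in Lemma~\ref{lem: mu cone is admissible 2}.
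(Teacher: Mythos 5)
Your proposal is correct and follows essentially the same route as the paper: reduce \eqref{eq: convex combination sum} to \eqref{eq: convex combination one r} via the induction hypothesis \eqref{eq: conv comb ind hyp}, the restriction to $L_i$, and the $e^{-3\mathsf d_0}$-net (whose spacing beats the expansion factor $e^{\mathsf d+\ell}\leq e^{\mathsf d_0}$), and then prove \eqref{eq: convex combination one r} from the decomposition \eqref{eq: mu r i decomp 2}, the commutation $u_s\sfh=\sfh'u_{s+s_\sfh}$ with $a_{\mathsf d}\boxHs_{10\beta}a_{-\mathsf d}\subset\boxHs_{10\beta}$, and Lemma~\ref{lem: Folner property 2} to absorb the $O(\beta)$ shift in $s$. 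The only cosmetic difference is the order of the two parts and your slightly more explicit accounting of the discretization error and of the normalization relating $\sigma\conv\mu_{i,r}^{j,m}$ to $\mu_{\cone_{i,r}^{j,m}}$, both of which are consistent with the paper's argument.
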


\begin{proof}
The proof is similar to the proof of Lemma~\ref{lem: convex comb 1}.
Indeed loc.\ cit.\ will be used as case $n=0$ in our inductive proof of this lemma.

We will first reduce~\eqref{eq: convex combination sum} to~\eqref{eq: convex combination one r}: 
\begin{align*}
\int\varphi(a_{\mathsf d} u_shx)&\diff\!\mu_{t,\ell,n}(h)=\iint\varphi(a_{\mathsf d} u_sa_\ell u_{r}hx)\diff\!\mu_{t,\ell,n-1}(h)\diff\!r\\
&=\iint\varphi(a_{\mathsf d+\ell} u_{r+se^{-\ell}}hx)\diff\!\mu_{t,\ell,n-1}(h)\diff\!r\\
&=\sum_ic'_i\iint \varphi(a_{\mathsf d+\ell} u_{r+se^{-\ell}}z)\diff\!\mu_{\cone'_i}(z)\diff\!r+O(\delta_{n-1}\Lip(\varphi));
\end{align*}
in the last equality we used~\eqref{eq: conv comb ind hyp}, and $0<d+\ell\leq \mathsf d_0$ and $|r+se^{-\ell}|\leq 2$.
 
Since $|[0,1]\setminus L_i|\leq \delta$ and $\{r_{i,q}: q=1,\ldots, N_i\}\subset L_i$ 
is a maximal $e^{-3\mathsf d_0}$-separated subset, we have 
\begin{multline*}
\sum_ic'_i\iint \varphi(a_{\mathsf d+\ell} u_{r+se^{-\ell}}z)\diff\!\mu_{\cone'_i}(z)\diff\!r=\\
\sum_{i}\sum_q\int \varphi(a_{\mathsf d+\ell} u_{r_{i,q}+se^{-\ell}}z)\diff\!\mu_{\cone'_i}(z)+O\Bigl(\max\{\delta,\beta\}\Lip(\varphi)\Bigr),
\end{multline*}
where we again used $\mathsf d+\ell\leq \mathsf d_0$.

In view of this, let us fix some $i$ and $q$, and investigate 
\[
\int \varphi(a_{\mathsf d+\ell} u_{r_{i,q}+se^{-\ell}}z)\diff\!\mu_{\cone'_i}(z)=\int \varphi(a_{\mathsf d} u_sa_\ell u_{r_{i,q}}z)\diff\!\mu_{\cone'_i}(z),
\]
which also completes the reduction of~\eqref{eq: convex combination sum} to~\eqref{eq: convex combination one r}. 

For simplicity, let us write $r=r_{i,q}$. Using~\eqref{eq: mu r i decomp}, we have 
\[
\int\varphi(a_{\mathsf d} u_sa_\ell u_{r}z)\!\diff\!\mu_{\cone'_i}(z)=\sum_{j}\int\varphi(a_{\mathsf d} u_sz)\!\diff\biggl(\sum_\varsigma\hat\mu_{i,r}^{j,\varsigma}\biggr)(z)+O(\beta\Lip(\varphi)).
\] 

In view of~\eqref{eq: mu r i decomp 2}, see also Lemma~\ref{lem: popular conej's 2}, 
it suffices to consider $j$'s so that $c_{i,r}\geq \beta^{12}e^{-\ell}$, we will however need to add
\[
O\Bigl(\max\{\eta^{1/2},\adm_{n-1}\beta\}\Lip(\varphi)\Bigr)
\]
to the error. Moreover, using~\eqref{e: mu r i j m}, 
we may replace $\sum_\varsigma\hat\mu_{i,r}^{j,\varsigma}$ with $\sum_m\mu_{i,r}^{j,m}$.
Fix one such $j\in\mathcal J_\ell$ and let $1\leq m\leq M_{i,r}^j$. Then $\mu_{i,r}^{j,m}=\sum_k\mu_{i,r}^{j,m,k}$.

We now compare 
\[
\int\varphi(a_{\mathsf d} u_sz)\!\diff\biggl(\sum_k\mu_{i,r}^{j,m,k}\biggr)(z)
\] 
with $\int \varphi(a_{d} u_sz)\diff\!\mu_{\cone_{i,r}^{j,m}}(z)$. Recall from~\eqref{eq: def mu conj 2} that 
\[
\int c_{i,r}^j\varphi(a_{\mathsf d} u_sz)\diff\!\mu_{\cone_{i,r}^{j,m}}(z)=\sum_k\iint\varphi(a_{\mathsf d} u_s\sfh z)\diff\!\mu_{i,r}^{j,m,k}(z)\diff\!\sigma^s(\sfh).
\] 

For every $\sfh\in\boxHs_\beta$ and all $|s|\leq 2$, we have $u_s\sfh=\sfh' u_{s+s_\sfh}$ where $|s_{\sfh}|\ll \beta$ 
and $\sfh'\in\boxHs_{10\beta}$, moreover, $a_d\boxHs_{10\beta}a_{-d}\subset \boxHs_{10\beta}$ for all $d>0$. 
Therefore, for every $k$ and all $\sfh\in\boxHs_\beta$, we have 
\[
\biggl|\int\varphi(a_{\mathsf d} u_s\sfh z)\diff\!\mu_{i,r}^{j,m,k}(z)-\int\varphi(a_{\mathsf d} u_{s+s_\sfh} z)\diff\!\mu_{i,r}^{j,m,k}(z)\biggr|\ll\beta\Lip(\varphi)\mu_{i,r}^{j,m,k}(X).
\]
Finally by Lemma~\ref{lem: Folner property 2}, we have 
\begin{multline*}
\biggl|\int\varphi(a_{\mathsf d} u_{s+s_\sfh} z)\diff\!\mu_{i,r}^{j,m,k}(z)-\int\varphi(a_{\mathsf d} u_{s+s_\sfh} z)\diff\!\mu_{i,r}^{j,m,k}(z)\biggr|\\\ll\adm_{n-1}\beta\Lip(\varphi)\mu_{i,r}^{j,m,k}(X)
\end{multline*}
which completes the proof. 
\end{proof}


\section{Margulis functions and Incidence geometry}\label{sec: MF and IG}

In this section, we will prove Lemma~\ref{lem: truncated MF main estimate} which is one of the main ingredients in the proof of Proposition~\ref{propos: imp dim main}, see also Proposition~\ref{propos: main bootstrap intro}.

\subsection*{The set $\cone$ and the measure $\mu_{\cone}$}
Let $0<\injr\leq0.01\eta_X$ and $\beta=\eta^2$. Recall that 
\[
\coneH=\boxHs_\beta\cdot\{u_r: |r|\leq \eta\}
\]
where $\boxHs_{\beta}:=\{u_s^-:|s|\leq {\beta}\}\cdot\{a_t: |t|\leq \beta\}$.

Let $F\subset B_\rfrak(0,\beta)$ be a finite set, and let $y\in X_{2\eta}$.
Then $\exp(w)y\in X_\eta$ for all $w\in F$, moreover,
$\sfh\mapsto \sfh\exp(w)y$ is injective over $\coneH$. 
For every subset $\coneH'\subset\cone$, put  
\be\label{eq: def cone''}
\cone_{\coneH'}=\bigcup\coneH'.\{\exp(w)y: w\in F\};
\ee
we will denote $\cone_\coneH$ by $\cone$. Throughout this section, we will 
assume fixed an admissible measure $\mu_\cone$ on $\cone$
whose definition we now recall from \S\ref{sec: cone and mu cone}. 

Let $\adl, \adm>0$. 
A probability measure $\mu_\cone$ on $\cone$ is said to be $(\adl, \adm)$-admissible if 
\[
\mu_\cone=\frac1{\sum_{w\in F}\mu_w(X)}\sum_{w\in F}\mu_w
\]
where for every $w\in F$, $\mu_w$ is a measure on $\coneH.\exp(w)y$ satisfying that  
\be\label{eq: adm meas again}
\diff\!\mu_w(\sfh\exp(w)y)=\adl\ddensity_w(\sfh)\diff\!m_H(\sfh)\quad\text{where $1/\adm\leq \ddensity_w(\bigcdot)\leq \adm$;}
\ee
moreover, there is a subset $\coneH_w=\bigcup_{p=1}^{\adm}\coneH_{w,p}\subset \coneH$
so that 
\begin{enumerate}
\item $\mu_w\Bigl((\coneH\setminus \coneH_w).\exp(w)y\Bigr)\leq \adm\beta \mu_w(\coneH.\exp(w)y)$,
\item The complexity of $\coneH_{w,p}$ is bounded by $\adm$ for all $p$, and 
\item $\Lip(\ddensity_w|_{\coneH_{w,p}})\leq \adm$ for all $p$.
\end{enumerate}

\subsection*{Regularity of $\cone$}
Let $0<\delta\leq\inj(z)$ for all $z\in\cone$. 
We will say $\cone$ is $(c,\delta)$-regular if for all $w\in F$ 
\be\label{eq: mfsc regular}
\#(F\cap B_\rfrak(w, \delta/100))\geq c\cdot \Bigl(\#(F\cap B_\rfrak(w, \delta))\Bigr),
\ee
see \S\ref{sec: regularization lemmas} where similar (and finer) regularity properties are discussed. 

\medskip

Our goal is to show that the discretized dimension of $\cone$ at controlled scales will improve under a certain random walk. 
We begin by defining a function which encodes this discretized transversal dimension.  

Let $0<\scmf\leq1/10$. For every $(h,z)\in H\times \cone$, define 
\be\label{eq: def margI h,z}
\margI_{\cone,\scmf}(h,z):=\Bigl\{w\in \rfrak: \|w\|<\scmf\,\inj(hz),\, \exp(w) h z\in h\cone.x\Bigr\}.
\ee
Note that $\margI_{\cone,\scmf}(h,z)$ contains $0$ for all $z\in\cone$. 
Moreover, since $\mathsf E$ is bounded, $\margI_{\cone,\scmf}(h,z)$ is a finite set for all $(h,z)\in H\times\cone$. 

Fix some $0<\alpha<1$.
For every $\trct\geq 1$, define the modified 
and localized Margulis function 
$
\mfm_{\cone,\scmf,\trct}: H\times \cone\to [1,\infty)
$
as follows: if $\#\margI_{\cone,\scmf}(h,z)\leq \trct$, put 
\[
\mfm_{\cone,\scmf,\trct}(h,z)=(\scmf\,\inj(hz))^{-\alpha};
\]
and if $\#\margI_{\cone,\scmf}(h,z)>\trct$, put
\[
\mfm_{\cone,\scmf,\trct}(h,z)=\min\left\{\sum_{w\in I}\|w\|^{-\alpha}: \begin{array}{c}I\subset I_{\cone,\scmf}(h,z)\text{ and }\\ \#(I_{\cone,\scmf}(h,z)\setminus I)=\trct\end{array}\right\}.
\]

Let us also define $\noI_{\cone,\scmf}$ on $H\times \cone$ by  
\be\label{eq: def noI h,z}
\noI_{\cone,\scmf}(h,z):=(\scmf\,\inj(hz))^{-\alpha}\cdot(\#\margI_{\cone,\mfsc}(h,z)\Bigr).
\ee  
If $\coneH'\subset\coneH$, we define $\margI_{\cone_{\coneH'},\mfsc}$, $\noI_{\cone_{\coneH'},\scmf}$, and $\mfm_{\cone_{\coneH'},\scmf,\trct}$ accordingly.

\medskip

Recall also the definition of $\eng$ from \S\ref{sec: discret dim}. Let $0<\mfsc_0\leq 1$, and let $I\subset B_\rfrak(0,\mfsc_0)$. For $\trct\geq1$, define $\eng_{I, \trct}: I\to (0,\infty)$ as follows: If $\#I\leq \trct$, put 
\[
\eng_{I, \trct}(w)=\mfsc_0^{-\alpha},\quad\text{for all $w\in I$,}
\]
and if $\#I>\trct$, put 
\[
\eng_{I,\trct}(w)=\min\left\{\sum_{I'}\|w-w'\|^{-\alpha}: \begin{array}{c}I'\subset I\text{ and }\\ \#(I\setminus I')=\trct\end{array}\right\}.
\]

Fix a small parameter $0<\vare<1$, and let $0<\kappa\leq \vare/10^6$.
Throughout the section, we assume 
\[
\nuni^{-\vare t/10^6}\leq \beta  \quad\text{and}\quad \ell=0.01\vare t.
\]

We will also use the following notation: 
\[
\partial_{\delta_1,\delta_2}\coneH=\Bigl(\partial_{\delta_1}\boxHs_\beta\Bigr)\cdot(\partial_{\delta_2}\{u_r: |r|\leq \eta\}),\quad\text{ for $\delta_1,\delta_2>0$};
\]
we denote $\partial_{\delta,\delta}\coneH$ simply by $\partial_\delta\coneH$.

The following is the main result of this section.

\begin{lemma}\label{lem: truncated MF main estimate}
Let $F\subset B_\rfrak(0,\beta)$ be a finite set with $\#F\geq \nuni^{9t/10}$.
Assume that $F$ satisfies~\eqref{eq: mfsc regular} 
with $\delta=\frac1{10}\inj(y)\mfsc$ and some $c\geq\nuni^{-\kappa^2t/4}$.

Let $\cone=\bigcup\coneH.\{\exp(w)y: w\in F\}$, and put
\[
\hat\cone=\bigcup\hat\coneH.\{\exp(w)y: w\in F\}
\] 
where $\hat\coneH=\overline{\coneH\setminus \partial_{10\mfsc}\coneH}$.

Assume that for some $\mfbd\geq1$ (large enough depending on $\kappa$) some $1\leq \trct\leq\nuni^{\vare t/100}$, and for $\mfsc=\nuni^{-\sqrt\vare t}$, we have 
\be\label{eq: mfm init estimate} 
\mfm_{\cone, \scmf,\trct}(e,z)\leq\mfbd,\qquad\text{for all $z\in\cone$}.
\ee
There exists ${\gdh_{\mu_\cone}}\subset [0,1]$ with 
\[
|[0,1]\setminus {\gdh_{\mu_\cone}}|\ll \nuni^{-\kappa^2t/4}
\]
and for every $r\in \gdh_{\mu_\cone}$, there exists a subset $\cone_r\subset\hat\cone$ with 
\[
\mu_{\cone}(\cone\setminus\cone_r)\ll\nuni^{-\kappa^2t/64}
\] 
so that the following holds. For every $z\in\cone_r$ we have    
\[
\mfm_{\hat\cone,\scmf,\trct_1}(a_{\ell} u_r,z)\leq 
200\nuni^{-\alpha\ell}L_1 \egbd^{1+8\kappa} +200\nuni^{2\alpha\ell}\noI_{\hat\cone,\scmf}(a_{\ell} u_r,z)
\] 
where $L_1=L\kappa^{-L}$ and $\trct_1=L_1\mfbd^{\kappa}\trct$, see Theorem~\ref{thm: proj thm}.
\end{lemma}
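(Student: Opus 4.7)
First, I will translate the hypothesis~\eqref{eq: mfm init estimate} into an energy estimate on $F$ and invoke Theorem~\ref{thm: proj thm}. For each $z=h_0\exp(w_0)y\in\cone$, any nonzero $w\in\margI_{\cone,\mfsc}(e,z)$ arises (via Lemma~\ref{lem:dist-sheet} and the injectivity of $\sfh\mapsto \sfh\exp(w)y$ over $\coneH$) as $\Ad(h_0)(w'-w_0)$ for a unique $w'\in F\setminus\{w_0\}$, so the hypothesis $\mfm_{\cone,\mfsc,\trct}(e,z)\le\mfbd$ yields $\eng_{F,\trct}(w_0)\ll\mfbd$ for every $w_0\in F$, verifying~\eqref{eq: energy bd proj thm main} with $\egbd\asymp\mfbd$. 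I apply Theorem~\ref{thm: proj thm} with $\pvare$ proportional to $\kappa$. This is viable because the regularity~\eqref{eq: mfsc regular} combined with $\#F\ge\nuni^{9t/10}$ forces $\mfbd\gtrsim\nuni^{0.4 t}$, so $L_1\mfbd^{-\pvare^2}\ll\nuni^{-\kappa^2 t/4}$. This produces $\gdh_{\mu_\cone}:=J'$ with $|[0,1]\setminus\gdh_{\mu_\cone}|\ll\nuni^{-\kappa^2 t/4}$, and for each $r\in\gdh_{\mu_\cone}$ a subset $F_r\subset F$ with $\#(F\setminus F_r)\le L_1\mfbd^{-\pvare^2}\#F$ satisfying $\eng_{\xi_r(F),\trct_1}(\xi_r(w))\le\egbd_1=L_1\mfbd^{1+8\kappa}$ for all $w\in F_r$, with $\trct_1\asymp\trct+L_1\mfbd^{\kappa}$.

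Second, I define $\cone_r:=\hat\cone\cap\bigcup\coneH\cdot\{\exp(w)y:w\in F_r\}$. Using the admissibility of $\mu_\cone$ with density $\ll\adm$ together with the boundary estimate $m_H(\coneH\setminus\hat\coneH)/m_H(\coneH)\ll\mfsc/\eta$, I obtain
\[
\mu_\cone(\cone\setminus\cone_r)\le\mu_\cone(\cone\setminus\hat\cone)+\mu_\cone\Bigl(\textstyle\bigcup\coneH\cdot\{\exp(w)y:w\in F\setminus F_r\}\Bigr)\ll\adm(\mfsc/\eta)+\adm L_1\mfbd^{-\pvare^2},
\]
and both terms are $\ll\nuni^{-\kappa^2 t/64}$ given our parameters $\beta=\nuni^{-\kappa t}$, $\eta=\beta^{1/2}$, $\mfsc=\nuni^{-\sqrt\vare t}$, and $\kappa\le\vare/10^6$.

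The main step is the Margulis estimate itself. For $r\in\gdh_{\mu_\cone}$ and $z=h_0\exp(w_0)y\in\cone_r$ (so $w_0\in F_r$ and $h_0$ lies in the interior $\hat\coneH$), I linearize: given any nonzero $v\in\margI_{\hat\cone,\mfsc}(a_\ell u_r,z)$, corresponding to $z'=h'\exp(w')y\in\hat\cone$ with $\exp(v)a_\ell u_r z=a_\ell u_r z'$, I conjugate to get $\exp(\Ad((a_\ell u_r)^{-1})v)\,h_0\exp(w_0)=h'\exp(w')$, then apply Lemma~\ref{lem: BCH} and uniqueness of the $H\cdot\exp(\rfrak)$ decomposition near the identity (valid since $h_0\in\hat\coneH$ is away from $\partial_{10\mfsc}\coneH$ and $\inj(a_\ell u_r y)$ is sufficient) to force $h'=h_0$ and $v=(1+O(\eta))\Ad(a_\ell u_r)(w'-w_0)$. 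A direct calculation then gives the components $v_{12}=e^\ell\xi_r(w'-w_0)+O(\eta\|v\|)$, $v_{11}=(w'-w_0)_{11}+r(w'-w_0)_{21}+O(\eta\|v\|)$, and $v_{21}=e^{-\ell}(w'-w_0)_{21}+O(\eta\|v\|)$. I then split $\margI\setminus\{0\}$ according to whether $\|v\|\ge e^{-2\ell}\mfsc\inj(a_\ell u_r z)$ (\emph{outer regime}) or $\|v\|<e^{-2\ell}\mfsc\inj(a_\ell u_r z)$ (\emph{inner regime}). The outer regime contributes at most
\[
\sum\|v\|^{-\alpha}\le e^{2\alpha\ell}(\mfsc\inj(a_\ell u_r z))^{-\alpha}\cdot\#\margI_{\hat\cone,\mfsc}(a_\ell u_r,z)=e^{2\alpha\ell}\noI_{\hat\cone,\mfsc}(a_\ell u_r,z).
\]
In the inner regime, $|v_{12}|\le\|v\|<e^{-2\ell}\mfsc\inj$ forces $|\xi_r(w'-w_0)|<e^{-3\ell}\mfsc\inj$; combining the trivial bound $\|v\|^{-\alpha}\le|v_{12}|^{-\alpha}=e^{-\alpha\ell}|\xi_r(w'-w_0)|^{-\alpha}$ with the projected energy bound yields a contribution of at most $e^{-\alpha\ell}\egbd_1$, provided the $\trct_1$ drops in the definition of $\mfm_{\hat\cone,\mfsc,\trct_1}$ are allocated to coincide with the projection theorem's outlier set (this same allocation absorbs the kernel coincidences $\xi_r(w'-w_0)=0$). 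Summing both regimes yields $\mfm_{\hat\cone,\mfsc,\trct_1}(a_\ell u_r,z)\le 200\nuni^{-\alpha\ell}L_1\mfbd^{1+8\kappa}+200\nuni^{2\alpha\ell}\noI_{\hat\cone,\mfsc}(a_\ell u_r,z)$, as claimed.

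The principal obstacle is the coordination of the two truncations in Step 3, namely matching the $\trct_1$ dropped terms in $\mfm_{\hat\cone,\mfsc,\trct_1}$ with the $\trct_1$ outliers in the projection theorem's energy bound, and simultaneously absorbing the kernel coincidences of $\xi_r$ within $F$ using the regularity of $F$ and the hypothesis~\eqref{eq: mfm init estimate}; the BCH linearization is itself routine once $h_0$ is away from $\partial_{10\mfsc}\coneH$, but bookkeeping the constants (e.g.\ the factor $200$ and matching the claimed power $\mfbd^{1+8\kappa}$) requires care with the choice of $\pvare$ in the application of Theorem~\ref{thm: proj thm}.
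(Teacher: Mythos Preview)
There is a genuine gap in your first step. The hypothesis~\eqref{eq: mfm init estimate} is \emph{local}: the return set $\margI_{\cone,\mfsc}(e,z)$ for $z=h_0\exp(w_0)y$ only records vectors $w$ with $\|w\|<\mfsc\,\inj(z)$, hence only those $w'\in F$ with $\|w'-w_0\|\lesssim\mfsc\bar\eta$. From this you may conclude the localized bound $\eng_{F\cap B(w_0,\mfsc\bar\eta),\,\trct}(w_0)\ll\mfbd$ (this is Lemma~\ref{lem: margIz energy est}), but \emph{not} the global estimate $\eng_{F,\trct}(w_0)\ll\mfbd$. The far-field contribution $\sum_{\|w'-w_0\|\geq\mfsc\bar\eta}\|w'-w_0\|^{-\alpha}$ is bounded above only by $(\mfsc\bar\eta)^{-\alpha}\#F$, which exceeds $\mfbd$ by a factor of order $(\beta/\mfsc)^{3}\asymp e^{3\sqrt\vare t}$ (cover $F\subset B_\rfrak(0,\beta)$ by balls of radius $\mfsc\bar\eta$ and use~\eqref{eq: Upsilon trivial lower bd}). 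Since the gain from the projection is only $e^{\alpha\ell}=e^{O(\vare t)}$, a single global application of Theorem~\ref{thm: proj thm} with $\egbd\asymp\mfbd$ cannot yield the target bound $200e^{-\alpha\ell}L_1\mfbd^{1+8\kappa}$.

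The paper resolves this by applying Theorem~\ref{thm: proj thm} separately to each localized set $\margI_z=\margI_{\cone',\mfsc}(e,z)$ (where the energy bound $\ll\mfbd$ does hold), obtaining a good set $J_z\subset[0,1]$ that depends on $z$. A Fubini argument on $\cone\times[0,1]$ then produces $\gdh_{\mu_\cone}$ and, for each $r\in\gdh_{\mu_\cone}$, a set $Y_r$ of points $z$ for which a fraction $\geq 1-e^{-\kappa^2 t/2}$ of $\margI_z$ lies in the good subset $\margI_{z,r}$. The key Claim in the paper is that the Margulis estimate holds at $\exp(w)z$ for every $w\in\margI_{z,r}\cap B_\rfrak(0,0.1\bar\eta\mfsc)$; translating this into the desired measure bound for $\cone_r$ is where the regularity hypothesis~\eqref{eq: mfsc regular} is essential (see~\eqref{eq: Izr and Iz in 0.1mfsc}): it guarantees that the exceptional set $\margI_z\setminus\margI_{z,r}$, although small relative to $\margI_z$, cannot concentrate in the inner ball $B_\rfrak(0,0.1\bar\eta\mfsc)$. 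Your proposal only invokes regularity to get a crude lower bound on $\mfbd$; you would need it in a much more substantial way to make any variant of this argument go through.
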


The proof of this lemma relies on Theorem~\ref{thm: proj thm} and will be completed in some steps. We begin with the following lemma.

\begin{lemma}\label{lem: margIz energy est}
Assume~\eqref{eq: mfm init estimate} holds. Let 
\[
\icone=\bigcup\iconeH.\{\exp(w)y: w\in F\}
\] 
where $\iconeH=\overline{\coneH\setminus \partial_{5\mfsc}\coneH}$. 
Let $m\in\bbn$. Put $z=\sfh\exp(w_z)y\in\icone$, and let $\margI_z:=\margI_{\icone, m\scmf}(e,z)$. Then  
\[
\eng_{\margI_z, \trct}(w)\leq (2+6m^4)\mfbd\qquad\text{for every $w\in\margI_z$},
\]
where $\eng$ is defined as above with $\mfsc_0=m\mfsc\,\inj(z)$.
\end{lemma}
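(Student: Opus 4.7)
The plan is to translate the local energy $\eng_{\margI_z,\trct}(w)$ into the value of the Margulis function $\mfm_{\cone,\cdot,\trct}$ at the shifted point $\tilde z := \exp(w)z \in \icone$ via a Baker--Campbell--Hausdorff correspondence, then to bound that value by splitting the effective scale $2m\mfsc$ into the inner scale $\mfsc$ (controlled directly by the hypothesis \eqref{eq: mfm init estimate}) and an outer annular shell of thickness $\sim m\mfsc\inj(\tilde z)$ (controlled by a covering argument). I will first handle the trivial case $\#\margI_z \leq \trct$: there $\eng_{\margI_z,\trct}(w) = (m\mfsc\inj(z))^{-\alpha}$, and since every nonzero summand in $\mfm_{\cone,\mfsc,\trct}(e,z)$ is at least $(\mfsc\inj(z))^{-\alpha}$, the hypothesis forces $\mfbd \geq (\mfsc\inj(z))^{-\alpha}$, whence $\eng \leq m^{-\alpha}\mfbd \leq \mfbd$.

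Assume from now on $\#\margI_z > \trct$. For each $w'' \in \margI_z \setminus \{w\}$, I will apply Lemma~\ref{lem: BCH} to write $\exp(w'')\exp(-w) = h\exp(\bar w)$ with $\|\bar w - (w''-w)\| \ll \beta\|w''-w\|$ and $\|h - I\| \ll \beta\|w''-w\|$. Since $\sfh \in \iconeH$ is in the $5\mfsc$-interior of $\coneH$ and $\|h - I\| \ll \beta \cdot m\mfsc\inj(z) \ll \mfsc$, the conjugate $h^{-1}\icone$ still sits inside $\cone$, so $\exp(\bar w)\tilde z = h^{-1}\exp(w'')z \in \cone$, hence $\bar w \in \margI_{\cone,2m\mfsc}(e,\tilde z)$. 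Lemma~\ref{lem:dist-sheet} (applied to the local uniqueness of the decomposition $G = H\exp(\rfrak)$) will ensure the map $w'' \mapsto \bar w$ is injective, and by construction it is near-isometric ($\|\bar w\| \asymp \|w''-w\|$). This gives
\[
\eng_{\margI_z,\trct}(w) \leq 2\,\mfm_{\cone,2m\mfsc,\trct}(e,\tilde z).
\]
Splitting $\margI_{\cone,2m\mfsc}(e,\tilde z) = J_0 \sqcup J_1$ with $J_0 = \margI_{\cone,\mfsc}(e,\tilde z)$ (inner) and $J_1$ the annular part $\{\mfsc\inj(\tilde z) \leq \|v\| \leq 2m\mfsc\inj(\tilde z)\}$, and noting that the $\trct$ closest-to-$0$ points lie in $J_0$ when $|J_0| \geq \trct$, I will obtain
\[
\mfm_{\cone,2m\mfsc,\trct}(e,\tilde z) \leq \mfm_{\cone,\mfsc,\trct}(e,\tilde z) + \sum_{v \in J_1}\|v\|^{-\alpha} \leq \mfbd + \sum_{v \in J_1}\|v\|^{-\alpha};
\]
the case $|J_0| < \trct$ is even more favourable, with no inner contribution.

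To bound the annular sum I plan to cover the shell by $\ll m^3$ balls $B_i$ of radius $\mfsc\inj(\tilde z)/10$, and for each $B_i$ meeting $J_1$ pick $v_i^* \in B_i \cap J_1$ and set $z_i^* := \exp(v_i^*)\tilde z \in \cone$. A second application of the same BCH correspondence at $v_i^*$ will identify $B_i \cap J_1$ with a subset of $\margI_{\cone,\mfsc/2}(e,z_i^*)$, and the hypothesis at $z_i^*$ will yield the cardinality bound $|B_i \cap J_1| \ll \mfbd(\mfsc\inj(\tilde z))^\alpha + \trct$. Since $\|v\|^{-\alpha} \asymp \|v_i^*\|^{-\alpha}$ on $B_i$ and $\sum_i \|v_i^*\|^{-\alpha} \ll m^{3-\alpha}(\mfsc\inj(\tilde z))^{-\alpha}$ (a discrete geometric sum over the grid of centres), the total shell contribution will be bounded by a constant times $m^3\mfbd + m^3\trct(\mfsc\inj(\tilde z))^{-\alpha}$. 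The main obstacle will be absorbing both terms into the stated $6m^4\mfbd$: the first uses $\mfbd \geq (\mfsc\inj(\tilde z))^{-\alpha}$; the second uses the regime hypotheses $\trct \leq \nuni^{\vare t/100}$ together with $\mfbd$ being ``large enough'' (as explicitly allowed in the lemma) so that $\trct(\mfsc\inj)^{-\alpha} \ll m\mfbd$ after absorbing the loss from the $\trct$-closest truncation in $\eng$. The $2\mfbd$ in the target then captures the inner contribution plus the constants introduced by the BCH near-isometry.
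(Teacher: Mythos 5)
Your first half — the trivial case, the BCH transfer of $\margI_z$ to the shifted point $\tilde z=\exp(w)z$ (valid because $\margI_z$ is defined via $\icone$, so the $H$-components lie in the $5\mfsc$-interior $\iconeH$), and the splitting into an inner part controlled by $\mfm_{\cone,\mfsc,\trct}(e,\tilde z)\le\mfbd$ plus an annular part — matches the paper's argument. But your treatment of the annulus has a genuine gap. For $v\in J_1$ you only know $\exp(v)\tilde z\in\cone$, i.e.\ $\exp(v)\tilde z=\sfh_v\exp(w_v)y$ with $\sfh_v\in\coneH$ possibly right on the boundary of $\coneH$; after the second BCH correction $\exp(\bar v)z_i^*=h^{-1}\exp(v)\tilde z$ with $\|h-I\|\ll\beta\mfsc$, the element $h^{-1}\sfh_v$ need not lie in $\coneH$, so $\exp(\bar v)z_i^*$ need not lie in $\cone$ and $\bar v\notin\margI_{\cone,\mfsc/2}(e,z_i^*)$ in general. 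This is exactly the obstruction the paper flags (``we can only guarantee $\exp(w'_v)z'$ belongs to $\cone$ and not necessarily to $\icone$''), and it is why the paper does \emph{not} iterate the transfer: instead it covers $\margI_z$ at the original point $z$ by $\le m^4$ balls with centers $w_i\in\margI_z$, transfers once (legitimately, from the interior) to get $\margI_z\subset\bigcup_i\margI_{\cone,\mfsc}(e,z_i)$ with $z_i=\exp(w_i)z\in\cone$, and bounds each $\#\margI_{\cone,\mfsc}(e,z_i)$ directly from the hypothesis at $z_i$ via the global count bound
\[
2\mfbd\ \ge\ \sup_{\hat z\in\cone}(\mfsc\,\inj(\hat z))^{-\alpha}\cdot\#\margI_{\cone,\mfsc}(e,\hat z).
\]

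The second gap is in how you absorb the truncation term: you need $\trct\,(\mfsc\,\inj)^{-\alpha}\ll m\,\mfbd$, and you justify it only by $\trct\le\nuni^{\vare t/100}$ together with ``$\mfbd$ large enough.'' That is not enough: a priori all the hypothesis gives is $\mfbd\ge(\mfsc\,\inj)^{-\alpha}$, so your term can be as large as $\trct\,\mfbd\approx\nuni^{\vare t/100}\mfbd$, which destroys the bound. The inequality displayed above (and hence the absorption) holds only because of the standing assumption $\#F\ge\nuni^{9t/10}$ with $\mfsc=\nuni^{-\sqrt\vare t}$: some point of $\cone$ then has $\ge\nuni^{0.8t}\gg\trct$ return vectors at scale $\mfsc$, so $\mfbd\ge\mfm$ at that point forces $\mfbd\gg\trct(\mfsc\,\inj)^{-\alpha}$ (this is the paper's inequality \eqref{eq: Upsilon trivial lower bd}); you never invoke the lower bound on $\#F$. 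Two smaller points: with $\|\bar w\|\le\tfrac32\|w''-w\|$ your image only lands in $\margI_{\cone,Cm\mfsc}(e,\tilde z)$ for a slightly larger constant than $2$, and your covering of the shell by balls of radius $\mfsc\,\inj/10$ produces an absolute constant of order $10^4 m^3$, so the stated constant $2+6m^4$ would not come out of your argument (the paper gets its small constants by covering $\margI_z$, which sits in a ball of radius $m\mfsc\,\inj(z)\le 0.01m\mfsc$, by $\mfsc/4$-balls).
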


\begin{proof}
Let $w\in\margI_z$, then $z':=\exp(w)z\in\icone$. We will estimate $\eng_{\margI_z, \trct}(w)$ in terms of $\mfm_{\cone,\scmf,\trct}(e,z')$.

Note that for every $v\in\margI_z$, there exists some $w_v\in F$ and some $\sfh_v\in\iconeH$
so that $\exp(v)z=\sfh_v\exp(w_v)y$. Thus
\begin{equation}\label{eq: commutation nth time}
\begin{aligned}
\sfh_v\exp(w_v)y&=\exp(v)z\\
&=\exp(v)\exp(-w)z'=\sfh'\exp(w'_v)z'
\end{aligned}
\end{equation}
where $\|\sfh'-I\|\ll \mfsc^2$ and $\frac12\|v-w\|\leq \|w'_v\|\leq 2\|v-w\|$, see Lemma~\ref{lem: BCH}.

Since $\sfh_v\in\coneH'$, we conclude from~\eqref{eq: commutation nth time} that 
\[
\exp(w'_v)z'=\sfh'^{-1}\sfh_v\exp(w_v)y\in\cone
\]
where we used $\sfh_v\in\iconeH$ and $\|\sfh'-I\|\ll \mfsc^2$. 
We emphasize that we can only guarantee $\exp(w'_v)z'$ belongs to $\cone$ 
and not necessarily to $\icone\subset \cone$.

Note that, $v\mapsto w'_v$ is one-to-one. Moreover, 
\be\label{eq: v-w <12 binj implies in margI}
\text{if $\|v-w\|< \frac{1}{2}\scmf\,\inj(z')$, then $w'_v\in\margI_{\cone, \scmf}(e,z')$},
\ee
since in that case we have $\|w'_v\|< \scmf\,\inj(z')$.  

Let $\{w_1=w, w_2,\ldots, w_N\}\subset I_z$ be a maximal $\mfsc/4$ separated 
subset; then $N\leq m^4$. Arguing as above with all $w_i$, we also conclude that  
\be\label{eq: margI cone cb}
\margI_z\subset \bigcup_{i=1}^N\margI_{\cone,\mfsc}(e,z_i),\quad\text{for some $\{z_1,\ldots, z_N\}\subset\cone$.}
\ee

Since $\mfsc=\nuni^{-\sqrt\vare t}$ and $\#F\geq \nuni^{0.9t}$, we have 
$\sup_{\hat z\in\cone} \#\margI_{\cone,\mfsc}(e,\hat z)\geq \nuni^{0.8t}$. 
Therefore,~\eqref{eq: mfm init estimate} and the fact that $0\leq \trct\leq\nuni^{0.01t}$ imply
\be\label{eq: Upsilon trivial lower bd}
2\mfbd\geq\sup_{\hat z\in\cone} (\scmf\,\inj(\hat z))^{-\alpha}\cdot\Bigl(\#{\margI_{\cone,\mfsc}(e,\hat z)}\Bigr)
\ee
Recall now that $0.9\,\inj(y)\leq \inj(\hat z)\leq 1.1\,\inj(y)$ for all $\hat z\in\cone$. Therefore,~\eqref{eq: margI cone cb} and~\eqref{eq: Upsilon trivial lower bd} imply that 
\be\label{eq: Upsilon trivial lower bd and margI cone cb}
\begin{aligned}
\scmf\,\inj(z')^{-\alpha}\!\cdot\!(\max\{1,\#\margI_z\})&\leq \tfrac32\sum  \scmf\,\inj(z_i)^{-\alpha}\!\cdot\!(\max\{1,\#\margI_{\cone,\mfsc}(e,z_i)\})\\
&\leq 3m^4\mfbd.
\end{aligned}
\ee
 
We now consider two cases: 
If $\#\margI_{\cone,\mfsc}(e,z')\leq \trct$, then~\eqref{eq: v-w <12 binj implies in margI} implies that $\#\{v\in\margI_z: \|v-w\|< \frac12\scmf\,\inj(z')\}\leq \trct$. Hence, using~\eqref{eq: Upsilon trivial lower bd and margI cone cb}, we get  
\[
\eng_{\margI_z, \trct}(w)\leq 2(\scmf\,\inj(z'))^{-\alpha}\cdot(\max\{1,\#\margI_z\})\leq 6m^4\mfbd
\]
This completes the proof in this case. 

Thus, let us assume $\#\margI_{\cone,\mfsc}(e,z')> \trct$, and let $\margI'\subset \margI_{\cone,\mfsc}(e,z')$ be so that 
\[
\sum_{w'\in\margI'}\|w'\|^{-\alpha}=\mfm_{\cone,\scmf,\trct}(e,z')\leq \mfbd.
\]
Let $\margI=\{v\in\margI_z: \|v-w\|<\frac{1}{2}\scmf\,\inj(z')$ and $w'_v\not\in\margI'\}$. Since $v\mapsto w'_v$  is a one-to-one map from $I$ into $\margI_{\cone,\mfsc}(e,z')\setminus \margI'$, see~\eqref{eq: v-w <12 binj implies in margI}, we have $\#\margI\leq\trct$. Therefore, 
\begin{align*}
\eng_{\margI_z, \trct}(w)&\leq\sum_{v\in \margI_z\setminus\margI}\|v-w\|^{-\alpha}
\leq 2\sum_{v\in \margI_z\setminus\margI}\|w'_v\|^{-\alpha}\\
&\leq 2\sum_{w'\in\margI'}\|w'\|^{-\alpha}+2(\scmf\,\inj(z'))^{-\alpha}\cdot(\max\{1,\#\margI_z\})\\
&\leq (2+6m^4)\mfbd,
\end{align*}
where we used $\frac12\|v-w\|\leq \|w'_v\|$ in the second inequality, the definition of $I$ in the third inequality, and~\eqref{eq: Upsilon trivial lower bd and margI cone cb} in the final inequality. 

This completes the proof of this case and of the lemma. 
\end{proof}

Let us also record the following two lemma whose proof is essentially included in 
the argument at the beginning of the proof of Lemma~\ref{lem: margIz energy est}.

\begin{lemma}\label{lem: 1-1 maps between Iz and subsets of F}
Let $\hat\cone\subset \cone'$ be as in Lemma \ref{lem: truncated MF main estimate}. Let $0<m\leq 100$, $z\in\hat\cone$, and $\delta\leq m\mfsc\,\inj(z)$. Write $z=\sfh_z\exp(w_z)y$
where $\sfh_z\in\hat\coneH$ and $w_z\in F$. Then 
\be\label{eq: 1-1 map between Iz and F}
\begin{aligned}
\#\Bigl(F\cap B_\rfrak(w_z,\delta/2)\Bigr)&\leq \#\Bigl(\margI_{\cone',m\mfsc}(e,z)\cap B_\rfrak(e,\delta)\Bigr)\\
&\leq \#\Bigl(F\cap B_\rfrak(w_z,2\delta)\Bigr).
\end{aligned}
\ee
\end{lemma}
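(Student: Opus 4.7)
Both inequalities come from an almost-isometric injective correspondence $v \leftrightarrow w_v$ between the elements of $\margI_{\cone',m\mfsc}(e,z)$ of small norm and nearby points of $F$, built from the commutation $\exp(v)\sfh_z = \sfh_z\exp(\Ad(\sfh_z^{-1})v)$ and Lemma~\ref{lem: BCH}. Starting from $z = \sfh_z \exp(w_z) y$ with $\sfh_z \in \hat\coneH$, for any $v \in \rfrak$ with $\|v\| < m\mfsc\inj(z)$ one has
\[
\exp(v) z \;=\; \sfh_z \exp(\Ad(\sfh_z^{-1}) v) \exp(w_z) y,
\]
and Lemma~\ref{lem: BCH}, applied to the product $\exp(\Ad(\sfh_z^{-1}) v)\exp(w_z)$ (i.e.\ with $w_1 = \Ad(\sfh_z^{-1}) v$ and $-w_2 = w_z$), yields $h(v) \in H$ and $\tilde w(v) \in \rfrak$ satisfying
$\|h(v)-I\|,\ \|\tilde w(v)-\Ad(\sfh_z^{-1})v-w_z\| \ll \max(\|v\|,\|w_z\|)(\|v\|+\|w_z\|)$, with
\[
\exp(v)z \;=\; \sfh_z h(v) \exp(\tilde w(v)) y.
\]

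The membership $\exp(v) z \in \cone'$ then splits into the two conditions $\sfh_z h(v) \in \iconeH$ and $\tilde w(v) \in F$. The first is automatic: $z \in \hat\cone$ places $\sfh_z$ at distance at least $5\mfsc$ from $\partial\iconeH$, while $\|h(v)-I\| \ll \mfsc^2 \ll \mfsc$ (using $\|v\| \leq 100\mfsc$ and $\|w_z\| \leq \beta \ll \mfsc$). Hence $v \in \margI_{\cone',m\mfsc}(e,z)$ iff $w_v := \tilde w(v) \in F$. Moreover, since $\|\sfh_z-I\| \ll \eta$, the linear map $\Ad(\sfh_z^{-1})$ is $(1+O(\eta))$-close to the identity on $\rfrak$, and combining this with the second-order BCH error one gets that $v \mapsto \tilde w(v) - w_z$ is a $(1+o(1))$-bi-Lipschitz perturbation of the identity on the relevant scales. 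In particular, $\tfrac12\|v\| \leq \|w_v - w_z\| \leq 2\|v\|$ in the nontrivial regime.

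The upper bound of \eqref{eq: 1-1 map between Iz and F} is then immediate from the injection $v \mapsto w_v$, and the lower bound follows by inverting the diffeomorphism $\tilde w$ near the origin: every $w \in F \cap B_\rfrak(w_z,\delta/2)$ is of the form $\tilde w(v)$ for a unique $v$ with $\|v\| \leq \delta$. The main technical point requiring care is the well-definedness of $w_v$ as an element of $F$, which amounts to the uniqueness of the decomposition $(\sfh,w) \mapsto \sfh\exp(w)y$ on $\iconeH \times F$; this follows from $\inj(y) \geq 2\eta$ being much larger than $\beta$, which dominates the transverse thickness of $\coneH$. The degenerate scales $\delta \lesssim \beta^2$ are handled separately: both sides of \eqref{eq: 1-1 map between Iz and F} reduce to $1$, the left containing only $w_z$ (and hence contributing $1$) while the right contains at least $v=0$.
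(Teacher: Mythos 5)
Your overall architecture is the same as the paper's (a BCH-type correspondence between small vectors $v$ with $\exp(v)z\in\cone'$ and points of $F$ near $w_z$, combined with injectivity of $(h,w)\mapsto h\exp(w)y$ on $\boxG_{10\eta}$ and the margin between $\hat\coneH$ and $\iconeH$), but the quantitative estimates you give do not suffice, and the patch you propose for small scales is false. First, the size comparison you invoke is reversed: $\mfsc=e^{-\sqrt\vare t}$ while $\beta=e^{-\kappa t}$ with $\kappa\le 10^{-6}\vare\ll\sqrt\vare$, so $\mfsc\ll\beta$, not $\beta\ll\mfsc$; in particular $\beta^2\gg\mfsc$. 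Consequently, applying Lemma~\ref{lem: BCH} once to the product $\exp(\Ad(\sfh_z^{-1})v)\exp(w_z)$, whose two arguments are \emph{not} close (one has size $\le 100\mfsc$, the other can have size $\asymp\beta$), only yields, with the error bounds you state, additive errors of order $\beta\cdot\|\Ad(\sfh_z^{-1})v+w_z\|\approx\beta^2$ for both $\|h(v)-I\|$ and $\|\tilde w(v)-\Ad(\sfh_z^{-1})v-w_z\|$. This neither gives $\|h(v)-I\|\ll\mfsc$ (needed to keep $\sfh_zh(v)$ inside $\iconeH$, since the margin is only $\asymp\mfsc$), nor the $(1+o(1))$-bi-Lipschitz property of $v\mapsto\tilde w(v)-w_z$ at scales $\delta\ll\beta^2$: the additive $\beta^2$ error swamps $\delta$. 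Your fallback for $\delta\lesssim\beta^2$, namely that $F\cap B_\rfrak(w_z,\delta/2)=\{w_z\}$, has no justification and is generally false: there is no separation of $F$ at scale $\beta^2$ (in the base construction the whole set $F_j$ lies in a ball of radius $2\beta^2$, and in general $\#\bigl(F\cap B_\rfrak(w,\rho)\bigr)$ only drops to $\le\trct$ at the far smaller scale $\rho\approx\egbd^{-1/\alpha}$). These small scales are exactly the ones the lemma is used for, so the gap is not cosmetic.

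The repair is to treat the two inequalities by two separate applications of Lemma~\ref{lem: BCH}, each time to a product of exponentials whose arguments differ by the \emph{small} quantity, so that all errors are proportional to that quantity; this is what the paper does. For the upper bound, from $\exp(v)z=\sfh_z\exp(\Ad(\sfh_z^{-1})v)\exp(w_z)y=\sfh'\exp(w'_v)y$ one only needs $\|w'_v-w_z\|\le\tfrac32\|\Ad(\sfh_z^{-1})v\|<2\|v\|$ together with injectivity on $\boxG_{10\eta}$ to conclude $w'_v\in F\cap B_\rfrak(w_z,2\delta)$; no fine control of the $H$-component is required there. For the lower bound, given $w\in F\cap B_\rfrak(w_z,\delta/2)$, apply Lemma~\ref{lem: BCH} to $\exp(w)\exp(-w_z)$, whose arguments differ by $w-w_z$ with $\|w-w_z\|\le\delta/2$; then both the displacement $\|v_w\|\le 2\|w-w_z\|<\delta$ and the $H$-error $\|\sfh'-I\|\ll\eta\|w-w_z\|\ll\eta\mfsc\,\inj(z)$ are proportional to $\|w-w_z\|$, and only then does $\sfh_z\sfh'^{-1}$ remain in $\iconeH$ (using $\sfh_z\in\hat\coneH$), giving $v_w\in\margI_{\cone',m\mfsc}(e,z)\cap B_\rfrak(0,\delta)$. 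With this reorganization no case distinction at scale $\beta^2$ is needed, and the count holds for every $\delta\le m\mfsc\,\inj(z)$.
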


\begin{proof}
Let us write $\margI_z=\margI_{\cone',m\mfsc}(e,z)$. 
We will first show: there is an injective map 
from $\margI_z\cap B_\rfrak(0,\delta)$ into $F\cap B_\rfrak(w_z, 2\delta)$.
For every $v\in\margI_z\cap B_\rfrak(0,\delta)$, there are $w_v\in F$ and $\sfh_v\in\iconeH$ 
so that $\exp(v)z=\sfh_v\exp(w_v)y$. Thus
\[
\begin{aligned}
\sfh_v\exp(w_v)y&=\exp(v)z\\
&=\exp(v)\sfh_z\exp(w_z)y=\sfh_z\exp(\Ad(\sfh_z^{-1})v)\exp(w_z)y\\
&=\sfh'\exp(w'_v)y
\end{aligned}
\]
where $\leq \|w'_v-w_z\|\leq \frac32\|\Ad(\sfh_z^{-1})v\|< 2\|v\|$, see Lemma~\ref{lem: BCH}. Since the map $(h,w)\mapsto h\exp(w)y$ is injective on $\boxG_{10\eta}$, we conclude that $w_v=w'_v$. Thus $v\mapsto w_v$ is an injection from $\margI_z\cap B_\rfrak(0,\delta)$ 
into $F\cap B_\rfrak(w_z, 2\delta)$.

\medskip

The other direction is similar, let $w\in F\cap B_\rfrak(w_z, \delta/2)$. Then 
\begin{align*}
    \exp(w)y&= \exp(w)\exp(-w_z)\exp(w_z)y\\
    &=\exp(w)\exp(-w_z)\sfh_z^{-1}z=\sfh'\exp(v'_w)\sfh_z^{-1}z\\
    &=\sfh'\sfh_z^{-1}\exp(\Ad(\sfh_z)v'_w)z
\end{align*}
where $\|\sfh'-I\|\ll \eta\|w-w_z\|$ and $\|\Ad(\sfh_z)v'_w\|< 2\|w-w_z\|$, see Lemma~\ref{lem: BCH}. 

Put $v_w=\Ad(\sfh_z)v'_w$. Then the above implies 
\[
\exp(v_w)z=\sfh_z\sfh'^{-1}\exp(w)y.
\]
Since $\|\sfh'-I\|\ll \eta\|w-w_z\|\ll \mfsc\eta\inj(z)$ and $\sfh_z\in\hat\coneH=\overline{\coneH\setminus\partial_{10\mfsc}\coneH}$, we conclude 
\[
\sfh_z\sfh'^{-1}\in\coneH'=\overline{\coneH\setminus\partial_{5\mfsc}\coneH}.
\]
Hence $\exp(v_w)z\in\cone'$. Moreover, we have 
$\|v_w\|\leq 2\|w-w_z\|<\delta$. These imply that $v_w\in \margI_z\cap B_\rfrak(e,\delta)$. 
Altogether, $w\mapsto v_w$ is an injection from $F\cap B_\rfrak(w_z, \delta/2)$ into $\margI_z\cap B_\rfrak(e,\delta)$. 
The proof is complete. 
\end{proof}

Let us also record the following lemma for later use

\begin{lemma}\label{lem: F energy est assuming mfm energy estimate}
Assume~\eqref{eq: mfm init estimate} holds. Let $m\in\bbn$. 
For any $w\in F$, put $F_w=B_\rfrak(w,m\mfsc\,\inj(y))\cap F$. 
Then  
\[
\eng_{F_w, \trct}(w')\leq (2+6(4m)^4)\mfbd\qquad\text{for every $w'\in F_w$}.
\]
\end{lemma}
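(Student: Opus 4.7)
The plan is to reduce this statement to Lemma~\ref{lem: margIz energy est} by exhibiting a point $z \in \icone$ whose associated transversal set $\margI_z$ is in bijective distance-comparable correspondence with $F_w$ near $w'$, and then transfer the energy bound through that correspondence using Lemma~\ref{lem: 1-1 maps between Iz and subsets of F}.

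Concretely, fix $w \in F$ and $w' \in F_w$, and set $z := \exp(w')y$. Since the identity $e$ is an interior point of $\coneH$ (in all three product coordinates, being at the center of each interval), we have $e \in \iconeH = \overline{\coneH \setminus \partial_{5\mfsc}\coneH}$, so that $z \in \icone$ with presentation $\sfh_z = e$ and $w_z = w'$. Applying Lemma~\ref{lem: margIz energy est} with $4m$ in place of $m$ at the point $z$, I obtain the energy bound
\[
\eng_{\margI_z,\trct}(v) \leq (2 + 6(4m)^4)\mfbd \qquad \text{for every } v \in \margI_z,
\]
where $\margI_z := \margI_{\icone,\, 4m\mfsc}(e, z)$ and $\eng$ is measured at scale $\mfsc_0 = 4m\mfsc\,\inj(z)$.

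Next I transfer this bound back to $F$ near $w'$. Since $w' \in F_w = B_\rfrak(w, m\mfsc\,\inj(y)) \cap F$, we have $\|w - w'\| \leq m\mfsc\,\inj(y)$, and therefore
\[
F_w \subset F \cap B_\rfrak(w',\, 2m\mfsc\,\inj(y)) \subset F \cap B_\rfrak(w_z,\, 2m\mfsc\,\inj(z)),
\]
using $\inj(z) \asymp \inj(y)$. Now I apply the right-hand injection of Lemma~\ref{lem: 1-1 maps between Iz and subsets of F} (in the direction $F \to \margI_z$, which is the map $w'' \mapsto v_{w''}$ constructed in its proof): every $w'' \in F \cap B_\rfrak(w', 2m\mfsc\,\inj(y))$ lies well within the range of that injection, and its image $v_{w''} \in \margI_z$ satisfies the distance comparison
\[
\tfrac{1}{2}\|w'' - w'\| \leq \|v_{w''}\| \leq 2\|w'' - w'\|
\]
(the same BCH/$\Ad$-Lipschitz estimate that underlies Lemma~\ref{lem: BCH} and the proof of Lemma~\ref{lem: 1-1 maps between Iz and subsets of F}, since $\sfh_z = e$ and $\|w'\| \leq \beta$ is small). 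In particular $v_{w'} = 0$, and distinct elements of $F_w$ produce distinct nonzero elements of $\margI_z$.

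Finally I combine these. If $\#F_w \leq \trct$, the required bound $\eng_{F_w,\trct}(w') \leq \mfsc_0^{-\alpha} \ll \mfbd$ is immediate from the a priori bound on $\mfm_{\cone,\mfsc,\trct}$. Otherwise, choose $I^* \subset \margI_z$ realizing the minimum in the definition of $\eng_{\margI_z,\trct}(0)$, and let $E \subset F_w$ be the preimage under $w'' \mapsto v_{w''}$ of the $\trct$ excluded points of $I^*$; then $\#E \leq \trct$, and
\[
\eng_{F_w,\trct}(w') \leq \sum_{w'' \in F_w \setminus (E \cup \{w'\})} \|w'' - w'\|^{-\alpha} \leq 2^\alpha \sum_{v \in I^*} \|v\|^{-\alpha} \leq 2(2 + 6(4m)^4)\mfbd,
\]
which yields the claimed bound (up to harmless constants absorbed into the $(4m)^4$ factor). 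The only mildly delicate point is verifying the distance comparison uniformly for all $w'' \in F_w$; this is not really an obstacle since the distortions in Lemmas~\ref{lem: BCH} and~\ref{lem: 1-1 maps between Iz and subsets of F} are of order $\eta$, negligible against the factor of $2$ we allow.
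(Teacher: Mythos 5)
Your proposal follows essentially the same route as the paper's own proof: take $z=\exp(w')y$, note it lies in the interior set, build the BCH-based injective and distance-comparable map $\hat w\mapsto v_{\hat w}$ from $F_w$ into $\margI_{\icone,4m\mfsc}(e,z)$ exactly as in Lemma~\ref{lem: 1-1 maps between Iz and subsets of F}, and then invoke Lemma~\ref{lem: margIz energy est} with $4m$ to transfer the energy bound. The only (cosmetic) difference is that you track the $2^{\alpha}$ distortion factor explicitly and land on $2(2+6(4m)^4)\mfbd$ rather than the stated constant; the paper's argument carries the same slack without recording it, and since all downstream applications of the lemma only use the bound up to absolute constants, this does not affect correctness.
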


\begin{proof}
Let $w'\in F_w$ and put $z'=\exp(w')y$. Then $z'\in\hat\cone$, and as it was done in the proof of Lemma~\ref{lem: 1-1 maps between Iz and subsets of F}, for every $w'\neq \hat w\in F_w$ we have 
\begin{align*}
    \exp(\hat w)y&= \exp(\hat w)\exp(-w')\exp(w')y\\
    &=\exp(\hat w)\exp(-w')\sfh_{w'}^{-1}z'=\bar\sfh\exp(v'_{\hat w})\sfh_{w'}^{-1}z'\\
    &=\bar\sfh\sfh_{w'}^{-1}\exp(\Ad(\sfh_{w'})v'_{\hat w})z
\end{align*}
where $\|\bar\sfh-I\|\ll \eta\|\hat w-w'\|$ and $\|\Ad(\sfh_{w'})v'_{\hat w}\|< 2\|\hat w-w'\|$, see Lemma~\ref{lem: BCH}. 

Put $v_{\hat w}=\Ad(\sfh_{w'})v'_{\hat w}$. 
Then, as in Lemma~\ref{lem: 1-1 maps between Iz and subsets of F}, we have 
$v_{\hat w}\in \margI_{\cone',4m\mfsc}(e,z')$ and the map $\hat w\mapsto v_{\hat w}$ is injective --- note that $\|\hat w-w'\|\leq 2m\mfsc\,\inj(y)$.  

This and Lemma~\ref{lem: margIz energy est}, imply that 
\[
\eng_{F_w, \trct}(w')\leq \eng_{\margI_{\cone',4m\mfsc}(e,z'), \trct}(0)\leq (2+6(4m)^4)\mfbd
\]
for every $w'\in F_w$.
\end{proof}

\subsection*{Proof of Lemma~\ref{lem: truncated MF main estimate}}
The proof will be completed in some steps. 

For every $w\in \rfrak$ and all $r\in[0,1]$, let 
\[
\xi_r(w)=(\Ad(u_r)w)_{12}=-w_{21}r^2-2w_{11}r+w_{12}.
\]

\subsection*{Applying Theorem~\ref{thm: proj thm}}
As in Lemma~\ref{lem: margIz energy est}, let 
\[
\icone=\bigcup\iconeH.\{\exp(w)y: w\in F\},
\] 
where $\iconeH=\overline{\coneH\setminus \partial_{5\mfsc}\coneH}$.
For all $z\in\icone$, put $\margI_z=\margI_{\icone, \mfsc}(e,z)$. 
In view of Lemma~\ref{lem: margIz energy est}, we have 
\be\label{eq: margIz energy est}
\eng_{\margI_z, \trct}(w)\leq 8\mfbd,\qquad\text{for all $w\in\margI_z$,}
\ee
where $\eng$ is defined with $\mfsc_0=\mfsc\,\inj(z)$.

Apply Theorem~\ref{thm: proj thm} with $\margI_z$ and $\pvare=\kappa$; let $J_z\subset [0,1]$ be the set $J'$ given by that theorem. In particular, 
\be\label{eq: meas of Jz}
|[0,1]\setminus J_z|\leq L\kappa^{-L}\mfbd^{-\kappa^2}\leq\nuni^{-\kappa^2t/2}.
\ee 
To see the last inequality, recall that $\#F\geq \nuni^{0.9t}$. 
Combining this with~\eqref{eq: Upsilon trivial lower bd} (and the discussion preceding~\eqref{eq: Upsilon trivial lower bd}), $\mfbd^{-\kappa^2}\leq \nuni^{-0.8\kappa^2t}$. The above estimate follows if we assume $t$ is large enough to account for the factor $L\vare^{-L}$.

Returning to the argument, by Theorem~\ref{thm: proj thm}, we also have that 
for every $r\in J_z$ there exists $\margI'_{z,r}\subset \margI_z$ with 
$\#(\margI_z\setminus \margI'_{z,r})\leq \nuni^{-\kappa^2t/2}\cdot(\#\margI_z)$ so that
\be\label{eq: proj thm use induct'}
\eng_{\xi_{r}(\margI_z),\trct_1}(\xi_r(w))\leq \egbd_1,\qquad\text{for every $w\in\margI'_{z,r}$,}
\ee
where $\mfbd_1=10L_1\egbd^{1+8\kappa}\geq L_1 (8\egbd)^{1+8\kappa}$.

\subsection*{The sets $\gdh_{\mu_\cone}$ and $\cone_r$}
Equip $\cone\times[0,1]$ with $\sigma:=\mu_{\cone}\times\Leb$ where 
$\Leb$ denotes the normalized Lebesgue measure on $[0,1]$. Let 
\[
Y=\left\{(z,r)\in \hat\cone\times[0,1]: \frac{\#\{w\in\margI_z:\eng_{\xi_r(\margI_z),\trct_1}(\xi_r(w))>\mfbd_1\}}{\#\margI_z}\leq \nuni^{-\kappa^2t/2}\right\}.
\]
where $\hat\cone=\bigcup\hat\coneH.\{\exp(w)y: w\in F\}$
and $\hat\coneH=\overline{\coneH\setminus \partial_{10\mfsc}\coneH}$. 
Then,~\eqref{eq: proj thm use induct'} implies
\[
\text{for all $z\in\hat\cone$, we have  $\{(z,r): r\in J_z\}\subset Y$}.
\]
Recall moreover that $\mu_\cone(\cone\setminus\hat\cone)\ll_{\adm} \mfsc$, see the definition of an admissible measure and 
in particular~\eqref{eq: adm meas again}. We thus conclude from~\eqref{eq: meas of Jz} that 
\[
\sigma(\cone\times[0,1]\setminus Y)\ll_\adm \mfsc+\nuni^{-\kappa^2t/2}\ll_{\adm} \nuni^{-\kappa^2t/2}.
\]
This and Fubini's theorem imply that there is a subset 
$\gdh_{\mu_\cone}\subset [0,1]$ with 
$|[0,1]\setminus\gdh_{\mu_\cone}|\ll_{\adm} \nuni^{-\kappa^2t/4}$ so that for all $r\in\gdh_{\mu_\cone}$, we have 
\be\label{eq: meas of Coner}
\lambda\Bigl(\cone\setminus Y_r\Bigr)\ll_\adm \nuni^{-\kappa^2t/4}
\ee 
where  $Y_r=\{z\in\hat\cone: (z,r)\in Y\}$.

For every $r\in\gdh_{\mu_\cone}$, define 
\[
\cone_r:=\Bigl\{z\in\hat\cone: \mfm_{\hat\cone,\mfsc, \trct_1}(a_\ell u_r, z)\leq 200\nuni^{-\alpha\ell}\egbd_1+200\nuni^{2\alpha\ell}\noI_{\hat\cone,\scmf}(a_{\ell} u_r,z)\Bigr\}.
\]
We will show that  
\be\label{eq: meas of cone-r}
\mu_\cone(\cone\setminus\cone_r)\leq\nuni^{-\kappa^2t/64}.
\ee
Note that the lemma follows from~\eqref{eq: meas of cone-r}. 
Thus, the rest of the argument is devoted to the proof of~\eqref{eq: meas of cone-r}.

Let $r\in {\gdh_{\mu_\cone}}$, and let $z\in Y_r$. Then $(z,r)\in Y$, and by the definition of $Y$, 
there exists a subset $\margI_{z,r}\subset \margI_z$ with 
$\frac{\#(\margI_z\setminus \margI_{z,r})}{\#\margI_z}\leq \nuni^{-\kappa^2t/2}$
so that for every $w\in\margI_{z,r}$, we have 
\be\label{eq: proj thm use induct}
\eng_{\xi_{r}(\margI_z),\trct_1}(\xi_r(w))\leq \egbd_1.
\ee

\begin{claim}
Let $\bar\eta=\inj(y)$. 
For all $ w\in\margI_{z,r}\cap B_\rfrak(0, 0.1\bar\eta\mfsc)$, we have 
\[
\mfm_{\hat\cone,\scmf, \trct_1}\bigl(a_{\ell} u_r,\exp(w)z\bigr)\leq 200\nuni^{-\alpha\ell}\egbd_1+200\nuni^{2\alpha\ell}\noI_{\hat\cone,\scmf}(a_{\ell} u_r,z).
\]
\end{claim}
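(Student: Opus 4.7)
Fix $w\in \margI_{z,r}\cap B_\rfrak(0,0.1\bar\eta\mfsc)$ and set $z_\ast:=\exp(w)z$. Since $\|w\|\ll\mfsc\bar\eta$, the point $z_\ast$ lies in $\hat\cone$, and $\inj(z_\ast)\asymp \inj(z)\asymp\bar\eta$. Put $I_\ast:=\margI_{\hat\cone,\mfsc}(a_\ell u_r,z_\ast)$ and $z':=a_\ell u_r z$. The plan is to bound $\sum_{v'\in I_\ast'}\|v'\|^{-\alpha}$ (for a suitable $I_\ast'\subset I_\ast$ with $\#(I_\ast\setminus I_\ast')\leq\trct_1$) by the two terms on the right-hand side of the Claim.

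\textbf{Step 1 (Correspondence).} For each $v'\in I_\ast$, repeat the argument at the beginning of the proof of Lemma~\ref{lem: margIz energy est}: writing $\exp(v')\,a_\ell u_r z_\ast=\mathsf h'\exp(\tilde w)y$ with $\mathsf h'\in\hat\coneH$ and $\tilde w\in F$, and pulling back by $a_\ell u_r$, we find a unique $\tilde v\in\rfrak$ with $\exp(\tilde v)z\in\hat\cone$ and, by Lemma~\ref{lem: BCH},
\[
\tilde v-w \;=\; \Ad(u_{-r}a_{-\ell})v' \;+\; O(\|v'\|^2).
\]
Writing $u:=\tilde v-w$ and using the explicit form of $\Ad(a_\ell u_r)$ computed component-wise,
\[
\|v'\|\;=\;\max\Bigl(\nuni^{\ell}|\xi_r(u)|,\;|u_{11}+ru_{21}|,\;\nuni^{-\ell}|u_{21}|\Bigr)+O(\|v'\|^{2}).
\]
Since $\|v'\|<\mfsc\inj(z_\ast)$, this forces $\|u\|\leq 5\nuni^{\ell}\mfsc\bar\eta$, so $\tilde v\in\margI_{\icone,m\mfsc}(e,z)$ with $m:=\lceil 10\nuni^\ell\rceil$; the map $v'\mapsto\tilde v$ is injective by Lemma~\ref{lem:dist-sheet}. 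Crucially, linearity of $\xi_r$ gives $\xi_r(\tilde v)-\xi_r(w)=\xi_r(u)$.

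\textbf{Step 2 (Splitting).} Decompose $I_\ast=I_\ast^{\rm main}\sqcup I_\ast^{\rm coll}$, where
\[
I_\ast^{\rm main}:=\bigl\{v'\in I_\ast:\nuni^{\ell}|\xi_r(u)|\geq \tfrac12\|v'\|\bigr\},\qquad I_\ast^{\rm coll}:=I_\ast\setminus I_\ast^{\rm main}.
\]
For $v'\in I_\ast^{\rm main}$, we have the clean bound
\[
\|v'\|^{-\alpha}\;\leq\; 2^{\alpha}\nuni^{-\alpha\ell}\,|\xi_r(\tilde v)-\xi_r(w)|^{-\alpha}.
\]
For $v'\in I_\ast^{\rm coll}$, $\xi_r(u)$ is dominated by the other two components; then $u$ lies in the thin fiber-like region where $\|v'\|$ is controlled by $\max(|u_{11}+ru_{21}|,\nuni^{-\ell}|u_{21}|)$.

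\textbf{Step 3 (Main term via the projection theorem).} Group the contributions in $I_\ast^{\rm main}$ by the value $\zeta:=\xi_r(\tilde v)\in\xi_r(\margI_z^m)$, where $\margI_z^m:=\margI_{\icone,m\mfsc}(e,z)$. By Lemma~\ref{lem: margIz energy est}, $\eng_{\margI_z^m,\trct}(w')\leq(2+6m^4)\mfbd$ for every $w'\in\margI_z^m$; an elementary extension of the projection-theorem-derived estimate~\eqref{eq: proj thm use induct} from $\margI_z$ to $\margI_z^m$ (splitting $\margI_z^m$ into $O(m^4)$ translates of $\margI_z$-type pieces and absorbing the constant into $\mfbd_1$) yields
\[
\eng_{\xi_r(\margI_z^m),\trct_1}(\xi_r(w))\;\leq\;\mfbd_1.
\]
The fibers of $\xi_r$ over $\zeta$ intersected with the box produced by the $\Ad(a_\ell u_r)u$-constraints contain at most $O(1)$ preimages that can produce distinct $v'$'s in $I_\ast^{\rm main}$, by the separation coming from $\|v'\|\asymp\nuni^\ell|\zeta-\xi_r(w)|$ on this regime. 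Summing,
\[
\sum_{v'\in I_\ast^{\rm main}\setminus E_1}\|v'\|^{-\alpha}\;\leq\;100\,\nuni^{-\alpha\ell}\mfbd_1,
\]
where $E_1$ is the set of $\tilde v$ excluded by the $\eng_{\xi_r(\margI_z^m),\trct_1}$ truncation, which accounts for $\trct_1=\trct+L_1\mfbd^{7\kappa}$ omitted terms.

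\textbf{Step 4 (Collision term via $\noI$).} For $v'\in I_\ast^{\rm coll}$, apply the injective correspondence of Lemma~\ref{lem: 1-1 maps between Iz and subsets of F}, translated by $w$, to inject $\{v'\in I_\ast^{\rm coll}\}$ into $I_{\hat\cone,\mfsc}(a_\ell u_r,z)$ after rescaling, with an adjustment of at most $O(\nuni^{\alpha\ell})$ because the shape of the elongated box is stretched by $\nuni^\ell$ compared to the reference box at $z$. Bounding each collision term by $\|v'\|^{-\alpha}\leq(\nuni^{-\ell}\mfsc\bar\eta)^{-\alpha}=\nuni^{\alpha\ell}(\mfsc\bar\eta)^{-\alpha}$ and multiplying by the cardinality yields
\[
\sum_{v'\in I_\ast^{\rm coll}}\|v'\|^{-\alpha}\;\leq\;100\,\nuni^{2\alpha\ell}\,\noI_{\hat\cone,\mfsc}(a_\ell u_r,z).
\]

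Combining Steps~3 and~4, and using that only $\trct_1$ terms are discarded, gives the desired inequality with the constant $200$ in place of $100$, completing the proof of the Claim.

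\textbf{Main obstacle.} The delicate step is Step~3: controlling the fiber multiplicities $N_\zeta:=\#\{\tilde v:\xi_r(\tilde v)=\zeta\}$ for the $\tilde v$'s corresponding to $I_\ast^{\rm main}$, and producing a clean energy bound at the shifted basepoint $\xi_r(w)$ inside $\xi_r(\margI_z^m)$ rather than $\xi_r(\margI_z)$. This requires extending the projection theorem estimate~\eqref{eq: proj thm use induct} from the $\mfsc$-scale Margulis set $\margI_z$ to its $m\mfsc$-enlargement, using Lemma~\ref{lem: margIz energy est} to pay only a $(2+6m^4)$-factor, and verifying that this factor is absorbed into the definition of $\mfbd_1$ up to the allowed $\kappa$-loss.
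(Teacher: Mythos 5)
Your Step 1 departs from the paper at a structurally decisive point, and the rest of the argument does not recover from it. You pull back \emph{every} $v'\in\margI_{\hat\cone,\mfsc}(a_\ell u_r,\exp(w)z)$ through $\Ad(u_{-r}a_{-\ell})$ to the base point $z$, which forces you into the enlarged set $\margI_{\icone,m\mfsc}(e,z)$ with $m\asymp\nuni^{\ell}$ (your $\margI_z^m$). The paper instead first discards the vectors $v$ with $\|v\|\geq 0.01\nuni^{-2\ell}\mfsc\,\inj(a_\ell u_r\exp(w)z)$, whose total contribution is bounded at once by $100\nuni^{2\alpha\ell}\noI_{\hat\cone,\mfsc}(a_\ell u_r,\cdot)$, and only then pulls the remaining (small) vectors back: these land in $\margI_{\hat\cone,\mfsc}(e,\exp(w)z)\cap B_\rfrak(0,0.1\bar\eta\mfsc)$ and hence, after translating by $w$, injectively in $\margI_z$ at scale $\mfsc$ — no enlargement at all. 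Because you skipped this cutoff, your Step 3 needs $\eng_{\xi_r(\margI_z^m),\trct_1}(\xi_r(w))\leq\egbd_1$ at the enlarged scale, and this estimate is simply not available: the only input coming from the construction of $Y$, $Y_r$ and $\margI_{z,r}$ is \eqref{eq: proj thm use induct}, i.e.\ the bound for $\margI_z$ at scale $\mfsc$, for the particular $r$ that has already been fixed. You cannot re-apply Theorem~\ref{thm: proj thm} to $\margI_z^m$ for that same $r$ (the good set of parameters produced by the theorem depends on the set it is applied to), and your proposed ``elementary extension'' by splitting $\margI_z^m$ into $O(m^4)$ pieces costs a factor $m^4\asymp\nuni^{4\ell}$, turning the main term into something of size $\nuni^{(4-\alpha)\ell}\egbd_1$ rather than the required contraction $\nuni^{-\alpha\ell}\egbd_1$; that defeats the purpose of the lemma.

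Two further steps fail as stated. In Step 3 the assertion that each fiber of $\xi_r$ contributes $O(1)$ preimages to the main regime is unjustified: distinct $\tilde v$'s with the same projected value all produce $v'$'s of comparable norm $\asymp\nuni^{\ell}|\zeta-\xi_r(w)|$, and nothing bounds their number. The paper needs no multiplicity count: it uses the pointwise inequality $\|\Ad(a_\ell u_r)u\|\geq|(\Ad(a_\ell u_r)u)_{12}|=\nuni^{\ell}|\xi_r(u)|$ with $u$ essentially equal to $w_v-w$, together with the injectivity of $v\mapsto w_v$ into $\margI_z$, so the sum over the kept vectors is dominated term-by-term by $\sum_{w'\in K_w}|\xi_r(w')-\xi_r(w)|^{-\alpha}\leq\egbd_1$, the at most $\trct_1$ exceptional indices being absorbed into the truncation built into $\mfm_{\hat\cone,\mfsc,\trct_1}$; near-collisions are thus handled by the projection estimate itself, not by a separate count. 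In Step 4 the bound $\|v'\|^{-\alpha}\leq\nuni^{\alpha\ell}(\mfsc\bar\eta)^{-\alpha}$ for your ``collision'' vectors is false in general: a vector whose $\xi_r$-component is not dominant can be far shorter than $\nuni^{-\ell}\mfsc\bar\eta$ (there is no separation lower bound at that scale), so these terms cannot be charged to $\nuni^{2\alpha\ell}\noI$ as you claim. In the paper the only vectors charged to the $\noI$ term are those above the explicit threshold $0.01\nuni^{-2\ell}\mfsc\,\inj(\cdot)$, for which the bound is immediate, and every remaining vector — whether or not its projection nearly collides with $\xi_r(w)$ — goes through the projection estimate.
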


\begin{proof}[Proof of the claim]
Recall that $\frac12\bar\eta\leq \inj(\bigcdot)\leq 2\bar\eta$ for all $\bigcdot\in\cone$.
Let $w\in I_{z,r}\cap B_\rfrak(0, 0.1\bar\eta\mfsc)$. For ease of notation, put $\hat z=\exp(w)z$ and $h=a_{\ell}u_r$. 

First note that if $\#\margI_{\hat\cone,\scmf}(h,\hat z)\leq \trct_1$, there is nothing to prove. Therefore, we will assume  
$\#\margI_{\hat\cone,\scmf}(h,\hat z)> \trct_1$.

Let $\margI_{h\hat z}^>=\{v\in\margI_{\hat\cone,\scmf}(h,\hat z): \|v\|\geq 0.01\nuni^{-2\ell}\scmf\,\inj(h\hat z)\}$. 
Then 
\be\label{eq: bdry vectors est}
\sum_{v\in\margI_{h\hat z}^>}\|v\|^{-\alpha}\leq100\nuni^{2\alpha\ell}(\scmf\,\inj(h\hat z))^{-\alpha}\cdot(\#\margI_{h\hat z}^>) \leq 100\nuni^{2\alpha\ell}\noI_{\hat\cone,\scmf}(h,\hat z). 
\ee

For any subset $I\subset\margI_{\hat\cone,\scmf}(h,\hat z)$, let 
\[
J_I=\{v\in\margI_{\hat\cone,\scmf}(e,\hat z): \Ad(h)v\in I\},
\] 
and put $I^{\rm new}=I\setminus \bigl(\Ad(h)\margI_{\hat\cone,\scmf}(e,\hat z)\bigr)$, i.e., $I^{\rm new}$ is the set 
of vectors in $I$ which do {\em not} equal $\Ad(h)v$ for any vector $v\in\margI_{\hat\cone,\scmf}(e,\hat z)$. 

With this notation, we have  
\be\label{eq: old and new v in I}
\sum_{v\in I}\|v\|^{-\alpha}\leq \sum_{v\in J_I}\|\Ad(h)v\|^{-\alpha}+\sum_{v\in I^{{\rm new}}}\|v\|^{-\alpha}
\ee
We first estimate the contribution of the second term on the right side of~\eqref{eq: old and new v in I}.
Recall that $\|\Ad(a_{\ell} u_r)^{\pm1}v\|\leq 3\nuni^{\ell}\|v\|$ for all $v\in\gfrak$,
in particular, we have  $\nuni^{-\ell}\inj(\hat z)/3\leq \inj(h\hat z)\leq 3\nuni^{\ell}\inj(\hat z)$.
Thus if $v\in \margI^{{\rm new}}$, then $\|v\|\geq \nuni^{-2\ell}\inj(h\hat z)\scmf/9$.
In consequence, for any $I$ we have $I^{\rm new}\subset\margI_{h\hat z}^>$, 
and the second term may be controlled using~\eqref{eq: bdry vectors est}.

We now turn to the first term on the right side of~\eqref{eq: old and new v in I}. 
The strategy is to relate this term (for an appropriate choice of $I$) to~\eqref{eq: proj thm use induct}. 

Recall that $w\in I_{z,r}\cap B_\rfrak(0, 0.1\bar\eta\mfsc)$ and $\hat z=\exp(w)z$.  
Let now 
\[
v\in \hat\margI(\hat z):=\margI_{\hat\cone,\scmf}(e,\hat z)\cap B_\rfrak(0, 0.1\bar\eta\mfsc).
\] 
Then we have 
\[
\exp(v)\hat z=\exp(v)\exp(w)z=\sfh_v\exp(w_v)z.
\]
We note that $\|w_v-(v+w)\|=\|(w_v-w)-v\|\ll\mfsc\|v\|$ and $\|\sfh_v\|\ll\mfsc^2$.
Since $\exp(v)\hat z\in\hat\cone$, this implies that $\exp(w_v)z=\sfh_v^{-1}\exp(v)\hat z\in\icone$. 
Moreover, $\|v\|,\|w\|\leq 0.1\bar\eta\mfsc$ implies that $\|w_v\|<\inj(z)\mfsc$. Altogether, we have $w_v\in\margI_z$.  

The map $v\mapsto w_v$ is on-to-one from $\hat\margI(\hat z)$ into $\margI_z$. 
Moreover, $\Ad(h^{-1})v\in \hat\margI(\hat z)$
for every $v\in\margI_{\hat\cone,\mfsc}(h,\hat z)\setminus \margI_{h\hat z}^>$.
Thus if $\#I_z\leq \trct_1$, then 
\[
\#\Bigl(\margI_{\hat\cone, \mfsc}(h,\hat z)\setminus \margI_{h\hat z}^>\Bigr)\leq \trct_1,
\] 
and the proof is complete thanks to~\eqref{eq: bdry vectors est}.

In view of this, we let $K_{w}\subset I_z$ be so that $\# (I_z\setminus K_w)\leq \trct_1$ and 
\be\label{eq: proj thm use induct''}
\sum_{w'\in K_{w}}\|\xi_r(w)-\xi_r(w')\|^{-\alpha}\leq \egbd_1,
\ee
see~\eqref{eq: proj thm use induct}. 

Let $\margI_{\rm exc}=\{v\in\hat\margI(\hat z): w_v\not\in K_{w}\}$.
Since the map $v\mapsto w_v$ is one-to-one from $\margI_{\rm exc}$ into 
$\margI_z\setminus K_{w}$, we have $\# \margI_{\rm exc}\leq \trct_1$.

As was remarked above, if $v\in\margI_{\hat\cone,\scmf}(h, \hat z)$ and $\Ad(h^{-1})v\not\in\margI_{\hat\cone,\scmf}(e, \hat z)$, then 
$\Ad(h)v\in\margI_{h\hat z}^>$. 
Therefore, using~\eqref{eq: proj thm use induct''} and~\eqref{eq: bdry vectors est}, we have   

\begin{align*}
\mfm_{\hat\cone,\scmf, \trct_1}\bigl(a_{\ell} u_r,\hat z\bigr)&\leq \sum_{v\in\hat\margI(\hat z)\setminus I_{\rm exc}}\|\Ad(h)v\|^{-\alpha}+100\nuni^{2\alpha\ell}\noI_{\hat\cone,\scmf}(h,\hat z)\\
&\leq 2 \sum_{v\in\hat\margI(\hat z)\setminus I_{\rm exc}}\|\Ad(h)(w_v-w)\|^{-\alpha}+100\nuni^{2\alpha\ell}\noI_{\hat\cone,\scmf}(h,\hat z) \\
&\leq2\sum_{v\in\hat\margI(\hat z)\setminus I_{\rm exc}} \|\nuni^{\ell}(\xi_r(w_v)-\xi_r(w))\|^{-\alpha}+100\nuni^{2\alpha\ell}\noI_{\hat\cone,\scmf}(h,\hat z)\\
&\leq2\nuni^{-\alpha\ell}\sum_{w'\in K_w} \|\xi_r(w')-\xi_r(w)\|^{-\alpha}+100\nuni^{2\alpha\ell}\noI_{\hat\cone,\scmf}(h,\hat z)\\
&\leq 2\nuni^{-\alpha\ell}\egbd_1+100\nuni^{2\alpha\ell}\noI_{\hat\cone,\scmf}(h,\hat z).
\end{align*}
We used~\eqref{eq: bdry vectors est} in the first inequality.   
For the second inequality we used the following: $\|(w_v-w)-v\|\ll\mfsc\|v\|$, moreover, 
the choice $\ell=0.01\vare n$ implies that $\nuni^{-4\ell}>\scmf$. 
Consequently, we have   
\[
\|a_\ell u_r v\|=\|a_\ell u_r(w_v-w+w')\|\geq 0.5\|a_\ell u_r(w_v-w)\|
\]
where $w'=v-(w_v-w)$ and we used  
$\|h^{\pm1}\bigcdot\|\leq 3\nuni^{\ell}\|\bigcdot\|$ for any $\bigcdot\in\gfrak$. The third inequality follows from $(\Ad(h)\bigcdot)_{12}=e^\ell\xi_r(\bigcdot)$, and the last inequality is a consequence of~\eqref{eq: proj thm use induct''}.

The above and~\eqref{eq: bdry vectors est} complete the proof of the claim. 
\end{proof}

\subsection*{Fubini's theorem and the proof of~\eqref{eq: meas of cone-r}}
In view of the claim, for every $z\in Y_r$ and every $w\in\margI_{z,r}$, we have $\exp(w)z\in\cone_r$ so long as $\exp(w)z\in\hat\cone$. We will use this to show~\eqref{eq: meas of cone-r}. That is,
\be\label{eq: meas of cone-r'}
\mu_\cone(\cone\setminus\cone_r)\leq\nuni^{-\kappa^2t/64},
\ee
which will complete the proof of the lemma.

Recall that $\bar\eta=\inj(y)$ and $\frac12\bar\eta\leq \inj(\bigcdot)\leq 2\bar\eta$ for all $\bigcdot\in\cone$. 
Set $\mfsc':=\mfsc\bar\eta/10$. 
The argument is based the following: For every $z\in Y_r$, we have 
\be\label{eq: Izr and Iz in 0.1mfsc}
\#\Bigl(\margI_{z,r}\cap B_\rfrak(0, \mfsc')\Bigr)\geq(1-\nuni^{-\kappa^2t/4})\cdot\Bigl(\#(\margI_z\cap B_\rfrak(0, \mfsc'))\Bigr),
\ee

Let us first establish~\eqref{eq: Izr and Iz in 0.1mfsc}. 
Let $z\in Y_r$. By Lemma~\ref{lem: 1-1 maps between Iz and subsets of F}, we have 
\begin{subequations}
\begin{align}
    \label{eq: use lemma Iz F}&\#\Bigl(\margI_z\cap B_\rfrak(0, \mfsc')\Bigr)\geq \#\Bigl(F\cap B_\rfrak(w_z, \mfsc'/2)\Bigr)\\
    \label{eq: use lemma Iz F again}&\#\margI_z\leq \#\Bigl(F\cap B_\rfrak(w_z, 40\mfsc')\Bigr).
\end{align}
\end{subequations}
where $z=\sfh_z\exp(w_z)y$ and in~\eqref{eq: use lemma Iz F again} we used $\frac12\bar\eta\leq \inj(z)\leq 2\bar\eta$.   

By our assumption, $F$ satisfies~\eqref{eq: mfsc regular} with $c\geq \nuni^{-\kappa^2t/4}$ and $50\mfsc'$. Thus using~\eqref{eq: use lemma Iz F} and~\eqref{eq: use lemma Iz F again}, we have  
\begin{align*}
\#\Bigl(\margI_z\cap B_\rfrak(0, \mfsc')\Bigr)&\geq \#\Bigl(F\cap B_\rfrak(w_z, \mfsc'/2)\Bigr)\\
&\geq c\cdot \Bigl(\#\Bigl(F\cap B_\rfrak(w_z, 50\mfsc'\Bigr)\Bigr)\\
&\geq c\cdot (\#\margI_z)\geq e^{-\kappa^2 t/4}\cdot (\#\margI_z)
\end{align*}
Since $\#(\margI_z\setminus \margI_{z,r})\leq \nuni^{-\kappa^2t/2}\cdot(\#\margI_z)$, the above implies that 
\[
\#(\margI_z\setminus \margI_{z,r})\leq \nuni^{-\kappa^2t/2}\cdot(\#\margI_z)\leq \nuni^{-\kappa^2t/4}\Bigl(\#\Bigl(\margI_z\cap B_\rfrak(0, \mfsc')\Bigr)\Bigr).
\]
Altogether, we conclude  
\[
\#\Bigl(\margI_{z,r}\cap B_\rfrak(0, \mfsc')\Bigr)\geq(1-\nuni^{-\kappa^2t/4})\cdot\Bigl(\#(\margI_z\cap B_\rfrak(0, \mfsc'))\Bigr),
\]
as was claimed in~\eqref{eq: Izr and Iz in 0.1mfsc}.

\medskip

Put $\cone_r^\complement=\cone\setminus\cone_r$ and assume contrary to~\eqref{eq: meas of cone-r'} that
\[
\mu_\cone(\cone_r^\complement)>\nuni^{-\kappa^2t/64}=:\delta.
\] 
We will repeatedly use properties of an admissible measure, see in particular~\eqref{eq: adm meas again}.
Recall from~\eqref{eq: meas of Coner} that
\[
\mu_\cone(\cone\setminus Y_r)\ll \nuni^{-\kappa^2t/4}\leq\delta^8.
\] 
Let $F'=\Bigl\{w\in F: \mu_{w}(Y_r\cap \coneH.\exp(w)y)\geq (1-\delta^4)\mu_w(\coneH.\exp(w)y)\Bigr\}$. 
Then by Fubini's theorem 
\[
\mu_\cone\Bigl(\bigcup_{w\not\in F'}\coneH.\exp(w)y\Bigr)\leq \delta^4.
\]

Points in $\cone$ are represented as $\sfh'\exp(v')y$, in order to utilize~\eqref{eq: Izr and Iz in 0.1mfsc}, however, it is more convenient to have a representation of points in $\cone$ in the form $\exp(v)\sfh y$. To that end, for every $w\in F'$, fix a covering $\{\boxH_{\mfsc'}.z'\}$ 
of 
\[
\Bigl(\coneH\setminus \partial_{20\mfsc}\coneH\Bigr).\exp(w)y
\]
with multiplicity $\leq K'$ (absolute constant), and let 
\[
\mathcal B'_w:=\Bigl\{\boxH_{\mfsc'}.z': \mu_w(\boxH_{\mfsc'}.z'\cap Y_r)\geq (1-\delta^2)\mu_w(\boxH_{\mfsc'}.z')\Bigr\}.
\]
Then $\mu_w\Bigl(\bigcup\Bigl\{\boxH_{\mfsc'}.z': \boxH_{\mfsc'}.z'\not\in\mathcal B'\Bigr\}\Bigr)\leq K'\delta^2$.

Let $\mathsf B=\exp(B_\rfrak(0,\mfsc'))\cdot \boxH_{\mfsc'}$, and put
\[
\hat{\mathcal B}=\{\mathsf B.z': \boxH_{\mfsc'}.z'\in\mathcal B'_w, w\in F' \}.
\]
Then there is $\mathcal B\subset\hat{\mathcal B}$ so that the multiplicity of $\mathcal B$ is 
$\leq K$ (absolute) and  
\[
\mu_\cone(\cup_{\mathcal B}\mathsf B.z')\geq 1-\adm^2KK'\delta^2-\delta^4>1-(\adm^2KK'+1)\delta^2
\]
where $\adm$ appears in the definition of $(\adl,\adm)$-admissible measure.

Recall now that $\mu_{\cone}(\hat\cone)\geq 1-O(\mfsc)>1-\delta^{16}$. 
Therefore, if we put $\mathcal B_{\rm exc}=\{\mathsf B.z'\in\mathcal B: \mu_\cone(\mathsf B.z'\cap \hat\cone)\leq (1-\delta^8)\mu_\cone(\mathsf B.z')\}$, then 
\[
\mu_\cone\Bigl(\bigcup_{\mathcal B_{\rm exc}}\mathsf B.z'\Bigr)\leq 2K\delta^8,
\]
provided that $\delta$ is small enough compared to $\adm$, $K$, and $K'$. 

Since $\mu_\cone(\cone_r^\complement)> \delta$ and the multiplicity of $\mathcal B$ is at most $K$,
there exists some $\mathsf B.z'\in\mathcal B\setminus \mathcal B_{\rm exc}$ so that  
\be\label{eq: BH Yr Er complement}
\mu_\cone\Bigl(\mathsf B.z'\cap \cone_r^\complement\Bigr)\geq \tfrac{\delta}{4K}\mu_\cone(\mathsf B.z')
\ee

Other other hand, applying the claim with 
$\sfh z'\in \boxH_{\mfsc'}.z'\cap Y_r$ we have: for every $v\in\margI_{\sfh z',r}$, $\exp(v)\sfh z'\in\cone_r$ so long as $\exp(v)\sfh z'\in\hat\cone$. 
This and the fact that every point in $\mathsf B.z'$ can be written uniquely as $\exp(v)\sfh z'$ for some 
$v\in B_\rfrak(0,\mfsc')$ and $\sfh\in \boxH_{\mfsc'}$, imply
\begin{multline*}
\mathsf B.z'\cap\cone_r^\complement\subset \Bigl(\mathsf B.z'\cap {\hat\cone}^\complement\Bigr)\bigcup\{\exp(v)\sfh z'\in \mathsf B.z': \sfh z'\not\in Y_r\} \\ \bigcup \{\exp(v)\sfh z'\in \mathsf B.z': v\not\in\margI_{\sfh z',r}\}.
\end{multline*}

We now bound the measure of the three sets appearing on the right side of the above and obtain a contradiction 
with~\eqref{eq: BH Yr Er complement}. First note that since $\mathsf B.z'\not\in\mathcal B_{\rm exc}$, we have 
\be\label{eq: not in B exc use}
\mu_\cone(\mathsf B.z'\cap {\hat\cone}^\complement)\leq \delta^8\mu_\cone(\mathsf B.z').
\ee
Moreover, since $\boxH_{\mfsc'}.z'\in\mathcal B'_w$ for some $w\in F'$, we have 
$\mu_w(\boxH_{\mfsc'}.z'\cap Y_r^\complement)\leq \delta^2\mu_w(\boxH_{\mfsc'}.z')$, hence
\be\label{eq: use def of B'}
\mu_\cone(\{\exp(v)\sfh z'\in \mathsf B.z': \sfh z'\not\in Y_r\})\leq \adm^2\delta^2\mu_\cone(\mathsf B.z'),
\ee
Finally, in view of~\eqref{eq: Izr and Iz in 0.1mfsc}, for every $\sfh z'\in \boxH_{\mfsc'}.z'\cap Y_r$, we have 
\[
\#(\margI_{\sfh z',r}\cap B_\rfrak(0,\mfsc'))\geq (1-\delta^8)\cdot\Bigl(\#(\margI_{\sfh z'}\cap B_\rfrak(0,\mfsc'))\Bigr). 
\]
This and the definition of admissible measure again imply
\be\label{eq: Izr and Iz in 0.1mfsc for hz'}
\mu_\cone(\{\exp(v)\sfh z'\in \mathsf B.z': v\not\in\margI_{\sfh z',r}\})\leq \adm^2\delta^8\mu_\cone(\mathsf B.z').
\ee

Now~\eqref{eq: not in B exc use},~\eqref{eq: use def of B'} and~\eqref{eq: Izr and Iz in 0.1mfsc for hz'}, imply that 
\[
\mu_\cone(\mathsf B.z'\cap\cone_r^\complement)\leq \Bigl(\adm^2\delta^2+(\adm^2+1)\delta^8\Bigr)\mu_\cone(\mathsf B.z'),
\] 
which contradicts~\eqref{eq: BH Yr Er complement} provided that $\delta$ is small enough. 

The proof is complete. 
\qed

\section{Improving the dimension}\label{sec: improve dim}
In this section, we will state and begin the proof of Proposition~\ref{propos: imp dim main}.
The proof is based on an inductive scheme, and relies on results in \S\ref{sec: convex comb} 
and \S\ref{sec: MF and IG}; it will occupy this section as well as~\S\ref{sec: induction} and \S\ref{sec: proof main prop}.

Fix a small parameter $0<\vare<1$ and a large parameter $t$ for the rest of this section as well as~\S\ref{sec: induction} and \S\ref{sec: proof main prop} --- in our applications, $\vare$ will depend on $\mixexp$ in~\eqref{eq:actual-mixing-intro} and $t$ will be chosen $\asymp\log R$ where $R$ is as in Theorem~\ref{thm:main}. 

Put $\ell=\vare t/100$. 
We will also fix a parameter $0<\kappa\leq\vare/10^6$, and put $\beta=e^{-\kappa t}$ and $\eta^2=\beta$, see Proposition~\ref{propos: imp dim main}. We also recall that  $0.9<\alpha<1$.

Let $\sigma$ denote the uniform measure on $\boxHs_{\beta+100\beta^2}$, where for any $\delta>0$, 
\[
\boxHs_{\delta}=\{u_s^-:|s|\leq {\delta}\}\cdot\{a_\tau: |\tau|\leq \delta\}.
\] 
 
For all $d>0$, define $\rwm_d$ by 
$\int\varphi\diff\!\rwm_d=\ave\varphi(a_{d}\uvk)\uvkd$ for any $\varphi\in C_c(H)$.
Recall from~\eqref{eq: def mu tn...t1} that 
\[
\mu_{t,\ell, n}=\rwm_{\ell}\conv\cdots\conv\nu_{\ell}\conv\sigma\conv\rwm_{t}
\]
where $\rwm_\ell$ appears $n$ times in the above expression.

\begin{propos}\label{propos: imp dim main}
Let $x_1\in X$, and assume that Proposition~\ref{prop:closing lemma intro}(2) does not hold for the point $x_1$, and parameters $D\geq 10$ and $t$. Let  
\[
d_{1}=100\lceil\tfrac{4D-3}{2\vare}\rceil,\quad d_2=d_{1}-\lceil\tfrac{10^4}{\sqrt\vare}\rceil,\quad\text{and}\quad\kappa = 10^{-6}d_1^{-2};
\] 
as before, we put $\beta=e^{-\kappa t}$ and $\eta^2=\beta$. 

Let $r_1\in I(x_1)$ and put $x_2=a_{8t}u_{r_1}x_1$, see Proposition~\ref{prop:closing lemma intro}(1). 
For every $d_2\leq d\leq d_1$, there is a collection $\Xi_d=\{\cone_{d,i}: 1\leq i\leq N_d\}$
of sets 
\[
\cone_{d,i}=\coneH.\{\exp(w)y_{d,i}: w\in F_{d,i}\}\subset X_\eta,
\] 
with $F_{d,i}\subset B_\rfrak(0,\beta)$, and $(\adl_{d,i},\adm_{d,i})$-admissible measures $\mu_{\cone_{d,i}}$, see~\S\ref{sec: cone and mu cone}, where $\adm_{d,i}$ depend on $d_1$ and $X$, so that both of the following hold: 
\begin{enumerate}
\item Let $\mfsc=\nuni^{-\sqrt\vare t}$. 
Let $d_2\leq d\leq d_1$, and let $1\leq i\leq N_d$. Then for all $w\in F_{d,i}$ and all $z=h\exp(w)y_{d,i}\in\cone_{d,i}$ with $h\in\overline{\coneH\setminus\partial_{10\mfsc}\coneH}$, both of the following hold:
\begin{align}
\label{eq: improve dim regularity}&\#\Bigl(B_\rfrak(w,4\mfsc\,\inj(y_{d,i}))\!\cap\! F_{d,i}\Bigr)\geq \nuni^{-\vare t}\!\!\sup_{w'\in F_{d,i}}\!\!\#\Bigl(B_\rfrak(w',4\mfsc\,\inj(y_{d,i}))\!\cap\! F_{d,i}\Bigr)\\
\label{eq:improve dim energy est}&\mfm_{\cone_{d,i}, \mfsc, \trct}(e,z)\leq \nuni^{\vare t}\noI_{\cone_{d,i},\mfsc}(e,z)\quad\text{where $\trct\leq \nuni^{0.01\vare t}$}
\end{align}

\medskip
\item For every $\varphi\in C_c^{\infty}(X)$, all $\tau\leq d_1\ell$ and $|s|\leq 2$, we have
\begin{multline}\label{eq: nud1 and nud1-d}
\biggl|\int \varphi(a_\tau u_shx_2)\diff\!\mu_{t,\ell, d_1}(h)-\sum_{d,i}c_{d,i}\int \varphi(a_\tau u_sz)\diff\!\rwm_\ell^{(d_{1}-d)}\conv\mu_{\cone_{d,i}}(z)\biggr|\\
\ll \Lip(\varphi)\beta^{\ref{k: bootstrap beta exp}}
\end{multline}
where $c_{d,i}\geq0$ and $\sum_{d,i}c_{d,i}=1-O(\beta^{\ref{k: bootstrap beta exp}})$, $\Lip(\varphi)$ 
is the Lipschitz norm of $\varphi$, and $\ref{k: bootstrap beta exp}$ and the implied constants depend on $X$.
\end{enumerate}
\end{propos}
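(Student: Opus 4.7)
The plan is to prove Proposition~\ref{propos: imp dim main} by induction on $n = 0, 1, \ldots, d_1$, maintaining at each stage a convex-combination decomposition of the measure $\mu_{t,\ell,n}\cdot\delta_{x_2}$ into $(\adl_n, \adm_n)$-admissible measures on cones $\widetilde\cone_j^{(n)}$, together with a running upper bound $\Upsilon_n$ on the Margulis function $\mfm_{\widetilde\cone_j^{(n)}, \mfsc, \trct}(e, \cdot)$. The base case $n = 0$ comes from Proposition~\ref{prop:closing lemma intro}(1)(c): since its part~(2) is assumed to fail for $x_1$, the function $f_t$ is bounded by $\nuni^{Dt}$ on $\coneH_t.x_2$, which (after rewriting in the localized form used in~\S\ref{sec: MF and IG}) yields $\Upsilon_0 \lesssim \nuni^{Dt}$; Lemma~\ref{lem: convex comb 1} provides the initial admissible decomposition of $\sigma\conv\rwm_t \cdot \delta_{x_2}$.

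The inductive step combines Lemma~\ref{lem: truncated MF main estimate} with Lemma~\ref{lem: convex comb 2}. For each open cone $\widetilde\cone_j^{(n)}$, Lemma~\ref{lem: truncated MF main estimate} gives, for $r$ outside an exceptional set of measure $\ll \nuni^{-\kappa^2 t/4}$, the new bound $\Upsilon_{n+1}\lesssim L_1 \nuni^{-\alpha \ell}\Upsilon_n^{1+8\kappa} + \nuni^{2\alpha \ell}\noI$. With $\ell = \vare t/100$ and $\kappa\le 10^{-6}/d_1$, the factor $\Upsilon_n^{8\kappa}$ stays at most $\nuni^{\alpha\ell/10}$ throughout the iteration, so each step either decreases $\Upsilon_n$ by a factor of at least $\nuni^{-\alpha\ell/2}$ or lets the noise term $\nuni^{2\alpha\ell}\noI$ take over. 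Lemma~\ref{lem: convex comb 2}, applied on the set of $r$'s that are simultaneously good for the Margulis-function and convex-combination steps, then decomposes $a_\ell u_r\mu_{\widetilde\cone_j^{(n)}}$ into a new family of admissible measures, advancing the induction up to $O(\beta^\star)$ error per step. Whenever the updated Margulis function of a new cone satisfies $\mfm\le \nuni^{\vare t}\noI$ and the current step $n$ lies in $J$, I would declare the cone \emph{finalized}: apply the regularization Lemma~\ref{lem: regular tree decomposition} (together with Lemma~\ref{lem: regular tree and local eg}, which preserves the energy bound up to a factor $\beta^{-O(1)}$ absorbed into $\nuni^{\vare t}$), record it as $\cone_{n,i}$ with weight $c_{n,i}$, and stop iterating it, so that it contributes to~\eqref{eq: nud1 and nud1-d} via $\rwm_\ell^{(d_1-n)}\conv \mu_{\cone_{n,i}}$.

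Starting from $\Upsilon_0 \approx \nuni^{Dt}$, the geometric decay $\nuni^{-\alpha\ell/2} = \nuni^{-\alpha\vare t/200}$ forces $\Upsilon_n$ below the noise floor within at most $\sim 200 D/(\alpha\vare) \le d_2$ iterations, so finalization occurs throughout the window $J = [d_2, d_1]$ and the total finalized mass by step $d_1$ is $1 - O(d_1\nuni^{-\kappa^2 t/64} + d_1\beta^\star) = 1 - O(\beta^{\ref{k: bootstrap beta exp}})$. The principal obstacle, and the point where careful bookkeeping is required, is the compounding of three error sources over $d_1$ iterations: the $\nuni^{-\kappa^2 t/4}$ measure-loss from Lemma~\ref{lem: truncated MF main estimate}, the convex-combination error from Lemma~\ref{lem: convex comb 2}, and the recursive growth of the admissibility constants $\adm_n$ from Lemma~\ref{lem: mu cone is admissible 2}. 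Since $d_1$ depends only on $D$ and $\vare$ (and not on $t$), and Lemma~\ref{lem: mu cone is admissible 2} shows $\adm_n$ grows only in terms of $\adm_{n-1}$ and $X$, the admissibility constants at the end are bounded in terms of $d_1$ and $X$ alone, and both of the other errors remain $\ll \beta^{\ref{k: bootstrap beta exp}}$; verifying that the local regularity produced via Lemma~\ref{lem: F energy est assuming mfm energy estimate} is genuinely preserved under the convex combination step of Lemma~\ref{lem: convex comb 2} is the technical heart that makes the scheme go through.
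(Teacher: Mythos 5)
Your overall architecture is the same as the paper's (base case from Proposition~\ref{prop:closing lemma intro}(1) plus Lemma~\ref{lem: convex comb 1}, then an iteration alternating Lemma~\ref{lem: truncated MF main estimate} with Lemma~\ref{lem: convex comb 2}, stopping a cone once its Margulis function is dominated by $\nuni^{\vare t}\noI$ within the window $[d_2,d_1]$), but the step you dispose of in one line --- ``finalization occurs throughout the window'' --- is where the real work lies, and your justification does not carry it. The stopping criterion is a \emph{relative} condition comparing the running bound to the noise level of the \emph{current} cone, and both sides move at every step: the offspring construction and trimming shrink the sets $F$ by factors $\beta^{O(1)}$, and the comparison between $\sup_w\noI$ and the values of $\noI$ at the points of the cone only holds up to an $e^{4\sqrt\vare t}$-type slack (cf.\ \eqref{eq: trivial lower bd}). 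Consequently a trajectory can meet the criterion \emph{before} step $d_2$ (which is not recorded), be reset --- the new running bound becomes a multiple of $\nuni^{\vare t/2}\sup\noI$ of the old cone, as in \eqref{eq: define mfbd n+1 2}, which can exceed $\nuni^{\vare t}\noI$ of the subsequent cones --- and then need on the order of $\vare^{-1/2}$ further steps of $e^{-\ell/2}$-decay to beat the $(\eta\mfsc)^{-\alpha}e^{4\sqrt\vare t}$ slack before it can be declared final again. Your count $\sim 200D/(\alpha\vare)\le d_2$ only bounds the \emph{first} time the bound reaches the noise scale; it neither shows that every trajectory satisfies the criterion at some $d\in[d_2,d_1]$, nor explains why the window length $d_1-d_2=\lceil 10^4\vare^{-1/2}\rceil$ is the right one (it is chosen exactly so that $\ell(d_1-d_2)\gtrsim 100\sqrt\vare\,t$ beats these slacks). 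This is the content of the paper's Lemma~\ref{lem: every Xi has future}, whose two-case analysis (in particular the case of a trajectory that was sub-threshold before $d_2$ and rose again) is missing from your proposal.

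A second, smaller but genuine issue is the maintenance of the regularity hypotheses along the iteration. Lemma~\ref{lem: truncated MF main estimate} requires the input set to satisfy \eqref{eq: mfsc regular} with $c\ge\nuni^{-\kappa^2t/4}$, and the dichotomy you invoke (and the uniform comparison of $\mfm$ with $\noI$ over interior points, needed both for the update rule and for \eqref{eq:improve dim energy est} at finalization) rests on the dyadic regularity \eqref{eq: regular tree'} of the current $F$ via Lemmas~\ref{lem: regular sets initial dim} and~\ref{lem: dimension improve 2}. These properties are \emph{not} preserved by the offspring construction of Lemma~\ref{lem: convex comb 2}; the paper restores them by re-applying Lemma~\ref{lem: regular tree decomposition} after \emph{every} step (the passage from the sets $\mathsf B_n^{\zeta_0}$ to $\mathsf A_n^{\zeta_0}$ in \S\ref{sec: induction}), discarding a $\beta^{1/4}$-fraction each time, rather than verifying any preservation. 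Your plan, which regularizes only at the moment of finalization and hopes that ``local regularity is genuinely preserved under the convex combination step,'' would leave the hypotheses of Lemma~\ref{lem: truncated MF main estimate} unverified at the very steps where you need to apply it; the fix is to re-regularize at each step, as the paper does, and to track the resulting $\beta^{O(1)}$ losses in the weights $c_{d,i}$.
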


As it was mentioned, the proof is based on an inductive scheme. 
The base case relies on Proposition~\ref{prop:closing lemma intro}(1) and Lemma~\ref{lem: convex comb 1}. 
Indeed, combining Proposition~\ref{prop:closing lemma intro}(1) and Lemma~\ref{lem: convex comb 1}, the measure 
$(\sigma\conv\rwm_t).x_2$ (up to an exponentially small error) can be written as $\sum c_i\mu_{\cone_i}$ 
where $\mu_{\cone_i}$ is an admissible measure for all $i$, and 
\[
\mfm_{\cone_{i}, \mfsc, 1}(e,z)\leq e^{Dt}\qquad\text{for all $i$ and all $z\in\cone_{i}$}.
\]  
This will serve as the base case of the induction.
We will then combine Lemma~\ref{lem: convex comb 2} and Lemma~\ref{lem: truncated MF main estimate} to inductively improve this dimension while obtaining convex combinations similar to the expressions appearing in~\eqref{eq: nud1 and nud1-d}. 
For technical reasons, Lemma~\ref{lem: regular tree decomposition} will be applied 
after every step to ensure regularity of the sets $F$ which are used to define sets $\cone$ (again, we are allowed to drop subsets of $F$ with exponentially small density). 

We now turn to the details of the argument, beginning with some general facts. 
In the next three lemmas, let 
\[
\cone=\coneH.\{\exp(w)y: w\in F\}\subset X_\eta
\]
where $F\subset B_\rfrak(0,\beta)$. 

\begin{lemma}\label{lem: rfrak is invariant}
Let $z\in\cone$, and write $z=\sfh\exp(w)y$ for some $w\in F$ and $\sfh\in\coneH$. Then 
\be\label{eq: rfrak is invariant}
4\noI_{\cone, 2\scmf}(e,\exp(w)y)\geq \noI_{\cone,\scmf}(e,z).
\ee
In particular, there exists some $w_0\in F$ so that 
\be\label{eq: rfrak is invariant sup}
4\noI_{\cone, 2\scmf}(e,\exp(w_0)y)\geq \sup_z\noI_{\cone,\scmf}(e,z). 
\ee
\end{lemma}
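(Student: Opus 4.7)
The plan is to construct a near-injection $v \mapsto \tilde v$ from $\margI_{\cone,\scmf}(e,z)$ into $\margI_{\cone,2\scmf}(e,\exp(w)y)$; the factor of $4$ in \eqref{eq: rfrak is invariant} will absorb the slight mismatch between the injectivity-radius normalizations on the two sides. The underlying geometric content is that $z$ and $\exp(w)y$ lie on the same local $H$-orbit in $\cone$, so the transversal sheets of $\cone$ passing near them are, up to an $\Ad(\sfh)$-conjugation and a negligible $H$-correction, indexed by the same subset of $F$.

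Given $v \in \margI_{\cone,\scmf}(e,z)$, write $\exp(v)z = \sfh'\exp(w')y$ with $\sfh' \in \coneH$ and $w'\in F$, and set $\tilde v := \Ad(\sfh^{-1})v$. A direct manipulation using $\exp(v)\sfh = \sfh\exp(\Ad(\sfh^{-1})v)$ yields
\[
\exp(\tilde v)\exp(w)y \;=\; \sfh^{-1}\exp(v)\sfh\exp(w)y \;=\; \sfh^{-1}\exp(v)z \;=\; (\sfh^{-1}\sfh')\exp(w')y.
\]
Applying Lemma~\ref{lem: BCH} to the product $\exp(w')\exp(-w) = \sfh_0 \exp(v_0)$ and invoking the uniqueness of the local $H \cdot \exp(\rfrak)$ decomposition (Lemma~\ref{lem:dist-sheet}) forces $\sfh = \sfh'\sfh_0$, hence $\sfh^{-1}\sfh' = \sfh_0^{-1}$ with $\|\sfh_0^{-1}-I\| \ll \beta\|w'-w\| \ll \beta\scmf\inj(y)$. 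This bound is so small that $\sfh_0^{-1}$ sits inside $\coneH = \boxHs_\beta \cdot \{u_r:|r|\leq\eta\}$, and therefore $\exp(\tilde v)\exp(w)y \in \cone$.

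For the norm constraint, $\sfh \in \coneH$ gives $\|\Ad(\sfh^{-1})\| \leq 1 + O(\eta)$, so $\|\tilde v\| < (1+O(\eta))\scmf\inj(z)$. Since $y \in X_{2\eta}$ and both $z$ and $\exp(w)y$ lie a uniformly small $H$-distance from $y$, a standard comparison gives $\inj(z),\inj(\exp(w)y) \in [\inj(y)-O(\eta),\inj(y)+O(\eta)]$; for $\beta,\eta$ small this yields $\|\tilde v\| < 2\scmf\inj(\exp(w)y)$. Thus $\tilde v \in \margI_{\cone,2\scmf}(e,\exp(w)y)$, and since $v \mapsto \tilde v$ is a linear injection, $\#\margI_{\cone,\scmf}(e,z) \leq \#\margI_{\cone,2\scmf}(e,\exp(w)y)$. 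Multiplying through by $(\scmf\inj(z))^{-\alpha}$ and absorbing the factor $2^\alpha(\inj(\exp(w)y)/\inj(z))^\alpha \leq 4^\alpha \leq 4$ produces \eqref{eq: rfrak is invariant}.

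For the supremum statement \eqref{eq: rfrak is invariant sup}, choose $z_n \in \cone$ with $\noI_{\cone,\scmf}(e,z_n) \to \sup_z \noI_{\cone,\scmf}(e,z)$ and decompose $z_n = \sfh_n\exp(w_n)y$; since $F$ is finite some $w_0 \in F$ appears as $w_n$ for infinitely many $n$, and applying the first part along that subsequence yields a valid $w_0$. The only real technical point in the argument is checking that the residual element $\sfh_0^{-1}$ produced by Lemma~\ref{lem: BCH} genuinely lies in $\coneH$; this is a comfortable inequality in the regime $\eta^2=\beta$, $\scmf \leq 1$, and so presents no serious obstacle.
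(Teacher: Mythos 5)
Your argument is correct and follows essentially the same route as the paper's proof: both produce an injection from $\margI_{\cone,\scmf}(e,z)$ into $\margI_{\cone,2\scmf}(e,\exp(w)y)$ using Lemma~\ref{lem: BCH} together with the local injectivity statement of Lemma~\ref{lem:dist-sheet} (your $\tilde v=\Ad(\sfh^{-1})v$ is exactly the vector $w_v$ the paper extracts from the BCH decomposition of $\exp(\hat w_v)\exp(-w)$, the two being identified by the same uniqueness argument), and then convert the factor $2$ in the scale plus the comparison $\inj(z)\asymp\inj(\exp(w)y)$ into the constant $4$. Your subsequence treatment of the supremum in \eqref{eq: rfrak is invariant sup} is, if anything, slightly more careful than the paper's, which simply takes a point realizing the supremum.
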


\begin{proof} 
The proof is similar to the proof of Lemma~\ref{lem: 1-1 maps between Iz and subsets of F}.
Let us write $z'=\exp(w)y$, i.e., $z=\sfh z'$. 
Let $v\in\margI_{\cone,\scmf}(e,z)$. Then $\exp(v)z\in\cone$, hence, there exist $\hat w_{v}\in F$ and $\hat\sfh\in\coneH$ so that 
\be\label{eq:zhat zbar w vw}
\begin{aligned}
\exp(v)z&=\hat\sfh\exp(\hat w_v)y=\hat\sfh\exp(\hat w_v)\exp(-w)\exp(w)y\\
&=\hat\sfh\exp(\hat w_v)\exp(-w)z'=\hat\sfh \sfh_v\exp(w_v)z';
\end{aligned}
\ee
for some $\sfh_v\in H$ and $w_v\in\rfrak$ so that 
\be\label{eq: size v-w h-w}
\text{$0.5\|\hat w_v-w\|\leq \|w_v\|\leq 2\|\hat w_v-w\|\quad$ and $\quad\|\sfh_v-I\|\leq \ref{E:BCH}\beta\|w_v\|$},
\ee 
see Lemma~\ref{lem: BCH}. 

Using Lemma~\ref{lem:dist-sheet}, recall that $\scmf\,\inj(z)\leq 0.01\eta$, we conclude that
\be\label{eq:hw-vw-yet-again}
\|w_v\|\leq 2\|v\|\leq  2\scmf\,\inj(z). 
\ee 
This and~\eqref{eq: size v-w h-w} imply that $\|\sfh_v-I\|\ll \scmf\,\inj(z)\leq \beta^2$ where the implied constant is absolute; 
hence, $\sfh_v^{\pm1}\in\coneH$.  
Moreover, comparing the second and the last term in~\eqref{eq:zhat zbar w vw}, it follows that
$\sfh_v\exp(w_v) z'=\exp(\hat w_v)y$. Since $\hat w_v\in F$,  
\[
\exp(w_v)z'=\sfh_v^{-1}\exp(\hat w_v)y\in\cone.
\]
We deduce that $w_v\in \margI_{\cone,2\scmf}(e,z')$. 
Furthermore, note that the map $v\mapsto w_v$ is injective. Hence, 
\be\label{eq: rfrak is invariant'}
\#\margI_{\cone,2\scmf}(e,z')\geq \#\margI_{\cone,\scmf}(e,z).
\ee

Recall now that $0.5\inj(z')\leq \inj( z)\leq 2\inj(z')$, and 
\[
\noI_{\cone,\scmf}(h,z)=\Bigl(\#\margI_{\cone,\scmf}(h,z)\Bigr)\cdot (\scmf\,\inj(hz))^{-\alpha},
\] 
see~\eqref{eq: def noI h,z}. Therefore,~\eqref{eq: rfrak is invariant} follows from~\eqref{eq: rfrak is invariant'}. 

\medskip

To see the second claim, let $\hat z$ be so that $\sup_z\noI_{\cone,\scmf}(e,z)=\noI_{\cone,\scmf}(e,\hat z)$. 
By the definition of $\cone$, there exists some $w\in F$ and $\sfh\in\coneH$ so that $\hat z=\sfh\exp(w)y$. The claim thus follows 
from~\eqref{eq: rfrak is invariant}. 
\end{proof}

\subsection*{Cubes and the function $\noI$}\label{sec: use regular tree}
Recall that $\cone=\{\exp(w)y: w\in F\}\subset X_\eta$. 
For a parameter $\mathsf M$ and every $k\in\bbn$, we let $\mathcal Q_{\mathsf Mk}$ denote the collection of $2^{-\mathsf Mk}$-cubes, see~\S\ref{sec: regularization lemmas}. Let $k_0\in\bbn$ be so that  
\[
2^{-k_0-1}\leq \mfsc\,\inj(y)<2^{-k_0}.
\]

\begin{lemma}\label{lem: regular sets initial dim}
Let $k_1>k_0$ be an integer, and assume that for every integer 
$k_0-10\leq k\leq k_1$, there exists $\tau_{k}>0$ so that, for all $Q\in\mathcal Q_{\mathsf Mk}$ 
\be\label{eq: regular tree'}
\text{either}\quad2^{\mathsf M(\tau_{k}-2)}\leq \#(F\cap Q)\leq 2^{\mathsf M\tau_{k}}\quad\text{or}\quad F\cap Q=\emptyset.
\ee
Let $z=h\exp(w)y\in\cone$ where $h\in \overline{\coneH\setminus\partial_{10\mfsc}\coneH}$.
Then 
\[
\ref{C: M and dimension}^{-1}\!\!\sup_{w'\in F}\noI_{\cone,\mfsc}(e,\exp(w')y)\leq \noI_{\cone,\mfsc}(e,z)\\
\leq\ref{C: M and dimension}\!\!\sup_{w'\in F}\noI_{\cone,\mfsc}(e,\exp(w')y) 
\]
where $\constE\label{C: M and dimension}$ depends on $\mathsf M$.

Furthermore, 
\[
\noI_{\cone,\mfsc}(e,z)
\leq\ref{C: M and dimension}\!\!\sup_{w'\in F}\noI_{\cone,\mfsc}(e,\exp(w')y) 
\]
holds true for all $z\in\cone$. 
\end{lemma}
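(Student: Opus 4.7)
The plan is to reduce the quantity $\noI_{\cone,\mfsc}(e,z)$ to a count of lattice points $F$ inside a ball, then apply the regular tree hypothesis \eqref{eq: regular tree'} to show the count is essentially independent of the center (up to constants depending only on $\mathsf M$ and $\dim \rfrak$).

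First I would handle the two-sided estimate. Write $z = h\exp(w)y$ with $h \in \overline{\coneH \setminus \partial_{10\mfsc}\coneH}$, so $z \in \hat\cone$. This is exactly the hypothesis needed to invoke Lemma~\ref{lem: 1-1 maps between Iz and subsets of F} with $m = 1$, giving
\[
\#\Bigl(F \cap B_\rfrak(w, \tfrac{1}{2}\mfsc\,\inj(z))\Bigr) \leq \#\margI_{\cone,\mfsc}(e,z) \leq \#\Bigl(F \cap B_\rfrak(w, 2\mfsc\,\inj(z))\Bigr).
\]
Since $h$ lies in the $\beta$-ball around the identity in $H$, we have $\tfrac12\inj(y) \leq \inj(z) \leq 2\inj(y)$, hence $\mfsc\,\inj(z) \asymp \mfsc\,\inj(y) \asymp 2^{-k_0}$. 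Now fix the integer $k_*$ for which $2^{-\mathsf M k_*}$ is comparable to $\mfsc\,\inj(y)$; by hypothesis $k_* \in [k_0 - 10, k_1]$, and the balls $B_\rfrak(w, \tfrac{1}{2}\mfsc\,\inj(z))$ and $B_\rfrak(w, 2\mfsc\,\inj(z))$ can be sandwiched between a single cube and $O_{\mathsf M}(1)$ cubes from $\mathcal Q_{\mathsf M k_*}$ (adjusting $k_*$ by $\pm O(1)$ if needed). The regularity \eqref{eq: regular tree'} then gives
\[
2^{\mathsf M(\tau_{k_*} - 2)} \;\ll_{\mathsf M}\; \#\margI_{\cone,\mfsc}(e,z) \;\ll_{\mathsf M}\; 2^{\mathsf M \tau_{k_*}}.
\]

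The same estimate applied to any $z' = \exp(w')y$ (for $w' \in F$, taking $h = e$, which is trivially interior) yields the identical sandwich, with the same $k_*$. Multiplying by $(\mfsc\,\inj(z))^{-\alpha} \asymp (\mfsc\,\inj(y))^{-\alpha}$ produces $\noI_{\cone,\mfsc}(e,z) \asymp_{\mathsf M, \dim} 2^{\mathsf M \tau_{k_*}} (\mfsc\,\inj(y))^{-\alpha}$, uniformly in $w'$ (and in $z$ satisfying the interior condition). Taking supremum over $w' \in F$ establishes the two-sided inequality with $\ref{C: M and dimension}$ depending only on $\mathsf M$ and the dimension.

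For the ``furthermore'' clause, let $z \in \cone$ be arbitrary and write $z = \sfh \exp(w)y$ for some $\sfh \in \coneH$ and $w \in F$. By Lemma~\ref{lem: rfrak is invariant},
\[
\noI_{\cone,\mfsc}(e,z) \leq 4\,\noI_{\cone, 2\mfsc}(e, \exp(w)y).
\]
Since $\exp(w)y$ is an interior point (corresponding to $h = e$), the argument of the previous paragraph applies verbatim at the doubled scale $2\mfsc$, yielding $\noI_{\cone, 2\mfsc}(e, \exp(w)y) \ll_{\mathsf M} 2^{\mathsf M\tau_{k_*-1}}(\mfsc\,\inj(y))^{-\alpha}$, which by the already-established sandwich is $\ll_{\mathsf M} \sup_{w'} \noI_{\cone,\mfsc}(e, \exp(w')y)$. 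The only real obstacle here is the bookkeeping of the scale mismatch between $\mfsc\,\inj(y)$ and the dyadic grid $\{2^{-\mathsf M k}\}$, but this costs only an $O_{\mathsf M}(1)$ factor that is absorbed into $\ref{C: M and dimension}$.
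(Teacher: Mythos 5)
Your proposal is correct and follows essentially the paper's own route: both arguments convert $\noI_{\cone,\mfsc}$ into counts of $F$ in balls of radius $\asymp\mfsc\,\inj(y)$ via Lemma~\ref{lem: 1-1 maps between Iz and subsets of F} and then use the dyadic regularity \eqref{eq: regular tree'} to make those counts independent of the center, with Lemma~\ref{lem: rfrak is invariant} covering points that need not satisfy the interiority condition (the paper runs the entire upper bound, and hence the furthermore clause, through Lemma~\ref{lem: rfrak is invariant} plus a $2\mfsc\to\mfsc$ scale change, whereas you use it only for the furthermore clause). One point to make explicit: the upper-bound half of Lemma~\ref{lem: 1-1 maps between Iz and subsets of F} is stated for $\margI_{\cone',m\mfsc}$ with the shrunk set $\cone'=\bigl(\overline{\coneH\setminus\partial_{5\mfsc}\coneH}\bigr).\{\exp(w'')y:w''\in F\}$, and since $\cone'\subset\cone$ this does not bound $\#\margI_{\cone,\mfsc}(e,z)$ by inclusion; the fix is that for $h\in\overline{\coneH\setminus\partial_{10\mfsc}\coneH}$ (or $h=e$ at the base points) the $H$-component of any sheet of $\cone$ within $\mfsc\,\inj(z)$ of $z$ differs from $h$ by $O(\beta\mfsc)$, so $\margI_{\cone,\mfsc}(e,z)=\margI_{\cone',\mfsc}(e,z)$ and your sandwich is legitimate --- the paper records exactly this identification for the points $\exp(w')y$, and one sentence of yours should do the same.
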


\begin{proof}
The upper bound is a consequence of Lemma~\ref{lem: rfrak is invariant}. Indeed by that lemma, we have 
\[
\noI_{\cone,\scmf}(e,z)\leq 4\sup_{w'}\noI_{\cone,2\scmf}(e,\exp(w')y).
\]
To replace $2\mfsc$ with $\mfsc$, note that~\eqref{eq: regular tree'} and the definition of $\noI$ imply 
\[
\sup_{w'}\noI_{\cone,2\scmf}(e,\exp(w')y)\ll\sup_{w'}\noI_{\cone,\scmf}(e,\exp(w')y)
\] 
where the implied constant depends on $\mathsf M$. 
The upper bound estimate for $\noI_{\cone,\scmf}(e,z)$ follows. 

As the proof shows, we did not use the condition on $h$ for this bound, thus the final claim follows.  

We now turn to the proof of the lower bound. 
Since $h\in \overline{\coneH\setminus\partial_{10\mfsc}\coneH}$, Lemma~\ref{lem: 1-1 maps between Iz and subsets of F} applied with $z$, $w$ and $\delta=\scmf\inj(z)$, implies 
\[
\#\Bigl(F\cap B_\rfrak(w,\scmf\inj(z)/2)\Bigr)\leq \#\margI_{\cone,\scmf}(e,z)
\]
This and the definition of $\noI$ yield the following: 
\begin{equation}\label{eq:mfht margI large Fi proof}
\begin{aligned}
\noI_{\cone,\scmf}(e,z)&=(\#\margI_{\cone,\scmf}(e,z))\cdot(\scmf\,\inj(z))^{-\alpha}\\
&\geq \#(F\cap B_\rfrak(w',\scmf\,\inj(z)/2))\cdot(\scmf\,\inj(z))^{-\alpha}\\
&\gg \sup_{w'}(\#F\cap B_\rfrak(w',4\scmf\,\inj(z)))\cdot(\scmf\,\inj(z))^{-\alpha},
\end{aligned}
\end{equation}
where we used~\eqref{eq: regular tree'} in the last inequality.

Note that for all $w'\in F$, we have $\inj(z)/2\leq \inj(\exp(w')y)\leq 2\inj(z)$. Moreover, 
$\margI_{\cone,\mfsc}(e,\exp(w')y)=\margI_{\cone',\mfsc}(e,\exp(w')y)$ where 
\[
\cone'=\Bigl(\overline{\coneH\setminus\partial_{5\mfsc}\coneH}\Bigr)\cdot\{\exp(w'')y: w''\in F\}.
\]
Thus~\eqref{eq:mfht margI large Fi proof} and Lemma~\ref{lem: 1-1 maps between Iz and subsets of F}, applied with $\delta=\mfsc\,\inj(\exp(w')y)$, imply 
\[
\noI_{\cone,\mfsc}(e,z)\gg\sup_{w'}\noI_{\cone,\mfsc}(e,\exp(w')y).
\]
The proof is complete. 
\end{proof}

We also record the following lemma which is similar to Lemma~\ref{eq: numb Fj 1}.

\begin{lemma}\label{lem:noI-tri-bd}
There exists $\constE\label{E:noI}>0$ so that the following holds. 
Let $0<\scmf\leq \beta^6$. Then for every $m\in\bbn$ with $\nuni^{m}\leq \scmf^{-1/2}$, 
every $|\rel|\leq 2$, and every $z\in\cone\subset X_\eta$, we have 
\[
\noI_{\cone,\scmf}(a_mu_\rel,z)\leq \ref{E:noI}\eta^{-3}\nuni^{4m}\cdot \Bigl(\sup_{z'}\noI_{\cone,\scmf}(e,z')\Bigr).
\]
\end{lemma}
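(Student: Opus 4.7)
The plan is to rewrite $\margI_{\cone,\scmf}(a_mu_\rel,z)$ via an $\Ad$-change of variables as a count of vectors in an \emph{ellipsoid} inside $\rfrak$ (not a ball), cover that ellipsoid by Euclidean balls of the natural scale $\scmf\inj(z)$, and bound each ball in terms of $\#\margI_{\cone,\scmf}(e,z')$ for a suitably chosen $z'$.

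Write $h=a_mu_\rel$. Since $\rfrak$ is $\Ad(H)$-invariant, $w\mapsto v=\Ad(h)^{-1}w$ is a bijection of $\rfrak$ that sends $\margI_{\cone,\scmf}(h,z)$ onto
\[
\Xi:=\{v\in\rfrak: \|\Ad(h)v\|<\scmf\inj(hz),\ \exp(v)z\in\cone\}.
\]
Decomposing $\rfrak$ under the $A$-weight spaces, the condition $\|\Ad(h)v\|<\scmf\inj(hz)$ defines an ellipsoid whose three semi-axes are comparable to $\nuni^{-m}\scmf\inj(hz)$, $\scmf\inj(hz)$, and $\nuni^{m}\scmf\inj(hz)$. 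Using $\inj(hz)\le 1/100$ and $\inj(z)\ge \eta/2$, a direct count shows this ellipsoid can be covered by
\[
N=O(\nuni^m\eta^{-3})
\]
Euclidean balls of radius $\scmf\inj(z)/2$.

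Fix such a ball $B(v^\circ,\scmf\inj(z)/2)$ meeting $\Xi$, pick some $v^\circ\in\Xi$ inside it, and set $z'=\exp(v^\circ)z\in\cone$ (so that $\inj(z')\asymp\inj(z)\asymp\eta$). For any other $v\in B(v^\circ,\scmf\inj(z)/2)\cap\Xi$, Lemma~\ref{lem: BCH} writes $\exp(v)\exp(-v^\circ)=h_\ast\exp(v_\ast)$ with $\|v_\ast\|\le\tfrac32\|v-v^\circ\|\le \scmf\inj(z')$ and $\|h_\ast-I\|\le \ref{E:BCH}\beta\|v_\ast\|$; the hypotheses $\scmf\le\beta^6$ and $\nuni^m\le\scmf^{-1/2}$ ensure $\|v\|,\|v^\circ\|\le \nuni^m\scmf\cdot 0.01\ll \beta$, so Lemma~\ref{lem: BCH} is applicable. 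Hence $\exp(v_\ast)z'=h_\ast^{-1}\exp(v)z$ lies in a slight enlargement of $\cone$, and the injectivity argument from the proof of Lemma~\ref{lem: rfrak is invariant} identifies such $v$'s with elements of the corresponding $\margI$-set based at $z'$ at scale $O(\scmf)$. A final application of Lemma~\ref{lem: 1-1 maps between Iz and subsets of F} (applied at a bounded scale ratio, since $\inj(z')$ and $\inj(z)$ are comparable) bounds this count by $O(1)\sup_{z''\in\cone}\#\margI_{\cone,\scmf}(e,z'')$.

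Summing over the $N$ balls yields $\#\margI_{\cone,\scmf}(h,z)\le C\nuni^m\eta^{-3}\sup_{z''}\#\margI_{\cone,\scmf}(e,z'')$. Multiplying by $(\scmf\inj(hz))^{-\alpha}$ and using $(\inj(z'')/\inj(hz))^\alpha\le C\nuni^{2m}$ (valid because $\alpha<1$ and $\inj(hz)\ge \inj(z)\nuni^{-2m}/C$) converts this counting estimate into the asserted inequality for $\noI_{\cone,\scmf}$, with room to spare in the exponent of $\nuni^m$. The main technical obstacle is tracking the BCH correction $h_\ast$ so that it can be absorbed by a controlled enlargement of $\coneH$; the quantitative hypotheses $\scmf\le\beta^6$ and $\nuni^m\le\scmf^{-1/2}$ (which together force $\nuni^m\scmf\le\beta^3\ll\beta$) are tailored precisely for this purpose.
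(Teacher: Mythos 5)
Your proof is correct and follows essentially the same route as the paper's: conjugate the displacement vectors by $\Ad(h)^{-1}$, cover the resulting region by balls of radius $\asymp\scmf\,\inj(z)$, bound the count in each ball by $\sup_{z''}\#\margI_{\cone,\scmf}(e,z'')$ via the BCH/injectivity correspondence with $F$ (Lemmas~\ref{lem: BCH}, \ref{lem: rfrak is invariant}, \ref{lem: 1-1 maps between Iz and subsets of F}), and absorb the $(\scmf\inj(hz))^{-\alpha}$ factor using $\inj(hz)\gg \nuni^{-m}\inj(z)$. The only (cosmetic) difference is that you cover the exact ellipsoid directly around moved base points $z'=\exp(v^\circ)z$, whereas the paper first enlarges to the single larger-scale set $\margI_{\cone,\mfsc'/\eta}(e,z)$ and passes through one $F$-ball before covering; both yield the stated bound with room to spare in the exponent.
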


\begin{proof}
Let $z\in\cone$, and let $w\in\margI_{\cone,\scmf}(a_mu_r,z)$. Then $\exp(w)a_mu_rz\in a_mu_r\cone$
which implies $\exp(\Ad(a_{-m}u_{-r})w)z\in\cone$. Moreover, we have 
\[
\|\Ad(a_{-m}u_{-r})w\|\leq 100\nuni^m \inj(a_mu_rz) \scmf\leq 100\nuni^m \scmf=:\scmf'.
\] 
Since $\inj(z)\geq \eta$, we get that $\inj(z)\scmf'/\eta\geq \scmf'$, hence 
\[
\Ad(a_{-m}u_{-r})w\in \margI_{\cone, b'/\eta}(e,z).
\] 
This and the fact that $\nuni^m\mfsc\leq \mfsc^{1/2}\leq \beta^3$ imply: 
$w\mapsto \Ad(a_{-m}u_{-r})w$ is an injection map from $\margI_{\cone,\scmf}(a_mu_r,z)$ into $\margI_{\cone, \mfsc'/\eta}(e,z)$.

Now arguing as in the proof of Lemma~\ref{lem: rfrak is invariant}, with $\mfsc$ replaced by $\mfsc'/\eta\leq \beta^2$, we conclude that 
\[
 \#\margI_{\cone, \mfsc'/\eta}(e,z)\leq \# \Bigl(F\cap B_\rfrak(w_z,2\mfsc'/\eta)\Bigr),
\]
for some $w_z\in F$. 
Note moreover that $B_\rfrak(w_z, \mfsc'/\eta)$ may be covered with $\ll \eta^{-3}\nuni^{3m}$ boxes of the form $B_{\rfrak}(w_i,\mfsc/2)$; thus
\begin{align*}
\#\margI_{\cone,\scmf}(a_mu_r,z)\leq \#\margI_{\cone, \mfsc'/\eta}(e,z)&\leq \# \Bigl(F\cap B_\rfrak(w_z,2\mfsc'/\eta)\Bigr)\\
&\ll\eta^{-3}\nuni^{3m}\cdot \sup_{w'}\#\Bigl(F\cap B_\rfrak(w',\mfsc/2)\Bigr)\\
&\ll\eta^{-3}\nuni^{3m}\cdot \Bigl(\sup_{z'}\#\margI_{\cone, \mfsc}(e,z')\Bigr),
\end{align*}
see also Lemma~\ref{lem: 1-1 maps between Iz and subsets of F} for the last inequality. 

Since $\inj(a_mu_rz)\gg e^{-m}\inj(z)$, 
\[
\noI_{\cone,\mfsc}(h,z)=(\inj(hz)\mfsc)^{-\alpha}\cdot\Bigl(\max\{\#\margI_{\cone,\mfsc}(h,z),1\}\Bigr),
\] 
and $0<\alpha\leq1$, the lemma follows. 
\end{proof}

\subsection{The dimension improvement lemma}\label{sec: new cone dim}
As it was done before, let $\kappa=10^{-6}d_1^{-2}\leq \vare/10^6$.
Suppose 
\[
\cone_{\rm old}=\coneH.\{\exp(w)y_0: w\in F_{\rm old}\}
\] 
satisfies the conditions in Lemma~\ref{lem: truncated MF main estimate}. 
That is, $F_{\rm old}\subset B_\rfrak(0,\beta)$ is finite with $\#F_{\rm old}\geq \nuni^{9t/10}$, and
\begin{multline}\label{eq: mfsc regular recall in dim impr}
\#\Bigl(F_{\rm old}\cap B_\rfrak(w, \mfsc\,\inj(y_0)/10^3)\Bigr)\geq \\
e^{-\kappa^2t/4}\cdot \Bigl(\#(F_{\rm old}\cap B_\rfrak(w, \mfsc\,\inj(y_0)/10))\Bigr).
\end{multline} 
Moreover, for all $z\in\cone_{\rm old}$, we have 
\be\label{eq: mfm init estimate'} 
\mfm_{\cone_{\rm old}, \scmf,\trct}(e,z)\leq\mfbd,
\ee
where $\mfbd\geq1$, $1\leq \trct\leq\nuni^{\vare t/100}$ and $\mfsc=\nuni^{-\sqrt\vare t}$. 

Let $\mu_{\cone_{\rm old}}$ be an admissible measure on $\cone_{\rm old}$. 
By Lemma~\ref{lem: truncated MF main estimate}, there exists ${\gdh_{\mu_{\cone_{\rm old}}}}\subset [0,1]$ with 
\[
\Bigl|[0,1]\setminus {\gdh_{\mu_{\cone_{\rm old}}}}\Bigr|\ll \nuni^{-\kappa^2t/4},
\]
and for every $r\in {\gdh_{\mu_{\cone_{\rm old}}}}$, there exists a subset 
\be\label{eq: cone Old,r is in hat cone Old}
\cone_{{\rm old},r}\subset\hat\cone_{\rm old}=\bigcup\hat\coneH.\{\exp(w)y_0: w\in F\}, \qquad (\hat\coneH=\overline{\coneH\setminus \partial_{10\mfsc}\coneH})
\ee
satisfying $\mu_{\cone_{\rm old}}(\cone_{\rm old}\setminus\cone_{{\rm old},r})\ll\nuni^{-\kappa^2t/64}$ 
and the following: for all $z'\in\cone_{{\rm old},r}$,   
\be\label{eq: mfm estimate use}
\mfm_{\hat\cone_{\rm old},\scmf,\trct_1}(a_{\ell} u_r,z')\leq 
200\nuni^{-\alpha\ell}L_1 \egbd^{1+8\kappa}+200\nuni^{2\alpha\ell}\noI_{\hat\cone_{\rm old},\scmf}(a_{\ell} u_r,z');
\ee 
where $L_1=L\kappa^{-L}$ and $\trct_1=\trct+L_1\mfbd^{\kappa}$, and we assume $\egbd$ is large enough compared to $\kappa$, see also Theorem~\ref{thm: proj thm}. 

\medskip

Let us put $\hat{\hat\coneH}=\overline{\coneH\setminus \partial_{10^3\beta^2}\coneH}$, and define 
\be\label{eq: define hat hat 1}
\hat{\hat\cone}_{\rm old}=\hat{\hat\coneH}.\{\exp(w)y_0: w\in F_{\rm old}\}.
\ee

The following lemma is an important ingredient in the proof of Lemma~\ref{lem: dimension improve 2}; the latter will be applied in every step of our inductive argument. 
Roughly speaking, in view of \eqref{eq: mfm estimate use}, Lemma~\ref{lem: dimension improves for new cone} implies
that for $r\in \gdh_{\mu_{\cone_{\rm old}}}$ offsprings of 
$a_\ell u_r\cone_{\rm old}$ (see \S\ref{sec: conv comb induction})
have improved coarse dimension, possibly after slight trimming.

Let us recall the notation 
\[
\umt_\ell^H=\{u^-_s:|s|\leq e^{-\ell}\beta^2\}\cdot\{a_\tau:|\tau|\leq \beta^2\}\cdot U_\eta.
\] 

\begin{lemma}\label{lem: dimension improves for new cone}
With the above notation, let $r\in \gdh_{\mu_{\cone_{\rm old}}}$. 
Let $(\cone',\mu_{\cone'})$, 
\[
\cone'=\coneH.\{\exp(w)y: w\in F'\}\subset X_\eta,
\]
be an offspring of $a_\ell u_r\mu_{\cone_{\rm old}}$, see \eqref{eq: def conej 2} and~\eqref{eq: def mu conj 2}. Recall from~\eqref{eq: interior mu r-i} that 
\[
\text{$\umt^H_\ell.\exp(w)y\subset a_\ell u_r.\cone_{\rm old}\quad$ for all $w\in F'$},
\]
Let $F\subset F'$ satisfy that for all $w\in F$, we have 
\be\label{eq: leaves meet cone-r}
\umt^H_\ell.\exp(w)y\cap \Bigl(a_\ell u_r.(\cone_{{\rm old},r}\cap \hat{\hat\cone}_{\rm old})\Bigr)\neq\emptyset,
\ee
and put $\cone=\coneH.\{\exp(w)y: w\in F\}$ and $\mu_\cone=\frac{1}{\mu_{\cone'}(\cone)}\mu_{\cone'}|_{\cone}$.

Then for every $z=h\exp(w)y\in\cone$ (where $h\in\coneH$ and $w\in F$), we have 
\be\label{eq: used to be sublemma}
\mfm_{\cone,\scmf,\trct_1}(e,z)\leq 2\mfm_{\hat\cone_{\rm old},\scmf, \trct_1}(a_\ell u_r,z_0)+10\noI_{\cone,\scmf}(e,z)
\ee 
where $z_0\in\cone_{{\rm old},r}\cap \hat{\hat\cone}_{\rm old}$ is so that $a_\ell u_r z_0=\sfh_0\exp(w)y$ for some $\sfh_0\in\umt_\ell^H$. 
\end{lemma}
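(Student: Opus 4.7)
The plan is to mimic the reasoning of Lemma~\ref{lem: margIz energy est}: build a near-injection from $\margI_{\cone,\scmf}(e,z)$ into $\margI_{\hat\cone_{\rm old},\scmf}(a_\ell u_r,z_0)$ whose distortion is controlled by a factor $2$, use it to pull back an optimal truncation witnessing $\mfm_{\hat\cone_{\rm old},\scmf,\trct_1}(a_\ell u_r,z_0)$, and absorb the boundary/non-matched vectors into $\noI_{\cone,\scmf}(e,z)$.

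First I would set up coordinates connecting $z$ and $a_\ell u_r z_0$. Write $z=h\exp(w)y$ with $h\in\coneH$ and $a_\ell u_r z_0=\sfh_0\exp(w)y$ with $\sfh_0\in\umt^H_\ell$, so $z=(h\sfh_0^{-1})\,a_\ell u_r z_0$ and $h\sfh_0^{-1}\in\boxH_{10\beta}$. Given $v\in\margI_{\cone,\scmf}(e,z)$, the point $\exp(v)z$ lies in $\cone\subset\cone'$, so it can be written as $h^\sharp\exp(w^\sharp)y$ with $h^\sharp\in\coneH$ and $w^\sharp\in F\subset F'$. Using the definition of an offspring, in particular the inclusion $\umt^H_\ell.\exp(w^\sharp)y\subset a_\ell u_r\coneH_{w_q',p}\exp(w_q')y_i'\subset a_\ell u_r\cone_{\rm old}$, we can rewrite $\exp(v)z=a_\ell u_r z^\sharp$ for some $z^\sharp\in\cone_{\rm old}$. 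Combined with $z=(h\sfh_0^{-1})a_\ell u_r z_0$, Lemma~\ref{lem: BCH} then yields
\[
\exp(v)(h\sfh_0^{-1})a_\ell u_r z_0=\sfh'\exp(v^*)a_\ell u_r z_0
\]
with $\|v^*\|\le 2\|v\|$ and $\|\sfh'-h\sfh_0^{-1}\|\ll\beta\|v\|$, and then Lemma~\ref{lem:dist-sheet} forces a unique choice of pair $(\sfh',v^*)$. Thus the map $\Phi\colon v\mapsto v^*$ is well-defined and injective.

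Second, I would verify that $\exp(v^*)a_\ell u_r z_0$ actually lies in $a_\ell u_r\hat\cone_{\rm old}$, so that $v^*\in\margI_{\hat\cone_{\rm old},\scmf}(a_\ell u_r,z_0)$. This is the step where the hypothesis $z_0\in\hat{\hat\cone}_{\rm old}$ is used: since $z_0=\sfh_1\exp(w_1)y_0$ with $\sfh_1\in\overline{\coneH\setminus\partial_{10^3\beta^2}\coneH}$ and the perturbation coming from $\sfh',\sfh_0,h$ sits at scale $O(\beta^2)\ll 10\mfsc$, the resulting new element of $\coneH$ in the decomposition of $\exp(v^*)a_\ell u_r z_0$ still lies away from $\partial_{10\mfsc}\coneH$, provided $\|v\|<\scmf\inj(z)$. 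There will be, however, an exceptional set $\margI_{\mathrm{exc}}\subset\margI_{\cone,\scmf}(e,z)$ of vectors whose image $v^*$ drifts outside $\hat\cone_{\rm old}$ or whose norm exceeds $\scmf\,\inj(a_\ell u_r z_0)$; a short bookkeeping argument bounds $\#\margI_{\mathrm{exc}}$ by a constant times $\#\margI_{\cone,\scmf}(e,z)\cdot(\scmf\inj(z))^{-\alpha}\cdot(\scmf\inj(z))^\alpha\lesssim\noI_{\cone,\scmf}(e,z)$ up to an absolute multiplicative constant.

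Third, I would combine these to estimate $\mfm_{\cone,\scmf,\trct_1}(e,z)$. If $\#\margI_{\cone,\scmf}(e,z)\le\trct_1$ the desired bound holds trivially by the definition of $\mfm$ together with Lemma~\ref{lem: 1-1 maps between Iz and subsets of F}. Otherwise, let $I^*\subset \margI_{\hat\cone_{\rm old},\scmf}(a_\ell u_r,z_0)$ realize $\mfm_{\hat\cone_{\rm old},\scmf,\trct_1}(a_\ell u_r,z_0)$, so $\#(\margI_{\hat\cone_{\rm old},\scmf}(a_\ell u_r,z_0)\setminus I^*)\le\trct_1$. Set $J=\Phi^{-1}(I^*)\setminus\margI_{\mathrm{exc}}$; by the injectivity of $\Phi$ the complementary set $\margI_{\cone,\scmf}(e,z)\setminus J$ has cardinality at most $\trct_1+\#\margI_{\mathrm{exc}}$. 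Since $\|v^*\|\le 2\|v\|$ gives $\|v\|^{-\alpha}\le 2^\alpha\|v^*\|^{-\alpha}\le 2\|v^*\|^{-\alpha}$, I get
\[
\sum_{v\in J}\|v\|^{-\alpha}\le 2\sum_{v^*\in I^*}\|v^*\|^{-\alpha}=2\,\mfm_{\hat\cone_{\rm old},\scmf,\trct_1}(a_\ell u_r,z_0),
\]
and the remaining truncated sum of at most $\#\margI_{\mathrm{exc}}$ terms of size $(\scmf\inj(z))^{-\alpha}$ is absorbed into $10\,\noI_{\cone,\scmf}(e,z)$. Summing the two contributions, after possibly enlarging the truncation from $\trct_1$ to $\trct_1$ (the bound for $\mfm_{\cone,\scmf,\trct_1}$ is chosen as the infimum over such truncations), yields~\eqref{eq: used to be sublemma}.

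The main obstacle I anticipate is the careful boundary bookkeeping in the second step: one must show that the small distortions introduced by passing from $z$ to $a_\ell u_r z_0$, and by the BCH-type commutation, do not push the image leaf outside $\hat\cone_{\rm old}$, except on a set of vectors $v$ that can be counted against $\#\margI_{\cone,\scmf}(e,z)$ and therefore controlled by $\noI_{\cone,\scmf}(e,z)$. The hypotheses $z_0\in\hat{\hat\cone}_{\rm old}$ (giving a margin $10^3\beta^2\gg\mfsc^2$) and $\sfh_0\in\umt^H_\ell$ (giving stable distortions of size $\le\beta^2 e^{-\ell}$) are precisely tailored to make this absorption work.
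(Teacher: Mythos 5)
Your skeleton (an injection $v\mapsto\hat w_v$ with $\|\hat w_v\|\le 2\|v\|$ from return vectors of $\cone$ at $z$ into return vectors of the old cone at $a_\ell u_r z_0$, pulling back an optimal truncation, and absorbing what is left into $\noI_{\cone,\scmf}(e,z)$) is indeed the paper's approach, but the crucial verification step is not correctly secured, and as written it has a genuine gap. The point that must be proved is that \emph{every} $v\in\margI_{\cone,\scmf}(e,z)$ with $\|v\|\le 0.1\,\scmf\,\inj(z)$ has image $\hat w_v\in\margI_{\hat\cone_{\rm old},\scmf}(a_\ell u_r,z_0)$, i.e.\ $\exp(\hat w_v)a_\ell u_r z_0\in a_\ell u_r\hat\cone_{\rm old}$. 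You attribute the needed boundary margin to the hypothesis $z_0\in\hat{\hat\cone}_{\rm old}$, but the relevant leaf is not the one through $z_0$: up to small $H$-corrections, $\exp(\hat w_v)a_\ell u_r z_0$ equals $a_\ell u_r z_v$, where $z_v\in\cone_{\rm old}$ lies on the leaf of the old cone corresponding to $w_v\in F$, and it is the position of $z_v$ relative to $\partial\coneH$ that decides membership in $a_\ell u_r\hat\cone_{\rm old}$. Your construction only gives $z_v\in\cone_{\rm old}$ (from the offspring inclusion), which allows $z_v$ to sit inside the collar $\partial_{10\mfsc}\coneH$, in which case no smallness of the corrections rescues the conclusion. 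This is exactly what hypothesis~\eqref{eq: leaves meet cone-r} is for: since $w_v\in F$, its $\umt^H_\ell$-leaf meets $a_\ell u_r(\cone_{{\rm old},r}\cap\hat{\hat\cone}_{\rm old})$, so one may \emph{choose} $z_v\in\cone_{{\rm old},r}\cap\hat{\hat\cone}_{\rm old}$; then $\boxH_{100\beta^2}.z_v\subset\hat\cone_{\rm old}$ and, via Lemma~\ref{lem:commutation-rel}\,(2), the product of the corrections (elements of $\umt^H_\ell$ and an element of size $\ll\beta\|\hat w_v\|\le\beta^2\nuni^{-\ell}$) keeps the point inside $a_\ell u_r\hat\cone_{\rm old}$. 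Note also your scale comparison is backwards: $\mfsc=\nuni^{-\sqrt\vare t}\ll\beta^2=\nuni^{-2\kappa t}$, so ``$O(\beta^2)\ll 10\mfsc$'' is false; the correct mechanism is that the $\sim\beta^2$ margin of $\hat{\hat\coneH}$ dominates both the $10\mfsc$-collar of $\hat\coneH$ and the perturbations.

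The second problem is your treatment of the exceptional set. Bounding the \emph{cardinality} of $\margI_{\rm exc}$ does not bound $\sum_{v\in\margI_{\rm exc}}\|v\|^{-\alpha}$: if a vector of very small norm had an image drifting outside $\hat\cone_{\rm old}$, its contribution $\|v\|^{-\alpha}$ would vastly exceed $(\scmf\inj(z))^{-\alpha}$ and could not be absorbed into $10\,\noI_{\cone,\scmf}(e,z)$; moreover $\mfm_{\cone,\scmf,\trct_1}$ only permits discarding $\trct_1$ vectors, so you cannot additionally drop $\#\margI_{\rm exc}$ of them. The paper avoids this entirely: at the outset it discards only the \emph{large} vectors ($\|v\|>0.1\,\scmf\,\inj(z)$), whose total contribution is trivially at most $10\,\noI_{\cone,\scmf}(e,z)$, and then shows (using~\eqref{eq: leaves meet cone-r} as above) that \emph{all} remaining small vectors map injectively into $\margI_{\hat\cone_{\rm old},\scmf}(a_\ell u_r,z_0)$, so the only further discarded set is $\Phi^{-1}$ of the $\trct_1$-truncation, of size at most $\trct_1$. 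Your proof becomes correct once you replace the $z_0$-margin argument by the choice of $z_v$ in $\cone_{{\rm old},r}\cap\hat{\hat\cone}_{\rm old}$ and eliminate, rather than estimate, the small-vector exceptional set.
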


\begin{proof}
Note that  
\be\label{eq: estimate mfm ind}
\mfm_{\cone,\scmf,\trct_1}(e,z)\leq \sum_{I}\|v\|^{-\alpha}+10\noI_{\cone,\scmf}(e,z)
\ee
for every $I\subset\bigl\{v\in\margI_{\cone, \mfsc}(e,z): \|v\|\leq 0.1\mfsc\,\inj(z)\bigr\}$ with 
$\#(\margI_{\cone,\mfsc}(e,z)\setminus I)\leq \trct_1$.
We will relate the first term on the right side of~\eqref{eq: estimate mfm ind} to 
\[
\mfm_{\hat\cone_{\rm old},\scmf, \trct_1}(a_\ell u_r,z_0).
\]

Let us begin with the following computation. 
Let $w\neq w_1\in F$, and let $z_1\in\cone_{{\rm old},r}\cap \hat{\hat\cone}_{\rm old}$ and 
$\sfh_1\in\umt_\ell^H$ be so that $\sfh_1\exp(w_1)y=a_\ell u_r z_1$. Then 
\be\label{eq: zi hi wi} 
\begin{aligned}
a_\ell u_rz_1&=\sfh_1\exp(w_1)y=\sfh_1\exp(w_1)\exp(-w)\sfh_0^{-1}a_\ell u_rz_0\\
&=\sfh_1\sfh_0^{-1}\exp(\Ad(\sfh_0)w_1)\exp(-\Ad(\sfh_0)w)a_\ell u_rz_0\\
&=\sfh_1\sfh_0^{-1}\hat\sfh\exp(\hat w)a_\ell u_rz_0
\end{aligned}
\ee
where $\hat\sfh\in H$ and $\hat w\in\rfrak$, moreover, by Lemma~\ref{lem: BCH}, we have
\begin{subequations}
\begin{align}
\label{eq:hi-hat-wi-1}&\|\hat\sfh-I\|\leq \ref{E:BCH}\beta\|\hat w\|\qquad\text{and}\\
\label{eq:hi-hat-wi-2}&0.5\|\Ad(\sfh_0)(w-w_1)\|\leq \|\hat w\|\leq2\|\Ad(\sfh_0)(w-w_1)\|.
\end{align}
\end{subequations}

Let $v\in I_{\cone,\scmf}(e,z)$. Then $z,\exp(v)z\in\cone$, and we have 
\[
z=h\exp(w)y=h\sfh_0^{-1}a_\ell u_rz_0=\bar ha_\ell u_rz_0,
\]
where $\bar h\in\boxH_{1.1\eta}$, recall that $z_0\in \cone_{{\rm old},r}\cap \hat{\hat\cone}_{\rm old}$.
Similarly, since $\exp(v)z\in\cone$, there exist $w_v\in F$ and $z_v\in \cone_{{\rm old},r}\cap \hat{\hat\cone}_{\rm old}$ 
so that 
\[
\text{$\exp(v)z=h'\exp(w_v)y\quad$ and $\quad\sfh_v\exp(w_v)y=a_\ell u_rz_v$.}
\] 
Thus, $\exp(v)z= \bar h_va_\ell u_rz_v$ where $z_v\in \cone_{{\rm old},r}\cap \hat{\hat\cone}_{\rm old}$ and $\bar h_v\in\boxH_{1.1\eta}$. Hence  
\be\label{eq: z-v and z0 relation 0}
\begin{aligned}
a_\ell u_rz_v=\bar h_v^{-1}\exp(v)z&=\bar h_v^{-1}\exp(v)\bar ha_\ell u_rz_0\\
&=\bar h_v^{-1}\bar h\exp(\Ad(\bar h^{-1})v)a_\ell u_rz_0
\end{aligned}
\ee

Applying~\eqref{eq: zi hi wi} with $w_1=w_v$ and $\sfh_1=\sfh_v$, we get that  
\be\label{eq: z-v and z0 relation}
a_\ell u_rz_v=\sfh_{v}\sfh_0^{-1}\hat\sfh\exp(\hat w_v)a_\ell u_rz_0
\ee
where $\hat \sfh$ and $\hat w_v$ satisfy~\eqref{eq:hi-hat-wi-1} and~\eqref{eq:hi-hat-wi-2}, and $\sfh_0,\sfh_v\in\umt_\ell^H$. 

Since $(\hat h,\hat w)\mapsto\hat h\exp(\hat w)a_\ell u_rz_0$ is injective over $\boxH_{10\eta}\times B_\rfrak(0,10\eta)$, we conclude from~\eqref{eq: z-v and z0 relation} and~\eqref{eq: z-v and z0 relation 0} that $\hat w_v=\Ad(\bar h^{-1})v$. In particular, 
\be\label{eq:wij-sublemma}
\|\hat w_v\|\leq 2\|v\|.
\ee

Moreover, the elements $\{z_v: v\in\margI_{\cone,\mfsc}(e,z)\}$ belong to different local $H$-orbits, thus $v\mapsto \hat w_v$ is well-defined and one-to-one.

Recall that $\cone\subset X_{\eta}$. Assume now that 
$\|v\|\leq \scmf\,\inj(z)/10$, then $\|\hat w_v\|\leq \scmf\,\inj(z)/5$.
This estimate and~\eqref{eq:hi-hat-wi-1} imply that
\[
\|\hat\sfh-I\|\leq \ref{E:BCH}\beta\|\hat w_{v}\| \ll \scmf\beta\leq \beta^2\nuni^{-\ell}; 
\]
recall that $\scmf\leq e^{-\sqrt\vare t}$ and $e^{-\ell},\beta\geq e^{-0.01\vare t}$.
 
In view of the definition of $\hat{\hat\cone}_{\rm old}$ in~\eqref{eq: define hat hat 1}, we have 
\[
\text{$z_v\in \cone_{{\rm old},r}\cap \hat{\hat\cone}_{\rm old}\quad$ implies $\quad\boxH_{100\beta^2}.z_v\subset \hat\cone_{\rm old}$.}
\]
Moreover, $\sfh_0,\sfh_v\in\umt^H_\ell$ and $\|\hat\sfh-I\|\leq \beta^2\nuni^{-\ell}$. 
Therefore, 
\[
\hat\sfh^{-1}\sfh_0\sfh_v^{-1}a_\ell u_rz_v\in a_\ell u_r\hat\cone_{\rm old},
\]
see~\eqref{eq:well-rd-tau-1}. This and~\eqref{eq: z-v and z0 relation} yield 
\[
\exp(\hat w_v)a_\ell u_rz_0=\hat\sfh^{-1}\sfh_0\sfh_v^{-1}a_\ell u_rz_v\in a_\ell u_r\hat\cone_{\rm old}.
\]
This and $\|\hat w_{v}\|\leq \scmf\,\inj(z)/5<\scmf\,\inj(a_\ell u_rz_0)$ imply $\hat w_{v}\in \margI_{\hat\cone_{\rm old},\scmf}(a_\ell u_r,z_0)$. 

Let now $J\subset \margI_{\hat\cone_{\rm old},\scmf}(a_\ell u_r,z_0)$ be a subset so that  
\[
\text{$\#\margI_{\hat\cone_{\rm old},\scmf}(a_\ell u_r,z_0)\setminus J=\trct_1\quad$ and }\quad\mfht_{\hat\cone_{\rm old},\scmf,\trct_1}(a_\ell u_r,z_0)=\sum_{\hat w\in J}\|\hat w\|^{-\alpha}.
\]
Put $I_J=\{v\in\margI_{\cone, \mfsc}(e,z): \|v\|\leq 0.1\mfsc\,\inj(z), \hat w_v\not\in J\}$. 
Since $v\mapsto \hat w_{v}$ is a one-to-one map from $I_J$ into 
$\margI_{\hat\cone_{\rm old},\scmf}(a_\ell u_r,z_0)\setminus J$, we have $\#I_J\leq \trct_1$. 
Applying~\eqref{eq: estimate mfm ind} with 
\[
I=\{v\in\margI_{\cone, \mfsc}(e,z): \|v\|\leq 0.1\mfsc\}\setminus I_J,
\] 
and using~\eqref{eq:wij-sublemma}, we conclude 
\[
\mfm_{\cone,\scmf,\trct_1}(e,z)\leq 2\mfht_{\hat\cone_{\rm old},\scmf,\trct_1}(a_\ell u_r,z_0)+10\noI_{\cone,\scmf}(e,z),
\]
as it was claimed in the lemma. 
\end{proof}

Recall that $d_1=100\lceil (4D-3)/2\vare\rceil$, $\kappa=10^{-6}d_1^{-2}$, and $\ell=0.01\vare t$, see Proposition~\ref{propos: imp dim main}. From this point to the end of this section, we will assume 
\be\label{eq: kappa is small}
\mfbd^{\kappa d_1}\leq \nuni^{\ell/100}.
\ee
Moreover, we assume that $t$ is large enough so that 
\be\label{eq: compare t and kappa}
L_1=L\kappa^{-L}<e^{\ell/100}
\ee 
--- this amounts to $t\gg |\log\vare|/\vare$, later we will choose $\vare$ to depend only on $\mixexp$ in~\eqref{eq: exp mixing}. 
We will also assume that $0.9<\alpha<1$. 

The following lemma combines the results in this section, 
and will be applied in every step of our inductive proof of Proposition~\ref{propos: imp dim main}. 

\begin{lemma}\label{lem: dimension improve 2}
Let the notation be as in Lemma~\ref{lem: dimension improves for new cone}. In particular,
\[
\text{$L_1=L\kappa^{-L}\quad$ and $\quad\trct_1=L_1\mfbd^{\kappa}\trct$.}
\] 
Assume further that~\eqref{eq: regular tree'} (with some parameter $\mathsf M$) holds true for $F_{\rm old}$. 

Let $w_0\in F_{\rm old}$ be so that 
\[
\noI_{\cone_{\rm old},\mfsc}(e,\exp(w_0)y_0)=\sup_{w'}\noI_{\cone_{\rm old},\mfsc}(e,\exp(w')y_0).
\]
Then we have the following.
\begin{enumerate}
\item If $\mfbd\geq \nuni^{\vare t/2}\noI_{\cone_{\rm old},\mfsc}(e,\exp(w_0)y_0)$, then 
\be\label{eq: moderate density}
\mfm_{\cone,\scmf,\trct_1}(e,z)\leq \nuni^{-0.6\ell}\mfbd+10\noI_{\cone,\scmf}(e,z)\quad\text{for all $z\in\cone$}.
\ee
\item If $\mfbd<\nuni^{\vare t/2}\noI_{\cone_{\rm old},\mfsc}(e,\exp(w_0)y_0)$, 
then both of the following hold
\begin{enumerate}
\item For every $\hat z=\hat h\exp(\hat w)y_0\in\cone_{\rm old}$ with $\hat h\in\overline{\coneH\setminus\partial_{10\mfsc}\coneH}$, we have
\be\label{eq: high density 1}
\mfm_{\cone_{\rm old},\scmf,\trct}(e,\hat z)\leq \nuni^{\vare t/2}\noI_{\cone_{\rm old},\mfsc}(e,\exp(w_0)y_0)\leq\ref{C: M and dimension}\nuni^{\vare t/2}\noI_{\cone_{\rm old},\mfsc}(e,\hat z)
\ee
where $\ref{C: M and dimension}$ is as in Lemma~\ref{lem: regular sets initial dim} (which depends on $\mathsf M$). 
\item For every $z\in\cone$, we have 
\be\label{eq: high density 2}
\mfm_{\cone,\scmf,\trct_1}(e,z)\leq \nuni^{-0.6\ell}\Bigl(\nuni^{{\vare t}/{2}}\!\cdot\noI_{\cone_{{\rm old},\mfsc}}(e,\exp(w_0)y_0)\Bigr)\!+10\noI_{\cone,\scmf}(e,z).
\ee
\end{enumerate}
\end{enumerate}
\end{lemma}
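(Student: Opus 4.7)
My plan is to chain Lemma~\ref{lem: dimension improves for new cone} with the estimate~\eqref{eq: mfm estimate use} to obtain a single master inequality, then bound the two resulting contributions using Lemmas~\ref{lem: rfrak is invariant},~\ref{lem: regular sets initial dim}, and~\ref{lem:noI-tri-bd}. Fix $z\in\cone$ and let $z_0\in\cone_{{\rm old},r}\cap\hat{\hat\cone}_{\rm old}$ be the point supplied by Lemma~\ref{lem: dimension improves for new cone}. Chaining that lemma with~\eqref{eq: mfm estimate use} produces
\[
\mfm_{\cone,\scmf,\trct_1}(e,z)\leq 400\nuni^{-\alpha\ell} L_1\mfbd^{1+8\kappa}+400\nuni^{2\alpha\ell}\noI_{\hat\cone_{\rm old},\scmf}(a_\ell u_r,z_0)+10\noI_{\cone,\scmf}(e,z).
\]
The first term is handled uniformly: using $\alpha\geq 0.9$,~\eqref{eq: kappa is small}, and~\eqref{eq: compare t and kappa},
\[
400 \nuni^{-\alpha\ell} L_1\mfbd^{1+8\kappa}\leq 400\,\nuni^{-0.9\ell+\ell/100+8\ell/100}\mfbd\leq \nuni^{-0.7\ell}\mfbd
\]
for $t$ large. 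The second term is where the two cases bifurcate.

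For the second term, since $\hat\cone_{\rm old}\subset\cone_{\rm old}$ we have $\noI_{\hat\cone_{\rm old},\scmf}\leq\noI_{\cone_{\rm old},\scmf}$. Apply Lemma~\ref{lem:noI-tri-bd} and then Lemma~\ref{lem: regular sets initial dim} (whose regularity hypothesis is exactly~\eqref{eq: regular tree'} assumed on $F_{\rm old}$) to obtain
\[
400\nuni^{2\alpha\ell}\noI_{\hat\cone_{\rm old},\scmf}(a_\ell u_r,z_0)\leq C\eta^{-3}\nuni^{6\ell}\noI_{\cone_{\rm old},\scmf}(e,\exp(w_0)y_0).
\]
With $\ell=0.01\vare t$ and $\kappa\leq\vare/10^6$ one has $\eta^{-3}\nuni^{6\ell}=\nuni^{6\ell+3\kappa t/2}\leq \nuni^{0.07\vare t}$.

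In Case~(1) the hypothesis gives $\noI_{\cone_{\rm old},\scmf}(e,\exp(w_0)y_0)\leq \nuni^{-\vare t/2}\mfbd$, so the second term is bounded by $\nuni^{-0.43\vare t}\mfbd$, which is much smaller than $\nuni^{-0.6\ell}\mfbd=\nuni^{-0.006\vare t}\mfbd$; combined with the first-term bound this proves~\eqref{eq: moderate density}. For Case~(2)(a), the first inequality is immediate from~\eqref{eq: mfm init estimate'} and the Case~(2) hypothesis on $\mfbd$, and the second inequality is exactly the lower bound in Lemma~\ref{lem: regular sets initial dim} applied to the point $\hat z$ (whose hypothesis $\hat h\in\overline{\coneH\setminus\partial_{10\mfsc}\coneH}$ is what the lemma requires). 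For Case~(2)(b), set $\mfbd':=\nuni^{\vare t/2}\noI_{\cone_{\rm old},\scmf}(e,\exp(w_0)y_0)$; since $\mfbd<\mfbd'$, the first term in the master inequality is bounded by $\nuni^{-0.7\ell}(\mfbd')^{1+8\kappa}\cdot\mfbd^{-8\kappa}\cdot\mfbd^{8\kappa}\leq\nuni^{-0.7\ell}\mfbd'\cdot L_1(\mfbd')^{8\kappa}$, and I check $(\mfbd')^{8\kappa}\leq \nuni^{0.08\ell}$ using $\mfbd'\leq C\trct\nuni^{\vare t/2}\mfbd\leq\nuni^{0.51\vare t}\mfbd$ together with the smallness of $\kappa$; the second-term bound becomes $\nuni^{-0.43\vare t}\mfbd'$, so both terms combine to give~\eqref{eq: high density 2}.

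The argument is essentially bookkeeping once the master inequality is set up, and the main (minor) obstacle will be verifying the exponent inequalities for the first term in Case~(2)(b), where one must show $(\mfbd')^{8\kappa}$ still satisfies the analogue of~\eqref{eq: kappa is small}. This amounts to checking $8\kappa\cdot 0.51\vare t\leq \ell/12$, which holds because $\kappa=10^{-6}/d_1$ is much smaller than $\vare^{-1}\ell=0.01 t$; the constant factor $400$ and $C^{8\kappa}$ are absorbed into $\nuni^{0.01\ell}$ for $t$ sufficiently large, which is consistent with~\eqref{eq: compare t and kappa}.
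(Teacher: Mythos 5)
Your proof is correct and follows essentially the same route as the paper: chain Lemma~\ref{lem: dimension improves for new cone} with~\eqref{eq: mfm estimate use}, absorb the $L_1\mfbd^{1+8\kappa}$ term via~\eqref{eq: kappa is small}--\eqref{eq: compare t and kappa}, control the shifted $\noI$ term through Lemma~\ref{lem:noI-tri-bd} together with Lemma~\ref{lem: regular sets initial dim}, and then split into the two cases exactly as the paper does. The only (harmless) deviation is your detour in Case~(2)(b), where you re-estimate the first term via $(\mfbd')^{1+8\kappa}$ and the unproved-but-true bound $\mfbd'\ll\trct\,\nuni^{\vare t/2}\mfbd$; this is unnecessary, since your own uniform bound $\nuni^{-0.7\ell}\mfbd$ combined with $\mfbd<\mfbd'$ already gives what is needed, which is how the paper argues.
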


\begin{proof}
Since~\eqref{eq: regular tree'} holds true for $F_{\rm old}$, Lemma \ref{lem: regular sets initial dim} is applicable with $\cone_{\rm old}$; we will utilize that lemma several times in the course of the proof.  

Let $z=h\exp(w)y\in\cone$, and let $z'\in\cone_{{\rm old},r}\cap \hat{\hat\cone}_{\rm old}$ be so that $a_\ell u_r z'=\sfh\exp(w)y$ for some $\sfh\in\umt_\ell^H$. 
By Lemma~\ref{lem: dimension improves for new cone}, we have 
\be\label{eq: dim improves for new cone use}
\mfm_{\cone,\scmf,\trct_1}(e,z)\leq 2\mfm_{\hat\cone_{\rm old},\scmf,\trct_1}(a_{\ell} u_r,z')+10\noI_{\cone,\scmf}(e,z).
\ee
Moreover, since $z'\in \cone_{{\rm old},r}$, we conclude from~\eqref{eq: mfm estimate use} that 
\be\label{eq: mfm estimate use to improve}
\mfm_{\hat\cone_{\rm old},\scmf,\trct_1}(a_{\ell} u_r,z')\leq 
200\nuni^{-\alpha\ell}L_1 \egbd^{1+8\kappa}+200\nuni^{2\alpha\ell}\noI_{\hat\cone_{\rm old},\scmf}(a_{\ell} u_r,z').
\ee

We give initial bounds for the two terms on the right side of~\eqref{eq: mfm estimate use to improve}. In view of~\eqref{eq: kappa is small} and~\eqref{eq: compare t and kappa}, we have  
\be\label{eq: 1st term in mfm use improve}
200\nuni^{-\alpha\ell}L_1 \egbd^{1+8\kappa}\leq e^{-0.7\ell}\egbd,
\ee
where we also used $0.9<\alpha<1$ and assumed $\ell=\vare t/100$ is large enough to account for the factor $200$. 
 
As for the second term, 
using the fact that $\hat\cone_{\rm old}\subset\cone_{\rm old}$, we obtain 
\be\label{eq: 2nd term in mfm use improve 1}
\begin{aligned}
200\nuni^{2\alpha\ell}\noI_{\hat\cone_{\rm old},\scmf}(a_{\ell} u_r,z')&\leq200\nuni^{2\alpha\ell}\noI_{\cone_{\rm old},\scmf}(a_{\ell} u_r,z')\\
&\leq200\ref{E:noI}\eta^{-3}\nuni^{6\ell}\cdot \sup_{z''}\noI_{\cone_{\rm old},\scmf}(e,z'')\\
&\leq e^{\vare t/10}\cdot \sup_{w'}\noI_{\cone_{\rm old},\scmf}(e,\exp(w')y_0);
\end{aligned}
\ee
we used Lemma~\ref{lem:noI-tri-bd} in the second inequality and 
used (the final claim in) Lemma~\ref{lem: regular sets initial dim} to replace $\sup_{z''}$ by $\sup_w$, we also used $\eta>e^{-0.01\ell}$ and assumed $t$ is large to account for the constants
$\ref{C: M and dimension}$ and $200\ref{E:noI}$.

We now begin the proof of the estimates in the lemma. Let us first assume 
\be\label{eq: moderate density assump}
\mfbd\geq \nuni^{\vare t/2}\cdot\noI_{\cone_{\rm old},\mfsc}(e,\exp(w_0)y_0),
\ee
where $\noI_{\cone_{\rm old},\mfsc}(e,\exp(w_0)y_0)=\sup_{w'}\noI_{\cone_{\rm old},\mfsc}(e,\exp(w')y_0)$, as in the statement of the lemma. 
Then~\eqref{eq: 2nd term in mfm use improve 1} and~\eqref{eq: moderate density assump} imply that 
\be\label{eq: 2nd term in mfm use improve}
\begin{aligned}
200\nuni^{2\alpha\ell}\noI_{\hat\cone_{\rm old},\scmf}(a_{\ell} u_r,z')&\leq e^{\vare t/10}\cdot \sup_{w'}\noI_{\cone_{\rm old},\scmf}(e,\exp(w')y_0) \\
&\leq  e^{\vare t/10}\cdot (e^{-\vare t/2}\mfbd) \leq \nuni^{-\ell}\mfbd
\end{aligned}
\ee
where we used $\ell=\vare t/100$.

Thus, combining~\eqref{eq: dim improves for new cone use},~\eqref{eq: mfm estimate use to improve},~\eqref{eq: 1st term in mfm use improve}, and~\eqref{eq: 2nd term in mfm use improve}, one gets    
\begin{align*}
\mfm_{\cone,\scmf,\trct_1}(e,z)&\leq 2\mfm_{\hat\cone_{\rm old},\scmf,\trct_1}(a_{\ell} u_r,z')+10\noI_{\cone,\scmf}(e,z)\\
&\leq e^{-0.7\ell}\egbd+\nuni^{-\ell}\mfbd+10\noI_{\cone,\scmf}(e,z)\\
&\leq e^{-0.6\ell}\mfbd+10\noI_{\cone,\scmf}(e,z).
\end{align*}
This establishes part~(1).

Let us now turn to the proof of part~(2). Therefore, we assume   
\be\label{eq: high density assump}
\mfbd< \nuni^{\vare t/2}\cdot\noI_{\cone_{\rm old},\mfsc}(e,\exp(w_0)y_0).
\ee
First note that by Lemma~\ref{lem: regular sets initial dim}, if $\hat z=\hat h\exp(\hat w)y_0\in\cone_{\rm old}$ 
where $\hat h\in\overline{\coneH\setminus\partial_{10\mfsc}\coneH}$, 
\be\label{eq: high density assump'}
\ref{C: M and dimension}^{-1}\noI_{\cone_{\rm old},\mfsc}(e,\exp(w_0)y_0)\leq \noI_{\cone_{\rm old},\mfsc}(e,\hat z)\leq \ref{C: M and dimension}\noI_{\cone_{\rm old},\mfsc}(e,\exp(w_0)y_0).
\ee

We conclude that  
\begin{align*}
\mfm_{\cone_{\rm old}, \scmf,\trct}(e,\hat z)\leq\mfbd&\leq \nuni^{\vare t/2}\cdot\noI_{\cone_{\rm old},\mfsc}(e,\exp(w_0)y_0)\\
&\leq \ref{C: M and dimension} \nuni^{\vare t/2}\cdot\noI_{\cone_{\rm old},\mfsc}(e,\hat z),
\end{align*}
where we used~\eqref{eq: mfm init estimate'} in the first inequality, used~\eqref{eq: high density assump} in the second inequality, and used~\eqref{eq: high density assump'} in the final inequality. This gives~\eqref{eq: high density 1}.

We now turn to the proof of~\eqref{eq: high density 2}. Recall from~\eqref{eq: 1st term in mfm use improve} and \eqref{eq: 2nd term in mfm use improve 1}, 
\begin{align*}
\mfm_{\hat\cone_{\rm old},\scmf,\trct_1}(a_{\ell} u_r,z')&\leq 
200\nuni^{-\alpha\ell}L_1 \egbd^{1+8\kappa}+200\nuni^{2\alpha\ell}\noI_{\hat\cone_{\rm old},\scmf}(a_{\ell} u_r,z')\\
&\leq e^{-0.7\ell}\mfbd+ e^{\vare t/10}\noI_{\hat\cone_{\rm old},\scmf}(e,\exp(w_0)y_0).
\end{align*}

In view of~\eqref{eq: high density assump} and since $\ell=\vare t/100$, we have 
\[
e^{-0.7\ell}\mfbd+ e^{\vare t/10}\noI_{\hat\cone_{\rm old},\scmf}(e,\exp(w_0)y)\leq e^{-0.6\ell}\Bigl(\nuni^{{\vare t}/{2}}\!\cdot\noI_{\cone_{{\rm old},\mfsc}}(e,\exp(w_0)y_0)\Bigr).
\]
Finally, using~\eqref{eq: dim improves for new cone use} and the above, we conclude that  
\begin{align*}
\mfm_{\cone,\scmf,\trct_1}(e,z)&\leq 2\mfm_{\hat\cone_{\rm old},\scmf,\trct_1}(a_{\ell} u_r,z')+10\noI_{\cone,\scmf}(e,z)\\
&\leq e^{-0.6\ell}\Bigl(\nuni^{{\vare t}/{2}}\!\cdot\noI_{\cone_{{\rm old},\mfsc}}(e,\exp(w_0)y_0)\Bigr)+10\noI_{\cone,\scmf}(e,z).
\end{align*}
The proof is complete. 
\end{proof}

\section{An inductive construction}\label{sec: induction}
As it was mentioned, the proof of Proposition~\ref{propos: imp dim main} is based on an inductive construction.
We will carry out this construction in this section and complete the proof of Proposition~\ref{propos: imp dim main} 
in the next section.

Recall that $0<\vare<1$ is a small parameter (in our application, $\vare$ will depend on $\ref{k:mixing}$, see~\eqref{eq: choose theta equi sec}) and $t>1$ is a large parameter (which will be chosen to be $\asymp\log R$ where $R$ is as in Theorem~\ref{thm:main}). Recall also that 
\be\label{eq: choose kappa 1}
\kappa = 10^{-6}d_1^{-1}\leq 10^{-6}\vare,
\ee 
where $d_1=100\lceil{(4D-3)}/(2\vare)\rceil$, see Proposition~\ref{propos: imp dim main}. 

Set $\mfsc=e^{-\sqrt\vare t}$, $\beta=e^{-\kappa t}$, and $\eta^2=\beta$.

From now until the end of \S\ref{sec: proof main prop}, we fix some $\mathsf M$ 
so that
\be\label{eq: condition on M 1}
\text{$2^{-{\mathsf M}}(D+1)< \kappa/100\quad$ and $\quad 6{\mathsf M}<2^{\kappa {\mathsf M}/100}$}.
\ee
That is, conditions in~\eqref{eq: condition on M} are satisfied with $\kappa=10^{-6}d_1^{-1}$ and $m_0=D$; note that $\kappa(D+1)\leq 10^{-6}\vare$. 
In particular, Lemma~\ref{lem: regular tree decomposition} is applicable with $\mathsf M$ and any $F\subset B_\rfrak(0,\beta)$ satisfying $e^{t/2}\leq\#F\leq e^{2t}$ and~\eqref{eq: tree dec eng bd} with $\egbd\leq e^{(D+1)t}$. This lemma will be applied, several times, in this section.

\subsection{Consequences of Proposition~\ref{prop:closing lemma intro}}\label{sec: initial dim} 
Let $x_1$, $t$, and $D$ be as in Proposition~\ref{propos: imp dim main}. 
By our assumption, Proposition~\ref{prop:closing lemma intro}(1) holds for these choices. Recall that $x_2=a_{8t}u_{r_1}x_1$
where $r_1\in I(x_1)$. 
Then the map $h\mapsto hx_2$ is injective over $\boxHs_\beta\cdot a_t\cdot U_1$, see Proposition~\ref{prop:closing lemma intro}(1). In particular, Lemma~\ref{lem: convex comb 1} may be applied with $x_2$, and yields the following: 
for every $\varphi\in C_c^\infty(X)$, every $\tau>0$, and all $|s|\leq 2$, 
\begin{multline}\label{eq: initial dim convex comb}
\biggl|\int\varphi(a_\tau u_shx_2)\diff(\sigma\conv\rwm_t)(h)-\sum_i c_i\int\varphi(a_\tau u_sz)\diff\!\mu_{\cone_i}(z)\biggr|\\\ll \beta\Lip(\varphi)
\end{multline}
where the implied constant depends only on $X$. 

Recall from~\eqref{eq: def conej} that $\cone_i=\coneH.\{\exp(w)y_i: w\in F_i\}$ where $y_i\in X_{3\eta/2}$. In particular, $\cone_i\subset X_\eta$. Recall also from Lemma~\ref{eq: numb Fj 1} and Lemma~\ref{lem: popular conej's 1} that 
\be\label{eq: num of Fi initial dim}
\beta^9e^t\leq \#F_i \leq \beta^{-3}e^{t}.
\ee

Moreover, in view of the definition of $\cone_i$ and Proposition~\ref{prop:closing lemma intro}(1), we have
\be\label{eq: initial dim'}
\mfm_{\cone_{i}, \mfsc, 1}(e,z)\leq e^{Dt}
\ee 
for all $z\in \cone_i$. 

\subsection{Regular tree decomposition of $F_i$}
We will decompose $F_i$ into subsets which are homogeneous in all {\em relevant} scales. 
First note that in view of~\eqref{eq: initial dim'} and Lemma~\ref{lem: F energy est assuming mfm energy estimate} 
applied with $m=4$, we have 
\be\label{eq: eng bound for Fw use}
\eng_{F_{i,w}, \trct}(w')\leq 10^6e^{Dt}\qquad\text{for every $w'\in F_{i,w}$}
\ee
where for all $w\in F_i$, we put $F_{i,w}=F_i\cap B_\rfrak(w, 4\mfsc\inj(y_i))$.
 
Let $k_1> k_{i,0}$ be positive integers defined as follows: 
\be\label{eq: def k0i and k1}
2^{k_{i,0}}< (\mfsc\,\inj(y_i))^{-1}\leq 2^{k_{i,0}+1}\quad\text{and}\quad 2^{k_1}< 10^6e^{Dt}\leq 2^{k_1+1}.
\ee

Let $\mathsf M$ be as above, see~\eqref{eq: condition on M 1}. 
For every $i$ as above, apply Lemma~\ref{lem: regular tree decomposition} to $F_i$. Then we can write 
\be\label{eq: Fi as union ini dim}
F_i=F_i'\bigcup (\textstyle \bigcup_\varsigma F_i^\varsigma)
\ee
where $\#F_i'\leq \beta^{1/4}\cdot (\#F_i)$. 
Furthermore, for every $i$ and $\varsigma$ we have 
\be\label{eq: num F i varsigma}
\beta^{11}e^t\leq \beta^2\cdot(\#F_i)\leq \#F_i^\varsigma\leq \#F_i\leq \beta^{-3}e^{t},
\ee
(where we used~\eqref{eq: num of Fi initial dim}), and for every $k_{i,0}-10\leq k\leq k_1$, 
there exists some $\tau_{ik}^\varsigma$ so that 
for all $Q\in\mathcal Q_{\mathsf Mk}$ we have 
\be\label{eq: regular tree'' ???}
\text{either}\quad2^{\mathsf M(\tau_{i k}^\varsigma-2)}\leq \#F_i^\varsigma\cap Q\leq 2^{\mathsf M\tau_{i k}^\varsigma}\quad\text{or}\quad F_i^\varsigma\cap Q=\emptyset.
\ee

\subsection{Initial dimension}\label{sec: initial dim F-i-varsigma}
Put $\cone_i^\varsigma=\coneH.\{\exp(w)y_i: w\in F_i^\varsigma\}$ for all $i$ and $\varsigma$.
Then both of the following hold 
\begin{enumerate}
\item Let $z=h\exp(w)\in\cone_i^\varsigma$ where $h\in\overline{\coneH\setminus\partial_{10\mfsc}\coneH}$, then 
\be\label{eq: noI constant initial dim'}
\begin{aligned}
\ref{C: M and dimension}^{-1}\!\!\sup_{w'\in F_i^\varsigma}\noI_{\cone_i^\varsigma,\mfsc}(e,\exp(w')y)&\leq \noI_{\cone_i^\varsigma,\mfsc}(e,z)\\
&\leq\ref{C: M and dimension}\!\!\sup_{w'\in F_i^\varsigma}\noI_{\cone_i^\varsigma,\mfsc}(e,\exp(w')y).
\end{aligned}
\ee
\item For all $z\in\cone_i^\varsigma$, we have 
\be\label{eq: energy bound cone-i-j'}
\mfm_{\cone_{i}^\varsigma, \mfsc, 0}(e,z)\leq e^{Dt}.
\ee
\end{enumerate}
Note that~\eqref{eq: noI constant initial dim'} is a consequence of Lemma~\ref{lem: regular sets initial dim}, 
and~\eqref{eq: energy bound cone-i-j'} follows from \eqref{eq: initial dim'} since $\cone_i^\varsigma\subset\cone_i$. We also note that the second inequality in~\eqref{eq: noI constant initial dim'} holds true for all $z\in\cone_i^\varsigma$, see Lemma~\ref{lem: regular sets initial dim}.

With this notation,~\eqref{eq: initial dim convex comb} may be rewritten as follows: for all $\tau>0$ and $|s|\leq 2$, we have   
\begin{multline}\label{eq: initial dim convex comb'}
\biggl|\int\varphi(a_\tau u_shx_2)\diff\mu_{t,\ell,0}(h)-\sum_i \sum_\varsigma c_{i,\varsigma} \int\varphi(a_\tau u_sz)\diff\!\mu_{\cone_i^\varsigma}(z)\biggr|\\
\ll \beta\Lip(\varphi),
\end{multline}
here $c_{i,\varsigma}=c_i\mu_{\cone_i}(\cone_i^\varsigma)$; $\mu_{\cone_i^\varsigma}$ denotes $\mu_{\cone_i}|_{\cone_i^\varsigma}$
normalized to be a probability measure; for any integer $n\geq 0$, we put 
$\mu_{t,\ell, n}=\rwm_{\ell}\conv\cdots\conv\nu_{\ell}\conv\sigma\conv\rwm_{t}$
where $\rwm_\ell$ appears $n$-times; and the implied constant depends only on $X$.

For notational convenience, let us write 
\be\label{eq: def mathcal Z}
\{(\cone_i^\varsigma,\mu_{\cone_i^\varsigma}): i, \varsigma\}=\{(\cone_\zeta,\mu_{\cone_\zeta}): \zeta\in \mathcal Z\},
\ee 
for an index set $\mathcal Z$.

\subsection{Random walk trajectories: one step}\label{sec: admissible}
Beginning with $\cone_{\zeta_0}$ for some $\zeta_0\in\mathcal Z$
as above, we will use Lemma~\ref{lem: convex comb 2} to construct sets $\cone$.  
Then Lemma~\ref{lem: dimension improves for new cone} implies that the estimate on the corresponding 
Margulis function exponentially improves after each step. 

Let us begin by fixing some notation. Let $\zeta_0\in\mathcal Z$ be as above. Put
\[
\mathsf A_0^{\zeta_0}=\{\zeta_0\},
\]   
and recall $\Bigl(\cone_{\zeta_0},\mu_{\cone_{\zeta_0}}\Bigr)$ from above. 
Using an inductive construction,  we will define 
$\mathsf A_n^{\zeta_0}$ and $(\cone_{\Xi},\mu_{\cone_{\Xi}})$ 
for all $n\geq 1$ and all $\Xi\in\mathsf A_n^{\zeta_0}$.

Let us begin with the definition in the case $n=1$. 
Put 
\[
(\cone_{\rm old}, \mu_{\cone_{\rm old}})=(\cone_{\zeta_0},\mu_{\cone_{\zeta_0}}).
\] 
In view of~\eqref{eq: energy bound cone-i-j'} and~\eqref{eq: regular tree'' ???},  
$(\cone_{\rm old}, \mu_{\cone_{\rm old}})$ satisfies the conditions in Lemma \ref{lem: truncated MF main estimate} 
with $\egbd=e^{Dt}$, $\trct=0$, and $c$ depending only on $\mathsf M$. Recall also that $0<\kappa\leq \vare/10^6$. 
By Lemma \ref{lem: truncated MF main estimate}, thus,
there exists $\gdh_{\mu_{\cone_{\rm old}}}\subset [0,1]$ with 
\[
\Bigl|[0,1]\setminus \gdh_{\mu_{\cone_{\rm old}}}\Bigr|\ll \nuni^{-\kappa^2t/4},
\]
and for every $r\in \gdh_{\mu_{\cone_{\rm old}}}$, there exists a subset 
\[
\cone_{{\rm old},r}\subset\hat\cone_{\rm old}=\bigcup\hat\coneH.\{\exp(w)y_0: w\in F_{\rm old}\}, \qquad (\hat\coneH=\overline{\coneH\setminus \partial_{10\mfsc}\coneH})
\]
satisfying $\mu_{\cone_{\rm old}}(\cone_{\rm old}\setminus\cone_{{\rm old},r})\ll\nuni^{-\kappa^2t/64}$ 
and the following: for all $z\in\cone_{{\rm old},r}$,   
\be\label{eq: mfm estimate one step use}
\mfm_{\hat\cone_{\rm old},\scmf,\trct_1}(a_{\ell} u_r,z)\leq 
200L_1 \nuni^{-\alpha\ell}\egbd^{1+8\kappa}+200\nuni^{2\alpha\ell}\noI_{\hat\cone_{\rm old},\scmf}(a_{\ell} u_r,z);
\ee 
where $L_1=L\kappa^{-L}$ and $\trct_1=1+L_1\mfbd^{\kappa}$. 
We assumed $\egbd$ is large (depending on $\kappa$) and the fact that $\trct=1$ in the above bound, see also Theorem~\ref{thm: proj thm}.  

\medskip

Recall that $d_1=100\lceil\frac{4D-3}{2\vare}\rceil$, and fix a maximal $e^{-6d_1\ell}$-separated subset 
\[
\mathcal L_{\cone_{\rm old}}\subset L_{\mu_{\cone_{\rm old}}}.
\] 
For every $r_0\in\mathcal L_{\cone_{\rm old}}$, let 
\[
\{(\cone_{\zeta}, \mu_{\cone_{\zeta}}): \zeta\in \mathcal Z_{\zeta_0,r_0}''\}
\]
be the set of offsprings of $a_\ell u_{r_0}\cone_{\rm old}$, see~\eqref{eq: def conej 2} and~\eqref{eq: def mu conj 2}. 
In particular, $\cone_{\zeta}=\coneH.\{\exp(w)y_\zeta: w\in F_{\zeta}\}$ where 
\[
F_{\zeta}\subset \Bigl\{w\in B_\rfrak(0,\beta): \umt^H_\ell.\exp(w)y_\zeta\subset a_\ell u_{r_0}\mu_{\cone_{\rm old}}\Bigr\}, 
\]
and $y_\zeta\in X_{3\eta/2}$. Moreover,~\eqref{eq: num in F-j-m-i-r almost const'} implies that for every $\zeta\in \mathcal Z_{\zeta_0,r_0}''$, 
\be\label{eq: num F i varsigma 1}
\beta^{9}\cdot(\#F_{\rm old})\leq\#F_{\zeta}\leq \beta^8\cdot(\#F_{\rm old}). 
\ee

Let us put $\hat{\hat\coneH}=\overline{\coneH\setminus \partial_{100\beta^2}\coneH}$, and define 
\[
\hat{\hat\cone}_{\rm old}=\hat{\hat\coneH}.\{\exp(v)y_0: v\in F_{\rm old}\}.
\]
Then, we have  
\be\label{eq: measure of hat hat cone 1}
\mu_{\cone_{\rm old}}\Bigl(\cone_{\rm old}\setminus (\cone_{{\rm old},r_0}\cap \hat{\hat\cone}_{\rm old})\Bigr)\ll \beta+e^{-\kappa^2t/64}.
\ee

Let $F_{\zeta,r_0}=\Bigl\{w\in F_{\zeta}: \umt^H_\ell.\exp(w)y_\zeta\cap a_\ell u_{r_0}\Bigl(\cone_{{\rm old},r_0}\cap \hat{\hat\cone}_{\rm old}\Bigr)=\emptyset\Bigr\}$. If $\#F_{\zeta, r_0}\leq 10^{-6}\cdot(\#F_\zeta)$, replace $\cone_{\zeta}$ with 
\[
\coneH.\{\exp(w)y_\zeta: w\in F_\zeta\setminus F_{\zeta, r_0}\}
\]
otherwise, discard the set $\cone_{\zeta}$ entirely. Such replacements will increase the set $a_\ell u_{r_0}\cone_{\rm old}\backslash\bigcup_\zeta\cone_{\zeta}$. But thanks to \eqref{eq: measure of hat hat cone 1}, this doesn't affect the properties that we will need later, or more precisely the inequality \eqref{eq: convex comb 2 use step 1 1} in Lemma \ref{lem: bd mfm cone zeta and conv comp} below. 

Let $\mathcal Z'_{\zeta_0, r_0}\subset \mathcal Z''_{\zeta_0, r_0}$ be the set of indices which survive the above process. Abusing the notation, for every $\zeta\in\mathcal Z'_{\zeta_0, r_0}$, we denote $F_\zeta\setminus F_{\zeta, r_0}$ by $F_\zeta$ and denote 
$\coneH.\{\exp(w)y_\zeta: w\in F_\zeta\setminus F_{\zeta, r_0}\}$
by $\cone_\zeta$. 

Thus, we obtain a collection 
$\{(\cone_{\zeta},\mu_{\cone_{\zeta}}): \zeta\in \mathcal Z'_{\zeta_0,r_0}\}$ 
satisfying the following: If $\zeta\in \mathcal Z_{\zeta_0,r_0}'$ and $w\in F_{\zeta}$, then 
\[
\umt^H_\ell.\exp(w)y_\zeta\cap a_\ell u_{r_0}\Bigl(\cone_{{\rm old},r_0}\cap \hat{\hat\cone}_{\rm old}\Bigr)\neq\emptyset;
\]
moreover, the following analogue of~\eqref{eq: num F i varsigma 1} holds 
\be\label{eq: control over card of fiber'}
0.5\beta^9\cdot(\#F_{\rm old})\leq \#F_{\zeta}\leq 2\beta^8\cdot(\#F_{\rm old}).
\ee

With this notation, define 
\be\label{eq: def B1 i varsigma}
\mathsf B_1^{\zeta_0}=\Bigl\{(\zeta_0, r_0, \zeta): r_0\in \mathcal L_{\cone_{\zeta_0}}, \zeta\in \mathcal Z_{\zeta_0, r_0}'\Bigr\},
\ee
and for every $\Xi=(\zeta_0, r_0, \zeta)\in\mathsf B_1^{\zeta_0}$, put 
\[
\cone_\Xi=\coneH.\{\exp(w)y_\Xi: w\in F_\Xi\},
\]
where $y_\Xi=y_\zeta$ and $F_\Xi=F_\zeta$.

\medskip

\begin{lemma}\label{lem: step one of ind initial leaves}
Let $\Xi=(\zeta_0, r_0, \zeta)\in\mathsf B_1^{\zeta_0}$, and write $F=F_\Xi$, $y=y_\Xi$, and $\cone=\cone_\Xi$. Let $w_0\in F_{\zeta_0}$ be so that  
\[
\noI_{\cone_{\zeta_0},\mfsc}(e,\exp(w_0)y_0)=\sup_{w'}\noI_{\cone_{\zeta_0},\mfsc}(e,\exp(w')y_0).
\]
Then one of the following properties holds: 
\begin{enumerate}
\item If $e^{Dt}\geq \nuni^{\vare t/2}\noI_{\cone_{\zeta_0},\mfsc}(e,\exp(w_0)y_0)$, then 
\be\label{eq: moderate density 1'}
\mfm_{\cone,\scmf,\trct_1}(e,z)\leq \nuni^{-0.6\ell}e^{Dt}+10\noI_{\cone,\scmf}(e,z)\quad\text{for all $z\in\cone$},
\ee
where $\trct_1=1+L\kappa^{-L}e^{\kappa Dt}$.
\item If $e^{Dt}<\nuni^{\vare t/2}\noI_{\cone_{\zeta_0},\mfsc}(e,\exp(w_0)y_0)$, then both of the following hold
\begin{enumerate}
\item Let $z=h\exp(w)y_0\in\cone_{\zeta_0}$ where $h\in\overline{\coneH\setminus\partial_{10\mfsc}\coneH}$, then 
\be\label{eq: high density 1'}
\mfm_{\cone_{\zeta_0},\scmf,\trct}(e,z)\leq \nuni^{\vare t/2}\noI_{\cone_{\zeta_0},\mfsc}(e,\exp(w_0)y_0)\leq\ref{C: M and dimension}\nuni^{\vare t/2}\noI_{\cone_{\zeta_0},\mfsc}(e,z),
\ee
(indeed the first inequality above holds for {\em every} $z\in\cone_{\zeta_0}$).
\item For all $z\in\cone$, we have 
\be\label{eq: high density 2 1'}
\mfm_{\cone,\scmf,\trct_1}(e,z)\leq \nuni^{-0.6\ell}\Bigl(\nuni^{{\vare t}/{2}}\!\cdot\noI_{\cone_{\zeta_0},\mfsc}(e,\exp(w_0)y_0)\Bigr)\!+10\noI_{\cone,\scmf}(e,z).
\ee
\end{enumerate}
\end{enumerate} 
Indeed case~(2) does not hold and we are always in case~(1).
\end{lemma}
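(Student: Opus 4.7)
The plan is to invoke Lemma~\ref{lem: dimension improve 2} with $(\cone_{\rm old},\mu_{\cone_{\rm old}})=(\cone_{\zeta_0},\mu_{\cone_{\zeta_0}})$, $\mfbd=e^{Dt}$, and $\trct=1$, and then to verify that case~(2) of that lemma cannot occur in the present setting, so that only case~(1) survives. I would first check the hypotheses: the uniform bound $\mfm_{\cone_{\zeta_0},\mfsc,1}(e,z)\leq e^{Dt}$ is immediate from~\eqref{eq: energy bound cone-i-j'} (read as summing over $0\neq w$); the cardinality lower bound $\#F_{\zeta_0}\geq e^{9t/10}$ follows from~\eqref{eq: num F i varsigma}; the $(c,\mfsc\inj(y_0)/10)$-regularity~\eqref{eq: mfsc regular recall in dim impr} with $c\geq e^{-\kappa^2 t/4}$ is obtained by comparing~\eqref{eq: regular tree'' ???} at two consecutive dyadic scales, using the choice~\eqref{eq: condition on M 1} of $\mathsf M$; and the regular tree decomposition hypothesis~\eqref{eq: regular tree'} needed in Lemma~\ref{lem: dimension improve 2} is exactly~\eqref{eq: regular tree'' ???}.

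Once those hypotheses are in place, the dichotomy produced by Lemma~\ref{lem: dimension improve 2} is literally cases~(1) and~(2) of the statement, so the remaining task is to rule out case~(2). The case~(2) condition is $e^{Dt}<e^{\vare t/2}\,\noI_{\cone_{\zeta_0},\mfsc}(e,\exp(w_0)y_0)$, i.e.\ $\noI_{\cone_{\zeta_0},\mfsc}(e,\exp(w_0)y_0)>e^{(D-\vare/2)t}$. To contradict this I would bound $\noI$ trivially: the identity lies in $\hat{\hat\coneH}=\overline{\coneH\setminus\partial_{100\beta^2}\coneH}$ for $\mfsc,\beta$ small, so Lemma~\ref{lem: 1-1 maps between Iz and subsets of F} applies to $z_0=\exp(w_0)y_0$ and yields
\[
\#\margI_{\cone_{\zeta_0},\mfsc}(e,z_0)\leq \#\bigl(F_{\zeta_0}\cap B_\rfrak(w_0,2\mfsc\,\inj(y_0))\bigr)\leq \#F_{\zeta_0}\leq \beta^{-3}e^t.
\]
Combined with $\inj(y_0)\geq\eta=e^{-\kappa t/2}$, $\mfsc=e^{-\sqrt\vare t}$, and $0<\alpha\leq 1$, this gives
\[
\noI_{\cone_{\zeta_0},\mfsc}(e,z_0)\leq \beta^{-3}e^t\cdot (\mfsc\eta)^{-\alpha}\leq e^{(1+3\sqrt\vare)t}
\]
for $t$ large and $\kappa\leq \vare/10^6$.

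Since $D\geq 10$ and $\vare<1$, one has $D-\vare/2>1+3\sqrt\vare$, so this bound on $\noI$ is strictly smaller than $e^{(D-\vare/2)t}$, contradicting the case~(2) hypothesis. Hence case~(1) must hold, and Lemma~\ref{lem: dimension improve 2}(1) delivers~\eqref{eq: moderate density 1'} with $\trct_1=1+L\kappa^{-L}e^{\kappa Dt}$. The only mildly subtle point is verifying that $e\in\hat{\hat\coneH}$ so that Lemma~\ref{lem: 1-1 maps between Iz and subsets of F} applies, which follows at once from $\mfsc\ll\beta$; the rest is bookkeeping of exponents. The parenthetical remark in~(2)(a) is not used in the sequel and simply records that~\eqref{eq: energy bound cone-i-j'} already holds at every $z\in\cone_{\zeta_0}$, not just for $z$ with $\hat h$ in the interior.
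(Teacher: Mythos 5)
Your proposal is correct and follows essentially the same route as the paper: apply Lemma~\ref{lem: dimension improve 2} to $(\cone_{\zeta_0},\cone)$ (whose hypotheses hold by the construction of $\mathsf B_1^{\zeta_0}$, \eqref{eq: energy bound cone-i-j'} and \eqref{eq: regular tree'' ???}), and then rule out case~(2) by the trivial bound $\noI_{\cone_{\zeta_0},\mfsc}(e,\exp(w_0)y_0)\ll (\mfsc\eta)^{-\alpha}\beta^{-3}e^{t}\leq e^{2t}$ coming from \eqref{eq: num of Fi initial dim}, together with $D\geq 10$. The only cosmetic difference is that you route the cardinality bound $\#\margI\leq\#F_{\zeta_0}$ through Lemma~\ref{lem: 1-1 maps between Iz and subsets of F} (whose statement is for the shrunk cone, though its injection argument applies verbatim to the full $\cone_{\zeta_0}$), whereas the paper quotes \eqref{eq: num of Fi initial dim} directly; this does not affect correctness.
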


\begin{proof}
Note that $e^{\kappa Dt}\leq e^{\ell t/100}$. 
Moreover, in view of~\eqref{eq: regular tree'' ???} and the fact that for every $w\in F_{\zeta_0}$, we have 
\[
\umt^H_\ell.\exp(w)y\cap a_\ell u_{r_0}\Bigl(\cone_{{\rm old},r}\cap \hat{\hat\cone}_{\rm old}\Bigr)\neq\emptyset,
\]
Lemma~\ref{lem: dimension improve 2} is applicable with $\cone_{\zeta_0}$ and $\cone$. Applying loc.\ cit.\ with $\cone_{\zeta_0}$ and $\cone$ thus implies all but the final claim in this lemma.

To see the final claim, note that by~\eqref{eq: num of Fi initial dim}, we have 
\[
e^{\vare t/2}\noI_{\cone_{\zeta_0},\mfsc}(e,\exp(w)y_0)\leq e^{\vare t/2}\cdot (2\eta\mfsc)^{-\alpha}\cdot (\beta^{-3}\nuni^{t})\leq \nuni^{2t}.
\] 
Moreover, $D\geq 10$, see Proposition~\ref{prop:closing lemma intro}, hence, case~(2) cannot hold. 
\end{proof}

Let $\mfbd_0=e^{Dt}$. For every $\Xi=(\zeta_0, r_0, \zeta)\in\mathsf B_1^{\zeta_0}$, define $\mfbd_{\Xi,1}$ as follows: if 
\[
\nuni^{-0.6\ell}e^{Dt}\geq 10\sup_{z\in \cone_\Xi}\noI_{\cone_\Xi,\scmf}(e,z),
\] 
then we put 
\be\label{eq: define mfbd0}
\mfbd_{\Xi,1}=e^{-\ell/2} e^{Dt}.
\ee 
Otherwise, i.e., if $
\nuni^{-0.6\ell}e^{Dt}<10\sup_{z\in \cone_{\Xi}}\noI_{\cone_{\Xi},\scmf}(e,z)$,
then we put
\be\label{eq: define mfbd0 1}
\mfbd_{\Xi,1}=20\sup_{z\in \cone_\Xi}\noI_{\cone_\Xi,\scmf}(e,z).
\ee

\begin{lemma}\label{lem: bd mfm cone zeta and conv comp} 
The following three statements hold: 
\begin{enumerate}
\item For every $\Xi=(\zeta_0, r_0, \zeta)\in\mathsf B_1^{\zeta_0}$, we have $\mfbd_{\Xi,1}\leq e^{Dt}$.
\item Let $\Xi=(\zeta_0, r_0, \zeta)\in\mathsf B_1^{\zeta_0}$, then 
\be\label{eq: mfbd 1}
\mfm_{\cone_\Xi,\scmf,\trct_{1}}(e,z)\leq \mfbd_{\Xi,1},
\ee
where $\trct_1=1+L\kappa^{-L}e^{\kappa Dt}$.
\item Let $r_0\in \mathcal L_{\cone_{\zeta_0}}$. Then
\be\label{eq: convex comb 2 use step 1 1}
\begin{aligned}
\biggl|\int\varphi(a_\tau u_s.z)\diff(a_\ell u_{r_0}\mu_{\cone_{\zeta_0}})(z)-&\sum_{\mathsf B_1^{\zeta_0}} c_{\Xi'}\int\varphi(a_\tau u_sz)\diff\!\mu_{\cone_{\Xi'}}(z)\biggr|\\
&\ll\max\Bigl\{\eta^{1/2}, e^{-\kappa^2t/64}\Bigr\}\Lip(\varphi),
\end{aligned}
\ee
for every $\varphi\in C_c^\infty(X)$, every $0<\tau\leq 2d_1\ell$, and all $|s|\leq 2$.
\end{enumerate}
\end{lemma}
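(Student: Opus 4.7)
The plan is to prove the three parts in order. Parts (1) and (2) follow quickly from the last claim of Lemma~\ref{lem: step one of ind initial leaves}, which rules out case~(2) and thus always gives the bound
\[
\mfm_{\cone_\Xi,\scmf,\trct_1}(e,z) \leq \nuni^{-0.6\ell}e^{Dt} + 10\noI_{\cone_\Xi,\scmf}(e,z),
\]
valid for every $z\in\cone_\Xi$, with $\trct_1=1+L\kappa^{-L}e^{\kappa Dt}$. For part~(2), I would split according to the two cases in the definition of $\mfbd_{\Xi,1}$: in the first branch \eqref{eq: define mfbd0}, the inequality above yields $\mfm\leq 2\nuni^{-0.6\ell}e^{Dt}\leq \nuni^{-\ell/2}e^{Dt}=\mfbd_{\Xi,1}$ once $t$ is large enough to absorb the factor of $2$; in the second branch \eqref{eq: define mfbd0 1}, the same inequality together with $\nuni^{-0.6\ell}e^{Dt}<10\sup_z\noI_{\cone_\Xi,\scmf}(e,z)$ gives $\mfm\leq 20\sup_z\noI_{\cone_\Xi,\scmf}(e,z)=\mfbd_{\Xi,1}$. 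For part~(1), branch~\eqref{eq: define mfbd0} gives $\mfbd_{\Xi,1}\leq e^{Dt}$ trivially, while for branch~\eqref{eq: define mfbd0 1} I would use the trivial bound $\#\margI_{\cone_\Xi,\scmf}(e,z)\leq \#F_\Xi$ together with the estimates \eqref{eq: control over card of fiber'} and \eqref{eq: num of Fi initial dim} to obtain $\#F_\Xi\leq 2\beta^5 e^t$, hence
\[
\noI_{\cone_\Xi,\scmf}(e,z)\leq (\scmf\eta)^{-\alpha}\cdot 2\beta^5e^t\leq 2e^{(1+\sqrt\vare)t},
\]
using $0.9<\alpha<1$ and $\kappa(D+1)\leq 10^{-6}\vare$. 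Since $D\geq 10$, this is $\leq e^{Dt}/20$ for $t$ large, so $\mfbd_{\Xi,1}=20\sup\noI\leq e^{Dt}$.

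For part~(3), I would apply the single-$r$ version \eqref{eq: convex combination one r} of Lemma~\ref{lem: convex comb 2} with $n=1$, taking $\mu'_{\cone_i}=\mu_{\cone_{\zeta_0}}$ (which is $(\adl_0,\adm_0)$-admissible with $\adm_0$ depending only on $X$, see Lemma~\ref{lem: mu cone is admissible}) and $L_i=\gdh_{\mu_{\cone_{\zeta_0}}}$. For each $r_0\in\mathcal L_{\cone_{\zeta_0}}\subset\gdh_{\mu_{\cone_{\zeta_0}}}$, this produces a decomposition
\[
\int\!\varphi(a_\tau u_sz)\,\diff(a_\ell u_{r_0}\mu_{\cone_{\zeta_0}})(z)=\sum_{\zeta\in\mathcal Z''_{\zeta_0,r_0}}\!\!c_\zeta\!\int\!\varphi(a_\tau u_sz)\,\diff\!\mu^{\rm orig}_{\cone_\zeta}(z)+O\!\left(\eta^{1/2}\Lip(\varphi)\right),
\]
where $\mu^{\rm orig}_{\cone_\zeta}$ is the untrimmed offspring measure on $\cone_\zeta=\coneH.\{\exp(w)y_\zeta:w\in F^{\rm orig}_\zeta\}$, and the error $\max\{\eta^{1/2},\adm_0\beta,\beta\}$ simplifies to $O(\eta^{1/2})$ since $\adm_0$ depends only on $X$ and $\beta=\eta^2$.

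It remains to pass from $\mathcal Z''_{\zeta_0,r_0}$ to $\mathsf B_1^{\zeta_0}$ by accounting for the trimming described in \S\ref{sec: admissible}. For each $\zeta$, I would decompose $\mu^{\rm orig}_{\cone_\zeta}=\lambda_\zeta\mu_{\cone_\Xi}+\mu^{\rm bad}_{\cone_\zeta}$, where $\mu^{\rm bad}_{\cone_\zeta}$ is the restriction to the bad fibers $\{\umt^H_\ell.\exp(w)y_\zeta:w\in F_{\zeta,r_0}\}$ and $\lambda_\zeta\mu_{\cone_\Xi}$ is the renormalization of the good-fiber restriction. The key point is that, by the very definition of $F_{\zeta,r_0}$, the union of all bad fibers across all $\zeta$ lies inside $a_\ell u_{r_0}\bigl(\cone_{\rm old}\setminus(\cone_{{\rm old},r_0}\cap\hat{\hat\cone}_{\rm old})\bigr)$, and \eqref{eq: measure of hat hat cone 1} bounds the mass of this set (under $a_\ell u_{r_0}\mu_{\cone_{\zeta_0}}$) by $O(\beta+e^{-\kappa^2t/64})$. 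Hence $\sum_\zeta c_\zeta\|\mu^{\rm bad}_{\cone_\zeta}\|\ll e^{-\kappa^2t/64}$. For dropped indices $\zeta$ (those with $\#F_{\zeta,r_0}>10^{-6}\#F^{\rm orig}_\zeta$), each contributes a proportion $\gtrsim 10^{-6}$ of its $c_\zeta$ to the bad-mass total, so $\sum_{\zeta\text{ dropped}}c_\zeta\ll e^{-\kappa^2t/64}$ as well. Setting $c_\Xi:=c_\zeta\lambda_\zeta$ for the surviving offsprings $\Xi=(\zeta_0,r_0,\zeta)\in\mathsf B_1^{\zeta_0}$ yields \eqref{eq: convex comb 2 use step 1 1} with the claimed error.

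The main obstacle I anticipate is the bookkeeping in the last step: one must simultaneously keep track of two different ways in which mass is lost (bad-fiber mass, controlled by the measure of the complement of $\cone_{{\rm old},r_0}\cap\hat{\hat\cone}_{\rm old}$ in $\cone_{\rm old}$, and dropped-set mass, controlled by the $10^{-6}$ threshold) and verify that both contributions are absorbed by the bound $e^{-\kappa^2t/64}$ coming from Lemma~\ref{lem: truncated MF main estimate}, rather than by the weaker $\eta^{1/2}$ from Lemma~\ref{lem: convex comb 2}.
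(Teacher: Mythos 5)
Your proposal is correct and follows essentially the same route as the paper: parts (1) and (2) come from Lemma~\ref{lem: step one of ind initial leaves} (whose final claim rules out case (2)) combined with the definition of $\mfbd_{\Xi,1}$ and the crude bound $\noI_{\cone_\Xi,\scmf}(e,z)\ll\mfsc^{-\alpha}\eta^{-\alpha}\#F_\Xi$, while part (3) is Lemma~\ref{lem: convex comb 2} applied with $\mathsf d_0=3d_1\ell$ at $r_0$, with the passage from $\mathcal Z''_{\zeta_0,r_0}$ to $\mathsf B_1^{\zeta_0}$ controlled by~\eqref{eq: measure of hat hat cone 1} and the definition of $\mathcal Z'_{\zeta_0,r_0}$. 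Your explicit bookkeeping of the bad-fiber and dropped-offspring mass is exactly the argument the paper leaves implicit.
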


\begin{proof}
The claim in part~(1) is clear if $\mfbd_{\Xi,1}=e^{-\ell/2}e^{Dt}$. 
Assume thus that 
\[
\mfbd_{\Xi,1}=20\sup_{z\in \cone_\Xi}\noI_{\cone_\Xi,\scmf}(e,z).
\]
Then by the definition of $\noI$,~\eqref{eq: num of Fi initial dim} and~\eqref{eq: control over card of fiber'}, we have 
\[
\mfbd_{\Xi,1}\ll \mfsc^{-\alpha}\eta^{-\alpha}\cdot (\#F_\Xi)\leq e^{2t},
\]
where we also used $\mfsc=e^{-\sqrt\vare t}$ and $\eta\geq e^{-0.01\vare t}$. The claim follows as $D\geq 10$. 

Part~(2) follows from the definition of $\mfbd_{\Xi,1}$ and Lemma~\ref{lem: step one of ind initial leaves}.

To see part~(3), apply Lemma~\ref{lem: convex comb 2}, with $\mathsf d_0=3d_1\ell$ (note that $\tau+\ell\leq \mathsf d_0$) 
and $r_0$. By that lemma thus  
\[
\begin{aligned}
\biggl|\int\varphi(a_du_s.z)\diff(a_\ell u_{r_0}\mu_{\cone_{\zeta_0}})(z)-&\sum c_{\zeta}\int\varphi(a_d u_sz)\diff\!\mu_{\cone_{\zeta}}(z)\biggr|\\
&\ll\max\Bigl\{\eta^{1/2}, e^{-\kappa^2t/64}\Bigr\}\Lip(\varphi),
\end{aligned}
\]
where the sum is over $\zeta\in\mathcal Z''_{\zeta_0,r_0}$. 

We can replace the summation over $\mathcal Z''_{\zeta_0,r_0}$ by summation over $\mathcal Z'_{\zeta_0,r_0}$ (hence over $\mathsf B_1^{\zeta_0}$) in view of~\eqref{eq: measure of hat hat cone 1} and the definition of $\mathcal Z'_{\zeta_0,r_0}$. 
\end{proof}

\subsection{Regularizing $F_\Xi$}\label{sec: regular F zeta step 1}
In preparation for the next step of the inductive construction, we will refine the set $\mathsf B_1^{\zeta_0}$ 
by decomposing $F_\Xi$ (for $\Xi\in\mathsf B_1^{\zeta_0}$) into sets satisfying estimates similar to those in~\eqref{eq: regular tree}.

To that end, let $\Xi=(\zeta_0, r_0,\zeta_1)\in\mathsf B_1^{\zeta_0}$, and let 
$F=F_\Xi$, $y=y_\Xi$, and $\cone=\cone_\Xi$.
In view of Lemma~\ref{lem: bd mfm cone zeta and conv comp}(2) and Lemma~\ref{lem: F energy est assuming mfm energy estimate}, 
\[
\eng_{F_w, \trct_1}(w')\leq 10^6\mfbd_{\Xi,1}\qquad\text{for every $w'\in F_w$},
\]  
where $F_w=F\cap B_\rfrak(w,4\mfsc\inj(y))$.  

Let $k_1> k_{0}$ be positive integers defined as follows: 
\be\label{eq: def k0 and k1 step 1}
2^{k_{0}}< (\mfsc\,\inj(y))^{-1}\leq 2^{k_{0}+1}\quad\text{and}\quad 2^{k_1}< 10^6\mfbd_{\Xi,1}\leq 2^{k_1+1}.
\ee
Let $\mathsf M$ be as above, see~\eqref{eq: condition on M 1}. 
Applying Lemma~\ref{lem: regular tree decomposition}, we can write 
\be\label{eq: Fi as union ini dim ??}
F=F'\bigcup \Bigl(\textstyle \bigcup_l F_l\Bigr)
\ee
where $\#F'\leq \beta^{1/4}\cdot (\#F)$ and $\#F_l\geq \beta^2\cdot(\#F)$.
In view of~\eqref{eq: control over card of fiber'}, we have  
\be\label{eq: num F ir jm varsigma 0}
\begin{aligned}
0.5\beta^{11}\cdot(\#F_{\zeta_0})\leq\beta^2\cdot(\#F)&\leq\#F_l\\
&\leq \#F \leq 2\beta^8\cdot(\#F_{\zeta_0}), 
\end{aligned}
\ee
and for every $k_0-10\leq k\leq k_1$, 
there exists some $\tau_k=\tau_{k}^l$ so that 
\be\label{eq: used to be regular tree''}
\text{either}\quad2^{\mathsf M(\tau_{k}-2)}\leq \#F_l\cap Q\leq 2^{\mathsf M\tau_{k}}\quad\text{or}\quad F_l\cap Q=\emptyset,
\ee
for all $Q\in\mathcal Q_{\mathsf Mk}$.

Let us also note that combining~\eqref{eq: num F ir jm varsigma 0} and~\eqref{eq: num F i varsigma}, we conclude
\be\label{eq: num F ir jm varsigma}
\frac12\beta^{22}e^t\leq \#F_l\leq 2\beta^{5}e^t.
\ee 

Let $\mathcal Z_{\zeta_0,r_0}$ be an enumeration of 
$\{(\zeta',l):\zeta'\in \mathcal Z_{0, r_0}', l\in \mathcal K_{(\zeta_0,r_0,\zeta')}\}$ where 
for every $\Xi=(\zeta_0,r_0,\zeta')\in\mathsf B_1^{\zeta_0}$, we let 
\[
\text{$\mathcal K_\Xi=\{l: F_l$ as in $\eqref{eq: Fi as union ini dim ??}\}$}.
\]
If $\zeta\in \mathcal Z_{\zeta_0,r_0}$ corresponds to $(\zeta',l)$, put $y_\zeta=y_{\zeta'}$ 
and $F_\zeta=(F_{\zeta'})_l$, see~\eqref{eq: Fi as union ini dim ??}. 

Define 
\be\label{eq: def A1 i varsigma}
\mathsf A_1^{\zeta_0}=\Bigl\{(\zeta_0, r_0, \zeta_1): r_0\in \mathcal L_{\cone_{\zeta_0}}, \zeta_1\in \mathcal Z_{\zeta_0, r_0}\Bigr\},
\ee
and for every $\Xi=(\zeta_0, r_0, \zeta_1)\in\mathsf A_1^{\zeta_0}$, put 
\[
\cone_\Xi=\coneH.\{\exp(w)y_\Xi: w\in F_{\Xi}\},
\]
where $y_\Xi=y_{\zeta_1}$ and $F_\Xi=F_{\zeta_1}$.

\begin{lemma}\label{lem: bd mfm cone zeta and conv comp A1} 
Let $\Xi=(\zeta_0, r_0, \zeta_1)\in\mathsf A_1^{\zeta_0}$, and suppose $\zeta_1$ correspond to $(\zeta',l)$ as above. Put $\egbd_{\Xi,1}=\egbd_{\Xi',1}$ where $\Xi'=(\zeta_0, r_0, \zeta')\in\mathsf B_1^{\zeta_0}$. Then both of the following hold: 
\begin{enumerate}
\item We have 
\be\label{eq: mfbd 1 2}
\mfm_{\cone_\Xi,\scmf,\trct_{1}}(e,z)\leq \mfbd_{\Xi,1},
\ee
where $\trct_1=1+L\kappa^{-L}e^{\kappa Dt}$.
\item Let $r_0\in \mathcal L_{\cone_{\zeta_0}}$. Then
\be\label{eq: convex comb 2 use step 1 2}
\begin{aligned}
\biggl|\int\varphi(a_\tau u_s.z)\diff(a_\ell u_{r_0}\mu_{\cone_{\zeta_0}})(z)-&\sum_{\mathsf A_1^{\zeta_0}} c_{\Xi}\int\varphi(a_\tau u_sz)\diff\!\mu_{\cone_\Xi}(z)\biggr|\\
&\ll\max\Bigl\{\eta^{1/2}, e^{-\kappa^2t/64}\Bigr\}\Lip(\varphi),
\end{aligned}
\ee
for every $\varphi\in C_c^\infty(X)$, every $0<\tau\leq 2d_1\ell$, and all $|s|\leq 2$,
\end{enumerate}
\end{lemma}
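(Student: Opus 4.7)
The plan is to deduce both parts directly from Lemma~\ref{lem: bd mfm cone zeta and conv comp} combined with the regular tree decomposition produced in \S\ref{sec: regular F zeta step 1}, using the nesting $\cone_\Xi\subset\cone_{\Xi'}$ obtained by passing from $F_{\Xi'}=F$ to $F_\Xi=F_l$.

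For part~(1), the key observation is monotonicity of $\margI$ under shrinking of $\cone$. Since $F_\Xi\subset F_{\Xi'}$ and $y_\Xi=y_{\Xi'}$, one has $\cone_\Xi\subset\cone_{\Xi'}$, and therefore for every $z\in\cone_\Xi$
\[
\margI_{\cone_\Xi,\mfsc}(e,z)\subset\margI_{\cone_{\Xi'},\mfsc}(e,z),
\]
because the defining condition $\exp(w)z\in\cone_\Xi$ is strictly stronger than $\exp(w)z\in\cone_{\Xi'}$ (and $\inj(z)$ is computed the same way). Taking the minimum that defines $\mfm_{\cone,\mfsc,\trct_1}$ over a smaller set can only decrease the value, so
\[
\mfm_{\cone_\Xi,\mfsc,\trct_1}(e,z)\le\mfm_{\cone_{\Xi'},\mfsc,\trct_1}(e,z)\le\mfbd_{\Xi',1}=\mfbd_{\Xi,1},
\]
where the middle inequality is~\eqref{eq: mfbd 1} from Lemma~\ref{lem: bd mfm cone zeta and conv comp}(2). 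This proves~\eqref{eq: mfbd 1 2}.

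For part~(2), I plan to start from~\eqref{eq: convex comb 2 use step 1 1} and replace each summand $c_{\Xi'}\int\varphi(a_\tau u_sz)\diff\mu_{\cone_{\Xi'}}(z)$ by its contributions from the pieces $F_l$ produced by Lemma~\ref{lem: regular tree decomposition}. Writing $F=F'\sqcup\bigcup_l F_l$ with $\#F'\le\beta^{1/4}(\#F)$, the admissible measure $\mu_{\cone_{\Xi'}}=\frac{1}{\sum_w\mu_w(X)}\sum_{w\in F}\mu_w$ decomposes as
\[
\mu_{\cone_{\Xi'}}=\mu_{\rm exc}+\sum_l\lambda_l\,\mu_{\cone_{(\zeta_0,r_0,(\zeta',l))}},
\]
where $\lambda_l=\sum_{w\in F_l}\mu_w(X)/\sum_{w\in F}\mu_w(X)$ and the exceptional piece $\mu_{\rm exc}$ is supported on the local leaves over $F'$. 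Because the densities $\ddensity_w$ entering an admissible measure are bounded above and below by constants depending only on $\adm$ (see \S\ref{sec: cone and mu cone}), one has $\mu_{\rm exc}(X)\ll\beta^{1/4}$, and therefore
\[
\biggl|\int\varphi(a_\tau u_sz)\diff\mu_{\cone_{\Xi'}}(z)-\sum_l\lambda_l\int\varphi(a_\tau u_sz)\diff\mu_{\cone_{(\zeta_0,r_0,(\zeta',l))}}(z)\biggr|\ll\beta^{1/4}\|\varphi\|_\infty.
\]
Setting $c_\Xi:=c_{\Xi'}\lambda_l$ for $\Xi=(\zeta_0,r_0,\zeta_1)$ with $\zeta_1$ corresponding to $(\zeta',l)$, summing over $\Xi'\in\mathsf B_1^{\zeta_0}$, and combining with~\eqref{eq: convex comb 2 use step 1 1} yields~\eqref{eq: convex comb 2 use step 1 2} with the same error bound $\max\{\eta^{1/2},e^{-\kappa^2t/64}\}$ (the additional $\beta^{1/4}=\eta^{1/2}$ term is absorbed since $\beta=\eta^2$).

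The only mildly delicate point, and the one to verify carefully, is that the measures $\mu_{\cone_\Xi}$ obtained by restricting $\mu_{\cone_{\Xi'}}$ to $\cone_\Xi$ and renormalizing are themselves $(\adl,\adm')$-admissible for a constant $\adm'$ depending only on $X$ and $\adm$ --- this is needed so the inductive hypotheses for the next step are available. This follows immediately from the fact that restricting the index set from $F$ to $F_l$ does not alter the local densities $\ddensity_w$ on $\coneH_{w,p}$, nor the complexity of the pieces $\coneH_{w,p}$, nor the boundary bound (1) in the definition of admissibility: the latter is verified $w$-by-$w$ and is unchanged. The combinatorial regularity~\eqref{eq: used to be regular tree''} of $F_l$, inherited from Lemma~\ref{lem: regular tree decomposition}, then puts $\cone_\Xi$ into a form to which Lemma~\ref{lem: regular sets initial dim} applies in the next iteration.
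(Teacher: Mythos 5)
Your proposal is correct and follows essentially the same route as the paper: part~(1) is exactly the nesting argument $\cone_\Xi\subset\cone_{\Xi'}$ combined with Lemma~\ref{lem: bd mfm cone zeta and conv comp}(2), and part~(2) is the paper's decomposition of the admissible measure along $F=F'\cup\bigcup_l F_l$ with $c_\Xi=c_{\Xi'}\mu_{\cone_{\Xi'}}(\cone_\Xi)$ (your $c_{\Xi'}\lambda_l$), the exceptional mass $\ll_{\adm}\beta^{1/4}=\eta^{1/2}$ being absorbed into the stated error. Your extra remarks spelling out the monotonicity of $\mfm$ under shrinking $\cone$ and the admissibility of the renormalized restriction are accurate and merely make explicit what the paper leaves implicit.
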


\begin{proof}
Part~(1) follows from Lemma~\ref{lem: bd mfm cone zeta and conv comp}(2) and the fact that $\cone_\Xi\subset\cone_{\Xi'}$. 
Part~(2) follows from Lemma~\ref{lem: bd mfm cone zeta and conv comp}(3) in view of~\eqref{eq: Fi as union ini dim ??} if we put 
\[
c_{\Xi}=c_{\Xi'}\mu_{\cone_{\Xi'}}(\cone_{\Xi})
\]
and use the fact that $\mu_{\cone_{\Xi'}}$ is admissible, see Lemma~\ref{lem: mu cone is admissible 2}. 
\end{proof}

\subsection{Random walk trajectories: $n$-steps}\label{sec: A zeta0 n}
We now assume that $\mathsf A_{n}^{\zeta_0}$ is defined for some $n\geq 1$, 
and will define $\mathsf A_{n+1}^{\zeta_0}$. The construction is similar to the case $n=0$ completed in previous sections. Indeed, as it was done in that case, we will define $\mathsf A_{n+1}^{\zeta_0}$ using the collection of $2n+3$ tuples  
\[
(\zeta_0,r_0,\ldots, \zeta_n, r_n, \zeta_{n+1})
\]
satisfying the following properties  
\begin{itemize}
\item $\hat\Xi:=(\zeta_0,r_0,\ldots, \zeta_n)\in \mathsf A_{n}^{\zeta_0}$, 
\item $r_n\in\mathcal L_{\cone_{\hat\Xi}}$, and 
\item $\zeta_{n+1}\in \mathcal Z_{n,r_n}'$,
\end{itemize}
where $\mathcal L_{\cone_{\hat\Xi}}\subset \gdh_{\mu_{\cone_{\hat\Xi}}}$ is a maximal $e^{-6d_1}$-separated subset, 
see Lemma~\ref{lem: truncated MF main estimate} for $\gdh_{\cone_{\hat\Xi}}$, and 
\[
\mathcal Z_{\zeta_n,r_n}'\subset\mathcal Z''_{\zeta_n,r_n}
\]
where $\mathcal Z''_{\zeta_n,r_n}$ is the index set enumerating the offsprings of $a_\ell u_{r_n}\cone_{\hat\Xi}$, see~\eqref{eq: def conej 2} and~\eqref{eq: def mu conj 2} for offsprings. 

{\em We now turn to the details:} Recall that $0<\kappa\leq \vare/10^6$, for all $m\in\bbn$ put
\be\label{eq: def trct n+1}
\trct_{m}=1+mL\kappa^{-L}e^{\kappa Dt},
\ee 
see Lemma~\ref{lem: bd mfm cone zeta and conv comp A1} for $\trct_1$.

Let $\hat\Xi=(\zeta_0,r_0,\ldots, \zeta_n)\in \mathsf A_{n}^{\zeta_0}$, and put
\[
(\cone_{\rm old}, \mu_{\cone_{\rm old}})=(\cone_{\hat\Xi},\mu_{\cone_{\hat\Xi}});
\]
note that $\cone_{\hat\Xi}=\coneH.\{\exp(w)y_{\hat\Xi}: w\in F_{\hat\Xi}\}$, where 
\be\label{eq: ind hyp on num F hat Xi}
\frac{1}{2^n}\beta^{11(n+1)}e^t\leq \#F_{\hat\Xi}\leq 2^n\beta^{8n-3}e^t,
\ee
see~\eqref{eq: num of Fi initial dim} and~\eqref{eq: num F ir jm varsigma}.

Then, by inductive hypothesis, we have  
\be\label{eq: Xi' initial cond}
\mfm_{\cone_{\hat\Xi},\mfsc, \trct_n}(e,z)\leq \mfbd_{\hat\Xi,n}\quad\text{for all $z\in\cone_{\hat\Xi}$,}
\ee
where $\mfbd_{\hat\Xi,n}$ is defined inductively. Recall that $\mfbd_0=\nuni^{Dt}$ also see~\eqref{eq: define mfbd0} 
and~\eqref{eq: define mfbd0 1} for the definition of $\mfbd_{\hat\Xi, 1}$.  In particular, we have 
\be\label{eq: egbd m < eDt}
\mfbd_{\hat\Xi,n}\leq e^{Dt},
\ee
see Lemma~\ref{lem: bd mfm cone zeta and conv comp}(1).

Recall that $d_1=100\lceil\frac{4D-3}{2\vare}\rceil$. Fix a maximal $e^{-6d_1\ell}$-separated subset 
\[
\mathcal L_{\cone_{\rm old}}\subset L_{\mu_{\cone_{\rm old}}}.
\] 
For every $r_n\in \mathcal L_{\cone_{\rm old}}$, let 
\[
\{(\cone_{\zeta}, \mu_{\cone_{\zeta}}): \zeta\in \mathcal Z_{\hat\Xi,r_n}''\}
\]
be the set of all offsprings of $a_\ell u_{r_n}\cone_{\rm old}=a_\ell u_{r_n}\cone_{\hat\Xi}$, see~\eqref{eq: def conej 2} and~\eqref{eq: def mu conj 2}. 
In particular, $\cone_{\zeta}=\coneH.\{\exp(w)y_\zeta: w\in F_{\zeta}\}$ where 
\[
F_{\zeta}\subset \Bigl\{w\in B_\rfrak(0,\beta): \umt^H_\ell.\exp(w)y_\zeta\subset a_\ell u_{r_n}\mu_{\cone_{\rm old}}\Bigr\}
\]
for some $y_\zeta\in X_{3\eta/2}$.

Moreover,~\eqref{eq: num in F-j-m-i-r almost const'} implies that
for every $\zeta\in \mathcal Z_{\hat\Xi,r_n}''$, we have 
\be\label{eq: num F i varsigma n}
\beta^{9}\cdot(\#F_{\rm old})\leq\#F_{\zeta}\leq \beta^8\cdot(\#F_{\rm old}). 
\ee

Let us put $\hat{\hat\coneH}=\overline{\coneH\setminus \partial_{100\beta^2}\coneH}$, and define 
\[
\hat{\hat\cone}_{\rm old}=\hat{\hat\coneH}.\{\exp(v)y_{\rm old}: v\in F_{\rm old}\}.
\]
Then, we have  
\be\label{eq: measure of hat hat cone n}
\mu_{\cone_{\rm old}}\Bigl(\cone_{\rm old}\setminus (\cone_{{\rm old},r_n}\cap \hat{\hat\cone}_{\rm old})\Bigr)\ll \beta+e^{-\kappa^2t/64}.
\ee

Let $F_{\zeta,r_n}=\Bigl\{w\in F_{\zeta}: \umt^H_\ell.\exp(w)y_\zeta\cap a_\ell u_{r_n}\Bigl(\cone_{{\rm old},r_n}\cap \hat{\hat\cone}_{\rm old}\Bigr)=\emptyset\Bigr\}$. If $\#F_{\zeta, r_n}\leq 10^{-6}\cdot(\#F_\zeta)$, replace $\cone_{\zeta}$ with 
\[
\coneH.\{\exp(w)y_\zeta: w\in F_\zeta\setminus F_{\zeta, r_n}\}
\]
otherwise, discard the set $\cone_{\zeta}$ entirely. As in how \eqref{eq: measure of hat hat cone 1} was used, the inequality \eqref{eq: measure of hat hat cone n} assures that such replacements causes no damage later.

Let $\mathcal Z'_{\hat\Xi, r_n}\subset \mathcal Z''_{\hat\Xi, r_n}$ be the set of indices which survive the above process. Abusing the notation, for every $\zeta\in\mathcal Z'_{\hat\Xi, r_n}$, we denote $F_\zeta\setminus F_{\zeta, r_n}$ by $F_\zeta$ and denote 
$\coneH.\{\exp(w)y_\zeta: w\in F_\zeta\setminus F_{\zeta, r_n}\}$
by $\cone_\zeta$. 

Thus, we obtain a collection 
$\Bigl\{(\cone_{\zeta},\mu_{\cone_{\zeta}}): \zeta\in \mathcal Z'_{\hat\Xi,r_n}\Bigr\}$
satisfying the following: If $\zeta\in \mathcal Z_{\hat\Xi,r_n}'$ and $w\in F_{\zeta}$, then   
\[
\umt^H_\ell.\exp(w)y_\zeta\cap a_\ell u_{r_n}\Bigl(\cone_{{\rm old},r_n}\cap \hat{\hat\cone}_{\rm old}\Bigr)\neq\emptyset;
\]
moreover, the following analogue of~\eqref{eq: num F i varsigma n} holds 
\be\label{eq: control over card of fiber' n}
0.5\beta^9\cdot(\#F_{\rm old})\leq \#F_{\zeta}\leq 2\beta^8\cdot(\#F_{\rm old}).
\ee

With this notation, define 
\be\label{eq: def Bn i varsigma}
\mathsf B_{n+1}^{\zeta_0}=\left\{(\zeta_0, r_0,\ldots, \zeta_n, r_n, \zeta):\begin{array}{c} \hat\Xi=(\zeta_0, r_0,\ldots, \zeta_n)\in\mathsf A_{n}^{\zeta_0},\\ 
r_n\in \mathcal L_{\cone_{\hat\Xi}}, \zeta\in \mathcal Z_{\hat\Xi, r_n}'\end{array}\right\}.
\ee

For every $\Xi=(\zeta_0, \ldots, \zeta_n, r_n, \zeta)\in\mathsf B_{n+1}^{\zeta_0}$, put 
\[
\cone_\Xi=\coneH.\{\exp(w)y_\Xi: w\in F_\Xi\},
\]
where $y_\Xi=y_\zeta$ and $F_\Xi=F_\zeta$.

\medskip

\begin{lemma}\label{lem: step n of ind initial leaves}
Let $\Xi=(\zeta_0, \ldots, \zeta_n, r_n, \zeta)\in\mathsf B_{n+1}^{\zeta_0}$, and write 
\[
\text{$\hat\Xi=(\zeta_0, \ldots, \zeta_n)$, $\;F=F_\Xi$, $\;y=y_\Xi$, and $\;\cone=\cone_\Xi$}.
\]
Let $w_0\in F_{\hat\Xi}$ be so that 
\[
\noI_{\cone_{\hat\Xi},\mfsc}(e,\exp(w_0)y_{\hat\Xi})=\sup_{w'}\noI_{\cone_{\hat\Xi},\mfsc}(e,\exp(w')y_{\hat\Xi}).
\]
Then one of the following properties holds: 
\begin{enumerate}[label={(A-\arabic*)}]
\item\label{(A-1)}  If $\mfbd_{\hat\Xi,n}\geq \nuni^{\vare t/2}\noI_{\cone_{\hat\Xi},\mfsc}(e,\exp(w_0)y_{\hat\Xi})$, then 
\be\label{eq: moderate density n'}
\mfm_{\cone,\scmf,\trct_{n+1}}(e,z)\leq \nuni^{-0.6\ell}\mfbd_{\hat\Xi,n}+10\noI_{\cone,\scmf}(e,z)\quad\text{for all $z\in\cone$},
\ee
where $\trct_{n+1}=1+(n+1)L\kappa^{-L}e^{\kappa Dt}$, see~\eqref{eq: def trct n+1}. 
\medskip
\item\label{(A-2)} If $\mfbd_{\hat\Xi,n}<\nuni^{\vare t/2}\noI_{\cone_{\hat\Xi},\mfsc}(e,\exp(w_0)y_{\hat\Xi})$, 
then both of the following hold
\begin{enumerate}
\item Let $z=h\exp(w)y_{\hat\Xi}\in\cone_{\hat\Xi}$ where $h\in\overline{\coneH\setminus\partial_{10\mfsc}\coneH}$, then 
\be\label{eq: high density n'}
\mfm_{\cone_{\hat\Xi},\scmf,\trct_n}(e,z)\leq \nuni^{\vare t/2}\noI_{\cone_{\hat\Xi},\mfsc}(e,\exp(w_0)y_{\hat\Xi})\leq\ref{C: M and dimension}\nuni^{\vare t/2}\noI_{\cone_{\hat\Xi},\mfsc}(e,z),
\ee
(indeed the first inequality above holds for {\em every} $z\in\cone_{\hat\Xi}$).
\item For all $z\in\cone$, we have 
\be\label{eq: high density 2 n'}
\mfm_{\cone,\scmf,\trct_{n+1}}(e,z)\leq \nuni^{-0.6\ell}\Bigl(\nuni^{{\vare t}/{2}}\!\cdot\noI_{\cone_{\hat\Xi},\mfsc}(e,\exp(w_0)y_{\hat\Xi})\Bigr)\!+10\noI_{\cone,\scmf}(e,z).
\ee
\end{enumerate}
\end{enumerate} 
\end{lemma}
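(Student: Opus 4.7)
The strategy is to apply Lemma~\ref{lem: dimension improve 2} with $\cone_{\rm old}=\cone_{\hat\Xi}$ and with the new cone $\cone=\cone_\Xi$. Since the passage from step~$n$ to step~$n+1$ was set up precisely to mimic the passage from $\cone_{\zeta_0}$ to $\cone_\Xi$ handled in Lemma~\ref{lem: step one of ind initial leaves}, the proof will be essentially identical in structure: verify the hypotheses, invoke Lemma~\ref{lem: dimension improve 2}, and read off the conclusions.

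First, I would check that the hypotheses of Lemma~\ref{lem: dimension improve 2} are met. The crucial point is that because $\hat\Xi\in\mathsf A_n^{\zeta_0}$ (rather than merely in $\mathsf B_n^{\zeta_0}$), the set $F_{\hat\Xi}$ has already undergone the regular-tree decomposition of Lemma~\ref{lem: regular tree decomposition} in the previous regularization step, so~\eqref{eq: regular tree'} holds with the parameter $\mathsf M$ fixed in~\eqref{eq: condition on M 1}. This regularity at the finest admissible scale yields~\eqref{eq: mfsc regular recall in dim impr} with some $c\gg 1$ depending only on $\mathsf M$, in particular $c\geq e^{-\kappa^2t/4}$. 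The cardinality estimate $\#F_{\hat\Xi}\geq e^{9t/10}$ follows from~\eqref{eq: ind hyp on num F hat Xi} together with the bounds $n\leq d_1$ and $\kappa(D+1)\leq 10^{-6}\vare$, so that the factor $2^{-n}\beta^{11(n+1)}$ is much larger than $e^{-t/10}$. The Margulis-function bound~\eqref{eq: mfm init estimate'} is precisely the inductive hypothesis~\eqref{eq: Xi' initial cond} together with~\eqref{eq: egbd m < eDt}. Finally one checks~\eqref{eq: kappa is small} and~\eqref{eq: compare t and kappa}: since $\mfbd_{\hat\Xi,n}\leq e^{Dt}$ and $\kappa=10^{-6}d_1^{-1}$, we have $\mfbd_{\hat\Xi,n}^\kappa\leq e^{\kappa Dt}\leq e^{\ell/100}$, and $L_1=L\kappa^{-L}\leq e^{\ell/100}$ for $t$ large enough. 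Meanwhile $\trct_n=1+nL\kappa^{-L}e^{\kappa Dt}\leq e^{\vare t/100}$ once $n\leq d_1$.

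Second, I would check the geometric input to Lemma~\ref{lem: dimension improve 2}, namely that the offspring $(\cone_\Xi,\mu_{\cone_\Xi})$ satisfies the hypothesis of Lemma~\ref{lem: dimension improves for new cone}. By construction of $\mathsf B_{n+1}^{\zeta_0}$ in~\eqref{eq: def Bn i varsigma}, we have $r_n\in\mathcal L_{\cone_{\hat\Xi}}\subset\gdh_{\mu_{\cone_{\hat\Xi}}}$ and $\zeta\in\mathcal Z_{\hat\Xi,r_n}'$, so that $(\cone_\Xi,\mu_{\cone_\Xi})$ is an offspring of $a_\ell u_{r_n}\mu_{\cone_{\hat\Xi}}$; moreover the pruning $F_\zeta\setminus F_{\zeta,r_n}$ carried out in \S\ref{sec: A zeta0 n} guarantees that for every $w\in F_\Xi$,
\[
\umt^H_\ell.\exp(w)y_\Xi\cap a_\ell u_{r_n}\bigl(\cone_{{\rm old},r_n}\cap\hat{\hat\cone}_{\rm old}\bigr)\neq\emptyset,
\]
which is exactly~\eqref{eq: leaves meet cone-r}. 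Hence Lemma~\ref{lem: dimension improve 2} is applicable with $\mfbd=\mfbd_{\hat\Xi,n}$ and $\trct=\trct_n$; its new truncation parameter is $\trct_n+L_1\mfbd_{\hat\Xi,n}^\kappa\leq\trct_n+L\kappa^{-L}e^{\kappa Dt}=\trct_{n+1}$, so any bound on $\mfm_{\cone_\Xi,\scmf,\trct_n+L_1\mfbd_{\hat\Xi,n}^\kappa}$ upgrades to the desired bound on $\mfm_{\cone_\Xi,\scmf,\trct_{n+1}}$.

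Third, I would translate the dichotomy of Lemma~\ref{lem: dimension improve 2} into~\ref{(A-1)}--\ref{(A-2)}. Case~(1) of that lemma gives exactly~\eqref{eq: moderate density n'}. Case~(2) gives~\eqref{eq: high density 2 n'}; the first inequality of~\eqref{eq: high density n'} is then just the combination of the inductive bound $\mfm_{\cone_{\hat\Xi},\scmf,\trct_n}(e,z)\leq \mfbd_{\hat\Xi,n}$ with the defining hypothesis $\mfbd_{\hat\Xi,n}<\nuni^{\vare t/2}\noI_{\cone_{\hat\Xi},\mfsc}(e,\exp(w_0)y_{\hat\Xi})$ of case~\ref{(A-2)}, while the second inequality of~\eqref{eq: high density n'} is Lemma~\ref{lem: regular sets initial dim} applied to the regularized $F_{\hat\Xi}$. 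There is no genuine obstacle here; the only thing one has to be careful about is that the numerical parameters introduced at step~$n+1$ (in particular $\trct_{n+1}$ and the condition $\mfbd_{\hat\Xi,n}^\kappa\leq e^{\ell/100}$) remain within the range where Lemma~\ref{lem: dimension improve 2} applies. Both are ensured by the choice $\kappa=10^{-6}d_1^{-1}$ and the uniform a~priori bound $\mfbd_{\hat\Xi,n}\leq e^{Dt}$, and this bookkeeping is the only mildly delicate part of the argument.
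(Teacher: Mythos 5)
Your proposal is correct and follows essentially the same route as the paper: both verify the a priori bound $\mfbd_{\hat\Xi,n}\leq e^{Dt}$ (hence $\mfbd_{\hat\Xi,n}^{\kappa}\leq e^{\kappa Dt}\leq e^{\ell/100}$ and $\trct_n+L\kappa^{-L}\mfbd_{\hat\Xi,n}^{\kappa}\leq\trct_{n+1}$), note that the pruning in \S\ref{sec: A zeta0 n} guarantees~\eqref{eq: leaves meet cone-r}, and then read off \ref{(A-1)}--\ref{(A-2)} from Lemma~\ref{lem: dimension improve 2} applied with $\cone_{\hat\Xi}$, $\cone$, and $\trct=\trct_n$. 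Your additional checks (regularity of $F_{\hat\Xi}$ from the preceding regularization step, the cardinality bound from~\eqref{eq: ind hyp on num F hat Xi}, and $\trct_n\leq e^{\vare t/100}$) are exactly the hypotheses the paper leaves implicit in invoking that lemma, so there is nothing missing.
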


\begin{proof}
Recall that $\egbd_{\hat\Xi,n}\leq e^{Dt}$, see~\eqref{eq: egbd m < eDt}; we have $e^{\kappa Dt}\leq e^{\ell t/100}$. Moreover, note that for every $w\in F_{\hat\Xi}$, we have 
\[
\umt^H_\ell.\exp(w)y\cap a_\ell u_{r_n}\Bigl(\cone_{{\rm old},r_n}\cap \hat{\hat\cone}_{\rm old}\Bigr)\neq\emptyset.
\]
Moreover, using $\egbd_{\hat\Xi,n}\leq e^{Dt}$ again, we have 
\[
\trct_n+L\kappa^{-L}\egbd_{\hat\Xi,n}^{\kappa}\leq \trct_{n+1}.
\]
The claims in the lemma thus follow from Lemma~\ref{lem: dimension improve 2} applied with $\cone_{\hat\Xi}$, $\cone$ and $\trct=\trct_n$.
\end{proof}

Let $\Xi=(\zeta_0,\ldots, \zeta_n, r_n,\zeta)\in\mathsf B_{n+1}^{\zeta_0}$ and put $\hat\Xi=(\zeta_0,\ldots, \zeta_n)$. We define $\mfbd_{\Xi, n+1}$ as follows: If case~\ref{(A-1)} holds and 
\[
\nuni^{-0.6\ell}\mfbd_{\hat\Xi,n}\geq 10\sup_{z\in \cone_\Xi}\noI_{\cone_\Xi,\scmf}(e,z),
\] 
then we put 
\be\label{eq: define mfbd n+1 1}
\mfbd_{\Xi,n+1}=e^{-\ell/2} \mfbd_{\hat\Xi,n}.
\ee 
If case~\ref{(A-1)} holds and $\nuni^{-0.6\ell}\mfbd_{\hat\Xi,n}< 10\sup_{z\in \cone_\Xi}\noI_{\cone_\Xi,\scmf}(e,z)$,
then we put
\be\label{eq: define mfbd n+1 1 1}
\mfbd_{\Xi, n+1}=20\sup_{z\in \cone_\Xi}\noI_{\cone_\Xi,\scmf}(e,z).
\ee

If case~\ref{(A-2)} holds and 
\[
\nuni^{-0.6\ell}\Bigl(\nuni^{{\vare t}/{2}}\!\cdot\noI_{\cone_{\hat\Xi},\mfsc}(e,\exp(w_0)y_{\hat\Xi})\Bigr)\geq 10\sup_{z\in \cone_\Xi}\noI_{\cone_\Xi,\scmf}(e,z),
\] 
we put 
\be\label{eq: define mfbd n+1 2}
\mfbd_{\Xi,n+1}=\nuni^{-\ell/2}\Bigl(\nuni^{{\vare t}/{2}}\!\cdot\noI_{\cone_{\hat\Xi},\mfsc}(e,\exp(w_0)y_{\hat\Xi})\Bigr).
\ee 
If case~\ref{(A-2)} holds and
\[
\nuni^{-0.6\ell}\Bigl(\nuni^{{\vare t}/{2}}\!\cdot\noI_{\cone_{\hat\Xi},\mfsc}(e,\exp(w_0)y_{\hat\Xi})\Bigr)< 10\sup_{z\in \cone_\Xi}\noI_{\cone_\Xi,\scmf}(e,z),
\] 
then we put
\be\label{eq: define mfbd n+1 2 1}
\mfbd_{\Xi,n+1}=20\sup_{z\in \cone_\Xi}\noI_{\cone_\Xi,\scmf}(e,z).
\ee

\begin{lemma}\label{lem: bd mfm cone zeta and conv comp n} 
The following three statements hold: 
\begin{enumerate}
\item For every $\Xi=(\zeta_0, \ldots,\zeta_n, r_n, \zeta)\in\mathsf B_{n+1}^{\zeta_0}$, we have $\mfbd_{\Xi,n+1}\leq e^{Dt}$.
\item Let $\Xi=(\zeta_0, \ldots,\zeta_n, r_n, \zeta)\in\mathsf B_{n+1}^{\zeta_0}$, then 
\be\label{eq: mfbd n+1}
\mfm_{\cone_\Xi,\scmf,\trct_{n+1}}(e,z)\leq \mfbd_{\Xi,n+1},
\ee
where $\trct_{n+1}=1+(n+1)L\kappa^{-L}e^{\kappa Dt}$.
\item Let $\hat\Xi\in\mathsf A_n^{\zeta_0}$ and let 
$r_{n}\in \mathcal L_{\cone_{\hat\Xi}}$. Then for every $\varphi\in C_c^\infty(X)$, every $0<\tau\leq 2d_1\ell$, and all $|s|\leq 2$, we have 
\be\label{eq: convex comb 2 use step n+1 1}
\begin{aligned}
\biggl|\int\varphi(a_\tau u_s.z)\diff(a_\ell u_{r_n}\mu_{\cone_{\hat\Xi}})(z)-&\sum c_{\Xi}\int\varphi(a_\tau u_sz)\diff\!\mu_{\cone_{\Xi}}(z)\biggr|\\
&\ll\max\Bigl\{\eta^{1/2}, e^{-\kappa^2t/64}\Bigr\}\Lip(\varphi),
\end{aligned}
\ee
where the sum is over $\mathcal Z_{\hat\Xi, r_n}'$, and for every $\zeta\in\mathcal Z_{\hat\Xi, r_n}'$, we let 
\[
\Xi=(\zeta_0, r_0,\ldots, \zeta_n, r_n,\zeta).
\]
\end{enumerate}
\end{lemma}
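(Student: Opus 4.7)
The proof will generalize the base-case arguments of Lemma \ref{lem: bd mfm cone zeta and conv comp} and Lemma \ref{lem: bd mfm cone zeta and conv comp A1}, replacing the role of $e^{Dt}$ by $\mfbd_{\hat\Xi,n}$ and the role of Lemma \ref{lem: step one of ind initial leaves} by Lemma \ref{lem: step n of ind initial leaves}. Throughout I will use the inductive hypothesis $\mfbd_{\hat\Xi,n} \leq e^{Dt}$, which is part~(1) at the previous level, together with the admissibility of $\mu_{\cone_{\hat\Xi}}$ propagated via Lemma \ref{lem: mu cone is admissible 2}. For part~(1) I will dispatch the four cases appearing in \eqref{eq: define mfbd n+1 1}--\eqref{eq: define mfbd n+1 2 1}. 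In \eqref{eq: define mfbd n+1 1} and \eqref{eq: define mfbd n+1 2} the value of $\mfbd_{\Xi,n+1}$ is $\nuni^{-\ell/2}$ times either $\mfbd_{\hat\Xi,n}$ or $\nuni^{\vare t/2}\noI_{\cone_{\hat\Xi},\mfsc}(e,\exp(w_0)y_{\hat\Xi})$; the first is $\leq e^{Dt}$ by induction, and for the second the trivial estimate $\noI \ll \mfsc^{-\alpha}\eta^{-\alpha}(\#F_{\hat\Xi}) \leq e^{1.1 t}$ (using \eqref{eq: ind hyp on num F hat Xi}, $\mfsc=e^{-\sqrt\vare t}$, $\eta \geq e^{-0.01\vare t}$) together with $D \geq 10$ delivers the bound. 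In the remaining cases $\mfbd_{\Xi,n+1} = 20 \sup_z \noI_{\cone_\Xi,\mfsc}(e,z)$, and the same trivial estimate applied to $\cone_\Xi$ via \eqref{eq: control over card of fiber' n} gives $\mfbd_{\Xi,n+1} \leq e^{2t} \leq e^{Dt}$.

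For part~(2) I will match each branch of the definition of $\mfbd_{\Xi,n+1}$ with the corresponding output of Lemma \ref{lem: step n of ind initial leaves}. In case \ref{(A-1)} the estimate \eqref{eq: moderate density n'} reads
\[
\mfm_{\cone_\Xi,\mfsc,\trct_{n+1}}(e,z) \leq \nuni^{-0.6\ell}\mfbd_{\hat\Xi,n} + 10\,\noI_{\cone_\Xi,\mfsc}(e,z).
\]
If $\nuni^{-0.6\ell}\mfbd_{\hat\Xi,n} \geq 10 \sup \noI$, the right side is $\leq 2\nuni^{-0.6\ell}\mfbd_{\hat\Xi,n} \leq \nuni^{-\ell/2}\mfbd_{\hat\Xi,n} = \mfbd_{\Xi,n+1}$, matching \eqref{eq: define mfbd n+1 1}; otherwise both summands are $\leq 10 \sup \noI$, totalling $\leq 20 \sup \noI = \mfbd_{\Xi,n+1}$ per \eqref{eq: define mfbd n+1 1 1}. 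Case \ref{(A-2)} is handled identically, using \eqref{eq: high density 2 n'} against \eqref{eq: define mfbd n+1 2} and \eqref{eq: define mfbd n+1 2 1}.

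For part~(3) I will apply Lemma \ref{lem: convex comb 2} to the admissible measure $\mu_{\cone_{\hat\Xi}}$ with $\mathsf d_0 = 3 d_1 \ell$, ensuring $\tau+\ell \leq \mathsf d_0$ in the allowed range of $\tau$; this yields the desired decomposition with sum indexed over the full offspring set $\mathcal Z''_{\hat\Xi,r_n}$. The passage from $\mathcal Z''_{\hat\Xi,r_n}$ to the trimmed index set $\mathcal Z'_{\hat\Xi,r_n}$, and hence to $\mathsf B_{n+1}^{\zeta_0}$, is justified by \eqref{eq: measure of hat hat cone n}: the leaves removed or cropped during the trimming procedure in \S\ref{sec: A zeta0 n} correspond to mass $\ll \beta + e^{-\kappa^2 t/64}$ coming from $a_\ell u_{r_n}(\cone_{\rm old}\setminus(\cone_{{\rm old},r_n}\cap \hat{\hat\cone}_{\rm old}))$, so this truncation contributes only an error already absorbed into the stated bound. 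The principal obstacle is not conceptual but combinatorial, namely pairing the four branches of the definition of $\mfbd_{\Xi,n+1}$ with the two cases of Lemma \ref{lem: step n of ind initial leaves} and checking that the truncation in the offspring set preserves the convex-combination identity; once the pairings are laid out in the base case, the induction step reduces to applying the same one-line comparisons uniformly in $n$.
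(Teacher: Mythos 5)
Your proposal is correct and follows essentially the same route as the paper: part (1) by the inductive bound $\mfbd_{\hat\Xi,n}\le e^{Dt}$ together with the trivial estimate $\noI\ll\mfsc^{-\alpha}\eta^{-\alpha}(\#F)$ and $D\ge 10$, part (2) by matching the branches of the definition of $\mfbd_{\Xi,n+1}$ with the two cases of Lemma~\ref{lem: step n of ind initial leaves}, and part (3) by Lemma~\ref{lem: convex comb 2} with $\mathsf d_0=3d_1\ell$ combined with \eqref{eq: measure of hat hat cone n} to pass from $\mathcal Z''_{\hat\Xi,r_n}$ to $\mathcal Z'_{\hat\Xi,r_n}$. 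Your write-up merely spells out the case comparisons that the paper leaves implicit.
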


\begin{proof}
Let $\Xi=(\zeta_0, \ldots,\zeta_n, r_n, \zeta)$ and put $\hat\Xi=(\zeta_0, \ldots,\zeta_n)$. The claim in part~(1) follows from~\eqref{eq: egbd m < eDt} if  $\mfbd_{\Xi,n+1}=e^{-\ell/2}\egbd_{\hat\Xi,n}$. 

We now consider the other two possibilities. First suppose that  
\[
\mfbd_{\Xi,n+1}=20\sup_{z\in \cone_\Xi}\noI_{\cone_\Xi,\scmf}(e,z).
\]
Then by the definition of $\noI$,~\eqref{eq: ind hyp on num F hat Xi} and~\eqref{eq: control over card of fiber' n}, we have 
\[
\mfbd_{\Xi,n+1}\ll \mfsc^{-\alpha}\eta^{-\alpha}\cdot (\#F_\Xi)\leq e^{2t},
\]
where we also used $\mfsc=e^{-\sqrt\vare t}$ and $\eta\geq e^{0.01\vare t}$. The claim in this case also follows as $D\geq 10$. 

Finally, let us assume
\[
\mfbd_{\Xi,n+1}=\nuni^{-\ell/2}\Bigl(\nuni^{{\vare t}/{2}}\!\cdot\sup_{w'}\noI_{\cone_{\hat\Xi},\mfsc}(e,\exp(w')y_{\hat\Xi})\Bigr).
\]
Then again using the definition of $\noI$, and~\eqref{eq: ind hyp on num F hat Xi}, we have 
\[
\mfbd_{\Xi,n+1}\ll e^{\vare t/2}\mfsc^{-\alpha}\eta^{-\alpha}\cdot (\#F_{\hat\Xi})\leq e^{2t},
\]
which completes the proof of part~(1).

Part~(2) follows from the definition of $\mfbd_{\Xi,n+1}$ and Lemma~\ref{lem: step n of ind initial leaves}.

To see part~(3), apply Lemma~\ref{lem: convex comb 2}, with $\mathsf d_0=3d_1\ell$ (note that $\tau+\ell\leq \mathsf d_0$) 
and $r_n$. 
The claim then follows from Lemma~\ref{lem: convex comb 2} and~\eqref{eq: measure of hat hat cone n}. 
\end{proof}

\subsection{Regularizing $F_\Xi$}\label{sec: regular F zeta step n}
Similar to what was done in \S\ref{sec: regular F zeta step 1}, we will define the set $\mathsf A_{n+1}^{\zeta_0}$ 
by decomposing $F_\Xi$ (for $\Xi\in\mathsf B_{n+1}^{\zeta_0}$) into sets satisfying estimates similar to those in~\eqref{eq: regular tree}.

To that end, let $\Xi=(\zeta_0, \ldots,\zeta_n,r_n,\zeta_{n+1})\in\mathsf B_{n+1}^{\zeta_0}$, and let 
$F=F_\Xi$, $y=y_\Xi$, $\cone=\cone_\Xi$.
In view of Lemma~\ref{lem: bd mfm cone zeta and conv comp n}(2) and Lemma~\ref{lem: F energy est assuming mfm energy estimate}, 
\[
\eng_{F_w, \trct_1}(w')\leq 10^6\mfbd_{\Xi,n+1}\qquad\text{for every $w'\in F_w$},
\]  
where $F_w=F\cap B_\rfrak(w,4\mfsc\inj(y))$.  

Let $k_1> k_{0}$ be positive integers defined as follows: 
\be\label{eq: def k0 and k1 step n}
2^{k_{0}}< (\mfsc\,\inj(y))^{-1}\leq 2^{k_{0}+1}\quad\text{and}\quad 2^{k_1}< 10^6\mfbd_{\Xi,n+1}\leq 2^{k_1+1}
\ee
Let $\mathsf M$ be as above, see~\eqref{eq: condition on M 1}.
Applying Lemma~\ref{lem: regular tree decomposition}, we can write 
\be\label{eq: Fi as union ini dim n}
F=F'\bigcup \Bigl(\textstyle \bigcup_l F_l\Bigr)
\ee
where $\#F'\leq \beta^{1/4}\cdot (\#F)$ and $\#F_l\geq \beta^2\cdot(\#F)$.
In view of~\eqref{eq: control over card of fiber' n}, we have  
\be\label{eq: control over card of fiber' ??}
\begin{aligned}
0.5\beta^{11}\cdot(\#F_{\hat\Xi})\leq\beta^2\cdot(\#F)&\leq\#F_l\\
&\leq \#F \leq 2\beta^8\cdot(\#F_{\hat\Xi}), 
\end{aligned}
\ee
and for every $k_0-10\leq k\leq k_1$, 
there exists some $\tau_k=\tau_{k}^l$ so that 
\be\label{eq: regular tree'''}
\text{either}\quad2^{\mathsf M(\tau_{k}-2)}\leq \#F_l\cap Q\leq 2^{\mathsf M\tau_{k}}\quad\text{or}\quad F_l\cap Q=\emptyset,
\ee
for all $Q\in\mathcal Q_{\mathsf Mk}$.

Let us also note that combining~\eqref{eq: control over card of fiber' ??} and~\eqref{eq: ind hyp on num F hat Xi}, we conclude
\be\label{eq: num F ir jm varsigma n}
\frac{1}{2^{n+1}}\beta^{11(n+2)}e^t\leq \#F_l\leq 2^{n+1}\beta^{8(n+1)-3}e^t.
\ee 

Let $\hat\Xi=(\zeta_0,\ldots,\zeta_n)\in\mathsf A_n^{\zeta_0}$, and let $r_n\in\mathcal L_{\cone_{\hat\Xi}}$. We let $\mathcal Z_{\hat\Xi,r_n}$ denote an enumeration of 
\[
\{(\zeta',l):\zeta'\in \mathcal Z_{\hat\Xi, r_n}', l\in \mathcal K_{\Xi}\}
\]
where for $\Xi=(\zeta_0,\ldots,\zeta_n,r_n,\zeta')\in\mathsf B_{n+1}^{\zeta_0}$, we let $\mathcal K_\Xi=\{l: F_l$ as in $\eqref{eq: Fi as union ini dim n}\}$. 
If $\zeta\in \mathcal Z_{\hat\Xi,r_n}$ corresponds to $(\zeta',l)$, then we put $y_\zeta=y_{\zeta'}$ 
and $F_\zeta=(F_{\zeta'})_l$, see~\eqref{eq: Fi as union ini dim n} and the discussion leading to Lemma~\ref{lem: step n of ind initial leaves}. 

Define 
\be\label{eq: define A n+1 zeta}
\mathsf A_{n+1}^{\zeta_0}=\left\{(\zeta_0, r_0,\ldots, \zeta_n, r_n, \zeta_{n+1}):\begin{array}{c} \hat\Xi=(\zeta_0,r_0,\ldots, \zeta_n)\in\mathsf A_{n}^{\zeta_0},\\ 
r_n\in \mathcal L_{\cone_{\hat\Xi}}, \zeta_{n+1}\in \mathcal Z_{\hat\Xi, r_n}\end{array}\right\},
\ee
and for every $\Xi=(\zeta_0, r_0, \ldots, \zeta_n, r_n,\zeta_{n+1})\in\mathsf A_{n+1}^{\zeta_0}$, put 
\[
\cone_\Xi=\coneH.\{\exp(w)y_\Xi: w\in F_{\Xi}\},
\]
where $y_\Xi=y_{\zeta_{n+1}}$ and $F_\Xi=F_{\zeta_{n+1}}$.

\begin{lemma}\label{lem: bd mfm cone zeta and conv comp An} 
Let $\Xi=(\zeta_0, r_0, ,\ldots, \zeta_n, r_n,\zeta_{n+1})\in\mathsf A_{n+1}^{\zeta_0}$. Suppose $\zeta_{n+1}$ corresponds to $(\zeta',l)$ as above, i.e., $\Xi'=(\zeta_0,r_0,\ldots,\zeta_n, r_n,\zeta')\in\mathsf B_{n+1}^{\zeta_0}$ and $l\in\mathcal K_{\Xi'}$. Put $\mfbd_{\Xi,n+1}=\mfbd_{\Xi',n+1}$.
Both of the following hold: 
\begin{enumerate}
\item Let  $\trct_{n+1}=1+(n+1)L\kappa^{-L}e^{\kappa Dt}$. Then
\be\label{eq: mfbd n 2}
\mfm_{\cone_\Xi,\scmf,\trct_{n+1}}(e,z)\leq \mfbd_{\Xi,n+1}.
\ee
\item Let $\hat\Xi=(\zeta_0, r_0,\ldots, \zeta_n)\in\mathsf A_n^{\zeta_0}$ and let 
$r_{n}\in \mathcal L_{\cone_{\hat\Xi}}$. Then for every $\varphi\in C_c^\infty(X)$, every $0<\tau\leq 2d_1\ell$, and all $|s|\leq 2$, we have
\be\label{eq: convex comb 2 use step n 2}
\begin{aligned}
\biggl|\int\varphi(a_\tau u_s.z)\diff(a_\ell u_{r_0}\mu_{\cone_{\hat\Xi}})(z)-&\sum c_{\Xi}\int\varphi(a_\tau u_sz)\diff\!\mu_{\cone_\Xi}(z)\biggr|\\
&\ll\max\Bigl\{\eta^{1/2}, e^{-\kappa^2t/64}\Bigr\}\Lip(\varphi),
\end{aligned}
\ee
where the sum is over $\mathcal Z_{\hat\Xi, r_n}$, and for every $\zeta\in\mathcal Z_{\hat\Xi, r_n}$, we let 
\[
\Xi=(\zeta_0, r_0,\ldots, \zeta_n, r_n,\zeta).
\]
\end{enumerate}
\end{lemma}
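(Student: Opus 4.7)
The plan is to mirror exactly the short proof of Lemma~\ref{lem: bd mfm cone zeta and conv comp A1}, using as inputs Lemma~\ref{lem: bd mfm cone zeta and conv comp n} (the analogous statement at the ``pre-regularization'' level $\mathsf B_{n+1}^{\zeta_0}$) and the decomposition~\eqref{eq: Fi as union ini dim n} of each $F_{\Xi'}$ supplied by Lemma~\ref{lem: regular tree decomposition}. No new probabilistic or geometric input is needed; the step from $\mathsf B_{n+1}^{\zeta_0}$ to $\mathsf A_{n+1}^{\zeta_0}$ is purely a refinement that preserves both the Margulis function bound and the convex combination identity up to acceptable error.

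For part~(1), I would observe that the construction defines $F_\Xi = (F_{\zeta'})_l \subset F_{\zeta'} = F_{\Xi'}$, hence $\cone_\Xi \subset \cone_{\Xi'}$. This inclusion is monotone in a way that is helpful for Margulis functions: for every $z \in \cone_\Xi$ one has $\margI_{\cone_\Xi,\mfsc}(e,z) \subset \margI_{\cone_{\Xi'},\mfsc}(e,z)$ (a point $\exp(w)z$ that lies in the smaller union of local $H$-orbits also lies in the larger one), and therefore
\[
\mfm_{\cone_\Xi,\mfsc,\trct_{n+1}}(e,z) \leq \mfm_{\cone_{\Xi'},\mfsc,\trct_{n+1}}(e,z) \leq \mfbd_{\Xi',n+1} = \mfbd_{\Xi,n+1},
\]
where the middle inequality is Lemma~\ref{lem: bd mfm cone zeta and conv comp n}(2) and the last equality is the definition of $\mfbd_{\Xi,n+1}$. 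This gives~\eqref{eq: mfbd n 2}.

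For part~(2), I would start from the estimate~\eqref{eq: convex comb 2 use step n+1 1} of Lemma~\ref{lem: bd mfm cone zeta and conv comp n}(3), applied with $\hat\Xi$ and $r_n$, which expresses $\int \varphi(a_\tau u_s z)\, d(a_\ell u_{r_n}\mu_{\cone_{\hat\Xi}})(z)$ as a sum over $\zeta' \in \mathcal Z'_{\hat\Xi,r_n}$ of weighted integrals against $\mu_{\cone_{\Xi'}}$, up to the stated error. For each such $\Xi' = (\zeta_0,r_0,\dots,\zeta_n,r_n,\zeta')$, I would then use~\eqref{eq: Fi as union ini dim n} to split $F_{\Xi'} = F'_{\Xi'} \cup \bigcup_l (F_{\Xi'})_l$ with $\#F'_{\Xi'} \leq \beta^{1/4}\cdot(\#F_{\Xi'})$. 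Since $\mu_{\cone_{\Xi'}}$ is admissible by Lemma~\ref{lem: mu cone is admissible 2}, the cone $\cone_{\Xi'}$ decomposes compatibly into $\cone_\Xi$'s indexed by $\mathcal Z_{\hat\Xi,r_n}$, with
\[
\mu_{\cone_{\Xi'}}\!\Bigl(\cone_{\Xi'} \setminus \bigcup_{l} \cone_\Xi\Bigr) \ll \beta^{1/4}.
\]
Defining $c_\Xi := c_{\Xi'}\,\mu_{\cone_{\Xi'}}(\cone_\Xi)$ and $\mu_{\cone_\Xi}$ as the normalized restriction, one obtains~\eqref{eq: convex comb 2 use step n 2} with an additional error of order $\beta^{1/4}\Lip(\varphi)$. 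Since $\beta = \eta^2$, this extra error is absorbed into the existing $\max\{\eta^{1/2}, e^{-\kappa^2 t/64}\}\Lip(\varphi)$ bound.

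The main obstacle, such as it is, is purely notational: one must track that the inductive bookkeeping from \S\ref{sec: A zeta0 n} indeed delivers both the set-theoretic inclusion $\cone_\Xi \subset \cone_{\Xi'}$ (so part~(1) is a trivial monotonicity statement) and the admissibility of $\mu_{\cone_{\Xi'}}$ (so part~(2) is a trivial restriction statement). Given the way $\mathsf A_{n+1}^{\zeta_0}$ was defined in~\eqref{eq: define A n+1 zeta} via the regular tree decomposition applied to each $F_{\Xi'}$, and given that Lemma~\ref{lem: mu cone is admissible 2} guarantees admissibility of each offspring measure, both of these structural facts hold, so no genuine new work beyond citing the inductive hypothesis and the base-case argument of Lemma~\ref{lem: bd mfm cone zeta and conv comp A1} is required.
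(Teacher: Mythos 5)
Your proposal is correct and follows essentially the same route as the paper: part (1) by the inclusion $\cone_\Xi\subset\cone_{\Xi'}$ (hence monotonicity of $\mfm$ at the same truncation $\trct_{n+1}$) combined with Lemma~\ref{lem: bd mfm cone zeta and conv comp n}(2), and part (2) by Lemma~\ref{lem: bd mfm cone zeta and conv comp n}(3) together with the decomposition~\eqref{eq: Fi as union ini dim n}, admissibility of $\mu_{\cone_{\Xi'}}$ from Lemma~\ref{lem: mu cone is admissible 2}, and the choice $c_\Xi=c_{\Xi'}\mu_{\cone_{\Xi'}}(\cone_\Xi)$, with the discarded $F'$-leaves contributing only an $O(\beta^{1/4})=O(\eta^{1/2})$ error. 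This matches the paper's argument, with your write-up merely spelling out the monotonicity step more explicitly.
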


\begin{proof}
Part~(1) follows from Lemma~\ref{lem: bd mfm cone zeta and conv comp n}(2) and the fact that $\cone_\Xi\subset\cone_{\Xi'}$.

As for part~(2), we again use the above notation, i.e., 
\[
\Xi=(\zeta_0, r_0,\ldots, \zeta_n, r_n,\zeta).
\]
where $\hat\Xi=(\zeta_0, r_0,\ldots, \zeta_n)$. Suppose $\zeta$ corresponds to $(\zeta',l)$ as above, that is, $\Xi'=(\zeta_0,r_0,\ldots,\zeta_n, r_n,\zeta')\in\mathsf B_{n+1}^{\zeta_0}$ and $l\in\mathcal K_{\Xi'}$.
Then part~(2) in the lemma follows from Lemma~\ref{lem: bd mfm cone zeta and conv comp n}(3) in view of~\eqref{eq: Fi as union ini dim n} if we put 
\[
c_{\Xi}=c_{\Xi'}\mu_{\cone_{\Xi'}}(\cone_{\Xi})
\]
and use the fact that $\mu_{\cone_{\Xi'}}$ is admissible, see Lemma~\ref{lem: mu cone is admissible 2}. 
\end{proof}

\section{Final sets and the proof of Proposition~\ref{propos: imp dim main}}\label{sec: proof main prop}
We will complete the proof of Proposition~\ref{propos: imp dim main} in this section. 
Let $\zeta_0\in\mathcal Z$, see \S\ref{sec: initial dim} in particular~\eqref{eq: def mathcal Z}, 
and let $\mathsf A_{n}^{\zeta_0}$ be defined as in~\eqref{eq: define A n+1 zeta}. 

Recall that $0<\vare<1$ is a small parameter (in our application, $\vare$ will depend on $\ref{k:mixing}$, see~\eqref{eq: choose theta equi sec}) and $t>1$ is a large parameter (which will be chosen to be $\asymp\log R$ where $R$ is as in Theorem~\ref{thm:main}); let $\mfsc=e^{-\sqrt\vare t}$. Recall also from Proposition~\ref{propos: imp dim main} that we fixed 
\be\label{eq: choose kappa}
\kappa = 10^{-6}d_1^{-1}\leq 10^{-6}\vare;
\ee 
where $d_1=100\lceil{(4D-3)}/(2\vare)\rceil$, see Proposition~\ref{propos: imp dim main}.

Set $\beta=e^{-\kappa t}$ and $\eta^2=\beta$. 
Recall from~\eqref{eq: def trct n+1} that 
\[
\trct_{n}=1+nL\kappa^{-L}e^{\kappa Dt}.
\] 
In particular, so long as $t$ is large enough, we have 
\be\label{eq: trct d1}
\trct_{d_1}=1+d_1L\kappa^{-L}e^{\kappa Dt}\leq e^{0.01\vare t}.
\ee

Recall also our assumption that Proposition~\ref{prop:closing lemma intro}(1) holds, and that 
\[
x_2=a_{8t}u_{r_1}x_1
\] 
where $r_1\in I(x_1)$. 
Then $x_2\in X_\eta$, and the map $h\mapsto hx_2$ is injective over $\boxHs_\beta\cdot a_t\cdot U_1$, see Proposition~\ref{prop:closing lemma intro}(1).

Motivated by the conditions in~\ref{(A-1)} and~\ref{(A-2)} of Lemma~\ref{lem: step n of ind initial leaves}, we make the following definition.

\begin{defin}\label{def: final sets}
Let $d_2:=d_1-\Bigl\lceil\frac{10^4}{\sqrt\vare}\Bigr\rceil$ where $d_1=100\Bigl\lceil\frac{4D-3}{2\vare}\Bigr\rceil$, and let 
\[
d_2\leq d\leq d_1.
\]

Let $\zeta_0\in\mathcal Z$. An element $\Xi\in\mathsf A_d^{\zeta_0}$ 
is said to be {\em final} if 
\be\label{eq: final dim 1 1}
\egbd_{\Xi,d}< \nuni^{\vare t/2}\sup_{w\in F_\Xi}\noI_{\cone_\Xi,\mfsc}(e,\exp(w)y_{\Xi}),
\ee
where $\cone_\Xi=\coneH.\{\exp(w)y_\Xi: w\in F_\Xi\}$. 
\end{defin}

It will be more convenient to distinguish elements of $\mathsf A_d^{\zeta_0}$ satisfying~\eqref{eq: final dim 1 1} for $d<d_2$ as well. Thus, for every $0\leq d\leq d_1$, let 
\[
\hat{\mathsf A}_d^{\zeta_0}=\Bigl\{\Xi\in \mathsf A_d^{\zeta_0}: \Xi\text{ satisfies~\eqref{eq: final dim 1 1}}\Bigr\}.
\]
Note that if $d_2\leq d\leq d_1$, then $\Xi\in\hat{\mathsf A}_d^{\zeta_0}$ if and only if it is final. 

\begin{lemma}\label{lem: final is has dim 1}
If $\Xi\in\hat{\mathsf A}_d^{\zeta_0}$, then
\[
\mfm_{\cone_\Xi,\mfsc, \trct_d}(e,z)\leq \ref{C: M and dimension}\nuni^{\vare t/2}\noI_{\cone_\Xi,\mfsc}(e,z)
\]
for all $z=h\exp(w)y_\Xi\in\cone_{\Xi}$ with $h\in\overline{\coneH\setminus\partial_{10\mfsc}\coneH}$. 
\end{lemma}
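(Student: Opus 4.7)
The plan is to chain together three facts already established in the inductive construction of \S\ref{sec: induction}. First, by Lemma~\ref{lem: bd mfm cone zeta and conv comp An}(1), for every $\Xi\in \mathsf A_d^{\zeta_0}$ and every $z\in\cone_\Xi$ we have
\[
\mfm_{\cone_\Xi,\scmf,\trct_d}(e,z)\leq \mfbd_{\Xi,d}.
\]
Second, the assumption $\Xi\in\hat{\mathsf A}_d^{\zeta_0}$ is, by Definition~\ref{def: final sets}, precisely
\[
\mfbd_{\Xi,d}< \nuni^{\vare t/2}\sup_{w\in F_\Xi}\noI_{\cone_\Xi,\mfsc}(e,\exp(w)y_\Xi).
\]
Combining these two displays gives an upper bound on $\mfm_{\cone_\Xi,\scmf,\trct_d}(e,z)$ in terms of the supremum of $\noI_{\cone_\Xi,\mfsc}$ over the ``basepoints'' $\exp(w)y_\Xi$ with $w\in F_\Xi$.

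The final step is to transfer this supremum to the value of $\noI_{\cone_\Xi,\mfsc}$ at the particular point $z=h\exp(w)y_\Xi$ with $h\in\overline{\coneH\setminus\partial_{10\mfsc}\coneH}$. This is exactly the lower inequality of Lemma~\ref{lem: regular sets initial dim}, which, rearranged, asserts
\[
\sup_{w'\in F_\Xi}\noI_{\cone_\Xi,\mfsc}(e,\exp(w')y_\Xi)\leq \ref{C: M and dimension}\,\noI_{\cone_\Xi,\mfsc}(e,z)
\]
under the stated restriction on $h$. Stringing the three inequalities together produces the desired bound
\[
\mfm_{\cone_\Xi,\mfsc,\trct_d}(e,z)\leq \ref{C: M and dimension}\,\nuni^{\vare t/2}\,\noI_{\cone_\Xi,\mfsc}(e,z).
\]

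The only point requiring verification is the applicability of Lemma~\ref{lem: regular sets initial dim}, i.e.\ that the indexing set $F_\Xi$ satisfies the tree-regularity~\eqref{eq: regular tree'} at all scales $k_0-10\leq k\leq k_1$ with $k_0,k_1$ as in that lemma. This is precisely what the regularization step in \S\ref{sec: regular F zeta step n} arranges: every $F_\zeta$ indexing an element of $\mathsf A_{n+1}^{\zeta_0}$ is produced from an offspring of $a_\ell u_{r_n}\mu_{\cone_{\hat\Xi}}$ by applying Lemma~\ref{lem: regular tree decomposition} with the parameter $\mathsf M$ fixed in~\eqref{eq: condition on M 1}, and hence satisfies~\eqref{eq: regular tree'''} for the range of dyadic scales dictated by~\eqref{eq: def k0 and k1 step n}. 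Since the inductive hypothesis~\eqref{eq: mfbd n+1} and the bound~\eqref{eq: trct d1} guarantee $\trct_d\leq e^{0.01\vare t}$ and $\egbd_{\Xi,d}\leq e^{Dt}$, the integer $k_1$ from Lemma~\ref{lem: regular sets initial dim} falls within the range for which~\eqref{eq: regular tree'''} has been established, and the lemma applies. There is no genuine obstacle in the argument beyond this bookkeeping check.
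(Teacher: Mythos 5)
Your proposal is correct and is essentially the paper's own proof: the paper likewise chains the inductive Margulis-function bound $\mfm_{\cone_\Xi,\mfsc,\trct_d}(e,\cdot)\leq\mfbd_{\Xi,d}$, the defining inequality~\eqref{eq: final dim 1 1} of $\hat{\mathsf A}_d^{\zeta_0}$, and the lower bound of Lemma~\ref{lem: regular sets initial dim} relating $\sup_{w'}\noI_{\cone_\Xi,\mfsc}(e,\exp(w')y_\Xi)$ to $\noI_{\cone_\Xi,\mfsc}(e,z)$. Your extra bookkeeping that the regularization of $F_\Xi$ in \S\ref{sec: regular F zeta step n} supplies the tree-regularity needed for Lemma~\ref{lem: regular sets initial dim} is exactly the implicit justification in the paper.
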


\begin{proof}
Let $z$ be as in the statement. Then by 
Lemma~\ref{lem: regular sets initial dim}, we have 
\[
\sup_w\noI_{\cone_\Xi,\scmf}(e,\exp(w)y)\leq \ref{C: M and dimension}\noI_{\cone_\Xi,\scmf}(e,z).
\]
Moreover, by~\eqref{eq: mfbd 1}, we have 
\[
\mfm_{\cone_\Xi,\mfsc, \trct_d}(e,z')\leq\egbd_{\Xi,d},\quad\text{for all $z'\in\cone_\Xi$}.
\]
The claim in the lemma follows from these, in view of~\eqref{eq: final dim 1 1}.   
\end{proof}

We fix the following notation: Let $0\leq d\leq d_1$, for any
\[
\Xi=(\zeta_0, r_0,\ldots,\zeta_{d-1}, r_{d-1}, \zeta_{d})\in\mathsf A_{d}^{\zeta_0}, 
\] 
and $0\leq n\leq d$, put $\Xi_n:=(\zeta_0, r_0,\ldots,\zeta_n)$. 

\begin{lemma}\label{lem: every Xi has future}
Let $\Xi\in\mathsf A_{d_2}^{\zeta_0}$, and let $d_2\leq d\leq d_1$. Let $\Xi'\in\mathsf A_{d}^{\zeta_0}$ 
be so that $\Xi'_{d_2}=\Xi$. Then at least one of the following holds. 
\begin{enumerate}
\item \label{every Xi first}
There exists $d_2\leq n\leq d$ so that $\Xi'_n\in\hat{\mathsf A}_{n}^{\zeta_0}$.
\item \label{every Xi second} There exists $d< d'\leq d_1$ and $\Xi''\in \hat{\mathsf A}_{d'}^{\zeta_0}$ so that $\Xi''_{d}=\Xi'$.
\end{enumerate} 
In particular,
\begin{enumerate}[resume*]
    \item \label{in particular item}
    For every $\Xi\in\mathsf A_{d_2}^{\zeta_0}$ and every $\Xi'\in\mathsf A_{d_1}^{\zeta_0}$ with $\Xi'_{d_2}=\Xi$, there exists $d_2\leq d\leq d_1$ so that $\Xi'_d\in\hat{\mathsf A}_{d}^{\zeta_0}$.
\end{enumerate}
\end{lemma}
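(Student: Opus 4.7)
Claim~\ref{in particular item} will follow directly from the dichotomy~\ref{every Xi first}--\ref{every Xi second} applied with $d=d_1$: case~\ref{every Xi second} is then vacuously false (there is no integer $d'$ with $d_1 < d' \leq d_1$), so case~\ref{every Xi first} must hold. The work is therefore to establish the dichotomy.

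Suppose case~\ref{every Xi first} fails, i.e.\ $\Xi'_n \notin \hat{\mathsf A}_n^{\zeta_0}$ for every $d_2 \leq n \leq d$. The plan is to extend $\Xi'$ and exhibit $\Xi'' \in \hat{\mathsf A}_{d'}^{\zeta_0}$ for some $d < d' \leq d_1$ with $\Xi''_d = \Xi'$. The first observation is that at each transition from level $n$ to $n+1$ with $d_2 \leq n \leq d-1$, the failure of finality at $\Xi'_n$ places us in case~(A-1) of Lemma~\ref{lem: step n of ind initial leaves}. Within this case, sub-case~\eqref{eq: define mfbd n+1 1 1} cannot be triggered: if it were, then
\[
\mfbd_{\Xi'_{n+1},n+1} = 20\,\sup_{z}\noI_{\cone_{\Xi'_{n+1}},\mfsc}(e,z),
\]
and the final clause of Lemma~\ref{lem: regular sets initial dim}, applicable via the regularization~\eqref{eq: regular tree'''}, bounds this by $20\ref{C: M and dimension}\sup_w \noI_{\cone_{\Xi'_{n+1}},\mfsc}(e,\exp(w)y_{\Xi'_{n+1}})$, which is strictly less than $e^{\vare t/2}\sup_w \noI$ once $t$ is large enough. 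That would force $\Xi'_{n+1} \in \hat{\mathsf A}_{n+1}^{\zeta_0}$ and contradict the failure of~\ref{every Xi first}. Hence only sub-case~\eqref{eq: define mfbd n+1 1} fires, and iterating gives
\[
\mfbd_{\Xi'_d, d} \;=\; e^{-(d-d_2)\ell/2}\,\mfbd_{\Xi, d_2} \;\leq\; e^{-(d-d_2)\ell/2}\, e^{Dt},
\]
the last inequality via Lemma~\ref{lem: bd mfm cone zeta and conv comp n}(1).

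Next, I extend $\Xi'$ inductively from level $d$. At each level $n \geq d$ the regularity~\eqref{eq: regular tree'''}, the cardinality bound~\eqref{eq: num F ir jm varsigma n} (sufficient since $n \leq d_1$ and $\kappa = 10^{-6}d_1^{-1}$), and the $\mfm$-bound from Lemma~\ref{lem: bd mfm cone zeta and conv comp An}(1) ensure that the hypotheses of Lemma~\ref{lem: truncated MF main estimate} are satisfied for $\cone_{\Xi''_n} \subset X_\eta$. Consequently $\gdh_{\mu_{\cone_{\Xi''_n}}}$ has near-full measure and is in particular nonempty, so I may pick $r_n \in \mathcal L_{\cone_{\Xi''_n}}$, an offspring $\zeta_{n+1} \in \mathcal Z'_{\Xi''_n, r_n}$, and then regularize as in \S\ref{sec: regular F zeta step n} to obtain $\Xi''_{n+1} \in \mathsf A_{n+1}^{\zeta_0}$. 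Either $\Xi''_{n+1} \in \hat{\mathsf A}_{n+1}^{\zeta_0}$, in which case $d' = n+1$ gives~\ref{every Xi second}, or the same argument as above forces sub-case~\eqref{eq: define mfbd n+1 1} and $\mfbd_{\Xi''_{n+1}, n+1} = e^{-\ell/2}\mfbd_{\Xi''_n, n}$.

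The main obstacle is to verify that finality must occur by level $d_1$. If it did not, the steady decay would give
\[
\mfbd_{\Xi''_{d_1}, d_1} \;\leq\; e^{-(d_1-d_2)\ell/2}\, e^{Dt},
\]
while sub-case~\eqref{eq: define mfbd n+1 1 1} would already have been forced at any level $n+1$ where $e^{-0.6\ell}\mfbd_{\Xi''_n, n} < 10\sup_z \noI_{\cone_{\Xi''_{n+1}},\mfsc}(e,z)$. Combining the universal lower bound $\sup_z \noI \geq (\mfsc\,\inj(y))^{-\alpha} \gtrsim e^{0.9\sqrt\vare\, t}$ with the decay rate $e^{-\ell/2}$ per step and the choice $d_1 - d_2 \geq \lceil 10^4/\sqrt\vare \rceil$ (so that $(d_1-d_2)\ell/2 \geq 50\sqrt\vare\, t$), a short calculation shows this inequality must fail at some intermediate level $n+1 \in (d, d_1]$, forcing finality at that level and hence~\ref{every Xi second}. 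Verifying the arithmetic of this last step (together with the choice of $\kappa$ and the design of $d_1, d_2$) is the only delicate point; everything else is a direct application of the construction in \S\ref{sec: induction}.
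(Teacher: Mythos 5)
Your reduction of claim~\eqref{in particular item} to the dichotomy, the existence of extensions of $\Xi'$ to higher levels, and the idea that non-finality at consecutive levels should force the decaying alternative~\eqref{eq: define mfbd n+1 1} are all consistent with the paper's argument. The genuine gap is in your endgame, where you claim that decay over the $d_1-d_2$ steps alone forces finality by level $d_1$. With your accounting the only bound available at level $d_1$ is $\mfbd_{\Xi''_{d_1},d_1}\leq e^{-(d_1-d_2)\ell/2}\,\mfbd_{\Xi,d_2}\leq e^{Dt-50\sqrt\vare\, t}$, because at level $d_2$ you invoke nothing better than the a priori bound $\mfbd_{\Xi,d_2}\leq e^{Dt}$ from Lemma~\ref{lem: bd mfm cone zeta and conv comp n}(1); and you compare this against the trivial lower bound $\sup\noI\gtrsim(\mfsc\,\inj(y))^{-\alpha}\approx e^{0.9\sqrt\vare\, t}$ (even the much stronger bound $\sup\noI\gtrsim e^{-4\sqrt\vare t}\#F''\gtrsim e^{0.9t}$ coming from \eqref{eq: trivial lower bd} and \eqref{eq: ind hyp on num F hat Xi} would not help). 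Since $D\geq 10$ and $\sqrt\vare\leq 10^{-8}\ref{k:mixing}$, the quantity $e^{Dt-50\sqrt\vare t}$ is vastly larger than either lower bound, so no threshold is crossed and no contradiction arises: $d_1-d_2=\lceil 10^4\vare^{-1/2}\rceil$ steps of decay at rate $e^{-\ell/2}$ cannot defeat a starting value as large as $e^{Dt}$; that would require on the order of $d_1\approx 200D/\vare$ steps, i.e.\ the whole run.

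What is missing is the history of the trajectory \emph{before} level $d_2$, and this is exactly why the paper's proof introduces ${\rm past}(\Xi)$ in \eqref{eq: def past} and splits into two cases. If no finality ever occurred, then the decay \eqref{eq: define mfbd n+1 1} has been running since level $0$, and $d_1$ is calibrated so that $d_1\ell/2\geq(4D-3)t/4$, which brings $\mfbd_{\Xi'',d_1}$ down to $\leq e^{3t/4}$, below the lower bound $\gtrsim e^{0.9t}$ on $\sup\noI$; here both the full $d_1$ steps and the cardinality lower bound on $\#F''$ are essential. If instead finality occurred at some $n_m<d_2$, then at the step $n_m\to n_m+1$ the value $\mfbd$ is not $e^{-\ell/2}$ times its predecessor but is reset via \eqref{eq: define mfbd n+1 2} to roughly $e^{\vare t/2}\eta^{-\alpha}\mfsc^{-\alpha}\#F_{\Xi_{n_m}}$, which is within a factor $e^{O(\sqrt\vare t)}$ of the level-$d_1$ lower bound $e^{-4\sqrt\vare t}\beta^{11(d_1-n_m)}\#F_{\Xi_{n_m}}$; only in this regime is the short budget $(d_1-d_2)\ell/2\geq 50\sqrt\vare t$ sufficient. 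Your uniform treatment conflates the two regimes and recovers neither contradiction. As a secondary point, your use of Lemma~\ref{lem: regular sets initial dim} to rule out \eqref{eq: define mfbd n+1 1 1} compares a supremum of $\noI$ taken over the offspring in $\mathsf B_{n+1}$ (before regularization, where the hypothesis \eqref{eq: regular tree'} need not hold) with the finality test, which is formulated for the regularized element of $\mathsf A_{n+1}$ and its smaller set $F$; this step therefore also needs more care than your sketch provides.
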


\begin{proof}
First note that \eqref{in particular item} is a direct consequence of \eqref{every Xi first}--\eqref{every Xi second}. Thus it is enough to prove the latter.

For every $\Xi\in\mathsf A_{d_2}^{\zeta_0}$, put  
\be\label{eq: def past}
{\rm past}(\Xi)=\Bigl\{n_i\leq d_2: \Xi_{n_i}\in\hat{\mathsf A}_{n_i}^{\zeta_0}\Bigr\}
\ee 
if such $n_i$ exists, otherwise put ${\rm past}(\Xi)=\emptyset$; 
in the former case, we will write ${\rm past}(\Xi)=\{n_1< \cdots < n_{m_{\Xi}}\}$. 
It follows from the definition (see~\eqref{eq: final dim 1 1}) 
that if $n\in {\rm past}(\Xi)$, then 
\[
\egbd_{\Xi_{n},n}< e^{\vare t/2}\sup_w\noI_{\cone_{\Xi_{n}},\mfsc}(e,\exp(w)y_{\Xi_{n}}).
\]

Let $d$ and $\Xi'\in\mathsf A_{d}^{\zeta_0}$ be as in the statement; 
note that for every $d\leq d'\leq d_1$, we have 
\[
\Bigl\{\Xi''\in \mathsf A_{d'}^{\xi_0}:\Xi''_{d}=\Xi'\Bigr\}\neq\emptyset;
\] 
see the discussion leading to~\eqref{eq: define A n+1 zeta}. 

We will consider two cases, ${\rm past}(\Xi)=\emptyset$ and ${\rm past}(\Xi)\neq\emptyset$, separately
(though the argument in both cases is similar).

\medskip

\noindent
{\bf Case 1.}\ Assume that ${\rm past}(\Xi)=\emptyset$.

Suppose that the claim in the lemma fails. 
Then for every $\Xi''\in \mathsf A_{d_1}^{\xi_0}$ with $\Xi''_{d}=\Xi'$ 
and all $0\leq n\leq d_1$ we have 
\be\label{eq: A-1 holds for all n}
\egbd_{\Xi_{n}'',n}\geq e^{\vare t/2}\sup_w\noI_{\cone_{\Xi_{n}''},\mfsc}(e,\exp(w)y_{\Xi_{n}''}).
\ee
For $0\leq n\leq d_2$,~\eqref{eq: A-1 holds for all n} follows from ${\rm past}(\Xi)=\emptyset$ and $\Xi''_{d_2}=\Xi$; for $d_2\leq n\leq d_1$, it follows from the fact that $\Xi''_n\not\in\hat{\mathsf A}_{n}^{\zeta_0}$, see~\eqref{eq: final dim 1 1}. 

We will show that~\eqref{eq: A-1 holds for all n} leads to a contradiction. 
To that end, put 
\[
\cone''=\cone_{\Xi''}=\coneH.\{\exp(w)y: w\in F''\}.
\]
Recall that $\ell=0.01\vare t$ and $d_1=100\lceil\frac{4D-3}{2\vare}\rceil$. 
Thus $\frac{\ell d_1}{2}\geq \frac{(4D-3)t}{4}$ and   
\be\label{eq: d max is chosen to beat M}
\nuni^{-\ell d_1/2}\nuni^{Dt}\leq e^{-(4D-3)t/4} \nuni^{Dt}\leq e^{3t/4}.
\ee
In view of~\eqref{eq: A-1 holds for all n}, we have~\ref{(A-1)} and~\eqref{eq: define mfbd n+1 1} hold for all $0\leq n\leq d_1$. That is $\mfbd_{\Xi''_n,n}=e^{-\ell/2} \mfbd_{\Xi''_{n-1},n-1}$ for all $0<n\leq d_1$. 
Since $\mfbd_0=e^{Dt}$, we conclude from~\eqref{eq: d max is chosen to beat M} that 
\be\label{eq: mfbd d1 case 1}
\mfbd_{\Xi'',d_1} = e^{-d_1\ell/2}e^{Dt}\leq e^{3t/4}.
\ee

We will compare~\eqref{eq: mfbd d1 case 1} with a lower bound for $\noI_{\cone'',\mfsc}$ which we now obtain. In view of~\eqref{eq: ind hyp on num F hat Xi}, we have 
\[
\#F''\geq (0.5)^{d_1}\beta^{11(d_{1}+1)}\nuni^{t}.
\] 
This and~\eqref{eq: trivial lower bd} imply that for all $w\in F''$, 
\be\label{eq: size of noI for final sets'}
\noI_{\cone'',\scmf}(e,\exp(w)y)\geq \nuni^{-4\sqrt\vare t}(\#F'')\geq  \nuni^{-4\sqrt\vare t}\beta^{11d_{1}+12}\nuni^{t}\geq \nuni^{0.9t} 
\ee
where in the last inequality we used $\beta=\nuni^{-\kappa t}$ and $100d_1\kappa\leq 0.01$, see~\eqref{eq: choose kappa}.

We conclude from~\eqref{eq: mfbd d1 case 1} and~\eqref{eq: size of noI for final sets'} that
\[
\mfbd_{\Xi'',d_1}\leq \sup_w\noI_{\cone'',\scmf}(e,\exp(w)y).
\]
This contradicts $\Xi''\not\in\hat{\mathsf A}_{d_1}^{\zeta_0}$, 
and completes the proof in this case.

\medskip

\noindent
{\bf Case 2.}\ Assume that ${\rm past}(\Xi)\neq\emptyset$. 

Let us write ${\rm past}(\Xi)=\{n_1< \cdots< n_{m_\Xi}\}$, and let $\Xi'$ be as in the statement. 
We will write $n_m=n_{m_\Xi}$ for simplicity in the notation.  
Assume again that the claim in the lemma fails. 
First note that $n_m<d_2$ otherwise part~\eqref{every Xi first} would hold with $n=d_2$, 
which contradicts our assumption. 
Similar to~\eqref{eq: A-1 holds for all n}, for every 
$\Xi''\in \mathsf A_{d_1}^{\xi_0}$ with $\Xi''_{d}=\Xi'$ and all $n_m< n\leq d_1$ we have 
\be\label{eq: A-1 holds for all n > nm}
\egbd_{\Xi_{n}'',n}\geq e^{\vare t/2}\sup_w\noI_{\cone_{\Xi_{n}''},\mfsc}(e,\exp(w)y_{\Xi_{n}''}).
\ee
For $n_m< n\leq d_2$, this follows from ${\rm past}(\Xi)=\{n_1,\ldots, n_m\}$ and $\Xi''_{d_2}=\Xi$; for $d_2\leq n\leq d_1$, it follows from our assumption that $\Xi''_n\not\in\hat{\mathsf A}_{n}^{\zeta_0}$.

As in Case 1, we will show that~\eqref{eq: A-1 holds for all n > nm} leads to a contradiction. Put 
\[
\cone''=\cone_{\Xi''}=\coneH.\{\exp(w)y: w\in F''\}.
\]
We will now inductively estimate $\mfbd_{\Xi_{n}'',n}$ for $n_m<n\leq d_1$. 
Since $\Xi_{n_m}=\Xi_{n_m}''\in\hat{\mathsf A}_{n_m}^{\zeta_0}$ and $\Xi_{n_m+1}''\not\in\hat{\mathsf A}_{n_m+1}^{\zeta_0}$ (see~\eqref{eq: def past}),
we conclude that~\ref{(A-2)} and~\eqref{eq: define mfbd n+1 2} are used to define $\mfbd_{\Xi_{n_m+1}'',n_m+1}$. Thus there exists some $w_0\in F_{\Xi_{n_m}}$ so that  
\be\label{eq: Xi n2 high density}
\begin{aligned}
\mfbd_{\Xi_{n_m+1}'',n_m+1}&= \nuni^{-\ell/2} \nuni^{\vare t/2}\noI_{\cone_{\Xi_{n_m}},\mfsc}\Bigl(e,\exp(w_0)y_{\Xi_{n_m}}\Bigr)\\
&\leq 2\nuni^{-\ell/2} \nuni^{\vare t/2}\eta^{-\alpha}\mfsc^{-\alpha}\cdot \Bigl(\#F_{\Xi_{n_m}}\Bigr)
\end{aligned}
\ee
where we used the definition of $\noI$ in the last inequality.

We now turn to $\egbd_{\Xi_n'',n}$ for $n>n_m+1$. In view of~\eqref{eq: A-1 holds for all n > nm} applied for $n$ and $n-1$, we have \ref{(A-1)} and~\eqref{eq: define mfbd n+1 1} hold. Thus
\[
\mfbd_{\Xi_n'',n}=\nuni^{-\ell/2}\mfbd_{\Xi_{n-1}'',n-1}\quad\text{ for all $n_m+1< n\leq d_1$}.
\]
This and~\eqref{eq: Xi n2 high density}, imply that 
\be\label{eq: mfbd d1 case 2}
\mfbd_{\Xi'',d_1}\leq \nuni^{-\ell(d_1-n_m)/2}\cdot\Bigl(2\nuni^{\vare t/2}\eta^{-\alpha}\mfsc^{-\alpha}\Bigr)\cdot \Bigl(\#F_{\Xi_{n_m}}\Bigr).
\ee
We will compare~\eqref{eq: mfbd d1 case 2} with a lower bound for $\noI_{\cone'',\mfsc}$ which we now obtain. In view of~\eqref{eq: control over card of fiber' ??}, we have 
\[
\#F''\geq (0.5)^{d_1}\beta^{11(d_{1}-n_m)}\cdot(\#F_{\Xi_{n_m}}).
\] 
This and~\eqref{eq: trivial lower bd} imply that for all $w\in F''$,
\be\label{eq: size of noI for final sets' ?}
\begin{aligned}
\noI_{\cone'',\scmf}(e,\exp(w)y)&\geq \nuni^{-4\sqrt\vare t}(\#F'')\\
&\geq  \nuni^{-4\sqrt\vare t}(0.5)^{d_1}\beta^{11(d_{1}-n_m)}\cdot(\#F_{\Xi_{n_m}}).
\end{aligned}
\ee

Since $\Xi''\not\in\hat{\mathsf A}_{d_1}^{\zeta_0}$, we have 
\[
\mfbd_{\Xi'',d_1}\geq e^{\vare t/2}\sup_w\noI_{\cone'',\scmf}(e,\exp(w)y).
\]
Combining this with~\eqref{eq: mfbd d1 case 2} and~\eqref{eq: size of noI for final sets' ?}, we conclude that 
\begin{align*}
\nuni^{-\frac{\ell(d_1-n_m)}{2}}\cdot\Bigl(2\nuni^{\frac{\vare t}{2}}\eta^{-\alpha}\mfsc^{-\alpha}\Bigr)&\cdot \Bigl(\#F_{\Xi_{n_m}}\Bigr)\geq \mfbd_{\Xi'',d_1}\\
&\geq e^{\frac{\vare t}{2}}\sup_w\noI_{\cone'',\scmf}(e,\exp(w)y)\\
&\geq e^{\frac{\vare t}{2}}\nuni^{-4\sqrt\vare t}(0.5)^{d_1}\beta^{11(d_{1}-n_m)}\cdot(\#F_{\Xi_{n_m}}).
\end{align*}
Comparing the first and last terms, cancelling $\#F_{\Xi_{n_m}}$ and $e^{\vare t/2}$ from both sides, and multiplying by
$\beta^{-11(d_{1}-n_m)}$ and replacing $2^{d_1+1}$ by $\beta^{-1}$, 
\[
\nuni^{-\ell(d_1-n_m)/2}\beta^{-11(d_{1}-n_m)-1}\cdot\Bigl(\eta^{-\alpha}\mfsc^{-\alpha}\Bigr)\geq \nuni^{-4\sqrt\vare t}.
\]
Recall now that $\beta=\nuni^{-\kappa t}$, $0<\kappa\leq \vare/10^6$, see~\eqref{eq: choose kappa}, and that $\ell=0.01\vare t$. Therefore, 
\[
\nuni^{-\ell(d_1-n_m)/2}\beta^{-11(d_{1}-n_m)-1}\leq \nuni^{-\ell(d_1-n_m)/3}
\]
This and the above thus imply that
\be\label{eq: final est case 2 n2}
\nuni^{-\ell(d_1-n_m)/3}\cdot\Bigl(\eta^{-\alpha}\mfsc^{-\alpha}\Bigr)\geq \nuni^{-4\sqrt\vare t}.
\ee
However, $\ell=0.01\vare t$ and $d_1-n_m\geq d_1-d_2\geq 10^4/\sqrt\vare$. 
Therefore, we have $\ell(d_1-n_m)\geq 100\sqrt\vare t$. This, together with $\eta\geq e^{-\vare t}$ and $\mfsc=\nuni^{-\sqrt\vare t}$, implies 
\[
\nuni^{-\ell(d_1-n_m)/3}\cdot\Bigl(\eta^{-\alpha}\mfsc^{-\alpha}\Bigr)\leq \nuni^{-30\sqrt\vare t}
\]
which contradicts~\eqref{eq: final est case 2 n2} and finishes the proof in Case 2 as well.  
\end{proof}

In view of this lemma, let $\hat{\mathsf A}_{d_2,d_2}^{\zeta_0}=\hat{\mathsf A}^{\zeta_0}_{d_2}$, and for every $d_2< d\leq d_1$, let
\[
\hat{\mathsf A}_{d_2,d}^{\zeta_0}=\Bigl\{\Xi\in \hat{\mathsf A}_{d}^{\zeta_0}: \Xi_n\not\in \hat{\mathsf A}_{n}^{\zeta_0} \text{ for any $d_2\leq n< d$}\Bigr\}.
\]
Let $N_d^{\zeta_0}=\#\hat{\mathsf A}_{d_2,d}^{\zeta_0}$ and enumerate the elements of $\hat{\mathsf A}_{d_2,d}^{\zeta_0}$ as $\{\cone_{d}^i\}$. For all $d$ as above and all $1\leq i\leq N_d^{\zeta_0}$,
$\cone_{d}^i$ and $\mu_{\cone_{d}^{i}}$ denote $\cone_{\Xi_d^i}$ and $\mu_{\cone_{\Xi_d^i}}$, respectively --- we note that $\cone_{d}^i$
and $\mu_{\cone_{d}^i}$ also depend on $\zeta_0$, however, this abuse of notation will not cause confusion in what follows. 

\begin{lemma}\label{lem: app with final sets}
For every $\varphi\in C_c^{\infty}(X)$, all $0<\tau\leq d_1\ell$ and $|s|\leq 2$ we have
\begin{multline*}
\biggl|\int \varphi(a_\tau u_shx_2)\diff\!\mu_{t,\ell, d_1}(h)-\sum_{\mathcal Z}\sum_{d,i}c_{d,i}\int \varphi(a_\tau u_sz)\diff\!\rwm_\ell^{(d_{1}-d)}\conv\mu_{\cone_d^i}(z)\biggr|\\
\ll \Lip(\varphi)\beta^{\star}
\end{multline*}
where for every $\zeta_0\in\mathcal Z$, the inner sum is over $d_2\leq d\leq d_1$ and $1\leq i\leq N_d^{\zeta_0}$, 
$c_{d,i}\geq0$ with $\sum_{d,i}c_{d,i}=1-O(\beta^{\star})$, $\Lip(\varphi)$ 
is the Lipschitz norm of $\varphi$, and the implied constants depend on $X$.
\end{lemma}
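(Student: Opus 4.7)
The plan is to iterate the one-step decomposition of Lemma~\ref{lem: bd mfm cone zeta and conv comp An}(2) for $d_1$ steps starting from the base decomposition \eqref{eq: initial dim convex comb'}, freezing each branch at the first moment it enters a final set (Definition~\ref{def: final sets}), and appealing to Lemma~\ref{lem: every Xi has future}(3) to guarantee that every branch has been frozen by step~$d_1$. Since $d_1$ is a constant depending only on $D$ and $\vare$, the finitely many errors produced along the way aggregate to $O(\beta^{\star}\Lip(\varphi))$.

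\emph{Base case.} Rewrite $\mu_{t,\ell,d_1}=\rwm_\ell^{(d_1)}\conv\mu_{t,\ell,0}$. For any $\tau,s$, using the commutation $a_\tau u_s a_\ell u_r=a_{\tau+\ell}u_{e^{-\ell}s+r}$ and applying \eqref{eq: initial dim convex comb'} in $(\tau+\ell,e^{-\ell}s+r)$ yields
\[
\int\varphi(a_\tau u_sh x_2)\diff\mu_{t,\ell,d_1}(h)=\sum_{\zeta_0\in\mathcal Z}c_{\zeta_0}\int\varphi(a_\tau u_sz)\diff\bigl(\rwm_\ell^{(d_1)}\conv\mu_{\cone_{\zeta_0}}\bigr)(z)+O(\beta\Lip(\varphi)).
\]
This is the $n=0$ form of the decomposition we will maintain.

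\emph{Inductive step.} Suppose that after $n$ refinements we have a decomposition
\[
\int\varphi(a_\tau u_sh x_2)\diff\mu_{t,\ell,d_1}(h)=\sum_{\zeta_0}\biggl(\sum_{\text{frozen}}c_{d,i}\!\int\!\varphi(a_\tau u_sz)\diff\bigl(\rwm_\ell^{(d_1-d)}\!\conv\!\mu_{\cone_d^i}\bigr)\!+\!\!\sum_{\text{active}}c_\Xi\!\int\!\varphi(a_\tau u_sz)\diff\bigl(\rwm_\ell^{(d_1-n)}\!\conv\!\mu_{\cone_\Xi}\bigr)\biggr)+E_n,
\]
where the \emph{frozen} sum runs over $\Xi\in\hat{\mathsf A}_{d_2,d}^{\zeta_0}$ with $d_2\le d\le n$, the \emph{active} sum runs over $\hat\Xi\in\mathsf A_n^{\zeta_0}$ having no prefix in $\hat{\mathsf A}_{d_2,\cdot}^{\zeta_0}$, and $E_n\ll n\cdot\max\{\eta^{1/2},e^{-\kappa^2t/64}\}\Lip(\varphi)$. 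For each active $\hat\Xi$, write $\rwm_\ell^{(d_1-n)}\conv\mu_{\cone_{\hat\Xi}}=\rwm_\ell^{(d_1-n-1)}\conv(\rwm_\ell\conv\mu_{\cone_{\hat\Xi}})$. Discretize the integral in $r$ defining $\rwm_\ell\conv\mu_{\cone_{\hat\Xi}}$ using the maximal $e^{-6d_1\ell}$-separated set $\mathcal L_{\cone_{\hat\Xi}}\subset{\gdh_{\mu_{\cone_{\hat\Xi}}}}$; since the integrand $r\mapsto\int\varphi(a_{\tau+\ell}u_{\bigcdot}a_\ell u_rz)d\mu_{\cone_{\hat\Xi}}$ is Lipschitz with constant $\ll e^{O(d_1\ell)}\Lip(\varphi)$, the Riemann error together with $|[0,1]\setminus{\gdh_{\mu_{\cone_{\hat\Xi}}}}|\ll e^{-\kappa^2t/4}$ produces a negligible contribution. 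For each surviving $r_n\in\mathcal L_{\cone_{\hat\Xi}}$ apply Lemma~\ref{lem: bd mfm cone zeta and conv comp An}(2) to rewrite the $r_n$-slice as $\sum_{\Xi\in\mathsf A_{n+1}^{\zeta_0},\,\Xi\vert_n=\hat\Xi}c_\Xi\int\varphi(a_\tau u_sz)d\mu_{\cone_\Xi}$, paying a further error $\ll\max\{\eta^{1/2},e^{-\kappa^2t/64}\}\Lip(\varphi)$ per active branch. Finally, inspect the newly produced $\Xi\in\mathsf A_{n+1}^{\zeta_0}$: if $\Xi\in\hat{\mathsf A}_{d_2,n+1}^{\zeta_0}$, move it to the frozen sum with $d=n+1$; otherwise keep it active. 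The invariant is preserved with $E_{n+1}\le E_n+O(\max\{\eta^{1/2},e^{-\kappa^2t/64}\}\Lip(\varphi))$.

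\emph{Termination and error aggregation.} After exactly $d_1$ iterations, Lemma~\ref{lem: every Xi has future}(3) guarantees that every branch $\Xi\in\mathsf A_{d_1}^{\zeta_0}$ has some prefix $\Xi_d\in\hat{\mathsf A}_{d_2,d}^{\zeta_0}$ with $d_2\le d\le d_1$, i.e.\ the active sum is empty. The total accumulated error is $E_{d_1}\ll d_1\cdot\max\{\eta^{1/2},e^{-\kappa^2t/64}\}\Lip(\varphi)$; since $d_1$ depends only on $D$ and $\vare$ while $\eta=\beta^{1/2}$ and $\kappa=10^{-6}d_1^{-1}$ depends on the same constants, this is $O(\beta^{\star}\Lip(\varphi))$. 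Setting $c_{d,i}$ equal to the frozen weights produced above, and using that the sum of all weights in the decomposition remains within $O(\beta^{\star})$ of $1$ (the deficit coming from the $O(\beta^{1/4})$ trimming at each of the finitely many levels in Lemma~\ref{lem: regular tree decomposition}, together with the $\beta^{\ref{k: bootstrap beta exp}}$ losses in Lemma~\ref{lem: convex comb 1} and Lemma~\ref{lem: convex comb 2}), yields the claimed inequality.

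\emph{Main obstacle.} The only nontrivial step is verifying that the Riemann discretization in $r$ across the active branches does not disturb the admissibility hypothesis needed at the next application of Lemma~\ref{lem: bd mfm cone zeta and conv comp An}; this is exactly the content of the definition of $\mathcal L_{\cone_{\hat\Xi}}$ as a maximal separated subset of ${\gdh_{\mu_{\cone_{\hat\Xi}}}}$ together with the boundary and Lipschitz controls built into an admissible measure (\S\ref{sec: cone and mu cone}), and it is this compatibility that forces the $e^{-6d_1\ell}$ spacing to be chosen so small relative to the per-step error.
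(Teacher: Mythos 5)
Your proposal is correct and follows essentially the same route as the paper: iterate the one-step convex-combination decompositions (Lemma~\ref{lem: bd mfm cone zeta and conv comp An}(2), built on Lemma~\ref{lem: convex comb 2}), freeze a branch the first time it lands in $\hat{\mathsf A}_{d_2,d}^{\zeta_0}$, invoke Lemma~\ref{lem: every Xi has future} for termination at $d_1$, and aggregate the $O(d_1)$ per-step errors of size $\max\{\eta^{1/2},e^{-\kappa^2 t/64}\}\Lip(\varphi)\ll\beta^{\star}\Lip(\varphi)$. The only cosmetic difference is that the paper first passes directly to the level-$d_2$ decomposition \eqref{eq: convex comb 2 use'}--\eqref{eq: convex comb 2 use''} and runs the freeze-or-refine induction only on the remaining $d_1-d_2$ steps, whereas you run it from level $0$; since no branch can be frozen before $d_2$, the two are the same argument.
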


\begin{proof}
We will use the above notation also the notation from \S\ref{sec: induction}.
Let 
\[
\{(\cone_{\zeta_0},\mu_{\zeta_0}): \zeta_0\in \mathcal Z\}
\] 
be as in~\eqref{eq: def mathcal Z}.
For every $\zeta_0\in\mathcal Z$, let $\mathsf A_{d_2}^{\zeta_0}$ 
be as in~\eqref{eq: define A n+1 zeta}. Then by part~(2) in Lemma~\ref{lem: bd mfm cone zeta and conv comp An}, for $0<\tau'\leq 2d_1\ell$, we have  
\begin{multline}\label{eq: convex comb 2 use'}
\biggl|\int \varphi(a_{\tau'}u_shx_1)\diff\!\mu_{t,\ell, d_2}(h)-\sum_{\zeta_0\in\mathcal Z}\sum_{\Xi\in\mathsf A_{d_2}^{\zeta_0}} c_{\Xi}\int\varphi(a_{\tau'} u_sz)\diff\!\mu_{\cone_\Xi}(z)\biggr|\\
\ll\max\Bigl\{\eta^{1/2}, e^{-\kappa^2t/64}\Bigr\}\Lip(\varphi).
\end{multline}
Recall that $a_{\ell_1}u_{r}a_{\ell_2}=a_{\ell_1+\ell_2}u_{e^{-\ell_2}r}$ for all $\ell_1,\ell_2,r\in\bbr$. Arguing as in Lemma~\ref{lem: thickening stable},~\eqref{eq: convex comb 2 use'} (applied with $\tau'=\tau+(d_1-d_2)\ell\leq 2d_1\ell$) implies that 
\begin{multline}\label{eq: convex comb 2 use''}
\biggl|\int \!\!\varphi(a_\tau u_shx_1)\diff\!\mu_{t,\ell, d_1}(h)-\sum c_{\Xi}\int\!\!\varphi(a_\tau u_sz)\diff\!\rwm^{(d_1-d_2)}\!\conv\mu_{\cone_\Xi}(z)\biggr|\\
\ll\max\Bigl\{\eta^{1/2}, e^{-\kappa^2t/64}\Bigr\}\Lip(\varphi)
\end{multline}
where $\sum=\sum_{\zeta_0\in\mathcal Z}\sum_{\Xi\in\mathsf A_{d_2}^{\zeta_0}}$.  
 
Let $\zeta_0\in\mathcal Z$ and let $\Xi\in\mathsf A_{d_2}^{\zeta_0}$. For every $d_2\leq d\leq d_1$, put 
 \[
\hat{\mathsf A}_{d_2,d}^{\zeta_0}(\Xi)=\{\Xi'\in\hat{\mathsf A}_{d_2,d}^{\zeta_0}: \Xi'_{d_2}=\Xi\};
\]
note in particular that if $\Xi\in\hat{\mathsf A}_{d_2}^{\zeta_0}$, then $\hat{\mathsf A}_{d_2,d}^{\zeta_0}(\Xi)=\emptyset$ for all $d> d_2$. 

We claim that
\begin{multline}\label{eq: sum Xi as dim 1}
\bigg|\int\!\!\varphi(a_\tau u_sz)\diff\!\rwm^{(d_1-d_2)}\!\conv\mu_{\cone_\Xi}-\sum\! c_{\Xi'}\!\!\int\!\!\varphi(a_\tau u_sz)\diff\!\rwm^{(d_1-d)}\!\conv\mu_{\cone_{\Xi'}}\biggr|\\
\ll \max\Bigl\{\eta^{1/2}, e^{-\kappa^2t/64}\Bigr\}\Lip(\varphi)
\end{multline}
where now $\sum=\sum_{d_2\leq d\leq d_1}\sum_{\hat{\mathsf A}_{d_2,d}^{\zeta_0}(\Xi)}$ and again $\sum c_{\Xi'}>1-O(\beta^\star)$. 

Note that~\eqref{eq: sum Xi as dim 1} and~\eqref{eq: convex comb 2 use''} finish the proof of the lemma. Thus, we need to prove~\eqref{eq: sum Xi as dim 1}. 

As it was mentioned, if 
$\Xi\in\hat{\mathsf A}_{d_2}^{\zeta_0}$, then $\hat{\mathsf A}_{d_2,d}^{\zeta_0}(\Xi)=\emptyset$ 
for all $d> d_2$, and there is nothing to prove. Let now $\Xi\in\mathsf A_{d_2}^{\zeta_0}\setminus\hat{\mathsf A}_{d_2}^{\zeta_0}$. 
Then we have 
\[
\int\varphi(a_\tau u_sz)\diff\!\rwm^{(d_1-d_2)}\!\conv\mu_{\cone_\Xi}=\int_0^1\int\varphi(a_\tau u_sz)\diff(\rwm^{d_1-d_2-1}\conv (a_\ell u_{r}\mu_{\cone_\Xi}))\diff\!r.
\]
Thus by Lemma~\ref{lem: convex comb 2} applied to the right side of the above, see also Lemma \ref{lem: bd mfm cone zeta and conv comp An}, we have 
\begin{multline*}
\biggl|\int\varphi(a_\tau u_sz)\diff\!\rwm^{(d_1-d_2)}\!\conv\mu_{\cone_\Xi}-\sum c_{\Xi'}\int\varphi((a_\tau u_sz))\diff(\rwm^{d_1-d_2-1}\conv\mu_{\cone_{\Xi'}})\biggr|\\
\ll\eta^{1/2}\Lip(\varphi),
\end{multline*}
where the sum is over $\Xi'\in\mathsf A_{d_2+1}^{\zeta_0}$ with $\Xi'_{d_2}=\Xi$. 

We now continue inductively, i.e., write 
\begin{multline*}
\Bigl\{\Xi'\in\mathsf A_{d_2+1}^{\zeta_0}: \Xi'_{d_2}=\Xi\Bigr\}=\\\hat{\mathsf A}_{d_2, d_2+1}^{\zeta_0}(\Xi)\cup\Bigl\{\Xi'\in\mathsf A_{d_2+1}^{\zeta_0}:\Xi'_{d_2}=\Xi, \Xi'\not\in\hat{\mathsf A}_{d_2, d_2+1}^{\zeta_0}(\Xi)\Bigr\}
\end{multline*}
and decompose the sum $\sum_{\Xi'}$ accordingly.  Repeat the above for all $\Xi'\in\mathsf A_{d_2+1}^{\zeta_0}$ with $\Xi'_{d_2}=\Xi$ but $\Xi'\not\in\hat{\mathsf A}_{d_2, d_2+1}^{\zeta_0}(\Xi)$. 
In view of Lemma~\ref{lem: every Xi has future}, this process terminates at some $d\leq d_1$, 
and the claim in~\eqref{eq: sum Xi as dim 1} follows. 
\end{proof}

\begin{proof}[Proof of Proposition~\ref{propos: imp dim main}]
Proposition~\ref{propos: imp dim main} follows from Lemma~\ref{lem: app with final sets}, as we now explicate. 
The decomposition in Lemma~\ref{lem: app with final sets} is of the form claimed in~\eqref{eq: nud1 and nud1-d}. 

Moreover, the sets provided by Lemma~\ref{lem: app with final sets} satisfy~\eqref{eq: improve dim regularity} in view of~\eqref{eq: regular tree'''} as $t$ is sufficiently large and $M$ is fixed. They also satisfy~\eqref{eq:improve dim energy est} thanks to Lemma~\ref{lem: final is has dim 1}. In view of Lemma~\ref{lem: mu cone is admissible} and Lemma~\ref{lem: mu cone is admissible 2}, the measures are $(\adl_{{\bigcdot}},\adm_{{\bigcdot}})$-admissible with $\adm_{{\bigcdot}}$ depending only on $X$ and the number of steps, which is $\leq d_1$. Finally, in view of~\eqref{eq: trct d1},  
\[
\trct_{d}\leq \trct_{d_1}\leq e^{0.01\vare t}. 
\]
The proof is complete.
\end{proof}

\section{From large dimension to equidistribution}\label{sec: equidistribution}
Let $0<\ref{k:mixing}\leq 1$ be the constant given by Proposition~\ref{prop: 1-epsilon N}; recall that this constant is closely related to the spectral gap (or mixing rate) in $G/\Gamma$, c.f.~\eqref{eq: exp mixing}.
Throughout this section, we fix $\vare$ as follows 
\be\label{eq: choose theta equi sec}
0<\sqrt\vare \leq 10^{-8}\ref{k:mixing}.
\ee
We also recall that $\beta=e^{-\kappa t}$ and $\eta^2=\beta$ where $0<\kappa\leq\vare/10^6$. 

The following is the main result of this section.

\begin{propos}\label{prop: high dim to equid}
The following holds for all large enough $t$. 
Let $F\subset B_\rfrak(0,\beta)$ be a finite set with $\#F\geq \nuni^{0.9t}$. 
Let 
\[
\cone=\coneH.\{\exp(w)y: w\in F\}\subset X_\eta
\]
be equipped with an admissible measure $\mu_\cone$ (the definition is recalled below). 
Assume further that the following two properties are satisfied:
\begin{enumerate}
\item For all $w\in F$, we have 
\be\label{eq: psi almos cst equi prop} 
\#\Bigl(B_\rfrak(w,4\mfsc\,\inj(y))\cap F\Bigr)\geq \nuni^{-\vare t}\sup_{w'\in F}\#\Bigl(B_\rfrak(w',4\mfsc\,\inj(y))\cap F\Bigr).
\ee
\item For all $z=h\exp(w)y$ with $h\in\overline{\coneH\setminus\partial_{10\mfsc}\coneH}$, we have 
\be\label{eq: dim 1 equi prop} 
\mfm_{\cone, \scmf,\trct}(e,z)\leq \nuni^{\vare t}\noI_{\cone,\scmf}(e,z)
\ee
where $\trct\leq \nuni^{0.01\vare t}$, $\nuni^{-\sqrt\vare t}\leq \scmf\leq \nuni^{-\sqrt\vare t/2}$, 
and $\alpha=1-\sqrt\vare$, see~\S\ref{sec: MF and IG}. 
\end{enumerate}

Let $2\sqrt\vare t\leq \tau\leq 0.01\ref{k:mixing} t$. Then 
\[
\bigg|\int_0^1 \int\varphi(a_\tau u_rz)\diff\!\mu_{\cone}(z)\diff\!r-\int \varphi\diff\! m_X\biggr|\ll \Sob(\varphi)\nuni^{-\vare^2 t}
\]
for all $\varphi\in C_c^\infty(X)$.
\end{propos}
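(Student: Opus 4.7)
My plan is to reduce the double integral to an average of $v_s$-translates along an $a_\tau u_r$-orbit, invoke Theorem~\ref{thm: proj thm} to show that for most $r \in [0,1]$ the projected set $\xi_r(F)$ carries a near-$1$-dimensional measure, and then apply Proposition~\ref{prop: 1-epsilon N} to deduce equidistribution.

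First, the admissibility of $\mu_\cone$ lets me absorb the $\coneH$-factor into the $r$-integration: writing $\sfh = \sfh_0 u_{r'}$ with $\sfh_0 \in \boxHs_\beta$ and $|r'| \le \eta$, the $u_{r'}$-part shifts the $r$-domain by $O(\eta)$, while the $\sfh_0$-part causes a Lipschitz error of size $O(\beta)$ (using that $a_\tau$ contracts the $U^-$-part while the $A$-part has size $\beta$). Up to $O(\Sob(\varphi)\beta^\star)$ the original integral reduces to $(\#F)^{-1}\sum_{w\in F}\int_0^1\varphi(a_\tau u_r\exp(w)y)\,dr$. Now the commutation $a_\tau u_r \exp(w) = \exp(\Ad(a_\tau u_r)w)\, a_\tau u_r$ together with the identity $(\Ad(a_\tau u_r)w)_{12} = e^\tau\xi_r(w)$ and the observation that the conjugated matrix $W$ satisfies $W^2 = (A^2+BC)I = O(\beta^2)I$ (so $\exp(W)$ can be computed exactly in closed form via $\cosh$ and $\sinh$) yields a factorization $\exp(\Ad(a_\tau u_r)w) = v_{e^\tau(\xi_r(w)+O(\beta^2))}\cdot h_{r,w}$ with $h_{r,w}$ having entries $O(\beta)$ outside the $(12)$-direction. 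Absorbing $h_{r,w}$ into a perturbation of the base point $u_r y$ (it lies in the horosphere contracted by $a_\tau$) and using $v_{e^\tau s}a_\tau = a_\tau v_s$, the integral becomes
\[
(\#F)^{-1}\sum_{w\in F}\int_0^1 \varphi(a_\tau u_r v_{\xi_r(w)}y)\,dr + O(\Sob(\varphi)\beta^\star).
\]

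Next, I apply the projection theorem. Hypotheses~\eqref{eq: psi almos cst equi prop}--\eqref{eq: dim 1 equi prop}, combined with Lemmas~\ref{lem: F energy est assuming mfm energy estimate} and~\ref{lem: 1-1 maps between Iz and subsets of F}, give an $\alpha$-energy bound $\eng_{F_w,\trct}(w') \le e^{O(\vare t)}$ on every local piece $F_w = F \cap B_\rfrak(w, 4\mfsc\,\inj(y))$. Theorem~\ref{thm: proj thm} with $\pvare = \sqrt\vare$ then produces $J' \subset [0,1]$ with $|[0,1] \setminus J'| \ll e^{-\vare^2 t/L}$ such that for every $r \in J'$, after discarding an exceptional subset of $F$ of relative density $\ll e^{-\vare^2 t/L}$, the projected set $\xi_r(F) \subset \bbr$ satisfies $\eng_{\xi_r(F),\trct_1}(\xi_r(w)) \le e^{O(\vare t)}$ with $\trct_1 \le e^{O(\vare t)}$. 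A Chebyshev-type argument then converts this energy bound into the interval-regularity condition required by Proposition~\ref{prop: 1-epsilon N}: the normalized counting measure $\rho_r$ on $\xi_r(F)$, rescaled affinely from its supporting interval of length $O(\beta)$ onto $[0,1]$, satisfies $\rho_r(J) \le e^{O(\vare t)} |J|^{1-\sqrt\vare}$ for every $J \subset [0,1]$ of length $\ge \beta^{-1} e^{-t/2}$.

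Finally, for each $r \in J'$ I invoke Proposition~\ref{prop: 1-epsilon N} applied to $\rho_r$ with $\theta = \theta' = \sqrt\vare$, $C = e^{O(\vare t)}$, base point $y \in X_\eta$, flow time $\tau$, and $\bpz \asymp e^{-\tau}$; the range $2\sqrt\vare t \le \tau \le 0.01\ref{k:mixing}t$ together with $\beta = e^{-\kappa t}$ and $\kappa \le \vare/10^6$ ensures the scale condition $|\log\bpz|/4 \le \tau \le (1-\theta')|\log\bpz|$, and the resulting error is
\[
\Sob(\varphi)\max\!\Bigl\{e^{O(\vare t)}e^{-\ref{k:mixing}\tau/2},\ e^{-\sqrt\vare\tau}\Bigr\} \ll \Sob(\varphi)\, e^{-\vare^2 t}
\]
by the choice~\eqref{eq: choose theta equi sec} of $\vare$. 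Averaging over $r \in J'$ and absorbing the exceptional contribution from $[0,1] \setminus J'$ yields the claim. The hardest part will be the careful bookkeeping matching the three scales $\mfsc$, $\beta$, and $e^{-\tau}$: the projection theorem operates at scale $\mfsc\,\inj(y)$ implicit in~\eqref{eq: dim 1 equi prop}, but its energy output must be recast as the interval-regularity condition of Proposition~\ref{prop: 1-epsilon N} at the very different scale $\bpz \asymp e^{-\tau}$, and additionally one must verify that the $r$-dependence of $\rho_r$ is slow enough (on scale $e^{-\tau}$) to legitimately apply the sparse equidistribution proposition fibrewise in $r$. This interplay is precisely what forces both the choice~\eqref{eq: choose theta equi sec} of $\vare$ and the range of $\tau$ in the hypothesis.
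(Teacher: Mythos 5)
Your high-level slogan (projection theorem plus Proposition~\ref{prop: 1-epsilon N}) is indeed the paper's strategy, but the way you set the scales makes the argument break at the decisive step. First, your parameter choice for Proposition~\ref{prop: 1-epsilon N} is internally inconsistent: with $\bpz\asymp e^{-\tau}$ and flow time $\tau$ the required condition $t\le(1-\theta')|\log\bpz|$ reads $\tau\le(1-\sqrt\vare)\tau$, which is false — you cannot ask the sparse measure to be regular only at the very scale you expand to. The paper's proof splits $\tau=\ell_1+\ell_2$ with $e^{-\ell_2}\asymp\mfsc\,\inj(y)$, uses the $a_{\ell_2}$-part of the flow to renormalize the transversal displacement to unit size, and only then applies Proposition~\ref{prop: 1-epsilon N} with flow time $\ell_1$, $\bpz=e^{-3\ell_1}$, and an extra average over $r_1\in[0,\delta]$ with $e^{\ell_1}\delta=e^{\sqrt\vare t/4}$. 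Second, your regularity input is claimed at the wrong scale: rescaling the projected set affinely from its supporting interval of length $O(\beta)$ onto $[0,1]$ would require $\alpha$-regularity of $F$ at all scales between $\mfsc\,\inj(y)$ and $\beta$, which the hypotheses \eqref{eq: psi almos cst equi prop}--\eqref{eq: dim 1 equi prop} simply do not give (the Margulis function only sees displacements of size $<\mfsc\,\inj(z)$, and \eqref{eq: psi almos cst equi prop} only says that $\mfsc$-ball counts are comparable; for instance all of $F$ could lie in a single ball of radius $2\mfsc\,\inj(y)$, in which case your rescaled $\rho_r$ gives mass $1$ to an interval of length $\approx\mfsc\,\inj(y)/\beta$ and the claimed bound fails, while the proposition is still true). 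This is exactly why the paper first localizes $F$ into cubes of size $\asymp\mfsc\,\inj(y)$ (\S\ref{sec: local F}), disintegrates over the $\coneH$-variable by Fubini, and applies the measure version of the projection theorem, Theorem~\ref{thm: proj thm 2}, to the fiber sets at scales between $(\#F_i^\sfh)^{-1}$ and $\mfsc\,\inj(y)$.

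There is also a quantitative problem in your initial reduction. Collapsing the $\coneH$-average into the outer $r$-average costs a boundary error $O(\eta)$ from the $u_{r'}$-part, and your linearized factorization puts an error of size $e^{\tau}O(\beta^2)$ into the $V$-parameter; since $\kappa$ is only assumed $\le\vare/10^6$ (and in the application is of order $\vare/D$ with $D$ large), errors of size $\eta^{\star}$ or $\beta^{\star}$ need not be $\ll e^{-\vare^2 t}$, and $e^{\tau}\beta^2=e^{\tau-2\kappa t}$ is not even small for $\tau\ge 2\sqrt\vare t$, so it cannot be absorbed as a Lipschitz error. The paper avoids both issues by never absorbing the $\coneH$-direction: the $\sfh$-variable is carried along as part of the base point via the conditional measures $\hat\mu_i^{\sfh}$ in Lemma~\ref{lem: equidist mu cone-i}, and the exact Iwasawa-type factorization is performed at the localized scale, where the off-$V$ component is $O(e^{-\ell_2})$ and remains small after the bounded extra averaging. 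Your diagnosis of the "hardest part" (matching the scales $\mfsc$, $\beta$, $e^{-\tau}$) is correct, but the fix is the localization-plus-time-splitting described above, not a global rescaling of $F$.
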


The proof, which is based on 
Proposition~\ref{prop: 1-epsilon N} and Theorem~\ref{thm: proj thm}, or more precisely Theorem~\ref{thm: proj thm 2}, will be completed in several steps.

Let us first
recall from \S\ref{sec: cone and mu cone} that a probability measure $\mu_\cone$ on 
$\cone$ is said to be $(\adl, \adm)$-admissible if 
\[
\mu_\cone=\frac1{\sum_{w\in F}\mu_w(X)}\sum_{w\in F}\mu_w
\]
where for every $w\in F$, $\mu_w$ is a measure on $\coneH.\exp(w)y$ satisfying that  
\be\label{eq: adm meas again 2}
\diff\!\mu_w(\sfh\exp(w)y)=\adl\ddensity_w(\sfh)\diff\!m_H(\sfh)\quad\text{where $1/\adm\leq \ddensity_w(\bigcdot)\leq \adm$;}
\ee
moreover, there is a subset $\coneH_w=\bigcup_{p=1}^{\adm}\coneH_{w,p}\subset \coneH$
so that 
\begin{enumerate}
\item $\mu_w\Bigl((\coneH\setminus \coneH_w).\exp(w)y\Bigr)\leq \adm\beta \mu_w(\coneH.\exp(w)y)$,
\item The complexity of $\coneH_{w,p}$ is bounded by $\adm$ for all $p$, and 
\item $\Lip(\ddensity_w|_{\coneH_{w,p}})\leq \adm$ for all $p$.
\end{enumerate}

\subsection{Localizing the set $F$}\label{sec: local F}
Recall that $F\subset B_\rfrak(0,\beta)$, and the set
\[
\cone=\coneH.\{\exp(w)y: w\in F\}
\] 
is equipped with a $(\adl, \adm)$-admissible measure $\mu_\cone$. In order to use Proposition~\ref{prop: 1-epsilon N}, we need to {\em move} $F$ to the direction of $\Lie(V)\subset\rfrak$, while controlling the errors in other directions. To facilitate this, we cover $F$ with subsets contained in cubes of size $\asymp\mfsc\,\inj(y)$ ---  {\em localized} Margulis functions were considered in the improving the dimension phase, precisely for this reason. 

Let $\bar\eta>0$ be so that $\bar\eta/2\leq \inj(z)\leq 2\bar\eta$ for all $z\in\cone$, and that $\bar\eta\mfsc$ is a dyadic number.
For every $v\in B_{\rfrak}(0,\beta)$, let $Q(v)$ be a cube with center $v$ and size $4\bar\eta\mfsc$. Fix a covering $\{Q(v_i): v_i\in F\}$ of $F$ with multiplicity bounded by $K$ (absolute). 

Since $\#\{Q(v_i): v_i\in F\}\ll (\bar\eta\mfsc)^{-3}$,~\eqref{eq: psi almos cst equi prop} implies that for all $i$ and $j$,
\begin{subequations}
\begin{align}
\label{eq: Qvi and Qvj}&\nuni^{-\vare t}\cdot(\#(Q(v_j)\cap F))\leq \#(Q(v_i)\cap F)\leq \nuni^{\vare t}\cdot(\#(Q(v_j)\cap F))\;\;\text{and}\\
\label{eq: no low density balls}&\#(Q(v_i)\cap F)\geq (\bar\eta\mfsc)^{4}\cdot(\#F)
\end{align}
\end{subequations}
where we used $e^{-\sqrt\vare t}\leq\mfsc\leq e^{-\sqrt\vare t/2}$ and $\bar\eta\geq e^{-0.001\vare t}$, and assumed $t$ is large to account for implied multiplicative constants.

For every $i$, define $\density_i:Q(v_i)\to\{1/j:j=1,\ldots, K\}$ by
\[
\density_i(w)=(\#\{Q(v_j): w\in Q(v_j)\})^{-1};
\]
we extend $\density_i$ to $\rfrak$ by defining it to be zero outside $Q(v_i)$.
 
For every $i$, let $\cone_i=\coneH.\{\exp(w)y:w\in Q(v_i)\}$. Let 
\[
\diff\!\mu_{\cone_i}(\sfh\exp(w)y)=\density_i(w)\diff\!\mu_{\cone}(\sfh\exp(w)y).
\] 
Then $\mu_\cone=\sum_i\mu_{\cone_i}$.

\subsection{A decomposition of the integral}\label{sec: decomp integral}
Recall that $\tau\geq 2\sqrt\vare t$. Let $\ell_2=|\log128\bar\eta\mfsc|$ (then $\sqrt\vare t/2\leq \ell_2\leq \sqrt\vare t+\vare t$) and let $\ell_1=\tau-\ell_2$. 
Let $0<\delta\leq 1$, and let $\varphi\in C_c^\infty(X)$. Then  
\begin{multline}\label{eq: int decomp 1}
\int_0^1\int\varphi(a_\tau u_rz)\diff\!\mu_{\cone}(z)\diff\!r=\\
\delta^{-1}\int_{0}^\delta\int_{0}^1\int \varphi(a_{\ell_1}u_{r_1}a_{\ell_2}u_{r_2}z)\diff\!\mu_{\cone}(z)\diff\!r_2\diff\!r_1
+O(e^{-\ell_2}\Lip(\varphi))
\end{multline}
where the implied constant depends on $X$. Note that in the integral above $r_1$ runs over $[0,\delta]$ and $r_2$ over $[0,1]$.

Thus we will investigate the first term on the right side of~\eqref{eq: int decomp 1}. 
Using the decomposition $\mu_\cone=\sum\mu_{\cone_i}$ and Fubini's theorem we have 
\begin{multline}\label{eq: int decomp 2}
\delta^{-1}\int_{0}^\delta\int_{0}^1\int \varphi(a_{\ell_1}u_{r_1}a_{\ell_2}u_{r_2}z)\diff\!\mu_{\cone}(z)\diff\!r_2\diff\!r_1=\\
\delta^{-1}\int_{0}^\delta\int_{0}^1\sum_i\int \varphi(a_{\ell_1}u_{r_1}a_{\ell_2}u_{r_2}z)\diff\!\mu_{\cone_i}(z)\diff\!r_2\diff\!r_1=\\
\sum_i\delta^{-1}\int_{0}^\delta\int_{0}^1\int \varphi(a_{\ell_1}u_{r_1}a_{\ell_2}u_{r_2}z)\diff\!\mu_{\cone_i}(z)\diff\!r_2\diff\!r_1.
\end{multline}

The following lemma will complete the proof of Proposition~\ref{prop: high dim to equid}. 

\begin{lemma}\label{lem: equidist mu cone-i}
Fix some $i$, and let $\bar\mu_{\cone_i}=\frac{1}{\mu_{\cone_i}(\cone_i)}\mu_{\cone_i}$, i.e., 
the probability measure proportional to $\mu_{\cone_i}$. Then 
\[
\biggl|\delta^{-1}\int_{0}^\delta\int_{0}^1\int \varphi(a_{\ell_1}u_{r_1}a_{\ell_2}u_{r_2}z)\diff\!\bar\mu_{\cone_i}(z)\diff\!r_2\diff\!r_1-\int\varphi\diff\!m_X\biggr|\ll\nuni^{-\vare^2t}\Sob(\varphi).
\]
\end{lemma}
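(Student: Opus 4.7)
The plan is to reduce the estimate to Proposition~\ref{prop: 1-epsilon N}, the sparse equidistribution tool for $N$-averages, by extracting from the localized piece $F_i:=Q(v_i)\cap F$ a one-variable empirical measure $\rho_r$ on the $V$-direction whose Frostman regularity is inherited, via the projection theorem of \S\ref{sec: proj thm}, from the energy bound~\eqref{eq: dim 1 equi prop} after conjugation by $a_{\ell_2}$.

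First I would unfold the admissible measure. Writing $z=\sfh\exp(w)y$ with $\sfh\in\coneH=\boxHs_\beta\cdot U_\eta$ and $w\in F_i$, the admissibility of $\mu_{\cone_i}$ together with the complexity-$\adm$ decomposition of $\coneH_w$ reduces the inner integral to one against $\ddensity_w\cdot m_H$. The contraction $a_{\ell_2}\boxHs_\beta a_{-\ell_2}\subset\boxHs_{e^{-\ell_2}\beta}$, combined with Lemma~\ref{lem:dist-sheet}, Lemma~\ref{lem: BCH}, and the absorption argument in Lemma~\ref{lem: Folner property 2}, then yields
\[
a_{\ell_1}u_{r_1}a_{\ell_2}u_{r_2}\sfh\exp(w)y=a_{\ell_1}u_{r_1+e^{\ell_2}(r_2+s)}\exp\bigl(\Ad(a_{\ell_2})(w-v_i)\bigr)y''+O(\beta),
\]
where $y'':=a_{\ell_2}\exp(v_i)y$, $s$ is the $U$-coordinate of $\sfh$, and the error is negligible against $\Lip(\varphi)$ since $\beta=e^{-\kappa t}\ll e^{-\vare^2 t}$. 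Centering at $v_i$ is crucial: $\Ad(a_{\ell_2})(w-v_i)$ lies in a box of size $\asymp 1$ because $|w-v_i|\leq 4\bar\eta\mfsc$ and $e^{\ell_2}\cdot 4\bar\eta\mfsc\leq 1/32$, while $\inj(y'')\geq e^{-\ell_2}(\inj(y)-O(\beta))\geq e^{-2\sqrt\vare t}$. Merging the $s\in[-\eta,\eta]$ and $r_2\in[0,1]$ averages by the change of variables $r\mapsto r_1+e^{\ell_2}(r_2+s)$ converts the integral, up to a $\Sob(\varphi)e^{-\star t}$ error, into
\[
\frac{1}{\delta}\int_0^\delta\int\frac{1}{\#F_i}\sum_{w\in F_i}\varphi\bigl(a_{\ell_1}u_{r_1+r}\exp(\Ad(a_{\ell_2})(w-v_i))y''\bigr)\diff r\diff r_1,
\]
where the inner $r$-integration runs over an interval of length $\asymp 1$.

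Second, I would invoke the projection theorem. Through Lemma~\ref{lem: F energy est assuming mfm energy estimate} and the local uniformity~\eqref{eq: Qvi and Qvj}--\eqref{eq: no low density balls}, the bound~\eqref{eq: dim 1 equi prop} transfers into an energy estimate on $F_i$, hence on the linearly rescaled $\widetilde F_i:=\Ad(a_{\ell_2})(F_i-v_i)$. Applying the modified projection theorem~\ref{thm: proj thm 2} alluded to at the end of \S\ref{sec: proj thm} to the one-parameter family $\{\xi_r\}_{r\in[0,1]}$ produces a subset $J'\subset[0,1]$ with $|[0,1]\setminus J'|\leq e^{-\vare^{\star} t}$ such that, for each $r\in J'$, the empirical measure $\rho_r:=(\#F_i)^{-1}\sum_{w\in F_i}\delta_{\xi_r(\Ad(a_{\ell_2})(w-v_i))}$ is supported in $[-1/16,1/16]$ and satisfies $\rho_r(J)\leq e^{C\vare t}|J|^{1-\sqrt\vare}$ for every interval $J$ of length at least $e^{-t/10}$.

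Finally, for each $r\in J'$, Proposition~\ref{prop: 1-epsilon N} applied with $\bpz=e^{-\ell_1/(1-10\vare)}$, $\theta=\sqrt\vare$, $\theta'=10\vare$, $C=e^{C\vare t}$, the proposition's time parameter set to $\ell_1$, and $\delta$ chosen to balance the two error terms, yields
\[
\Bigl|\frac{1}{\delta}\int_0^\delta\int\varphi(a_{\ell_1}u_{r_1}v_s\,y'')\diff\rho_r(s)\diff r_1-\int\varphi\diff m_X\Bigr|\ll\Sob(\varphi)e^{-\vare^2 t}.
\]
The hypotheses are satisfied: $|\log\bpz|\geq 16|\log\inj(y'')|$ since $\ell_1\geq\ell_2$ and $\inj(y'')\geq e^{-2\sqrt\vare t}$, and the range $\ell_2\leq\tau\leq 0.01\ref{k:mixing} t$ forces $\bpz$ to sit inside the admissible window $[e^{-4\ell_1},e^{-\ell_1/(1-10\vare)}]$; the choice~\eqref{eq: choose theta equi sec} of $\vare\ll\ref{k:mixing}^2$ is precisely what makes the $\ref{k:mixing}$-gain in Proposition~\ref{prop: 1-epsilon N} dominate the $\sqrt\vare$-loss in the regularity of $\rho_r$. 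Integrating in $r\in J'$ and absorbing the exceptional set into the error completes the proof. The principal obstacle is the projection step: transferring the localized Margulis bound~\eqref{eq: dim 1 equi prop} uniformly into a Frostman regularity for $\rho_r$ at the correct scale $\bpz$ requires Theorem~\ref{thm: proj thm 2} and careful bookkeeping of the truncation $\trct$ and the rescaling by $\Ad(a_{\ell_2})$ so that $\alpha=1-\sqrt\vare$ stays close enough to $1$ for Proposition~\ref{prop: 1-epsilon N} to deliver the required polynomial error.
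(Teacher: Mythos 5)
Your overall route is the paper's: localize to $Q(v_i)$, recenter at $v_i$, use the projection theorem to give Frostman regularity of the projected transversal measure for most $r_2$, and finish with Proposition~\ref{prop: 1-epsilon N} over a short $u_{[0,\delta]}$-average with $e^{\ell_1}\delta$ a small power of $e^t$. (The paper additionally inserts a Fubini step, disintegrating the admissible measure over the $H$-coordinate $\sfh$ and working with the fibers $F_i^\sfh\subset\margI_{\cone',32\mfsc}(e,\sfh z_i)$, which is how \eqref{eq: dim 1 equi prop} is converted — via Lemma~\ref{lem: rfrak is invariant} and \eqref{eq: Qvi and Qvj} — into the counting bound \eqref{eq: uniform meas on F-i-h is regular}; you gesture at this transfer but skip the $\sup\noI\leftrightarrow\#F_i$ conversion and the handling of the Lipschitz densities/injectivity radius that the disintegration is there to absorb.) However, two steps as written would fail.

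First, your projected measure $\rho_r=(\#F_i)^{-1}\sum_w\delta_{\xi_r(\Ad(a_{\ell_2})(w-v_i))}$ is the wrong object: $\xi_r$ does not commute with $\Ad(a_{\ell_2})$. Since $\Ad(a_{\ell_2})$ multiplies $w_{12}$ by $e^{\ell_2}$, $w_{11}$ by $1$, and $w_{21}$ by $e^{-\ell_2}$, one has
\[
\xi_r\bigl(\Ad(a_{\ell_2})(w-v_i)\bigr)=e^{\ell_2}(w-v_i)_{12}-2(w-v_i)_{11}r-e^{-\ell_2}(w-v_i)_{21}r^2,
\]
which is, up to errors of relative size $e^{-\ell_2}$, the $r$-independent coordinate projection $w\mapsto e^{\ell_2}(w-v_i)_{12}$; Theorem~\ref{thm: proj thm 2} gives no gain for such a family, and if $F_i$ were concentrated in the $w_{12}$-direction your $\rho_r$ would be degenerate for every $r$. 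The quantity the geometry actually produces is the $V$-component of $\Ad(a_{\ell_2}u_{r_2})(w-v_i)$, namely $e^{\ell_2}\xi_{r_2}(w-v_i)$ (the paper's \eqref{eq: Iwasawa decomposition Ad(ur)w}): one must project by $\xi_{r_2}$ \emph{before} the $a_{\ell_2}$-rescaling, and it is to this family that Theorem~\ref{thm: proj thm 2} is applied. Second, your final application of Proposition~\ref{prop: 1-epsilon N} uses the $r_2$-independent base point $y''$, but the reduction gives
\[
a_{\ell_1}u_{r_1+e^{\ell_2}R}\exp\bigl(\Ad(a_{\ell_2})(w-v_i)\bigr)y''=g\,a_{\ell_1}u_{r_1}v_{e^{\ell_2}\xi_R(w-v_i)}\,\bigl(u_{e^{\ell_2}R}y''\bigr),\qquad R=r_2+s,
\]
so the point receiving the short translate $a_{\ell_1}u_{r_1}v_s$, $r_1\in[0,\delta]$, is $u_{e^{\ell_2}R}y''=a_{\ell_2}u_R\exp(v_i)y$, which varies with $r_2$ (the paper's $z^\sfh_{i,r_2}$); your merged formula, with its claim that the inner parameter runs over a unit-length interval (it runs over length $\asymp e^{\ell_2}$) and with base point $y''$, conflates the long $u$-range with the short one to which Proposition~\ref{prop: 1-epsilon N} applies. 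Relatedly, the residual factor $g$ above is of size $e^{-\ell_2}$, not $O(\beta)$, and keeping it harmless after applying $a_{\ell_1}u_{r_1}$ is exactly what forces the explicit constraint $e^{\ell_1}\delta\le e^{\ell_2/2}$ (the paper's choice \eqref{eq:choose delta} and estimate \eqref{eq: applying a ell-1 ur-1}), which your "balance the two error terms" leaves unverified.
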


\begin{proof}
Recall that $\cone_i=\coneH.\{\exp(w)y:w\in Q(v_i)\}$. Let $z_i=\exp(v_i)y$. It will be more convenient to replace $y$ in the definition of $\cone_i$ by $z_i$: Note that 
\be\label{eq: chage y to zi}
\begin{aligned}
\sfh\exp(w)y&=\sfh\exp(w)\exp(-v_i)\exp(v_i)y\\
&=\sfh\sfh_w\exp(v_w)z_i
\end{aligned}
\ee
where $\|\sfh_w-I\|\ll\mfsc\beta$ and $\frac12\|w-v_i\|\leq \|v_w\|\leq 2\|w-v_i\|$, see Lemma~\ref{lem: BCH}.

Note also that the map $w\mapsto v_w$ is one-to-one. Let $F_i=\{v_w: w\in Q(v_i)\}$ 
and let $\cconeH=\overline{\coneH\setminus\partial_{20\mfsc}\coneH}$. Put 
\[
\ccone_i:=\cconeH.\{\exp(v)z_i: v\in F_i\}.
\]
Then by~\eqref{eq: chage y to zi} and since $\|\sfh_w-I\|\ll\mfsc\beta$, we have $\ccone_i\subset\cone_i$; moreover, $\bar\mu_{\cone_i}(\cone_i\setminus\ccone_i)\ll \mfsc$. Thus it suffices to show the claim in the lemma with $\bar\mu_{\cone_i}$ replaced by $\hat\mu_i:=\frac{1}{\bar\mu_{\cone_i}(\ccone_i)}\bar\mu_{\cone_i}|_{\ccone_i}$. 

For later reference, let us also record that~\eqref{eq: chage y to zi} and  $\|\sfh_w-I\|\ll\mfsc\beta$ implies also that in fact
\be\label{eq: ccone in cone'}
\ccone_i\subset \cone':=\coneH'.\{\exp(w)y: w\in F\}
\ee
where $\coneH'=\overline{\coneH\setminus\partial_{10\mfsc}\coneH}$. In particular,~\eqref{eq: dim 1 equi prop} holds true for all $z\in\ccone_i$.

Recall that $\hat\mu_i$ is the probability measure proportional to 
$\sum_w\hat\mu_{i,w}$ where $\diff\!\hat\mu_{i,w}=\hat{\density}_{i,w}\diff\! m_H$ and 
$(K\adm)^{-1}\leq\hat{\density}_{i,w}\leq \adm$. We will use Fubini's theorem to change the order of disintegration of $\hat\mu_i$ as follows. 
Let $z\in\ccone_i$, then  
\[
z=\sfh\exp(v)z_i=\exp(\Ad(\sfh)v)\sfh z_i\in\ccone_i.
\] 
Moreover, $\Ad(\sfh)v\in B_\rfrak(0,8\bar\eta\mfsc)$. Since $\bar\eta/2\leq \inj(z')\leq 2\bar\eta$ for every $z'\in\cone_i$, we conclude that 
\[
\Ad(\sfh)v\in\margI_{\cone_i,32\mfsc}(e,\sfh z_i).
\] 
Let $\pi:\ccone_i\to\coneH.z_i$ denote the projection $z=\sfh\exp(v)z_i\mapsto \sfh z_i$.
Using Fubini's theorem, we have  
\[
\hat\mu_i=\int\hat\mu_i^\sfh\diff\!\pi_*\hat\mu_i(\sfh.z_i),
\]
where $\hat\mu_{i}^\sfh$ denotes the conditional measure of $\hat\mu_i$ for the factor map $\pi$. Note that $\hat\mu_{i}^\sfh$ is supported on $\{\exp(w)\sfh z_i:w\in\margI_{\cone_i,32\mfsc}(e,\sfh z_i)\}$.  
In view of the above discussion, $\diff\!\pi_*\hat\mu_i$ is proportional to $\hat{\density}\diff\! m_H$ restricted to the support of $\pi_*\hat\mu_i$ where $1\ll\hat{\density}\ll 1$, moreover, 
for every $i$, and every $w\in\supp(\hat\mu_i^\sfh)$,  
\be\label{eq: weight of atoms of cmui}
\hat\mu_i^\sfh(w)\asymp (\#F_i)^{-1}
\ee  
where the implied constant depends on $K$ and $\adm$. 

Now, using Fubini's theorem we have 
\begin{multline*}
\delta^{-1}\int_{0}^\delta\!\int_{0}^1\!\int \varphi(a_{\ell_1}u_{r_1}a_{\ell_2}u_{r_2}z)\diff\!\hat\mu_{i}(z)\diff\!r_2\diff\!r_1=\\
\delta^{-1}\int_{\cconeH.z_i}\!\int_{0}^\delta\!\int_{0}^1\!\int \varphi(a_{\ell_1}u_{r_1}a_{\ell_2}u_{r_2}\exp(w)\sfh z_i)\diff\!\hat\mu_i^\sfh(w)\diff\!r_2\diff\!r_1\diff\!\pi_*\cmu_i(\sfh.z_i).
\end{multline*}

Fix some $i$. We will investigate
\be\label{eq: fix i and h reduces to this}
\delta^{-1}\!\int_{\cconeH.z_i}\!\int_0^\delta\!\int_0^1\!\!\int\!\varphi(a_{\ell_1}u_{r_1}a_{\ell_2}u_{r_2}\exp(w)\sfh z_i)\diff\!\hat\mu_i^\sfh(w)\diff\!r_2\diff\!r_1\diff\!\pi_*\cmu_i(\sfh.z_i).
\ee

\subsection*{Discretized dimension of $\cmu_i^\sfh$}
Let us put
\[
F_{i}^\sfh:=\supp(\cmu_i^\sfh)=\{\Ad(\sfh)v: v\in F_i\}.
\] 
Moreover, recall from~\eqref{eq: ccone in cone'} that $\exp(\Ad(\sfh)v)\sfh z_i=\sfh\exp(v)z_i\in\ccone_i\subset\cone'$.
Since $\|v\|\leq 4\bar\eta\mfsc$, for every $v\in F_i$, we conclude that 
\be\label{eq: supp in margi 8b}
F_{i}^\sfh\subset\margI_{\cone',32\mfsc}(e,\sfh z_i). 
\ee
Furthermore, by~\eqref{eq: no low density balls} and since $\#F\geq e^{0.9t}$, we have 
\be\label{eq: supp has many elements}
\#F_{i}^\sfh=\#F_i=\#(Q(v_i)\cap F)\geq (\bar\eta\mfsc)^{4}\cdot(\#F)\geq e^{0.8t}.
\ee 

Recall now that 
\be\label{eq: dim 1 equi prop'}
\mfm_{\cone, \scmf,\trct}(e,z')\leq \nuni^{\vare t}\noI_{\cone,\scmf}(e,z')\leq \nuni^{\vare t}\sup_{z''\in\cone}\noI_{\cone,\scmf}(e,z'')
\ee
for all $z'\in\cone'$, where we used~\eqref{eq: dim 1 equi prop} to get the first bound.

Apply Lemma~\ref{lem: margIz energy est} with $\egbd=\nuni^{\vare t}\sup_{z'\in\cone}\noI_{\cone,\scmf}(e,z')$, $z=\sfh z_i$, 
and $\margI_{\sfh z_i}=\margI_{\cone',32\mfsc}(e,\sfh z_i)$.
We thus conclude that
\be\label{eq: margIz energy est use equi}
\eng_{\margI_{\sfh z_i}, \trct}(w)\ll\mfbd\qquad\text{for every $w\in\margI_{\sfh z_i}$.}
\ee

Moreover, by~\eqref{eq: Qvi and Qvj} and Lemma~\ref{lem: rfrak is invariant}, we have 
\begin{align*}
\#F_{i}^\sfh=\#F_i=\#(Q(v_i)\cap F)&\gg \nuni^{-\vare t}\sup_{z'}\#\margI_{\cone,\mfsc}(e,z')\\
&=\nuni^{-\vare t} \sup_{z'}\Bigl((\inj(z')\mfsc)^{\alpha}\noI_{\cone,\mfsc}(e,z')\Bigr)\\
&\gg \nuni^{-2\vare t}  (\bar\eta \mfsc)^{\alpha}\egbd,
\end{align*}
where we also used the definition of $\egbd$ in the last inequality.

Recall that $\trct\leq \nuni^{0.01\vare t}$. Therefore,~\eqref{eq: supp in margi 8b},~\eqref{eq: supp has many elements}, and~\eqref{eq: margIz energy est use equi}, in view of the above, imply that 
\[
\eng_{F_{i}^\sfh, \trct}(w)\ll\mfbd\ll \nuni^{2\vare t}(\bar\eta\mfsc)^{-\alpha}\cdot (\#F_{i}^\sfh)\qquad\text{for every $w\in F_{i}^\sfh$}.
\]
Using $\trct\leq \nuni^{0.01\vare t}$ and~\eqref{eq: supp has many elements} again, we conclude that 
\[
\sigma_i^\sfh(B_\rfrak(w,\mfsc'))\ll \nuni^{2\vare t}(\mfsc'/\bar\eta\mfsc)^{\alpha}\quad\text{for all $\mfsc'\geq(\#F_{i}^\sfh)^{-1}$},
\]
where $\sigma_i^\sfh$ is the uniform measure on $F_{i}^\sfh$. 
This and~\eqref{eq: weight of atoms of cmui} imply that
\be\label{eq: uniform meas on F-i-h is regular}
\cmu_i^\sfh(B_\rfrak(w,\mfsc'))\ll\nuni^{2\vare t}(\mfsc'/\bar\eta\mfsc)^{\alpha}\quad\text{for all $\mfsc'\geq(\#F_{i}^\sfh)^{-1}$,}
\ee
where the implied constant depends only on $\adm$ and $K$.

\subsection*{Projecting the dimension} 
Recall that $0<\ref{k:mixing}\leq 1$, we have 
\[
2\sqrt\vare t\leq \tau\leq 0.01\ref{k:mixing} t\leq 0.01t.
\] 

For every $r\in[0,1]$ and $w\in B_\rfrak(0,128\bar\eta\mfsc)$, write 
\be\label{eq: Iwasawa decomposition Ad(ur)w}
\exp(\Ad(u_r)w)=\begin{pmatrix}d_{r,w} &0\\ c_{r,w} & 1/d_{r,w}\end{pmatrix}\begin{pmatrix}1 &\xi_r(w)\\ 0& 1\end{pmatrix} u_{\hat r}
\ee
where $|d_{r,w}-1|, |c_{r,w}|\ll \nuni^{-\ell_2}$, $\xi_r(w)\in\rfrak^+$, and $\hat r=\hat r(w,r)$ satisfies $|\hat r|\ll e^{-2\ell_2}$. Note that 
$\hat r=0$ if $G=\SL_2(\R)\times \SL_2(\R)$. 

In view of~\eqref{eq: uniform meas on F-i-h is regular}, we may apply Theorem~\ref{thm: proj thm 2} with $F_i^\sfh$, $\mfsc_1=\nuni^{-\ell_2}=128 \bar\eta\mfsc$, $\mfsc_0=(\#F_i^\sfh)^{-1}$, $\cmu_i^\sfh$, $\vare$, and 
\[
\mfsc'=\nuni^{-3\ell_1-\ell_2}\geq \nuni^{-4\tau}\geq \nuni^{-0.04 t}\geq (\#F_i^\sfh)^{-1},
\] 
where we used $\tau\leq 0.01 t$ and~\eqref{eq: supp has many elements}.

Let $J_{\mfsc'}\subset[0,1]$ and  
$\Theta_{\mfsc',r_2}\subset F_i^\sfh$ (for every ${r_2}\in J_{\mfsc'}$) be as in Theorem~\ref{thm: proj thm 2}. Set $J^\sfh:=J_{\mfsc'}$.
Let $\bar\mu_{i,r_2}^\sfh$ denote the projection of $\cmu^\sfh_i|_{\Theta_{\mfsc',r_2}}$
under the map $w\mapsto\xi_{r_2}(w)$. 
Then, by Theorem~\ref{thm: proj thm 2}, we have 
\be\label{eq: regularity of rho i r2}
\bar\mu_{i,r_2}^{\sfh}(I)\leq L\vare^{-L}\nuni^{2\vare n}(\mfsc'/\bar\eta\scmf)^{\alpha-7\vare}
\ee
for every interval $I$ of length $\mfsc'$ where $L$ is absolute.

Moreover,  
$\left|[0,1]\setminus J^\sfh\right|\leq L\vare^{-L}\mfsc'^{\vare}$ which is $\leq L\vare^{-L} \nuni^{-\vare^{3/2} t}$ since $\mfsc'<e^{-2\sqrt\vare t}$.
Thus by Fubini's theorem there exists some $J\subset [0,1]$ with $\left|[0,1]\setminus J\right|\ll\vare^{-2L} \nuni^{-\vare^{8/5} t}$ 
and for every $r_2\in J$, a subset $\hat{\mathsf E}(r_2)\subset \hat{\mathsf E}$ with $|\hat{\mathsf E}\setminus\hat{\mathsf E}(r_2)|\ll\vare^{-2L} \nuni^{-\vare^{8/5} t}$ for that the~\eqref{eq: regularity of rho i r2} holds with $r_2$ and any $\sfh\in\hat{\mathsf E}(r_2)$.

This and the fact that for all $w$, the Jacobian of the map $r_2\to \hat r(w,r_2)$ is $1+O(e^{-2\ell_2})$ imply that for any $r_1 \in [0,\delta]$
\begin{multline}\label{eq: conv dmax d0 s 1}
\int_{\cconeH.z_i}\!\int_0^1\!\int\varphi(a_{\ell_1}u_{r_1}a_{\ell_2}u_{r_2}\exp(w)\sfh z_{i})\diff\!\cmu_i^\sfh(w)\diff\!r_2\diff\!\pi_*\cmu_i(\sfh.z_i)=\\
\int_{J}\int_{\cconeH.z_i}\!\!\int\varphi\Bigl(a_{\ell_1}u_{r_1}a_{\ell_2}f(w,r_2)u_{r_2}\sfh z_i\Bigr)\diff\!\cmu_i^\sfh(w)\diff\!r_2\diff\!\pi_*\cmu_i(\sfh.z_i)+ \\O\Bigl(\Sob(\varphi)L\vare^{-2L}\nuni^{-\vare^{8/5} t}\Bigr),
\end{multline}
where $f(w,r_2)=\begin{pmatrix}d_{r,w} &0\\ c_{r,w} & 1/d_{r,w}\end{pmatrix}\begin{pmatrix}1 &\xi_r(w)\\ 0& 1\end{pmatrix}$.

\subsection*{Approximating orbits using the projection $\xi_{r_2}$}
In view of~\eqref{eq: conv dmax d0 s 1}, we need to investigate the contribution of the first term on the right side of~\eqref{eq: conv dmax d0 s 1} to~\eqref{eq: fix i and h reduces to this}.  
We begin by fixing the size of $\delta$ and some algebraic considerations. 

Recall that $\sqrt\vare t/2\leq \ell_2\leq \sqrt\vare t+\vare t$ and $\ell_1=\tau-\ell_2\geq \sqrt\vare t-\vare t$. Define $0<\delta\leq 1$ by the following equation 
\be\label{eq:choose delta}
\nuni^{\ell_1}\delta = \nuni^{\sqrt\vare t/4}\leq\nuni^{\ell_2/2}.
\ee

For any $r_2\in[0,1]$, put $z_{i,r_2}^\sfh=a_{\ell_2}u_{r_2}\sfh z_{i}$. 
Using~\eqref{eq: Iwasawa decomposition Ad(ur)w} and~\eqref{eq: supp in margi 8b}, for any $w\in F_i^\sfh$ and all $r_1\in[0,\delta]$, we have 
\begin{multline*}
a_{\ell_1}u_{r_1}a_{\ell_2}f(w,r_2)a_{-\ell_2}z^\sfh_{i,r_2}
=\\a_{\ell_1}u_{r_1}\begin{pmatrix}d_{r_2,w} &0\\ \nuni^{-\ell_2}c_{r_2,w} & 1/d_{r_2,w}\end{pmatrix}\begin{pmatrix}1 &\nuni^{\ell_2}\xi_{r_2}(w)\\ 0& 1\end{pmatrix}z^\sfh_{i,r_2}
\end{multline*}
where $|c_{r_2,w}|, |d_{r_2,w}-1|\ll \nuni^{-\ell_2}$. From this, we conclude that  
\be\label{eq: applying a ell-1 ur-1}
a_{\ell_1}u_{r_1}\exp(\Ad(a_{\ell_2}u_{r_2})w)z^\sfh_{i,r_2}= g a_{\ell_1}u_{r_1}\begin{pmatrix}1 &\nuni^{\ell_2}\xi_{r_2}(w)\\ 0 & 1\end{pmatrix}z^\sfh_{i,r_2}
\ee
where $\|g-I\|\ll \nuni^{\ell_1}\delta\nuni^{-\ell_2}\ll \nuni^{-\ell_2/2}\leq \nuni^{-\sqrt\vare t/4}$, see~\eqref{eq:choose delta}.

\subsection*{Applying Proposition~\ref{prop: 1-epsilon N}}
Fix $r_2 \in J$ and $\sfh\in\cconeH(r_2)$. 
Let $\hat\mu_{i,r_2}^\sfh$ denote the image of $\bar\mu_{i,r_2}^\sfh$ under the map $s\mapsto\nuni^{\ell_2}s$.
In view of~\eqref{eq: applying a ell-1 ur-1} and the fact that     
$\cmu^\sfh_i(F_i^\sfh\setminus \Theta_{\mfsc',r_2})\leq L\vare^{-L}\nuni^{-\vare^{3/2} t}$ we have 
\begin{multline*}
\delta^{-1}\int_{0}^\delta \!\int \varphi\Bigl(a_{\ell_1}u_{r_1}a_{\ell_2}f(w_2)a_{-\ell_2}z_{i,r_2}^\sfh\Bigr)\diff\!\cmu_i^\sfh(w)\diff\!r_1=\\
\delta^{-1}\int_{0}^\delta \!\int \varphi\Bigl(a_{\ell_1}u_{r_1}v_sz_{i,r_2}^\sfh\Bigr)\diff\!\hat\mu_{i,r_2}^\sfh(s)\diff\!r_1+ O\Bigl(\Sob(\varphi) L\vare^{-2L}\nuni^{-\vare^{8/5} t}\Bigr).
\end{multline*}

Recall that $\alpha=1-\sqrt\vare$. By~\eqref{eq: regularity of rho i r2}, the measure $\hat\mu_{i,r_2}^\sfh$ satisfies the condition~\eqref{eq:C-rho-reg-N} 
in Proposition~\ref{prop: 1-epsilon N} for
\[
\theta=\sqrt\vare+7\vare, \qquad \bpz=\nuni^{-3\ell_1}, \quad\text{and}\quad  C=L\vare^{-L}\nuni^{2\vare t}.
\]
Apply Proposition~\ref{prop: 1-epsilon N} for $t=\ell_1$, and the above chosen $\delta$; note that $|\log\bpz|/4\leq t=\ell_1\leq |\log\bpz|/2$ so that in particular~\eqref{eq: comapring bpz and eta} holds. Then as $\bpz^{1/2}\leq \nuni^{-\ell_1}$ the first term in the right hand side of \eqref{eq: in Prop 5.2} dominates and   
\begin{multline}\label{eq: tau small almost final}
\biggl|\delta^{-1}\int_{0}^\delta\!\int \varphi\Bigl(a_{\ell_1}u_{r_1}v_sz_{i,r_2}^\sfh\Bigr)\diff\!\hat\mu_{i,r_2}^\sfh(s)\diff\!r_1-\int\varphi\diff\!m_X\biggr|\\\ll \Sob(\varphi)\Bigl(L\vare^{-L}\nuni^{2\vare t}\nuni^{3(\sqrt\vare+7\vare)\ell_1}\Bigr)^{1/2} (\nuni^{\ell_1}\delta)^{-\ref{k:mixing}}.
\end{multline}

Recall that $\ell_1\leq \tau\leq 0.01\ref{k:mixing} t$. Therefore,
\[
\nuni^{3\sqrt\vare\ell_1}\leq \nuni^{0.03\ref{k:mixing}\sqrt\vare t}.
\]
Moreover, $\ell_1\leq \tau\leq 0.01t$, hence $21\vare\ell_1\leq \vare t$, and using~\eqref{eq: choose theta equi sec} we get 
\[
3\vare=3(\sqrt\vare)^2\leq 0.01\ref{k:mixing}\sqrt\vare.
\] 
Thus, $\nuni^{2\vare t}\cdot \nuni^{21\vare\ell_1}\leq \nuni^{3\vare t}\leq \nuni^{0.01\ref{k:mixing}\sqrt\vare t}$. 
Altogether, we conclude that  
\[
\nuni^{2\vare t}\nuni^{3(\sqrt\vare+7\vare)\ell_1}\leq \nuni^{0.04\ref{k:mixing}\sqrt\vare t}.
\] 
Since $e^{\ell_1}\delta=\nuni^{\sqrt\vare t/4}$. The above implies that the right side of~\eqref{eq: tau small almost final} is 
\[
\ll\Sob(\varphi) L\vare^{-L}\nuni^{-\ref{k:mixing}\sqrt \vare t/5}\ll \Sob(\varphi) L\vare^{-L}\nuni^{-\vare t}
\] 
where in the second inequality is a consequence of~\eqref{eq: choose theta equi sec}.

Choosing $t$ large enough so that $L\vare^{-2L}\nuni^{-\vare^{8/5} t}\leq \nuni^{-\vare^2 t}$, we conclude that 
\begin{multline*}
\delta^{-1}\!\int_{\cconeH.z_i}\!\int_0^\delta\!\int_0^1\!\!\int\!\varphi(a_{\ell_1}u_{r_1}a_{\ell_2}u_{r_2}\exp(w)\sfh z_i)\diff\!\hat\mu_i^\sfh(w)\diff\!r_2\diff\!r_1\diff\!\pi_*\cmu_i(\sfh.z_i)=\\
\int \varphi\diff\!m_X+ O(\Sob(\varphi)\nuni^{-\vare^2 t}).
\end{multline*}
The proof is complete. 
\end{proof}

\begin{proof}[Proof of Proposition~\ref{prop: high dim to equid}]
In view of~\eqref{eq: int decomp 1} and~\eqref{eq: int decomp 2}, 
the proposition follows from Lemma~\ref{lem: equidist mu cone-i}.
\end{proof}

\section{Proof of Theorem~\ref{thm:main}}\label{sec: proof of main thm}

The proof will be completed in some steps 
and it is based on various propositions which were discussed so far.

\subsection*{Fixing the parameters}
Fix $\vare$ as follows 
\be\label{eq: choose theta equi sec final}
0<\sqrt\vare < 10^{-8}\ref{k:mixing}
\ee
where $\ref{k:mixing}$ is as in Proposition~\ref{prop: 1-epsilon N}.

Let $D=D_0D_1+2D_1$ where $D_0$ is as in Proposition~\ref{prop: linearization translates} and $D_1$ is as in Proposition~\ref{prop:closing lemma intro}; we will always assume $D_1, D_0\geq 10$. We will show the claim holds with
\[
A=15+2D_0.
\]

Let us assume (as we may) that  
\be\label{eq: how big is R xi}
R\geq \max\{(10\ref{E:non-div-main})^3\inj(x_0)^{-2}, e^{\ref{E:non-div-main}}, e^{s_0}, \ref{c: linear trans}\},
\ee
see Proposition~\ref{prop:non-div} and Proposition~\ref{prop: linearization translates}. Let $T\geq R^A$, and
suppose that Theorem~\ref{thm:main}(2) does not hold with this $A$. 
That is, for every $x\in X$ so that $Hx$ is periodic with $\vol(Hx)\leq R$, 
\be\label{eq: main thm 2 fails}
\dist_X(x,x_0)> R^{A}(\log T)^AT^{-1}\geq (\log S)^{D_0}S^{-1}
\ee
where $S:=R^{-A}T$. 

Since $D_0, D_1\geq 10$, we have 
\[
A=15+2D_0\geq 10+(10+2D)D_1^{-1}\geq 10+(\tfrac{5}{2}\sqrt\vare+9 +\tfrac{4D-3}{2})D_1^{-1}. 
\]
Therefore, 
\be\label{eq: computation for t0}
\begin{aligned}
\log T-\Bigl((\tfrac{5}{2}\sqrt\vare+9 +\tfrac{4D-3}{2})D_1^{-1}\Bigr)\log R\geq \log T-A\log R+10\log R&\\
\geq\log S+2|\log\inj(x_0)|+8\log R&
\end{aligned}
\ee
we used $R\geq \inj(x_0)^{-2}$ and $\log S=\log T-A\log R$ in the last inequality.  

Let $t=\frac{1}{D_1}\log R$, $\ell = \vare t/100$, and $d_1=100\lceil\frac{4D-3}{2\vare}\rceil$. Then 
\be\label{eq: range of d1ell}
\tfrac{4D-3}{2}t\leq d_1\ell\leq \tfrac{4D-3}{2}t+\vare t.
\ee
As it was done in~\eqref{eq: choose kappa}, fix 
\[
0<\kappa < \min\{10^{-6}d_1^{-1}, 10^{-6}\vare\}.
\]
Let $\beta=e^{-\kappa t}$ and let $\eta=\beta^{1/2}$; note that $\eta\geq e^{-0.1\ell}$.

Let us write $\log T=t_3+t_2+t_1+t_0$ where 
\be\label{eq: def t0 t1 t2}
\begin{aligned}
&t_0=\log T-((\tfrac{5}{2}\sqrt\vare+9 +\tfrac{4D-3}{2})D_1^{-1})\log R\\
&t_1=8t,\;\text{ and }\; t_2=t+d_1\ell.
\end{aligned}
\ee
Note that $t_0, t_1, t_2\geq t$ (see~\eqref{eq: computation for t0} for $t_0>t$). We now estimate $t_3$; indeed
\begin{align*}
t_3&= \log T-(t_0+t_1+t_2)\\
&=(\tfrac{5}{2}\sqrt\vare + 9 +\tfrac{4D-3}{2})D_1^{-1}\log R-9t-d_1\ell\\
&=(\tfrac{5}{2}\sqrt\vare + 9 +\tfrac{4D-3}{2})t-9t-d_1\ell
\end{align*}
where we used $t=\frac{1}{D_1}\log R$ in the last equation. 
This and~\eqref{eq: range of d1ell} imply 
\be\label{eq: range of t3}
2\sqrt\vare t\leq t_3\leq 3\sqrt\vare t.
\ee

Recall that $a_{\ell_1}u_ra_{\ell_2}=a_{\ell_1+\ell_2}u_{e^{-\ell_2}r}$. Thus, for any $\varphi\in C_c^\infty(X)$, we have
\begin{multline}\label{eq: proof integral 1}
\int_0^1\varphi(a_{\log T} u_rx_0)\diff\!r=\quad O(\|\varphi\|_\infty e^{-t})\quad +\\
\int_0^1\int_0^1\int_0^1\int_0^1\varphi(a_{t_3} u_{r_3}a_{t_2}u_{r_2}a_{t_1}u_{r_1}a_{t_0}u_{r_0}x_0)\diff\!r_3\diff\!r_2\diff\!r_1\diff\!r_0 
\end{multline}
where the implied constant is absolute and we used $t_0, t_1, t_2\geq t$.

\subsection*{Improving the Diophantine condition}\label{sec:improving diophantine}
Apply Proposition~\ref{prop: linearization translates} with $S=R^{-A}T$, then for all
\[
\tau\geq \max\{\log S,2|\log\inj(x_0)|\}+s_0,
\] 
we have the following  
\be\label{eq: apply linearization proof}
\biggl|\biggl\{r\in [0,1]: \begin{array}{c}\text{$a_{\tau}u_rx_0\not\in X_\eta$ or $\exists\, x$ with $\vol(Hx)\leq R$} \\ 
\text{so that }\dist_X(x, a_{\tau}u_rx_0)\leq R^{-D_0-1}\end{array}\biggr\}\biggr|\ll\eta^{1/2},
\ee
where we also used $\eta^{1/2}\geq R^{-1}$ and $R\geq \ref{c: linear trans}$. 

Let $J_0\subset[0,1]$ be the set of those $r_0\in[0,1]$ so that $a_{t_0}u_{r_0}x_0\in X_{\eta}$ and 
\[
\dist_X(x, a_{t_0}u_{r_0}x_0)> R^{-D_0-1}=e^{-D_1(D_0+1)t}
\] 
for all $x$ with $\vol(Hx)\leq R=e^{D_1t}$. 
Then since by~\eqref{eq: computation for t0} and~\eqref{eq: how big is R xi} we have  
\[
t_0\geq \log S+2|\log\inj(x_0)|+8\log R\geq \max\{\log S,2|\log\inj(x_0)|\}+s_0,
\] 
the assertion in~\eqref{eq: apply linearization proof} implies that $|[0,1]\setminus J_0|\ll\eta^{1/2}$. In consequence, 
\begin{multline}\label{eq: proof integral 2}
\int_{0}^1\varphi(a_{\log T} u_rx_0)\diff\!r=\quad O(\|\varphi\|_\infty \eta^{1/2})\quad +\\
\int_{J_0}\int_0^1\int_0^1\int_0^1\varphi(a_{t_3} u_{r_3}a_{t_2}u_{r_2}a_{t_1}u_{r_1}x(r_0))\diff\!r_3\diff\!r_2\diff\!r_1\diff\!r_0 
\end{multline}
where $x(r_0)=a_{t_0}u_{r_0}x_0$ and the implied constant depends on $X$.

\subsection*{Applying the closing lemma}
For every $r_0\in J_0$, we now apply Proposition~\ref{prop:closing lemma intro} 
with $x(r_0)$, $D=D_0D_1+2D_1$ and the parameter $t$. 
For any such $r_0$, we have
\[
\dist_X(x, x(r_0))> e^{-D_1(D_0+1)t} =e^{(-D+D_1)t}
\] 
for all $x$ with $\vol(Hx)\leq e^{D_1t}$. Thus Proposition~\ref{prop:closing lemma intro}(1) holds.
Let 
\[
J_1(r_0)=I(x(r_0))=I(a_{t_0}u_{r_0}x_0)
\] 
Then 
\begin{multline}\label{eq: proof integral 3}
\int_0^1\varphi(a_{\log T} u_rx_0)\diff\!r=\quad O(\|\varphi\|_\infty \eta^{1/2})\quad +\\
\int_{J_0}\int_{J_1(r_0)}\int_0^1\int_0^1\varphi(a_{t_3} u_{r_3}a_{t_2}u_{r_2}x(r_0,r_1))\diff\!r_3\diff\!r_2\diff\!r_1\diff\!r_0 
\end{multline}
where $x(r_0, r_1)=a_{t_1}u_{r_1}a_{t_0}u_{r_0}x_0$ and the implied constant is absolute.

\subsection*{Improving the dimension phase}
Fix some $r_0\in J_0$, and let $r_1\in J(r_0)$. Put $x_1=x(r_0, r_1)$. 
Recall from~\eqref{eq: def mu tn...t1} that 
\[
\mu_{t,\ell, d_1}=\rwm_{\ell}\conv\cdots\conv\nu_{\ell}\conv\sigma\conv\rwm_{t}
\]
where $\rwm_\ell$ appears $d_1$ times in the above expression. 
In view of Lemma~\ref{lem: thickening stable}, 
\begin{multline}\label{eq: proof integral 4}
\biggl|\int_0^1\!\!\int_0^1\varphi(a_{t_3} u_{r_3}a_{t_2}u_{r_2}x_1)\diff\!r_3\diff\!r_2-\int\!\!\int_0^1\!\! \varphi(a_{t_3} u_{r_3}hx_1)\diff\!r_3\mu_{t,\ell, d_1}(h)\biggr|\\
\ll \Lip(\varphi)e^{-\ell}\ll \Lip(\varphi)\eta^{1/2}.
\end{multline}

We now apply Proposition~\ref{propos: imp dim main} with $x_1$, $t_3$ and $r_3\in[0,1]$. Then 
 \begin{multline}\label{eq: proof integral 5}
\int_0^1\!\!\int \varphi(a_{t_3} u_{r_3}hx_1)\diff\!\mu_{t,\ell, d_1}(h)\diff\!r_3=\\
\sum_{d,i}c_{d,i}\int_0^1\!\!\int \varphi(a_{t_3} u_{r_3}z)\diff\!\rwm_\ell^{(d_{1}-d)}\conv\mu_{\cone_{d,i}}(z)\diff\!r_3 + O(\Lip(\varphi)\beta^{\ref{k: bootstrap beta exp}})
\end{multline}
where the sum is over 
\[
d_{1}-\lceil{10^4}\vare^{-1/2}\rceil=d_2\leq d\leq  d_{1},
\]
$c_{d,i}\geq0$ and $\sum_{d,i}c_{d,i}=1-O(\beta^{\ref{k: bootstrap beta exp}})$ and the implied constants depend on $X$.
Moreover, for all $d,i$ both of the following hold 
\begin{subequations}
\begin{align}
\label{eq: proof final regularity}&\#\Bigl(B_\rfrak(w,4\mfsc\,\inj(y))\cap F_{d,i}\Bigr)\geq \nuni^{-\vare t}\!\!\sup_{w'\in F_{d,i}}\#\Bigl(B_\rfrak(w',4\mfsc\,\inj(y))\cap F_{d,i}\Bigr)\\
\label{eq: proof final energy est}&\mfm_{\cone_{d,i}, \mfsc, \trct}(e,z)\leq \nuni^{\vare t}\noI_{\cone_{d,i},\mfsc}(e,z)\quad\text{where $\trct\leq \nuni^{0.01\vare t}$}
\end{align}
\end{subequations}
for all $w\in F_{d,i}$ and all $z=h\exp(w)y_{d,i}\in\cone_{d,i}$ with $h\in\overline{\coneH\setminus\partial_{10\mfsc}\coneH}$.

\subsection*{From large dimension to equidistribution}
For every $d_2\leq d\leq d_1$, set
\[
\tau_d:=t_3+(d_1-d)\ell.
\]
Since $0\leq d_1-d\leq \lceil{10^4}\vare^{-1/2}\rceil$, $\ell=0.01\vare t$, and $2\sqrt\vare t\leq t_3\leq 3\sqrt\vare t$, see~\eqref{eq: range of t3},  
\be\label{eq: range of tau-d}
2\sqrt\vare t\leq\tau_d\leq (4+10^2)\sqrt\vare t\leq 0.01\ref{k:mixing}t
\ee
where in the last inequality we used $0<\sqrt\vare < 10^{-8}\ref{k:mixing}$, see~\eqref{eq: choose theta equi sec final}.

In view of Lemma~\ref{lem: thickening stable}, for all $d,i$ as above, we have 
\begin{multline}\label{eq: proof integral 6}
\int_0^1\!\!\int \varphi(a_{t_3} u_{r_3}z)\diff\!\rwm_\ell^{(d_{1}-d)}\conv\mu_{\cone_{d,i}}(z)\diff\!r_3=\\
\int_0^1\!\!\int \varphi(a_{\tau_d} u_rz)\diff\!\mu_{\cone_{d,i}}(z)\diff\!r+O(\Lip(\varphi)e^{-\ell})
\end{multline}
where the implied constant depends on $X$. 

We now apply Proposition~\ref{prop: high dim to equid} with $\cone_{d,i}$ (in view of~\eqref{eq: proof final regularity} and~\eqref{eq: proof final energy est} the conditions in that proposition are satisfied) and $\tau_d$ which is in the admissible range thanks to~\eqref{eq: range of tau-d}. Hence, for all $d,i$ as above, we have 
\be\label{eq: proof integral 7}
\bigg|\int_0^1 \int\varphi(a_{\tau_d} u_rz)\diff\!\mu_{\cone_{d,i}}(z)\diff\!r-\int \varphi\diff\! m_X\biggr|\ll \Sob(\varphi)\nuni^{-\vare^2 t}
\ee
where the implied constant depends on $X$. 

Let $\ref{k:main-1}=\min\{\vare^2, \ref{k: bootstrap beta exp}\kappa, \kappa/4\}$. Then 
~\eqref{eq: proof integral 7},~\eqref{eq: proof integral 6},~\eqref{eq: proof integral 5},~\eqref{eq: proof integral 4},~\eqref{eq: proof integral 3}, \eqref{eq: proof integral 2}, and~\eqref{eq: proof integral 1}, imply that 
\[
\bigg|\int_0^1\varphi(a_{\log T} u_rx_0)\diff\!r-\int \varphi\diff\! m_X\biggr|\ll \Sob(\varphi)\nuni^{-\ref{k:main-1} t}\ll \Sob(\varphi)R^{-\ref{k:main-1} /D_1}
\]
where the implied constant depends on $X$. The proof is complete.
\qed


\section{Proof of Theorem~\ref{thm: main arithmeticity relaxed}}\label{sec: proof arith relaxed}

The argument is similar to the proof of Theorem~\ref{thm:main}, the main difference here is that even though Proposition~\ref{prop: linearization translates} holds without the arithmeticity assumption on $\Gamma$, its output, i.e., points which are not near periodic $H$-orbits, is too weak for our closing lemma, in the absence of arithmeticity. Indeed the assertion (2') in~\S\ref{sec: closing lemma} only guarantees that if Proposition~\ref{prop:closing lemma intro}(1) fails, then we can find a nearby point $x$ whose stabilizer contains a non-elementary Fuchsian subgroup which is generated by {\em small} elements; 
without the arithmeticity assumption on $\Gamma$, however, the orbit $Hx$ need not be periodic, see e.g.,~\cite[\S12]{BO-GF}, in contrast to what happens in the arithmetic case (cf.\ Lemma~\ref{lem:non-elementary}). Therefore, the proof of Theorem~\ref{thm: main arithmeticity relaxed} will not include the improving Diophantine condition step which was present in the proof of Theorem~\ref{thm:main} (see p.~\pageref{sec:improving diophantine}). To remedy this issue, we will choose the parameter $D$ in the proof to be $O(1/\delta)$; this is responsible for the error rate $T^{-\delta^2\ref{k:main-1}}$ in Theorem~\ref{thm: main arithmeticity relaxed}(1). Let us now turn to the details.   
\subsection*{Fixing the parameters}
Fix $\vare$ as follows 
\be\label{eq: choose theta equi sec final 1}
0<\sqrt\vare < 10^{-8}\ref{k:mixing}
\ee
where $\ref{k:mixing}$ is as in Proposition~\ref{prop: 1-epsilon N}.

Let $0<\delta<1/4$ be as in the statement of Theorem~\ref{thm: main arithmeticity relaxed}, and let $D_1$ be as in Proposition~\ref{prop:closing lemma intro}.
Put $t=\frac{\delta}{D_1}\log T$, and define $D$ by 
\be\label{eq: define D thm 1.3}
\tfrac{4D-3}{2}+ 9 +\tfrac52\sqrt\vare=D_1/\delta
\ee
Since $\delta<1/4$, we have $D\geq 2D_1$. Let 
\be\label{eq: choose A thm 1.3}
A'=\Bigl(\tfrac{4D-3}{2}+ 9 +\tfrac52\sqrt\vare\Bigr)/(D-D_1);
\ee
note that $A'\ll1$ where the implied constant is absolute. 

We assume $T$ is large enough so that 
\[
e^t> (10\ref{E:non-div-main})^3\inj(x_0)^{-2}.
\]
Suppose that Theorem~\ref{thm: main arithmeticity relaxed}(2) fails for this choice of $A'$. That is for all $x\in X$ such that ${\rm Stab}_H(x)$ contains elements $\gamma_1$ and $\gamma_2$ 
so that
\begin{itemize}
    \item $\|\gamma_1\|,\|\gamma_2\|\leq T^{\delta}$  
    \item $\langle\gamma_1,\gamma_2\rangle$ is Zariski dense in $H$
\end{itemize} 
we have    
\be\label{eq: thm 1.3 2 fails}
\dist_X(x,x_0)> T^{-1/A'}=e^{(-D+D_1)t}.
\ee
We will show that Theorem~\ref{thm: main arithmeticity relaxed}(1) holds.

Put $\ell = \vare t/100$, and $d_1=100\lceil\frac{4D-3}{2\vare}\rceil$. Then 
\be\label{eq: range of d1ell 1}
\tfrac{4D-3}{2}t\leq d_1\ell\leq \tfrac{4D-3}{2}t+\vare t.
\ee
We define the parameter $\kappa$ as follows:  
\be\label{eq: def kappa proof thm 1.3}
\kappa=\tfrac12\min\{10^{-6}d_1^{-1}, 10^{-6}\vare\},
\ee
and let $\beta=e^{-\kappa t}$ and let $\eta=\beta^{1/2}$; note that $\eta\geq e^{-0.1\ell}$ and that $\kappa\asymp\delta$.

Let us write $\log T=t_3+t_2+t_1$ where 
\be\label{eq: def t0 t1 t2 1}
t_1=8t\quad\text{ and }\quad t_2=t+d_1\ell.
\ee
Note that $t_1, t_2\geq t$. We now estimate $t_3$; indeed
\begin{align*}
t_3&= \log T-(t_1+t_2)\\
&=tD_1/\delta-9t-d_1\ell\\
&=(\tfrac{4D-3}{2}+9+\tfrac{5}{2}\sqrt\vare)t-9t-d_1\ell
\end{align*}
where we used $tD_1/\delta=\log T$ in the second equation and~\eqref{eq: define D thm 1.3} in the last equation. 
This and~\eqref{eq: range of d1ell 1} imply 
\be\label{eq: range of t3 1}
2\sqrt\vare t\leq t_3\leq 3\sqrt\vare t.
\ee

Recall that $a_{\ell_1}u_ra_{\ell_2}=a_{\ell_1+\ell_2}u_{e^{-\ell_2}r}$. Thus, for any $\varphi\in C_c^\infty(X)$, we have
\begin{multline}\label{eq: proof integral 1 1}
\int_0^1\varphi(a_{\log T} u_rx_0)\diff\!r=\quad O(\|\varphi\|_\infty e^{-t})\quad +\\
\int_0^1\int_0^1\int_0^1\varphi(a_{t_3} u_{r_3}a_{t_2}u_{r_2}a_{t_1}u_{r_1}x_0)\diff\!r_3\diff\!r_2\diff\!r_1\diff\!r_0 
\end{multline}
where the implied constant is absolute and we used $t_1, t_2\geq t$.

The rest of the argument follows,  mutatis mutandis, the same steps as in the proof of Theorem~\ref{thm:main}, as we now explicate. 

\subsection*{Applying the closing lemma}
We now apply Proposition~\ref{prop:closing lemma intro} 
with $x_0$, $D$ as in~\eqref{eq: define D thm 1.3} and the parameter $t$ (which is assumed to be large). 
In view of~\eqref{eq: thm 1.3 2 fails}, Proposition~\ref{prop:closing lemma intro}(1) holds.
Let $J_1=I(x_0)$. Then 
\begin{multline}\label{eq: proof integral 3 1}
\int_0^1\varphi(a_{\log T} u_rx_0)\diff\!r=\quad O(\|\varphi\|_\infty \eta^{1/2})\quad +\\
\int_{J_1}\int_0^1\int_0^1\varphi(a_{t_3} u_{r_3}a_{t_2}u_{r_2}x(r_1))\diff\!r_3\diff\!r_2\diff\!r_1
\end{multline}
where $x(r_1)=a_{t_1}u_{r_1}x_0$ and the implied constant is absolute.

\subsection*{Improving the dimension phase}
Fix some $r_1\in J_1$, and put $x_1=x(r_1)$. 
Recall from~\eqref{eq: def mu tn...t1} that 
\[
\mu_{t,\ell, d_1}=\rwm_{\ell}\conv\cdots\conv\nu_{\ell}\conv\sigma\conv\rwm_{t}
\]
where $\rwm_\ell$ appears $d_1$ times in the above expression. 
In view of Lemma~\ref{lem: thickening stable}, 
\begin{multline}\label{eq: proof integral 4 1}
\biggl|\int_0^1\!\!\int_0^1\varphi(a_{t_3} u_{r_3}a_{t_2}u_{r_2}x_1)\diff\!r_3\diff\!r_2-\int\!\!\int_0^1\!\! \varphi(a_{t_3} u_{r_3}hx_1)\diff\!r_3\mu_{t,\ell, d_1}(h)\biggr|\\
\ll \Lip(\varphi)e^{-\ell}\ll \Lip(\varphi)\eta^{1/2}.
\end{multline}

We now apply Proposition~\ref{propos: imp dim main} with $x_1$, $t_3$ and $r_3\in[0,1]$. Then 
 \begin{multline}\label{eq: proof integral 5 1}
\int_0^1\!\!\int \varphi(a_{t_3} u_{r_3}hx_1)\diff\!\mu_{t,\ell, d_1}(h)\diff\!r_3=\\
\sum_{d,i}c_{d,i}\int_0^1\!\!\int \varphi(a_{t_3} u_{r_3}z)\diff\!\rwm_\ell^{(d_{1}-d)}\conv\mu_{\cone_{d,i}}(z)\diff\!r_3 + O(\Lip(\varphi)\beta^{\ref{k: bootstrap beta exp}})
\end{multline}
where the sum is over 
\[
d_{1}-\lceil{10^4}\vare^{-1/2}\rceil=d_2\leq d\leq  d_{1},
\]
$c_{d,i}\geq0$ and $\sum_{d,i}c_{d,i}=1-O(\beta^{\ref{k: bootstrap beta exp}})$ and the implied constants depend on $X$.
Moreover, for all $d,i$ both of the following hold 
\begin{subequations}
\begin{align}
\label{eq: proof final regularity 1}&\#\Bigl(B_\rfrak(w,4\mfsc\,\inj(y))\cap F_{d,i}\Bigr)\geq \nuni^{-\vare t}\!\!\sup_{w'\in F_{d,i}}\#\Bigl(B_\rfrak(w',4\mfsc\,\inj(y))\cap F_{d,i}\Bigr)\\
\label{eq: proof final energy est 1}&\mfm_{\cone_{d,i}, \mfsc, \trct}(e,z)\leq \nuni^{\vare t}\noI_{\cone_{d,i},\mfsc}(e,z)\quad\text{where $\trct\leq \nuni^{0.01\vare t}$}
\end{align}
\end{subequations}
for all $w\in F_{d,i}$ and all $z=h\exp(w)y_{d,i}\in\cone_{d,i}$ with $h\in\overline{\coneH\setminus\partial_{10\mfsc}\coneH}$.

\subsection*{From large dimension to equidistribution}
For every $d_2\leq d\leq d_1$, set
\[
\tau_d:=t_3+d_1-d.
\]
Since $0\leq d_1-d\leq \lceil{10^4}\vare^{-1/2}\rceil$, $\ell=0.01\vare t$, and $2\sqrt\vare t\leq t_3\leq 3\sqrt\vare t$, see~\eqref{eq: range of t3 1}, 
\be\label{eq: range of tau-d 1}
2\sqrt\vare t\leq\tau_d\leq (4+10^2)\sqrt\vare t\leq 0.01\ref{k:mixing}t
\ee
where in the last inequality we used $0<\sqrt\vare < 10^{-8}\ref{k:mixing}$, see~\eqref{eq: choose theta equi sec final 1}.

In view of Lemma~\ref{lem: thickening stable}, for all $d,i$ as above, we have 
\begin{multline}\label{eq: proof integral 6 1}
\int_0^1\!\!\int \varphi(a_{t_3} u_{r_3}z)\diff\!\rwm_\ell^{(d_{1}-d)}\conv\mu_{\cone_{d,i}}(z)\diff\!r_3=\\
\int_0^1\!\!\int \varphi(a_{\tau_d} u_rz)\diff\!\mu_{\cone_{d,i}}(z)\diff\!r+O(\Lip(\varphi)e^{-\ell})
\end{multline}
where the implied constant depends on $X$. 

We now apply Proposition~\ref{prop: high dim to equid} with $\cone_{d,i}$. In view of~\eqref{eq: proof final regularity 1} and~\eqref{eq: proof final energy est 1} the conditions in that proposition are satisfied, and $\tau_d$ is in the admissible range thanks to~\eqref{eq: range of tau-d 1}. Hence, for all $d,i$ as above, we have 
\be\label{eq: proof integral 7 1}
\bigg|\int_0^1 \int\varphi(a_{\tau_d} u_rz)\diff\!\mu_{\cone_{d,i}}(z)\diff\!r-\int \varphi\diff\! m_X\biggr|\ll \Sob(\varphi)\nuni^{-\vare^2 t}
\ee
where the implied constant depends on $X$. 

Let $\hat\kappa=\min\{\vare^2, \ref{k: bootstrap beta exp}\kappa, \kappa/4\}$. Then 
~\eqref{eq: proof integral 7 1},~\eqref{eq: proof integral 6 1},~\eqref{eq: proof integral 5 1},~\eqref{eq: proof integral 4 1},~\eqref{eq: proof integral 3 1}, and~\eqref{eq: proof integral 1 1}, imply that 
\[
\bigg|\int_0^1\varphi(a_{\log T} u_rx_0)\diff\!r-\int \varphi\diff\! m_X\biggr|\ll \Sob(\varphi)e^{-\hat\kappa t}=\Sob(\varphi)T^{-\delta\hat\kappa/D_1}
\]
where the implied constant depends on $X$. 

In view of the definition of $\hat\kappa$ and~\eqref{eq: def kappa proof thm 1.3}, we have $\hat\kappa\gg \delta$ where the implied constant depends only on $X$. The proof is complete.
\qed

\section{Proof of Theorem \ref{thm:main unipotent}} \label{sec: proof main unip}

The proof is based on Theorem~\ref{thm:main} and the following lemma, which is a special case of \cite[Thm.~1.4]{LMMS} tailored to our application here.  

\begin{lemma}\label{lem: unipotent linearization}
There exist $A_3$, $D_3$, and $\constE\label{E: unip lin}$ (depending on $X$) 
so that the following holds. Let $S,M>0$, and $0<\eta<1/2$ satisfy
\[
S\geq M^{A_3}\quad\text{and}\quad M\geq \ref{E: unip lin}\eta^{-A_3}.
\]
Let $x_1\in X_\eta$, and suppose there exists $\exceptional\subset \{r\in [-S,S]: u_rx_1\in X_\eta\}$
with 
\[
|\exceptional|>\ref{E: unip lin}\eta^{1/D_3}S
\]
so that for every $r\in\exceptional$, 
there exists $y_r\in X$ with 
\[
\text{$\vol(H.y_r)\leq M\quad$ and $\quad\dist(u_{r}x_1, y_r)\leq M^{-A_3}$}.
\]
Then one of the following holds  
\begin{enumerate}
    \item There exists $x \in G/\Gamma$ with $\vol(H.x)\leq M^{A_3}$, 
and for every $r\in [-S,S]$ there exists $g\in G$ with $\|g\|\leq M^{A_3}$ so that  
\[
\dist_X(u_{s}x_1, gH.x)\leq M^{A_3}\left(\frac{|s-r|}{S}\right)^{1/D_3}\quad\text{for all $s\in[-S,S]$.}
\] 
\item For every $r\in[-S,S]$ and $t \in [\log M, \log S]$, the injectivity radius at $a_{-t}u_r x_1$ is at most $M^{A_3}e^{-t}$.
\end{enumerate}
\end{lemma}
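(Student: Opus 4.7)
\smallskip

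The plan is to deduce the lemma from a quantitative linearization argument in the style of Dani--Margulis, in the sharp form of \cite{LMMS}. First I would set up the linearization framework: for each periodic $H$-orbit $Hy_0 \subset X$ there is a representation $V$ of $G$ (arising from $\ngh$) and a discrete $\Gamma$-orbit $\Gamma \cdot v_{0} \subset V$, the stabilizer of $v_0$ being essentially $\ngh$, with $\|v_0\|$ comparable to $\vol(Hy_0)$. The crucial feature is that for $g \in G$, the map $r \mapsto u_r g v_0$ is a polynomial of degree $\leq \dim V$ in $r$, and the condition that $\mathrm{dist}(u_r x_1, g H y_0) < \varepsilon$ translates (after choosing an appropriate representative $\gamma \in \Gamma$) into smallness of some $u_r g_1 \gamma v_0 \in V$.

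The first step is a pigeonholing step. For each $r \in \exceptional$, I would convert the hypothesis $\dist(u_r x_1, y_r) \leq M^{-A_3}$ with $\vol(Hy_r) \leq M$ into the existence of $\gamma_r \in \Gamma$ so that $\|u_r g_1 \gamma_r v_{0,r}\| \leq M^{-A_3 + O(1)}$, where $v_{0,r}$ is one of a bounded (polynomial in $M$) collection of representatives; I use the standard bound $\#\{Hy : \vol(Hy) \leq M\} \ll M^6$ together with reduction theory (Lemma~\ref{lem: reduction theory}) to control the complexity of the relevant $\Gamma$-orbit elements. Discarding subsets of controlled measure, this produces a single vector $v^\star \in V$ of norm $\leq M^{O(1)}$ and a subset $E' \subset \exceptional$ of measure $\gg |\exceptional|/M^{O(1)} \gg \eta^{1/D_3} S / M^{O(1)}$ on which $\|u_r v^\star\| \leq M^{-A_3 + O(1)}$.

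The second step is to exploit that $p(r) := u_r v^\star$ has coordinates which are polynomials in $r$ of degree $\leq \deg(V) \ll_G 1$. A standard Remez-type inequality for polynomials on an interval says that if $|p(r)| \leq \delta$ on a subset of $[-S,S]$ of measure $\geq \mu S$, then $|p(r)| \leq (C/\mu)^{\deg p}\, \delta \cdot (|r|/S)^{-O(1)}\!$-type growth away from that set. Applying this with $\delta = M^{-A_3 + O(1)}$ and $\mu \gg \eta^{1/D_3}/M^{O(1)}$, and choosing $A_3$ large enough so that $M^{A_3 - O(1)} \geq (M^{O(1)}\eta^{-1/D_3})^{\deg}$ (this forces the definitions of $A_3, D_3$ and the hypothesis $M \geq \ref{E: unip lin}\eta^{-A_3}$), I obtain a bound of the form
\[
\|u_s v^\star\| \leq M^{A_3}\left(|s-r|/S\right)^{1/D_3} \qquad \text{for all } s \in [-S,S].
\]

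The third step converts this back to a geometric statement. Smallness of $u_s v^\star$ in $V$, combined with the identification $V \cong G/\ngh$ near $v^\star$, translates to $u_s x_1$ being close (in the stated polynomial-in-distance sense) to a translate $g H y^\star$ of the periodic orbit associated to $v^\star$, which is case (1); the translate $g \in G$ is bounded by $\|g\| \leq M^{A_3}$ because it is essentially the element of $G$ taking $v^\star$ to a standard representative, and $\|v^\star\| \leq M^{O(1)}$. The alternative, that the linearization argument fails because the orbit escapes into the cusp instead of accumulating near a proper subvariety, is exactly the injectivity radius degeneration in case (2), detected via the non-divergence estimates of Proposition~\ref{prop:Non-div-main} applied on dyadic scales $t \in [\log M, \log S]$.

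The main obstacle is calibrating the polynomial constants carefully: the exponents $A_3, D_3$ must simultaneously accommodate the pigeonhole loss $M^{O(1)}$, the polynomial growth in the Remez step (which eats $(\mu^{-1})^{\deg}$), the translation from $V$-distance back to $X$-distance near the orbit, and the dichotomy with cusp excursions. The sharp form of linearization in \cite{LMMS} packages precisely this calibration; the proof here is therefore a direct application (with $U = \{u_r\}$, $H$ our copy of $\SL_2(\bbr)$, and the target family being periodic $H$-orbits of bounded volume) with no new ideas required beyond specializing the notation.
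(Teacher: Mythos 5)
There is a genuine gap, and it sits exactly at the point you wave through with a pigeonhole. Your step two reduces to a \emph{single} vector $v^\star$ with only an $M^{O(1)}$ loss in density, but the vectors $g_1\gamma_r v_{0,r}$ produced at different $r\in\exceptional$ need not coincide, and their number is not polynomially bounded in $M$: from $\|u_r g_1\gamma_r v_{0,r}\|\leq M^{-A_3+O(1)}$ with $|r|\leq S$ one only gets $\|g_1\gamma_r v_{0,r}\|\ll S^{O(1)}M^{-A_3}$, so the count of admissible lattice vectors is polynomial in $S$, and $S$ is allowed to be arbitrarily large compared to $M$ (only $S\geq M^{A_3}$ is assumed). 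After pigeonholing over $S^{O(1)}$ vectors the surviving subset has density $\eta^{1/D_3}S^{-O(1)}$, and then the Remez step loses $(1/\mu)^{\deg}=S^{O(1)}$, which swamps $M^{-A_3}$. Handling the interleaving of the intervals on which \emph{different} polynomials $r\mapsto u_r g_1\gamma v_0$ are small is precisely the content of the Dani--Margulis linearization and of the sharp form \cite[Cor.~7.2]{LMMS}; it cannot be replaced by one application of a Remez inequality to one polynomial. This is in fact how the paper proceeds: it sets up the arithmetic data (the $\bbq$-group $\tilde\G$, the integral vectors ${\bf v}_{\tilde H_g}$ whose height is polynomially equivalent to $\vol(Hg\Gamma)$, Lemma~\ref{lem: volume and height}), verifies the two inputs $\|\hat u_r\hat g_1\tilde\gamma_r.{\bf v}^r\|\leq M^{A_3'}$ and $\|z\wedge(\hat u_r\hat g_1\tilde\gamma_r.{\bf v}^r)\|\ll M^{\star}M^{-A_3}$ for $r\in\exceptional$, and only then invokes \cite[Cor.~7.2]{LMMS} to produce one subgroup $\tilde\H'$ with $\|u_rg_1{\bf v}_{\tilde H'}\|\ll M^\star$ and $\|z\wedge(u_rg_1{\bf v}_{\tilde H'})\|\ll S^{-1/D}M^\star$ for \emph{all} $r\in[-S,S]$.

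Your third step also misidentifies where alternative (2) comes from. It is not a cusp-excursion alternative detected by the non-divergence estimate of Proposition~\ref{prop:Non-div-main}; non-divergence points in the opposite direction (it bounds the time spent with small injectivity radius). In the paper's argument the dichotomy is on the group $\tilde\H'$ returned by the linearization theorem: if $\rho(\tilde H')$ is a conjugate of $H$, one gets a periodic $H$-orbit of volume $\leq M^{A_3}$ and deduces (1) via the symmetric-space decomposition $G=KA'H$ and the polynomial bound on $r\mapsto u_rg_1{\bf v}_{\tilde H'}$; if instead $\rho(\tilde H')$ is a conjugate of $U$, then ${\bf v}_{\tilde H'}$ is a small nilpotent integral vector with $\exp({\bf v}_{\tilde H'})\in\Gamma$, and the explicit matrix computation $\|a_{-t}u_r\exp({\bf v}_{\tilde H'})u_{-r}a_t - I\|\ll M^\star e^{-t}$ for $t\in[\log M,\log S]$ forces the small injectivity radius in (2). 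So beyond fixing the pigeonhole, you would still need this case analysis on the output of the linearization theorem to obtain the second alternative in the stated form.
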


The lemma will be proved using \cite[Thm.~1.4]{LMMS} or more precisely \cite[Cor.~7.2]{LMMS}. 
The statements in~\cite{LMMS} use a slightly different language than the one 
we used in this paper, thus we begin by recalling some terminology 
to relate Lemma~\ref{lem: unipotent linearization} to \cite[Thm.~1.4]{LMMS}.  

\subsection*{Arithmetic groups}
Let ${\bf G}=\SL_2\times \SL_2$ if $G=\SL_2(\lf)\times\SL_2(\lf)$, and 
${\bf G}={\rm Res}_{\qlf/\lf}(\SL_2)$ if $G=\SL_2(\qlf)$.
Then ${\bf G}$ is defined over $\lf$ and $G={\bf G}(\lf)$; moreover, $H=\H(\bbr)$ where 
$\H\subset\G$ is an algebraic subgroup.

Recall that $\Gamma$ is assumed to be arithmetic. 
Therefore, there exists a semisimple simply connected $\bbq$-group $\tilde\G\subset\SL_N$, for some $N$, and an epimorphism 
\[
\rho:\tilde\G(\bbr)\to \G(\bbr)=G
\]
of $\bbr$-groups with compact kernel so that $\Gamma$ is commensurable with $\rho(\tilde\G(\bbz))$.  Note that $\tilde\G$ can be chosen to be $\bbq$-almost simple unless $\Gamma\subset\SL_2(\bbr)\times\SL_2(\bbr)$ is a reducible lattice, in which case $\tilde\G$ can be chosen to have two $\bbq$-almost simple factors. We assume $\tilde\G$ is thus chosen. 

Moreover, since $\tilde\G$ is simply connected, we can identify $\tilde G(\bbr)$ with $G\times G'$ where $G'=\ker(\rho)$ is compact.

We are allowed to choose the parameter $M$ in the lemma to be large depending on $\Gamma$, therefore, by passing to a finite index subgroup, we will assume that both of the following hold: 
\begin{itemize}
    \item $\Gamma\subset \tilde\Gamma:=\rho(\tilde\G(\bbz))$, where $\tilde\G(\bbz)=\tilde\G(\bbr)\cap\SL_N(\bbz)$, and    
    \item if $\Gamma\subset\SL_2(\bbr)\times\SL_2(\bbr)$ is reducible, then $\Gamma=\Gamma_1\times\Gamma_2$.
\end{itemize}

With this notation, every $\gamma\in \Gamma$ lifts uniquely to $(\gamma,\sigma(\gamma))\in\tilde\Gamma$, where $\sigma$ is (a collection of) Galois automorphisms.  For every $g\in G$, we put 
\[
\hat g=(g,1)\in G\times G'.
\]

Suppose now that $g\in G$ is so that $Hg\Gamma$ is periodic. 
Let $\Delta_g=\Gamma\cap g^{-1}Hg$, and let $\tilde\Delta_g=\rho^{-1}(\Delta_g)\cap\tilde \Gamma$. Let $\tilde\H_g$ 
be the Zariski closure of $\tilde\Delta_g$. 
Then $\tilde\H_g$ is a semisimple $\bbq$-subgroup, 
and the restriction of $\rho$ to $\tilde\H_g$ surjects onto $g^{-1}\H g$. 
Let $\tilde H_g=\tilde\H_g(\bbr)$, then  
\[
\overline{\hat g^{-1}\hat H\hat g\tilde\Gamma}=\tilde H_g\tilde\Gamma
\]

\subsection*{Lie algebras and the adjoint representation}
We continue to write $\Lie(G)=\gfrak$ and $\Lie(H)=\hfrak$; 
these are considered as $6$-dimensional (resp.\ $3$-dimensional) $\lf$-vector spaces. 

Let $v_H$ be a unit vector on the line $\wedge^3\hfrak$. Note that 
\[
N_G(H)=\{g\in G: gv_H=v_H\}
\] 
which contains $H$ as a subgroup of index two. 

Let $\tilde\gfrak=\Lie(\tilde\G(\bbr))$, this Lie algebra has a natural $\bbq$-structure. 
Moreover, $\tilde\gfrak_\bbz:=\tilde\gfrak\cap\sl_N(\bbz)$ is a $\tilde\G(\bbz)$-stable lattice in $\tilde\gfrak$.

If there exists $g\in G$ so that $Hg\Gamma$ is periodic, fix 
${\mathsf g}_1,\ldots,{\mathsf g}_m$ so that $\vol(H{\mathsf g}_i\Gamma)\ll 1$ 
(the implied constant and $m$ depend on $\Gamma$) 
and that every $\tilde H_g$ is conjugate to some $\tilde H_i=\tilde H_{{\mathsf g}_i}$ in $\tilde G$. 
Let ${\bf v}_{i}$ be a primitive integral vector on the line 
\[
\wedge^{\dim \tilde H_i}(\Lie(\tilde H_i))\subset\wedge^{\dim \tilde H_i}\tilde\gfrak.
\]
Then $N_{\tilde G}(\tilde H_i)=\{g\in \tilde G: gv_{i}=v_i\}$,
and $\tilde H_i\subset N_{\tilde G}(\tilde H_i)$ has finite index. 
For all $i$, ${\bf v}_i=c_i \cdot \Bigl((g_i^{-1}v_H)\wedge v'_i\Bigr)$ where $v'_i\in\wedge\Lie(G')$ and $|c_i|\asymp 1$.

More generally, if ${\bf L}\subset \tilde\G$ is a $\bbq$-algebraic group, 
we let ${\bf v}_L$ be a primitive integral vector on the line 
$\wedge^{\dim L}\Lie(L)\subset\wedge^{\dim L}\tilde\gfrak$ where $L=\Lbf(\bbr)$.

\subsection*{Volume and height of periodic orbits}
Let ${\bf L}\subset \tilde\G$ be a $\bbq$-algebraic group.
Recall the definition of the height of ${\bf L}$ from~\cite{LMMS}
\[
{\rm ht}(\Lbf)=\|{\bf v}_L\|.
\]  

Recall that $\tilde G=G\times G'$. 
We fix a right invariant metric on $\tilde G$ defined using 
the killing form and the maximal compact subgroup $\tilde K=K\times G'$ 
where $K=\SO(2)\times \SO(2)$ if $G=\SL_2(\bbr)\times\SL_2(\bbr)$ and $K={\rm SU}(2)$ if $G=\SL_2(\bbc)$; 
this metric induces the right invariant metric on $G$ which we fixed on p.~\pageref{d definition page}.

\begin{lemma}\label{lem: volume and height}
Let $Hg\Gamma$ be a periodic orbit, and let $\tilde\H_g$ be as above.  
Both of the following properties hold:
\[
{\rm ht}(\tilde\H_g)^\star\ll\vol(\tilde H_g\tilde\Gamma/\tilde\Gamma)\ll {\rm ht}(\tilde\H_g)^\star
\]
\[
\|g\|^{-\star}\vol(Hg\Gamma)\ll\vol(\tilde H_g\tilde\Gamma/\tilde\Gamma)\ll\|g\|^\star\vol(Hg\Gamma)
\]
\end{lemma}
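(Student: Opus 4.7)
The plan is to establish the two chains of inequalities separately. The first chain ${\rm ht}(\tilde\H_g)^\star \ll \vol(\tilde H_g\tilde\Gamma/\tilde\Gamma) \ll {\rm ht}(\tilde\H_g)^\star$ is a general height-volume estimate for periodic orbits of semisimple $\bbq$-subgroups, while the second chain is a routine comparison exploiting the compact kernel of $\rho$.

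For the first chain I would reduce to the fixed reference subgroups $\tilde\H_i$. Since every $\tilde\H_g$ is $\tilde G$-conjugate to some $\tilde\H_i$, write $\tilde H_g = k\tilde H_i k^{-1}$ for some $k \in \tilde G$. By right-multiplying $k$ by a suitable $\tilde\gamma \in \tilde\Gamma$, I would arrange $k$ to lie in a Siegel-like fundamental domain for $N_{\tilde G}(\tilde\H_i)\backslash \tilde G/\tilde\Gamma$. In such a normal form both quantities become polynomially comparable to $\|k\|$: the height satisfies $\|\mathbf{v}_{\tilde\H_g}\| \asymp \|k\mathbf{v}_i\| \asymp \|k\|^{\beta_1}$, using that $\mathbf{v}_{\tilde\H_g}$ is a primitive integral vector on the line spanned by $k\mathbf{v}_i$, and the volume $\vol(k\tilde H_i k^{-1}\tilde\Gamma/\tilde\Gamma) = \vol(\tilde H_i/(\tilde H_i \cap k^{-1}\tilde\Gamma k)) \asymp \|k\|^{\beta_2}$ follows from the bounded base volumes $\vol(\tilde H_i\tilde\Gamma/\tilde\Gamma) \ll 1$ together with standard estimates controlling how $\tilde H_i \cap k^{-1}\tilde\Gamma k$ shrinks under large conjugations. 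These are the classical height-volume lemmas for orbits of reductive $\bbq$-subgroups, cf.~the analogous arguments in \cite{EMV, EMMV}.

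For the second chain I would exploit the short exact sequence $1 \to \tilde H_g \cap G' \to \tilde H_g \xrightarrow{\rho} g^{-1}Hg \to 1$, whose kernel is contained in the compact group $G' = \ker\rho$. Since $\tilde\H_g$ is by definition the Zariski closure of $\tilde\Delta_g = \rho^{-1}(g^{-1}Hg \cap \Gamma)\cap\tilde\Gamma$, one verifies that $\rho(\tilde H_g \cap \tilde\Gamma) = g^{-1}Hg \cap \Gamma$ (the containment $\supseteq$ follows from $\tilde\Delta_g \subseteq \tilde H_g\cap\tilde\Gamma$; $\subseteq$ is immediate). The multiplicativity of Haar measure in the short exact sequence then gives
\[
\vol(\tilde H_g/(\tilde H_g \cap \tilde\Gamma)) = \vol\bigl((\tilde H_g \cap G')/(\tilde H_g \cap G' \cap \tilde\Gamma)\bigr) \cdot \vol(g^{-1}Hg/(g^{-1}Hg \cap \Gamma)).
\]
The first factor is bounded since $G'$ is compact and $\tilde\Gamma \cap G'$ is finite (being a discrete subset of a compact group); the second equals $\vol(Hg\Gamma)$ after applying the volume-preserving left translation $y\Gamma \mapsto gy\Gamma$ of $G/\Gamma$. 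This yields $\vol(\tilde H_g\tilde\Gamma/\tilde\Gamma) \asymp \vol(Hg\Gamma)$ with implied constants depending only on $\Gamma$, which is strictly stronger than the claimed bound with $\|g\|^\star$ factors.

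The main obstacle is the uniform execution of reduction theory in the first step: the exponents $\beta_1, \beta_2$ and implied constants must be independent of the specific subgroup $\tilde\H_g$, which requires passing to the finite collection of references $\{\tilde\H_i\}$ and invoking reduction theory for the homogeneous varieties $N_{\tilde G}(\tilde\H_i)\backslash \tilde G/\tilde\Gamma$. This is standard for arithmetic quotients of semisimple groups but requires careful bookkeeping to extract the polynomial comparability in the form stated; once done, the second chain follows essentially for free.
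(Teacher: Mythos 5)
For the first chain your plan is essentially what the paper does: it does not reprove the height--volume comparison but cites \cite[\S17]{EMV} and \cite[App.~B]{EMMV} (and \cite[\S2]{ELMV-1}), and your sketch of the reduction-theory argument is the content of those references. The issue is in your treatment of the second chain, where you claim the \emph{stronger} bound $\vol(\tilde H_g\tilde\Gamma/\tilde\Gamma)\asymp\vol(Hg\Gamma)$ with constants independent of $g$. The step that fails is the assertion that $y\Gamma\mapsto gy\Gamma$ is a ``volume-preserving left translation.'' Left translation by $g$ preserves the $G$-invariant probability measure $m_X$, but the volume of a periodic orbit is computed from the \emph{right}-invariant metric fixed in the paper (equivalently, from the intrinsic normalization of Haar measure on the varying group that this metric induces). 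Identifying $T_{x\Gamma}X\cong\gfrak$ in the usual way, the derivative of $L_g$ is $\Ad(g)$, so on a $(\dim H)$-dimensional orbit the induced Riemannian volume is distorted by a factor as large as $\|\Ad(g)^{\pm1}\|^{\dim H}\asymp\|g\|^{\pm\star}$. This distortion is exactly the source of the $\|g\|^{\pm\star}$ factors in the statement, and it is exactly the second sentence of the paper's proof (``left multiplication by $g$ changes the volume by $\|g\|^\star$''); your $g$-independent comparability is not justified and, with the normalization the paper uses, is not true in general.

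The same normalization issue also hides in your displayed multiplicativity identity: the factor $\vol\bigl(g^{-1}Hg/(g^{-1}Hg\cap\Gamma)\bigr)$ only equals $\vol(Hg\Gamma)$ on the nose if you equip $g^{-1}Hg$ with the Haar measure transported from $m_H$ by conjugation, whereas the left-hand side $\vol(\tilde H_g/(\tilde H_g\cap\tilde\Gamma))$ is taken with the intrinsic (metric) normalization; the discrepancy between the two normalizations is again a factor $\|g\|^{\pm\star}$, coming from the Jacobian of $\Ad(g)$ on $\Ad(g^{-1})\hfrak$. Once you replace ``volume-preserving'' by ``distorts volumes by at most $\|g\|^{\star}$,'' your argument collapses to the paper's: $\tilde H_g\tilde\Gamma$ projects onto $g^{-1}Hg\Gamma$ with compact fibers of volume $\asymp1$ (your exact-sequence bookkeeping, including $\rho(\tilde H_g\cap\tilde\Gamma)=\Delta_g$ and the finiteness of $\tilde\Gamma\cap G'$, is a fine way to justify this), giving $\vol(\tilde H_g\tilde\Gamma/\tilde\Gamma)\asymp\vol(g^{-1}Hg\Gamma)$, and then translating by $g$ costs $\|g\|^{\pm\star}$, which proves the lemma as stated but not your strengthening.
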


\begin{proof}
For the first claim see~\cite[\S17]{EMV} or~\cite[App.~B]{EMMV} 
(for the upper bound, see also~\cite[\S2]{ELMV-1}, which treats the case of tori but the proof there works for the semisimple case as well).

To see the second claim, note that
$\tilde H_g\tilde\Gamma$ projects onto $g^{-1}Hg\Gamma$ and the fiber is compact which volume $\asymp 1$. 
Therefore, 
\[
\vol(\tilde H_g\tilde\Gamma)\asymp \vol(g^{-1}Hg\Gamma). 
\] 
Moreover, left multiplication by $g$ changes the volume by $\|g\|^\star$. 

The claim follows.   
\end{proof}

\begin{proof}[Proof of Lemma~\ref{lem: unipotent linearization}]
In view of our assumption in the lemma, periodic $H$ orbits exists. Let $\tilde H_1,\ldots, \tilde H_m$ be as above.  
Let $A_3$ and $D_3$ be large constants which will be explicated later, in particular, 
we will let $A_3>\max(A, D_2)$, $D_3>D$ and $\ref{E: unip lin}> \max\{mE_1, \ref{c: red th}\}$ 
where $A$, $D$, and $E_1$ are as in \cite[Thm.~1.4]{LMMS} applied with 
$\{\hat u_r\}\subset\tilde G$, and $D_2$ and $\ref{c: red th}$ are as in Lemma~\ref{lem: reduction theory}. 

We first interpret the condition in the lemma as a condition about the action of 
$\{\hat u_r\}$ on $\tilde G/\tilde \Gamma$. Let us write $x_1=g_1\Gamma$, where 
$\|g_1\|\leq\ref{c: red th} \eta^{-D_2}\leq M$, see Lemma~\ref{lem: reduction theory} and our assumption in this lemma. 
Similarly, for every $r\in\exceptional$, let us write $y_r=g(r)\Gamma$ where 
$\|g(r)\|\leq M$ and for every such $r$, there exists $\gamma_r\in\Gamma$ so that 
\be\label{eq: unip lin exceptial gamma r}
\|u_rg_1\gamma_r\|\leq M+1\quad\text{and}\quad  u_rg_1\gamma_r=\epsilon(r)g(r), 
\ee
where $\|\epsilon(r)\|\ll M^{-A_3}$. 

For every $1\leq i\leq m$, let 
\[
\exceptional_i=\{r\in\exceptional: \text{$\tilde H^r:=\tilde H_{g(r)}$ is a conjugate of $\tilde H_i$}\}.
\]
Then, there exists some $i$ so that $|\exceptional_i|\geq |\exceptional|/m$. 
Replacing $\exceptional$ by $\exceptional_i$, we assume that $\tilde H^r$ is a conjugate of $\tilde H_i$ 
for all $r\in\exceptional$. Let us write $\tilde H^r=\tilde g(r)^{-1}\tilde H_i\tilde g(r)$. Then 
\[
\tilde g(r)=({\mathsf g}_i^{-1}g(r), \tilde g'(r))\in G\times G',
\]
and ${\bf v}^r:=\frac{\|{\bf v}_{\tilde H^r}\|}{\|\tilde g(r)^{-1}{\bf v}_i\|}\tilde g(r)^{-1}{\bf v}_i=\pm {\bf v}_{\tilde H^r}$. 
Moreover, we have  
\be\label{eq: vr and the norm of gr}
{\bf v}^r=c_r\cdot\Bigl((g(r)^{-1}v_H)\wedge (\tilde g'(r)^{-1}v'_i)\Bigr)\quad\text{where $|c_r|\ll M{\rm ht}(\tilde\H_g)\ll M^\star$}
\ee
where we used Lemma~\ref{lem: volume and height} to conclude $M{\rm ht}(\tilde\H_g)\ll M^\star$.

Recall that $\hat g=(g,1)$ for all $g\in G$. In view of~\eqref{eq: unip lin exceptial gamma r}, we have 
\be\label{eq: unip lin exceptional gamma r 1}
\hat u_r\hat g_1(\gamma_r,\sigma(\gamma_r)).{\bf v}^r=
c_r\cdot \biggl((\epsilon(r) v_H)\wedge \Bigl((\sigma(\gamma_r)\tilde g'(r)^{-1})v'_i\Bigr)\biggr).
\ee
Since $G'$ is compact, we conclude from~\eqref{eq: unip lin exceptional gamma r 1} that 
\be\label{eq: unip lin exceptional gamma r 2}
\|\hat u_r\hat g_1(\gamma_r,\sigma(\gamma_r)).{\bf v}^r\|\leq M^{A'_3},
\ee
for some $A'_3$.

Let $z\in\gfrak$ be a vector so that $u_r=\exp(rz)$. 
Using~\eqref{eq: unip lin exceptional gamma r 1} and associativity of the exterior algebra, we have 
\begin{align}
\notag\|z\wedge \Bigl(\hat u_r\hat g_1(\gamma_r,\sigma(\gamma_r)).{\bf v}^r\Bigr)\|&=|c_r|\Bigl\|\Bigl(z\wedge \epsilon(r) v_H\Bigr)\wedge \Bigl((\sigma(\gamma_r)\tilde g'(r)^{-1})v'_i\Bigr)\Bigr\|\\
\label{eq: unip lin exceptional gamma r 3}&\ll M^\star M^{-A_3}<\eta^{A}M^{-AA'_3}/E_1. 
\end{align}
where we used $\|\epsilon(r)\|\ll M^{-A_3}$ in the second to last inequality, $A$ and $E_1$ are as in \cite[Thm.~1.4]{LMMS}, and we choose $A_3$ large enough so that the last estimate holds.

In view of~\eqref{eq: unip lin exceptional gamma r 2} and~\eqref{eq: unip lin exceptional gamma r 3}, 
conditions in \cite[Cor.~7.2]{LMMS} are satisfied. 
Hence, there exist $\tilde\gamma=(\gamma, \sigma(\gamma))\in \tilde\Gamma$, 
$r\in\exceptional$, and a subgroup 
\[
\tilde\H'\subset \tilde\gamma^{-1}\tilde\H^r\tilde\gamma\cap\tilde\H^r
\]
satisfying that $\tilde\H'(\bbc)$ is generated by unipotent subgroups (see~\cite[p.~3]{LMMS}) 
so that both of the following hold for all $r\in[-S,S]$
\begin{subequations}
\begin{align}
\label{eq: tube throughout 1}&\|u_rg_1{\bf v}_{\tilde H'}\|\ll M^\star\\
\label{eq: tube throughout 2}&\|z\wedge (u_rg_1{\bf v}_{\tilde H'})\|\ll S^{-1/D}M^\star.
\end{align}
\end{subequations}

Let $\tilde H'=\tilde\H'(\bbr)$. Since $\|g_1\|\leq M$, we conclude from~\eqref{eq: tube throughout 1}, applied with $r=0$, that 
\be\label{eq: height of tilde H'}
\|{\bf v}_{\tilde H'}\|\ll M^\star.
\ee
Let us consider two possibilities:

\subsection*{Case 1} $\rho(\tilde H')$ is a conjugate of $H$.  

First note that this implies 
\[
\rho(\tilde H')=g(r_0)^{-1}Hg(r_0)\quad \text{where $r_0\in\exceptional$ is as above.}
\] 
Let us write $g'=g(r_0)$. Then $\|g'\|\leq M$, and we have 
\be\label{eq: volume of Hg'Gamma}
\begin{aligned}
\vol(Hg'\Gamma/\Gamma)&\ll \|g'\|^\star\vol(g'^{-1}Hg'\Gamma/\Gamma)\\
&\ll M^\star{\rm ht}(\tilde\H')\ll M^\star
\end{aligned}
\ee
where we used Lemma~\ref{lem: volume and height} in the second 
and~\eqref{eq: height of tilde H'} in the last inequality.

Recall that $H$ is a symmetric subgroup of $G$, i.e., there exists an involution $\tau: G\to G$ so that $H$ 
is the connected component of the identity in ${\rm Fix}(\tau)$. 
In particular, $G=KA'H$ for an $\bbr$-diagonalizable subgroup $A'$.
For every $r\in[-S,S]$, let us write 
\[
u_rg_1=g'^{-1}k_rb_rg' g'^{-1}h_rg'\in g'^{-1}KA'g' g'^{-1}Hg',
\]
and put $g'_r=g'^{-1}k_rb_rg'$. 
Then~\eqref{eq: tube throughout 1} and~\eqref{eq: height of tilde H'} imply that 
\[
\|g'_r\|\ll \|g'_r{\bf v}_{\tilde H'}\|^\star\|{\bf v}_{\tilde H'}\|^\star\|g'\|^\star\ll\|u_rg_1{\bf v}_{\tilde H'}\|^\star M^\star\ll M^\star.
\]
Since the map $r\mapsto u_rg_1{\bf v}_{\tilde H'}$ is a polynomial map whose coefficients are $\ll M^\star$, we conclude that
\[
g'_s=\epsilon'(s,r)g'_r\quad\text{where $\|\epsilon'(s,r)\|\ll M^\star\Bigl({|s-r|}/{S}\Bigr)^{\star}$}.
\]   
Since $u_sg_1=g'_sg'^{-1}h_sg'$ and $d$ is right invariant, the above implies  
\[
\dist(u_sg_1, g'_rg'^{-1}Hg')\ll M^\star\Bigl({|s-r|}/{S}\Bigr)^{\star};
\]
hence part~(1) in the lemma holds if for every $r\in[-S,S]$ we let $g=g'_rg'^{-1}$.  

\subsection*{Case 2}
$\rho(\tilde H')=g'^{-1}Ug'$ where $U=\{u_r\}$. 

First note that if this holds, then $\tilde\G=\G$ (as $\bbr$-groups). Indeed in this case $\Gamma$ is a non-uniform arithmetic lattice, thus 
$\tilde\G=R_{k/\bbq}(\SL_2)$ for a quadratic extension $k/\bbq$ if $G=\SL_2(\bbc)$ or $G=\SL_2(\bbr)\times \SL_2(\bbr)$ and $\Gamma$ is irreducible. If $\Gamma=\Gamma_1\times\Gamma_2$ in $G=\SL_2(\bbr)\times \SL_2(\bbr)$, then since the projection of $g'^{-1}Ug'$ to both factors is a nontrivial unipotent subgroup, $\Gamma_1$ and $\Gamma_2$ are both non-uniform arithmetic lattices; hence, $\tilde\G=\SL_2\times\SL_2$. 

Moreover, note that in this case ${\bf v}_{\tilde H'}\in\Lie(G)$, and we have 
\[
\exp({\bf v}_{\tilde H'})\in \tilde H'\cap\Gamma.
\]
Let us consider the case of $G=\SL_2(\bbc)$, the computations in the other case is similar by considering each component. Put 
\[
g_1.{\bf v}_{\tilde H'}= \begin{pmatrix}a & b\\ c & -a\end{pmatrix}.
\]
Then~\eqref{eq: tube throughout 1} implies that for every $r\in[-S,S]$ we have 
\[
\biggl\|u_r\begin{pmatrix}a & b\\ c & -a\end{pmatrix}u_{-r}\biggr\|=\biggl\|\begin{pmatrix}a+cr & -cr^2-2ar+b\\ c & -a-cr\end{pmatrix}\biggr\|\ll M^\star
\]
Hence $|c|S^2\ll M^\star$ and $|a|S\ll M^\star$, which implies $|a+cr|\ll M^\star S^{-1}$.

Let now $t\in[\log M,\log S]$, then 
\[
\biggl\|a_{-t}u_r\begin{pmatrix}a & b\\ c & -a\end{pmatrix}u_{-r}a_t\biggr\|=\biggl\|\begin{pmatrix}a+cr & e^{-t}(-cr^2-2ar+b)\\ e^{t}c & -a-cr\end{pmatrix}\biggr\|\ll M^\star e^{-t}, 
\]
where we used $e^t|c|, |a+cr|\ll M^\star S^{-1}\leq M^\star e^{-t}$.  

Since $\exp({\bf v}_{\tilde H'})\in \tilde H'\cap\Gamma$, the above implies the claim in part~(2).
\end{proof}

\subsection{Proof of Theorem~\ref{thm:main unipotent}} 
Let $A$ be as Theorem~\ref{thm:main}, and let $A_3$, $D_3$ and $\ref{E: unip lin}$ 
be as in Lemma~\ref{lem: unipotent linearization}. 
Increasing $A_3$ and $D_3$ if necessary, we may assume $A_3,D_3\geq 10A$. 
We will show the theorem holds with 
\[
A_1=\star A_3\geq 4A_3\quad\text{and}\quad A_2=D_3 
\]

Let $C=\max\{(10\ref{E:non-div-main})^3, e^{\ref{E:non-div-main}}, e^{s_0}, \ref{c: linear trans}, \ref{E: unip lin}\}$, 
see~\eqref{eq: how big is R xi}. Let $R\geq C^{2}$, and put
\[
d=3A_3\log R \quad\text{and}\quad \eta=(C/R)^{1/A_3}.
\] 
Let $T>R^{A_1}$, and put $T_1=e^{-d}T\geq R^{A_3}$. Then 
\begin{multline}\label{eq: proof main unip 1}
\frac{1}{T}\int_0^T\varphi(u_rx_0)\diff\!r=\frac{1}{T_1}\int_0^{T_1}\varphi(a_d u_{r_1}a_{-d}x_0)\diff\!r_1\\
=\frac{1}{T_1}\int_0^{T_1}\!\!\int_0^1\varphi(a_d u_{r}u_{r_1}a_{-d}x_0)\diff\!r\diff\!r_1+O(\|\varphi\|_\infty T_1^{-1})
\end{multline}
where the implied constant is $\leq 2$. 

Put $x_1=a_{-d}x_0$, and define 
\begin{subequations}
\begin{align}
\label{eq: main unip exceptional cusp}\exceptional_1&=\{r_1\in[0,T_1]: u_{r_1}x_1\not\in X_{\eta}\}\\
\label{eq: main unip exceptional periodic}\exceptional_2&=\biggl\{r_1\in[0,T_1]: \begin{array}{c}\text{there exists $x$ with $\vol(Hx)\leq R$}\\\text{and }\dist(u_{r_1}x_1,x)\leq R^Ad^{A}e^{-d}\end{array}\biggr\}.
\end{align}
\end{subequations}

Let us first assume that 
\be\label{eq: exceptional sets are small}
|\exceptional_1|\leq C\eta^{1/2}T_1 \quad\text{and}\quad |\exceptional_2|\leq 2C^2R^{-\kappa}T_1,
\ee
where $\kappa=\min\{1/(2A_3), 1/(2D_3)\}$. 
 
For every 
\[
r_1\in[0,T_1]\setminus\Bigl(\exceptional_1\cup\exceptional_2\Bigr),
\]
put $x(r_1)=u_{r_1}x_1$. Then  
\[
R\geq C\eta^{-A_3}\geq C\inj(x(r_1))^{-2},
\]
see~\eqref{eq: how big is R xi}; moreover, $e^d>R^A$. 
Thus conditions of Theorem~\ref{thm:main} hold true with $e^d$, $R$, and $x(r_1)$. 
Moreover, in view of the definition of $\exceptional_2$, part~(2) in Theorem~\ref{thm:main} does not hold with these choices. 
Altogether, we conclude that for every $r_1$ as above, 
\[
\biggl|\int_0^1\varphi(a_d u_{r}x(r_1))\diff\!r-\int\varphi\diff\!m_X\biggr|\leq \Sob(\varphi)R^{-\ref{k:main-1}}
\]  
This,~\eqref{eq: exceptional sets are small} and~\eqref{eq: proof main unip 1} 
imply that 
\[
\biggl|\frac{1}{T}\int_0^T\varphi(u_rx_0)\diff\!r-\int\varphi\diff\!m_X\biggr|\leq (R^{-\ref{k:main-1}}+3C^2R^{-\kappa}+2T_1^{-1})\Sob(\varphi),
\]
where we used $C\eta^{1/2}\leq C^2R^{-\kappa}$. 

Hence, part~(1) in Theorem~\ref{thm:main unipotent} holds with 
$\ref{k:main uni}=\min(\ref{k:main-1},\kappa)/2$ if we assume $R$ is large enough.

\medskip

We now assume to the contrary that~\eqref{eq: exceptional sets are small} fails:

\subsection*{Assume that $|\exceptional_1|> C\eta^{1/2}T_1$.}  
We will show that part~(3) in the theorem holds under this condition; 
the argument is similar to Case 2 in Lemma~\ref{lem: unipotent linearization}. 

Let us write $x_0=g_0\Gamma$. Then
\begin{align*}
\{u_{r_1}x_1: r_1\in[0,T_1]\}&=\{a_{\log T_1}u_ra_{-d-\log T_1}x_0: r\in [0,1]\}\\
&=\{a_{\log T_1}u_ra_{-\log T}g_0\Gamma: r\in[0,1]\}.
\end{align*}
Our assumption $|\exceptional_1|> C\eta^{1/2}T_1$ and the change of variables thus imply 
\[
|\{r\in[0,1]: a_{\log T_1}u_ra_{-\log T}g_0\Gamma\not\in X_{\eta}\}|>\ref{E:non-div-main} \eta^{1/2},
\]
where we used $C\geq \ref{E:non-div-main}$, see Proposition~\ref{prop:Non-div-main} for $\ref{E:non-div-main}$.
 
This and Proposition~\ref{prop:Non-div-main}, applied with $a_{-\log T}g_0\Gamma$, the interval $[0,1]$, $\log T_1$, 
and $\vare=\eta$, imply that 
\[
\inj(a_{-\log T}g_0\Gamma)\ll T_1^{-1};
\] 
the implied constant depends on $X$. Hence, there is some $\gamma\in\Gamma$ so that 
\[
a_{-\log T}g_0\gamma g_0^{-1}a_{\log T}\in\boxG_{C'T_1^{-1}}
\]
where $C'$ depends on $X$. 
Assuming $R$ and hence $T_1$ is large enough, the above implies that $\gamma$ is a unipotent element. 
In particular, we have 
\[
a_{-\log T}g_0\gamma g_0^{-1}a_{\log T}=\exp\left(\begin{pmatrix}a & b\\ c & -a\end{pmatrix}\right)
\]
where $|a|, |b|, |c|\ll T_1^{-1}=e^{d}T^{-1}=R^{3A_3}T^{-1}$. Hence,
\[
g_0\gamma g_0^{-1}=\exp\left(\begin{pmatrix}a & Tb\\ T^{-1}c & -a\end{pmatrix}\right).
\]
Let $b'=Tb$ and $c'=c/T$. Then 
\[
|b'|\ll R^{3A_3}\quad\text{and}\quad |c'|\ll R^{3A_3}T^{-2},
\] 
which implies that $|a+c'r|\ll R^{3A_3} T^{-1}$ for every $r\in[0,T]$. 
Therefore, for every $r\in[0,T]$ and every $t\in[\log R,\log T]$ we have 
\[
a_{-t}u_rg_0\gamma g_0^{-1}u_{-r}a_{t}=\begin{pmatrix}a+c'r & e^{-t}(-c'r^2-2ar+b')\\ e^{t}c' & -a-c'r\end{pmatrix}.
\]
Note that $|a+c'r|\ll R^{3A_3} T^{-1}$, $e^{t}|c'|\ll R^{3A_3}T^{-1}$, and 
\[
e^{-t}|-c'r^2-2ar+b'|\ll R^{3A_3} e^{-t}.
\]
In consequence, part~(3) in the theorem holds with $A_1=3A_3+1$ if we assume $R$ is large enough. 

\subsection*{Assume that $|\exceptional_2|> 2C^2R^{-\kappa}T_1$}
If $|\exceptional_1|> C\eta^{1/2}T_1$, then part~(3) in the theorem holds as we just discussed. 
Thus, we may assume that 
\[
|\exceptional_2|> 2C^2R^{-\kappa}T_1\quad\text{and}\quad |\exceptional_1|\leq  C\eta^{1/2}T_1.
\] 
Put $\exceptional':=\exceptional_2\setminus \exceptional_1$. Then 
\[
\exceptional'=\biggl\{r_1\in[0,T_1]: \begin{array}{c}\text{$u_{r_1}x_1\in X_\eta$ and there exists $x$ with}\\ \text{$\vol(Hx)\leq R$ 
and $\dist(u_{r_1}x_1,x)\leq R^Ad^{A}e^{-d}$}\end{array}\biggr\},
\]
and $|\exceptional'|\geq C^2R^{-\kappa}T_1\geq \ref{E: unip lin}R^{-1/D_3}T_1$.
Moreover, assuming $R$ is large enough, we have 
\[
R^Ad^{A}e^{-d}=R^A(3A_3\log R)^AR^{-3A_3}\leq R^{-A_3}.
\]

Fix some $r_1\in\exceptional'$ for the rest of the argument. Put 
\[
\text{$x_2=u_{r_1}x_1=u_{r_0}a_{-d}x_0\;\;$ and $\;\;\exceptional=\exceptional'-r_1 \subset [-T_1,T_1]$}.
\]
Then the conditions in Lemma~\ref{lem: unipotent linearization} are satisfied with $x_2$,
$\exceptional$, $\eta$, $M=R$, and $S=T_1=R^{3A_3}T^{-1}$. 

Assume first that part~(2) in Lemma~\ref{lem: unipotent linearization} holds. Then 
there exists $x \in G/\Gamma$ with $\vol(H.x)\leq R^{A_3}$, 
and for every $r\in [-T_1,T_1]$ there exists $g\in G$ with $\|g\|\leq R^{A_3}$ so that  
\[
\dist_X(u_{s}x_2, gHx)\leq R^{A_3}\left(\frac{|s-r|}{T_1}\right)^{1/D_3}\quad\text{for all $s\in[-T_1,T_1]$.}
\] 
Since $s-r_1, r-r_1\in[-T_1,T_1]$ for all $s,r\in[0,T_1]$, the above implies 
\begin{align*}
\dist_X(u_{e^ds}x_0, a_dgHx)&=\dist_X(a_du_sa_{-d}x_0, a_dgHx)\\
&=\dist_X(a_du_{s-r_1}u_{r_1}a_{-d}x_0, a_dgHx)\\
&=\dist_X(a_du_{s-r_1}x_2, a_dgHx)\\
&\ll e^{\star d}\dist_X(u_{s-r_1}x_2, gHx)\leq R^{\star A_3}\biggl(\frac{|e^ds-e^dr|}{T}\biggr)^{1/D_3}.
\end{align*}
That is part~(1) holds with $A_1=\star A_3$ and $A_2=D_3$ for all large enough $R$.  

Assume now that part~(2) in Lemma~\ref{lem: unipotent linearization} holds.
Therefore, for every $r\in[-T_1,T_1]$ and every $t_1 \in [\log R, \log T_1]$, the injectivity radius of $a_{-t_1}u_r x_2$ is at most 
$R^{A_3}e^{-t_1}$.

Let $t_1\in[\log R, \log T_1]$ and $r\in[0,T_1]$, then 
\begin{align*}
\inj(a_{-t_1}u_{e^dr}x_0)&=\inj(a_{-t_1}a_du_{r-r_1}u_{r_1}a_{-d}x_0)\\
&\ll e^{\star d}\inj(a_{-t_1}u_{r-r_1}x_2)\leq R^{\star A_3}e^{-t_1}. 
\end{align*}
This implies part~(3) of the theorem for all $t\in[\log R, \log T_1]$ and large enough $R$.  

Let now $t\in[\log T_1,\log T]$. Then $t=s+\log T_1$ where $0\leq s\leq 3A_3\log R$,
and we have 
\[
\inj(a_{-t}u_{e^dr}x_0)=\inj(a_{-s}a_{-\log T_1}u_{e^dr}x_0)\leq R^{\star A_3}T_1^{-1}\leq R^{\star A_3}e^{-t}.
\]
Altogether, part~(3) in the theorem holds, again with $A_1=\star A_3$ and assuming 
$R$ is large enough depending on $X$.
\qed

\appendix


\section{Proof of Proposition~\ref{prop: linearization translates}}\label{sec: proof linearization}
In this section we prove Proposition~\ref{prop: linearization translates}. 
The proof is based on the study of a certain Margulis function whose definition will be recalled in~\eqref{eq: define f_Y app A}.   

For every $d>0$, define the probability measure $\sigma_d$ on $H$ by
\[
\int \varphi(h)\diff\!\sigma_d(h)=\frac{1}{3}\int_{-1}^2\varphi(a_du_r)\diff\!r.
\]
Let us first remark our choice of the interval $[-1,2]$: 
We will define a function $f_Y$ in~\eqref{eq: define f_Y app A} below. In Lemmas~\ref{lem: linear algebra}--\ref{lem: Margulis func periodic}, certain estimates for 
\[
\int f_Y(h\,\bigcdot)\diff(\sigma_{d_1}\!\conv\!\cdots\!\conv\!\sigma_{d_n})(h)
\]
will be obtained, then in Lemma~\ref{lem: average over [0,1]}, we will convert these estimates to similar estimates for 
\[
\int_0^1 f_Y(a_{d_1+\cdots+d_n}u_r\,\bigcdot)\diff\!r.
\]
The argument in Lemma~\ref{lem: average over [0,1]} is based on commutation relations between $a_d$ and $u_r$. Similar arguments have been used several times throughout the paper, however, since the function $f_Y$ can have a rather large Lipschitz constant, we will not appeal to continuity properties of $f_Y$ in Lemma~\ref{lem: average over [0,1]}. Instead, we will use the fact that $[0,1]\subset[-1,2]+r$ for any $|r|\leq 1/2$.

\medskip

We begin with the following linear algebra lemma.

\begin{lemma}[cf.~Lemma 5.2,~\cite{EMM-Upp}]
\label{lem: linear algebra}
For all $0\neq w\in\rfrak$, we have 
\[
\int\|\Ad(h)w\|^{-1/3}\diff\!\sigma_d(h)\leq C' e^{-d/3}\|w\|^{-1/3}
\]
where $C'$ is an absolute constant.  
\end{lemma}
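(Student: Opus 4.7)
The proof hinges on making the action $\Ad(a_d u_r)$ on $\rfrak$ completely explicit, so the problem reduces to integrability of a negative power of a polynomial on an interval.

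Writing $w\in\rfrak$ in the uniform coordinates as
\[
w=\begin{pmatrix}w_{11}&w_{12}\\ w_{21}&-w_{11}\end{pmatrix},
\]
a direct computation (using that $\Ad(a_d)$ multiplies the off‑diagonal entries by $e^d$ and $e^{-d}$ while fixing the diagonal ones, and using that $\Ad$ is $\R$-linear so the calculation is the same whether $\rfrak=i\,\mathfrak{sl}_2(\R)$ or a direct summand $\mathfrak{sl}_2(\R)\oplus\{0\}$) gives
\[
\Ad(a_du_r)w=\begin{pmatrix}w_{11}+rw_{21}& e^{d}\,p(r)\\ e^{-d}w_{21}& -(w_{11}+rw_{21})\end{pmatrix},
\]
where
\[
p(r)=w_{12}-2w_{11}r-w_{21}r^{2}
\]
is a polynomial of degree $\leq 2$ in $r$. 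Since $\|\cdot\|$ is the max-norm, we immediately get the pointwise lower bound $\|\Ad(a_du_r)w\|\geq e^{d}|p(r)|$, hence
\[
\int \|\Ad(h)w\|^{-1/3}\,\diff\!\sigma_d(h)\leq \tfrac{1}{3}e^{-d/3}\int_{-1}^{2}|p(r)|^{-1/3}\,\diff\!r.
\]

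The plan is now to show $\int_{-1}^{2}|p(r)|^{-1/3}\,\diff\!r\leq C''\|w\|^{-1/3}$ for an absolute constant $C''$, from which the lemma follows with $C'=C''/3$. For this I would use two standard facts about polynomials of bounded degree on a fixed interval $I=[-1,2]$. Set $M:=\sup_{r\in I}|p(r)|$. First, by the Remez inequality (or the fact that all norms on the $3$-dimensional space of polynomials of degree $\leq 2$ are equivalent), $M\asymp\max(|w_{12}|,|w_{11}|,|w_{21}|)=\|w\|$, where the implied constants are absolute. Second, for any polynomial $p$ of degree $\leq 2$ and any $\lambda>0$,
\[
\bigl|\{r\in I:|p(r)|\leq \lambda\}\bigr|\leq C_0(\lambda/M)^{1/2},
\]
again with $C_0$ absolute (Remez-type measure bound).

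Finally I would convert the measure estimate into the desired $L^{1/3}$ bound by the layer-cake formula:
\[
\int_{I}|p(r)|^{-1/3}\,\diff\!r=\int_{0}^{\infty}\bigl|\{r\in I:|p(r)|<t^{-3}\}\bigr|\,\diff\!t.
\]
Splitting the $t$-integral at $t=M^{-1/3}$, the integrand is bounded by $|I|=3$ for $t\leq M^{-1/3}$ and by $C_0 M^{-1/2}t^{-3/2}$ for $t>M^{-1/3}$; both contributions integrate to a constant multiple of $M^{-1/3}$. Combining with $M\asymp\|w\|$ gives the claim. The only step that needs the slightest care is the norm-equivalence bound $M\gtrsim \|w\|$, but this is immediate from the fact that $w\mapsto p$ is an injective linear map onto the $3$-dimensional space of polynomials of degree $\leq 2$, and any two norms on this space are comparable on $[-1,2]$.
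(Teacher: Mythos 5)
Your proposal is correct and follows essentially the same route as the paper: both compute $\Ad(a_du_r)w$ explicitly, lower-bound the norm by $e^{d}|p(r)|$ with the same quadratic $p(r)=-w_{21}r^{2}-2w_{11}r+w_{12}$, and then invoke the sublevel-set measure bound $|\{r\in[-1,2]:|p(r)|\leq\lambda\}|\ll(\lambda/\sup_I|p|)^{1/2}$ for degree-two polynomials (the paper cites the Kleinbock--Margulis $(C,1/2)$-good property, which is your Remez-type bound). The only difference is bookkeeping: the paper normalizes $\|w\|=1$ and sums over dyadic level sets of $|p|$, while you keep $M\asymp\|w\|$ explicit and integrate the distribution function by layer-cake; these are interchangeable.
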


\begin{proof}
We may assume $\|w\|=1$. Let us write $w=\begin{pmatrix} x & y\\ z & -x\end{pmatrix}$. Then 
\[
\Ad(a_tu_r)w=\begin{pmatrix} x+zr & e^t(-zr^2-2xr+y)\\ e^{-t}z & -x-zr\end{pmatrix}
\]
For every $\vare>0$, let 
\[
I(\vare)=\{r\in [-1,2]: \vare/2\leq |-zr^2-2xr+y|\leq \vare\},
\]
then $|I(\vare)|\leq C''\vare^{1/2}$ where $C''$ is absolute, see~e.g.~\cite[Prop.~3.2]{KM-Nondiv}. (This estimate is responsible for our choice of exponent $1/3$ which is $<1/2$.)

Moreover, for every $r\in I(\vare)$, we have $\|\Ad(a_tu_r)w\|\geq e^{t}\vare/2$. 
Note also that $\sup_{[-1,2]}|-zr^2-2xr+y|\leq 10$. Altogether, we have 
\begin{align*}
\int\|\Ad(h)w\|^{-1/3}&\diff\!\sigma_d\leq \sum_{-4}^{\infty}\int_{I(2^{k})} \|\Ad(a_tu_r)w\|^{-1/3}\diff\!r\\
&\leq C''\sum_{k=-4}^\infty 2^{-k/2}\Bigl(e^{-t/3}2^{(k+1)/3}\Bigr)\leq 2C'' e^{-t/3}\sum_{k=-4}^\infty 2^{{-k}/{6}}.
\end{align*} 
The claim follows. 
\end{proof}

We also need the following 

\begin{propos}\label{prop: average of inj}
There exists $C\geq C'$ (absolute) so that 
\[
\int \inj(hx)^{-1/3}\diff\!\sigma_d^{(\ell)}(h)\leq C^\ell e^{-\ell d/3}\inj(x)^{-1/3}+\bar B e^{2d/3}
\]
where $\sigma_d^{(\ell)}$ denotes the $\ell$-fold convolution and 
$\bar B\geq 1$ depends only of $X$.
\end{propos}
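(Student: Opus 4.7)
The plan is to reduce the claim to a one-step drift inequality for an auxiliary Margulis function and then iterate. Following the standard construction (cf.~\cite{EMM-Upp}), I would introduce $f_Y : X \to [1,\infty)$, defined near the cusps of $X$ as a regularised sum $\sum \|w\|^{-1/3}$ over the non-zero logs $w = \log(g\gamma g^{-1}) \in \gfrak$ of $\Gamma$-elements $\gamma \neq e$ for which $g\gamma g^{-1}$ lies in a fixed small ball about the identity (here $x = g\Gamma$), and uniformly bounded elsewhere in $X$. Reduction theory (Lemma~\ref{lem: reduction theory}) ensures $f_Y(x) \asymp \inj(x)^{-1/3}$ up to multiplicative constants depending only on $X$, so it suffices to prove the claimed inequality with $\inj(\cdot)^{-1/3}$ replaced by $f_Y$.

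The core step is the one-step drift
\[
\int f_Y(hx)\,\diff\sigma_d(h) \leq C_0 e^{-d/3} f_Y(x) + B_0,
\]
for constants $C_0 \geq 1$, $B_0$ depending only on $X$. Each short vector $w$ contributing to $f_Y(x)$ gives rise to $\Ad(h)w$ contributing to $f_Y(hx)$; its $\sigma_d$-average is controlled via Lemma~\ref{lem: linear algebra} applied separately to the $\hfrak$ and $\rfrak$ components of $w$ (the proof of Lemma~\ref{lem: linear algebra} is purely a calculation on $2\times 2$ matrix entries and carries over verbatim from $w\in\rfrak$ to $w\in\hfrak$, and the two components combine via the max-norm), summing to the $C_0 e^{-d/3} f_Y(x)$ term. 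The ``new'' short vectors at $hx$ -- elements of $\Gamma$ brought into the identity-ball by the translation $a_d u_r$ from outside -- have their aggregate contribution bounded by $B_0$, by the non-divergence estimate of Proposition~\ref{prop:Non-div-main} together with a volume count on the region of $X$ where such new short vectors can appear.

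Iterating the one-step inequality $\ell$ times by Fubini yields
\[
\int f_Y(hx)\,\diff\sigma_d^{(\ell)}(h) \leq C_0^\ell e^{-\ell d/3} f_Y(x) + B_0 \sum_{k=0}^{\ell-1}(C_0 e^{-d/3})^k,
\]
and the geometric sum is at most $2 B_0$ when $C_0 e^{-d/3} \leq 1/2$; for the remaining (bounded) range $d \leq 3 \log(2C_0)$, the sum is bounded by a constant depending only on $X$, which in turn is dominated by $\bar B\, e^{2d/3}$ after enlarging $\bar B$. Replacing $f_Y$ by $\inj^{-1/3}$ at the endpoints changes only the constants. The main technical obstacle is the one-step drift, and inside it the control of the new short vectors: this is where the specific cusp geometry of $\Gamma\backslash G$ is used, and where one must verify that the defining sum of $f_Y$ is locally finite so that the accounting of ``old'' vs.\ ``new'' contributions is well-posed.
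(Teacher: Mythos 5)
There is a genuine gap, in fact two. First, your reduction rests on the claim that a Margulis function $f$ defined as $\sum\|w\|^{-1/3}$ over all displacements $w$ lying in a \emph{fixed} small ball satisfies $f\asymp \inj(\cdot)^{-1/3}$ with constants depending only on $X$. The lower bound is fine, but the upper bound fails: in a rank-two cusp (e.g.\ $G=\SL_2(\qlf)$, cusp lattice with successive minima $\lambda_1\asymp\inj(x)\leq\lambda_2$) the sum over all displacements of norm at most a fixed constant is of size $\asymp \inj(x)^{-1/3}+\lambda_1^{-1}+(\lambda_1\lambda_2)^{-1}$, which swamps $\inj(x)^{-1/3}$ as $\inj(x)\to0$. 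If instead you cut the sum off at a radius proportional to $\inj(x)$, comparability holds trivially, but then the cutoff moves with the point and the ``old versus new'' bookkeeping in your one-step inequality is no longer routine --- it essentially \emph{is} the problem. Second, your one-step drift claims an additive constant $B_0$ independent of $d$, with the new short vectors controlled ``by Proposition~\ref{prop:Non-div-main} together with a volume count''. That non-divergence estimate requires $d\geq|\log(\delta^2\inj(x))|+\ref{E:non-div-main}$, i.e.\ $d\gtrsim|\log\inj(x)|$, which is exactly what is \emph{not} available here: Proposition~\ref{prop: average of inj} is invoked in Lemma~\ref{lem: Margulis func periodic} with $d_k$ as small as the absolute constant $3\log(4C)$ at intermediate points whose injectivity radius is uncontrolled. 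Pointwise, a single new vector can contribute as much as $\asymp e^{d/3}\inj(x)^{-1/3}$, so bounding the new contribution requires applying the $r$-average (Lemma~\ref{lem: linear algebra}) to the new vectors too, together with a count of how many can appear; this is precisely what produces the $d$-dependent additive term $\bar B e^{2d/3}$ in the statement rather than a constant, and it is the content you have not supplied.

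For comparison, the paper does not reprove this at all: it quotes \cite[Prop.~A.3]{LM-PolyDensity}, observing that the argument there goes through verbatim once Equation (2.12) of that proof is replaced by Lemma~\ref{lem: linear algebra} (see also \cite[Lemma 2.4]{LM-PolyDensity}). If you want a self-contained proof, you should either reproduce that argument (where the cutoffs and the count of newly visible displacements are handled carefully, yielding the $e^{2d/3}$ term), or else justify your stronger $d$-independent additive bound, which your sketch does not do and which would need a genuinely new idea. The remaining points (applying Lemma~\ref{lem: linear algebra} to general $w\in\gfrak$, and handling the bounded range of $d$ where $C e^{-d/3}\geq 1/2$) are minor and fixable.
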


\begin{proof}
This follows from~\cite[Prop.~A.3]{LM-PolyDensity} if one replaces the use of Equation (2.12) in that proof 
by Lemma~\ref{lem: linear algebra}, see also~\cite[Lemma 2.4]{LM-PolyDensity}.
\end{proof}

Let $Y=Hy$ be a periodic orbit. 
For every $x\in X\setminus Y$, define 
\[
I_Y(x)=\{w\in \rfrak: 0<\|w\|< \inj(x), \exp(w)x\in Y\}.
\]  
Recall from~\cite[\S9]{LM-PolyDensity}, that 
\be\label{eq: number of sheets}
\#I_Y(x)\leq E\vol(Y)
\ee
for a constant $E$ depending only on $X$.

For every $h=a_du_r$ with $d\geq 0$ and $r\in[-1,2]$, and all $w\in\mathfrak g$, 
we have  
\be\label{eq: cont Ad app B}
\|\Ad(h^{\pm1})w\|\leq 10e^d\|w\|.
\ee
Replacing $10$ by a bigger constant $c$, if necessary, 
we also assume that
\be\label{eq: cont inj app B}
c^{-1}e^{-d}\inj(x)\leq \inj(h^{\pm1}x)\leq ce^d\inj(x)
\ee
for all such $h$ and all $x\in X$.

Define 
\be\label{eq: define f_Y app A}
f_{Y}(x)=\begin{cases}\sum_{w\in I_Y(x)}\|w\|^{-1/3}& I_Y(x)\neq \emptyset\\
\inj(x)^{-1/3}&\text{otherwise}\end{cases}.
\ee

\begin{lemma}\label{lem: Margulis func periodic 1}
Let $C$ be as in Proposition~\ref{prop: average of inj}, and let  
$d\geq 3\log(4C)$. Then 
\begin{multline*}
\int f_Y(hx)\diff\!\sigma_d(h)\leq \\
Ce^{-d/3}f_Y(x)+ce^{d}E\vol(Y)\cdot (Ce^{-d/3}\inj(x)^{-1/3}+\bar Be^{d})
\end{multline*}
where $\bar B$ is as in Proposition~\ref{prop: average of inj}.  
\end{lemma}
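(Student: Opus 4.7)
The plan is to derive a uniform pointwise upper bound for $f_Y(hx)$ with $h\in\supp\sigma_d$ and then integrate it against $\sigma_d$ using Lemma~\ref{lem: linear algebra} and Proposition~\ref{prop: average of inj}. The key structural input is that $Y=Hy$ is $H$-invariant and that $\rfrak$ is $\Ad(H)$-invariant, so for each $h\in\supp\sigma_d$ the map $w\mapsto\Ad(h)w$ is a linear bijection of $\rfrak$ intertwining the sets $\{w\in\rfrak\setminus\{0\}:\exp(w)x\in Y\}$ and $\{v\in\rfrak\setminus\{0\}:\exp(v)(hx)\in Y\}$. However, $I_Y(x)$ and $I_Y(hx)$ are cut out of these sets by two \emph{different} injectivity thresholds, and reconciling the two cutoffs is the main technical point.

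To compare the cutoffs I would partition $I_Y(hx)=\bigl(I_Y(hx)\setminus B_h\bigr)\sqcup B_h$, where $B_h:=\{v\in I_Y(hx):\|\Ad(h^{-1})v\|\geq\inj(x)\}$. For $v\notin B_h$ the preimage $w=\Ad(h^{-1})v$ lies in $I_Y(x)$, giving
\[
\sum_{v\in I_Y(hx)\setminus B_h}\|v\|^{-1/3}\leq\sum_{w\in I_Y(x)}\|\Ad(h)w\|^{-1/3}.
\]
For $v\in B_h$, the contraction estimate \eqref{eq: cont Ad app B} yields $\|v\|\geq(10e^d)^{-1}\|\Ad(h^{-1})v\|\geq(10e^d)^{-1}\inj(x)$, hence $\|v\|^{-1/3}\leq(10e^d)^{1/3}\inj(x)^{-1/3}$; and \eqref{eq: number of sheets} bounds the count $\#B_h\leq\#I_Y(hx)\leq E\vol(Y)$. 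Combined with the convention $f_Y(hx)=\inj(hx)^{-1/3}$ when $I_Y(hx)=\emptyset$, this furnishes the uniform pointwise bound
\[
f_Y(hx)\leq\sum_{w\in I_Y(x)}\|\Ad(h)w\|^{-1/3}+\inj(hx)^{-1/3}+E\vol(Y)(10e^d)^{1/3}\inj(x)^{-1/3},
\]
valid regardless of whether $I_Y(x)$ or $I_Y(hx)$ is empty.

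Integrating against $\sigma_d$, Lemma~\ref{lem: linear algebra} controls the first summand by $C'e^{-d/3}f_Y(x)\leq Ce^{-d/3}f_Y(x)$, while Proposition~\ref{prop: average of inj} with $\ell=1$ bounds the integral of the second by $Ce^{-d/3}\inj(x)^{-1/3}+\bar Be^{2d/3}$. The leftover quantities $Ce^{-d/3}\inj(x)^{-1/3}$, $\bar Be^{2d/3}$, and $10^{1/3}E\vol(Y)e^{d/3}\inj(x)^{-1/3}$ are then absorbed into $ce^dE\vol(Y)\bigl(Ce^{-d/3}\inj(x)^{-1/3}+\bar Be^d\bigr)$ using $c\geq 10$, $E\vol(Y)\geq 1$, and the hypothesis $d\geq 3\log(4C)$, which guarantees $e^{d/3}\geq 4C$ and hence the inequalities $1\leq cE\vol(Y)e^d$, $1\leq cE\vol(Y)e^{4d/3}$, and $10^{1/3}\leq cCe^{d/3}$ needed for the three absorptions. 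The only conceptual step is the good/bad dichotomy for $I_Y(hx)$; everything else is linear-algebraic bookkeeping of constants.
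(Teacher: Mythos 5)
Your proof is correct and follows essentially the same route as the paper: split $I_Y(hx)$ into the vectors whose $\Ad(h^{-1})$-pullbacks land in $I_Y(x)$ and a remainder controlled by the count bound \eqref{eq: number of sheets}, then integrate using Lemma~\ref{lem: linear algebra} and Proposition~\ref{prop: average of inj} and absorb the leftover terms into the stated constants. The only cosmetic difference is where the dichotomy is cut: the paper uses the norm threshold $\|w\|<c^{-2}e^{-2d}\inj(hx)$ on the $hx$-side (which by \eqref{eq: cont Ad app B} and \eqref{eq: cont inj app B} forces the pullback into $I_Y(x)$), whereas you define it directly by the pullback criterion $\|\Ad(h^{-1})v\|<\inj(x)$; both versions lead to the same bound.
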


\begin{proof}
Since $Y$ is fixed throughout the argument, 
we drop it from the index in the notation, e.g., we will denote $f_{Y}$ by $f$ etc.

Let $d\geq 0$ and let $h=a_du_r$ for some $r\in[-1,2]$. 
Let $x\in X$. First, let us assume that there exists some $w\in I(hx)$ with 
\[
\|w\|<c^{-2}e^{-2d}\inj(hx)=:\Upsilon.
\]
This in particular implies that both $I(hx)$ and $I(z)$ are non-empty. 
Hence, we have  
\begin{align}
\notag f_Y(hx)&=\sum_{w\in \margI(hx)}\|w\|^{-1/3}\\
\notag&=\sum_{\|w\|< \Upsilon}\|w\|^{-1/3}+\sum_{\|w\|\geq \Upsilon}\|w\|^{-1/3}\\
\label{eq: MF-periodic-1}&\leq \sum_{w\in I(x)}\|\Ad(h)w\|^{-1/3}+c^{2/3}e^{2d/3}\Bigl(\# \margI(hx)\Bigr)\cdot\inj(hx)^{-1/3}.
\end{align}

Note also that if $\|w\|\geq \Upsilon=c^{-2}e^{-2d}\inj(hx)$ for all $w\in \margI(hx)$ 
(which in view of the choice of $c$ includes the case $I(x)=\emptyset$) 
or if $\margI(hx)=\emptyset$, then
\be\label{eq: MF-periodic-2}
f_Y(hx)\leq c^{2/3}e^{2d/3}\Bigl(\# \margI(hx)\Bigr)\cdot\inj(hx)^{-1/3}.   
\ee

Averaging~\eqref{eq: MF-periodic-1} and~\eqref{eq: MF-periodic-2} over $[-1,2]$ and using~\eqref{eq: number of sheets}, we conclude that 
\begin{multline*}
\int f_Y(hx)\diff\!\sigma_d(h)\leq 
\sum_{w\in I(x)}\int \|h w\|^{-1/3}\diff\!\sigma_d(h)\quad+\\ c^{2/3}e^{2d/3}E\vol(Y)\cdot\int \inj(hx)^{-1/3}\diff\!\sigma_d(h);
\end{multline*}
we replace the summation on the right by $0$ if $I(x)=\emptyset$. 

Thus by Lemma~\ref{lem: linear algebra} and Proposition~\ref{prop: average of inj}, we conclude that  
\begin{multline*}
\int f_Y(hx)\diff\!\sigma_d(h)\leq Ce^{-d/3}\cdot \sum_{w\in I(x)}\|w\|^{-1/3}\quad+\\
ce^dE\vol(Y)\cdot (Ce^{-d/3}\inj(x)^{-1/3}+\bar Be^{d})
\end{multline*}
where we replaced $2d/3$ by $d$. This may be rewritten as  
\[
\int f_Y(hx)\diff\!\sigma_d(h)\leq 
Ce^{-d/3}f_Y(x)+ce^dE\vol(Y)\cdot (Ce^{-d/3}\inj(x)^{-1/3}+\bar Be^{d}).
\]

The proof is complete.
\end{proof}

\begin{lemma}\label{lem: Margulis func periodic}
There is an absolute constant $T_0$ so that the following holds. 
Let $T\geq T_0$ and define 
\[
d_i=10^{-2}\cdot (2^{-i}\log T)
\]
for all $i=1,\ldots, k$ where $k$ is the largest integer so that $d_k\geq 3\log(4C)$ and $C$ is as in Proposition~\ref{prop: average of inj} --- note that $\frac{1}{2}\log\log T\leq k\leq 2\log \log T$. 

Then 
\begin{multline*}
\int f_{Y}(hx)\diff\!\sigma_{d_1}^{(100)}\conv\cdots\conv\sigma_{d_k}^{(100)}(h)\leq \\
(\log T)^{D'_0}T^{-1/3} \biggl(f(x)+B'\vol(Y)\inj(x)^{-1/3}\textstyle\sum_{i=1}^k e^{2d_i}\biggr)+
 B'\vol(Y)
\end{multline*}
where $D'_0, B'\geq 1$ are absolute.   
\end{lemma}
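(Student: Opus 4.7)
The plan is to iterate Lemma~\ref{lem: Margulis func periodic 1} together with Proposition~\ref{prop: average of inj}. Set $M_d g(x):=\int g(hx)\diff\sigma_d(h)$ and $\alpha_d:=Ce^{-d/3}$; since each $d_i\geq 3\log(4C)$, we have $\alpha_{d_i}\leq 1/4$, which will make all iterations strictly contracting and is the reason the savings $e^{-\sum d_i/3}$ accumulate into the factor $T^{-1/3}$.

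For the \emph{inner} loop, fix $i$ and apply Lemma~\ref{lem: Margulis func periodic 1} one hundred times, recycling the $\inj^{-1/3}$ error generated at each step via the $\ell=1$ case of Proposition~\ref{prop: average of inj}. An induction on the number of applications gives
\[
M_{d_i}^{100}f_Y(x)\leq \alpha_{d_i}^{100}f_Y(x)+\mu_i\inj(x)^{-1/3}+\nu_i
\]
with $\mu_i\leq 100\alpha_{d_i}^{99}\cdot cCe^{2d_i/3}E\vol(Y)$ and $\nu_i\ll e^{2d_i}\vol(Y)$; the implicit geometric series in the induction converge thanks to $\alpha_{d_i}\leq 1/4$.

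For the \emph{outer} loop, compose $M_{\sigma_{d_k}^{(100)}}\circ\cdots\circ M_{\sigma_{d_1}^{(100)}}$ (the ordering is dictated by $M_{\mu\conv\nu}=M_\nu\circ M_\mu$) and write the result applied to $f_Y$ as $A_kf_Y+B_k\inj^{-1/3}+C_k$. The linear recursions $A_{i+1}=A_i\alpha_{d_{i+1}}^{100}$, $B_{i+1}=A_i\mu_{i+1}+B_i\alpha_{d_{i+1}}^{100}$, $C_{i+1}=C_i+A_i\nu_{i+1}+B_i\bar Be^{2d_{i+1}/3}$ can then be solved explicitly. Combining $\sum_{i=1}^k d_i=10^{-2}\log T\cdot(1-2^{-k})$ with the fact $k=\Theta(\log\log T)$ (so $2^{-k}\log T=O(1)$ and $C^{100k}\leq(\log T)^{D_0'}$) gives
\[
A_k=C^{100k}T^{-(1-2^{-k})/3}\leq (\log T)^{D_0'}T^{-1/3}.
\]
A similar telescoping, using $\mu_i/\alpha_{d_i}^{100}=100cEe^{d_i}\vol(Y)$, produces the bound $B_k\leq A_k\cdot B'\vol(Y)\sum_{i=1}^k e^{2d_i}$, matching the form claimed in the lemma.

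The main obstacle lies in the additive constant $C_k$: naively $\nu_1\asymp e^{2d_1}\vol(Y)\asymp T^{1/100}\vol(Y)$ is carried forward through the recursion and never contracted by subsequent blocks. The cleanest way to resolve this is to pass directly to the enhanced Margulis function
\[
F_Y(x):=f_Y(x)+M\vol(Y)\inj(x)^{-1/3}\sum_{j=1}^k e^{2d_j},
\]
choosing $M$ large enough (depending on the biggest $d_j$) so that the one-step bound from Lemma~\ref{lem: Margulis func periodic 1} rearranges into a clean \emph{uniform} multiplicative recursion of the shape $M_{d_i}F_Y\leq 2\alpha_{d_i}F_Y+B'\vol(Y)$. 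Iterating this uniform estimate first over the one hundred applications of each block and then over the $k$ scales collapses all error into a single absolute multiple of $\vol(Y)$, yielding the lemma with both $B'$ and $D_0'$ absolute.
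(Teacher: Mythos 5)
Your inner-loop bound and the recursions for $A_k$ and $B_k$ are fine and parallel what the paper does, and you have correctly located the crux of the matter in the additive constants. However, your proposed resolution does not work. The claimed uniform recursion $M_{d_i}F_Y\leq 2\alpha_{d_i}F_Y+B'\vol(Y)$ with an \emph{absolute} $B'$ is false for $F_Y=f_Y+M\vol(Y)\inj(\cdot)^{-1/3}\sum_j e^{2d_j}$: one application of Lemma~\ref{lem: Margulis func periodic 1} at scale $d_i$ produces the pure constant $cE\bar Be^{2d_i}\vol(Y)$, and pushing the $\inj^{-1/3}$ part of $F_Y$ through $M_{d_i}$ via Proposition~\ref{prop: average of inj} produces a further constant of order $M\vol(Y)\bigl(\sum_j e^{2d_j}\bigr)e^{2d_i/3}$; for $i=1$ both are at least of size $T^{1/100}\vol(Y)$. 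Since the averaging operators fix constants, no choice of $M$ (nor adding a constant to $F_Y$) can absorb these into the multiplicatively contracted part: a term proportional to $F_Y$ only gets re-multiplied by $2\alpha_{d_i}<1$, but the \emph{new} constant created at the first block processed has no accumulated contraction in front of it. So with your ordering the recursion still outputs $C_k\geq\nu_1\asymp T^{1/100}\vol(Y)$, which is not of the form $B'\vol(Y)$; the enhanced Margulis function does not repair the very obstacle you identified.

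The missing idea is that the large-scale constants must be killed by the contraction factors of the \emph{smaller} scales, and this is an ordering phenomenon. In the paper (and in the only order in which the lemma is used, cf.\ the product $a_{d_k}u_{r_{k,100}}\cdots a_{d_1}u_{r_{1,1}}x$ in Lemma~\ref{lem: average over [0,1]}), the elements drawn from $\sigma_{d_1}$ act \emph{first} on the point, i.e.\ one bounds $\int f_Y(h_k\cdots h_1x)\,\diff\sigma_{d_1}^{(100)}(h_1)\cdots\diff\sigma_{d_k}^{(100)}(h_k)$, and the estimate is peeled starting from the outermost, smallest scale $d_k$. Then every constant generated while handling scale $d_i$ arrives multiplied by $\prod_{j>i}(Ce^{-d_j/3})^{100}$, and the two elementary inequalities~\eqref{eq: Di and di} and~\eqref{eq: C ell di and stuff} (namely $5\sum_{j>i}d_j\geq d_i$ and $\sum_j C^{100(k-j)}e^{-d_j}\leq M$) give $\sum_i C^{100(k-i)}e^{-100\Theta_i/3}e^{3d_i}=O(1)$, so the total additive error is an absolute multiple of $\vol(Y)$, while the $\inj(x)^{-1/3}$ contributions assemble into $C^{100k}e^{-100(\sum d_i)/3}\inj(x)^{-1/3}\sum_i e^{2d_i}$ exactly as in your $B_k$. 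Note that even in this order there is no uniform recursion with absolute additive constant; the constant at scale $d_i$ really is $\asymp e^{2d_i}\vol(Y)$, and what saves the bound is the weight $\prod_{j>i}\alpha_{d_j}^{100}$ attached to it. In your composition $M_{\sigma_{d_k}^{(100)}}\circ\cdots\circ M_{\sigma_{d_1}^{(100)}}$ the scale-$d_1$ block is processed first and its constant carries no such prefactor; reversing the roles of the blocks (the paper's convolution is to be read with the $\sigma_{d_1}$-factors adjacent to $x$) and keeping your recursions otherwise is what closes the gap.
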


\begin{proof}
Again since $Y$ is fixed throughout the argument, 
we drop it from the index in the notation, e.g., we will denote $f_{Y}$ by $f$ etc. 

Let us make the following two observations:
\be\label{eq: Di and di}
5\sum_{j=i+1}^{k} d_{j}\geq 0.05\times 2^{-i-1}\log T\geq 0.01\times 2^{-i}\log T=d_{i}
\ee
There is an absolute constant $M\geq 1$ so that the following holds    
\be\label{eq: C ell di and stuff}
\sum_{j=1}^{i} C^{100 (i-j)}e^{-d_j}\leq \sum_{j=1}^kC^{100 (k-j)} e^{-d_j}\leq M
\ee
for all $1\leq i\leq k$. 

By Lemma~\ref{lem: Margulis func periodic}, for all $d\geq 3\log(4C)$, we have 
\begin{multline}\label{eq: main MF periodic app}
\int f(hx)\diff\!\sigma_d(h)\leq \\
Ce^{-d/3}f(x)+cEe^d\vol(Y)\cdot (Ce^{-d/3}\inj(x)^{-1/3}+\bar Be^{d}).
\end{multline}

Let $\lambda=cE\bar B$ and $\ell=100$. Iterating~\eqref{eq: main MF periodic app}, $\ell$-times, we conclude that
\begin{multline*}
\int f(h_k\cdots h_1x)\diff\!\sigma_{d_1}^{(\ell)}(h_1)\cdots\diff\!\sigma_{d_k}^{(\ell)}(h_k)\leq \\
C^\ell e^{-\ell d_k/3}\int\! f(h_{k-1}\cdots h_1x)\diff\!\sigma_{d_1}^{(\ell)}(h_1)\cdots\diff\!\sigma_{d_{k-1}}^{(\ell)}(h_{k-1})\quad +\\
cEe^{d_k}\vol(Y)(\Xi_k+2\bar Be^{d_k})
\end{multline*}
we used $Ce^{-d_k/3}\leq 1/4$ to bound the $\ell$-terms geometric sum by $2\bar Be^{d_k}$, and 
\[
\Xi_k=\sum_{j=0}^{\ell-1} (Ce^{-d_k/3})^{\ell-j}\!\!\int\inj(h_kh_{k-1}\cdot\cdot h_1x)^{-\frac{1}3}\diff\!\sigma_{d_1}^{(\ell)}(h_1)\cdot\cdot\diff\!\sigma_{d_{k-1}}^{(\ell)}(h_{k-1})\diff\!\sigma_{d_k}^{(j)}(h_k).
\]
Note that $cEe^{d_k}\vol(Y)(\Xi_k+2\bar Be^{d_k})\leq \lambda \vol(Y) e^{2d_k}(\Xi_k+2)$, therefore,  
\begin{multline}\label{eq: main estimate MF periodic diff di}
\int f(h_k\cdots h_1x)\diff\!\sigma_{d_1}^{(\ell)}(h_1)\cdots\diff\!\sigma_{d_k}^{(\ell)}(h_k)\leq \\
C^\ell e^{-\ell d_k/3}\int\! f(h_{k-1}\cdots h_1x)\diff\!\sigma_{d_1}^{(\ell)}(h_1)\cdots\diff\!\sigma_{d_{k-1}}^{(\ell)}(h_{k-1})\quad +\\
\lambda \vol(Y) e^{2d_k}(\Xi_k+2).
\end{multline}

We will apply Proposition~\ref{prop: average of inj}, to bound $\Xi_k$ from above.  
Let us begin by applying Proposition~\ref{prop: average of inj}, $j$-times with $d_k$, then 
\[
\Xi_k\leq C^\ell e^{-\ell d_k/3}\int\inj(h_{k-1}\cdot\cdot h_1x)^{-1/3}\diff\!\sigma_{d_1}^{(\ell)}(h_1)\cdot\cdot\diff\!\sigma_{d_{k-1}}^{(\ell)}(h_{k-1})+\lambda e^{d_k}
\]
where we used $Ce^{-d_k/3}\leq 1/4$  and $\lambda=cE\bar B \geq 2\bar B$ to estimate the $\ell$-terms geometric sum. 

The goal now is to inductively apply Proposition~\ref{prop: average of inj}, $\ell$ times with $d_i$ for all $1\leq i\leq k-1$, in order to simplify the above estimate. 
Applying Proposition~\ref{prop: average of inj}, $\ell$-times with $d_{k-1}$, we obtain from the above that
\begin{multline*}
\Xi_k\leq C^{2\ell}e^{-\ell(d_k+d_{k-1})/3}\int\inj(h_{k-2}\cdot\cdot h_1x)^{-1/3}\diff\!\sigma_{d_1}^{(\ell)}(h_1)\cdot\cdot\diff\!\sigma_{d_{k-2}}^{(\ell)}(h_{k-2})\quad +\\ 
C^\ell e^{-\ell d_k/3}\cdot (\lambda e^{d_{k-1}})+\lambda e^{d_k}.
\end{multline*}
Put $\Theta_k=0$, and for every $1\leq i<k$, let $\Theta_{i}=\sum_{j=i+1}^{k}d_{j}$. Continuing the above inequalities inductively, we conclude 
\begin{align*}
\Xi_k&\leq C^{\ell k}e^{-\ell(\sum_{i=1}^k d_i)/3}\inj(x)^{-1/3}+\lambda(e^{d_k}+\sum_{i=1}^{k-1} C^{\ell (k-i)}e^{-\ell \Theta_{i}/3}e^{d_{i}})\\
&\leq C^{\ell k}e^{-\ell(\sum_{i=1}^k d_i)/3}\inj(x)^{-1/3}+\lambda(e^{d_k}+\sum_{i=1}^{k-1} C^{\ell (k-i)}e^{-d_i})\\
&\leq C^{\ell k}e^{-\ell(\sum_{i=1}^k d_i)/3}\inj(x)^{-1/3}+\lambda(e^{d_k}+M)
\end{align*}
where we used $\ell \Theta_i/3 =100 \Theta_i/3\geq 100d_i/15$, see~\eqref{eq: Di and di}, in the second to last inequality and~\eqref{eq: C ell di and stuff} in the last inequality.  

Iterating~\eqref{eq: main estimate MF periodic diff di} and the above analysis, we conclude 
\begin{multline*}
\int f(h_k\cdots h_1x)\diff\!\sigma_{d_1}^{(\ell)}(h_1)\cdots\diff\!\sigma_{d_k}^{(\ell)}(h_k)\leq\\
C^{\ell k}e^{-\ell(\sum_{i=1}^k d_i)/3} f(x)+\lambda\vol(Y)\sum_{i=1}^k C^{\ell(k-i)}e^{-\ell \Theta_i/3}e^{2d_i}\Bigl(\Xi_i+ 2\Bigr)
\end{multline*}
where for every $1\leq i\leq k$, we have 
\[
\Xi_i=\sum_{j=0}^{\ell-1} (Ce^{-d_i/3})^{\ell-j}\!\!\int\inj(h_ih_{i-1}\cdot\cdot h_1x)^{-\frac{1}3}\diff\!\sigma_{d_1}^{(\ell)}(h_1)\cdot\cdot\diff\!\sigma_{d_{i-1}}^{(\ell)}(h_{i-1})\diff\!\sigma_{d_i}^{(j)}(h_i).
\]
Arguing as above, we have 
\[
\Xi_i\leq C^{\ell i}e^{-\ell(\sum_{j=1}^i d_j)/3}\inj(x)^{-1/3}+\lambda(e^{d_i}+M).
\] 
Recall that $\Theta_i=\sum_{j=i+1}^k d_j$; therefore, we conclude that 
\begin{multline*}
\int f(h_k\cdots h_1x)\diff\!\sigma_{d_1}^{(\ell)}(h_1)\cdots\diff\!\sigma_{d_k}^{(\ell)}(h_k)\leq\\
C^{\ell k}e^{-\ell(\sum_{i=1}^k d_i)/3} \biggl(f(x)+\lambda\vol(Y)\inj(x)^{-1/3}\textstyle\sum_{i=1}^k e^{2d_i}\biggr) + \\
(M+2)\lambda^2\vol(Y)\sum_{i=1}^k C^{\ell(k-i)}e^{-\ell \Theta_i/3}e^{3d_i}
\end{multline*}
In view of~\eqref{eq: Di and di},
$\ell \Theta_i/3 =100 \Theta_i/3\geq 100d_i/15$. Hence, using~\eqref{eq: C ell di and stuff}, 
the last term above is $\leq B'\vol(Y)$ for an absolute constant $B'\geq \lambda$. 

Moreover, $\ell\sum d_i=100\sum d_i=\log T - O(1)$ where the implied constant is absolute, and $k\leq 2\log\log T$. Hence,
\[
C^{\ell k}e^{-\ell(\sum_{i=1}^k d_i)/3}\leq (\log T)^{1+200\log C}T^{-1/3}
\]  
so long as $T$ is large enough. The proof of the lemma is complete.  
\end{proof}

\begin{lemma}\label{lem: average over [0,1]}
Let the notation be as in Lemma~\ref{lem: Margulis func periodic}, in particular for every $T\geq T_0$ define 
$d_1,\ldots, d_k$ as in that lemma. Put $d(T)=100\sum d_i$, then 
\begin{multline*}
\int_0^1 f_{Y}(a_{d(T)}u_rx)\diff\!r\leq \\
3(\log T)^{D'_0} T^{-1/3}\biggl(f_{Y}(x)+B\vol(Y)\inj(x)^{-1/3}\sum e^{2d_i}\biggr)+B\vol(Y)
\end{multline*}
where $B\geq 1$ is absolute.
\end{lemma}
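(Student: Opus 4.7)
The plan is to express $\int_0^1 f_Y(a_{d(T)} u_r x)\,dr$ as a constant multiple of an integral against the push-forward to the $U$-factor of the random-walk measure
\[
\mu \;:=\; \sigma_{d_1}^{(100)}\conv\cdots\conv\sigma_{d_k}^{(100)},
\]
and then invoke Lemma~\ref{lem: Margulis func periodic}.

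First I would unfold a typical element $h \in \supp(\mu)$ as a product of $N:=100k$ elements of $\{a_d u_r : d>0,\, r\in[-1,2]\}$. Using the commutation relation $a_d u_r = u_{e^d r}\,a_d$ repeatedly to collect all $a$-factors on the left, one obtains
\[
h \;=\; a_{d(T)}\, u_{s}, \qquad s \;=\; \sum_{i=1}^{N} c_i\,r_i,
\]
where the $r_i\in[-1,2]$ are the $U$-parameters sampled independently and uniformly, and the weights $c_i>0$ take the form $c_i=e^{-\sum_{j<i}\tilde d_j}$ for an appropriate listing $(\tilde d_1,\ldots,\tilde d_N)$ of the $d_j$'s (each repeated $100$ times). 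The crucial feature is that exactly one index---the innermost $r$ in the product, say $r_1$---appears with coefficient $c_1=1$, and every other weight satisfies $c_i\leq e^{-(i-1)d_k}$ since each $\tilde d_j\geq d_k$.

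Next I would compute the density $\rho$ of the push-forward of $\mu$ under the map $a_{d(T)} u_s\mapsto s$. Parameterizing the fiber above $s$ by $(r_2,\ldots,r_N)$ and eliminating $r_1 = s - \sum_{i\geq 2} c_i r_i$ gives
\[
\rho(s) \;=\; \frac{1}{3^{N}}\int_{[-1,2]^{N-1}}\! \mathbf{1}_{[-1,2]}\!\Big(s-\textstyle\sum_{i\geq 2} c_i r_i\Big)\, dr_2\cdots dr_N.
\]
The tail satisfies $\bigl|\sum_{i\geq 2} c_i r_i\bigr|\leq 2/(e^{d_k}-1)<1/20$ by the standing hypothesis $d_k\geq 3\log(4C)\geq\log 64$ of Lemma~\ref{lem: Margulis func periodic}. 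Consequently, for every $s\in[0,1]$ the argument of the outer indicator lies in $[-1/20,\,1+1/20]\subset[-1,2]$, so the indicator equals $1$ identically and the integral evaluates to $3^{N-1}$. Hence $\rho(s)=1/3$ throughout $[0,1]$.

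The proof then concludes in one line: since $\rho\geq\tfrac13\,d\Leb$ on $[0,1]$,
\[
\int_0^1 f_Y(a_{d(T)} u_r x)\,dr \;=\; 3\!\int_{[0,1]}\! f_Y(a_{d(T)} u_s x)\,d\rho(s) \;\leq\; 3\!\int\! f_Y(hx)\,d\mu(h),
\]
and Lemma~\ref{lem: Margulis func periodic} supplies the bound with $B:=3B'$. The only substantive point is the tail estimate placing $[0,1]$ strictly inside the ``flat'' central region of $\rho$; this is immediate from the lower bound on $d_k$ already assumed in Lemma~\ref{lem: Margulis func periodic 1}, so no genuine analytic obstacle arises.
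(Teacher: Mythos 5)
Your proof is correct and takes essentially the same route as the paper: the same collapse of the product to $a_{d(T)}u_s$, with the innermost $u$-parameter carrying weight one and the remaining weights summing to at most $2/(e^{d_k}-1)$, followed by the application of Lemma~\ref{lem: Margulis func periodic} and the choice $B=3B'$. The only differences are cosmetic---you compute the marginal density of $s$ and observe it is identically $1/3$ on $[0,1]$, whereas the paper pigeonholes a single good tuple $\hat r$ and uses $[0,1]\subset[-1,2]+\varphi(\hat r)$ together with $f_Y\geq 0$; also the commutation relation $a_du_r=u_{e^d r}a_d$ as you state it collects the $a$-factors on the right, though the weights $c_i=e^{-\sum_{j<i}\tilde d_j}$ you then use are the correct ones for collecting them on the left.
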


\begin{proof}
Again, since $Y$ is fixed throughout the argument, 
we drop it from the index in the notation, e.g., we will denote $f_{Y}$ by $f$ etc. 

By Lemma~\ref{lem: Margulis func periodic}, we have 
\begin{multline}\label{eq: use Margulis func periodic [0,1]}
\frac{1}{3^{100k}}\int_{-1}^2\cdots\int_{-1}^2 f(a_{d_k}u_{r_{k,100}}\cdots a_{d_k}u_{r_{k,1}}\cdots a_{d_1}u_{r_{1,1}}x)\diff\!r_{1,1}\cdots\diff\!r_{k,100} \leq\\
(\log T)^{D'_0} T^{-1/3}\biggl(f_{Y}(x)+B'\vol(Y)\inj(x)^{-1/3}\sum e^{2d_i}\biggr)+B'\vol(Y).
\end{multline}

Now, for every $(r_{k,100},\ldots, r_{1,2}, r_{1,1})\in [-1,2]^{100k}$, we have 
\[
a_{d_k}u_{r_{k,100}}\cdots a_{d_k}u_{r_{k,1}}\cdots a_{d_1}u_{r_{1,1}}=a_{d(T)}u_{\varphi(\hat r)+r_{1,1}}
\] 
where $\hat r=(r_{k,100},\ldots, r_{1,2})$ and $|\varphi(\hat r)|\leq 0.2$. 

In view of~\eqref{eq: use Margulis func periodic [0,1]}, there is $\hat r= (r_{k,100},\ldots, r_{1,2})\in [-1,2]^{100k-1}$ so that 
\begin{multline}\label{eq: hat r and r1}
\frac{1}{3}\int_{-1+\varphi(\hat r)}^{2+\varphi(\hat r)} f(a_{d(T)}u_{r}x)\diff\!r\leq \\
(\log T)^{D'_0} T^{-1/3}\biggl(f_{Y}(x)+B'\vol(Y)\inj(x)^{-1/3}\sum e^{2d_i}\biggr)+B'\vol(Y).
\end{multline}
Since $|\varphi(\hat r)|\leq 0.2$, we have $[0,1]\subset [-1,2]+\varphi(\hat r)$. 
Therefore,~\eqref{eq: hat r and r1} and the fact that $f\geq 0$ imply that
\begin{multline*}
\frac{1}{3}\int_{0}^1 f(a_{d(T)}u_{r}x)\diff\!x\leq \\
(\log T)^{D'_0} T^{-1/3}\biggl(f_{Y}(x)+B'\vol(Y)\inj(x)^{-1/3}\sum e^{2d_i}\biggr)+B'\vol(Y).
\end{multline*}
The lemma follows with $B=3B'$.
\end{proof}

\begin{proof}[Proof of Proposition~\ref{prop: linearization translates}]
Let $R\geq 1$ be a parameter and assume that $\vol(Y)\leq R$.
Recall that for a periodic orbit $Y$, we put
\[
f_{Y}(x)=\begin{cases}\sum_{w\in I_Y(x)}\|w\|^{-1/3}& I_Y(x)\neq \emptyset\\
\inj(x)^{-1/3}&\text{otherwise}\end{cases}.
\]
Let $\psi(x_0)=\max\{\dist(x_0,Y)^{-1/3},\inj(x_0)^{-1/3}\}$. Then 
\be\label{eq: fYd and dist}
\psi(x_0)\ll f_{Y,d}(x_0)\ll \vol(Y)\psi(x_0),
\ee
where the implied constant depends only on $X$, see~\eqref{eq: number of sheets}. 

With the notation of Lemma~\ref{lem: Margulis func periodic}: let $T\geq T_0$ 
and $d_i=0.01\times 2^{-i}\log T$ for $1\leq i\leq k$. Then
\be\label{eq: d(T) is almost T}
\log T-\bar b\leq d(T)\leq \log T
\ee
where $\bar b$ is absolute. 

There exists $T_1\geq T_0$ so that for all $T\geq T_1$ we have 
\[
(\log T)^{D'_0} T^{-1/3}\sum e^{2d_i}\leq T^{-1/4}.
\]
Let $T'_1=\max\{T_1, 3D'_0\}$, then $(\log T)^{D'_0}T^{-1/3}$ is decreasing on $[T'_1,\infty)$. Let
\be\label{eq: choose T_2 periodic}
T_2=\inf\{ T\geq \max\{T_1', \inj(x_0)^{-2}\}: (\log T)^{D'_0}T^{-1/3}\leq d(x_0, Y)^{1/3}\}.
\ee 
In view of~\eqref{eq: fYd and dist} and since $\vol(Y)\leq R$, thus for all $T\geq T_2$, we have 
\[
(\log T)^{D'_0}T^{-1/3} f_Y(x_0)\ll R(\log T)^{D'_0}T^{-1/3}\psi(x_0)\\
\]
By the definition of $T_2$, we have $(\log T)^{D'_0}T^{-1/3} d(x_0, Y)^{-1/3}\leq 1$, and 
\[
(\log T)^{D'_0} T^{-1/3}\inj(x_0)^{-1/3}\sum e^{2d_i}\leq T^{-1/4}\inj(x_0)^{-1/3}\leq 1.
\]
In particular, using~\eqref{eq: fYd and dist} again, we have $(\log T)^{D'_0}T^{-1/3} f_Y(x_0)\ll R$. 

Altogether, we conclude that for all $T\geq T_2$, we have 
\be\label{eq: length of the flow in linearization}
\log(T)^{D'_0}T^{-1/3}\Bigl(f_Y(x_0)+B\vol(Y)\inj(x_0)^{-1/3}\sum e^{2d_i}\Bigr)\leq B_2'R
\ee
where $B_2'$ is absolute. 

Let $T\geq T_2$, and let $d(T)=100\sum d_i$ where $d_i$'s are as above.
Using~\eqref{eq: length of the flow in linearization} and Lemma~\ref{lem: average over [0,1]}, 
\be\label{lem: use average over [0,1] lemma}
\int_0^1 f_{Y}(a_{d(T)}u_rx)\diff\!r\leq B_2 R
\ee
where $B_2=3B_2'+B$. 

Let $D\geq 10$. Then by~\eqref{lem: use average over [0,1] lemma} we have 
\[
|\{r\in[0,1]: f_{Y}(a_{d(T)}u_rx_0)>B_2R^{D}\}|\leq B_2R/B_2R^{D}\leq R^{-D+1}.
\]
In view of~\eqref{eq: fYd and dist}, there is an absolute constant $B_1$ so that 
$\dist_X(a_{s}u_rx_0, Y)\leq B_1^{-1}R^{-3D}$ implies $f_{Y}(a_{s}u_rx_0)>B_2R^{D}$ for all $s\geq 0$ and $r\in[0,1]$.
Therefore, we conclude from the above that 
\be\label{eq: control dist x0 and Y app B}
\Bigl|\Bigl\{r\in [0,1]: \dist_X(a_{d(T)}u_rx_0, Y)\leq B_1^{-1}R^{-3D}\Bigr\}\Bigr|\leq R^{-D+1}.
\ee

Let now $s\geq \log T_2$, then by~\eqref{eq: d(T) is almost T} there exists some $T\geq T_2$ so that 
\[
d(T)-2\bar b\leq s\leq d(T)+2\bar b
\] 
For every $s\geq \log T_2$ let $T_s$ denote the minimum such $T$. 
Then~\eqref{eq: cont Ad app B} implies that is $\hat B\geq 1$ (absolute)
so that if $s\geq \log T_2$ and $r\in[0,1]$ are so that
\[
\dist_X(a_{s}u_rx_0, Y)\leq \hat B^{-1}R^{-3D},
\] 
then $\dist_X(a_{d(T_s)}u_rx_0, Y)\leq B_1^{-3}R^{-3D}$.
This and~\eqref{eq: control dist x0 and Y app B}, imply that
\be\label{eq: control dist x0 and Y app B s}
\Bigl|\Bigl\{r\in [0,1]: \dist_X(a_{s}u_rx_0, Y)\leq \hat B^{-1}R^{-3D}\Bigr\}\Bigr|\leq R^{-D+1}
\ee

Let $\ref{E:non-div-main}$ be as in Proposition~\ref{prop:Non-div-main}, increasing $T_1$ if necessary, we  
will assume $\log T_2\geq |\log (\inj(x_0))|+\ref{E:non-div-main}$. 
Using Proposition~\ref{prop:Non-div-main}, thus, we conclude that  
\be\label{eq: non div app B}
\Bigl|\Bigl\{r\in [0,1]:\inj(a_s\uvk x)< \eta\Bigr\}\Bigr|<\ref{E:non-div-main}\eta^{1/2}
\ee
for any $\eta>0$ and all $s\geq \log T_2$.

Altogether, from~\eqref{eq: control dist x0 and Y app B s} and~\eqref{eq: non div app B} it follows that 
for any $s\geq \log T_2$, we have 
\be\label{eq: fYd and inj app B}
\biggl|\biggl\{r\in[0,1]: \begin{array}{c}\inj(a_s\uvk x)< \eta\quad\text{ or }\\
\dist_X(a_{s}u_rx_0, Y)\leq \hat B^{-1}R^{-3D}\end{array}\biggr\}\biggr|\leq \ref{E:non-div-main}\eta^{1/2}+R^{-D+1}.
\ee

In view of~\cite[Cor.~10.7]{MO-MargFun}, the number of periodic $H$-orbits with volume $\leq R$ 
in $X$ is $\leq \hat E R^6$ where $\hat E$ depends on $X$. 
Let $D=8$ and $\ref{c: linear trans}=\max\{\hat E, \hat B, \ref{E:non-div-main}\}$. Then~\eqref{eq: fYd and inj app B} implies 
\begin{multline}\label{eq: fYd and inj app B final}
\biggl|\biggl\{r\in[0,1]: \!\!\begin{array}{c}\inj(a_s\uvk x)< \eta\text{ or there exists $x$ with}\\
\vol(Hx)\leq R\text{ s.t. }\dist_X(a_{s}u_rx_0, x)\leq \frac{1}{\ref{c: linear trans}R^{24}}\end{array}\!\!\biggr\}\biggr|\\ 
\leq \ref{c: linear trans}(\eta^{1/2}+R^{-1}).
\end{multline} 

We now show that ~\eqref{eq: fYd and inj app B final} implies the proposition. Suppose 
\[
\dist_X(x_0,x)\geq S^{-1}(\log S)^{3D'_0}
\]
for every $x$ with $\vol(Hx)\leq R$. Then by~\eqref{eq: choose T_2 periodic}, we have 
\[
T_2\leq \max\{S, \inj(x_0)^{-2}, T_1'\}.
\]
Therefore, the proposition follows from~\eqref{eq: fYd and inj app B final} if we let $D_0=\max\{24,3D'_0\}$ and put $s_0=\log T_1'$. 
\end{proof}

\section{Proof of Proposition~\ref{prop:closing lemma intro}}\label{sec: proof closing}

In this section, we will give a detailed proof of Proposition~\ref{prop:closing lemma intro}. 
As it was mentioned, the proof is a slight modification of~\cite[Prop. 6.1]{LM-PolyDensity}. 

\begin{proof}[Proof of Proposition~\ref{prop:closing lemma intro}]
In what follows all the implied multiplicative constants depend only on~$X$.

We begin by recalling Proposition~\ref{prop: Non-div main}: for all positive $\vare$, every interval $J\subset[0,1]$, 
and every $x\in X$, we have 
\be\label{eq:cpct-return}
\Bigl|\Bigl\{r\in J:\inj(a_du_r x)< \vare^2\Bigr\}\Bigr|<\ref{E:non-div-main}\vare|J|, 
\ee
so long as $d\geq |\log(|J|^2\inj(x))|+\ref{E:non-div-main}$.  

We also recall Lemma~\ref{lem: reduction theory}: 
Let $0<\eta\leq \eta_X$ and let $g\in G$ be so that $g\Gamma\in X_\eta$. 
Then there exists some $\gamma\in\Gamma$ so that 
\be\label{eq: red theory pf closing}
\|g\gamma\|\leq \ref{c: red th}\eta^{-D_2}.
\ee

For the rest of the argument, let  
\be\label{eq: t D2 and eta clsoing}
\rws\geq 100 D_2|\log(\eta\,\inj(x_1))|+\ref{E:non-div-main}
\ee
Let $r_1\in[0,1]$ be so that $x_2=a_tu_{r_1}x_1\in X_\eta$. 
Write $x_2=g_2\Gamma$ where $|g_2|\ll \eta^{-D_2}$, see~\eqref{eq: red theory pf closing}.  

We will show that unless part~(2) in the proposition holds, we have the following: for every $x_2$, there exists $J(x_2)\subset [0,1]$ with $|[0,1]\setminus J(x_2)|\leq 200\ref{E:non-div-main}\eta^{1/2}$ so that for all $r\in J(x_2)$, we have: 
\begin{itemize}
\item[(a)] $a_{7t}u_rx_2\in X_\eta$,  
\item[(b)] the map $\sfh\mapsto \sfh a_{7t}u_rx_2$ is injective on $\coneH_{\rws}$, and 
\item[(c)] for all $z\in \coneH_\rws.a_{7t}u_rx_2$ we have $f_{\rws,\alpha}(z)\leq\nuni^{D\rws}$.
\end{itemize}
This will imply that part~(1) in the proposition holds as 
\[
a_{7t}u_ra_tu_{r'}x_1=a_{8t}u_{r'+e^{-t}r}x_1.
\]

Assume contrary to the above claim that for some $x_2$ as above, there exists a subset $I'_{\rm bad}\subset [0,1]$ with  
$|I'_{\rm bad}|>200\ref{E:non-div-main}\eta^{1/2}$ so that one of (a), (b), or (c) above fails. 
Then in view of~\eqref{eq:cpct-return} applied with $x_2$ and $7t$, there is a subset 
$I_{\rm bad}\subset [0,1]$ with $|I_{\rm bad}|\geq 100\ref{E:non-div-main}\eta^{1/2}$ 
so that for all $r\in I_{\rm bad}$ we have $a_{7t}u_rx_2\in X_{\eta}$, but  
\begin{itemize}
\item either the map $\sfh\mapsto \sfh a_{7t}u_rx_2$ is not injective on $\coneH_{\rws}$,
\item or there exists $z\in \coneH_\rws.a_{7t}u_rx_2$ so that $f_{\rws,\alpha}(z)>\nuni^{D\rws}$.
\end{itemize}
We will show that this implies part~(2) in the proposition holds. 

\subsection*{Finding lattice elements $\gamma_r$}
We introduce the shorthand notation $h_r:=a_{7t}u_r$, for any $r\in[0,1]$. 
Let us first investigate the latter situation. That is: for $r\in I_{\rm bad}$ (recall that $h_rx_2\in X_{\eta}$) 
there exists some $z=\sfh_1h_rx_2\in \coneH_\rws.h_rx_2$, so that $f_{\rws,\alpha}(z)>\nuni^{D\rws}$. 
Since $h_rx_2\in X_{\eta}$, we have  
\be\label{eq:Ct-cusp}
\inj(\sfh h_rx_2)\gg \eta\nuni^{-\rws}, \quad\text{for all $\sfh\in\coneH_\rws$}.
\ee
Using the definition of $f_{\rws,\alpha}$, thus, we conclude that 
if $\margI_\rws(z)=\{0\}$, then $f_{t,\alpha}(z)\ll \eta^{-1}\nuni^\rws$. Since $t\geq 100D_2|\log\eta|$, assuming $t$ 
is large enough, we conclude that $\margI_\rws(z)\neq\{0\}$. 
Recall also that by virtue of Lemma~\ref{eq: numb Fj 1} 
we have $\#\margI_\rws(z)\ll \eta^{-4}\nuni^{4\rws}$, see also~\cite[Lemma 6.4]{LM-PolyDensity}.

Altogether, if $D\geq6$ and $t$ is large enough, there exists some $w\in I_\rws(z)$ with 
\[
0<\|w\|\leq \nuni^{(-D+5)\rws}.
\]

The above implies that for some $w\in \rfrak$ with $\|w\|\leq \nuni^{(-D+5)\rws}$ 
and $\sfh_1\neq \sfh_2\in\coneH_\rws$, we have $\exp(w)\sfh_1h_rx_2=\sfh_2h_rx_2$.
Thus 
\be\label{eq:wh-sh}
\exp(w_r)h_r^{-1}\sfs_rh_rx_2=x_2
\ee 
where $\sfs_r=\sfh_2^{-1}\sfh_1$, $w_r=\Ad(h_r^{-1}\sfh_2^{-1})w$. 
In particular, $\|w_r\|\ll \nuni^{(-D+13)\rws}$.
Assuming $t$ is large enough compared to the implied multiplicative constant,
\be\label{eq:wh-est}
0<\|w_r\|\leq \nuni^{(-D+14)\rws}.
\ee
Recall that $x_2=g_2\Gamma$ where $|g_2|\ll \eta^{-D_2}$, thus,~\eqref{eq:wh-sh} implies 
\be\label{eq:gamma-h}
\exp(w_r)h_r^{-1}\sfs_rh_r=g_2\gamma_rg_2^{-1}
\ee
where $1\neq\sfs_r\in H$ with $\|\sfs_r\|\ll \nuni^{\rws}$ and $e\neq\gamma_r\in \Gamma$.

Similarly, if for some $r\in I_{\rm bad}$, $\sfh\mapsto \sfh h_rx_2$ is not injective, then
\[
h_r^{-1}\sfs_r h_r=g_2\gamma_rg_2^{-1}\neq e.
\]
In this case we actually have $e\neq \gamma_r\in g_2^{-1}Hg_2$ --- we will not use this extra information in what follows.  

\subsection*{Some properties of the elements $\gamma_r$}
Recall that $\|g_2\|\ll \eta^{-D_2}$ and that $t\geq 100D_2|\log\eta|$. Therefore, 
\be\label{eq:size-gammah}
\|\gamma_r^{\pm1}\|\leq \nuni^{9\rws}
\ee
again we assumed $\rws$ is large compared to $\|g_2\|$ hence the estimate $\ll \nuni^{8.5\rws}$ 
is replaced by $\leq \nuni^{9t}$.

Let $\xi>0$ be so that $\|g\gamma g^{-1}-I\|\geq 20\xi\eta^{2D_2}$ for all $\gamma\in\Gamma\setminus \{1\}$ 
and $\|g\|\leq \ref{c: red th}\eta^{-D_2}$, see~\eqref{eq: red theory pf closing}.   
Write $\sfs_r=\begin{pmatrix} a_1 & a_2 \\ a_3 & a_4 \end{pmatrix}\in H$ where $|a_i|\leq 10\nuni^{t}$.
Then by~\eqref{eq:gamma-h}, we have 
\[
\|h_r^{-1}\sfs_r h_r-I\|=\biggl\|u_{-r}\begin{pmatrix} a_1 & e^{-7t}a_2 \\ e^{7t}a_3 & a_4 \end{pmatrix}u_r-I\biggr\|\geq 10\xi\eta^{2D_2}
\]
which implies that 
\be\label{eq: closing not unipotent}
\max\{e^{7t}|a_3|, |a_1-1|, |a_4-1|\}\geq \xi\eta^{2D_2}.
\ee
Note also that if $e^{7t}|a_3|<\xi\eta^{2D_2}$, then $|a_2a_3|\leq10\xi\eta^{2D_2} e^{-6t}$, thus $|a_1a_4-1|\ll \eta^{\star}e^{-6t}$. We conclude from~\eqref{eq: closing not unipotent} that $|a_1-a_4|\gg \eta^{2D_2}$. Altogether, 
\be\label{eq: a1-a4}
\max\{e^{7t}|a_3|, |a_1-a_4|\}\gg \eta^{2D_2}.
\ee
 
 \medskip
 
Since $|I_{\rm bad}|\geq 100\ref{E:non-div-main}\eta^{1/2}$, there are two intervals 
$J,J'\subset [0,1]$ with $\dist(J,J')\geq \eta^{1/2}$, $|J|, |J'|\geq \eta^{1/2}$, and
\be\label{eq: Ji cap I bad has many points}
|J\cap I_{\rm bad}|\geq \eta\quad\text{and}\quad |J'\cap I_{\rm bad}|\geq \eta.
\ee 
Put $J_\eta=J\cap I_{\rm bad}$.

\subsection*{Claim:} There are $\gg e^{29t/10}$ distinct elements in $\{\gamma_r: r\in J_{\eta}\}$.

Fix $r\in J_{\eta}$ as above, and consider the set of $r' \in J_{\eta}$ so that and $\gamma_r=\gamma_{r'}$.
Then for each such $r'$, 
\begin{align*}
h_r^{-1}\sfs_r h_r&=\exp(-w_r)g_2\gamma_rg_2^{-1}=\exp(-w_r)\exp(w_{r'})h_{r'}^{-1}\sfs_{r'} h_{r'}\\
&=\exp(w_{rr'})h_{r'}^{-1}\sfs_{r'} h_{r'}
\end{align*}
where $w_{rr'}\in \gfrak$ and $\|w_{rr'}\|\ll \nuni^{(-D+14)\rws}$. 

Set $\tau=e^{7t}(r'-r)$. Assuming $D\geq32$, we conclude that
\begin{equation}\label{eq: u_tau equation}
  u_{\tau}\sfs_r u_{-\tau}=h_{r'}h_r^{-1}\,\sfs_r\, h_rh_{r'}^{-1}=\exp(\hat w_{rr'})\sfs_{r'}
\end{equation}
where $\|\hat w_{rr'}\|=\|\Ad(h_{r'})w_{rr'}\|\ll \nuni^{(-D+21)}$.

Finally, we compute 
\[
u_{\tau}\sfs_r u_{-\tau}=\begin{pmatrix} a_1+a_3\tau& a_2+(a_4-a_1)\tau-a_3\tau^2  \\  a_3& a_4-a_3\tau\end{pmatrix}.
\]

In view of~\eqref{eq: a1-a4}, for every $r\in J_{\eta}$ the set of $r'\in J_{\eta}$ so that 
\begin{equation}\label{eq: define J_r}
  |a_2e^{-7t}+(a_4-a_1)(r'-r)-a_3e^{7t}(r'-r)^2|\leq 10^{4} e^{-6t}  
\end{equation}
has measure $\ll \eta^{-4D_2}e^{-3t}$ since at least one of the coefficients of this quadratic polynomial is of size $\gg\eta^{2D_2}$. 
Let $J_{\eta,r}$ be the set of $r'\in J_{\eta}$ for which \eqref{eq: define J_r} holds.

If $r'\in J_{\eta} \setminus J_{\eta,r}$,
then $|a_2+(a_4-a_1)\tau-a_3\tau^2|>10^{4}e^{t}$ (recall that $\tau=e^{7t}(r'-r)$), thus for all 
$r'\in J_{\eta} \setminus J_{\eta,r}$, we have 
\[
\|u_{\tau}\sfs_r u_{-\tau}\|> 10^4e^{t}> \|\exp(\hat w_{rr'})\sfs_{r'}\|,
\]
in contradiction to \eqref{eq: u_tau equation}.

In other words, for each $\gamma \in \Gamma$ the set of $r \in J_{\eta}$ for which $\gamma_r = \gamma$ has measure 
$\ll \eta^{-4D_2}e^{-3t}$ and so the set $\{\gamma_r : r \in J_{\eta}\}$ has at least 
$\gg \eta^{4D_1+1}e^{3t}\gg e^{29t/10}$ distinct elements (recall from~\eqref{eq: t D2 and eta clsoing} that $t\geq 100D_2|\log\eta|$); 
this establishes the claim.

\subsection*{Zariski closure of the group generated by $\{\gamma_r : r \in I_{\rm bad}\}$} {$\,$}
\\

\noindent
We now consider two possibilities for the elements $\{\gamma_r  : r \in I_{\rm bad}\}$. 

\subsection*{Case 1} The family $\{\gamma_r: r \in I_{\rm bad}\}$ is commutative. \\

Let ${\bf L}$ denote the Zariski closure of $\langle \gamma_{r}: r\in I_{\rm bad}\rangle$. Since $\langle \gamma_{r}\rangle$ is commutative, so is~${\bf L}$. Let $C_{\bf G}$ denote the center of $\bf G$.
We claim that ${\bf L}={\bf L}'{\bf C'}$ where ${\bf C}'\subset C_{\bf G}$ and ${\bf L}'$ is either a unipotent group or a torus. Indeed since ${\bf L}$ is commutative, we have ${\bf L}={\bf T}{\bf V}$ where $\bf T$ is a (possibly finite) algebraic subgroup of a torus, $\bf V$ is a unipotent group and ${\bf T}$ and $\bf V$ commute. Therefore, if both $\bf T$ and ${\bf V}$ are non-central, then  $G=\SL_2(\bbr)\times\SL_2(\bbr)$ and $\Gamma=\Gamma_1\times\Gamma_2$ is reducible.
Moreover, ${\bf T}\subset {\bf T}' C_{\bf G}$ where ${\bf T}'$ is an algebraic subgroup of a torus, and ${\bf T}'$ and ${\bf V}$ belong to different $\SL_2(\bbr)$ factors in $G$. Let us assume $\bf V$ belongs to the second factor. Recall from~\eqref{eq:wh-sh} that 
\be\label{eq:wh-sh'}
\exp(w_r)h_r^{-1}\sfs_rh_r=g_2\gamma_rg_2^{-1}
\ee
where $\|w_r\|\leq e^{(-D+14)t}$ with $D\geq 32$ and 
$h_r^{-1}\sfs_rh_r\in H=\{(h,h): h\in \SL_2(\bbr)\}$. 
Now if $\gamma_r=(\gamma_r^1,\gamma_r^2)$, then~\eqref{eq:wh-sh'} together with the bound $\|h_r^{-1}\sfs_rh_r\|\ll e^{8t}$ implies that  $|{\rm tr}(\gamma_r^1)-{\rm tr}(\gamma_r^2)|\ll e^{(-D+22)t}$; moreover, since $\gamma_r^2\in{\bf V} C_{\bf G}$, we have $|{\rm tr}(\gamma_r^2)|=2$. This and the fact that the length of closed geodesics in (finite volume) hyperbolic surfaces is bounded away from zero imply that $|{\rm tr}(\gamma_r^1)|=2$ if $t$ is large enough. This contradicts the fact that $\bf T$ is a non-central subgroup of a torus. Hence, the claim holds.

We now show that ${\bf L}'$ is indeed a unipotent group.
In view of the above discussion, $\#\{\gamma_r: r\in J_{\eta}\}\geq e^{29t/10}$.
Note also that that for every torus $T\subset G$, we have  
\[
\#(B_T(e,R)\cap \Gamma)\ll (\log R)^2,
\]
where the implied constant is absolute. These, in view of the bound $\|\gamma_{r}\|\leq e^{9t}$, see~\eqref{eq:size-gammah}, 
imply that ${\bf L}'$ is unipotent.

Since ${\bf L}'$ is a unipotent subgroup of $\bf G$, we have that
\[\#\{\gamma_r: \|\gamma_r\|\leq e^{4t/3}\}\ll e^{8t/3}.\] 
Furthermore, there are $\gg e^{29t/10}$ distinct elements $\gamma_r$ with $r\in J_{\eta}$.
Thus  
\[
\#\{\gamma_r : \|\gamma_r\|>100e^{4t/3} \text{ and } r\in J_{\eta}\}\gg e^{29t/10}.
\]

For every $r\in I_{\rm bad}$, write 
\[
\sfs_{r}=\begin{pmatrix} a_{1,r} & a_{2,r} \\ a_{3,r} & a_{4,r} \end{pmatrix}\in H
\] 
where $|a_{j,r}|\leq 10\nuni^{t}$. 

We will obtain an improvement of~\eqref{eq: closing not unipotent}.  
Let $\xi\eta^{2D_2}\leq \Upsilon\leq e^{4t/3}$ and assume that $\|g_2\gamma_rg_2^{-1}-I\|\geq 20 \Upsilon$ --- by definition of $\xi$, this holds with $\Upsilon=\xi\eta^{2D_2}$ for all $r\in I_{\rm bad}$ and as we have just seen this also holds for with $\Upsilon=e^{4t/3}$ for many choices of $r\in J_{\rm bad}$.
We claim 
\begin{equation}\label{eq: lower bound a(3,r)}
    |a_{3,r}|\geq \Upsilon e^{-7t}.
\end{equation}
Indeed by~\eqref{eq:gamma-h}, we have 
\[
\|h_r^{-1}\sfs_r h_r-I\|=\biggl\|u_{-r}\begin{pmatrix} a_{1,r} & e^{-7t}a_{2,r} \\ e^{7t}a_{3,r} & a_{4,r} \end{pmatrix}u_r-I\biggr\|\geq 10 \Upsilon.
\]
This implies that $\max\{e^{7t}|a_{3,r}|, |a_{1,r}-1|, |a_{3,r}-1|\}\geq \Upsilon$. Assume contrary to our claim that $|a_{3,r}|<\Upsilon e^{-7t}$. 
Then 
\begin{equation}\label{eq:max1,4}
    \max\{|a_{1,r}-1|, |a_{4,r}-1|\}\geq \Upsilon;
\end{equation}
furthermore, we get $|a_{2,r}a_{3,r}|\ll \Upsilon e^{-6t}$. 
Thus, 
\begin{equation}\label{eq:max1*4}
    |a_{1,r}a_{4,r}-1|\ll \Upsilon e^{-6t}\ll e^{-14t/3}.
\end{equation}
Moreover, since $h_r^{-1}\sfs_r h_r$ is very nearly $g_2\gamma_{r}g_2^{-1}$, and the latter is either a unipotent element or its minus, we conclude that 
\begin{equation}\label{eq:trace min}
    \min(|a_{1,r}+a_{4,r}-2|, |a_{1,r}+a_{4,r}+2|)\ll e^{(-D+22)t}.
\end{equation}
Equations~\eqref{eq:max1*4} and~\eqref{eq:trace min} contradict~\eqref{eq:max1,4} 
if $t$ is large enough (recall again from~\eqref{eq: t D2 and eta clsoing} that $t\geq 100D_2|\log\eta|$). 
Hence necessarily $|a_{3,r}|\geq \Upsilon e^{-7t}$.

Using this, we now show that Case 1 cannot occur. 
Since ${\bf L}'$ is unipotent, there exists some $g$  so that 
${\bf L}'(\bbr)\subset gNg^{-1}$; moreover $g$ can be chosen to be in the maximal compact subgroup of $G$ --- for our purposes, we only need to know that the size of $g$ can be bounded by an absolute constant. 

It follows that
\be\label{eq: us sr and N}
u_{-r}\begin{pmatrix} a_{1,r} & e^{-7t}a_{2,r} \\ e^{7t}a_{3,r} & a_{4,r} \end{pmatrix}u_r\in \exp(-w_r) (gNg^{-1})\cdot C_{\bf G}
\ee
for all $r\in I_{\rm bad}$.  
We show that this leads to a contradiction when $G=\SL_2(\bbc)$, the proof in the other case is similar by considering first and second coordinates. 

Recall the intervals $J$ and $J'$ from~\eqref{eq: Ji cap I bad has many points}, and let $r_0\in J'\cap I_{\rm bad}$.
then $|r_0-r|\geq \eta^{1/2}$ for all $r\in J_\eta$. 
Then,~\eqref{eq: us sr and N}, yields that
\be\label{eq: us sr and N'}
u_{-r+r_0}\begin{pmatrix} a_{1,r} & e^{-7t}a_{2,r} \\ e^{7t}a_{3,r} & a_{4,r} \end{pmatrix}u_{r-r_0}\in \exp(-w_r') (u_{r_0}gNg^{-1}u_{-r_0})\cdot C_{\bf G}
\ee
for all $r\in I_{\rm bad}$.

Let us write $u_{r_0}g=\begin{pmatrix} a & b \\ c & d \end{pmatrix}$, 
then for all $z\in\bbc$ we have  
\[
u_{r_0}g\begin{pmatrix} 1 & z \\ 0 & 1 \end{pmatrix}g^{-1}u_{-r_0}= \begin{pmatrix}1-acz& a^2z\\ -c^2z & 1+acz\end{pmatrix}.
\]
Let $z_0\in\bbc$ be so that 
\[
\begin{pmatrix} a_{1,r_0} & e^{-7t}a_{2,r_0} \\ e^{7t}a_{3,r_0} & a_{4,r_0}\end{pmatrix}=\pm\exp(-w_r)\begin{pmatrix}1-acz_0& a^2z_0\\ -c^2z_0 & 1+acz_0\end{pmatrix}.
\]
By \eqref{eq: lower bound a(3,r)} applied with $\Upsilon=\xi\eta^{2D_2}$, 
$|{a_{3,r_0}}|\geq \xi \eta^{2D_2}e^{-7t}$. Since $|a|, |b|, |c|, |d|\ll1$, comparing the bottom left entries of the matrices, we get $|z_0|\gg \eta^{2D_2}$. 
Now, since $|a_{2,r_0}|\leq 10e^{t}$, comparing the top right entries we conclude that $|a|\ll \eta^{-2D_2}e^{-3t}\ll e^{-29t/10}$. 
Since $\det(g)=1$, it follows that  $|c|$ is also $\gg 1$.  

Let now $r\in J_{\eta}$ be so that $\|\gamma_r\|\geq 100e^{4t/3}$. We write $r_1=r-r_0$, $a'_{2,r}=e^{-7t}a_{2,r}$ and $a'_{3,r}=e^{7t}a_{3,r}$. By~\eqref{eq: lower bound a(3,r)}, applied this time with $\Upsilon=e^{4t/3}$, we have that $|a'_{3,r}|\geq e^{4t/3}$; note also that $|a'_{2,r}|\ll e^{-6t}$. In view of~\eqref{eq: us sr and N'}, there exists $z_r\in\bbc$ so that 
\begin{align*}
u_{-r_1}\begin{pmatrix} a_{1,r} & a'_{2,r} \\ a'_{3,r} & a_{4,r} \end{pmatrix}u_{r_1}&=\begin{pmatrix} a_{1,r}-r_1a'_{3,r}  & a'_{2,r}+(a_{4,r}-a_{1,r})r_1-a'_{3,r}r_1^{2} \\ a'_{3,r} & a_{4,r}+r_1a'_{3,r} \end{pmatrix}\\
&=\pm\exp(-w_r')\begin{pmatrix}1-acz_r& a^2z_r\\ -c^2z_r & 1+acz_r\end{pmatrix}.
\end{align*}
Recall that $|a'_{3,r}|\geq e^{4t/3}$, $|a_{1,r}|$ and $|a_{4,r}|$ are $\ll e^{t}$, and $|a'_{2,r}|\ll e^{-6t}$;
moreover $\eta^{1/2}\leq |r_1|\leq 1 $ and by~\eqref{eq: t D2 and eta clsoing} $e^{t/10}\geq\eta^{-1}$. We cocnlude
\[
|a'_{3,r}|\eta/10\leq |a'_{2,r}+(a_{4,r}-a_{1,r})r-a'_{3,r}r^{2}|\leq 2|a'_{3,r}|.
\]
Hence, since $w_r'$ is small, $|c^2z_r|\eta\ll |a^2z_r|\ll |c^2z_r|$. 
On the other hand, using $r=r_0$, we already established $|a| \ll e^{-29t/10}$ and $|c|\gg1$, thus $|a^2z_r|\ll e^{-5t}|c^2z_r|$, which is a contradiction, see~\eqref{eq: t D2 and eta clsoing} again.

Altogether, we conclude that Case~1 cannot occur.

\subsection*{Case 2} There are $r,r'\in I_{\rm bad}$ so that $\gamma_r$ and $\gamma_{r'}$ do not commute. \\ 

We first recall versions of \cite[Lemma 6.2]{LM-PolyDensity} and  \cite[Lemma 6.3]{LM-PolyDensity}. 
The statements in those lemmas assume $g_2\in \mathfrak S_{\rm cpt}$. 
However, the arguments work without any changes and one has the following.

Let $v_H$ be a unit vector on the line $\wedge^3\hfrak\subset \wedge^3\gfrak$. 

\begin{lemma}\label{lem:non-elementary}
Assume $\Gamma$ is arithmetic. There exist $\constE\label{E:non-el-1}$ and $\constk\label{k:non-el-2}$ depending on $\Gamma$, and $\constE\label{E:non-el-2}$ (absolute)
so that the following holds.  
Let $\gamma_1,\gamma_2\in\Gamma$ be two non-commuting elements. 
If $g\in G$ is so that $\gamma_i g^{-1}v_H=g^{-1}v_H$ for $i=1,2$, then $Hg\Gamma$ is a closed orbit with 
\[
\vol(Hg\Gamma)\leq\ref{E:non-el-1}  \|g\|^{\ref{E:non-el-2}}\Bigl(\max\{\|\gamma_1^{\pm1}\|,\|\gamma_2^{\pm1}\|\}\Bigr)^{\ref{k:non-el-2}}.
\] 
\end{lemma}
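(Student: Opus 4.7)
The plan is to reformulate the hypothesis in terms of the Zariski closure of $\langle\tilde\gamma_1,\tilde\gamma_2\rangle$ inside the simply connected arithmetic cover $\tilde\G$, and then use standard height--volume comparisons. First, observe that $N_G(H)=\{g'\in G:g'v_H=v_H\}$, so the hypothesis $\gamma_ig^{-1}v_H=g^{-1}v_H$ is equivalent to $g\gamma_ig^{-1}\in N_G(H)$ for $i=1,2$. Lifting $\gamma_i$ to $\tilde\gamma_i=(\gamma_i,\sigma(\gamma_i))\in\tilde\Gamma$ and writing $\hat g=(g,1)\in\tilde G$, this says that the first projection of $\hat g\tilde\gamma_i\hat g^{-1}$ lands in $N_G(H)$.

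Next, let $\tilde{\mathbf L}\subset\tilde\G$ denote the Zariski closure of $\langle\tilde\gamma_1,\tilde\gamma_2\rangle$; since the $\tilde\gamma_i$ lie in $\tilde\G(\bbq)$, this closure is automatically a $\bbq$-algebraic subgroup of $\tilde\G$. The first projection $\pi_1\bigl(\hat g\tilde{\mathbf L}(\bbr)\hat g^{-1}\bigr)$ is an algebraic subgroup of $G$ contained in $N_G(H)$, equivalently $\pi_1(\tilde{\mathbf L}(\bbr))\subset N_G(g^{-1}Hg)$. Non-commutativity of $\gamma_1,\gamma_2$ forces this projection to be non-abelian; and since $H\cong\SL_2$ has rank one, a non-abelian connected algebraic subgroup of $N_G(g^{-1}Hg)$ is either all of $g^{-1}Hg$ (up to the finite index coming from $N_G(H)/H$) or a Borel subgroup of $g^{-1}Hg$.

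The main obstacle, and the step where arithmeticity (rather than the weaker ``algebraic entries'' hypothesis) is essential, is ruling out the Borel case. Here I would proceed as in the analogous step of \cite[Prop.~6.1]{LM-PolyDensity}: if $\pi_1(\tilde{\mathbf L}(\bbr))$ were contained in a Borel of $g^{-1}Hg$, then the $\bbq$-structure of $\tilde{\mathbf L}$ inside the $\bbq$-almost simple group $\tilde\G$ would force the Galois-conjugate projections to the remaining simple factors of $\tilde\G$ to land in corresponding Borel subgroups as well, producing a proper $\bbq$-parabolic subgroup of $\tilde\G$ containing both $\tilde\gamma_1$ and $\tilde\gamma_2$. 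A trace/commutator computation entirely analogous to the one used in Case~1 of the proof of Proposition~\ref{prop:closing lemma intro} above then yields a contradiction with $\gamma_1,\gamma_2$ being non-commuting in $G$.

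Once the Borel case is excluded, $\pi_1(\tilde{\mathbf L}(\bbr))$ contains $g^{-1}Hg$ up to finite index, and Borel--Harish-Chandra applied to the $\bbq$-subgroup $\tilde{\mathbf L}$ supplies a lattice $\tilde{\mathbf L}(\bbz)$ in $\tilde{\mathbf L}(\bbr)$ whose image under $\rho$, conjugated by $g$, produces a lattice inside $H\cap g\Gamma g^{-1}$; equivalently $Hg\Gamma$ is periodic. The volume bound is then an application of Lemma~\ref{lem: volume and height}: since $\tilde{\mathbf L}$ is generated by two elements of norm $\leq\max\{\|\tilde\gamma_i^{\pm 1}\|\}\ll\max\{\|\gamma_i^{\pm 1}\|\}$, its height is controlled by a fixed power of this quantity (a generating set for the defining ideal of $\tilde{\mathbf L}$ has coefficients bounded by a polynomial in $\|\tilde\gamma_i^{\pm 1}\|$, and one uses the standard wedge-product estimate). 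The extra $\|g\|^{\ref{E:non-el-2}}$ factor in the volume bound reflects the conjugation by $\hat g$ relating $\tilde{\mathbf L}$ to the $\bbq$-group associated with the periodic orbit $Hg\Gamma$, and combining these estimates yields the stated bound.
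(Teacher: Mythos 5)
The paper itself does not reprove this lemma: it is quoted from \cite[Lemma 6.2]{LM-PolyDensity}, with only the remark that the argument there never uses the normalization $g\in\mathfrak S_{\rm cpt}$. Your attempt is therefore a from-scratch proof, and it has a genuine gap at the structural dichotomy. The Zariski closure $\tilde{\mathbf L}$ of $\langle\tilde\gamma_1,\tilde\gamma_2\rangle$ need not be connected, and non-commutativity of the group does not make its identity component non-abelian, so ``Borel or (essentially) all of $g^{-1}Hg$'' is not an exhaustive list. Concretely, take $G=\SL_2(\bbc)$, $\Gamma=\SL_2(\bbz[i])$, $g=e$, $\gamma_1=u_1$ and $\gamma_2=\mathrm{diag}(i,-i)\in N_G(H)\setminus H$: these do not commute (indeed $\gamma_2 u_1\gamma_2^{-1}=u_{-1}$), both fix $v_H$, yet the Zariski closure of the group they generate is $U\rtimes\langle\gamma_2\rangle$, a one-dimensional group with abelian (unipotent) identity component. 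Similarly, a purely hyperbolic $\gamma_1$ together with an order-two elliptic $\gamma_2$ whose axis meets the axis of $\gamma_1$ orthogonally gives an infinite-dihedral configuration whose closure has a torus as identity component. In neither case is the projection of $\tilde{\mathbf L}(\bbr)$ a Borel of $g^{-1}Hg$, nor virtually all of $g^{-1}Hg$.

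These degenerate configurations are exactly where your subsequent steps fail. Your plan to ``rule out the Borel case'' by deriving a contradiction with non-commutativity cannot work: non-commuting pairs inside (the normalizer of) a Borel genuinely occur in arithmetic lattices, as the example above shows, and there the conclusion of the lemma must still be proved rather than contradicted (in that example $H\Gamma$ is in fact closed, since $\SL_2(\bbz)\subset\Gamma\cap H$). Moreover, once $\tilde{\mathbf L}$ is small, Borel--Harish-Chandra applied to $\tilde{\mathbf L}$ only produces a lattice in the small group $\tilde{\mathbf L}(\bbr)$; it gives no lattice in $\Gamma\cap g^{-1}Hg$, so closedness of $Hg\Gamma$ does not follow, and your height bookkeeping, which tacitly identifies $\tilde{\mathbf L}$ with the $\bbq$-group $\tilde{\mathbf H}_g$ attached to the orbit, collapses for the same reason. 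A correct argument --- and, in essence, the one the paper imports from \cite{LM-PolyDensity} --- does not ask $\langle\gamma_1,\gamma_2\rangle$ to be Zariski dense in $g^{-1}Hg$. What non-commutativity buys is finiteness of the set of common fixed vectors of $\gamma_1,\gamma_2$ on the orbit $G\cdot v_H$: a positive-dimensional family of common fixed $H$-vectors would force both elements into the abelian group preserving every member of the family. Hence $g^{-1}v_H$ is an isolated point of a set cut out by equations rational for the arithmetic $\bbq$-structure with coefficients of size at most $\max\{\|\gamma_1^{\pm1}\|,\|\gamma_2^{\pm1}\|\}$; this makes the associated group $\tilde{\mathbf H}_g$ a $\bbq$-subgroup of $\tilde\G$ of height polynomially bounded in $\|g\|$ and $\max\|\gamma_i^{\pm1}\|$, and then the height--volume comparison of Lemma~\ref{lem: volume and height} (i.e.\ \cite[\S17]{EMV}) yields both closedness and the stated volume bound. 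In the generic case where $\langle\gamma_1,\gamma_2\rangle$ is Zariski dense in $g^{-1}Hg$ your outline is the standard one and is fine; the missing idea is precisely how arithmeticity handles the elementary (virtually abelian) configurations, which is the real content of the lemma.
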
 

\begin{lemma}\label{lem:almost-inv}
Assume $\Gamma$ has algebraic entries. 
There exist $\constk\label{k:Eq-proj}$, $\constk\label{k:Eq-proj-2}$, $\constE\label{E:Eq-proj-mul}$ and $\constE\label{E:Eq-proj-mul-2}$
so that the following holds. Let $\gamma_1,\gamma_2\in\Gamma$ be two non-commuting elements,
and let 
\[
\delta\leq  \ref{E:Eq-proj-mul}^{-1}\Big(\max\{\|\gamma_1^{\pm1}\|, \|\gamma_2^{\pm1}\|\}\Big)^{-\ref{k:Eq-proj}}.
\]
Suppose there exists some $g\in G$ so that $\gamma_i g^{-1}v_H=\epsilon_ig^{-1}v_H$ for $i=1,2$ 
where $\|\epsilon_i-I\|\leq \delta$. 
Then, there is some $g'\in G$ such that 
\[
\|g'-g^{-1}\|\leq \ref{E:Eq-proj-mul}  \|g\|^{\ref{E:Eq-proj-mul-2}}\delta \Big(\max\{\|\gamma_1^{\pm1}\|, \|\gamma_2^{\pm1}\|\}\Big)^{\ref{k:Eq-proj-2}}
\] 
and $\gamma_ig'v_H=g'v_H$ for $i=1,2$.
\end{lemma}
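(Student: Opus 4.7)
The plan is to prove Lemma~\ref{lem:almost-inv} as a quantitative implicit function theorem applied to the map $\Phi\colon G\to(\wedge^3\gfrak)^2$ defined by
\[
\Phi(h)=\bigl((\gamma_1-I)\,hv_H,\;(\gamma_2-I)\,hv_H\bigr).
\]
By construction $\Phi(h)=0$ exactly when $\gamma_ihv_H=hv_H$ for $i=1,2$, which is the conclusion we want to solve for $h=g'$. The hypothesis $\gamma_ig^{-1}v_H=\epsilon_ig^{-1}v_H$ with $\|\epsilon_i-I\|\le\delta$ translates into an approximate zero at $h_0:=g^{-1}$, with the explicit bound $\|\Phi(h_0)\|\ll\|g\|^{O(1)}\delta$. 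The goal is then to upgrade this approximate zero to a genuine zero $g'$ with $\|g'-h_0\|$ controlled polynomially in $\delta$, $\|g\|$ and $\max\|\gamma_i^{\pm1}\|$.

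The derivative at $h_0$ is $d\Phi_{h_0}(h_0X)=\bigl((\gamma_i-I)\,(Xh_0v_H)\bigr)_i$ for $X\in\gfrak$, where $X$ acts on $\wedge^3\gfrak$ via the induced representation. Since $X\cdot v_H=0$ iff $X\in\hfrak$, the operator descends to an injective linear map $T_{h_0}\colon\gfrak/h_0^{-1}\hfrak h_0\to(\wedge^3\gfrak)^2$. Conjugating by $g$ and setting $\tilde\gamma_i=g\gamma_ig^{-1}$, this is equivalent to controlling the map $\tilde T\colon\gfrak/\hfrak\to(\wedge^3\gfrak)^2$, $\bar Z\mapsto\bigl((\tilde\gamma_i-I)(Zv_H)\bigr)_i$. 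The key quantitative input I would establish is a lower bound $\sigma_{\min}(\tilde T)\gg\max(\|\gamma_i^{\pm1}\|,\|g\|)^{-\kappa_*}$ for some absolute $\kappa_*>0$. The argument is: a near-kernel vector of $\tilde T$ would give a direction $\bar Z$ in $\gfrak/\hfrak$ simultaneously infinitesimally stabilised by both $\tilde\gamma_i$, forcing both $\tilde\gamma_i$ to almost-normalize a strictly larger subalgebra than $\hfrak$; but since $\H$ is maximal of its type and $\gamma_1,\gamma_2$ do not commute, the common normalizer collapses. Packaging this as the non-vanishing of the Gram determinant of $\tilde T$ in a fixed basis — a polynomial on $G\times G$ that identically vanishes only on the commuting/degenerate subvariety — and applying an effective \L{}ojasiewicz-type estimate in the matrix entries of $\tilde\gamma_i$ yields the polynomial lower bound.

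With $\|\Phi(h_0)\|$ small, $\tilde T$ invertible with polynomial bound on its inverse, and uniform polynomial Lipschitz control on $d\Phi$ on a ball $B(h_0,\rho)\subset G$ (a direct computation since $\Phi$ is a fixed polynomial map of degree bounded in $\|\gamma_i\|$), a standard Newton iteration / quantitative inverse function theorem produces the desired $g'\in B(h_0,\ref{E:Eq-proj-mul}\|g\|^{\ref{E:Eq-proj-mul-2}}\delta\max\|\gamma_i^{\pm1}\|^{\ref{k:Eq-proj-2}})$ with $\Phi(g')=0$. The smallness assumption $\delta\le\ref{E:Eq-proj-mul}^{-1}\max\|\gamma_i^{\pm1}\|^{-\ref{k:Eq-proj}}$ is exactly what ensures the initial residual lies inside the quadratic basin of convergence determined by the inverse-norm and Lipschitz bounds. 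The main obstacle is the quantitative invertibility of $\tilde T$; everything else is soft once that estimate is in hand, and the role of the non-commutativity hypothesis is precisely to prevent the Gram determinant from vanishing at $(\gamma_1,\gamma_2)$.
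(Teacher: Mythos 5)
Your proposal has a genuine structural gap: the Newton/quantitative-IFT step cannot work in the form you describe, because the system is heavily overdetermined. Your map $\Phi$ sends the $6$-dimensional group $G$ into $(\wedge^3\gfrak)^2$ (of dimension $40$ when $G=\SL_2(\C)$), so the differential can at best be injective, never surjective. A quantitative inverse/implicit function theorem upgrades an approximate zero to an exact zero only when the differential admits a bounded \emph{right} inverse; with an injective but non-surjective differential, a small residual together with a lower bound on $\sigma_{\min}$ does not imply that any exact zero exists nearby (already $x\mapsto (x,\epsilon)$ from $\bbr$ to $\bbr^2$ has residual $\epsilon$ at $0$, $\sigma_{\min}=1$, and no zero at all). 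A Gauss--Newton iteration with the pseudo-inverse converges to a least-squares critical point, not to a solution of $\gamma_i g'v_H=g'v_H$. The existence of an exact solution near $g^{-1}$ is precisely the content of the lemma, and it must be supplied by some structural input other than the iteration.

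The second gap is in the key estimate $\sigma_{\min}(\tilde T)\gg \max(\|\gamma_i^{\pm1}\|,\|g\|)^{-\kappa_*}$, which you justify by a Łojasiewicz bound for a Gram determinant said to vanish ``only on the commuting/degenerate subvariety.'' The degeneracy locus of $\tilde T$ is the set of pairs admitting a common fixed vector inside the tangent space $\gfrak\cdot v_H\subset\wedge^3\gfrak$, which is in general strictly larger than the commuting variety; and even with the locus identified correctly, an effective Łojasiewicz inequality only converts a lower bound on the distance from $(g\gamma_1g^{-1},g\gamma_2g^{-1})$ to that locus into a lower bound on the determinant --- a distance bound you never establish, and which neither non-commutativity nor discreteness (which gives $\|[\gamma_1,\gamma_2]-I\|\geq c_\Gamma$, not a distance to the larger variety) obviously provides. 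Tellingly, your argument never invokes the standing hypothesis that $\Gamma$ has algebraic entries, which is exactly what powers the effectivity here: the paper does not reprove Lemma~\ref{lem:almost-inv} but quotes \cite[Lemma 6.3]{LM-PolyDensity} (observing that the assumption $g_2\in\mathfrak S_{\rm cpt}$ made there can be dropped at the cost of the explicit $\|g\|$-dependence in the conclusion), and the proof there obtains the polynomial constants from the algebraicity of the entries of $\gamma_1,\gamma_2$, via height/Liouville-type lower bounds for nonzero algebraic quantities built from those entries (applied to the linear-algebraic data of the fixed-vector equations on $\wedge^3\gfrak$), rather than from a transversality or Newton argument at $g^{-1}$. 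Without an arithmetic input of this kind your exponent $\kappa_*$ has no justification, and because of the overdetermination issue above, even a correct $\sigma_{\min}$ bound would not by itself close the proof.
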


Let us now return to the analysis in Case 2. 
Recall that $\|g_2\|\leq \eta^{-D_1}$, we will assume $t$ is large enough so that 
\[
e^{t}\geq \eta^{-2D_1\max\{\ref{E:non-el-2},\ref{E:Eq-proj-mul-2}\}}.
\]
Recall that $\exp(w_r)h_r^{-1}\sfs_rh_r=g_2\gamma_rg_2^{-1}$, thus 
\[
\gamma_r .g_2^{-1}v_H=\exp(\Ad(g_2^{-1})w_r).g_2^{-1}v_H.
\]
Moreover, since $\|w_r\|\leq \nuni^{(-D+16)\rws}$,  
\[
\|\Ad(g_2^{-1})w_r\|\ll \eta^{-2D_1}\nuni^{(-D+14)\rws}\ll e^{(-D+15)t}
\]
similar statements also hold for $r'$.

Recall that $\|\gamma_r^{\pm1}\|,\|\gamma_{r'}^{\pm1}\|\leq e^{9t}$.
If $D$ is large enough, we may apply Lemma~\ref{lem:almost-inv}
and conclude that there exists some $g_3\in G$ with 
\[
\|g_2-g_3\|\leq \ref{E:Eq-proj-mul}\eta^{-D_1\ref{E:Eq-proj-mul-2}}\nuni^{(-D+15+9\ref{k:Eq-proj-2})\rws}\leq \ref{E:Eq-proj-mul}\nuni^{(-D+16+9\ref{k:Eq-proj-2})\rws},
\] 
so that $\gamma_r .g_3^{-1}v_H=g_3^{-1}v_H$ and 
$\gamma_{r'} .g_2^{-1}v_H=g_2^{-1}v_H$.

In view of Lemma~\ref{lem:non-elementary}, thus, we have $Hg_3\Gamma$ is periodic and 
\[
\vol(Hg_3\Gamma)\leq \ref{E:non-el-1}\eta^{-D_2\ref{E:non-el-2}}\Bigl(\max\{\|\gamma_r^{\pm1}\|,\|\gamma_{r'}^{\pm1}\|\}\Bigr)^{\ref{k:non-el-2}}\leq \ref{E:non-el-1}\nuni^{1+9\ref{k:non-el-2}\rws}.
\]

Then for $t$ large enough,  $\vol(Hg_2\Gamma)\leq\nuni^{D'_0\rws}$ and $d_X(g_2\Gamma,g_2\Gamma)\ll e^{(-D+D_0')t}$ for $D'_0=9\max\{\ref{k:non-el-2}, \ref{k:Eq-proj-2}\}+16$.

Since $g_2\Gamma=x_2=a_tu_{r_1}x_1$, part~(2) in the proposition holds with $x'=(a_tu_{r_1})^{-1}g_3\Gamma$ and 
$D_0=\max\{D'_0+2, 32\}$ if $t$ is large enough (recall that we already assumed in several places that $D \geq 32$). 
\end{proof}

We note that the only place we used the arithmeticity of $\Gamma$ is Lemma~\ref{lem:non-elementary}. If we instead assume $\Gamma$ has algebraic entries, the argument above goes through and yields {\em (2')} in \S\ref{sec: closing lemma}.


\section{Proof of Theorem~\ref{thm: proj thm}}\label{sec: proof proj}

Theorem~\ref{thm: proj thm} will be proved using the following theorem which is~\cite[Thm.~B.1]{LM-PolyDensity}. As noted there, \cite[Thm.~B.1]{LM-PolyDensity} is an adaptation of works of K\"{a}enm\"{a}ki, Orponen, and Venieri~\cite{kenmki2017marstrandtype} and of Zahl \cite{Zahl,Zahl-Smooth} tailored to our needs. We refer to ~\cite[App.\ B]{LM-PolyDensity} for a brief history of this problem and more references.

\begin{thm}\label{thm: proj thm app}
Let $0<\alpha\leq1$ and let $0<\mfsc_1<1$. 
Let $\Theta\subset B_\rfrak(0,1)$ be a finite set satisfying 
\be\label{eq: energy bd proj thm}
\#(B_\rfrak(w,\mfsc)\cap \Theta)\leq \egbd \mfsc^\alpha \quad\text{for every $w\in\Theta$ and all $\mfsc\geq \mfsc_1$}
\ee
where $\egbd\geq 1$. 

Let $0<\pvare<0.01\alpha$.
For every $\rhsc\geq \egbd^{-1/\alpha}$, there exists a subset $J_\rhsc\subset [0,1]$ with $|[0,1]\setminus J_\rhsc|\leq  L\pvare^{-L}\rhsc^{\pvare}$ 
so that the following holds. 
Let $r\in J_\rhsc$, then there exists a subset $\Theta_{\rhsc,r}\subset \Theta$ with 
\[
\#(\Theta\setminus \Theta_{\rhsc,r})\leq L\pvare^{-L} \rhsc^{\pvare}\cdot (\#\Theta)
\]
such that for all $w\in \Theta_{\rhsc,r}$, we have 
\[
\#\{w'\in\Theta: |\xi_r(w)-\xi_r(w')|\leq \rhsc\}\leq L\pvare^{-L} \egbd^{1+7\pvare} \rhsc^\alpha\cdot (\#\Theta)
\]
where $L$ is an absolute constant and 
\[
\xi_r(w)=(\Ad(u_r)w)_{12}=-w_{21}r^2-2w_{11}r+w_{12}.
\]
\end{thm}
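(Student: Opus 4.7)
The plan is to recognize $\xi_r$ as the projection onto a non-degenerate curve in $\R^3$, and then apply a quantitative discretized version of the restricted projection theorem from \cite{kenmki2017marstrandtype}, building on \cite{Wolff, Schlag, Zahl}. Identifying $\rfrak \simeq \R^3$ via $w \leftrightarrow (w_{11}, w_{12}, w_{21})$, one has $\xi_r(w) = \langle w, \gamma(r)\rangle$ with $\gamma(r) = (-2r, 1, -r^2)$, and a direct computation gives
\[
\det\bigl(\gamma(r), \gamma'(r), \gamma''(r)\bigr) = -4 \neq 0,
\]
so $\gamma$ is everywhere non-degenerate. Thus $\{\xi_r\}_{r \in J}$ satisfies the curvature/transversality hypotheses required by the aforementioned projection theorems uniformly on~$J$.

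First I would upgrade the weak energy hypothesis $\eng_{\Theta, \trct}(w) \leq \egbd$ to a clean uniform Frostman-type ball-counting bound. A dyadic pigeonhole over scales $\delta \in [\egbd^{-1/\alpha}, 1]$ (of which there are $\ll \pvare^{-1}\log\egbd$) produces, after discarding an $L\pvare^{-L}\rhsc^\pvare$-fraction of $\Theta$, a subset $\Theta^\ast \subset \Theta$ satisfying
\[
\#\bigl(\Theta^\ast \cap B_\rfrak(w, \delta)\bigr) \leq L\pvare^{-L}\,\egbd\,\delta^\alpha, \qquad w \in \Theta^\ast,\ \delta \geq \egbd^{-1/\alpha}.
\]
The trimmings $\trct$ can be absorbed because the standing assumption $\rhsc \geq \egbd^{-1/\alpha}$ ensures $\trct \leq \egbd \rhsc^\alpha$ throughout the relevant range.

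Next I would apply the discretized restricted projection theorem to $\Theta^\ast$ at scale $\rhsc$. Non-degeneracy of $\gamma$, combined with the Wolff-Schlag $L^2$ bilinear estimate refined to the sharp $\alpha$-to-$\alpha$ range in \cite{Zahl} and packaged in \cite{kenmki2017marstrandtype}, yields an exceptional set $J \setminus J_\rhsc$ of measure $\leq L\pvare^{-L}\rhsc^\pvare$ such that, for each $r \in J_\rhsc$, the $\xi_r$-pushforward of the counting measure on $\Theta^\ast$ inherits an $\alpha$-Frostman bound at scale $\rhsc$ with constant $\egbd^{1+O(\pvare)}$, off a further $L\pvare^{-L}\rhsc^\pvare$-fraction of exceptional base points. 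Absorbing $\Theta \setminus \Theta^\ast$ into $\Theta \setminus \Theta_{\rhsc, r}$ and inflating the exponent $1 + O(\pvare)$ to $1 + 7\pvare$ to swallow the implicit constants yields the stated bound.

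The main obstacle is extracting the correct quantitative form of the restricted projection theorem from the literature: the results in \cite{Wolff, Schlag, kenmki2017marstrandtype, Zahl} are often phrased qualitatively or at coarser quantitative levels than needed here. To obtain the polynomial losses $\egbd^{1+7\pvare}$ and $\rhsc^\pvare$ one must track constants carefully through the Wolff-Schlag $L^2$ argument, namely a dyadic decomposition on scales of the projected pairs, the bilinear $L^2$ estimate on the non-degenerate curve $\gamma$, and a Chebyshev conversion from $L^2$ bounds to pointwise statements. The fact that $\det(\gamma, \gamma', \gamma'')$ is a nonzero constant on all of $J$ simplifies this last step, since transversality holds uniformly and no delicate treatment of degenerate parameters is needed.
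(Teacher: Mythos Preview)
Your outline identifies the right mechanism --- the cinematic/Wolff--Schlag curvature structure underlying the family $\{\xi_r\}$ --- and the paper's proof sits in the same circle of ideas. But two points deserve correction.

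First, your claim that ``$\trct \leq \egbd\rhsc^\alpha$ because $\rhsc \geq \egbd^{-1/\alpha}$'' is simply false: the latter gives only $\egbd\rhsc^\alpha \geq 1$, and there is no hypothesis bounding $\trct$. Relatedly, the energy-to-Frostman conversion you propose is unnecessary: the energy bound already yields directly $\#\bigl(\Theta\cap B_\rfrak(w,\delta)\bigr) \leq \trct + \egbd\delta^\alpha$ with no pigeonholing, but the $\trct$ term cannot be dropped by the argument you give. The paper does not pass through a Frostman condition at all; instead it works with the truncated energy throughout, and the $\trct$-dependence is absorbed inside the key lemma (a version of Schlag's tube lemma, stated as Lemma~\ref{lem: Schlag} and proved as in \cite[Lemma~B.2]{LM-PolyDensity}).

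Second, and as you yourself flag, the black-box projection theorem you want to cite does not appear in the literature in precisely the quantitative form needed. The paper avoids this by working in the dual picture: rather than viewing $\xi_r$ as projection onto $\gamma(r)$, it considers the graphs $\Xi(w) = \{(r,\xi_r(w)) : r\in[0,1]\}$ and the associated multiplicity function $m^\rhsc_\rho$. The core input is then Lemma~\ref{lem: Schlag}, which says that after discarding a $\rhsc^\pvare$-fraction of $\Theta$, each remaining tube $\Xi^{8\rhsc}(w)$ meets the high-multiplicity region in small measure. The theorem follows by a short contradiction: if the conclusion failed on a large set of $(r,w)$, Fubini would produce a single $w_0$ in the good set whose tube has large intersection with the high-multiplicity region, violating the lemma. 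This is more direct than your route and sidesteps the need to repackage the Wolff--Schlag machinery as a standalone discretized projection theorem.
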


\begin{proof}[Proof of Theorem~\ref{thm: proj thm}]
First note that replacing $\Theta$ by $\frac{1}{\mfsc_0}\Theta$ and $\egbd$ by $b_0^{\alpha}\egbd$, we may assume $\mfsc_0=1$.  
Note that~\eqref{eq: energy bd proj thm main} implies that  
\[
\#(B(w,\mfsc)\cap \Theta)\leq \egbd\mfsc^\alpha+\trct, 
\]
for all $w$ and all $\mfsc$. Thus we have 
\[
\#(B(w,\mfsc)\cap \Theta)\leq 2\egbd\mfsc^\alpha\quad\text{for all $\mfsc\geq (\tfrac{\trct}{\egbd})^{1/\alpha}=:\mfsc_1$}
\]
and all $w$. Thus $\Theta$ satisfies~\eqref{eq: energy bd proj thm} in Theorem~\ref{thm: proj thm app}. 

We will work with dyadic scales. Let $\ell_1=\lfloor-\log\mfsc_1\rfloor$.
Let $L$ be as in Theorem~\ref{thm: proj thm app}; put $C=L\pvare^{-L}$. 

Let $\ell_2=20+\lfloor\pvare\log\egbd\rfloor$. Then
\[
\sum_{\ell=\ell_2}^{\infty} 2^{-\pvare\ell}< 10^{-6}\egbd^{-\pvare^2}.
\]
Let $J=\bigcap_{\ell=\ell_2}^{\ell_1} J_{2^{-\ell}}$. 
Then the choice of $\ell_2$ and Theorem~\ref{thm: proj thm app} imply that 
\[
|[0,1]\setminus J|\leq C\egbd^{-\pvare^2}.
\]

For every $r\in J$, let $\Theta_r=\bigcap_{\ell=\ell_2}^{\ell_1}\Theta_{2^{-\ell},r}$.
Then by Theorem~\ref{thm: proj thm app}, 
\[
\rho(\Theta\setminus\Theta_r)\leq C\egbd^{-\pvare^2}.
\]
Moreover, for all $w\in \Theta_r$ and all $\ell_2\leq \ell\leq \ell_1$
we have
\be\label{eq: regularity proj proof}
\#(\{w'\in \Theta: |\xi_{r}(w')-\xi_r(w)|\leq 2^{-\ell}\})\leq  C\egbd^{1+7\pvare} 2^{-\alpha\ell}.
\ee

Let $w\in \Theta_r$, and put $\Theta(w)=\Theta\setminus \{w'\in \Theta: |\xi_{r}(w')-\xi_r(w)|\leq 2^{-\ell_1}\}$. 
In view of~\eqref{eq: regularity proj proof}, applied with $\ell=\ell_1$, we have 
\[
\begin{aligned}
\#(\{w'\in \Theta: |\xi_{r}(w')-\xi_r(w)|\leq 2^{-\ell_1}\})&\leq  C\egbd^{1+7\pvare} 2^{-\alpha\ell_1}\leq C\egbd^{1+7\pvare} \mfsc_1^{\alpha}\\
&=C\egbd^{1+7\pvare} \tfrac{\trct}{\egbd}=C\egbd^{7\pvare}\trct. 
\end{aligned}
\]
In other words we have 
\be\label{eq: size of Theta(w)}
\#(\Theta\setminus \Theta(w))\leq 2C\egbd^{7\pvare}\trct.
\ee
Moreover,~\eqref{eq: regularity proj proof} applied with $\ell_2\leq \ell\leq \ell_1$, implies that  
\be\label{eq:modified energy cont pf}
\begin{aligned}
\sum_{w'\in\Theta(w)}\|\xi_r(w)-\xi_r(w')\|^{-\alpha}&\leq\sum_{\ell=\ell_2}^{\ell_1}C\egbd^{1+7\pvare} 2^{-\alpha\ell}2^{\alpha\ell}+2^{\alpha\ell_2}\\
&=\ell_1C\egbd^{1+7\pvare}+2^{\alpha\ell_2}\cdot(\#\Theta).
\end{aligned}
\ee

Recall that $\#\Theta\leq \egbd$ and that $2^{\alpha\ell_2}\leq 2^{20}\egbd^\pvare$. 
The claim in the theorem thus follows from~\eqref{eq: size of Theta(w)} and~\eqref{eq:modified energy cont pf}.
\end{proof}

We also need the following theorem which was used in 
\S\ref{sec: equidistribution}, in particular in the proof of Lemma~\ref{lem: equidist mu cone-i}. We will reduce this to the results proved in~\cite[App.~B]{LM-PolyDensity}, these results have now been obtained in greater generality, see~\cite{PYZ}. 

\begin{thm}\label{thm: proj thm 2}
Let $0<\alpha\leq1$, and let  $0<\rhsc_1<\rhsc_0\leq1$. 
Let $\Theta\subset B_\rfrak(0,\rhsc_0)$ be a finite set, and let $\theta$ 
denote a probability measure on $\Theta$. Assume further that the following two properties hold
\begin{subequations}
\begin{align}
\label{eq: lambda counting meas thm 2}&K^{-1}\leq \theta(w)\leq K\\
\label{eq:appendix-regularity-F}
&\theta(B_\rfrak(w, \rhsc))\leq \bar\egbd\cdot (\rhsc/\rhsc_0)^\alpha\qquad\text{for all $w$ and all $\rhsc\geq \rhsc_1$}
\end{align}
\end{subequations}
where $\bar\egbd\geq 1$ and $K$ is absolute.

Let $0<\pvare<0.01\alpha$, and let $J\subset [0,1]$ be an interval with $|J|\geq 10^{-4}$. 
For every $\rhsc\geq \rhsc_1$, there exists a subset $J_{\rhsc}\subset J$ with $|J\setminus J_{\rhsc}|\ll \rhsc^{\pvare}$ 
so that the following holds. 
Let $r\in J_{\rhsc}$, then there exists a subset $\Theta_{\rhsc,r}\subset \Theta$ with 
\[
\theta(\Theta\setminus \Theta_{\rhsc,r})\ll \rhsc^{\pvare}
\]
such that for all $w\in\Theta_{\rhsc,r}$, we have 
\[
\theta\Bigl(\{w'\in\Theta: |\zeta_{r}(w')-\zeta_r(w)|\leq \rhsc\}\Bigr)\leq C(\rhsc/\rhsc_0)^{\alpha-7\pvare}
\] 
where $C\ll\pvare^{-\star}\bar\egbd$, the implied constants are absolute and $\zeta_r(w)\in\mathfrak r^+$ is defined as follows:
\[
u_r\exp(w)u_{-r}=\begin{pmatrix}d_{r,w} &0\\ c_{r,w} & 1/d_{r,w}\end{pmatrix}\begin{pmatrix}1 &\zeta_r(w)\\ 0& 1\end{pmatrix}u_{\hat r},
\]
for some $\hat r=\hat r(w, r)$.
\end{thm}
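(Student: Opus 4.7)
My plan is to adapt the proof of Theorem~\ref{thm: proj thm app} to the weighted probability-measure setting, then pass from the projection $\xi_r$ (where the polynomial structure is explicit) to $\zeta_r$ via an Iwasawa comparison.

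First, I would derive an explicit comparison between $\zeta_r$ and $\xi_r$. Writing $w = \bigl(\begin{smallmatrix}a&b\\c&-a\end{smallmatrix}\bigr)$, $\mu^2 = a^2+bc$, and using $\exp(X)=\cosh\mu\cdot I + (\sinh\mu/\mu)X$ for $2\times 2$ traceless $X$, then reading off the $V$-coordinate of the Iwasawa decomposition of $\exp(\Ad(u_r)w)$, one obtains
\[
\zeta_r(w)=\xi_r(w)\cdot\frac{\sinh\mu/\mu}{\cosh\mu+(a+cr)\sinh\mu/\mu}.
\]
For $w\in B_\rfrak(0,b_0)$ with $b_0\leq 1$ and $r\in[0,1]$, the rightmost factor equals $1+g_r(w)$ with $g_r$ smooth, $g_r(0)=0$, $|g_r(w)|\ll \|w\|$, and $\|\nabla g_r(w)\|\ll 1$ uniformly in $r$. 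Consequently $|\zeta_r(w)-\zeta_r(w')-(\xi_r(w)-\xi_r(w'))|\ll b_0^2$ for all $w,w'\in B_\rfrak(0,b_0)$.

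Second, I would convert the Frostman hypothesis~\eqref{eq:appendix-regularity-F} into a weighted energy bound. A standard layer-cake computation, combined with removal of the contribution from $\|w'-w\|<b_1$ via an appropriate choice of $\trct$ (using quasi-uniformity $\theta\asymp (\#\Theta)^{-1}$), yields $\sum_{w'\in\Theta\setminus B_\rfrak(w,b_1)}\theta(w')\|w-w'\|^{-\alpha}\ll\bar\egbd b_0^{-\alpha}\log(b_0/b_1)$ for every $w\in\Theta$. The proof of Lemma~\ref{lem: Schlag} (sketched via~\cite[Lemma B.2]{LM-PolyDensity}) adapts to this weighted setting essentially verbatim once one replaces counting measure by $\theta$ and feeds in the above energy bound. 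Running the argument of the proof of Theorem~\ref{thm: proj thm app} from~\S\ref{sec: proof proj} in the weighted setting then yields the conclusion of Theorem~\ref{thm: proj thm 2} with $\zeta_r$ replaced by $\xi_r$: the log factor $\log(b_0/b_1)$ raised to the power $1+7\pvare$ is absorbed, where necessary, into the exponent of $(b/b_0)^{\alpha-7\pvare}$ using that $\log X\le X^{\pvare/100}$ for $X$ large and that $b_1<b_0^{10}$ provides a comfortable margin.

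Third, I would transfer the $\xi_r$-conclusion to $\zeta_r$. When $b\ge b_0^2$, the inclusion $\{w':|\zeta_r(w)-\zeta_r(w')|\le b\}\subset\{w':|\xi_r(w)-\xi_r(w')|\le 2b\}$ from step one lets us invoke the $\xi_r$-bound at scale $2b$, with only a constant loss. The main obstacle is the regime $b_1\le b<b_0^2$ allowed by the hypothesis $b_1<b_0^{10}$: here the additive $b_0^2$ error dominates $b$, and a direct pullback is too lossy. I would handle this by redoing step two directly with $\zeta_r$ in place of $\xi_r$; the only ingredient of the projection theorem that depends on the specific form of the projection is the planar sublevel-set estimate entering Lemma~\ref{lem: Schlag} (a polynomial-in-$r$ bound for $|\{r\in[0,1]:|\xi_r(w)-\xi_r(w')|\le b\}|$), and its analogue for $\zeta_r$ follows from the fact that $\zeta_r$ is a $C^\infty$ perturbation of the non-degenerate quadratic polynomial $\xi_r$ with derivatives uniformly bounded on $B_\rfrak(0,b_0)$. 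Verifying this quantitative non-degeneracy uniformly in $r\in[0,1]$ and in $w,w'$ on the support of $\theta$ will be the main technical point, and it reduces to showing that the map $(\xi_r,\mu^2,a+cr)\colon\rfrak\to\R^3$ is a submersion away from an exponentially small bad set, which can be established by a direct computation analogous to the proof of Lemma~\ref{lem: linear algebra}.
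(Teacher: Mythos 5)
Your reduction to the (quasi-)uniform measure and the coarse-scale comparison $\zeta_r(w)=\xi_r(w)(1+g_r(w))$ are fine --- the paper also passes to the counting measure using \eqref{eq: lambda counting meas thm 2} --- but they leave the entire difficulty in the regime $\rhsc<\rhsc_0^2$, and there your central claim, that the only ingredient depending on the specific projection is the pairwise sublevel-set bound for $|\{r\in[0,1]:|\xi_r(w)-\xi_r(w')|\le \rhsc\}|$, is not correct. The analogue of Lemma~\ref{lem: Schlag} is a Wolff--Schlag--Zahl tangency/incidence estimate, and its proof uses the full \emph{cinematic curvature} structure of the curve family $r\mapsto\zeta_r(w)$ (joint quantitative non-degeneracy of the first and second $r$-derivatives, uniformly over the three-parameter family), not merely a two-point transversality bound. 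Indeed, the sublevel-set exponent for this family is only $1/2$ (near-tangencies of the quadratics), and feeding that into a Kaufman/Peres--Schlag-type energy argument cannot produce the exponent $\alpha-7\pvare$ for $\alpha$ close to $1$; obtaining the nearly loss-free exponent is precisely what the tangency machinery of K\"aenm\"aki--Orponen--Venieri, Wolff, Schlag and Zahl buys, and it is the reason Theorem~\ref{thm: proj thm app} is proved that way in the first place. Your substitute criterion --- that $(\xi_r,\mu^2,a+cr)$ be a submersion away from an ``exponentially small bad set'' --- is not the relevant condition, and a bad set in parameter space cannot simply be discarded, since $\theta$ is only assumed to satisfy the Frostman bound \eqref{eq:appendix-regularity-F} and may concentrate anywhere; note also that the inequality $\rhsc_1<\rhsc_0^{10}$ you invoke is a hypothesis of Proposition~\ref{prop: equi translates of cone intro}, not of this theorem.

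What the paper actually does, and what would repair your plan, is to verify the curvature condition directly for the conjugated family and then rerun the incidence argument wholesale: it replaces $\exp$ by the rational chart $f$, observes that $g=f^{-1}\circ\exp$ is a near-identity diffeomorphism so \eqref{eq:appendix-regularity-F} survives up to a factor $2$, writes the resulting curves $\cZ(w)=\{(r,\czeta_r(w))\}$ as level sets of the explicit algebraic function $\Phi$, and checks the two-sided bound \eqref{eq: cinematic curvature} on $|\partial\Phi/\partial y_1|+|\partial^2\Phi/\partial y_1^2|$; this is exactly the Kolasa--Wolff/Zahl cinematic curvature condition, after which the proof of Theorem~\ref{thm: proj thm app} goes through verbatim with $\Xi$ replaced by $\cZ$ and the tangency parameter $\Delta$ defined there. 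In particular no comparison with $\xi_r$, no case split over scales, and no weighted energy detour are needed. Your proposal is missing precisely this verification step for $\zeta_r$; once you supply it (either for the transcendental formula directly, with uniform two-sided derivative bounds, or after algebraizing as above), your ``fine-scale'' argument becomes the paper's argument and the rest of your outline is superfluous.
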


\begin{proof} 
In view of the assumption~\eqref{eq: lambda counting meas thm 2}, it suffices to prove the claim when $\theta$ is the uniform measure on $\Theta$. 

Define $f: B_{\rfrak}(0, 0.01)\to G$ by
\[
f\left(\!\begin{pmatrix}w_{11} & w_{12}\\ w_{21} & -w_{11}\end{pmatrix}\!\right)=\begin{pmatrix}1+w_{11} &w_{12}\\ w_{21} & \frac{1+w_{12}w_{21}}{1+w_{11}}\end{pmatrix}.
\] 

There exists an absolute constant $\delta_0$ so that the map
$g=f^{-1}\circ\exp$ is a diffeomorphism from $B_\rfrak(0,\delta_0)$ onto its image and 
\be\label{eq: derivative of Psi}
\|Dg-I\|\leq 0.01.
\ee

We may, without loss of generality, assume that $\Theta\subset B_\rfrak(0,\delta_0)$. 
Let $\Theta'=g(\Theta)$. Then, in view of~\eqref{eq:appendix-regularity-F} and~\eqref{eq: derivative of Psi}, we have 
\be\label{eq:appendix-regularity-F'}
\frac{\#B_\rfrak(w, \rhsc)\cap \Theta'}{\#\Theta'}\leq 2\bar\egbd\cdot (\rhsc/\rhsc_0)^\alpha\qquad\text{for all $w$ and all $\rhsc\geq \rhsc_1$}.
\ee

Moreover, for any $w\in B_\rfrak(0,\delta_0)$, we have 
\[
u_{r}\exp(w)u_{-r}= u_{r}f(g(w))u_{-r}.
\]
Therefore, it suffices to prove the theorem with $\exp$ replaced by $g$. 

Altogether, it suffices to prove the theorem for $\czeta_r$ defined as follows 
\[
u_r\begin{pmatrix}1+w_{11} &w_{12}\\ w_{21} & \frac{1+w_{12}w_{21}}{1+w_{11}}\end{pmatrix}u_{-r}=\begin{pmatrix}d'_{r,w} &0\\ c'_{r,w} & 1/d'_{r,w}\end{pmatrix}\begin{pmatrix}1 &\czeta_r(w)\\ 0& 1\end{pmatrix},
\] 
and when $\theta$ is the counting measure.

The above definition implies that 
\[
\czeta_r(w)=\frac{w_{12}+\frac{w_{12}w_{21}-2w_{11}-w_{11}^2}{1+w_{11}}r-w_{21}r^2}{1+w_{11}+w_{21}r};
\]
define $\cZ(w)=\{(r,\czeta_r(w)): r\in[0,1]\}$ if $G=\SL_2(\R)\times\SL_2(\R)$ and $\cZ(w)=\{(r,\Im(\czeta_r(w))): r\in[0,1]\}$ if $G=\SL_2(\C)$. 

First one argues as in~\cite[Prop.\  2.1]{PYZ} to establish the {\em cinematic} curvature conditions~\cite[Eq.~(1.5) and~(1.6)]{Zahl} for this family. This can alternatively be checked directly, as we now explicate in the first case above. 
Defined $\Phi:\bbr^2\times\bbr^2\to \bbr$ by 
\[
\Phi(x,y)=y_2(1+x_1)+\frac{(2x_1+x_1^2)y_1+(x_2+x_1x_2)y_1^2}{1+x_1+x_2y_1}.
\]
Note that $\Phi(0,y)=y_2$ and that 
\[
\cZ(w)=\{y\in\bbr^2: y_1\in[0,1], \Phi(w_{11}, w_{21}, y)=w_{12}\}.
\]
Assuming $|x_i|\leq 0.1$ and $|y_i|\leq 1$, a direct calculation shows that 

\begin{align*}
\frac{\partial\Phi}{\partial y_1}&=\frac{(1+x_1)(x_1^2+2x_1+2x_2(1+x_1)y_1+x_2^2y_1^2)}{(1+x_1+x_2y_1)^2}\\
\frac{\partial^2\Phi}{\partial y_1^2}&=\frac{2(1+x_1)x_2}{(1+x_1+x_2y_1)^3}.
\end{align*}
In particular, there exists some absolute constant $C$ so that 
\be\label{eq: cinematic curvature}
\tfrac{1}{C}\max\{|x_1|,|x_2|\}\leq|\tfrac{\partial \Phi}{\partial y_1}|+|\tfrac{\partial^2 \Phi}{\partial y_1^2}|\leq C\max\{|x_1|, |x_2|\}. 
\ee 
In view of~\cite[Eq.~(21)]{Kolasa-Wolff}, thus, the family $\cZ$ satisfies the {\em cinematic} 
curvature conditions~\cite[Eq.~(1.5) and~(1.6)]{Zahl}.

For two curves $\cZ=\{y\in\bbr^2: y_1\in[0,1], \Phi(w_{11}, w_{21}, y)=w_{12}\}$ and 
$\cZ'=\{y'\in\bbr^2: y_1'\in[0,1], \Phi(w_{11}', w_{21}', y')=w'_{12}\}$, define 
\[
\Delta(\cZ,\cZ')=\inf_{y\in \cZ,y'\in\cZ'}\|y-y'\|+\biggl|\frac{d_y\Phi(w_{11}, w_{21}, y)}{\|d_y\Phi(w_{11}, w_{21}, y)\|}-\frac{d_y\Phi(w_{11}', w_{21}', y)}{\|d_y\Phi(w_{11}', w_{21}', y')\|}\biggr|;
\]
this provides a quantitative tool to study incidence of $\cZ$ and $\cZ'$. 

In view of~\eqref{eq: cinematic curvature} and the fact that the level curves $\cZ$ here are algebraic, we may apply~\cite[Lemma 5.18]{Zahl}, see also~\cite{Zahl-Smooth}. Therefore, the proof of the theorem goes through the same lines as the proof of~\cite[Thm.~B.1]{LM-PolyDensity} (see also the proof of Theorem~\ref{thm: proj thm app})
if we replace the family $\Xi$ there by the family $\cZ$ and $\Delta$ there by $\Delta$ above. 
\end{proof}

\bibliographystyle{halpha}
\bibliography{papers}

\end{document}